\newtheorem{theorem}{Theorem}[section]
\newtheorem{proposition}[theorem]{Proposition}
\newtheorem{lemma}[theorem]{Lemma}
\newtheorem{remark}[theorem]{Remark}
\newtheorem{definition}[theorem]{Definition}
\newtheorem{corollary}[theorem]{Corollary}
\newtheorem{conjecture}[theorem]{Conjecture}
\newcommand{\be}{\begin{equation}}
\newcommand{\ee}{\end{equation}}
\newcommand{\bea}{\begin{eqnarray}}
\newcommand{\eea}{\end{eqnarray}}
\newcommand{\ben}{\begin{eqnarray*}}
	\newcommand{\een}{\end{eqnarray*}}
\begin{document}

\title{The orbifold DT/PT  vertex correspondence}

\author[Yijie Lin]{Yijie Lin}

\address{School of Mathematics and Statistics\\Key Laboratory of Analytical Mathematics and Applications (Ministry of Education)\\Fujian Normal University\\350117 Fuzhou, P.R. China}
\email{yjlin@fjnu.edu.cn; yjlin12@163.com}

\maketitle

\begin{abstract}
We present an orbifold topological vertex formalism for PT invariants of toric Calabi-Yau 3-orbifolds with transverse $A_{n-1}$ singularities.
We give a proof of  the  orbifold DT/PT Calabi-Yau topological vertex correspondence. As an application, we derive  an explicit formula for the PT $\mathbb{Z}_{n}$-vertex in terms of loop Schur functions and prove the multi-regular orbifold DT/PT correspondence.
\end{abstract}

%{\bf Keywords:} 

%{\bf MSC(2010):} 14N35.

%\date{\today}

\tableofcontents

\section{Introduction}
The topological vertex has been proposed to be an effective tool for computing enumerative invariants such as Gromov-Witten invariants, Donaldson-Thomas invariants and Pandharipande-Thomas invariants of toric Calabi-Yau 3-folds since it made an appearance in physics for calculating the topological string amplitudes [\cite{AKMV}]. The GW topological vertex is  developed in [\cite{AKMV,LLZ1,LLZ2,LLLZ,OP,MOOP}] as a generating function of Hodge integrals while the DT topological vertex is a generating function of 3D partitions [\cite{ORV,MNOP1}]. Applying the technique of virtual localization [\cite{GP}], the GW/DT correspondence [\cite{MNOP1}] can be reduced to verify first the GW/DT vertex correspondence and then the compatibility with the edge terms in their gluing laws. As a geometric approach to the reduced DT theory, the PT theory [\cite{PT1}]  equips with  its topological vertex as a generating function of labelled box configurations [\cite{PT2}], where the conjectured DT/PT vertex correspondence  is resolved recently in [\cite{JWY}] for the Calabi-Yau case. There are  alternative approaches to the DT/PT correspondence  in [\cite{Bri1,Toda,ST}]. With the rationality property of PT theory [\cite{PP3,PP5}], the GW/PT correspondence is treated in [\cite{PP1,PP2,OOP1}].

For toric CY 3-orbifolds, the orbifold DT and GW topological  vertices are developed in [\cite{BCY}] and [\cite{Ross1}] as generating functions of 3D partitions colored by irreducible representations of a finite Abelian group and of Hurwitz-Hodge integrals respectively. The orbifold GW/DT vertex correspondence for toric CY 3-orbifolds with transverse $A_{n-1}$ quotient singularities is studied in [\cite{Ross1,Zong1,RZ1,RZ2,Ross2}].
The foundation of the (relative) orbifold PT theory in the general case is investigated in [\cite{Lyj1,Lyj2}]. When I had obtained  PT invariants of toric CY 3-orbifolds with transverse $A_{n-1}$ singularities (see Theorem \ref{PT-partition function}), I was informed by Patrick Lei last October  that Zhang [\cite{Zhang}] had also studied the orbifold PT topological vertex and obtained the 1-leg orbifold PT vertex formula in terms of Schur functions. Thanks to the  proof of [\cite{PT2}, Conjecture 2] by Jenne, Webb and Young in [\cite{JWY}],  I had independently obtained the same formula of the PT partition function as in Zhang's work [\cite{Zhang}, Theorem/Conjecture 4.19] and settled his unsolved  3-leg case. However, there are several  differences between ours as follows.  We have slightly different arguments on the description of  the $T$-fixed orbifold PT stable pairs, see [\cite{Zhang}, Section 4.6] and Section 2.2 respectively. Compared with [\cite{Zhang}, Proposition 4.25], the more explicit proof of $T$-equivariant K-theory decomposition of the finite cokernel $\mathtt{Q}$ as a sum of vertex terms is presented in Proposition \ref{K-decomposition}, which is essential for deriving the sign formula and hence the PT partition function later. Next, the PT partition function here is defined by virtual localization as in [\cite{PT2}] together with   perfect obstruction theories and virtual fundamental classes developed in [\cite{Lyj1}] especially for the 3-leg case with non-isolated fixed points, while  the PT invariant defined as Behrend's weighted Euler characteristic in [\cite{Zhang}, Definition 4.4]  fails to deal with the full 3-leg case by [\cite{BF}, Theorem 3.4], see Section 3.3 and [\cite{Zhang}, Section 4] respectively. Explicitly, we have the following PT version of [\cite{BCY}, Theorem 10].
\begin{theorem}(see Theorem \ref{PT-partition function} and [\cite{Zhang}, Theorem/Conjecture 4.19])
		Let $\mathcal{X}$ be a  toric CY 3-orbifold with transverse $A_{n-1}$ singularities. Then the PT partition function $PT(\mathcal{X})$ is
	\ben
	PT(\mathcal{X})=\underline{PT}(\mathcal{X})\bigg|_{q_{e,0}\to -q_{e,0},\;q\to-q}
	\een	
where
		\ben
		\underline{PT}(\mathcal{X}):=\sum_{\{\lambda_{e}\}\in\Lambda_{\mathcal{X}}}\prod_{e\in E_{\mathcal{X}}}E_{\lambda_{e}}^e\prod_{v\in V_{\mathcal{X}}}(-1)^{\varXi_{\widetilde{\pi}_{v}}}W_{\lambda_{1,v}\lambda_{2,v}\lambda_{3,v}}^{n_{e_{3,v}}}((-1)^{\widetilde{s}(\lambda_{3,v})}\mathbf{q}_{v})
		\een
	and
	\ben
	E_{\lambda_{e}}^e=(-1)^{\mathbf{S}_{\lambda_{e}}^e}\cdot\left(\prod_{k=0}^{n_{e}-1}v_{e,k}^{|\lambda_{e}|_{k,n_{e}}}\right)\cdot q_{e}^{C_{m_{e},m_{e}^\prime}^{\lambda_{e}}}\cdot\left(\overline{q_{f_{v_{0}}}^{A_{\lambda_{e}}}}\right)^{\delta_{0,e}}\cdot\left(q_{f^\prime_{v_{0}}}^{A_{\lambda^\prime_{e}}}\right)^{\delta_{0,e}^\prime}\cdot\left(q_{g_{v_{\infty}}}^{A_{\lambda_{e}}}\right)^{\delta_{\infty,e}}\cdot\left(\overline{q_{g^\prime_{v_{\infty}}}^{A_{\lambda^\prime_{e}}}}\right)^{\delta_{\infty,e}^\prime}
	\een
	with edges $(e,f_{v_{0}},f^\prime_{v_{0}},g_{v_{\infty}},g^\prime_{v_{\infty}})$  oriented   as in Figure \ref{figure1} (see other notations in Section 3).
\end{theorem}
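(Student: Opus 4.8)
The plan is to compute $PT(\mathcal{X})$ by $T$-equivariant virtual localization on the moduli space of orbifold PT stable pairs, adapting to stable pairs the strategy that produces the DT formula in [\cite{BCY}, Theorem 10]. First I would use the toric structure of $\mathcal{X}$ to reduce to local models: away from the transverse $A_{n-1}$ loci the geometry is that of an ordinary toric CY $3$-fold, so the torus $T$ acts with a combinatorially controlled fixed locus. The $T$-fixed stable pairs are classified by the data of an asymptotic partition $\lambda_e$, colored by irreducible representations of $\mathbb{Z}_{n_e}$, along each edge $e$, together with a finite-length modification supported near each vertex $v$; this is exactly the combinatorial description set up in Section 2.2. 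Because the full $3$-leg case has non-isolated $T$-fixed loci, the naive signed-Euler-characteristic definition is inadequate, so I would invoke the perfect obstruction theory and virtual fundamental class of [\cite{Lyj1}] and apply the virtual localization formula of [\cite{GP}] to each fixed component.

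Next I would compute the $T$-character of the virtual tangent space at, or the virtual normal bundle of, each fixed locus from the deformation theory of stable pairs, and decompose it as a sum of an edge part for each $e\in E_{\mathcal{X}}$ and a vertex part for each $v\in V_{\mathcal{X}}$. The vertex part is, by construction, the generating function defining the orbifold PT $\mathbb{Z}_n$-vertex $W^{n_{e_{3,v}}}_{\lambda_{1,v}\lambda_{2,v}\lambda_{3,v}}$: it records the box configuration sitting over the vertex together with the restriction of the pair's cokernel. The edge part records the normal-bundle and framing data of the invariant $\mathbb{P}^1$ over $e$ and produces the monomial $E^e_{\lambda_e}$, whose exponents $C^{\lambda_e}_{m_e,m'_e}$ and $A_{\lambda_e}$ encode the normal-bundle degrees and the equivariant weights carried by the asymptotic cylinder, with the boundary factors $\delta_{0,e}$, $\delta'_{0,e}$, $\delta_{\infty,e}$, $\delta'_{\infty,e}$ selecting the distinguished edges at the poles $v_0$ and $v_\infty$.

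The main obstacle is the sign computation. In the Calabi-Yau case the virtual tangent space is anti-self-dual with respect to the canonical weight, so the localization contribution is, up to sign, a square root of an equivariant Euler class whose equivariant parameters must cancel in the appropriate limit; controlling exactly which sign survives is the delicate point. Here I would rely on the $T$-equivariant $K$-theory decomposition of the finite cokernel $\mathtt{Q}$ into vertex terms established in Proposition \ref{K-decomposition}: writing $\mathtt{Q}$ as an explicit sum of monomials attached to the vertices lets me read off the parity $\varXi_{\pi_v}$ and the combinatorial sign $\widetilde{s}(\lambda_{3,v})$ governing the argument substitution $\mathbf{q}_v\to(-1)^{\widetilde{s}(\lambda_{3,v})}\mathbf{q}_v$, as well as the edge sign $\mathbf{S}^e_{\lambda_e}$. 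This is precisely where the present argument is more explicit than [\cite{Zhang}, Proposition 4.25], and where I expect the bookkeeping to be heaviest.

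Finally I would assemble the pieces: summing over all fixed loci $\{\lambda_e\}\in\Lambda_{\mathcal{X}}$ and taking the product of the vertex and edge contributions yields the generating series $\underline{PT}(\mathcal{X})$, while tracking the overall Calabi-Yau normalization of the localization measure produces the global substitution $q_{e,0}\to -q_{e,0}$, $q\to -q$. The resulting expression is the direct PT analogue of the DT formula in [\cite{BCY}, Theorem 10], with the DT box-counting vertex replaced throughout by the orbifold PT vertex $W$; this structural parallel is what later makes the orbifold DT/PT vertex correspondence transparent.
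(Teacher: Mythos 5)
Your proposal takes essentially the same route as the paper's proof: defining $PT(\mathcal{X})$ by virtual localization with respect to the Calabi--Yau subtorus using the perfect obstruction theory of [\cite{Lyj1}] (with the nonsingularity of the fixed loci from [\cite{JWY}] making each component a product of $\mathbb{P}^1$'s, so that the contribution reduces to $(-1)^{\dim\mathrm{Ext}^1(\mathbf{I}^\bullet,\mathbf{I}^\bullet)_{0}}\chi_{\mathrm{top}}$ as in Formula \eqref{PT-formula}), computing that sign via the $K$-theory decomposition of Proposition \ref{K-decomposition} and Theorems \ref{sign-formula}, \ref{sign-formula2}, and assembling the edge and vertex factors following [\cite{BCY}, Section 6.3]. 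The only imprecision is that the variables $q^\beta$ and the vertex factors $W^{n}$ come from the $K$-class of $\mathcal{F}$ (the second formula in Proposition \ref{K-decomposition}), not from the virtual normal bundle character, which in the Calabi--Yau reduction enters only through its parity; this is a matter of phrasing rather than a gap.
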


In [\cite{Zhang}], the author has proved the 1-leg orbifold DT/PT vertex correspondence by obtaining the explicit formula for the 1-leg orbifold PT topological vertex. In this paper, we will extend this result to the full 3-leg case, i.e., prove the 3-leg orbifold DT/PT vertex correspondence by generalizing the DT/PT Calabi-Yau topological vertex correspondence in [\cite{JWY}] to the orbifold case.
 
In [\cite{JWY}], the DT topological vertex $V_{\lambda\mu\nu}$ (see Definition \ref{DT-vertex}) is described as some dimer model via the well-known correspondence between 3D partitions and  dimer configurations on the honeycomb graph while the PT topological vertex $W_{\lambda\mu\nu}$ (see Definition \ref{PT-vertex}) has certain double-dimer model description with the established correspondence between labelled box configurations and tripartite double-dimer configurations on the honeycomb graph. With these descriptions, the so-called graphical condensation discovered by Kuo [\cite{Kuo}, Theorem 5.1] is applied  to derive a graphical condensation recurrence for the DT vertex while the analogue technique developed in [\cite{Jenne}, Theorem 1.0.2] is employed to obtain a graphical condensation recurrence for the PT vertex. Then the DT/PT vertex correspondence [\cite{JWY}, Theorem 1.0.1] (see also Theorem \ref{DT-PT-correspondence}) is a consequence of the  same  recurrence obtained in both sides together with the base 2-leg case proved in [\cite{PT2}]. This strategy can be extended and carried out for  toric CY 3-orbifolds with cyclic orbifold structure explicitly as follows. The orbifold DT vertex $V^n_{\lambda\mu\nu}$ (see Definition \ref{orbifoldDT-vertex}) and the orbifold PT vertex $W^n_{\lambda\mu\nu}$ (see Definition \ref{orbifoldPT-vertex}) as generating functions of colored 3D partitions and colored labelled box configurations can be also described as some dimer model and double-dimer model respectively but with the same generalized weighted rule in Definition \ref{weight-rule}, see Section 4.1 and Section 5.2. Next, the graphical condensation recurrence for orbifold DT theory is obtained in Section 4.2 as follows
\ben
\frac{V^n_{\lambda\mu\nu}}{V^n_{\emptyset\emptyset\emptyset}} =\frac{V^n_{\lambda^{rc}\mu\nu}}{V^n_{\emptyset\emptyset\emptyset}}\cdot \frac{V^n_{\lambda\mu^{rc}\nu}}{V^n_{\emptyset\emptyset\emptyset}}\cdot\left(\frac{V^n_{\lambda^{rc}\mu^{rc}\nu}}{V^n_{\emptyset\emptyset\emptyset}}\right)^{-1}+q^{K_{1}(\lambda,\mu,\nu)}\cdot\frac{V^n_{\lambda^r\mu^c\nu}}{V^n_{\emptyset\emptyset\emptyset}}\cdot \frac{V^n_{\lambda^c\mu^r\nu}}{V^n_{\emptyset\emptyset\emptyset}}\cdot\left(\frac{V^n_{\lambda^{rc}\mu^{rc}\nu}}{V^n_{\emptyset\emptyset\emptyset}}\right)^{-1}
\een
which  coincides with the graphical condensation recurrence for orbifold PT theory in Section 5.3
\ben
W^{n}_{\lambda\mu\nu}
=W^{n}_{\lambda^{rc}\mu\nu}\cdot W^{n}_{\lambda\mu^{rc}\nu}\cdot\left(W^{n}_{\lambda^{rc}\mu^{rc}\nu}\right)^{-1}
+q^{K_{1}(\lambda,\mu,\nu)}\cdot W^{n}_{\lambda^r\mu^c\nu}\cdot W^{n}_{\lambda^c\mu^r\nu}\cdot\left(W^{n}_{\lambda^{rc}\mu^{rc}\nu}\right)^{-1}
\een
where 
\ben
q^{K_{1}(\lambda,\mu,\nu)}:=q_{-\widetilde{d}(\lambda^\prime)}^{d(\lambda^\prime)}\cdot q_{\widetilde{d}(\mu)}^{d(\mu)}\cdot \prod\limits_{i=-\widetilde{d}(\lambda^\prime)}^{\widetilde{d}(\mu)}q_{i}^{-1}\cdot\prod\limits_{i=\widetilde{d}(\lambda^\prime)+1}^{\ell(\lambda^\prime)}\left(\frac{q_{-i}}{q_{-i+1}}\right)^{\lambda^\prime_{i}}\cdot\prod\limits_{i=\widetilde{d}(\mu)+1}^{\ell(\mu)}\left(\frac{q_{i}}{q_{i-1}}\right)^{\mu_{i}}.
\een 
That is, both $Y_{1}(\lambda,\mu,\nu):=\frac{V^n_{\lambda\mu\nu}}{V^n_{\emptyset\emptyset\emptyset}}$ and $Y_{2}(\lambda,\mu,\nu):=W^{n}_{\lambda\mu\nu}$ are solutions $Y$ of the following equation:
\bea\label{recurrence1}
Y(\lambda,\mu,\nu)=\frac{Y(\lambda^{rc},\mu,\nu)\cdot Y(\lambda,\mu^{rc},\nu)}{Y(\lambda^{rc},\mu^{rc},\nu)}+q^{K_{1}(\lambda,\mu,\nu)}\cdot \frac{Y(\lambda^r,\mu^c,\nu)\cdot Y(\lambda^c,\mu^r,\nu)}{Y(\lambda^{rc},\mu^{rc},\nu)}.
\eea

The above  graphical condensation recurrence \eqref{recurrence1}  generalizes [\cite{JWY}, Equation (2)] if we set $q_{l}:=q$ for all $l$ (or equivalently $n=1$). However, it is not enough to determine the 3-leg orbifold DT/PT vertex correspondence since we don't have the 2-leg correspondence $Y_{1}(\lambda,\emptyset,\nu)=Y_{2}(\lambda,\emptyset,\nu)$ and $Y_{1}(\emptyset,\mu,\nu)=Y_{2}(\emptyset,\mu,\nu)$ as the base case via Remark \ref{size-comparision}.  Actually, it is observed that  the 2-leg correspondence $Y_{1}(\lambda,\mu,\emptyset)=Y_{2}(\lambda,\mu,\emptyset)$ can be obtained by Equation \eqref{recurrence1}, [\cite{Zhang}, Theorem 5.22] and Remark \ref{size-comparision}. This observation leads us to explore more graphical condensation recurrences from orbifold DT and PT theories in order to obtain the desired 2-leg  correspondence and hence the full 3-leg case. Specifically, with different choices of graphs for the dimer model and double-dimer model, both $Y_{1}(\lambda,\mu,\nu)$ and $Y_{2}(\lambda,\mu,\nu)$ are also the solutions $Y$ of the following two more recurrences: 
\bea\label{recurrence2}
Y(\lambda,\mu,\nu)=\frac{Y(\lambda^{rc},\mu,\nu)\cdot Y(\lambda,\mu,\nu^{rc})}{Y(\lambda^{rc},\mu,\nu^{rc})}+q^{K_{2}(\lambda,\mu,\nu)}\cdot \frac{Y(\lambda^r,\mu,\nu^c)\cdot Y(\lambda^c,\mu,\nu^r)}{Y(\lambda^{rc},\mu,\nu^{rc})}
\eea
and 
\bea\label{recurrence3}
Y(\lambda,\mu,\nu)=\frac{Y(\lambda,\mu^{rc},\nu)\cdot Y(\lambda,\mu,\nu^{rc})}{Y(\lambda,\mu^{rc},\nu^{rc})}+q^{K_{3}(\lambda,\mu,\nu)}\cdot \frac{Y(\lambda,\mu^r,\nu^c)\cdot Y(\lambda,\mu^c,\nu^r)}{Y(\lambda,\mu^{rc},\nu^{rc})}
\eea
where 
\ben
&&q^{K_{2}(\lambda,\mu,\nu)}:=q_{1-d(\lambda^\prime)}^{-\lambda^\prime_{d(\lambda^\prime)}}\cdot\prod\limits_{i=1}^{d(\lambda^\prime)-1}q_{-i}\cdot\prod\limits_{i=1}^{d(\lambda^\prime)-1}\left(\frac{q_{-i}}{q_{-i+1}}\right)^{\lambda^\prime_{i}}\cdot\prod\limits_{i=1}^{\ell(\mu)}\left(\frac{q_{i}}{q_{i-1}}\right)^{\mu_{i}}\cdot\prod\limits_{i=1}^{\nu_{d(\nu)}-d(\nu)}q_{i}^{-1},\\
&&q^{K_{3}(\lambda,\mu,\nu)}:=q_{d(\mu)-1}^{-\mu_{d(\mu)}}\cdot \prod\limits_{i=1}^{d(\mu)-1}q_{i}\cdot\prod\limits_{i=1}^{d(\mu)-1}\left(\frac{q_{i}}{q_{i-1}}\right)^{\mu_{i}}\cdot\prod\limits_{i=1}^{\ell(\lambda^\prime)}\left(\frac{q_{-i}}{q_{-i+1}}\right)^{\lambda^\prime_{i}}\cdot \prod_{i=1}^{\widetilde{d}(\nu)-d(\nu)}q_{-i}^{-1},
\een
see  Section 4.2 and Section 5.3 for more details.

When $Y$ is chosen to be $Y_{1}$ or $Y_{2}$, there exists certain symmetry between Equation \eqref{recurrence2} and Equation \eqref{recurrence3} due to  symmetries of $Y_{1}(\lambda,\mu,\nu)$ and $Y_{2}(\lambda,\mu,\nu)$ in Remarks \ref{symmetry1} and \ref{symmetry2} together with Lemma \ref{transpose}, but Equation \eqref{recurrence2} or \eqref{recurrence3} can not be derived from Equation \eqref{recurrence1} via symmetries unless $n=1$, see also Section 4.2 and 5.3. With the 1-leg correspondence from [\cite{Zhang}, Theorem 5.22] as the base case, it follows from graphical condensation recurrences \eqref{recurrence1}, \eqref{recurrence2} and \eqref{recurrence3} that 
\begin{theorem}(see Theorem \ref{3-leg correspondence})\label{mr-DT-PT}
If $\nu$ is multi-regular, then 	
\ben
\frac{V^n_{\lambda\mu\nu}}{V_{\emptyset\emptyset\emptyset}^n}=W_{\lambda\mu\nu}^n.
\een
\end{theorem}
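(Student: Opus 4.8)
The plan is to prove the identity $Y_1=Y_2$, where $Y_1(\lambda,\mu,\nu)=V^n_{\lambda\mu\nu}/V^n_{\emptyset\emptyset\emptyset}$ and $Y_2(\lambda,\mu,\nu)=W^n_{\lambda\mu\nu}$, by a single induction that exploits all three graphical condensation recurrences \eqref{recurrence1}, \eqref{recurrence2} and \eqref{recurrence3} simultaneously. Since Sections 4.2 and 5.3 show that each of $Y_1$ and $Y_2$ satisfies all three recurrences, and since each recurrence expresses $Y(\lambda,\mu,\nu)$ as a fixed rational function of the values of $Y$ at triples of strictly smaller total size (this strictness being exactly the content of Remark \ref{size-comparision}), it suffices to verify that the two solutions agree on a suitable family of base cases, that the common denominators appearing in the recurrences are invertible, and that the reductions never leave the multi-regular regime. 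The base cases are supplied by the $1$-leg correspondence [\cite{Zhang}, Theorem 5.22], which gives $Y_1=Y_2$ whenever two of the three partitions are empty, and in particular for $(\emptyset,\emptyset,\nu)$ with $\nu$ multi-regular.

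The key structural obstruction is that recurrence \eqref{recurrence1} keeps $\nu$ fixed and shrinks only $\lambda$ and $\mu$, so by itself it can reduce $(\lambda,\mu,\nu)$ only to the $2$-leg configurations $(\lambda',\emptyset,\nu)$ and $(\emptyset,\mu',\nu)$, which are not among the available base cases when $\nu\neq\emptyset$. I would therefore first establish these two missing $2$-leg correspondences separately. For $(\lambda,\emptyset,\nu)$ I would set $\mu=\emptyset$ in \eqref{recurrence2} and induct on $|\lambda|+|\nu|$: for a genuine $2$-leg triple every term on the right-hand side keeps $\mu=\emptyset$ and has strictly smaller $|\lambda|+|\nu|$, so the induction bottoms out at the $1$-leg cases $(\lambda',\emptyset,\emptyset)$ and $(\emptyset,\emptyset,\nu')$, both covered by [\cite{Zhang}, Theorem 5.22]. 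Symmetrically, setting $\lambda=\emptyset$ in \eqref{recurrence3} and inducting on $|\mu|+|\nu|$ yields $(\emptyset,\mu,\nu)$. Here the multi-regularity hypothesis on $\nu$ enters twice: it is what makes the $\nu$-shrinking recurrences \eqref{recurrence2} and \eqref{recurrence3} available (their dimer and double-dimer graphs are built from a multi-regular $\nu$), and it must be preserved by the operations $\nu\mapsto\nu^{rc},\nu^{r},\nu^{c}$ so that the reduced triples still lie in the range where both the base case and the recurrences apply.

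With these two families of $2$-leg correspondences in hand — the remaining case $(\lambda,\mu,\emptyset)$ already follows from \eqref{recurrence1} together with the $1$-leg base, as observed in the introduction — I would finish with the $3$-leg step via \eqref{recurrence1}, fixing the multi-regular $\nu$ and inducting on $|\lambda|+|\mu|$. For nonempty $\lambda,\mu$ the right-hand side of \eqref{recurrence1} involves only smaller $3$-leg triples $(\lambda',\mu',\nu)$ with $|\lambda'|+|\mu'|<|\lambda|+|\mu|$, the $2$-leg terms $(\lambda',\emptyset,\nu)$ and $(\emptyset,\mu',\nu)$ established in the previous step, and the $1$-leg base $(\emptyset,\emptyset,\nu)$. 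Since $Y_1$ and $Y_2$ agree on all of these and obey the same recurrence with the same (nonvanishing) denominator, they agree at $(\lambda,\mu,\nu)$, which closes the induction and proves the theorem.

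I expect the principal difficulty to be conceptual rather than computational: a single recurrence together with the $1$-leg base is genuinely insufficient, and the argument closes only because the three recurrences \eqref{recurrence1}--\eqref{recurrence3} shrink different pairs of legs and can be cascaded in the correct order. The points demanding care are accordingly the well-foundedness guaranteed by Remark \ref{size-comparision} (that each leg operation strictly decreases the relevant size, and that the denominators $Y(\lambda^{rc},\mu^{rc},\nu)$ and their analogues are invertible so that the rational expressions are legitimate) and the stability of multi-regularity under the $\nu$-operations; a failure of either would sever the chain linking the $1$-leg base to the full $3$-leg correspondence.
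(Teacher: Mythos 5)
Your cascade is exactly the one the paper itself uses to prove Theorem \ref{3-leg correspondence}: the three recurrences \eqref{recurrence1}, \eqref{recurrence2}, \eqref{recurrence3}, well-foundedness from Remark \ref{size-comparision}, the $1$-leg base case of Theorem \ref{1-leg}, the three $2$-leg cases obtained by emptying one leg in the recurrence that shrinks the other two, and finally the full $3$-leg case from \eqref{recurrence1}. So the route is the paper's route, and the steps that never move $\nu$ --- the case $(\lambda,\mu,\emptyset)$ and the final passage from the $2$-leg cases to three legs via \eqref{recurrence1} --- are sound as you describe them.

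The gap sits exactly at the assumption you single out as load-bearing but do not verify. Multi-regularity is \emph{not} preserved by $\nu\mapsto\nu^{r},\nu^{c},\nu^{rc}$ (and, separately, it is not what makes \eqref{recurrence2}, \eqref{recurrence3} ``available'': the paper derives all three recurrences for arbitrary partitions and says so explicitly). For $n=2$ the partition $\nu=(1,1)$ is multi-regular, but $\nu^{r}=(1)$ is not; in general $|\nu^{r}|,|\nu^{c}|,|\nu^{rc}|$ need not even be divisible by $n$. Hence your induction for the $2$-leg cases $(\lambda,\emptyset,\nu)$ and $(\emptyset,\mu,\nu)$ leaves the multi-regular locus and bottoms out at $1$-leg values $(\emptyset,\emptyset,\nu')$ with $\nu'$ not multi-regular, where the correspondence \emph{fails}: by Remark \ref{DT-PT-correspondence1}, $Y_{1}(\emptyset,\emptyset,\nu')=O_{\nu'}\,Y_{2}(\emptyset,\emptyset,\nu')$ with $O_{\nu'}\neq 1$. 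Concretely, take $n=2$, $\lambda=(1)$, $\mu=\emptyset$, $\nu=(1,1)$; then $\lambda^{r}=\lambda^{c}=\lambda^{rc}=\emptyset$, $\nu^{c}=\nu^{rc}=\emptyset$, $\nu^{r}=(1)$, and \eqref{recurrence2} reads
\[
Y\big((1),\emptyset,(1,1)\big)=Y\big(\emptyset,\emptyset,(1,1)\big)\cdot Y\big((1),\emptyset,\emptyset\big)+q^{K_{2}}\cdot Y\big(\emptyset,\emptyset,(1)\big).
\]
The first two values on the right are covered by the base case, but the third is evaluated precisely where $Y_{1}\neq Y_{2}$; subtracting the two instances of the recurrence would give $Y_{1}-Y_{2}=q^{K_{2}}(O_{(1)}-1)\,Y_{2}(\emptyset,\emptyset,(1))\neq 0$ at the multi-regular point $((1),\emptyset,(1,1))$, i.e.\ taken at face value the substitution contradicts the very theorem being proved. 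This shows the recurrences cannot be fed into the induction verbatim: one must additionally track how the correspondence deviates off the multi-regular locus (the factors $O_{\nu'}$, equivalently the cyclic shift of the colouring produced by the shifted origins behind the second terms of \eqref{recurrence2} and \eqref{recurrence3}) and prove that these deviations cancel. Neither your write-up nor, to be fair, the paper's own one-line citation of ``\eqref{recurrence2}, Remark \ref{size-comparision}, Theorem \ref{1-leg}'' supplies this cancellation; so by your own criterion the chain linking the $1$-leg base to the $2$-leg cases with $\nu\neq\emptyset$ is severed, and the proposal as written is not a complete proof.
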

This completes the  proof of the full 3-leg orbifold DT/PT vertex correspondence, which generalizes DT/PT vertex correspondence in [\cite{JWY}, Theorem 1.0.1]. 
Although we don't impose the multi-regular condition on $\nu$ in recurrences \eqref{recurrence1}, \eqref{recurrence2} and \eqref{recurrence3}, the 3-leg correspondence in Theorem \ref{mr-DT-PT} doesn't hold in general when  $\nu$ is not multi-regular and  $n>1$, see Remark \ref{DT-PT-correspondence1}.
Now we have the explicit formula for $W^n_{\lambda\mu\nu}$ in terms of loop Schur functions.
\begin{corollary}(see Corollary \ref{application1} and Remark \ref{loop-Schur-function})\label{explicit-formula}
	If $\nu$ is multi-regular, then 	
	\ben
	W_{\lambda\mu\nu}^n=q^{-A_{\lambda}}\cdot \overline{q^{-A_{\mu^\prime}}}\cdot \left(\prod_{(i,j)\in\nu}q_{i-j}^{-j}\right)\cdot\mathfrak{s}_{\nu}\cdot \sum_{\eta}q_{0}^{-|\eta|}\cdot \overline{s_{\lambda^\prime/\eta}(\mathbf{q}_{\bullet-\nu^\prime})}\cdot s_{\mu/\eta}(\mathbf{q}_{\bullet-\nu}).
	\een
\end{corollary}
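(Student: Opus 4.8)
The plan is to obtain the formula not by computing $W^n_{\lambda\mu\nu}$ directly from its combinatorial (double-dimer) definition, but by feeding the already-established $3$-leg correspondence into the known closed form for the orbifold DT vertex. First I would invoke Theorem \ref{mr-DT-PT}: because $\nu$ is multi-regular, it yields the identity $W^n_{\lambda\mu\nu}=V^n_{\lambda\mu\nu}/V^n_{\emptyset\emptyset\emptyset}$. This immediately reduces the problem to producing a loop-Schur expression for the \emph{normalized} orbifold DT vertex $V^n_{\lambda\mu\nu}/V^n_{\emptyset\emptyset\emptyset}$, which is a purely DT-theoretic object and is exactly the content recorded in Corollary \ref{application1}. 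No PT combinatorics enter at this stage; the correspondence has already done all of the theory-comparison work.

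For the normalized DT vertex I would appeal to the Bryan--Cadman--Young evaluation of the multi-regular orbifold topological vertex for transverse $A_{n-1}$ singularities [\cite{BCY}, Theorem 10]. Their formula expresses $V^n_{\lambda\mu\nu}/V^n_{\emptyset\emptyset\emptyset}$ as the loop Schur function $\mathfrak{s}_\nu$ attached to the multi-regular leg, times the monomial prefactors $q^{-A_\lambda}$, $\overline{q^{-A_{\mu'}}}$ and $\prod_{(i,j)\in\nu}q_{i-j}^{-j}$ produced by the box-weight bookkeeping along the legs, times the Cauchy-type sum $\sum_{\eta}q_0^{-|\eta|}\,\overline{s_{\lambda'/\eta}(\mathbf{q}_{\bullet-\nu'})}\,s_{\mu/\eta}(\mathbf{q}_{\bullet-\nu})$ over the intermediate partition $\eta$. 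Substituting this expression into the identity of the previous paragraph yields the asserted formula for $W^n_{\lambda\mu\nu}$. Thus the corollary is essentially a one-line substitution once the two ingredients are in place.

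The bulk of the genuine work, and the point where care is required, is matching conventions between the DT source formula and the normalizations, gradings, and sign/conjugation conventions in force here. Concretely, I would verify that the $\mathbb{Z}_n$-colored variable specializations $\mathbf{q}_{\bullet-\nu}$ and $\mathbf{q}_{\bullet-\nu'}$, read off from the profile of $\nu$ along each leg, agree with those fixed in Remark \ref{loop-Schur-function}; that the conjugation $\overline{(\cdot)}$ is applied to precisely the factors carried by the negatively oriented leg (hence to $q^{-A_{\mu'}}$ and to $s_{\lambda'/\eta}$ but not to $s_{\mu/\eta}$); and that the diagonal weight $q_0^{-|\eta|}$ is the correct one after normalizing by $V^n_{\emptyset\emptyset\emptyset}$. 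Once these dictionaries are pinned down the loop-Schur identity is a formal rewriting, and the multi-regularity of $\nu$ is exactly what guarantees that $\mathfrak{s}_\nu$ is the honest loop Schur function rather than a more complicated colored-partition generating object, so no additional positivity or convergence subtlety arises.
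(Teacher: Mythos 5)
Your proposal is correct and follows essentially the same route as the paper: Theorem \ref{3-leg correspondence} converts $W^n_{\lambda\mu\nu}$ into the normalized multi-regular DT vertex $V^n_{\lambda\mu\nu}/V^n_{\emptyset\emptyset\emptyset}$, the Bryan--Cadman--Young closed-form evaluation of that vertex supplies the prefactors and the Cauchy-type skew Schur sum, and the conversion to the loop Schur function is exactly the content of Remark \ref{loop-Schur-function} via [\cite{RZ2}, Lemma A.1]. The only slip is bibliographic: the vertex evaluation you need is [\cite{BCY}, Theorem 12], which produces the hook-type factor $H_\nu$ rather than $\mathfrak{s}_\nu$ directly (the identity $H_{\nu}=\bigl(\prod_{(i,j)\in\nu}q_{i-j}^{-j}\bigr)\mathfrak{s}_{\nu}$ is then the separate RZ2 step), whereas [\cite{BCY}, Theorem 10] is the global partition-function gluing formula.
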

Notice that if $\lambda,\mu=\emptyset$ and $\nu\neq\emptyset$, the  formula in Corollary \ref{explicit-formula} is true without multi-regularity of $\nu$ due to [\cite{Zhang}, Theorem 5.22].
The second application is 
\begin{corollary}(see Corollary \ref{application2})\label{mr DT/PT correspondence}
	Let $\mathcal{X}$ be a  toric CY 3-orbifold with transverse $A_{n-1}$ singularities. Then we have 
	\ben
	DT^\prime_{mr}(\mathcal{X})=PT_{mr}(\mathcal{X}).
	\een	
where $DT^\prime_{mr}(\mathcal{X})$	and $PT_{mr}(\mathcal{X})$ are defined in Section 2.3 and Section 3.3 respectively.
\end{corollary}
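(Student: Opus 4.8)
The plan is to compute both $DT'_{mr}(\mathcal{X})$ and $PT_{mr}(\mathcal{X})$ from one and the same topological-vertex gluing law over the toric graph $(V_{\mathcal{X}},E_{\mathcal{X}})$ of $\mathcal{X}$, so that the only building blocks distinguishing the two sides are the vertex factors, and then to match the vertices using Theorem~\ref{mr-DT-PT}. First I would recall the relevant definitions: by Theorem~\ref{PT-partition function}, $PT_{mr}(\mathcal{X})$ is the expression $\underline{PT}(\mathcal{X})|_{q_{e,0}\to-q_{e,0},\,q\to-q}$ with the sum over $\{\lambda_{e}\}\in\Lambda_{\mathcal{X}}$ restricted to the multi-regular configurations $\Lambda_{\mathcal{X}}^{mr}\subseteq\Lambda_{\mathcal{X}}$, while $DT'_{mr}(\mathcal{X})$ is the reduced (degree-zero--normalized) multi-regular orbifold DT series of Section~2.3, the analogue of [\cite{BCY}, Theorem~10]. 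The normalization passing from $DT$ to $DT'$ divides the degree-zero contribution out vertex by vertex, which is exactly why the reduced DT vertex factor is $V^{n}_{\lambda_{1,v}\lambda_{2,v}\lambda_{3,v}}/V^{n}_{\emptyset\emptyset\emptyset}$ and why Theorem~\ref{mr-DT-PT} is stated in this normalized form.

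The structural heart of the argument is the claim that, apart from the vertex factor, the two gluing laws coincide term by term. The edge contributions $E_{\lambda_{e}}^{e}$ are intrinsic to the torus-invariant curves of $\mathcal{X}$, to their $\mathbb{Z}_{n_{e}}$-weights and framings, and hence are insensitive to whether one enumerates ideal sheaves (DT) or stable pairs (PT): along an edge the two theories agree. Likewise the per-vertex signs $(-1)^{\varXi_{\pi_{v}}}$, the internal variable twist $(-1)^{\widetilde{s}(\lambda_{3,v})}\mathbf{q}_{v}$ and the global substitution $q_{e,0}\to-q_{e,0},\,q\to-q$ are produced by the same normal-direction and obstruction data, supplied by the $T$-equivariant $K$-theory decomposition of Proposition~\ref{K-decomposition} and the ensuing sign formula, so they must be carried identically on both sides. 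Granting this, I would write
\[
DT'_{mr}(\mathcal{X})=\sum_{\{\lambda_{e}\}\in\Lambda_{\mathcal{X}}^{mr}}\prod_{e\in E_{\mathcal{X}}}E_{\lambda_{e}}^{e}\prod_{v\in V_{\mathcal{X}}}(-1)^{\varXi_{\pi_{v}}}\frac{V^{n_{e_{3,v}}}_{\lambda_{1,v}\lambda_{2,v}\lambda_{3,v}}}{V^{n_{e_{3,v}}}_{\emptyset\emptyset\emptyset}}\big((-1)^{\widetilde{s}(\lambda_{3,v})}\mathbf{q}_{v}\big)\bigg|_{q_{e,0}\to-q_{e,0},\,q\to-q},
\]
i.e. the very formula of Theorem~\ref{PT-partition function} with each $W^{n_{e_{3,v}}}$ replaced by the corresponding normalized DT vertex.

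With this in place the conclusion is immediate once two compatibility points are settled. First I would verify that the global multi-regular restriction is local, namely that $\{\lambda_{e}\}\in\Lambda_{\mathcal{X}}^{mr}$ holds precisely when the distinguished third-leg partition $\lambda_{3,v}=\nu$ along each singular curve is multi-regular at every vertex $v$; this identifies the restricted DT and PT sums as indexed by the same set and guarantees that the hypothesis of Theorem~\ref{mr-DT-PT} is met at each vertex of each surviving term. Applying Theorem~\ref{mr-DT-PT} factorwise then replaces every $V^{n_{e_{3,v}}}_{\lambda_{1,v}\lambda_{2,v}\lambda_{3,v}}/V^{n_{e_{3,v}}}_{\emptyset\emptyset\emptyset}$ by $W^{n_{e_{3,v}}}_{\lambda_{1,v}\lambda_{2,v}\lambda_{3,v}}$, turning the displayed DT expression into $\underline{PT}(\mathcal{X})|_{q_{e,0}\to-q_{e,0},\,q\to-q}$ restricted to $\Lambda_{\mathcal{X}}^{mr}$, which is $PT_{mr}(\mathcal{X})$. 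Hence $DT'_{mr}(\mathcal{X})=PT_{mr}(\mathcal{X})$.

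The main obstacle is not the final substitution but establishing the two inputs that make it legitimate. The delicate one is confirming that the DT and PT gluing laws really share identical edge terms, vertex signs and variable substitutions, so that the vertex is genuinely the only point of difference; this requires the localization and obstruction-theory bookkeeping of Section~2.3 to reproduce exactly the data extracted in Section~3.3, including the signs coming from Proposition~\ref{K-decomposition}. The second, easier, point is the locality of multi-regularity under gluing. As a consistency check I would note that Remark~\ref{DT-PT-correspondence1}, which records the failure of the vertex correspondence for non-multi-regular $\nu$ when $n>1$, confirms that the restriction to $\Lambda_{\mathcal{X}}^{mr}$ is exactly what is needed and that no global cancellation could rescue the excluded terms.
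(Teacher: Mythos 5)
Your proposal is correct and takes essentially the same route as the paper: the paper's proof likewise combines Theorem \ref{PT-partition function}, the gluing formula of [\cite{BCY}, Theorem 10], the vertex-by-vertex factorization $DT_{0}(\mathcal{X})=\prod_{v\in V_{\mathcal{X}}}V^{n_{e_{3,v}}}_{\emptyset\emptyset\emptyset}(\mathbf{q}_{v})\big|_{q_{e_{3,v},0}\to -q_{e_{3,v},0}}$, and Theorem \ref{3-leg correspondence} applied factorwise over multi-regular edge assignments. The two points you flag as delicate (identical edge terms and signs on both sides, and locality of multi-regularity under conjugation and gluing) are exactly what the construction of Section 3.4, modeled on [\cite{BCY}, Section 6.3], guarantees.
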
	
This gives a proof of orbifold DT/PT correspondence [\cite{Zhang}, Conjecture 4.7] for toric CY 3-orbifolds with transverse $A_{n-1}$ singularities.  Restricted on any given multi-regular edge assignment (see Section 2.1), the equality in Corollary \ref{mr DT/PT correspondence} still  holds at the level of formal series  without using the  equality of rational functions proposed in the alternative proof [\cite{BCR}, Theorem A]. We may investigate orbifold GW/DT/PT correspondence and DT  crepant resolution conjecture for other  CY 3-orbifolds satisfying the hard Lefschetz condition such as the ones with singularities of type $D$ and $E$.  

This paper is organized as follows. In Section 2, we recollect some definitions  and properties of toric CY 3-orbifolds and definitions of DT partition functions, and characterize $T$-fixed locus of orbifold PT stable pairs. We devote Section 3 to present the PT orbifold topological vertex formalism, which resolves conjectural parts in [\cite{Zhang}, Conjectures 4.16, 4.19, 4.23]. In Section 4, we generalize the dimer description of the DT topological vertex in [\cite{JWY}, Section 3.2] to the orbifold case and derive three graphical condensation recurrences for orbifold DT theory, one of which generalizes the one in [\cite{JWY}, Section 3.3]. In Section 5, we show that AB configurations and double-dimer configurations as two combinatorial descriptions of  the PT topological vertex in [\cite{JWY}, Section 4.2, 4.3 and 4.4] can be extended to the ones for the  PT $\mathbb{Z}_{n}$-vertex, and obtain graphical condensation recurrences for orbifold PT theory containing the one in [\cite{JWY}, Section 4.5] as a special case. In Section 6, we present a proof of orbifold DT/PT vertex correspondence, which leads to an explicit formula for the PT $\mathbb{Z}_{n}$-vertex in terms of loop Schur functions  and  a proof of multi-regular orbifold DT/PT correspondence for toric CY 3-orbifolds with transverse $A_{n-1}$ singularities.

{\bf Acknowledgements.}
The author would like to thank Patrick Lei, a PhD student of Professor Chiu-Chu Melissa Liu, for bringing to my attention the work of Zhang [\cite{Zhang}],
and thank Xiaowen Hu for his helpful suggestions. The author also would like to thank an anonymous referee for his helpful  suggestions and comments.

\section{Preliminaries}

We first recollect some relevant ingredients of the structure of toric CY 3-orbifolds (with transverse $A_{n-1}$ singularities) in [\cite{BCY}] for formulating PT partition functions in Section 3, and then discuss the relation between $T$-fixed PT stable pairs and $T$-fixed orbifold PT stable pairs, which give the description of $T$-fixed orbifold PT stable pairs in   Section 2.2, see also [\cite{Zhang}, Section 4.6] for comparison as mentioned in Introduction. We also recall definitions of reduced (multi-regular) DT partition functions in [\cite{BCY}] for describing orbifold DT/PT correspondence later.

\subsection{Geometry of toric Calabi-Yau 3-orbifolds}
In this subsection, we briefly recall some relevant geometry of toric Calabi-Yau (CY) 3-orbifolds in [\cite{BCY}]. A CY 3-orbifold  is a smooth quasi-projective Deligne-Mumford stack $\mathcal{X}$ of dimension 3 over $\mathbb{C}$ with generically trivial  stabilizers and its canonical bundle is trivial, i.e., $K_{\mathcal{X}}\cong\mathcal{O}_{\mathcal{X}}$. The local model of  $\mathcal{X}$ at any point $p$ is given by $[\mathbb{C}^3/H_{p}]$, where $H_{p}$ is a finite subgroup of $SL(3,\mathbb{C})$ as the automorphism group of $p$. It is shown in [\cite{BCY}, Lemma 40] that a toric CY 3-orbifold $\mathcal{X}$ is uniquely determined by its coarse moduli space $X$, which is a simplicial toric variety. The Deligne-Mumford torus of $\mathcal{X}$ defined in [\cite{FMN}, Definition 3.1] is isomorphic to $T=(\mathbb{C}^*)^3\subset \mathcal{X}$ with the naturally extended action on $\mathcal{X}$. More generally, one may refer to [\cite{BCS,FMN}] for the theory of toric Deligne-Mumford stacks.

 A toric CY 3-orbifolds $\mathcal{X}$ is determined by its combinatorial data called a web diagram $\Gamma_{\mathcal{X}}$, which is a finite  trivalent planar graph with edges and vertices satisfying certain conditions, see [\cite{BCY}, Appendix B] for more details. Denote by  $V_{\mathcal{X}}$ all vertices of $\Gamma_{\mathcal{X}}$ and by $E_{\mathcal{X}}$ all edges of $\Gamma_{\mathcal{X}}$. Set $\Gamma_{\mathcal{X}}:=\{V_{\mathcal{X}},E_{\mathcal{X}}\}$, where vertices and edges correspond to torus fixed points  and torus invariant curves of $\mathcal{X}$ respectively, and regions divided by edges in the plane correspond to torus invariant divisors. Let $p_{v}$ be the torus fixed point corresponding to $v\in V_{\mathcal{X}}$. Here, $E_{\mathcal{X}}$ may contain some non-compact edges. If $e\in E_{\mathcal{X}}$ is a compact edge corresponding to a torus invariant curve $\mathcal{C}\subset\mathcal{X}$, then  $\mathcal{C}$ is an abelian gerbe [\cite{FMN}]  over a football $\mathbb{P}^1_{l_{0},l_{\infty}}$ constructed from a  $\mathbb{P}^1$ with root construction of order $l_{0}$ and $l_{\infty}$ at $0$ and $\infty$.
 
 Next, we assume that $\mathcal{X}$ is a toric CY 3-orbifold with transverse $A_{n-1}$ singularities, which means that nontrivial orbifold structure is supported on a union of disjoint smooth curves and the stabilizer along each curve is $\mathbb{Z}_{n}$ where $n$ is variable for different curves. Then the local model of $\mathcal{X}$ at a torus fixed point is of the form $[\mathbb{C}^3/\mathbb{Z}_{n}]$ with the generator of $\mathbb{Z}_{n}$ acting on the coordinates with weights $(1,-1,0)$. Denote by $\mathcal{C}_{e}$ the torus invariant curve (line) corresponding to $e$ and by $n_{e}$ the order of the stabilizer on $\mathcal{C}_{e}$ for each $e\in E_{\mathcal{X}}$. 
 
 An orientation on $\Gamma_{\mathcal{X}}$ is defined by taking a choice of direction for each edge and labeling three edges incident to each vertex $v$ by $(e_{1,v}, e_{2,v}, e_{3,v})$ in counterclockwise ordering where if an edge $e$ is incident to a vertex $v$ with $n_{e}>1$, define  $e_{3,v}=e$. On $\Gamma_{\mathcal{X}}$, we define an edge assignment by choosing a partition $\lambda_{e}$ for each edge $e$ with the convention that $\lambda_{e}=\emptyset$ for any non-compact edge.  Denoted by $\Lambda_{\mathcal{X}}$ the set of all edge assignments $\{\lambda_{e}\;|\;e\in E_{\mathcal{X}}\}$. With one edge assignment $\{\lambda_{e}\}$, we define a triple of partitions $(\lambda_{1,v},\lambda_{2,v},\lambda_{3,v})$ for each vertex $v$ by taking $\lambda_{i,v}=\lambda_{e_{i,v}}$ for  $e_{i,v}$ oriented outward from $v$ and $\lambda_{i,v}=\lambda_{e_{i,v}}^\prime$ for $e_{i,v}$ with the opposite orientation. Here, $\lambda^\prime=\{(j,i)\;|\;(i,j)\in\lambda\}$ is the conjugate of $\lambda$. This orientation convention is used to guarantee the compatibility between edge partitions and triple vertex outgoing partitions  in Section 3. We call an edge assignment  $\{\lambda_{e}\}$ multi-regular if for each $e\in E_{\mathcal{X}}$, we have $|\lambda_{e}|_{l}=\frac{1}{n}|\lambda|$ for all $l\in\{0,1,\cdots,n-1\}$ where $|\lambda_{e}|_{l}$ is defined in Section 3.4, and each partition $\lambda_{e}$ is called multi-regular.
 
 According to the orientation of $e$, let $\mathcal{D}_{e}$ and $\mathcal{D}_{e}^\prime$ be torus invariant divisors corresponding to the right and left regions incident to  $e$ respectively, see Figure \ref{figure1}. Denote by $p_{0,e}$ and $p_{\infty,e}$ the torus fixed points corresponding to the initial  and final vertices $v_{0}$, $v_{\infty}$ adjacent to $e$. Let $\mathcal{D}_{0,e}$ and $\mathcal{D}_{\infty,e}$ be the torus invariant divisors intersecting $\mathcal{C}_{e}$ transversely at $p_{0,e}$ and $p_{\infty,e}$. 
 Let $\mathcal{D}_{1,v}$, $\mathcal{D}_{2,v}$ and $\mathcal{D}_{3,v}$ be torus invariant divisors corresponding to the regions opposite to the edges $e_{1,v}$, $e_{2,v}$ and  $e_{3,v}$. Let $p_{e}$ be a generic point on $\mathcal{C}_{e}$. As in [\cite{BCY}], we assume that for each edge $e$ that
 \ben
 &&\mathcal{O}_{\mathcal{C}_{e}}(\mathcal{D}_{e})=\mathcal{O}_{\mathcal{C}_{e}}(m_{e} p_{e}-\delta_{0,e}p_{0,e}-\delta_{\infty,e}p_{\infty,e}),\\
 &&\mathcal{O}_{\mathcal{C}_{e}}(\mathcal{D}_{e}^\prime)=\mathcal{O}_{\mathcal{C}_{e}}(m^\prime_{e} p_{e}-\delta_{0,e}^\prime p_{0,e}-\delta_{\infty,e}^\prime p_{\infty,e})
 \een
where $\delta_{0,e}=1$ if the edge $f_{v_{0}}:=\mathcal{D}_{e}\cap\mathcal{D}_{0,e}$ is labelled by $e_{3,v_{0}}$ and $\delta_{0,e}=0$ otherwise, and $\delta^\prime_{0,e}=1$ if the edge $f^\prime_{v_{0}}:=\mathcal{D}^\prime_{e}\cap\mathcal{D}_{0,e}$ is labelled by $e_{3,v_{0}}$ and $\delta^\prime_{0,e}=0$ otherwise. Let $g_{v_{\infty}}:=\mathcal{D}_{e}\cap\mathcal{D}_{\infty,e}$  and $g^\prime_{v_{\infty}}:=\mathcal{D}_{e}^\prime\cap\mathcal{D}_{\infty,e}$. One can define $\delta_{\infty,e}, \delta_{\infty,e}^\prime$ corresponding to $g_{v_{\infty}}, g^\prime_{v_{\infty}}$ similarly. Then we have
\ben
&&\deg\mathcal{O}_{\mathcal{C}_{e}}(\mathcal{D}_{e})=m_{e}-\frac{\delta_{0,e}}{n_{f_{v_{0}}}}-\frac{\delta_{\infty,e}}{n_{g_{v_{\infty}}}},\\
&&\deg\mathcal{O}_{\mathcal{C}_{e}}(\mathcal{D}^\prime_{e})=m_{e}^\prime-\frac{\delta^\prime_{0,e}}{n_{f^\prime_{v_{0}}}}-\frac{\delta^\prime_{\infty,e}}{n_{g^\prime_{v_{\infty}}}}.
\een
The Calabi-Yau condition implies that
\ben
m_{e}+m_{e}^\prime-\delta_{0,e}-\delta_{0,e}^\prime-\delta_{\infty,e}-\delta^\prime_{\infty,e}+2=0.
\een

Let $K(\mathcal{X})$ be the Grothendieck group of compactly supported coherent sheaves on $\mathcal{X}$ modulo the numerical equivalence $\sim$, where $\mathcal{G}_{1}\sim \mathcal{G}_{2}$
if $\chi(\mathcal{G}_{1}\otimes\mathcal{H})=\chi(\mathcal{G}_{2}\otimes\mathcal{H})$ for any locally free sheave $\mathcal{H}$ on $\mathcal{X}$.
For $0\leq d\leq 3$, let $F_{d}K(\mathcal{X})$ be the subgroup of those   elements in $FK(\mathcal{X})$ with support of dimension at most $d$. It is shown in  [\cite{BCY}, Section 3.3] that  classes in $F_{1}K(\mathcal{X})$ are generated by $[\mathcal{O}_{p}]$ for a generic point $p$ in $\mathcal{X}$, $[\mathcal{O}_{p_{e}}\otimes\rho_{k}]$ for any (compact or not) edge $e\in E_{\mathcal{X}}$, and $[\mathcal{O}_{\mathcal{C}_{e}}(-1)\otimes\rho_{k}]$ for any compact edge $e\in E_{\mathcal{X}}$.
Here, $\rho_{k}$, $0\leq k< n_{e}$, are the irreducible representations of $\mathbb{Z}_{n_{e}}$ with the convention that $\mathcal{O}_{p_{e}}(-k\mathcal{D}_{e})\cong\mathcal{O}_{p_{e}}\otimes\rho_{k}$, and the classes $[\mathcal{O}_{p}], [\mathcal{O}_{p_{e}}\otimes\rho_{k}]$ and $[\mathcal{O}_{\mathcal{C}_{e}}(-1)\otimes\rho_{k}]$ are associated to variables $q$, $q_{e,k}$ and $v_{e,k}$ respectively. Among all these generators for $F_{1}K(\mathcal{X})$, we have the relation 
\ben
[\mathcal{O}_{p}]=[\mathcal{O}_{p_{e}}\otimes R_{reg}]=\sum_{k=0}^{n_{e}-1}[\mathcal{O}_{p_{e}}\otimes\rho_{k}]
\een
with the associated relation in variables $q=\prod\limits_{k=0}^{n_{e}-1}q_{e,k}$ for each edge $e\in E_{\mathcal{X}}$, where $R_{reg}=\sum\limits_{k=0}^{n_{e}-1}\rho_{k}$ is the regular representation of $\mathbb{Z}_{n_{e}}$.

\subsection{Orbifold PT stable pairs}
In this subsection, we will recall moduli spaces of orbifold PT stable pairs in [\cite{Lyj1,Lyj2}] and show that every orbifold PT stable pair has the similar characterization as in [\cite{PT2}].
Let  $\mathcal{X}$ be a 3-dimensional smooth projective Delinge-Mumford stack over $\mathbb{C}$.  The definition of every group in the natural filtration $F_{0}K(\mathcal{X})\subset F_{1}K(\mathcal{X})\subset F_{2}K(\mathcal{X})\subset F_{3}K(\mathcal{X})=K(\mathcal{X})$ is the same as the one in Section 2.1. Given $\beta\in F_{1}K(\mathcal{X})$, let $\mathrm{PT}(\mathcal{X},\beta)$ be the moduli space of orbifold PT stable pairs $\varphi: \mathcal{O}_{\mathcal{X}}\to\mathcal{F}$ with $[\mathcal{F}]=\beta$, where $\mathcal{F}$ is pure of dimension one and $\varphi$ has finite 0-dimensional  cokernel. Here,  $\mathrm{PT}(\mathcal{X},\beta)$ is denoted by $\mathrm{PT}^\beta_{\mathcal{X}/\mathbb{C}}$ in [\cite{Lyj2}, Remark 5.6], which  is a projective scheme over $\mathbb{C}$. And $\mathrm{PT}(\mathcal{X},\beta)$ is a fine moduli space, which carries a perfect obstruction theory and has a virtual fundamental class  $[\mathrm{PT}(\mathcal{X},\beta)]^{vir}$, see [\cite{Lyj1}, Theorem 5.20 and 5.21] for more details. Since $\dim\mathrm{Supp}\mathcal{F}=1$, we have  $\beta\notin F_{0}K(\mathcal{X})$ implicitly throughout this paper.

For an orbifold PT stable pair $\varphi: \mathcal{O}_{\mathcal{X}}\to\mathcal{F}$, as in [\cite{PT1}, Section 1.3], we have the following exact sequence
\ben
0\to\mathcal{I}_{\mathcal{C}_{\mathcal{F}}}\to\mathcal{O}_{\mathcal{X}}\to\mathcal{F}\to\mathrm{coker}\,\varphi\to0,
\een
where $\mathcal{C}_{\mathcal{F}}=\mathrm{Supp}(\mathcal{F})\subset\mathcal{X}$ is a stacky curve, and $\mathcal{I}_{\mathcal{C}_{\mathcal{F}}}$ is the ideal sheaf of $\mathcal{C}_{\mathcal{F}}$ in $\mathcal{X}$. Here,  $\mathrm{Im}\,\varphi\subset\mathcal{F}$  is equal to the structure sheaf $\mathcal{O}_{\mathcal{C}_{\mathcal{F}}}$ as a quotient of $\mathcal{O}_{\mathcal{X}}$, and is also a pure
sheaf which implies that locally in the $\acute{e}$tale topology $\mathcal{O}_{\mathcal{C}_{\mathcal{F}}}$ is a Cohen-Macaulay module by [\cite{BS}, Appendix C] and [\cite{Serre}, IV-B]. Then $\mathcal{C}_{\mathcal{F}}$ is Cohen-Macaulay. Set $\mathtt{Q}=\mathrm{coker}\,\varphi$. Denote by $\mathrm{Supp}^{red}(\mathtt{Q})$ the finite $0$-dimensional reduced closed substack support of  $\mathtt{Q}$. Then $\mathrm{Supp}^{red}(\mathtt{Q})\subset\mathcal{C}_{\mathcal{F}}$.
Let $\mathfrak{m}\subset \mathcal{O}_{\mathcal{C}_{\mathcal{F}}}$ be the ideal sheaf of a finite $0$-dimensional closed substack of $\mathcal{C}_{\mathcal{F}}$. It can be shown  as in [\cite{PT1}, Proposition 1.8] that an orbifold PT stable pair $\varphi: \mathcal{O}_{\mathcal{X}}\to\mathcal{F}$ (or denoted by $(\mathcal{F},\varphi)$) with support $\mathcal{C}_{\mathcal{F}}$ satisfying $\mathrm{Supp}^{red}(\mathtt{Q})\subset\mathrm{Supp}(\mathcal{O}_{\mathcal{C}_{\mathcal{F}}}/\mathfrak{m})$ is equivalent to a subsheaf of $\mathcal{H}om(\mathfrak{m}^r,\mathcal{O}_{\mathcal{C}_{\mathcal{F}}})/\mathcal{O}_{\mathcal{C}_{\mathcal{F}}}$ for $r\gg0$.
As in the proof of  [\cite{PT1}, Proposition 1.8], $\mathtt{Q}$ can be viewed as a coherent subsheaf of 
\ben
\lim\limits_{\longrightarrow}\mathcal{H}om(\mathfrak{m}^r,\mathcal{O}_{\mathcal{C}_{\mathcal{F}}})/\mathcal{O}_{\mathcal{C}_{\mathcal{F}}}
\een
and hence $\mathcal{F}$ can be viewed as a coherent subsheaf of $\lim\limits_{\longrightarrow}\mathcal{H}om(\mathfrak{m}^r,\mathcal{O}_{\mathcal{C}_{\mathcal{F}}})$ such that we have  the following short exact sequence 
\ben
0\to\mathcal{O}_{\mathcal{C}_{\mathcal{F}}}\to\mathcal{F}\to\mathtt{Q}\to0.
\een

Let $\mathcal{X}$ be a toric CY 3-orbifold.  We can compactify $\mathcal{X}$ to  the projective one $\overline{\mathcal{X}}$
and use the $T$-equivariant residue to define the PT invariants of $\mathcal{X}$ similarly as in  [\cite{OP1,MPT,Zhou2}]  due to the compactness of $T$-fixed locus $\mathrm{PT}(\mathcal{X},\beta)^\mathrm{T}$. One may view 
$\mathrm{PT}(\mathcal{X},\beta)$ as a quasi-projective moduli scheme as the restriction of $\mathrm{PT}(\overline{\mathcal{X}},\beta)$ by the equivalence of descriptions of (quasi)-projective Deligne-Mumford stacks [\cite{Kre}, Theorem 5.3 and Corollary 5.4].  Now we follow [\cite{PT2}, Section 2] to give a description of $T$-fixed orbifold PT stable pair. Let $[\mathcal{O}_{\mathcal{X}}\to\mathcal{F}]\in \mathrm{PT}(\mathcal{X},\beta)^\mathrm{T}$ be a $T$-fixed orbifold PT stable pair. Assume $\mathtt{Q}$ is the cokernel of $\varphi:\mathcal{O}_{\mathcal{X}}\to\mathcal{F}$. Then the sheaf $\mathcal{F}$ is supported on torus invariant curves and $\mathtt{Q}$ must be supported on torus fixed points.  Let $\mathcal{X}^T=\{p_{\alpha}\}$ be all the torus fixed points in $\mathcal{X}$. By [\cite{BCY}, Lemma 46], for each torus fixed point $p_{\alpha}$, there is a $T$-invariant open neighborhood $\mathcal{U}_{\alpha}=[\mathbb{C}^3/G_{\alpha}]$. Here, the finite subgroup $G_{\alpha}\subset T$ acts on $\mathbb{C}^3$ by $(z_{1}, z_{2}, z_{3})\mapsto (t_{1}z_{1},t_{2}z_{2},t_{3}z_{3})$ where $(t_{1},t_{2},t_{3})\in G_{\alpha}$ and $(z_{1}, z_{2}, z_{3})\in\mathbb{C}^3$.
Denote by $\varphi_{\alpha}: \mathcal{O}_{\mathcal{U}_{\alpha}}\to\mathcal{F}_{\alpha}$  the restriction of $\varphi: \mathcal{O}_{\mathcal{X}}\to\mathcal{F}$ on $\mathcal{U}_{\alpha}$,  then  $\varphi_{\alpha}$ is a $T$-invariant section of $T$-invariant sheaf $\mathcal{F}_{\alpha}$. Let $\mathcal{C}_{\alpha}$ be  the support of $\mathcal{F}_{\alpha}$. Assume that $\mathcal{C}_{\alpha}$ is nonempty, then $\mathcal{C}_{\alpha}$  is also Cohen-Macaulay and  supported on  at least one of three abelian gerbes over their corresponding footballs [\cite{FMN}] restricted in $\mathcal{U}_{\alpha}$. These three abelian gerbes along coordinate directions of $\mathcal{U}_{\alpha}$ intersect at the $T$-fixed point $[(0,0,0)/G_{\alpha}]\in\mathcal{C}_{\alpha}$.
Let $\mathtt{Q}_{\alpha}=\mathrm{coker}\,\varphi_{\alpha}$, we have the following exact sequence
\ben
0\to\mathcal{I}_{\mathcal{C}_{\alpha}}\to\mathcal{O}_{\mathcal{U}_{\alpha}}\to\mathcal{F}_{\alpha}\to\mathtt{Q}_{\alpha}\to0.
\een

Since $\mathcal{U}_{\alpha}=[\mathbb{C}^3/G_{\alpha}]$, coherent sheaves $\mathcal{I}_{\mathcal{C}_{\alpha}}$, $\mathcal{O}_{\mathcal{C}_{\alpha}}$, $\mathcal{O}_{\mathcal{U}_{\alpha}}$, $\mathcal{F}_{\alpha}$ and $\mathtt{Q}_{\alpha}$ are equivalent to the corresponding $G_{\alpha}$-equivariant sheaves  $\mathcal{I}_{C_{\alpha}}$, $\mathcal{O}_{C_{\alpha}}$, $\mathcal{O}_{U_{\alpha}}$, $F_{\alpha}$ and $\mathcal{Q}_{\alpha}$ on $U_{\alpha}=\mathbb{C}^3$  respectively by [44, Example 7.21]. Here $F_{\alpha}$  is  supported on  $C_{\alpha}\subset U_{\alpha}=\mathbb{C}^3$, which is corresponding to  $\mathcal{C}_{\alpha}\subset\mathcal{U}_{\alpha}$. And $C_{\alpha}$ is  $T$-fixed subscheme of pure dimension one and hence is Cohen-Macaulay. By the definition of (orbifold) PT stable pairs, an orbifold PT stable pair $\mathcal{O}_{\mathcal{U}_{\alpha}}\to\mathcal{F}_{\alpha}$ is equivalent to $G_{\alpha}$-equivariant PT stable pair  $\mathcal{O}_{U_{\alpha}}\to F_{\alpha}$ fitting into the short exact sequence $0\to\mathcal{I}_{C_{\alpha}}\to\mathcal{O}_{U_{\alpha}}\to F_{\alpha}\to\mathcal{Q}_{\alpha}\to0$ on $U_{\alpha}$  corresponding to [\cite{PT2}, (2-3)].
Since $\mathtt{Q}_{\alpha}$ has finite zero-dimensional support  at the origin of  $\mathcal{U}_{\alpha}=[\mathbb{C}^3/G_{\alpha}]$, then $\mathtt{Q}_{\alpha}$ can be viewed as a finite dimensional representation $\mathcal{Q}_{\alpha}$ of $G_{\alpha}$ by [\cite{CG}, Lemma 5.1.23 or Section 5.2.1].

By [\cite{PT1}, Proposition 1.8], a $T$-fixed orbifold PT stable pair $\varphi_{\alpha}: \mathcal{O}_{\mathcal{U}_{\alpha}}\to\mathcal{F}_{\alpha}$   corresponds to the $G_{\alpha}$-equivariant subsheaf of 
\ben
\lim\limits_{\longrightarrow}\mathcal{H}om(\mathbf{m}_{\alpha}^r,\mathcal{O}_{C_{\alpha}})/\mathcal{O}_{C_{\alpha}}
\een
on $U_{\alpha}=\mathbb{C}^3$, where $\mathbf{m}_{\alpha}$ is the ideal sheaf of $(0,0,0)$ in $C_{\alpha}\subset\mathbb{C}^3$.

To characterize  orbifold PT stable pairs, we now breifly recall some notation for the description of PT stable pairs $\mathcal{O}_{U_{\alpha}}\to F_{\alpha}$ in [\cite{PT2}]. Since $C_{\alpha}$ is $T$-fixed and Cohen-Macaulay, then $\mathcal{I}_{C_{\alpha}}$ is a monomial ideal in $\mathbb{C}[x_{1},x_{2},x_{3}]$ which determines a 3D partition $\pi$ given by the union of the infinite cylinders on the three nonnegative axes with cross-sections $\lambda, \mu, \nu$. As $\dim C_{\alpha}=1$, at least one of  $\lambda, \mu, \nu$ is nonempty. Let $\mathbf{M}=\bigoplus_{i=1}^3 \mathbf{M}_{i}$
 with 
 \ben
&&\mathbf{M}_{1}=\mathbb{C}[x_{1},x_{1}^{-1}]\otimes\frac{\mathbb{C}[x_{2},x_{3}]}{\lambda[x_{2},x_{3}]},\\
&&\mathbf{M}_{2}=\mathbb{C}[x_{2},x_{2}^{-1}]\otimes\frac{\mathbb{C}[x_{3},x_{1}]}{\mu[x_{3},x_{1}]},\\
&&\mathbf{M}_{3}=\mathbb{C}[x_{3},x_{3}^{-1}]\otimes\frac{\mathbb{C}[x_{1},x_{2}]}{\nu[x_{1},x_{2}]},
\een
where $\lambda[x_{2},x_{3}]$, $\mu[x_{3},x_{1}]$ and $\nu[x_{1},x_{2}]$ are  monomial ideals determined by partitions $\lambda$, $\mu$ and $\nu$ respectively as in  [\cite{PT2}, Section 2.2] .

By the similar argument in [\cite{PT2}, Section 2],  the $T$-fixed orbifold PT stable pair $(\mathcal{F}_{\alpha},\varphi_{\alpha})$ corresponds to a finitely generated $T$-invariant $\mathbb{C}[x_{1},x_{2},x_{3}]$-submodule 
\ben
\mathcal{Q}_{\alpha}\subset \mathbf{M}/\langle(1,1,1)\rangle
\een
 as a $G_{\alpha}$-module (or a representation of $G_{\alpha}$), and every finitely generated $T$-invariant $\mathbb{C}[x_{1},x_{2},x_{3}]$-submodule $\mathcal{Q}_{\alpha}\subset \mathbf{M}/\langle(1,1,1)\rangle$ as  a $G_{\alpha}$-module   can be viewed as the restriction of a $T$-fixed orbifold PT stable pair on $\mathcal{X}$ to $[\mathbb{C}^3/G_{\alpha}]$. Here the $i$-th component of $(1,1,1)$ is the canonical $T$-invariant element in $\mathbf{M}_{i}$, and it should be intepreted as 0 if $\mathbf{M}_{i}$ is 0. In [\cite{PT2}],   $\mathbf{M}/\langle(1,1,1)\rangle$ is also denoted by  $\mathbf{M}/\mathcal{O}_{C_{\alpha}}$. It is shown in [\cite{PT2}] that a connected component of the moduli space of $T$-invariant $\mathbb{C}[x_{1},x_{2},x_{3}]$-submodules of $\mathbf{M}/\langle(1,1,1)\rangle$ (or $\mathbf{M}/\mathcal{O}_{C_{\alpha}}$) is a product of $\mathbb{P}^1$'s. This implies that a connected component $\mathsf{Q}$ of $T$-fixed locus  of $\mathrm{PT}(\mathcal{X},\beta)$ is also a product of $\mathbb{P}^1$'s, which is projective and nonsingular. 
As in the proof of [\cite{PT2}, Proposition 3], the component $\mathsf{Q}$ is corresponding to some connected component of moduli space of the labelling of a collection of labelled box configurations, see Lemma \ref{PT-fix} for more details. See also [\cite{Zhang}, Lemma 4.21] for comparison.

\subsection{The DT partition function}
We briefly recall several DT partition functions in [\cite{BCY}]. Let $\mathcal{X}$ be a toric CY 3-orbifold with transverse $A_{n-1}$ singularities. Let $F_{mr}K(\mathcal{X})$ be the multi-regular part of $F_{1}K(\mathcal{X})$, each of whose elements is represented by some coherent sheaf such that  the associated representation of the stabilizer group at the generic point is a multiple of the regular representation, see [\cite{BCY}, Section 4.1] for more details. As $F_{0}K(\mathcal{X})\subset F_{mr}K(\mathcal{X})\subset F_{1}K(\mathcal{X})$, we define 
\ben
DT(\mathcal{X})=\sum_{\alpha\in F_{1}K(\mathcal{X})}DT_{\alpha}(\mathcal{X})q^\alpha;\;\;
DT_{mr}(\mathcal{X})=\sum_{\alpha\in F_{mr}K(\mathcal{X})}DT_{\alpha}(\mathcal{X})q^\alpha;\;\;
DT_{0}(\mathcal{X})=\sum_{\alpha\in F_{0}K(\mathcal{X})}DT_{\alpha}(\mathcal{X})q^\alpha
\een
and 
\ben
DT^\prime(\mathcal{X})=\frac{DT(\mathcal{X})}{DT_{0}(\mathcal{X})};\;\;\;\;DT_{mr}^\prime(\mathcal{X})=\frac{DT_{mr}(\mathcal{X})}{DT_{0}(\mathcal{X})}
\een
where $DT_{\alpha}(\mathcal{X})$ is defined by the weighted Euler characteristic of $\mathrm{Hilb}^\alpha(\mathcal{X})$ in [\cite{BCY}, Definition 1], and for some choice of a basis $\mathfrak{b}_{1},\cdots,\mathfrak{b}_{l}$ of $F_{1}K(\mathcal{X})$ and $\alpha=\sum_{i=1}^l c_{i} \mathfrak{b}_{i}$, we have $q^\alpha=q_{1}^{c_{1}}\cdots q_{l}^{c_{l}}$ and the similar definition of $q^\alpha$ for $\alpha\in F_{mr}K(\mathcal{X})$ and $\alpha\in F_{0}K(\mathcal{X})$. The explicit formula for $DT(\mathcal{X})$ is obtained as a sum over all edge assignments in [\cite{BCY}, Theorem 10] while $DT_{mr}(\mathcal{X})$ is the sum over all multi-regular edge assignments.

\section{The orbifold PT topological vertex}
The goal of this section is to derive the PT topological vertex formalism (see Theorem \ref{PT-partition function}) for toric CY 3-orbifolds with transverse $A_{n-1}$ singularities, which is also obtained by Zhang [\cite{Zhang}, Theorem/Conjecture 4.19] mentioned in Introduction where we  point out some differences between ours. In [\cite{JWY}], the authors prove the nonsingularity of $T_{0}$-fixed PT stable pairs, which is useful  for defining the PT partition function (see Formula \eqref{PT-formula}) in Section 3.3 via virtual localization. This  completes the proof of [\cite{Zhang}, Conjecture 4.23] (and [\cite{Zhang}, Proposition/Conjecture 4.16]) and hence the proof of [\cite{Zhang}, Theorem/Conjecture 4.19] for the full 3-leg case. For completeness and comparison, we present our argument here and suggest the reader also refer to [\cite{Zhang}, Section 4] for his argument.  
We first recall  the (orbifold)  DT and PT topological vertices and  the (orbifold) DT/PT vertex correspondence in Section 3.1. We explicitly present the proof of $K$-theory class of the finite cokernel $\mathtt{Q}$ as a sum over vertex terms in Proposition \ref{K-decomposition} which is important for deriving the sign formula  in Theorems \ref{sign-formula} and \ref{sign-formula2} and hence PT partition functions in Section 3.4 following the method in [\cite{BCY}, Section 6].

\subsection{Partitions, labelled box configurations and  topological vertices}
In this subsection, we recall  definitions of 3D partitions and the (orbifold) DT  topological vertex in [\cite{BCY}, Section 3.1], and then recollect labelled box configurations and the PT topological vertex defined in [\cite{PT2}], and finally give the definition of orbifold  PT topological vertex that will be used later.
\begin{definition}([\cite{BCY}, Definition 4])
Let $\lambda, \mu, \nu$ be three ordinary 2D partitions.  Let  $\pi$ be  the subset of $(\mathbb{Z}_{\geq0})^3$ satisfy the following conditions:\\
$(i)$ for $i, j, k\geq0$, if any of $(i+1,j,k)$, $(i,j+1,k)$, $(i,j,k+1)$ is in $\pi$, then $(i,j,k)$ is also in $\pi$;\\
$(ii)$ three 2D partitions $\lambda, \mu, \nu$ are associated to $\pi$ such that $(a)$ $(j,k)\in\lambda$ if and only if $(i,j,k)\in\pi$ for all $i\gg0$, $(b)$ $(k,i)\in\mu$ if and only if $(i,j,k)\in\pi$ for all $j\gg0$, $(c)$ $(i,j)\in\nu$ if and only if $(i,j,k)\in\pi$ for all $k\gg0$;\\
Then we call $\pi$ a 3D partition asymptotic to $(\lambda,\mu,\nu)$. We say  $\{(i,j,k)| (j,k)\in\lambda\}$, $\{(i,j,k)| (k,i)\in\mu\}$, and $\{(i,j,k)| (i,j)\in\nu\}$ the leg of $\pi$ in the first, second and third direction respectively.
\end{definition}
For the triple of ordinary 2D partitions $(\lambda,\mu,\nu)$, we employ some notation in  [\cite{JWY}, Section 2] and [\cite{PT2}, Section 2.5]. Let 
\ben
&&\mathrm{Cyl}_{1}=\{(i,j,k)\in\mathbb{Z}^3| (j,k)\in\lambda\},\\
&&\mathrm{Cyl}_{2}=\{(i,j,k)\in\mathbb{Z}^3| (k,i)\in\mu\},\\
&&\mathrm{Cyl}_{3}=\{(i,j,k)\in\mathbb{Z}^3| (i,j)\in\nu\}.
\een
For $1\leq l\leq3$, let $\mathrm{Cyl}_{l}^+=(\mathbb{Z}_{\geq0})^3\cap\mathrm{Cyl}_{l}$
 and $\mathrm{Cyl}_{l}^{-}=\mathrm{Cyl}_{l}\backslash \mathrm{Cyl}_{l}^+$. Set
 \ben
&&\mathrm{I}^{-}=\mathrm{Cyl}_{1}^{-}\cup\mathrm{Cyl}_{2}^{-}\cup\mathrm{Cyl}_{3}^{-},\\
&&\mathrm{I}^{+}=\left(\mathrm{Cyl}_{1}^{+}\cup\mathrm{Cyl}_{2}^{+}\cup\mathrm{Cyl}_{3}^{+}\right)\backslash(\mathrm{II}\cup\mathrm{III}),\\
&&\mathrm{II}_{\hat{1}}=\left(\mathrm{Cyl}_{2}\cap\mathrm{Cyl}_{3}\right)\backslash\mathrm{Cyl}_{1},\\ 
&&\mathrm{II}_{\hat{2}}=\left(\mathrm{Cyl}_{1}\cap\mathrm{Cyl}_{3}\right)\backslash\mathrm{Cyl}_{2},\\ 
&&\mathrm{II}_{\hat{3}}=\left(\mathrm{Cyl}_{1}\cap\mathrm{Cyl}_{2}\right)\backslash\mathrm{Cyl}_{3},\\ 
&&\mathrm{II}=\mathrm{II}_{\hat{1}}\cup\mathrm{II}_{\hat{2}}\cup\mathrm{II}_{\hat{3}},\\
&&\mathrm{III}=\mathrm{Cyl}_{1}\cap\mathrm{Cyl}_{2}\cap\mathrm{Cyl}_{3}.
 \een
Notice that $\mathrm{Cyl}_{l}$,  $\mathrm{Cyl}_{l}^{\pm}$, $\mathrm{I}^{\pm}$, $\mathrm{II}_{\hat{l}}$, $\mathrm{II}$, and  $\mathrm{III}$ are defined associated to $(\lambda,\mu,\nu)$, we also write them as $\mathrm{Cyl}_{l}(\lambda,\mu,\nu)$, $\mathrm{Cyl}_{l}^{\pm}(\lambda,\mu,\nu)$, $\mathrm{I}^{\pm}(\lambda,\mu,\nu)$, $\mathrm{II}_{\hat{l}}(\lambda,\mu,\nu)$, $\mathrm{II}(\lambda,\mu,\nu)$, and  $\mathrm{III}(\lambda,\mu,\nu)$ respectively.
Let $\pi$ be a 3D partition asymptotic to $(\lambda,\mu,\nu)$. Then $\mathrm{Cyl}_{l}^+$ is the leg of $\pi$ in the $l$-th direction defined above. Define the renormalized volume of $\pi$ as
\ben
\Vert\pi\Vert=\sum_{(i,j,k)\in\pi}\xi_{\pi}(i,j,k)
\een
where
\ben
\xi_{\pi}(i,j,k)=1-|\{l\in\{1,2,3\}|(i,j,k)\in\mathrm{Cyl}_{l}^+\}|.
\een
Then $\Vert\pi\Vert=|\pi\backslash(\mathrm{I}^{+}\cup\mathrm{II}\cup\mathrm{III})|-|\mathrm{II}|-2|\mathrm{III}|$ as in [\cite{JWY}, Section 3.1].
Let $\mathcal{P}(\lambda,\mu,\nu)$ be the set of all 3D partitions $\pi$ asymptotic to $(\lambda,\mu,\nu)$. Now we have the topological vertex defined by Okounkov, Reshetikhin and Vafa as follows.
\begin{definition}([\cite{BCY}, Definition 5]) \label{DT-vertex}
The $\mathrm{DT}$ topological vertex is defined by
\ben
V_{\lambda\mu\nu}=\sum_{\pi\in \mathcal{P}(\lambda,\mu,\nu)}q^{\Vert\pi\Vert}.
\een
\end{definition}
\begin{remark}
To avoid the confusion, we employ the notation $\Vert\cdot\Vert$ to denote the renormalized volume rather than $|\cdot|$ in [\cite{BCY}], which has been used to denote the number of elements in some set.
\end{remark}	
Assume that a finite abelian group $G$ acts on $\mathbb{C}^3$
with  characters $r_{1}, r_{2}, r_{3}$ for three coordinate lines. Let $\widehat{G}$ be the set of all the characters of irreducible representations of $G$, i.e, the set of all nonzero complex-valued functions on $G$. The DT $G$-vertex is defined in [\cite{BCY}] by
\ben
V_{\lambda\mu\nu}^G=\sum_{\pi\in\mathcal{P}(\lambda,\mu,\nu)}\prod_{r\in\widehat{G}}q_{r}^{\Vert\pi\Vert_{r}}
\een
where 
\ben
\Vert\pi\Vert_{r}=\sum_{\substack{(i,j,k)\in\pi\\r_{1}^i r_{2}^j r_{3}^k=r}}\xi_{\pi}(i,j,k),
\een
is the renormalized  number of boxes $(i,j,k)\in\pi$ with color $r\in\widehat{G}$. Roughly speaking, the orbifold topolocial vertex $V_{\lambda\mu\nu}^G$ counts the 3D partitions colored with elements of $\widehat{G}$.

If $G=\mathbb{Z}_{n}$, we have 
\begin{definition}([\cite{BCY}, Definition 6])\label{orbifoldDT-vertex}
The $\mathrm{DT}$ $\mathbb{Z}_{n}$-vertex is defined by
\ben
V_{\lambda\mu\nu}^n=\sum_{\pi\in\mathcal{P}(\lambda,\mu,\nu)}q_{0}^{\Vert\pi\Vert_{0}}\cdots q_{n-1}^{\Vert\pi\Vert_{n-1}}
\een
where for $0\leq l< n$,
\ben
\Vert\pi\Vert_{l}=\sum_{\substack{(i,j,k)\in\pi\\ i-j=l \; \mathrm{mod} \; n}}\xi_{\pi}(i,j,k)
\een
is the renormalized number of boxes $(i,j,k)\in\pi$ with color $i-j=l\mbox{ mod }n$.
\end{definition}
\begin{remark}\label{symmetry1}
There are  symmetries for the (orbifold) DT toplogical vertex [\cite{BCY}, Section 3.1] as follows
\ben
&&V_{\lambda\mu\nu}=V_{\mu^\prime\lambda^\prime\nu^\prime};\;\;V_{\lambda\mu\nu}=V_{\lambda^\prime\nu^\prime\mu^\prime};\;\;V_{\lambda\mu\nu}=V_{\nu^\prime\mu^\prime\lambda^\prime};\\
&&V_{\lambda\mu\nu}^n(q_{0},q_{1},\cdots,q_{n-1})=V^n_{\mu^\prime\lambda^\prime\nu^\prime}(q_{0},q_{n-1},\cdots,q_{1}), \;\;\;\mbox{i.e.,} \;\;\;V_{\lambda\mu\nu}^n=\overline{V}^n_{\mu^\prime\lambda^\prime\nu^\prime}.
\een
where the overline denotes the exchange of variables $q_{k}\leftrightarrow q_{-k}$ with the subscripts in $\mathbb{Z}_{n}$.
\end{remark}

The labelled box configurations are introduced in [\cite{PT2}] in order to describe the PT topological vertex. It is shown in [\cite{PT2}, Section 2.5] that a finitely generated $T$-invariant $\mathbb{C}[x_{1},x_{2},x_{3}]$-submodule of $\mathbf{M}/\langle(1,1,1)\rangle$ associated to $(\lambda,\mu,\nu)$ defined in Section 2.2  corresponds to a labelled box configuration with outgoing partitions $(\lambda,\mu,\nu)$ defined as follows. 
\begin{definition}([\cite{PT2}, Section 2.5])
A labelled box configuration with outgoing partitions $(\lambda,\mu,\nu)$ is defined as a finite subset of $\mathrm{I}^{-}\cup\mathrm{II}\cup\mathrm{III}$ whose elements are called boxes, where a type $\mathrm{III}$ box $w$ may be labelled by an element of 
\ben
\mathbb{P}^1=\mathbb{P}\left(\frac{\mathbb{C}\cdot\mathbf{1}_{w}\oplus\mathbb{C}\cdot\mathbf{2}_{w}\oplus\mathbb{C}\cdot\mathbf{3}_{w}}{\mathbb{C}\cdot(1,1,1)_{w}}\right).
\een
These boxes satisfy the following rule:\\
$(i)$ If $w=(w_{1}, w_{2}, w_{3})\in\mathrm{I}^{-}$ and any of 
\ben
(w_{1}-1,w_{2},w_{3}),\;\;(w_{1},w_{2}-1,w_{3}),\;\;(w_{1},w_{2},w_{3}-1)
\een
is a box, then $w$ must be a box.\\
$(ii)$ If $w\in\mathrm{II}_{\hat{i}}$ and any of 
\ben
(w_{1}-1,w_{2},w_{3}),\;\;(w_{1},w_{2}-1,w_{3}),\;\;(w_{1},w_{2},w_{3}-1)
\een
is a box other than a type $\mathrm{III}$ box labelled by the 1-dimensional subspace $\mathbb{C}\cdot \mathbf{i}$, then  $w$ must be a box.\\
$(iii)$ If $w\in\mathrm{III}$ and the span of the subspaces of 
\ben
\frac{\mathbb{C}\cdot\mathbf{1}_{w}\oplus\mathbb{C}\cdot\mathbf{2}_{w}\oplus\mathbb{C}\cdot\mathbf{3}_{w}}{\mathbb{C}\cdot(1,1,1)_{w}}
\een
induced by boxes 
\ben
(w_{1}-1,w_{2},w_{3}),\;\;(w_{1},w_{2}-1,w_{3}),\;\;(w_{1},w_{2},w_{3}-1)
\een
is not zero, then $w$ must be  a box. And if the span has dimension 2, then $w$ must be an ublabelled box. If the span has dimension 1, then $w$ may be either a box labelled by the span or an unlabelled box.
\end{definition}

Let  $\widetilde{\pi}$ be  a labelled box configuration with outgoing partitions $(\lambda,\mu,\nu)$.
We define the length of a labelled box configuration $\widetilde{\pi}$ by the sum of contributions from all boxes in $\widetilde{\pi}$ as follows: $(i)$ each  of  type $\mathrm{I}^{-}$ boxes, type $\mathrm{II}$ boxes, and  type $\mathrm{III}$ labelled boxes contributes 1; $(ii)$  each of  type $\mathrm{III}$ unlabelled boxes contributes 2.  Let $\mathrm{I}^{-}(\widetilde{\pi})$ be the set of  type $\mathrm{I}^{-}$ boxes in  $\widetilde{\pi}$ and $\mathrm{II}(\widetilde{\pi})$ the set of type $\mathrm{II}$ boxes in  $\widetilde{\pi}$.
Denote by $\mathrm{III}_{lb}(\widetilde{\pi})$ the set of type $\mathrm{III}$ labelled boxes in $\widetilde{\pi}$ and by $\mathrm{III}_{ulb}(\widetilde{\pi})$ the set of type $\mathrm{III}$ unlabelled boxes in $\widetilde{\pi}$.  Set $\mathrm{III}(\widetilde{\pi})=\mathrm{III}_{lb}(\widetilde{\pi})\cup\mathrm{III}_{ulb}(\widetilde{\pi})$. Now we
define the length of $\widetilde{\pi}$ by
\ben
\ell(\widetilde{\pi})=\sum_{(i,j,k)\in\widetilde{\pi}}\eta_{\widetilde{\pi}}(i,j,k)
\een
where 
\ben
\eta_{\widetilde{\pi}}(i,j,k)=\left\{
\begin{aligned}
	&1, \;\;\mbox{if} \;\; (i,j,k)\in\mathrm{I}^{-}(\widetilde{\pi})\cup\mathrm{II}(\widetilde{\pi})\cup\mathrm{III}_{lb}(\widetilde{\pi}); \\
	& 2, \;\;\mbox{if} \;\;(i,j,k)\in\mathrm{III}_{ulb}(\widetilde{\pi}).
\end{aligned}
\right.
\een
Notice that $|\widetilde{\pi}|=|\mathrm{I}^{-}(\widetilde{\pi})|+|\mathrm{II}(\widetilde{\pi})|+|\mathrm{III}_{lb}(\widetilde{\pi})|+|\mathrm{III}_{ulb}(\widetilde{\pi})|$ and $\ell(\widetilde{\pi})=|\widetilde{\pi}|+|\mathrm{III}_{ulb}(\widetilde{\pi})|$.
Let $\underline{\widetilde{\pi}}$ be the set of underlying boxes of $\widetilde{\pi}$ by forgetting the labellings of $\mathrm{III}_{lb}(\widetilde{\pi})$. And we call $\widetilde{\pi}$ a labelling of $\underline{\widetilde{\pi}}$.
Let $\widetilde{\mathcal{P}}(\lambda,\mu,\nu)$ be the set of components of the moduli space of  labelled box configurations with outgoing partitions $(\lambda,\mu,\nu)$. Let $[\widetilde{\pi}]\in\widetilde{\mathcal{P}}(\lambda,\mu,\nu)$ be the component of the moduli space of labellings of $\underline{\widetilde{\pi}}$ containing $\widetilde{\pi}$. It is obvious that all elements in one component $[\widetilde{\pi}]$ have the same length, which we denoted  by $\ell(\widetilde{\pi})$. Let $\pi_{\mathrm{min}}(\lambda,\mu,\nu)$ be the minimal 3D partition asymptotic to $(\lambda,\mu,\nu)$, i.e., the union of infinite cylinders on three nonegative axes with cross-sections $\lambda, \mu, \nu$.
It is proved in [\cite{PT2}, Proposition 3] that  each component $[\widetilde{\pi}]\in\widetilde{\mathcal{P}}(\lambda,\mu,\nu)$ corresponding to some component of moduli space of $T$-fixed $\mathbb{C}[x_{1},x_{2},x_{3}]$-submodule of $\mathbf{M}/\langle(1,1,1)\rangle$ is a product of $\mathbb{P}^{1}$'s.

\begin{definition}([\cite{PT2}, Section 5.3])\label{PT-vertex}
The $\mathrm{PT}$ topological vertex is defined by 
\ben
W_{\lambda\mu\nu}=\sum_{[\widetilde{\pi}]\in\widetilde{\mathcal{P}}(\lambda,\mu,\nu)}\chi_{\mathrm{top}}([\widetilde{\pi}])q^{\ell(\widetilde{\pi})+\Vert\pi_{\mathrm{min}}(\lambda,\mu,\nu)\Vert}
\een
where $\chi_{\mathrm{top}}([\widetilde{\pi}])$ is topological Euler characteristic of   $[\widetilde{\pi}]$.
\end{definition}
\begin{remark}\label{symmetry}
The PT topological vertex defined here is slightly different from that in [\cite{PT2}, (5-3)] by replacing the variable $-q$ by $q$, but coincides with the definition  in [\cite{JWY}, Section 4.1]
due to the equality $\ell(\widetilde{\pi})+\Vert\pi_{\mathrm{min}}(\lambda,\mu,\nu)\Vert=|\widetilde{\pi}|+|\mathrm{III}_{ulb}(\widetilde{\pi})|-|\mathrm{II}(\pi_{\mathrm{min}}(\lambda,\mu,\nu))|-2|\mathrm{III}	(\pi_{\mathrm{min}}(\lambda,\mu,\nu))|$. The symmetry of $W_{\lambda\mu\nu}$ is the same as that of $V_{\lambda\mu\nu}$:
\ben
W_{\lambda\mu\nu}=W_{\mu^\prime\lambda^\prime\nu^\prime};\;\;W_{\lambda\mu\nu}=W_{\lambda^\prime\nu^\prime\mu^\prime};\;\;W_{\lambda\mu\nu}=W_{\nu^\prime\mu^\prime\lambda^\prime}.
\een 
\end{remark}
The DT/PT equivariant topological vertex correspondence  is conjectured by Pandharipande and Thomas in [\cite{PT2}, Conjecture 4]  and proved  in the following Calabi-Yau case.
\begin{theorem}([\cite{JWY}, Theorem 1.0.1])\label{DT-PT-correspondence}
	\ben
	V_{\lambda\mu\nu}=M(q)W_{\lambda\mu\nu}
	\een
	where $M(q)$ is the MacMahon function defined by 
	\ben
	M(q)=\prod_{n\geq1}\frac{1}{(1-q^n)^n}.
	\een	
\end{theorem}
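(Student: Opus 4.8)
The plan is to obtain this identity as the $n=1$ specialization of the strategy the paper develops for general $n$. First I observe that a 3D partition asymptotic to $(\emptyset,\emptyset,\emptyset)$ has no legs, so its renormalized volume coincides with its ordinary volume; hence $V_{\emptyset\emptyset\emptyset}=\sum_{\pi}q^{|\pi|}=M(q)$. Thus the asserted identity $V_{\lambda\mu\nu}=M(q)W_{\lambda\mu\nu}$ is equivalent to $Y_{1}(\lambda,\mu,\nu)=Y_{2}(\lambda,\mu,\nu)$, where $Y_{1}:=V_{\lambda\mu\nu}/V_{\emptyset\emptyset\emptyset}$ and $Y_{2}:=W_{\lambda\mu\nu}$. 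I would prove this equality by showing that $Y_{1}$ and $Y_{2}$ satisfy one and the same recurrence and agree on a common base case, and then inducting.

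To produce the generating-function identities I would pass to combinatorial models on the honeycomb graph. On the DT side I use the classical bijection between 3D partitions asymptotic to $(\lambda,\mu,\nu)$ and dimer (perfect-matching) configurations with the prescribed asymptotics, the renormalized volume being recorded by the edge weighting; this presents $V_{\lambda\mu\nu}$ as a weighted sum over matchings. On the PT side I use the correspondence of [\cite{PT2}] between labelled box configurations and tripartite double-dimer configurations, under which $\chi_{\mathrm{top}}([\widetilde\pi])$ and the length $\ell(\widetilde\pi)$ translate into the natural weighted enumeration of double-dimers, presenting $W_{\lambda\mu\nu}$ as a weighted double-dimer partition function. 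To the first model I apply Kuo's graphical condensation [\cite{Kuo}, Theorem 5.1], obtaining a quadratic relation among the six vertices $V_{\lambda\mu\nu},V_{\lambda^{rc}\mu^{rc}\nu},V_{\lambda^{rc}\mu\nu},V_{\lambda\mu^{rc}\nu},V_{\lambda^{r}\mu^{c}\nu},V_{\lambda^{c}\mu^{r}\nu}$, which after normalizing by the appropriate power of $V_{\emptyset\emptyset\emptyset}$ becomes, for $Y_{1}$,
\bea
Y_{1}(\lambda,\mu,\nu)=\frac{Y_{1}(\lambda^{rc},\mu,\nu)\,Y_{1}(\lambda,\mu^{rc},\nu)}{Y_{1}(\lambda^{rc},\mu^{rc},\nu)}+q^{K_{1}(\lambda,\mu,\nu)}\,\frac{Y_{1}(\lambda^{r},\mu^{c},\nu)\,Y_{1}(\lambda^{c},\mu^{r},\nu)}{Y_{1}(\lambda^{rc},\mu^{rc},\nu)}.
\eea
To the second model I apply the double-dimer condensation of [\cite{Jenne}, Theorem 1.0.2], with a carefully chosen subgraph and boundary modification, to obtain the identical equation with $Y_{2}$ in place of $Y_{1}$; in both cases this is exactly the recurrence \eqref{recurrence1} with $q_{l}=q$ for all $l$, so that $q^{K_{1}(\lambda,\mu,\nu)}$ collapses to a single power of $q$.

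I expect the crux of the argument to be verifying that the two condensation identities yield \emph{the same} functional equation: one must align not only the ratio structure but also the precise exponent of the additive term on the two sides, and the double-dimer weighting together with the identification of the distinguished $\mathbb{P}^{1}$-labels is considerably more delicate than the single-dimer bookkeeping. Once this matching is in place, I would close the argument by induction on $|\lambda|+|\mu|$ with $\nu$ held fixed: the operations $\lambda\mapsto\lambda^{r},\lambda^{c},\lambda^{rc}$ and $\mu\mapsto\mu^{r},\mu^{c},\mu^{rc}$ strictly decrease this quantity, so \eqref{recurrence1} expresses $Y_{i}(\lambda,\mu,\nu)$ through values at strictly smaller arguments whenever both $\lambda$ and $\mu$ are nonempty. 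The base cases $\lambda=\emptyset$ or $\mu=\emptyset$ are two-leg vertices; by the symmetries $V_{\lambda\mu\nu}=V_{\mu^{\prime}\lambda^{\prime}\nu^{\prime}}=V_{\lambda^{\prime}\nu^{\prime}\mu^{\prime}}$ of Remark \ref{symmetry1} and the identical symmetries of $W_{\lambda\mu\nu}$ in Remark \ref{symmetry}, every two-leg vertex is equivalent to one with $\nu=\emptyset$, for which $Y_{1}=Y_{2}$ is the two-leg DT/PT correspondence of [\cite{PT2}]. Since $Y_{1}$ and $Y_{2}$ satisfy the same recurrence and agree on all two-leg vertices, they coincide for every $(\lambda,\mu,\nu)$, giving $V_{\lambda\mu\nu}=M(q)W_{\lambda\mu\nu}$.
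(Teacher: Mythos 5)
Your proposal is correct and follows essentially the same route as the proof this paper relies on: the statement is quoted from [\cite{JWY}, Theorem 1.0.1], and the strategy the paper describes for it (and mirrors in its own orbifold Theorem \ref{3-leg correspondence}) is exactly yours — identify $V_{\emptyset\emptyset\emptyset}=M(q)$, present $V_{\lambda\mu\nu}$ and $W_{\lambda\mu\nu}$ as dimer and tripartite double-dimer partition functions, apply Kuo's and Jenne's condensation theorems to get one and the same recurrence \eqref{recurrence1} (with $q_{l}=q$), and induct on $|\lambda|+|\mu|$ down to the 2-leg base case of [\cite{PT2}], which at $n=1$ suffices because the full symmetries of Remarks \ref{symmetry1} and \ref{symmetry} reduce every 2-leg vertex to one with $\nu=\emptyset$. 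Your closing observation is also the precise point the paper emphasizes: this symmetry reduction is what fails for $n>1$, forcing the orbifold argument to use the additional recurrences \eqref{recurrence2} and \eqref{recurrence3}.
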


As in the DT side, we define the following PT $G$-vertex
\ben
W^{G}_{\lambda\mu\nu}=\sum_{[\widetilde{\pi}]\in\widetilde{\mathcal{P}}(\lambda,\mu,\nu)}\chi_{\mathrm{top}}([\widetilde{\pi}])\prod_{r\in\widehat{G}} q_{r}^{\ell(\widetilde{\pi})_{r}+\Vert\pi_{\mathrm{min}}(\lambda,\mu,\nu)\Vert_{r}}
\een
where 
\ben
\ell(\widetilde{\pi})_{r}=\sum_{\substack{(i,j,k)\in\widetilde{\pi}\\r_{1}^i r_{2}^j r_{3}^k=r}}\eta_{\widetilde{\pi}}(i,j,k)
\een
is the length of boxes  $(i,j,k)\in\widetilde{\pi}$ colored by $r\in\widehat{G}$. Again, when $G=\mathbb{Z}_{n}$, one expects the following definition of PT $\mathbb{Z}_{n}$-vertex used later, see also [\cite{Zhang}, Definition 4.27].
\begin{definition}\label{orbifoldPT-vertex}
The $\mathrm{PT}$ $\mathbb{Z}_{n}$-vertex is defined by 
\ben
W^{n}_{\lambda\mu\nu}=\sum_{[\widetilde{\pi}]\in\widetilde{\mathcal{P}}(\lambda,\mu,\nu)}\chi_{\mathrm{top}}([\widetilde{\pi}]) q_{0}^{\ell(\widetilde{\pi})_{0}+\Vert\pi_{\mathrm{min}}(\lambda,\mu,\nu)\Vert_{0}}\cdots q_{n-1}^{\ell(\widetilde{\pi})_{n-1}+\Vert\pi_{\mathrm{min}}(\lambda,\mu,\nu)\Vert_{n-1}}
\een
where for $0\leq l\leq n-1$,
\ben
\ell(\widetilde{\pi})_{l}=\sum_{\substack{(i,j,k)\in\widetilde{\pi}\\ i-j=l \; \mathrm{mod} \; n}}\eta_{\widetilde{\pi}}(i,j,k)
\een
is the length of boxes  $(i,j,k)\in\widetilde{\pi}$ colored by $i-j=l\mbox{ mod }n$.
\end{definition}
\begin{remark}\label{symmetry2}
As in Remark \ref{symmetry1}, we have the following symmetries for the $\mathrm{PT}$ $\mathbb{Z}_{n}$-vertex
\ben
W_{\lambda\mu\nu}^n(q_{0},q_{1},\cdots,q_{n-1})=W^n_{\mu^\prime\lambda^\prime\nu^\prime}(q_{0},q_{n-1},\cdots,q_{1})\;\;\;\mbox{i.e.,} \;\;\;W_{\lambda\mu\nu}^n=\overline{W}^n_{\mu^\prime\lambda^\prime\nu^\prime}.
\een
which has fewer symmetries than that of $W_{\lambda\mu\nu}$ in Remark \ref{symmetry}.
\end{remark}	
As in the manifold case, we expect the following conjecture  of the DT/PT $\mathbb{Z}_{n}$-orbifold topological vertex correspondence.
\begin{conjecture} ([\cite{Zhang}, Conjecture 5.20]) \label{orbifold-DT/PT-correspondence}
If $\nu$ is multi-regular, then
\ben
V^n_{\lambda\mu\nu}=V_{\emptyset\emptyset\emptyset}^nW_{\lambda\mu\nu}^n
\een
where
\ben
V_{\emptyset\emptyset\emptyset}^n=M(1,q)^n\prod_{0<a\leq b<n}M(q_{a}\cdots q_{b},q)M(q_{a}^{-1}\cdots q_{b}^{-1},q)
\een
and 
\ben
M(v,q)=\prod_{m=1}^\infty\frac{1}{(1-vq^m)^m}.
\een

\end{conjecture}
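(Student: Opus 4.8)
The plan is to prove the equivalent normalized statement $V^n_{\lambda\mu\nu}/V^n_{\emptyset\emptyset\emptyset}=W^n_{\lambda\mu\nu}$ by exhibiting both sides as solutions of the same system of recurrences sharing a common base case; the closed form for $V^n_{\emptyset\emptyset\emptyset}$ is the empty-vertex specialization of the DT $\mathbb{Z}_n$-vertex, computed independently as in [\cite{BCY}]. Writing $Y_{1}(\lambda,\mu,\nu):=V^n_{\lambda\mu\nu}/V^n_{\emptyset\emptyset\emptyset}$ and $Y_{2}(\lambda,\mu,\nu):=W^n_{\lambda\mu\nu}$, I would first establish that both functions satisfy each of the three graphical condensation recurrences \eqref{recurrence1}, \eqref{recurrence2} and \eqref{recurrence3}, and then run an induction on the total size $|\lambda|+|\mu|+|\nu|$ anchored at the $1$-leg correspondence of [\cite{Zhang}, Theorem 5.22].

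To produce the recurrences on the DT side, I would realize the colored $3$D partitions counted by $V^n_{\lambda\mu\nu}$ as dimer configurations on the honeycomb graph, decorated by the generalized weight of Definition \ref{weight-rule} that records the $\mathbb{Z}_n$-coloring $i-j\bmod n$ of each box. Applying Kuo's graphical condensation [\cite{Kuo}, Theorem 5.1] to three distinct choices of the four marked boundary faces — one choice for each pair of legs to be reduced — yields the three recurrences, with the explicit monomial prefactors $q^{K_{1}}$, $q^{K_{2}}$, $q^{K_{3}}$ arising as ratios of edge-weights between the condensed and the original regions. On the PT side I would give the parallel double-dimer description of the labelled box configurations counted by $W^n_{\lambda\mu\nu}$ and apply the double-dimer condensation of [\cite{Jenne}, Theorem 1.0.2]; because the two models use the same weighting rule and the same honeycomb combinatorics, the recurrences obtained for $Y_{2}$ are identical to those for $Y_{1}$, prefactors included.

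With the recurrences in hand, the induction proceeds in three stages. The $1$-leg cases $(\lambda,\emptyset,\emptyset)$, $(\emptyset,\mu,\emptyset)$, $(\emptyset,\emptyset,\nu)$ form the base case, supplied directly by [\cite{Zhang}, Theorem 5.22]. For the $2$-leg cases I would use each recurrence in the regime where it reduces to $1$-leg data: \eqref{recurrence1} with $\nu=\emptyset$ handles $(\lambda,\mu,\emptyset)$, \eqref{recurrence2} with $\mu=\emptyset$ handles $(\lambda,\emptyset,\nu)$, and \eqref{recurrence3} with $\lambda=\emptyset$ handles $(\emptyset,\mu,\nu)$; in each case the reductions $\lambda\mapsto\lambda^{r},\lambda^{c},\lambda^{rc}$ (and likewise for $\mu,\nu$) strictly decrease the relevant partition, so the inductive hypothesis applies and Remark \ref{size-comparision} guarantees that the two series are compared at matching orders. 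Finally, for the full $3$-leg case with all three partitions nonempty, any one of the three recurrences expresses $Y_{i}(\lambda,\mu,\nu)$ through values at strictly smaller total size, reducing to the already-established $2$-leg cases; since $Y_{1}$ and $Y_{2}$ obey the same recurrence and agree at every input on the right-hand side, they agree at $(\lambda,\mu,\nu)$.

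The main obstacle I anticipate is the faithful transfer of the orbifold weighting through the condensation argument — specifically, verifying that the graded box-count $i-j\bmod n$ transforms under the row and column reductions exactly by the factors $q^{K_{1}}$, $q^{K_{2}}$, $q^{K_{3}}$ on both the dimer and the double-dimer sides. Unlike the $n=1$ situation of [\cite{JWY}], the recurrences \eqref{recurrence2} and \eqref{recurrence3} cannot be deduced from \eqref{recurrence1} through the symmetries of Remarks \ref{symmetry1} and \ref{symmetry2} together with Lemma \ref{transpose}, so each must be derived independently from its own choice of condensation faces. The careful bookkeeping of the $\mathbb{Z}_n$-graded weights in these asymmetric reductions, together with the use of the multi-regularity of $\nu$ to ensure that the prefactors remain well-defined Laurent monomials in the $q_{l}$, is where I expect the real work to lie.
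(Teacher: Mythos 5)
Your strategy coincides with the paper's own proof of Theorem \ref{3-leg correspondence}: derive the three condensation recurrences \eqref{recurrence1}, \eqref{recurrence2}, \eqref{recurrence3} for both $Y_{1}=V^{n}_{\lambda\mu\nu}/V^{n}_{\emptyset\emptyset\emptyset}$ and $Y_{2}=W^{n}_{\lambda\mu\nu}$ from Kuo's and Jenne's condensation theorems with the weighting of Definition \ref{weight-rule}, anchor everything at Zhang's one-leg result (Theorem \ref{1-leg}), and prove the two-leg cases exactly as the paper does (\eqref{recurrence1} with $\nu=\emptyset$, \eqref{recurrence2} with $\mu=\emptyset$, \eqref{recurrence3} with $\lambda=\emptyset$). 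The genuine gap is in your last step, where you claim that ``any one of the three recurrences'' reduces the full three-leg case to the two-leg cases: this is false for \eqref{recurrence2} and \eqref{recurrence3}, and the argument only closes if the final induction is run with \eqref{recurrence1}, which is what the paper does.

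The point you are missing is how multi-regularity propagates through the induction. Recurrence \eqref{recurrence1} is the only one of the three that leaves $\nu$ untouched; recurrences \eqref{recurrence2} and \eqref{recurrence3} replace $\nu$ by $\nu^{r},\nu^{c},\nu^{rc}$, and these operations destroy multi-regularity (for $n=2$, $\nu=(2)$ is multi-regular while $\nu^{c}=(1)$ is not). Since by Theorem \ref{1-leg} and Remark \ref{DT-PT-correspondence1} the correspondence $Y_{1}=Y_{2}$ holds only for a multi-regular third leg, and for non-multi-regular $\nu\neq\emptyset$ with $n>1$ already the one-leg identity carries the nontrivial correction factor $O_{\nu}$, applying \eqref{recurrence2} or \eqref{recurrence3} at a three-leg input with multi-regular $\nu$ produces right-hand sides evaluated at non-multi-regular third legs, which are not covered by anything you have established and cannot in general satisfy the correspondence; that induction therefore collapses. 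Your related assertion that multi-regularity is needed ``to ensure that the prefactors remain well-defined Laurent monomials in the $q_{l}$'' is also off the mark: the prefactors $q^{K_{1}}$, $q^{K_{2}}$, $q^{K_{3}}$ are Laurent monomials for arbitrary partitions (Lemmas \ref{recurrence1-weight9}, \ref{recurrence2-weight7}, \ref{recurrence3-weight5}), and all three recurrences hold with no multi-regularity hypothesis at all; the hypothesis is needed solely so that the statement being propagated is true. Finally, note that in the two-leg steps this issue cannot be dodged, since there \eqref{recurrence2} and \eqref{recurrence3} must be used and they do modify $\nu$: the induction necessarily passes through non-multi-regular third legs, where only the $O_{\nu}$-corrected one-leg identity of Remark \ref{DT-PT-correspondence1} is available, so your blanket statement that ``the inductive hypothesis applies'' at those inputs is precisely the step that requires an argument rather than an assertion.
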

This conjecture will be proved in Section 6.

\subsection{$K$-theory decomposition of orbifold PT stable pairs}

As mentioned in Introduction, the more explicit proof of $T$-equivaraint $K$-theory decomposition of the finite cokernel $\mathtt{Q}$ is present in Proposition \ref{K-decomposition},  which is essential for obtaining the sign formula and PT partition functions in Section 3.3 and 3.4 respectively. One can refer to [\cite{Zhang}, Proposition 4.25] for comparison.  As a generalization of the description of $T$-fixed PT stable pairs in [\cite{PT2}, Section 2.6], we have the following  orbifold PT version of [\cite{BCY}, Lemma 13].
\begin{lemma}\label{PT-fix}
The $T$-fixed points in 
\ben
\coprod_{\beta\in F_{1}K(\mathcal{X})}\mathrm{PT}(\mathcal{X},\beta)
\een
are in bijective correspondence with the set $\{\widetilde{\pi}_{v}, \lambda_{e}\}$ where $\{\widetilde{\pi}_{v}\; |\; v\in V_{\mathcal{X}}\}$
is a collection of labelled box configurations $\widetilde{\pi}_{v}$ with outgoing partitions $(\lambda_{1,v}, \lambda_{2,v}, \lambda_{3,v})$ and $\{\lambda_{e} \;| \;e\in E_{\mathcal{X}}\}$ is an edge assignment which is compatible with triple outgoing partitions at all vertices as in Section 2.1.
\end{lemma}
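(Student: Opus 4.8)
The plan is to reduce the statement to the local analysis of Section 2.2 on the toric charts $\mathcal{U}_{v}=[\mathbb{C}^3/G_{v}]$ and then to glue, following the manifold case [\cite{PT2}, Section 2.6] and the DT orbifold case [\cite{BCY}, Lemma 13]. First I would treat the forward direction. Starting from a $T$-fixed orbifold PT stable pair $[\mathcal{O}_{\mathcal{X}}\to\mathcal{F}]$, recall from Section 2.2 that the support curve $\mathcal{C}_{\mathcal{F}}$ is $T$-fixed, Cohen--Macaulay and supported on the torus invariant curves $\mathcal{C}_{e}$, while the cokernel $\mathtt{Q}$ is supported on the finite set of torus fixed points $\{p_{v}\}$. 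For each vertex $v$, the restriction to $\mathcal{U}_{v}$ yields, via [\cite{Vis}, Example 7.21], a $G_{v}$-equivariant $T$-fixed PT stable pair on $\mathbb{C}^3$, equivalently a finitely generated $T$-invariant $\mathbb{C}[x_{1},x_{2},x_{3}]$-submodule $\mathcal{Q}_{v}\subset\mathbf{M}/\langle(1,1,1)\rangle$ as a $G_{v}$-module, which by [\cite{PT2}, Section 2.5 and Proposition 3] corresponds to a labelled box configuration $\widetilde{\pi}_{v}$ with outgoing partitions $(\lambda_{1,v},\lambda_{2,v},\lambda_{3,v})$. Reading off the transverse cross-section of $\mathcal{C}_{\mathcal{F}}$ along each edge $e$ produces a partition $\lambda_{e}$; since the curve is $T$-fixed this cross-section is constant along $\mathcal{C}_{e}$, so $\lambda_{e}$ is well defined.

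Next I would verify compatibility. The outgoing partition of $\widetilde{\pi}_{v}$ in the direction of $e_{i,v}$ records the cross-section of $\mathcal{C}_{\mathcal{F}}$ as seen from $p_{v}$ along that coordinate axis. Two charts $\mathcal{U}_{v_{0}}$ and $\mathcal{U}_{v_{\infty}}$ adjacent to a common compact edge $e$ see the same curve $\mathcal{C}_{e}$, and the orientation convention of Section 2.1, namely $\lambda_{i,v}=\lambda_{e_{i,v}}$ for outward orientation and $\lambda_{i,v}=\lambda_{e_{i,v}}^{\prime}$ otherwise, is exactly the bookkeeping that matches the two local cross-sections, accounting for the transpose introduced by the reversed orientation. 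Thus the edge assignment $\{\lambda_{e}\}$ is compatible with the triple outgoing partitions at every vertex, and $\{\widetilde{\pi}_{v},\lambda_{e}\}$ lies in the claimed set.

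For the converse I would reconstruct the global pair from compatible data $\{\widetilde{\pi}_{v},\lambda_{e}\}$. First build the Cohen--Macaulay curve $\mathcal{C}_{\mathcal{F}}$: along each edge $e$ take the gerbe over the football $\mathbb{P}^1_{l_{0},l_{\infty}}$ thickened by the transverse partition $\lambda_{e}$, and glue these thickened curves at the fixed points, the compatibility condition guaranteeing agreement on overlaps just as the minimal 3D partition $\pi_{\mathrm{min}}$ is assembled from cylinders along the axes. On each chart the configuration $\widetilde{\pi}_{v}$ determines a $G_{v}$-equivariant submodule $\mathcal{Q}_{v}\subset\mathbf{M}/\langle(1,1,1)\rangle$, hence, reversing Section 2.2, a local orbifold PT stable pair $\mathcal{O}_{\mathcal{U}_{v}}\to\mathcal{F}_{v}$ supported on $\mathcal{C}_{\mathcal{F}}\cap\mathcal{U}_{v}$ with cokernel $\mathtt{Q}_{v}$ at $p_{v}$. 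Since the cokernels are supported only at the fixed points while $\mathcal{F}$ and $\mathcal{O}_{\mathcal{C}_{\mathcal{F}}}$ agree elsewhere, these local pairs glue uniquely to a global $\mathcal{O}_{\mathcal{X}}\to\mathcal{F}$ fitting into $0\to\mathcal{O}_{\mathcal{C}_{\mathcal{F}}}\to\mathcal{F}\to\mathtt{Q}\to0$ with $\mathtt{Q}=\bigoplus_{v}\mathtt{Q}_{v}$; it is $T$-fixed because each local piece is, and it is an orbifold PT stable pair since purity and the finite-cokernel condition are local. Local uniqueness in [\cite{PT2}, Proposition 3] makes the two constructions mutually inverse.

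The hard part will be the gluing step in the converse: checking that the locally defined $\mathcal{F}_{v}$ patch into a single coherent sheaf on the stack $\mathcal{X}$ in a way that respects the gerbe structure along the edges. Away from the fixed points $\mathcal{F}$ coincides with $\mathcal{O}_{\mathcal{C}_{\mathcal{F}}}$, so gluing over an edge gerbe reduces to verifying that the two vertex charts induce the \emph{same} $\mathbb{Z}_{n_{e}}$-representation data, i.e.\ the same $\rho_{k}$-grading, on the shared curve $\mathcal{C}_{e}$. This representation-theoretic matching across edges is precisely where the orbifold case genuinely departs from [\cite{PT2}], and it is exactly what the compatibility condition encodes; I would devote the most care to making this matching rigorous, using the identification $\mathcal{O}_{p_{e}}(-k\mathcal{D}_{e})\cong\mathcal{O}_{p_{e}}\otimes\rho_{k}$ from Section 2.1 to track the coloring consistently from both endpoints.
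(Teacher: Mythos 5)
Your proposal is correct and takes essentially the same approach as the paper: the paper's proof of Lemma \ref{PT-fix} simply invokes the local description of $T$-fixed orbifold PT stable pairs from Section 2.2 (restriction to the charts $[\mathbb{C}^3/G_{v}]$, equivalence with $G_{v}$-equivariant pairs on $\mathbb{C}^3$ via [\cite{Vis}, Example 7.21], and identification with $T$-invariant submodules of $\mathbf{M}/\langle(1,1,1)\rangle$, hence labelled box configurations), combined with the orientation/compatibility bookkeeping of Section 2.1, exactly as you do. Your expanded converse gluing step and the care about matching the $\mathbb{Z}_{n_{e}}$-representation data along edges are precisely the details left implicit in that citation (and handled the same way in [\cite{BCY}, Lemma 13] and [\cite{Zhang}, Lemma 4.21], the coloring being determined by box positions as noted in Remark \ref{not-isolated}).
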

\begin{proof}
It follows from the description of $T$-fixed orbifold PT stable pairs in Section 2.2. See also the proof of [\cite{Zhang}, Lemma 4.21].
\end{proof}

\begin{remark}\label{not-isolated}
In Lemma \ref{PT-fix},  $T$-fixed points are not isolated when the moduli space of the labelling of $\widetilde{\pi}_{v}$ for some $v\in V_{\mathcal{X}}$ has positive dimension, i.e., the moduli space is a $m$-th  product of $\mathbb{P}^1$'s for some integer $m\geq1$. And every labelled box configuration for each vertice has the color scheme  uniquely  determined by the coordinate positions of their boxes and representations of the local group.
\end{remark}
In order to formulate the $K$-theory classes of $T$-fixed orbifold PT stable pairs, we make the same convention as in DT side [\cite{PT2}, Remark 14]  to identify a box  in a labelled box configuration, or a 3D partition, or an edge partition with its corresponding divisor as follows. For a labelled box configuration $\widetilde{\pi}_{v}$, the notation $\mathcal{D}\in\widetilde{\pi}_{v}$ corresponding to a box $(i,j,k)\in\widetilde{\pi}_{v}$ means that $\mathcal{D}=i\mathcal{D}_{1,v}+j\mathcal{D}_{2,v}+k\mathcal{D}_{3,v}$. This convention is the same for  a 3D partition  $\pi_{v}$. For an edge partition $\lambda_{e}$, the notation $\mathcal{D}\in\lambda_{e}$ corresponding to a box $(i,j)\in\lambda_{e}$ means  $\mathcal{D}=i\mathcal{D}_{e}+j\mathcal{D}_{e}^\prime$. These divisors are compatible due to the orientation in Section 2.1. Now we have 

\begin{proposition}\label{K-decomposition}
Given $\beta\in F_{1}K(\mathcal{X})$, let $[\mathcal{O}_{\mathcal{X}}\xrightarrow{\varphi}\mathcal{F}]\in \mathrm{PT}(\mathcal{X},\beta)^\mathrm{T}$ be a $T$-fixed orbifold PT stable pair corresponding to the set $\{\widetilde{\pi}_{v}, \lambda_{e}\}$ of labelled box configurations $\widetilde{\pi}_{v}$ with outgoing partitions $(\lambda_{1,v},\lambda_{2,v},\lambda_{3,v})$   and edge partitions $\lambda_{e}$. Let $\mathtt{Q}=\mathrm{coker}\,\varphi$. Then we have the following $T$-equivariant $K$-theory decomposition:
\ben
\mathtt{Q}=\sum_{v\in V_{\mathcal{X}}}\sum_{\mathcal{D}\in\widetilde{\pi}_{v}}\eta_{\widetilde{\pi}_{v}}(\mathcal{D})\mathcal{O}_{p_{v}}(-\mathcal{D})
\een
and 
\ben
\mathcal{F}=\sum_{v\in V_{\mathcal{X}}}\left(\sum_{\mathcal{D}\in\widetilde{\pi}_{v}}\eta_{\widetilde{\pi}_{v}}(\mathcal{D})\mathcal{O}_{p_{v}}(-\mathcal{D})+\sum_{\mathcal{D}\in\overline{\pi}_{v}}\xi_{\overline{\pi}_{v}}(\mathcal{D})\mathcal{O}_{p_{v}}(-\mathcal{D})\right)+\sum_{e\in E_{\mathcal{X}}}\sum_{\mathcal{D}\in\lambda_{e}}\mathcal{O}_{\mathcal{C}_{e}}(-\mathcal{D})
\een
where  $\overline{\pi}_{v}:=\pi_{\mathrm{min}}(\lambda_{1,v},\lambda_{2,v},\lambda_{3,v})$  and the set $\{\overline{\pi}_{v},\lambda_{e} \}$ is corresponding to a $T$-invariant Cohen-Macaulay substack $\mathcal{C}_{\mathcal{F}}$ of dimension one, or equivalently a torus fixed point in $ \mathrm{Hilb}^{\alpha}(\mathcal{X})$ for some $\alpha\in F_{1}K(\mathcal{X})$ as in [\cite{BCY}, Lemma 13].
\end{proposition}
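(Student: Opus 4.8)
The plan is to compute the class $[\mathcal{F}]$ by separating the one-dimensional Cohen--Macaulay part of the pair from its zero-dimensional cokernel, using the short exact sequence
\[
0 \to \mathcal{O}_{\mathcal{C}_{\mathcal{F}}} \to \mathcal{F} \to \mathtt{Q} \to 0
\]
established in Section 2.2. In the Grothendieck group this gives $[\mathcal{F}] = [\mathcal{O}_{\mathcal{C}_{\mathcal{F}}}] + [\mathtt{Q}]$, so it suffices to decompose the two summands separately. For the curve part, $\mathcal{C}_{\mathcal{F}}$ is exactly the $T$-invariant Cohen--Macaulay substack determined by $\{\overline{\pi}_{v},\lambda_{e}\}$, that is, a torus-fixed point of $\mathrm{Hilb}^{\alpha}(\mathcal{X})$; hence the DT-type $K$-theory decomposition of [\cite{BCY}, Lemma 13] applies and produces the second vertex sum (weighted by the renormalized multiplicities $\xi_{\overline{\pi}_{v}}$) together with the edge sum $\sum_{e}\sum_{\mathcal{D}\in\lambda_{e}}\mathcal{O}_{\mathcal{C}_{e}}(-\mathcal{D})$. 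Thus the whole content of the proposition reduces to establishing the single formula for $[\mathtt{Q}]$.

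To decompose $\mathtt{Q}$ I would work locally. Its support is the finite set of isolated torus fixed points $p_{v}\in\mathcal{X}$ (this is unrelated to the possible non-isolatedness of the fixed \emph{point of the moduli space} noted in Remark \ref{not-isolated}), so $\mathtt{Q}$ splits as a direct sum of its localizations $\mathtt{Q}_{\alpha}$ at the $p_{v}$, and it is enough to identify each $\mathtt{Q}_{\alpha}$ as a $T$-equivariant, hence $G_{\alpha}$-equivariant, class on $\mathcal{U}_{\alpha}=[\mathbb{C}^3/G_{\alpha}]$. By the description recalled in Section 2.2 (via Lemma \ref{PT-fix}), $\mathtt{Q}_{\alpha}$ is the finite-dimensional $T$-invariant submodule of $\mathbf{M}/\langle(1,1,1)\rangle$ encoded by the labelled box configuration $\widetilde{\pi}_{v}$, so its class is the sum of its weight spaces. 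I would then run through the box types weight by weight: a point of $\mathbf{M}/\langle(1,1,1)\rangle$ lying in $\mathrm{I}^{-}$ or $\mathrm{II}$ has a one-dimensional weight space, while a type $\mathrm{III}$ weight space is two-dimensional, being the quotient of the three cylinder generators by the diagonal $(1,1,1)$. The defining rules of a labelled box configuration then say that an $\mathrm{I}^{-}$, $\mathrm{II}$, or labelled $\mathrm{III}$ box occupies a one-dimensional subspace of $\mathtt{Q}_{\alpha}$ at its weight, whereas an unlabelled $\mathrm{III}$ box occupies the full two-dimensional space. In every case $\dim(\mathtt{Q}_{\alpha})_{(i,j,k)}=\eta_{\widetilde{\pi}_{v}}(i,j,k)$.

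It remains to match each weight space with the claimed skyscraper class. The weight space at $(i,j,k)$ is spanned by monomials of multidegree $(i,j,k)$, so it carries the $T$-character of $\mathcal{O}_{p_{v}}(-\mathcal{D})$ for $\mathcal{D}=i\mathcal{D}_{1,v}+j\mathcal{D}_{2,v}+k\mathcal{D}_{3,v}$ under the divisor-box identification fixed before the statement; restricting to $G_{\alpha}=\mathbb{Z}_{n}$ acting with weights $(1,-1,0)$, this is the representation $\rho_{i-j\bmod n}$, consistent with the convention $\mathcal{O}_{p_{e}}(-k\mathcal{D}_{e})\cong\mathcal{O}_{p_{e}}\otimes\rho_{k}$ of Section 2.1. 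Summing the weight spaces with multiplicity $\eta_{\widetilde{\pi}_{v}}$ gives $\mathtt{Q}_{\alpha}=\sum_{\mathcal{D}\in\widetilde{\pi}_{v}}\eta_{\widetilde{\pi}_{v}}(\mathcal{D})\mathcal{O}_{p_{v}}(-\mathcal{D})$, and summing over $v$ yields the first formula; adding $[\mathcal{O}_{\mathcal{C}_{\mathcal{F}}}]$ yields the second. The main obstacle is the bookkeeping in the type $\mathrm{III}$ case: one must check that passing to $\mathbf{M}/\langle(1,1,1)\rangle$ drops the diagonal exactly once, so that labelled boxes genuinely contribute one and unlabelled boxes two, and that this refined weight-space count is compatible with---rather than double counting against---the renormalized multiplicities $\xi_{\overline{\pi}_{v}}$ coming from $\mathcal{O}_{\mathcal{C}_{\mathcal{F}}}$. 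Keeping the $G_{\alpha}$-representation correct throughout the equivariant identification is the other place demanding care.
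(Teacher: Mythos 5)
Your proposal is correct and follows the same skeleton as the paper's proof: the short exact sequence $0\to\mathcal{O}_{\mathcal{C}_{\mathcal{F}}}\to\mathcal{F}\to\mathtt{Q}\to0$ reduces everything to the two summands, the curve part is handled by the DT decomposition of [\cite{BCY}, Lemma 13 and Proposition 4], and $\mathtt{Q}$ is identified through the correspondence of [\cite{PT2}, Section 2.5] between labelled box configurations and $T$-invariant submodules of $\mathbf{M}/\langle(1,1,1)\rangle$. Where you diverge is in how the dictionary ``box at $(i,j,k)$ with multiplicity $\eta_{\widetilde{\pi}_{v}}$ $\leftrightarrow$ class $\eta_{\widetilde{\pi}_{v}}(\mathcal{D})\,\mathcal{O}_{p_{v}}(-\mathcal{D})$'' is certified: you count weight spaces of $\mathbf{M}/\langle(1,1,1)\rangle$ directly (dimension $1$ on $\mathrm{I}^{-}$ and $\mathrm{II}$, dimension $2$ on $\mathrm{III}$) and invoke the character identification ``tensoring by $\mathcal{O}_{p_{v}}(\pm\mathcal{D}_{i,v})$ $=$ multiplication by $x_{i}^{\mp1}$,'' whereas the paper spends the bulk of its proof deriving this dictionary geometrically: it introduces the ideals $\mathcal{I}_{r}$ with $\lim\limits_{\longrightarrow}\mathcal{H}om(\mathfrak{m}^r,\mathcal{O}_{\mathcal{C}_{\mathcal{F}}})=\lim\limits_{\longrightarrow}\mathcal{H}om(\mathcal{I}_{r},\mathcal{O}_{\mathcal{C}_{\mathcal{F}}})$, computes $\mathcal{H}om(\mathcal{I}_{r}|_{\mathcal{C}_{\lambda_{e}}},\mathcal{O}_{\mathcal{C}_{\lambda_{e}}})$ edge by edge, and obtains the $K$-class of the ambient module $\lim\limits_{\longrightarrow}\mathcal{H}om(\mathfrak{m}^r,\mathcal{O}_{\mathcal{C}_{\mathcal{F}}})/\mathcal{O}_{\mathcal{C}_{\mathcal{F}}}$, matching it against the $\mathbb{C}$-basis of $\mathbf{M}/\langle(1,1,1)\rangle$ from [\cite{PT2}]; your route buys brevity, the paper's buys the explicit global-to-local verification (which is precisely the ``more explicit'' content the author advertises relative to [\cite{Zhang}, Proposition 4.25]). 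Two small cautions: the proposition is stated for a general toric CY $3$-orbifold, so the character identification should be phrased for an arbitrary finite abelian $G_{\alpha}\subset T$ rather than only $\mathbb{Z}_{n}$ acting with weights $(1,-1,0)$; and the ``compatibility with $\xi_{\overline{\pi}_{v}}$'' you flag as the main obstacle is a non-issue, since the two decompositions refer to different sheaves and are simply added by the exact sequence, with no interaction to check.
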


\begin{proof}
If $Y\subset\mathcal{X}$ is a $T$-invariant substack of dimension zero, then $\mathcal{O}_{Y}$ must be supported on the torus fixed points of $\mathcal{X}$. By [\cite{BCY}, Lemma 13] it corresponds to the set $\{\widehat{\pi}_{v}\;|\;v\in V_{\mathcal{X}}\}$ which is a collection of 3D partitions $\widehat{\pi}_{v}$ asymptotic to $(\emptyset,\emptyset,\emptyset)$. By [\cite{BCY}, Proposition 4] we have 
\ben
\mathcal{O}_{Y}=\sum_{v\in V_{\mathcal{X}}}\sum_{\mathcal{D}\in\widehat{\pi}_{v}}\mathcal{O}_{p_{v}}(-\mathcal{D}).
\een	
It is shown in the proof of [\cite{BCY}, Lemma 13] that for each $v$ the   ideal sheaf $\mathcal{I}_{\widehat{\pi}_{v}}\subset\mathcal{O}_{\mathcal{X}}$  corresponding to a $T$-invariant substack of dimension zero $Y\subset\mathcal{X}$ is generated by the image of the maps 
\ben
\mathcal{O}_{\mathcal{X}}(-i\mathcal{D}_{1,v}-j\mathcal{D}_{2,v}-k\mathcal{D}_{3,v})\to\mathcal{O}_{\mathcal{X}},\;\;\;(i,j,k)\notin\widehat{\pi}_{v}.
\een
For a $T$-invariant open neighborhood $\mathcal{U}_{v}:=[\mathbb{C}^3/G_{v}]$ of $p_{v}$, a  finite $T$-invariant $0$-dimensional sheaf supported on $[(0,0,0)/G_{v}]$ is equivalent to a finitely generated $T$-invariant $\mathbb{C}[x_{1},x_{2},x_{3}]$-module as a $G_{v}$-module.
Therefore  the $K$-class decomposition $\sum_{\mathcal{D}\in\widehat{\pi}_{v}}\mathcal{O}_{p_{v}}(-\mathcal{D})$  corresponds to   the finitely generated $T$-invariant $\mathbb{C}[x_{1},x_{2},x_{3}]$-module (as a $G_{v}$-module) whose $\mathbb{C}$-basis is given  by monomials $x_{1}^ix_{2}^jx_{3}^k$ for all $(i,j,k)$ belonging to the 3D partition $\widehat{\pi}_{v}$. 
This correspondence holds for any 3D partition $\widehat{\pi}_{v}$ asymptotic to $(\emptyset,\emptyset,\emptyset)$. And it has the  correspondence between tensor product of elements $\mathcal{O}_{p_{v}}(-\mathcal{D})$ for $\mathcal{D}=i\mathcal{D}_{1,v}+j\mathcal{D}_{2,v}+k\mathcal{D}_{3,v}$ in $K$-theory and multiplication of elements $x_{1}^ix_{2}^jx_{3}^k$ in the $T$-invariant $\mathbb{C}[x_{1},x_{2},x_{3}]$-module as a $G_{v}$-module. This implies that tensoring with $\mathcal{O}_{p_{v}}(\pm D_{i,v})$ corresponds to multiplication by $x_{i}^{\mp1}$ for $1\leq i\leq3$.

Since $\mathcal{C}_{\mathcal{F}}$ is a $T$-invariant Cohen-Macaulay substack of dimension one, it follows from  [\cite{BCY}, Proposition 4] that 
\ben
\mathcal{O}_{\mathcal{C}_{\mathcal{F}}}=\sum_{v\in V_{\mathcal{X}}}\sum_{\mathcal{D}\in\overline{\pi}_{v}}\xi_{\overline{\pi}_{v}}(\mathcal{D})\mathcal{O}_{p_{v}}(-\mathcal{D})+\sum_{e\in E_{\mathcal{X}}}\sum_{\mathcal{D}\in\lambda_{e}}\mathcal{O}_{\mathcal{C}_{e}}(-\mathcal{D})
\een
where the 3D partition $\overline{\pi}_{v}$ is equal to  $\pi_{\mathrm{min}}(\lambda_{1,v},\lambda_{2,v},\lambda_{3,v})$ as $\mathcal{C}_{\mathcal{F}}$ is Cohen-Macaulay  and the set $\{\overline{\pi}_{v},\lambda_{e} \}$ is corresponding to $\mathcal{C}_{\mathcal{F}}$ by [\cite{BCY}, Lemma 13]. Here, $\overline{\pi}_{v}$ is determined by $\widetilde{\pi}_{v}$ via its outgoing partitions.
Let $\mathfrak{m}$ be the ideal sheaf of torus fixed points in $\mathcal{C}_{\mathcal{F}}$. Then  on the local model at $p_{v}$ for each $v$, $\mathfrak{m}\subset\mathcal{O}_{\mathcal{C}_{\mathcal{F}}}$ is  generated by the image of the maps 
\ben
\mathcal{O}_{\mathcal{C}_{\mathcal{F}}}(-i\mathcal{D}_{1,v}-j\mathcal{D}_{2,v}-k\mathcal{D}_{3,v})\to\mathcal{O}_{\mathcal{C}_{\mathcal{F}}}
\een
where $(i,j,k)\in\overline{\pi}_{v}$ and $i+j+k\geq1$. Hence $\mathfrak{m}^r\subset\mathcal{O}_{\mathcal{C}_{\mathcal{F}}}$ is generated by the image of the maps
\ben
\mathcal{O}_{\mathcal{C}_{\mathcal{F}}}(-i\mathcal{D}_{1,v}-j\mathcal{D}_{2,v}-k\mathcal{D}_{3,v})\to\mathcal{O}_{\mathcal{C}_{\mathcal{F}}}
\een
where $(i,j,k)\in\overline{\pi}_{v}$ and $i+j+k\geq r$ on the local model at $p_{v}$ for each $v$. We define $\mathcal{I}_{r}\subset \mathcal{O}_{\mathcal{C}_{\mathcal{F}}}$ to be the ideal such that for each $v$ it is locally generated by the image of the maps
\ben
\mathcal{O}_{\mathcal{C}_{\mathcal{F}}}(-i\mathcal{D}_{1,v}-j\mathcal{D}_{2,v}-k\mathcal{D}_{3,v})\to\mathcal{O}_{\mathcal{C}_{\mathcal{F}}}
\een
where $(i,j,k)\in\overline{\pi}_{v}\setminus\{(i,j,k)\,|\,0\leq i,j,k< r\}$. Now it is easy to see that 
\ben
\lim\limits_{\longrightarrow}\mathcal{H}om(\mathfrak{m}^r,\mathcal{O}_{\mathcal{C}_{\mathcal{F}}})=\lim\limits_{\longrightarrow}\mathcal{H}om(\mathcal{I}_{r},\mathcal{O}_{\mathcal{C}_{\mathcal{F}}}).
\een
%Then we can  study subsheaves of $\lim\limits_{\longrightarrow}\mathcal{H}om(\mathcal{I}_{r},\mathcal{O}_{\mathcal{C}_{\mathcal{F}}})/\mathcal{O}_{\mathcal{C}_{\mathcal{F}}}$ in place  of those of  $\lim\limits_{\longrightarrow}\mathcal{H}om(\mathfrak{m}^r,\mathcal{O}_{\mathcal{C}_{\mathcal{F}}})/\mathcal{O}_{\mathcal{C}_{\mathcal{F}}}$.
As in the direct sum decomposition of $\lim\limits_{\longrightarrow}\mathcal{H}om(\mathfrak{m}^r,\mathcal{O}_{\mathcal{C}_{\mathcal{F}}})$ as modules on the local model  [\cite{PT2}, Section 2.4], we have the following decomposition in $K$-theory 
\ben
\lim\limits_{\longrightarrow}\mathcal{H}om(\mathcal{I}_{r},\mathcal{O}_{\mathcal{C}_{\mathcal{F}}})=\lim\limits_{\longrightarrow}\sum_{e\in E_{\mathcal{X}}}\mathcal{H}om(\mathcal{I}_{r}|_{\mathcal{C}_{\lambda_{e}}},\mathcal{O}_{\mathcal{C}_{\lambda_{e}}})
\een
where $\mathcal{C}_{\lambda_{e}}$ is $T$-invariant substack of $\mathcal{C}_{\mathcal{F}}$ supported only on $\mathcal{C}_{e}$  with $\mathcal{O}_{\mathcal{C}_{\lambda_{e}}}=\sum_{\mathcal{D}\in\lambda_{e}}\mathcal{O}_{\mathcal{C}_{e}}(-\mathcal{D})$ by [\cite{BCY}, Proposition 4]. Notice that $\{\lambda_{1,v},\lambda_{2,v},\lambda_{3,v}\}$ and $\{\lambda_{e}\}$ are compatible.
Since  for $r\gg0$
\ben
\mathcal{H}om(\mathcal{I}_{r}|_{\mathcal{C}_{\lambda_{e}}},\mathcal{O}_{\mathcal{C}_{\lambda_{e}}})\otimes\mathcal{O}_{\mathcal{X}}(-r\mathcal{D}_{0,e}-r\mathcal{D}_{\infty,e})\cong\mathcal{H}om(\mathcal{I}_{r}|_{\mathcal{C}_{\lambda_{e}}}\otimes\mathcal{O}_{\mathcal{X}}(r\mathcal{D}_{0,e}+r\mathcal{D}_{\infty,e}),\mathcal{O}_{\mathcal{C}_{\lambda_{e}}})
\cong\mathcal{O}_{\mathcal{C}_{\lambda_{e}}},
\een
we have $\mathcal{H}om(\mathcal{I}_{r}|_{\mathcal{C}_{\lambda_{e}}},\mathcal{O}_{\mathcal{C}_{\lambda_{e}}})=\sum_{\mathcal{D}\in\lambda_{e}}\mathcal{O}_{\mathcal{C}_{e}}(-\mathcal{D}+r\mathcal{D}_{0,e}+r\mathcal{D}_{\infty,e})$. Notice that for each $\mathcal{D}\in\lambda_{e}$
\ben
\mathcal{O}_{\mathcal{C}_{e}}(-\mathcal{D}+r\mathcal{D}_{0,e}+r\mathcal{D}_{\infty,e})=\mathcal{O}_{\mathcal{C}_{e}}(-\mathcal{D})+\sum_{k=1}^r\mathcal{O}_{p_{0,e}}(-\mathcal{D}+k\mathcal{D}_{0,e})+\sum_{k=1}^r\mathcal{O}_{p_{\infty,e}}(-\mathcal{D}+k\mathcal{D}_{\infty,e}).
\een
Now we have
\ben
\sum_{e\in E_{\mathcal{X}}}\mathcal{H}om(\mathcal{I}_{r}|_{\mathcal{C}_{\lambda_{e}}},\mathcal{O}_{\mathcal{C}_{\lambda_{e}}})=\sum_{v\in V_{\mathcal{X}}}\sum_{i=1}^3\sum_{\mathcal{D}\in\lambda_{e_{i,v}}}\sum_{k=1}^{r}\mathcal{O}_{p_{v}}(-\mathcal{D}+k\mathcal{D}_{i,v})+\sum_{e\in E_{\mathcal{X}}}\sum_{\mathcal{D}\in\lambda_{e}}\mathcal{O}_{\mathcal{C}_{e}}(-\mathcal{D}).
\een
Then 
\ben
&&\lim\limits_{\longrightarrow}\mathcal{H}om(\mathfrak{m}^r,\mathcal{O}_{\mathcal{C}_{\mathcal{F}}})/\mathcal{O}_{\mathcal{C}_{\mathcal{F}}}\\
&=&\lim\limits_{\longrightarrow}\mathcal{H}om(\mathcal{I}_{r},\mathcal{O}_{\mathcal{C}_{\mathcal{F}}})/\mathcal{O}_{\mathcal{C}_{\mathcal{F}}}\\
&=&\lim\limits_{\longrightarrow}\left(\sum_{v\in V_{\mathcal{X}}}\left(\sum_{i=1}^3\sum_{\mathcal{D}\in\lambda_{e_{i,v}}}\sum_{k=1}^{r}\mathcal{O}_{p_{v}}(-\mathcal{D}+k\mathcal{D}_{i,v})+\sum_{\mathcal{D}\in\overline{\pi}_{v}}\left(-\xi_{\overline{\pi}_{v}}(\mathcal{D})\right)\mathcal{O}_{p_{v}}(-\mathcal{D})\right)\right).
\een
As before, for each $v$, one representative of the equivalence class $[x_{1}^ix_{2}^jx_{3}^k]$ in the $\mathbb{C}$-basis described  in [\cite{PT2}, Page 1848] of the $T$-module $\mathbf{M}/\langle(1,1,1)\rangle$ (as a $G_{v}$-module)   corresponds to a unique element $\mathcal{O}_{p_{v}}(-i\mathcal{D}_{1,v}-j\mathcal{D}_{2,v}-k\mathcal{D}_{3,v})$ in the $v$-term part of $K$-class decomposition of $\lim\limits_{\longrightarrow}\mathcal{H}om(\mathfrak{m}^r,\mathcal{O}_{\mathcal{C}_{\mathcal{F}}})/\mathcal{O}_{\mathcal{C}_{\mathcal{F}}}$ above. Here, the $\mathbb{C}$-dimension of  $[x_{1}^ix_{2}^jx_{3}^k]$ is equal to the coefficient of  $\mathcal{O}_{p_{v}}(-i\mathcal{D}_{1,v}-j\mathcal{D}_{2,v}-k\mathcal{D}_{3,v})$. 

As the $0$-dimensional sheaf $\mathtt{Q}$ is a coherent subsheaf of $\lim\limits_{\longrightarrow}\mathcal{H}om(\mathfrak{m}^r,\mathcal{O}_{\mathcal{C}_{\mathcal{F}}})/\mathcal{O}_{\mathcal{C}_{\mathcal{F}}}$, then the $K$-theory decomposition of   $\mathtt{Q}$  follows from  the  description of $\mathtt{Q}$ locally as a finitely generated $T$-invariant $\mathbb{C}[x_{1},x_{2},x_{3}]$-submodule of $\mathbf{M}/\langle(1,1,1)\rangle$ (as a $G_{v}$-module)  corresponding to  a labelled box  configuration $\widetilde{\pi}_{v}$ with outgoing partitions $(\lambda_{1,v},\lambda_{2,v},\lambda_{3,v})$ as shown in [\cite{PT2}, Section 2.5].

Now the proof is completed by  the following short exact sequence
\ben
0\to\mathcal{O}_{\mathcal{C}_{\mathcal{F}}}\to\mathcal{F}\to\mathtt{Q}\to0.
\een
\end{proof}
\begin{remark}\label{Simplification1}
By the definition of $\widetilde{\pi}_{v}$ and $\overline{\pi}_{v}$, the $K$-theory decomposition of $\mathcal{F}$ can be rewritten as
\ben
\mathcal{F}&=&\sum_{v\in V_{\mathcal{X}}}\sum_{\mathcal{D}\in \breve{\pi}_{v}}\zeta_{\breve{\pi}_{v}}(\mathcal{D})\mathcal{O}_{p_{v}}(-\mathcal{D})+\sum_{e\in E_{\mathcal{X}}}\sum_{\mathcal{D}\in\lambda_{e}}\mathcal{O}_{\mathcal{C}_{e}}(-\mathcal{D})
\een
where
\ben
\zeta_{\breve{\pi}_{v}}(i,j,k)=\left\{
\begin{aligned}
	&  1,\; \;\;\;\;\;\;\mbox{if} \;\;(i,j,k)\in\mathrm{I}^{-}(\widetilde{\pi}_{v}); \\
	& -1, \;\;\;\mbox{if}\;\; (i,j,k)\in(\mathrm{II}(\overline{\pi}_{v})\setminus \mathrm{II}(\widetilde{\pi}_{v}))\cup\mathrm{III}_{lb}(\widetilde{\pi}_{v});\\
	&-2,\;\;\;\mbox{if}\;\; (i,j,k)\in \mathrm{III}(\overline{\pi}_{v})\setminus\mathrm{III}(\widetilde{\pi}_{v})
\end{aligned}
\right.
\een
with
\ben
&&\breve{\pi}_{v}:=\mathrm{I}^{-}(\widetilde{\pi}_{v})\cup(\mathrm{II}(\overline{\pi}_{v})\setminus \mathrm{II}(\widetilde{\pi}_{v}))\cup\mathrm{III}_{lb}(\widetilde{\pi}_{v})\cup(\mathrm{III}(\overline{\pi}_{v})\setminus\mathrm{III}(\widetilde{\pi}_{v}))\\
&&\mathrm{II}(\overline{\pi}_{v})=\mathrm{II}(\lambda_{1,v},\lambda_{2,v},\lambda_{3,v}),\;\; \mathrm{III}(\overline{\pi}_{v})=\mathrm{III}(\lambda_{1,v},\lambda_{2,v},\lambda_{3,v}).
\een

\end{remark}

\subsection{The PT partition function}
As mentioned in Introduction,  based on the perfect obstruction theory and virtual fundamental classes developed in [\cite{Lyj1}, Section 5] for orbifold PT theory, we define the PT partition function via the virtual localization following [\cite{PT2}] as follows. 

Let $\mathcal{X}$ be a toric  CY 3-orbifold. 
The DT invariant of $\mathcal{X}$ is defined in [\cite{BCY}, Section 5.1] by Behrend's weighted Euler characteristic [\cite{Beh}]  of the moduli space $\mathrm{Hilb}^{\alpha}(\mathcal{X})$ for  $\alpha\in F_{1}K(\mathcal{X})$, which is equal to a signed count over the $T$-fixed points  by [\cite{BF}, Theorem 3.4 and Corollary 3.5].  This definition is also employed  for  PT invariants of $\mathcal{X}$ in  [\cite{Zhang}, Section 4].  However,  it is not suitable to  take the formula in [\cite{BF}, Theorem 3.4 and Corollary 3.5] directly for the definition of  PT invariant of $\mathcal{X}$ since the $T$-fixed points of the moduli space $\mathrm{PT}(\mathcal{X},\beta)$ for $\beta\in F_{1}K(\mathcal{X})$ are not necessary isolated as stated in  Remark \ref{not-isolated}.  More generally, as the integration over the virtual fundamental class  $[\mathrm{PT}(\mathcal{X},\beta)]^{vir}$,  PT invariants of $\mathcal{X}$ can be defined by $T$-equivariant residue, which resolves  [\cite{Zhang}, Conjecture 4.23, 4.16, 4.19].

The virtual fundamental class of $\mathrm{PT}(\mathcal{X},\beta)$  has been defined for a $3$-dimensional smooth projective Deligne-Mumford stack $\mathcal{X}$ in [\cite{Lyj1}, Theorem 5.21].  However, a toric CY 3-orbifold $\mathcal{X}$ here  is quasi-projective, we will employ the strategy as in  [\cite{BP,OP1}] by restricting to the $T$-fixed locus of $\mathrm{PT}(\mathcal{X},\beta)$ each of whose  connected component is a product of $\mathbb{P}^1$'s, and  considering the $T$-equivariant perfect obstruction theories on $T$-fixed locus and the associated virtual fundamental classes in $T$-equivariant Chow ring, and then applying the virtual localization formula [\cite{GP}] to define $T$-equivariant integral for PT invariant of $\mathcal{X}$.
Actually, we will follow the argument in  [\cite{PT2}, Section 4.2 and Section 5.2].

Let  $\mathsf{Q}$ be a connected component of $T$-fixed locus of $\mathrm{PT}(\mathcal{X},\beta)$. By Lemma \ref{PT-fix},  we have $\mathsf{Q}=\prod_{v\in V_{\mathcal{X}}}\mathsf{Q}_{v}$ where $\mathsf{Q}_{v}$ is a connected component of moduli space of labelled box configurations containing $\widetilde{\pi}_{v}$ for each $v\in V_{\mathcal{X}}$. Each $\mathsf{Q}_{v}$ can be viewed as a connected component $[\widetilde{\pi}_{v}]$ of moduli space of labellings of $\underline{\widetilde{\pi}_{v}}$ due to [\cite{PT2}, Proposition 3]. Since $\mathsf{Q}_{v}$ is a product of $\mathbb{P}^1$'s, then $\mathsf{Q}$ is also
a product of $\mathbb{P}^1$'s which is obviously projective and nonsingular. By the similar argument  in [\cite{PT2}, Section 4.1 and 4.2], we have the following formula in localized $T$-equvariant Chow ring of $\mathrm{PT}(\mathcal{X},\beta)$:
\be\label{vir-loc-1}
\sum_{\mathsf{Q}\subset\mathrm{PT}(\mathcal{X},\beta)^T}\varsigma_{*}\left(e(T_{\mathsf{Q}})\cdot\frac{e(E_{\mathsf{Q}}^1)}{e(E_{\mathsf{Q}}^{0})}\cap[\mathsf{Q}]\right)=[\mathrm{PT}(\mathcal{X},\beta)]^{\mathrm{vir}},
\ee
where $\varsigma: \mathsf{Q}\hookrightarrow\mathrm{PT}(\mathcal{X},\beta)^T$ is the inclusion map  and $E_{\mathsf{Q}}^0\to E_{\mathsf{Q}}^1$ is the dual of $T$-equivariant perfect obstruction theory restricted on $\mathsf{Q}$. One may take integration over the left hand side of the equation \eqref{vir-loc-1} to define PT invariant of $\mathcal{X}$ as in [\cite{BP,OP1}] although it seems difficult to follow [\cite{PT2}, Section 4]  to compute $T$-equivariant  class  $\frac{e(E_{\mathsf{Q}}^1)}{e(E_{\mathsf{Q}}^{0})}$. Alternatively, we will adopt another virtual localization with a different action, which is easy to deal with $T$-equivariant Euler classes in  our CY case.

Let $T_{0}=\{(t_{1},t_{2},t_{3})\in T\;|\;t_{1}t_{2}t_{3}=1\}$. Then $T_{0}$ preserves the canonical form of $\mathcal{X}$ since it acts trivially on the form $dz_{1}\wedge dz_{2}\wedge dz_{3}$ on the local model $[\mathbb{C}^3/G] $ of each $T$-fixed point due to [\cite{BCY}, Lemma 46]. Let $\mathsf{Q}_{0}$ be a connected component of  $T_{0}$-fixed locus of $\mathrm{PT}(\mathcal{X},\beta)$.  The moduli space of $T_{0}$-invariant submodules of  $\mathbf{M}/\langle(1,1,1)\rangle$ as in Section 2.2 is conjectured to be nonsigular [\cite{PT2}, Conjecture 2], and fortunately it is  proved recently in [\cite{JWY}]. Then $\mathsf{Q}_{0}$ is also nonsingular. And as above we have
\be\label{vir-loc-2}
\sum_{\mathsf{Q}_{0}\subset\mathrm{PT}(\mathcal{X},\beta)^{T_{0}}}\varsigma_{0*}\left(e(T_{\mathsf{Q}_{0}})\cdot\frac{e(E_{\mathsf{Q}_{0}}^1)}{e(E_{\mathsf{Q}_{0}}^0)}\cap[\mathsf{Q}_{0}]\right)=[\mathrm{PT}(\mathcal{X},\beta)]^{\mathrm{vir}}
\ee
in $A^{T_{0}}_{*}(\mathrm{PT}(\mathcal{X},\beta))_{\mathrm{loc}}$,
where $\varsigma_{0}: \mathsf{Q}_{0}\hookrightarrow\mathrm{PT}(\mathcal{X},\beta)^{T_{0}}$ is the inclusion map and $E_{\mathsf{Q}_{0}}^0\to E_{\mathsf{Q}_{0}}^1$ is the dual of $T_{0}$-equivariant symmetric perfect obstruction theory $E^{-1}\to E^0$ (see [\cite{Lyj1}, Section 5.3]) restricted on $\mathsf{Q}_{0}$. By the self-duality of perfect obstruction theory, we define the following PT invariant of $\mathcal{X}$ by integration against the left hand side classes of the formula $\eqref{vir-loc-2}$:
\ben
PT_{\beta}(\mathcal{X})&=&\sum_{\mathsf{Q}_{0}\subset\mathrm{PT}(\mathcal{X},\beta)^{T_{0}}}\int_{\mathsf{Q}_{0}}e(T_{\mathsf{Q}_{0}})(-1)^{\mathrm{rk}\left( \mathrm{Ext}^1(\mathbb{I}_{\mathsf{Q}_{0}}^\bullet,\mathbb{I}_{\mathsf{Q}_{0}}^\bullet)_{0}\right)}\\
&=&\sum_{\mathsf{Q}_{0}\subset\mathrm{PT}(\mathcal{X},\beta)^{T_{0}}}\chi_{\mathrm{top}}(\mathsf{Q}_{0})(-1)^{\mathrm{rk}\left( \mathrm{Ext}^1(\mathbb{I}_{\mathsf{Q}_{0}}^\bullet,\mathbb{I}_{\mathsf{Q}_{0}}^\bullet)_{0}\right)}
\een
where $\chi_{\mathrm{top}}(\mathsf{Q}_{0})$ is the topological Euler chracteristic of $\mathsf{Q}_{0}$, and  $\mathbb{I}^\bullet_{\mathsf{Q}_{0}}=\{\mathcal{O}_{\mathcal{X}\times\mathsf{Q}_{0}}\to\mathbb{F}\}\in \mathrm{D}^b(\mathcal{X}\times\mathsf{Q}_{0})$ is the universal complex with  $\mathbb{F}$  flat over $\mathsf{Q}_{0}$. Here, we have identified each orbifold PT stable pair $\mathcal{O}_{\mathcal{X}}\to\mathcal{F}$ in $\mathsf{Q}_{0}$ with the complex $\{\mathcal{O}_{\mathcal{X}}\to\mathcal{F}\}$ in $\mathrm{D}^b(\mathcal{X})$ as in [\cite{PT2}, Proposition 1.21]. As in [\cite{PT2}, Section 5.2], we have the $\mathbb{C}^*=T/T_{0}$ action on every $T_{0}$-fixed component $\mathsf{Q}_{0}$, and every element of $\mathsf{Q}$ is the $\mathbb{C}^*$-fixed locus of a unique component $\mathsf{Q}_{0}\subset\mathrm{PT}(\mathcal{X},\beta)^{T_{0}}$ while any $\mathbb{C}^*$-fixed lous of $\mathsf{Q}_{0}$ is an element of some $\mathsf{Q}\subset\mathrm{PT}(\mathcal{X},\beta)^{T}$. Then we have 
\ben
\chi_{\mathrm{top}}(\mathsf{Q}_{0})=\sum_{\substack{\mathsf{Q}\subset\mathsf{Q}_{0}\\ \mathsf{Q}\subset\mathrm{PT}(\mathcal{X},\beta)^T}}\chi_{\mathrm{top}}(\mathsf{Q})
\een
and
\ben
PT_{\beta}(\mathcal{X})=\sum_{\mathsf{Q}\subset\mathrm{PT}(\mathcal{X},\beta)^{T}}\chi_{\mathrm{top}}(\mathsf{Q})(-1)^{\mathrm{rk}\left( \mathrm{Ext}^1(\mathbb{I}_{\mathsf{Q}}^\bullet,\mathbb{I}_{\mathsf{Q}}^\bullet)_{0}\right)}
\een
where $\mathbb{I}^\bullet_{\mathsf{Q}}=\{\mathcal{O}_{\mathcal{X}\times\mathsf{Q}}\to\mathbb{F}\}\in \mathrm{D}^b(\mathcal{X}\times\mathsf{Q})$. 

\begin{definition}\label{PT-def}
The PT partition function of $\mathcal{X}$ is defined by 
\ben
PT(\mathcal{X})=\sum_{\beta\in F_{1}K(\mathcal{X})}PT_{\beta}(\mathcal{X})q^\beta
\een
and the multi-regular PT partition function is defined by
\ben
PT_{mr}(\mathcal{X})=\sum_{\beta\in F_{mr}K(\mathcal{X})}PT_{\beta}(\mathcal{X})q^\beta
\een
where $q^\beta$ for $\beta\in F_{1}K(\mathcal{X})$ is defined in Section 2.3.
\end{definition}

Assume that $[\mathbf{I}^\bullet]\in\mathsf{Q}\subset\mathrm{PT}(\mathcal{X},\beta)^{T}$ where the complex $\mathbf{I}^\bullet=\{\mathcal{O}_{\mathcal{X}}\to\mathcal{F}\}$ in $\mathrm{D}^b(\mathcal{X})$ concentrated in degree $0$ and 1 corresponds to the $T$-invariant orbifold PT stable pair $\mathcal{O}_{\mathcal{X}}\to\mathcal{F}$, which is the restriction of the $T$-invariant universal complex   $\mathbb{I}^\bullet_{\mathsf{Q}}$ on $[\mathbf{I}^\bullet]$. Let $\{\widetilde{\pi}_{v},\lambda_{e}\}$ be the set of labelled box configurations and edge partitions corresponding to $\mathcal{O}_{\mathcal{X}}\to\mathcal{F}$ (or  $\mathbf{I}^\bullet$) by Lemma \ref{PT-fix}. Since $\mathsf{Q}=\prod_{v\in V_{\mathcal{X}}}\mathsf{Q}_{v}$ and $\mathsf{Q}_{v}$ corresponds to a   component $[\widetilde{\pi}_{v}]$ of moduli space of labellings of $\underline{\widetilde{\pi}_{v}}$ for each $v\in V_{\mathcal{X}}$.  Then 
\ben
\chi(\mathsf{Q})=\prod_{v\in V_{\mathcal{X}}}\chi_{\mathrm{top}}(\mathsf{Q}_{v})=\prod_{v\in V_{\mathcal{X}}}\chi_{\mathrm{top}}([\widetilde{\pi}_{v}]).
\een
 Let $\widetilde{\mathcal{P}}(\beta)$ be the set of all collections $\{[\widetilde{\pi}_{v}]\}$  corresponding to all components $\mathsf{Q}\subset\mathrm{PT}(\mathcal{X},\beta)^{T}$. Then we have 
\bea\label{PT-formula}
PT_{\beta}(\mathcal{X})&=&\sum_{\mathsf{Q}=\prod\limits_{v\in V_{\mathcal{X}}}\mathsf{Q}_{v}\subset\mathrm{PT}(\mathcal{X},\beta)^{T}}(-1)^{\dim \mathrm{Ext}^1(\mathbf{I}^\bullet,\mathbf{I}^\bullet)_{0}}\prod_{v\in V_{\mathcal{X}}}\chi_{\mathrm{top}}(\mathsf{Q}_{v})\nonumber\\
&=&\sum_{\{[\widetilde{\pi}_{v}]\}\in\widetilde{\mathcal{P}}(\beta)}(-1)^{\dim \mathrm{Ext}^1(\mathbf{I}^\bullet,\mathbf{I}^\bullet)_{0}}\prod_{v\in V_{\mathcal{X}}}\chi_{\mathrm{top}}([\widetilde{\pi}_{v}]).
\eea

Next, we will compute the parity of $\dim\mathrm{Ext}^1(\mathbf{I}^\bullet,\mathbf{I}^\bullet)_{0}$ to obtain the explicit formula of $PT(\mathcal{X})$.  We first employ some notation from  the DT side  in [\cite{BCY}, Section 6] as follows. Define 
\ben
&&s_{1}(W)=\mathrm{vdim}\, W\mbox{ mod } 2,\;\;\;\mbox{ for a virtual $T$-representation W};\\
&&s_{2}(M)= M \mbox{ mod }2,\;\;\;\mbox{ for  $M\in\mathbb{Z}$}.
\een
For a  $T$-representation $W$, the shifted dual of $W$ is defined by $W^*=W^\vee\otimes\mathbb{C}[-\mu]$ where $W^\vee$ is the dual representation of $W$ and $\mu\in \mathrm{Hom}(T,\mathbb{C}^*)$ is the primitive weight defined by $\omega_{\mathcal{X}}\cong\mathcal{O}_{\mathcal{X}}\otimes_{\mathbb{C}}\mathbb{C}[\mu]$ in [\cite{BCY}, Lemma 18]. Here $\mathbb{C}[\mu]$ is a one-dimensional $T$-representation with weight $\mu$. And one can define $\sigma(W-W^*)=s_{1}(W)$ for a virtual $T$-representation $W$ by [\cite{BCY}, Proposition 6] where $\sigma$ is defined on anti-self shifted dual virtual representations. For $\mathcal{H}_{1},\mathcal{H}_{2}\in \mathrm{D}^b(\mathcal{X})$, we define $\widetilde{\chi}(\mathcal{H}_{1},\mathcal{H}_{2})=\sum_{i}(-1)^i \mathrm{Ext}^i(\mathcal{H}_{1},\mathcal{H}_{2})$ in $K$-theory, and $\chi(\mathcal{H}_{1},\mathcal{H}_{2})=\sum_{i}(-1)^i\dim
 \mathrm{Ext}^i(\mathcal{H}_{1},\mathcal{H}_{2})$ with the special case  $\chi(\mathcal{H}_{1})=\chi(\mathcal{O}_{\mathcal{X}},\mathcal{H}_{1})$. 
Define $\lambda_{-1}W=\sum_{i}(-1)^i\Lambda^iW$ for a vector bundle $V$.
Now it follows from the argument in [\cite{BCY}, Section 6] that $s_{2}(\dim\mathrm{Ext}^1(\mathbf{I}^\bullet,\mathbf{I}^\bullet)_{0})=s_{1}(\mathrm{Ext}^1(\mathbf{I}^\bullet,\mathbf{I}^\bullet)_{0})$ can be  computed as follows, see also the proof in [\cite{Zhang}, Section 4.7].

\begin{theorem}\label{sign-formula}
Let $\varphi:\mathcal{O}_{\mathcal{X}}\to\mathcal{F}$ be a $T$-invariant orbifold PT stable pair and let $\{\widetilde{\pi}_{v},\lambda_{e}\}$ be the corresponding set of labelled box configurations and edge partitions. Let $\overline{\pi}_{v}:=\pi_{\mathrm{min}}(\lambda_{1,v},\lambda_{2,v},\lambda_{3,v})$ be the 3D partitions determined by outgoing partitions $(\lambda_{1,v},\lambda_{2,v},\lambda_{3,v})$ of the labelled box configuration $\widetilde{\pi}_{v}$ for each $v\in V_{\mathcal{X}}$.
Let $\mathbf{I}^\bullet=\{\mathcal{O}_{\mathcal{X}}\to\mathcal{F}\}\in\mathrm{D}^b(\mathcal{X})$ be the complex  concentrated in degree $0$ and 1. Then we have
\ben
s_{1}(\mathrm{Ext}^1(\mathbf{I}^\bullet,\mathbf{I}^\bullet)_{0})=s_{2}(\chi(\mathcal{O}_{\mathcal{C}_{\mathcal{F}}}))+\sum_{v\in V_{\mathcal{X}}}s_{2}(\mathbf{V}(v))+\sum_{e\in E(\mathcal{X})}s_{2}(\mathbf{E}(e))
\een
where 
\ben
\mathbf{V}(v)&=&\ell(\widetilde{\pi}_{v})+\sum_{\mathcal{A}\in\widetilde{\pi}_{v}}\eta_{\widetilde{\pi}_{v}}(\mathcal{A})h^0\left(\mathcal{O}_{p_{v}}(-\mathcal{A})\right)\\
&&+\sum_{\mathcal{A},\mathcal{B}\in\widetilde{\pi}_{v}}\eta_{\widetilde{\pi}_{v}}(\mathcal{A})\eta_{\widetilde{\pi}_{v}}(\mathcal{B})h^0\left(\sum_{i=1}^3\mathcal{O}_{p_{v}}(\mathcal{A}-\mathcal{B}+\mathcal{D}_{i,v})\right)\\
&&+\sum_{\mathcal{A}\in\widetilde{\pi}_{v}}\sum_{\mathcal{B}\in\overline{\pi}_{v}}\eta_{\widetilde{\pi}_{v}}(\mathcal{A})\xi_{\overline{\pi}_{v}}(\mathcal{B})h^0\left(\sum_{i=1}^3\mathcal{O}_{p_{v}}(\mathcal{A}-\mathcal{B}+\mathcal{D}_{i,v})\right)\\
&&+\sum_{\mathcal{A}\in\overline{\pi}_{v}}\sum_{\mathcal{B}\in\widetilde{\pi}_{v}}\xi_{\overline{\pi}_{v}}(\mathcal{A})\eta_{\widetilde{\pi}_{v}}(\mathcal{B})h^0\left(\sum_{i=1}^3\mathcal{O}_{p_{v}}(\mathcal{A}-\mathcal{B}+\mathcal{D}_{i,v})\right)\\
&&+\sum_{i=1}^3\sum_{\mathcal{A}\in\lambda_{i,v}}\sum_{\mathcal{B}\in\widetilde{\pi}_{v}}\eta_{\widetilde{\pi}_{v}}(\mathcal{B})h^0\left(\mathcal{O}_{p_{v}}(\mathcal{A}-\mathcal{B})\otimes\lambda_{-1}(N_{\mathcal{C}_{e_{i,v}}/\mathcal{X}})\right)\\
&&+\Vert\overline{\pi}_{v}\Vert+\sum_{\mathcal{A},\mathcal{B}\in\overline{\pi}_{v}}\xi_{\overline{\pi}_{v}}(\mathcal{A})\xi_{\overline{\pi}_{v}}(\mathcal{B})h^0\left(\sum_{i=1}^3\mathcal{O}_{p_{v}}(\mathcal{A}-\mathcal{B}+\mathcal{D}_{i,v})\right)\\
&&+\sum_{i=1}^3\sum_{\mathcal{A}\in\lambda_{i,v}}\sum_{\mathcal{B}\in\overline{\pi}_{v}}\xi_{\overline{\pi}_{v}}(\mathcal{B})h^0\left(\mathcal{O}_{p_{v}}(\mathcal{A}-\mathcal{B})\otimes\lambda_{-1}N_{\mathcal{C}_{e_{i,v}}/\mathcal{X}}\right)\\
&&+\sum_{i\neq j}\sum_{\mathcal{A}\in\lambda_{i,v}}\sum_{\mathcal{B}\in\lambda_{j,v}}h^0(\mathcal{O}_{p_{v}}(\mathcal{A}-\mathcal{B}+\mathcal{D}_{j,v}))
\een
and 
\ben
\mathbf{E}(e)=\sum_{\mathcal{A},\mathcal{B}\in\lambda_{e}}\chi(\mathcal{O}_{\mathcal{C}_{e}}(\mathcal{A}-\mathcal{B})+\mathcal{O}_{\mathcal{C}_{e}}(\mathcal{A}-\mathcal{B}+\mathcal{D}_{e})).
\een
\end{theorem}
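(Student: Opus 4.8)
The plan is to follow the method of [\cite{BCY}, Section 6], adapted to the PT complex $\mathbf{I}^\bullet=\{\mathcal{O}_{\mathcal{X}}\to\mathcal{F}\}$, reducing the sought parity to a $\sigma$-computation of an explicit $T$-character. First I would record that the $T_{0}$-equivariant symmetric obstruction theory identifies the virtual tangent space with $\mathrm{Ext}^1(\mathbf{I}^\bullet,\mathbf{I}^\bullet)_{0}$, and that Serre duality on the Calabi-Yau orbifold together with $\omega_{\mathcal{X}}\cong\mathcal{O}_{\mathcal{X}}\otimes_{\mathbb{C}}\mathbb{C}[\mu]$ from [\cite{BCY}, Lemma 18] makes the trace-free virtual representation $-\widetilde{\chi}(\mathbf{I}^\bullet,\mathbf{I}^\bullet)_{0}$ anti-self shifted dual. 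Because a stable pair is simple, $\mathrm{Ext}^0(\mathbf{I}^\bullet,\mathbf{I}^\bullet)_{0}=\mathrm{Ext}^3(\mathbf{I}^\bullet,\mathbf{I}^\bullet)_{0}=0$ and $\mathrm{Ext}^2(\mathbf{I}^\bullet,\mathbf{I}^\bullet)_{0}\cong\mathrm{Ext}^1(\mathbf{I}^\bullet,\mathbf{I}^\bullet)_{0}^{*}$, so $-\widetilde{\chi}(\mathbf{I}^\bullet,\mathbf{I}^\bullet)_{0}=W-W^{*}$ with $W=\mathrm{Ext}^1(\mathbf{I}^\bullet,\mathbf{I}^\bullet)_{0}$. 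Then [\cite{BCY}, Proposition 6] gives
\[
s_{1}(\mathrm{Ext}^1(\mathbf{I}^\bullet,\mathbf{I}^\bullet)_{0})=\sigma\big(-\widetilde{\chi}(\mathbf{I}^\bullet,\mathbf{I}^\bullet)_{0}\big),
\]
and the problem becomes the explicit evaluation of the right-hand side.

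Next I would expand the Euler form. From $[\mathbf{I}^\bullet]=[\mathcal{O}_{\mathcal{X}}]-[\mathcal{F}]$, discarding the trace summand $\widetilde{\chi}(\mathcal{O}_{\mathcal{X}},\mathcal{O}_{\mathcal{X}})$ yields
\[
-\widetilde{\chi}(\mathbf{I}^\bullet,\mathbf{I}^\bullet)_{0}=\widetilde{\chi}(\mathcal{O}_{\mathcal{X}},\mathcal{F})+\widetilde{\chi}(\mathcal{F},\mathcal{O}_{\mathcal{X}})-\widetilde{\chi}(\mathcal{F},\mathcal{F}).
\]
Into this I substitute the $T$-equivariant $K$-theory decomposition of $\mathcal{F}$ from Proposition \ref{K-decomposition} (equivalently Remark \ref{Simplification1}), namely a sum of skyscraper classes $\mathcal{O}_{p_{v}}(-\mathcal{D})$ at the vertices and curve classes $\mathcal{O}_{\mathcal{C}_{e}}(-\mathcal{D})$ along the edges. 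Expanding bilinearly produces a sum over pairs of boxes and edge cells of local Euler forms, each computed on the charts $[\mathbb{C}^3/G_{v}]$ and on $\mathcal{C}_{e}$ as in [\cite{PT2}, Sections 4.1 and 4.2]; the pairings of a point class against a curve class are what generate the factors $\lambda_{-1}(N_{\mathcal{C}_{e_{i,v}}/\mathcal{X}})$ through the local Koszul resolution of $\mathcal{O}_{p_{v}}$ along $\mathcal{C}_{e}$.

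Then I would use that $\sigma$ is additive on anti-self shifted dual summands. Pairings of sheaves with disjoint support vanish, so the character splits into pieces localized at each vertex $v$ and along each edge $e$ (the edge piece absorbing its two endpoints). For each such piece $V$, which is separately anti-self shifted dual, the value $\sigma(V)$ equals $s_{2}$ of the dimension of the non-negative-weight part of $V$ under a generic cocharacter that splits the involution $w\mapsto -w-\mu$ on weights; this half-dimension is precisely what the symbol $h^0(-)$ abbreviates in the statement. Assembling the point-point pairings at $v$ (together with the $\ell(\widetilde{\pi}_{v})$ and $\Vert\overline{\pi}_{v}\Vert$ bookkeeping), the point-curve pairings, and the pure curve self-pairing along $e$ then yields exactly $\mathbf{V}(v)$ and $\mathbf{E}(e)$.

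The main obstacle will be the weight-zero and self-paired contributions, i.e. the monomials fixed (or killed) by $w\mapsto-w-\mu$, which $\sigma$'s half-counting does not see through the plain $h^0$ terms. Tracking these is exactly what forces the global correction $s_{2}(\chi(\mathcal{O}_{\mathcal{C}_{\mathcal{F}}}))$, and the delicate point is to verify that they assemble into the holomorphic Euler characteristic of the Cohen-Macaulay support $\mathcal{C}_{\mathcal{F}}$ rather than into spurious local terms. A secondary difficulty is the correct allocation of the curve contributions between the edge term $\mathbf{E}(e)$ and the endpoint vertex terms, and checking that every pairing between distinct vertices or between non-adjacent edges cancels, so that only the stated local pieces remain.
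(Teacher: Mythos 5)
Your overall reduction is the same as the paper's: the paper also converts $s_{1}(\mathrm{Ext}^1(\mathbf{I}^\bullet,\mathbf{I}^\bullet)_{0})$ into a parity computation of $\widetilde{\chi}(\mathcal{O}_{\mathcal{X}},\mathcal{O}_{\mathcal{X}})-\widetilde{\chi}(\mathbf{I}^\bullet,\mathbf{I}^\bullet)$ (via [\cite{Lyj1}, Lemma 5.16] rather than simplicity plus Serre duality), substitutes the decomposition of Proposition \ref{K-decomposition}, and evaluates the resulting local blocks by the method of [\cite{BCY}, Section 6]. The only organizational difference is that the paper first splits $\mathcal{F}$ by $0\to\mathcal{O}_{\mathcal{C}_{\mathcal{F}}}\to\mathcal{F}\to\mathtt{Q}\to 0$, so that the block $\sigma(\widetilde{\chi}(\mathcal{O}_{\mathcal{C}_{\mathcal{F}}},\mathcal{O}_{\mathcal{C}_{\mathcal{F}}}))$ can be quoted verbatim from the DT computation of [\cite{BCY}, Section 6.2] (this produces $\mathbf{E}(e)$ and the last four terms of $\mathbf{V}(v)$), and only the three $\mathtt{Q}$-blocks require new work.

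However, two of your claims about the mechanism are wrong, and they sit exactly at the step where the computation is matched to the stated formula. First, the symbols $h^{0}(-)$ in $\mathbf{V}(v)$ are genuine sheaf-cohomology dimensions on the stack: for a class supported at $p_{v}$, $h^{0}$ is the dimension of the $G_{v}$-invariant part, i.e. it equals $1$ or $0$ according to whether the corresponding character of $G_{v}$ is trivial. They are not ``half-dimensions under a generic cocharacter.'' Half-counting (writing an anti-self shifted dual block as $W-W^{*}$ and taking $s_{2}(\mathrm{vdim}\,W)$) is only the device for evaluating $\sigma$ of the two diagonal blocks $\widetilde{\chi}(\mathtt{Q},\mathtt{Q})$ and $\widetilde{\chi}(\mathcal{O}_{\mathcal{C}_{\mathcal{F}}},\mathcal{O}_{\mathcal{C}_{\mathcal{F}}})$, and each such $\mathrm{vdim}\,W$ is a signed combination of several $h^{0}$'s coming from Koszul resolutions on $[\mathbb{C}^3/G_{v}]$; no single $h^{0}$ is itself a half-count. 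Second, your ``main obstacle'' rests on a false premise: since $\mu$ is primitive ([\cite{BCY}, Lemma 18]), no weight is fixed by $w\mapsto -w-\mu$, so there are no fixed or self-paired monomials to track, and the global term does not arise that way. It arises trivially: by Serre duality $\widetilde{\chi}(\mathcal{O}_{\mathcal{X}},\mathcal{F})+\widetilde{\chi}(\mathcal{F},\mathcal{O}_{\mathcal{X}})=W-W^{*}$ with $W=\widetilde{\chi}(\mathcal{O}_{\mathcal{X}},\mathcal{F})$, so its $\sigma$ is $s_{2}(\chi(\mathcal{F}))=s_{2}(\chi(\mathcal{O}_{\mathcal{C}_{\mathcal{F}}}))+s_{2}(\chi(\mathtt{Q}))$; the summand $\chi(\mathtt{Q})=\sum_{v}\sum_{\mathcal{A}\in\widetilde{\pi}_{v}}\eta_{\widetilde{\pi}_{v}}(\mathcal{A})h^{0}(\mathcal{O}_{p_{v}}(-\mathcal{A}))$ is absorbed into the vertex terms, while $\chi(\mathcal{O}_{\mathcal{C}_{\mathcal{F}}})$ is kept as the global summand. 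With these two corrections your outline coincides with the paper's proof.
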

\begin{proof}
By [\cite{Lyj1}, Lemma 5.16], we have the following equality 
\ben
\mathrm{Ext}^1(\mathbf{I}^\bullet,\mathbf{I}^\bullet)_{0}-\mathrm{Ext}^2(\mathbf{I}^\bullet,\mathbf{I}^\bullet)_{0}=\widetilde{\chi}(\mathcal{O}_{\mathcal{X}},\mathcal{O}_{\mathcal{X}})-\widetilde{\chi}(\mathbf{I}^\bullet,\mathbf{I}^\bullet).
\een
As there exist an exact triangle in $\mathrm{D}^b(\mathcal{X})$:
\ben
\mathcal{F}[-1]\to\mathbf{I}^\bullet\to\mathcal{O}_{\mathcal{X}}\to\mathcal{F},
\een 
and a short exact sequence 
\ben
0\to\mathcal{O}_{\mathcal{C}_{\mathcal{F}}}\to\mathcal{F}\to\mathtt{Q}\to0,
\een
then by [\cite{BCY}, Proposition 6-(1)], we have
\ben
&&\mathrm{Ext}^1(\mathbf{I}^\bullet,\mathbf{I}^\bullet)_{0}-\left(\mathrm{Ext}^1(\mathbf{I}^\bullet,\mathbf{I}^\bullet)_{0}\right)^*\\
&=&\widetilde{\chi}(\mathcal{O}_{\mathcal{X}},\mathcal{O}_{\mathcal{X}})-\widetilde{\chi}(\mathbf{I}^\bullet,\mathbf{I}^\bullet)\\
%&=&\widetilde{\chi}(\mathcal{O}_{\mathcal{X}},\mathcal{F})+\widetilde{\chi}(\mathcal{F},\mathcal{O}_{\mathcal{X}})-\widetilde{\chi}(\mathcal{F},\mathcal{F})\\
&=&\widetilde{\chi}(\mathcal{O}_{\mathcal{X}},\mathcal{O}_{\mathcal{C}_{\mathcal{F}}})-\left(\widetilde{\chi}(\mathcal{O}_{\mathcal{X}},\mathcal{O}_{\mathcal{C}_{\mathcal{F}}})\right)^*+\widetilde{\chi}(\mathcal{O}_{\mathcal{X}},\mathtt{Q})-\left(\widetilde{\chi}(\mathcal{O}_{\mathcal{X}},\mathtt{Q})\right)^*\\
&&-\left(\widetilde{\chi}(\mathcal{O}_{\mathcal{C}_{\mathcal{F}}},\mathtt{Q})-\left(\widetilde{\chi}(\mathcal{O}_{\mathcal{C}_{\mathcal{F}}},\mathtt{Q})\right)^*\right)-\widetilde{\chi}(\mathcal{O}_{\mathcal{C}_{\mathcal{F}}},\mathcal{O}_{\mathcal{C}_{\mathcal{F}}})-\widetilde{\chi}(\mathtt{Q},\mathtt{Q})
\een
and
\ben
&&s_{1}(\mathrm{Ext}^1(\mathbf{I}^\bullet,\mathbf{I}^\bullet)_{0})\\
&=&s_{1}(\widetilde{\chi}(\mathcal{O}_{\mathcal{X}},\mathcal{O}_{\mathcal{C}_{\mathcal{F}}}))+s_{1}(\widetilde{\chi}(\mathcal{O}_{\mathcal{X}},\mathtt{Q}))+s_{1}(\widetilde{\chi}(\mathcal{O}_{\mathcal{C}_{\mathcal{F}}},\mathtt{Q}))+\sigma(\widetilde{\chi}(\mathcal{O}_{\mathcal{C}_{\mathcal{F}}},\mathcal{O}_{\mathcal{C}_{\mathcal{F}}}))+\sigma(\widetilde{\chi}(\mathtt{Q},\mathtt{Q}))\\
&=&s_{2}(\chi(\mathcal{O}_{\mathcal{C}_{\mathcal{F}}}))+s_{2}(\chi(\mathtt{Q}))+s_{1}(\widetilde{\chi}(\mathcal{O}_{\mathcal{C}_{\mathcal{F}}},\mathtt{Q}))+\sigma(\widetilde{\chi}(\mathcal{O}_{\mathcal{C}_{\mathcal{F}}},\mathcal{O}_{\mathcal{C}_{\mathcal{F}}}))+\sigma(\widetilde{\chi}(\mathtt{Q},\mathtt{Q})).
\een
As in the proof of Proposition \ref{K-decomposition}, we have
\ben
\mathcal{O}_{\mathcal{C}_{\mathcal{F}}}=\sum_{v\in V_{\mathcal{X}}}\sum_{\mathcal{D}\in\overline{\pi}_{v}}\xi_{\overline{\pi}_{v}}(\mathcal{D})\mathcal{O}_{p_{v}}(-\mathcal{D})+\sum_{e\in E_{\mathcal{X}}}\sum_{\mathcal{D}\in\lambda_{e}}\mathcal{O}_{\mathcal{C}_{e}}(-\mathcal{D}).
\een
It is proved in [\cite{BCY}, Section 6.2] that 
\ben
\sigma(\widetilde{\chi}(\mathcal{O}_{\mathcal{C}_{\mathcal{F}}},\mathcal{O}_{\mathcal{C}_{\mathcal{F}}}))&=&\sum_{e\in E_{\mathcal{X}}}s_{2}\bigg(\sum_{\mathcal{A},\mathcal{B}\in\lambda_{e}}\chi(\mathcal{O}_{\mathcal{C}_{e}}(\mathcal{A}-\mathcal{B})+\mathcal{O}_{\mathcal{C}_{e}}(\mathcal{A}-\mathcal{B}+\mathcal{D}_{e}))\bigg)\\
&&+\sum_{v\in V_{\mathcal{X}}}s_{2}\Bigg(\Vert\overline{\pi}_{v}\Vert+\sum_{\mathcal{A},\mathcal{B}\in\overline{\pi}_{v}}\xi_{\overline{\pi}_{v}}(\mathcal{A})\xi_{\overline{\pi}_{v}}(\mathcal{B})h^0\left(\sum_{i=1}^3\mathcal{O}_{p_{v}}(\mathcal{A}-\mathcal{B}+\mathcal{D}_{i,v})\right)\\
&&+\sum_{i=1}^3\sum_{\mathcal{A}\in\lambda_{i,v}}\sum_{\mathcal{B}\in\overline{\pi}_{v}}\xi_{\overline{\pi}_{v}}(\mathcal{B})h^0\left(\mathcal{O}_{p_{v}}(\mathcal{A}-\mathcal{B})\otimes\lambda_{-1}N_{\mathcal{C}_{e_{i,v}}/\mathcal{X}}\right)\\
&&+\sum_{i\neq j}\sum_{\mathcal{A}\in\lambda_{i,v}}\sum_{\mathcal{B}\in\lambda_{j,v}}h^0(\mathcal{O}_{p_{v}}(\mathcal{A}-\mathcal{B}+\mathcal{D}_{j,v}))\Bigg).
\een
%By Proposition \ref{K-decomposition}, we have
%\ben
%\mathtt{Q}=\sum_{v\in V_{\mathcal{X}}}\sum_{\mathcal{A}\in\widetilde{\pi}_{v}}\eta_{\widetilde{\pi}_{v}}(\mathcal{A})\mathcal{O}_{p_{v}}(-\mathcal{A}).
%\een
By Proposition \ref{K-decomposition} and using the similar argument as in [\cite{BCY}, Section 6.2], we have
\ben
\chi(\mathtt{Q})&=&\sum_{v\in V_{\mathcal{X}}}\sum_{\mathcal{A}\in\widetilde{\pi}_{v}}\eta_{\widetilde{\pi}_{v}}(\mathcal{A})h^0\left(\mathcal{O}_{p_{v}}(-\mathcal{A})\right),\\
\sigma\left(\widetilde{\chi}(\mathtt{Q},\mathtt{Q})\right)
%&=&\sum_{v\in V_{\mathcal{X}}}\sum_{\mathcal{A}\in\widetilde{\pi}_{v}}s_{2}(\eta_{\widetilde{\pi}_{v}}(\mathcal{A})^2)s_{1}(\mathrm{Ext}^0(\mathcal{O}_{p_{v}}(-\mathcal{A}),\mathcal{O}_{p_{v}}(-\mathcal{A}))-\mathrm{Ext}^1(\mathcal{O}_{p_{v}}(-\mathcal{A}),\mathcal{O}_{p_{v}}(-\mathcal{A})))\\
%&&+\sum_{v\in V_{\mathcal{X}}}\sum_{\substack{\mathcal{A},\mathcal{B}\in\widetilde{\pi}_{v}\\ \mathcal{A}<\mathcal{B}}}s_{2}(\eta_{\widetilde{\pi}_{v}}(\mathcal{A})\eta_{\widetilde{\pi}_{v}}(\mathcal{B}))s_{1}(\widetilde{\chi}(\mathcal{O}_{p_{v}}(-\mathcal{A}),\mathcal{O}_{p_{v}}(-\mathcal{B})))\\
%&=&\sum_{v\in V_{\mathcal{X}}}s_{2}(\ell(\widetilde{\pi}_{v})(1+h^0(N_{p_{v}/\mathcal{X}})))\\
%&&+\sum_{v\in V_{\mathcal{X}}}\sum_{\substack{\mathcal{A},\mathcal{B}\in\widetilde{\pi}_{v}\\ \mathcal{A}<\mathcal{B}}}s_{2}(\eta_{\widetilde{\pi}_{v}}(\mathcal{A})\eta_{\widetilde{\pi}_{v}}(\mathcal{B}))s_{1}(\widetilde{\chi}(\mathcal{O}_{p_{v}}(\mathcal{A}-\mathcal{B})\otimes\lambda_{-1} (N_{p_{v}/\mathcal{X}})))\\
&=&\sum_{v\in V_{\mathcal{X}}}s_{2}\left(\ell(\widetilde{\pi}_{v})+\sum_{\mathcal{A},\mathcal{B}\in\widetilde{\pi}_{v}}\eta_{\widetilde{\pi}_{v}}(\mathcal{A})\eta_{\widetilde{\pi}_{v}}(\mathcal{B})h^0\left(\sum_{i=1}^3\mathcal{O}_{p_{v}}(\mathcal{A}-\mathcal{B}+\mathcal{D}_{i,v})\right)\right)
\een
%where $<$ is any  total order on each partition $\widetilde{\pi}_{v}$.
% and we have used some results  in [\cite{BCY}, Section 6.2] for the second and third equalities.
%Again, the similar argument as in [\cite{BCY}, Section 6.2] shows that
and
\ben
s_{1}(\widetilde{\chi}(\mathcal{O}_{\mathcal{C}_{\mathcal{F}}},\mathtt{Q}))
%&=&\sum_{v\in V_{\mathcal{X}}}\sum_{\mathcal{A}\in\overline{\pi}_{v}}\sum_{\mathcal{B}\in\widetilde{\pi}_{v}}s_{2}(\xi_{\overline{\pi}_{v}}(\mathcal{A})\eta_{\widetilde{\pi}_{v}}(\mathcal{B}))s_{1}(\widetilde{\chi}(\mathcal{O}_{p_{v}}(-\mathcal{A}),\mathcal{O}_{p_{v}}(-\mathcal{B})))\\
%&&+\sum_{v\in V_{\mathcal{X}}}\sum_{i=1}^3\sum_{\mathcal{A}\in\lambda_{e_{i,v}}}\sum_{\mathcal{B}\in\widetilde{\pi}_{v}}s_{2}(\eta_{\widetilde{\pi}_{v}}(\mathcal{B}))s_{1}(\widetilde{\chi}(\mathcal{O}_{\mathcal{C}_{e_{i,v}}}(-\mathcal{A}),\mathcal{O}_{p_{v}}(-\mathcal{B})))\\
%&=&\sum_{v\in V_{\mathcal{X}}}s_{2}\left(\sum_{\mathcal{A}\in\overline{\pi}_{v}}\sum_{\mathcal{B}\in\widetilde{\pi}_{v}}\xi_{\overline{\pi}_{v}}(\mathcal{A})\eta_{\widetilde{\pi}_{v}}(\mathcal{B})h^0\left(\mathcal{O}_{p_{v}}(\mathcal{A}-\mathcal{B})\otimes\lambda_{-1}(N_{p_{v}/\mathcal{X}})\right)\right)\\
%&&+\sum_{v\in V_{\mathcal{X}}}s_{2}\left(\sum_{i=1}^3\sum_{\mathcal{A}\in\lambda_{i,v}}\sum_{\mathcal{B}\in\widetilde{\pi}_{v}}\eta_{\widetilde{\pi}_{v}}(\mathcal{B})h^0\left(\mathcal{O}_{p_{v}}(\mathcal{A}-\mathcal{B})\otimes\lambda_{-1}(N_{\mathcal{C}_{e_{i,v}}/\mathcal{X}})\right)\right)\\
&=&\sum_{v\in V_{\mathcal{X}}}s_{2}\left(\sum_{\mathcal{A}\in\widetilde{\pi}_{v}}\sum_{\mathcal{B}\in\overline{\pi}_{v}}\eta_{\widetilde{\pi}_{v}}(\mathcal{A})\xi_{\overline{\pi}_{v}}(\mathcal{B})h^0\left(\sum_{i=1}^3\mathcal{O}_{p_{v}}(\mathcal{A}-\mathcal{B}+\mathcal{D}_{i,v})\right)\right)\\
&&+\sum_{v\in V_{\mathcal{X}}}s_{2}\left(\sum_{\mathcal{A}\in\overline{\pi}_{v}}\sum_{\mathcal{B}\in\widetilde{\pi}_{v}}\xi_{\overline{\pi}_{v}}(\mathcal{A})\eta_{\widetilde{\pi}_{v}}(\mathcal{B})h^0\left(\sum_{i=1}^3\mathcal{O}_{p_{v}}(\mathcal{A}-\mathcal{B}+\mathcal{D}_{i,v})\right)\right)\\
&&+\sum_{v\in V_{\mathcal{X}}}s_{2}\left(\sum_{i=1}^3\sum_{\mathcal{A}\in\lambda_{i,v}}\sum_{\mathcal{B}\in\widetilde{\pi}_{v}}\eta_{\widetilde{\pi}_{v}}(\mathcal{B})h^0\left(\mathcal{O}_{p_{v}}(\mathcal{A}-\mathcal{B})\otimes\lambda_{-1}(N_{\mathcal{C}_{e_{i,v}}/\mathcal{X}})\right)\right).
\een
%where the last equality holds due to  $\lambda_{-1}(N_{p_{v}/\mathcal{X}})=\sum_{i=1}^3(\mathcal{O}_{p_{v}}(-\mathcal{D}_{i})-\mathcal{O}_{p_{v}}(\mathcal{D}_{i}))$ by the Calabi-Yau condition.
Now the proof is completed by combining all the above results.		
\end{proof}	
\begin{remark}\label{DT-PT-comparision}
The pairity of $\dim\mathrm{Ext}^1(\mathbf{I}^\bullet,\mathbf{I}^\bullet)_{0}$ in Theorem \ref{sign-formula} have the contribution from the edge part $\mathbf{E}(e)$ which is  the same as $\mathbf{SE}_{\lambda(e)}(e)$ in  DT case [\cite{BCY}, Theorem 21], while its vertex part $\mathbf{V}(v)$ is more complicated involving labelled box configurations $\widetilde{\pi}_{v}$ due to the first 6 terms. The sum of the last 4 terms in $\mathbf{V}(v)$ is exactly the vertex part of DT side $\mathbf{SV}_{\pi(v)}(v)$ with $\pi(v)=\overline{\pi}_{v}$. The formula for $\mathbf{V}(v)$ is arranged for comparing with the DT case and noticing the  computation needed only for the additional terms later.
Actually, as in Remark \ref{Simplification1}, the formula $\mathbf{V}(v)$ can be rewritten as
\ben
\mathbf{V}(v)&=&\ell(\widetilde{\pi}_{v})+\Vert\overline{\pi}_{v}\Vert+\sum_{\mathcal{A}\in\widetilde{\pi}_{v}}\eta_{\widetilde{\pi}_{v}}(\mathcal{A})h^0\left(\mathcal{O}_{p_{v}}(-\mathcal{A})\right)\\
&&+\sum_{\mathcal{A},\mathcal{B}\in\breve{\pi}_{v}}\zeta_{\breve{\pi}_{v}}(\mathcal{A})\zeta_{\breve{\pi}_{v}}(\mathcal{B})h^0\left(\sum_{i=1}^3\mathcal{O}_{p_{v}}(\mathcal{A}-\mathcal{B}+\mathcal{D}_{i,v})\right)\\
&&+\sum_{i=1}^3\sum_{\mathcal{A}\in\lambda_{i,v}}\sum_{\mathcal{B}\in\breve{\pi}_{v}}\zeta_{\breve{\pi}_{v}}(\mathcal{B})h^0\left(\mathcal{O}_{p_{v}}(\mathcal{A}-\mathcal{B})\otimes\lambda_{-1}(N_{\mathcal{C}_{e_{i,v}}/\mathcal{X}})\right)\\
&&+\sum_{i\neq j}\sum_{\mathcal{A}\in\lambda_{i,v}}\sum_{\mathcal{B}\in\lambda_{j,v}}h^0(\mathcal{O}_{p_{v}}(\mathcal{A}-\mathcal{B}+\mathcal{D}_{j,v})),
\een
where $\ell(\widetilde{\pi}_{v})+\Vert\overline{\pi}_{v}\Vert=\sum_{\mathcal{A}\in\breve{\pi}_{v}}\zeta_{\breve{\pi}_{v}}(\mathcal{A})$. See also  [\cite{Zhang}, Page 94].
\end{remark}

\begin{remark}
When $\mathcal{X}$ is a scheme $X$, then $\mathcal{C}_{e}\cong\mathbb{P}^1$ for all $e\in E_{\mathcal{X}}$. By Remark \ref{DT-PT-comparision} and   [\cite{BCY}, Example 22], we have 
\ben
s_{2}(\mathbf{E}(e))=m_{e}|\lambda_{e}|\;\mathrm{ mod }\;2
\een
and 
\ben
s_{2}(\mathbf{V}(v))&=&s_{2}\left(\ell(\widetilde{\pi}_{v})+\sum_{\mathcal{D}\in\widetilde{\pi}_{v}}\eta_{\widetilde{\pi}_{v}}(\mathcal{D})+3\sum_{\mathcal{A},\mathcal{B}\in\widetilde{\pi}_{v}}\eta_{\widetilde{\pi}_{v}}(\mathcal{A})\eta_{\widetilde{\pi}_{v}}(\mathcal{B})+6\sum_{\mathcal{A}\in\widetilde{\pi}_{v}}\sum_{\mathcal{B}\in\overline{\pi}_{v}}\eta_{\widetilde{\pi}_{v}}(\mathcal{A})\xi_{\overline{\pi}_{v}}(\mathcal{B})\right)\\
&=&s_{2}\left(2\ell(\widetilde{\pi}_{v})+3\ell(\widetilde{\pi}_{v})^2\right)\\
&=&\ell(\widetilde{\pi}_{v})\;\mathrm{ mod }\;2.
\een
Then 
\ben
s_{1}(\mathrm{Ext}^1(\mathbf{I}^\bullet,\mathbf{I}^\bullet)_{0})=s_{2}\left(\chi(\mathcal{O}_{\mathcal{C}_{\mathcal{F}}})+\sum_{v\in V_{\mathcal{X}}}\ell(\widetilde{\pi}_{v})+\sum_{e\in E_{\mathcal{X}}}m_{e}|\lambda_{e}|\right)
\een
exactly coincides with the sign in [\cite{PT2}, Theorem/Conjecture 2] proved in [\cite{JWY}] due to  \ben
\chi(\mathcal{F})=\chi(\mathcal{O}_{\mathcal{C}_{\mathcal{F}}})+\chi(\mathtt{Q})=\chi(\mathcal{O}_{\mathcal{C}_{\mathcal{F}}})+\sum_{v\in V_{\mathcal{X}}}\ell(\widetilde{\pi}_{v}).
\een

\end{remark}

Assume that the local model of a torus fixed point $p_{v}$ is of the form $[\mathbb{C}^3/G_{v}]$  with  a finite abelian group $G_{v}$. Let $r_{i}\in\widehat{G_{v}}$ 
be the character obtained from the action of $G_{v}$ on $\mathcal{O}_{p_{v}}(-D_{i})$ for $1\leq i\leq3$,  different from the convention in [\cite{BCY}, Example 23]. Let $0\in\widehat{G_{v}}$ be the trivial character where we have identified the group $\widehat{G_{v}}$ of characters  of $G_{v}\subset(\mathbb{C}^*)^3$ with the corresponding additive subgroup of $\mathbb{Z}^3$. Then 
we have
\ben
s_{2}(\mathbf{V}(v))&=&s_{2}(\ell(\widetilde{\pi}_{v})_{0}+\ell(\widetilde{\pi}_{v})+\Vert\overline{\pi}_{v}\Vert)+\sum_{i\neq j}\sum_{r\in\widehat{G_{v}}}s_{2}(|\lambda_{i,v}|_{r}\cdot|\lambda_{j,v}|_{r+r_{j}})\\
&&+\sum_{r\in\widehat{G_{v}}}s_{2}\bigg((\ell(\widetilde{\pi}_{v})_{r}+\Vert\overline{\pi}_{v}\Vert_{r})\sum_{i=1}^3(\ell(\widetilde{\pi}_{v})_{r+r_{i}}+\Vert\overline{\pi}_{v}\Vert_{r+r_{i}})\bigg)\\
&&+\sum_{i=1}^3\sum_{r\in\widehat{G_{v}}}s_{2}\bigg(|\lambda_{i,v}|_{r}\bigg(\ell(\widetilde{\pi}_{v})_{r}+\Vert\overline{\pi}_{v}\Vert_{r}+\sum_{j\in\{1,2,3\}\setminus\{i\}}(\ell(\widetilde{\pi}_{v})_{r+r_{j}}+\Vert\overline{\pi}_{v}\Vert_{r+r_{j}})\\
&&+\ell(\widetilde{\pi}_{v})_{r-r_{i}+\sum_{j=1}^3 r_{j}}+\Vert\overline{\pi}_{v}\Vert_{r-r_{i}+\sum_{j=1}^3 r_{j}}\bigg)\bigg)
\een
where $|\lambda_{i,v}|_{r}$ is the number of elements in 2D partition $\lambda_{i,v}\subset\mathbb{Z}^2$ colored by $r\in\widehat{G_{v}}$, and $\ell(\widetilde{\pi}_{v})_{r}$, $\Vert\overline{\pi}_{v}\Vert_{r}$ are defined as in Section 3.1.

As in the DT case [\cite{BCY}, Remark 24], one can obtain the general formula for the PT partition function $PT(\mathcal{X})$ in Definition \ref{PT-def} by combining Lemma \ref{PT-fix}, Proposition \ref{K-decomposition}, and Theorem \ref{sign-formula} with the formula \eqref{PT-formula}. The explicit formula is presented in the following transverse $A_{n-1}$ case.

\subsection{The  PT topological vertex formalism} Let $\mathcal{X}$ be a toric CY 3-orbifold with transverse $A_{n-1}$ singularities. We present the explicit formula for PT partition function of $\mathcal{X}$ for full 3-leg case as follows.
We first recall some notations  in [\cite{BCY}, Section 3.3] that are used later.
For a 2D ordinary partition $\lambda$, define
\ben
\lambda[k,n]:=\{(i,j)\in\lambda\;|\; i-j\equiv k \mbox{ mod }n\}\;\;\;\mbox{and} \;\;\;|\lambda|_{k,n}:=|\lambda[k,n]|
\een
where we also write $|\lambda|_{k,n}$ as $|\lambda|_{k}$ for simplicity. We call a partition $\eta$ multi-regular if $|\eta|_{k}=\frac{|\eta|}{n}$ for all $k\in\{0, 1, \cdots ,n-1\}$. With some abuse of notation, we use the notation $|\cdot|_{k}$ not only for  edge partitions $\lambda_{e}$ with the above definition, but also for outgoing partitions  $(\lambda_{1,v},\lambda_{2,v},\lambda_{3,v})$ of  labelled box configurations $\widetilde{\pi}_{v}$ with the slightly different convention: we define  the color of $(j,k)\in\lambda_{1,v}$, $(k,i)\in\lambda_{2,v}$, and $(i,j)\in\lambda_{3,v}$  by $-j\mbox{ mod }n$, $i\mbox{ mod }n$, and $i-j\mbox{ mod }n$ respectively due to transverse $A_{n-1}$ singularities and hence $|\lambda_{i,v}|_{l}$ is defined to be the number of boxes in $\lambda_{i,v}$ colored by $l\mbox{ mod }n$ for $1\leq i\leq 3$ where $n=n_{e_{3,v}}$. Define 
\ben
\varXi_{\widetilde{\pi}_{v}}=\sum_{k=0}^{n_{e_{3,v}}-1}|\lambda_{3,v}|_{k}(|\lambda_{1,v}|_{k}+|\lambda_{2,v}|_{k}+|\lambda_{1,v}|_{k+1}+|\lambda_{2,v}|_{k-1}).
\een
Let
\ben
&&C_{m,m^\prime}^\lambda:=\sum_{(i,j)\in\lambda}\left(-mi-m^\prime j+1\right),\\
&&C_{m,m^\prime}^\lambda[k,n]:=\sum_{(i,j)\in\lambda[k,n]}\left(-mi-m^\prime j+1\right),\\
&&A_{\lambda}(k,n):=\sum_{(i,j)\in\lambda}\left\lfloor\frac{i+k}{n}\right\rfloor,
\een
and 
\ben
q_{e}^{C_{m_{e},m_{e}^\prime}^{\lambda_{e}}}=:\prod_{k=0}^{n_{e}-1}q_{e,k}^{C_{m_{e},m_{e}^\prime}^{\lambda_{e}}[k,n_{e}]},\;\;\;\;\;q_{e}^{A_{\lambda}}:=\prod_{k=0}^{n_{e}-1}q_{e,k}^{A_{\lambda}(k,n_{e})}.
\een
We define for each edge $e\in E_{\mathcal{X}}$
\ben
\mathbf{q}_{e}=(q_{e,0},q_{e,1},\cdots,q_{e,n_{e}-1}),\;\;\;\overline{\mathbf{q}_{e}}=(q_{e,0},q_{e,n_{e}-1},\cdots,q_{e,1})
\een
and 
\ben
\mathbf{S}_{\lambda_{e}}^e=\sum_{k=0}^{n_{e}-1}\bigg(C_{m_{e},m^\prime_{e}}^{\lambda_{e}}[k,n_{e}](|\lambda_{e}|_{k-1}-|\lambda_{e}|_{k+1})+|\lambda_{e}|_{k}\big(1+(1+m_{e}+\delta_{0,e}+\delta_{\infty,e})|\lambda_{e}|_{k-1}\big)\bigg).
\een
Let 
\ben
\mathbf{q}_{v}=\left\{
\begin{aligned}
	& \mathbf{q}_{e_{3,v}} ,\; \;\;\mbox{if $e_{3,v}$ is oriented outward from $v$}; \\
	& \overline{\mathbf{q}_{e_{3,v}}}, \;\;\;\mbox{if $e_{3,v}$ is oriented inward toward $v$}.
\end{aligned}
\right.
\een
and $(-1)^{\widetilde{s}_{k}(\lambda_{3,v})}\mathbf{q}_{v}$ be the vector $\mathbf{q}_{v}$ with its each component $q_{e,k}$ multiplied by the sign $(-1)^{\widetilde{s}_{k}(\lambda_{3,v})}$ where $\widetilde{s}_{k}(\lambda_{3,v})=|\lambda_{3,v}|_{k-1}+|\lambda_{3,v}|_{k+1}$ and $e=e_{3,v}$.

With these notations, we have
\begin{theorem}\label{sign-formula2}
Let $\mathcal{X}$ be a  toric CY 3-orbifold with transverse $A_{n-1}$-singularities. Then the pairity of $\dim\mathrm{Ext}(\mathbf{I}^\bullet,\mathbf{I}^\bullet)_{0}$ in Theorem \ref{sign-formula} is
\ben
s_{1}(\mathrm{Ext}^1(\mathbf{I}^\bullet,\mathbf{I}^\bullet)_{0})=s_{2}(\chi(\mathcal{O}_{\mathcal{C}_{\mathcal{F}}}))+\sum_{v\in V_{\mathcal{X}}}s_{2}(\mathbf{V}(v))+\sum_{e\in E_{\mathcal{X}}}s_{2}(\mathbf{E}(e))
\een
where
\ben
\mathbf{E}(e)=\left\{
\begin{aligned}
	& |\lambda_{e}|(m_{e}+\delta_{0}+\delta_{\infty}),\; \;\;\mbox{if} \;\;n_{e}=1; \\
	&\sum_{k=0}^{n-1}C_{m_{e},m^\prime_{e}}^{\lambda_{e}}[k,n](|\lambda_{e}|_{k-1}-|\lambda_{e}|_{k+1})+|\lambda_{e}|_{k}(1+(1+m_{e})|\lambda_{e}|_{k-1}), \;\;\mbox{if}\;\; n_{e}>1.
\end{aligned}
\right.
\een
and 
\ben
\mathbf{V}(v)&=&\ell(\widetilde{\pi}_{v})_{0}+\sum_{k=0}^{n_{e_{3,v}}-1}(\ell(\widetilde{\pi}_{v})_{k}+\Vert\overline{\pi}_{v}\Vert_{k})(|\lambda_{3,v}|_{k-1}+|\lambda_{3,v}|_{k+1})\\
&&+\sum_{k=0}^{n_{e_{{3,v}}}-1}|\lambda_{3,v}|_{k}(|\lambda_{1,v}|_{k}+|\lambda_{2,v}|_{k}+|\lambda_{1,v}|_{k+1}+|\lambda_{2,v}|_{k-1}).
\een
\end{theorem}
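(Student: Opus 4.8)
The plan is to obtain Theorem \ref{sign-formula2} as the transverse $A_{n-1}$ specialization of the general sign formula in Theorem \ref{sign-formula}, handling the edge and vertex contributions separately. For the edge term, observe that $\mathbf{E}(e)=\sum_{\mathcal{A},\mathcal{B}\in\lambda_e}\chi\bigl(\mathcal{O}_{\mathcal{C}_e}(\mathcal{A}-\mathcal{B})+\mathcal{O}_{\mathcal{C}_e}(\mathcal{A}-\mathcal{B}+\mathcal{D}_e)\bigr)$ depends only on the edge partition $\lambda_e$ and the geometry of $\mathcal{C}_e$, and by Remark \ref{DT-PT-comparision} it coincides verbatim with the DT edge contribution $\mathbf{SE}_{\lambda(e)}(e)$ of [\cite{BCY}, Theorem 21]. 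Hence $s_2(\mathbf{E}(e))$ is computed exactly as on the DT side: [\cite{BCY}, Example 22] supplies the scheme value for $n_e=1$, and the transverse $A_{n-1}$ computation of [\cite{BCY}, Example 23] supplies the colored formula for $n_e>1$, so no new work is needed for the edge part.

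For the vertex term I would start from the general finite-abelian-group parity formula for $s_2(\mathbf{V}(v))$ displayed just before the statement and substitute the transverse $A_{n-1}$ characters $(r_1,r_2,r_3)=(1,-1,0)$ in $\widehat{G_v}\cong\mathbb{Z}/n$. The crucial structural input is $r_1+r_2+r_3=0$, so that $r-r_i+\sum_j r_j=r-r_i$ everywhere. Writing $a_r:=\ell(\widetilde{\pi}_v)_r+\Vert\overline{\pi}_v\Vert_r$, the third sum becomes $\sum_r a_r(a_{r-1}+a_r+a_{r+1})\equiv\sum_r a_r^2\equiv\sum_r a_r=\ell(\widetilde{\pi}_v)+\Vert\overline{\pi}_v\Vert\pmod 2$, because the two cross-term sums agree after reindexing and cancel; added to the first summand $\ell(\widetilde{\pi}_v)_0+\ell(\widetilde{\pi}_v)+\Vert\overline{\pi}_v\Vert$ this leaves exactly $\ell(\widetilde{\pi}_v)_0$. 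In the fourth sum the $i=1$ and $i=2$ brackets collapse to $2a_r+2a_{r\mp1}\equiv 0$, so only $i=3$ survives and, after reindexing the color, contributes $\sum_k(\ell(\widetilde{\pi}_v)_k+\Vert\overline{\pi}_v\Vert_k)(|\lambda_{3,v}|_{k-1}+|\lambda_{3,v}|_{k+1})$. Finally, in the second sum the ordered pairs $(1,2)$ and $(2,1)$ yield identical color sums and cancel mod $2$, while the four pairs involving the index $3$ combine into $\sum_k|\lambda_{3,v}|_k(|\lambda_{1,v}|_k+|\lambda_{1,v}|_{k+1}+|\lambda_{2,v}|_k+|\lambda_{2,v}|_{k-1})$. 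Adding the three surviving pieces reproduces precisely the claimed expression for $\mathbf{V}(v)$.

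The one genuinely substantive step is not the specialization above, which is mechanical mod-$2$ bookkeeping, but the derivation of the general-$G$ parity formula that feeds into it. That formula rests on decomposing the invariant ($h^0$) parts of the five $\widetilde{\chi}$-terms produced in the proof of Theorem \ref{sign-formula} into their colored pieces indexed by $\widehat{G_v}$, following [\cite{BCY}, Section 6.2] but now carrying the extra PT-specific labelled-box contributions coming from the first six terms of $\mathbf{V}(v)$ (equivalently the $\breve{\pi}_v$ form of Remark \ref{DT-PT-comparision}). I expect the main obstacle to be keeping the coloring conventions consistent: boxes of $\widetilde{\pi}_v$ are colored by $i-j\bmod n$, whereas the outgoing partitions $\lambda_{1,v},\lambda_{2,v},\lambda_{3,v}$ carry the shifted conventions $-j$, $i$, $i-j\bmod n$ of Section 3.4, and each cross-term $h^0(\mathcal{O}_{p_v}(\mathcal{A}-\mathcal{B}+\mathcal{D}_{i,v}))$ shifts colors by $r_i$. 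Once these shifts are correctly matched against the substitution $(1,-1,0)$, the collapse to the stated $\mathbf{V}(v)$ follows as above, completing the proof.
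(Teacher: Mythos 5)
Your proposal is correct and takes essentially the same route as the paper: the paper's proof is a two-line citation deferring to Theorem \ref{sign-formula} together with the specialization argument of [\cite{BCY}, Theorem 25] (and [\cite{Zhang}, Proposition 4.29]), which is exactly what you carry out — the edge term verbatim from the DT side, and the vertex term by substituting $(r_1,r_2,r_3)=(1,-1,0)$ into the general colored parity formula displayed before the statement. Your mod-$2$ bookkeeping (cancellation of the cross-terms in the $a_r$-sum, collapse of the $i=1,2$ brackets, pairing of $(1,2)$ with $(2,1)$, and the reindexings producing the $|\lambda_{3,v}|_{k\pm1}$ and $|\lambda_{1,v}|_{k+1},|\lambda_{2,v}|_{k-1}$ shifts) checks out.
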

\begin{proof}
It follows from Theorem \ref{sign-formula} and the similar argument in the proof of [\cite{BCY}, Theorem 25]. See also the proof of [\cite{Zhang}, Proposition 4.29].
\end{proof}
Let $\Gamma_{\mathcal{X}}$ be the web diagram of $\mathcal{X}$. Given an edge assignment $\{\lambda_{e}\;|\;e\in E_{\mathcal{X}}\}\in\Lambda_{\mathcal{X}}$, there should be many collections of labelled box configurations where each collection $\{\widetilde{\pi}_{v}\;|\;v\in V_{\mathcal{X}}\}$ of labelled box configurations $\widetilde{\pi}_{v}$ with  outgoing partitions $(\lambda_{1,v},\lambda_{2,v},\lambda_{3,v})$ determined by $\{\lambda_{e}\}$ via the compatibility in Section 2.1. And each set $\{\widetilde{\pi}_{v}, \lambda_{e}\}$ corresponds to one $T$-fixed point in $\coprod_{\beta\in F_{1}K(\mathcal{X})}\mathrm{PT}(\mathcal{X},\beta)$ by Lemma \ref{PT-fix}. Therefore, for a given edge assignment $\{\lambda_{e}\}\in\Lambda_{\mathcal{X}}$, there are corresponding several connected components of $T$-fixed locus of  $\coprod\limits_{\beta\in F_{1}K(\mathcal{X})}\mathrm{PT}(\mathcal{X},\beta)$ where each component is of the form $\prod\limits_{v\in V_{\mathcal{X}}}[\widetilde{\pi}_{v}]$ satisfying some compatible conditions.

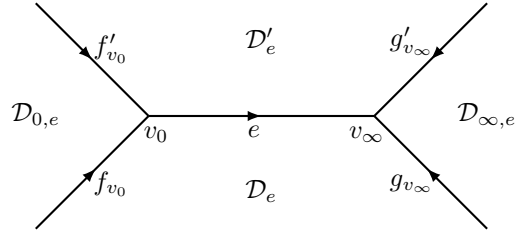
\begin{figure}[htp]
	\centering
	\begin{tikzpicture}[>=latex]
\draw[->,thick]  (-1.5,0)--(0,0); 
\draw[thick] (-0.1,0)node[below]{$e$}--(1.5,0); 
\node at (1.4,-0.25){$v_{\infty}$};	
\node at (-1.4,-0.25){$v_{0}$};	
\node at  (0,-1){$\mathcal{D}_{e}$};
\node at  (0,1){$\mathcal{D}_{e}^\prime$};

\draw[->,thick] (-3,1.5)--(-2.25,0.75);
\draw[thick] (-2.3,0.8)--(-1.5,0);
\node at (-2,0.9){$f^\prime_{v_{0}}$};

\draw[,thick] (-1.5,0)--(-2.25,-0.75);
\draw[<-,thick] (-2.2,-0.7)--(-3,-1.5);
\node at (-2,-0.9){$f_{v_{0}}$};

\node at (-3,0){$\mathcal{D}_{0,e}$};

\draw[->,thick] (3,1.5)--(2.25,0.75);
\draw[thick] (2.3,0.8)--(1.5,0);
\node at (2,1){$g^\prime_{v_{\infty}}$};

\draw[thick] (1.5,0)--(2.25,-0.75);
\draw[<-,thick] (2.2,-0.7)--(3,-1.5);
\node at (2,-0.9){$g_{v_{\infty}}$};

\node at (3,0){$\mathcal{D}_{\infty,e}$};
	\end{tikzpicture} 
	\caption{\label{figure1} Orientations of the edge $e$ and its adjacent edges.}
\end{figure}

Now we have the following PT version of [\cite{BCY}, Theorem 10], see also [\cite{Zhang}, Theorem/Conjecture 4.19].
\begin{theorem}\label{PT-partition function}
Let $\mathcal{X}$ be a  toric CY 3-orbifold with transverse $A_{n-1}$ singularities. Define
\ben
\underline{PT}(\mathcal{X}):=\sum_{\{\lambda_{e}\}\in\Lambda_{\mathcal{X}}}\prod_{e\in E_{\mathcal{X}}}E_{\lambda_{e}}^e\prod_{v\in V_{\mathcal{X}}}(-1)^{\varXi_{\widetilde{\pi}_{v}}}W_{\lambda_{1,v}\lambda_{2,v}\lambda_{3,v}}^{n_{e_{3,v}}}((-1)^{\widetilde{s}(\lambda_{3,v})}\mathbf{q}_{v})
\een
where 
\ben
E_{\lambda_{e}}^e=(-1)^{\mathbf{S}_{\lambda_{e}}^e}\cdot\left(\prod_{k=0}^{n_{e}-1}v_{e,k}^{|\lambda_{e}|_{k,n_{e}}}\right)\cdot q_{e}^{C_{m_{e},m_{e}^\prime}^{\lambda_{e}}}\cdot\left(\overline{q_{f_{v_{0}}}^{A_{\lambda_{e}}}}\right)^{\delta_{0,e}}\cdot\left(q_{f^\prime_{v_{0}}}^{A_{\lambda^\prime_{e}}}\right)^{\delta_{0,e}^\prime}\cdot\left(q_{g_{v_{\infty}}}^{A_{\lambda_{e}}}\right)^{\delta_{\infty,e}}\cdot\left(\overline{q_{g^\prime_{v_{\infty}}}^{A_{\lambda^\prime_{e}}}}\right)^{\delta_{\infty,e}^\prime}
\een
with edges $(e,f_{v_{0}},f^\prime_{v_{0}},g_{v_{\infty}},g^\prime_{v_{\infty}})$ are the oriented edges presented as in Figure \ref{figure1}, and the overline denotes the exchange of variables $q_{k}\leftrightarrow q_{-k}$.
Then  we have
\ben
PT(\mathcal{X})=\underline{PT}(\mathcal{X})\bigg|_{q_{e,0}\to -q_{e,0},\;q\to-q}
\een 
that is, $PT(\mathcal{X})$ is obtained from $\underline{PT}(\mathcal{X})$ by replacing  varibles $q_{e,0}$ by $-q_{e,0}$ for all $e\in E_{\mathcal{X}}$ and hence replacing $q$ by $-q$.
\end{theorem}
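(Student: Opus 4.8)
The plan is to assemble the ingredients already in place—Lemma \ref{PT-fix}, Proposition \ref{K-decomposition}, the virtual localization formula \eqref{PT-formula}, and the parity computation of Theorem \ref{sign-formula2}—into a product over the web diagram $\Gamma_{\mathcal{X}}$, paralleling the DT computation of [\cite{BCY}, Theorem 10]. First I would rewrite $PT(\mathcal{X})=\sum_{\beta}PT_{\beta}(\mathcal{X})q^\beta$ as a single sum over all $T$-fixed components of $\coprod_{\beta}\mathrm{PT}(\mathcal{X},\beta)$, which by Lemma \ref{PT-fix} are indexed by the data $\{[\widetilde{\pi}_{v}],\lambda_{e}\}$, and then reorganize it as an outer sum over edge assignments $\{\lambda_{e}\}\in\Lambda_{\mathcal{X}}$—which fix the outgoing triple $(\lambda_{1,v},\lambda_{2,v},\lambda_{3,v})$ at every vertex—and an inner sum over collections $\{[\widetilde{\pi}_{v}]\}$ of labelled box configurations carrying these prescribed legs. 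Using Proposition \ref{K-decomposition}, the class $\beta=[\mathcal{F}]$ splits into vertex terms $\mathcal{O}_{p_{v}}(-\mathcal{D})$ and edge terms $\mathcal{O}_{\mathcal{C}_{e}}(-\mathcal{D})$, so the monomial $q^\beta$ factors into purely edge monomials (producing the $v_{e,k}$, $q_{e}^{C}$ and $A_{\lambda}$ data of $E^{e}_{\lambda_{e}}$) and purely vertex monomials $\prod_{k}q_{e_{3,v},k}^{\ell(\widetilde{\pi}_{v})_{k}+\Vert\overline{\pi}_{v}\Vert_{k}}$, the latter being exactly the weights of Definition \ref{orbifoldPT-vertex}. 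Combined with the factorized sign of Theorem \ref{sign-formula2}, the whole expression becomes $\sum_{\{\lambda_{e}\}}\prod_{e}(\cdots)\prod_{v}(\cdots)$ with everything local to edges and vertices.

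Next I would identify the vertex factor. Holding the legs fixed and summing over $[\widetilde{\pi}_{v}]$ against $\chi_{\mathrm{top}}([\widetilde{\pi}_{v}])\prod_{k}q_{k}^{\ell(\widetilde{\pi}_{v})_{k}+\Vert\overline{\pi}_{v}\Vert_{k}}$ reproduces $W^{n}_{\lambda_{1,v}\lambda_{2,v}\lambda_{3,v}}$ by Definition \ref{orbifoldPT-vertex}. The sign $(-1)^{s_{2}(\mathbf{V}(v))}$ of Theorem \ref{sign-formula2} is where the structure emerges: since $\mathbf{V}(v)=\ell(\widetilde{\pi}_{v})_{0}+\sum_{k}(\ell(\widetilde{\pi}_{v})_{k}+\Vert\overline{\pi}_{v}\Vert_{k})(|\lambda_{3,v}|_{k-1}+|\lambda_{3,v}|_{k+1})+\varXi_{\widetilde{\pi}_{v}}$, and since both $\varXi_{\widetilde{\pi}_{v}}$ and $\widetilde{s}_{k}(\lambda_{3,v})=|\lambda_{3,v}|_{k-1}+|\lambda_{3,v}|_{k+1}$ depend only on the fixed legs, I would pull $(-1)^{\varXi_{\widetilde{\pi}_{v}}}$ out of the inner sum and absorb the middle term by observing that replacing each $q_{k}$ with $(-1)^{\widetilde{s}_{k}(\lambda_{3,v})}q_{k}$ multiplies the weight by precisely $(-1)^{\sum_{k}\widetilde{s}_{k}(\ell(\widetilde{\pi}_{v})_{k}+\Vert\overline{\pi}_{v}\Vert_{k})}$. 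This converts the inner sum into $(-1)^{\varXi_{\widetilde{\pi}_{v}}}W^{n}_{\lambda_{1,v}\lambda_{2,v}\lambda_{3,v}}((-1)^{\widetilde{s}(\lambda_{3,v})}\mathbf{q}_{v})$ up to the residual factor $(-1)^{\ell(\widetilde{\pi}_{v})_{0}}$ sitting inside the sum, which is exactly what the global substitution $q_{e,0}\to -q_{e,0}$—and hence $q\to -q$, forced by $q=\prod_{k}q_{e,k}$—is meant to supply.

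The main obstacle, and the step demanding the most care, is reconciling the remaining color-$0$ and minimal-partition signs. Applying $q_{e,0}\to -q_{e,0}$ to the vertex weight contributes $(-1)^{\ell(\widetilde{\pi}_{v})_{0}+\Vert\overline{\pi}_{v}\Vert_{0}}$, whereas $\mathbf{V}(v)$ supplies only $(-1)^{\ell(\widetilde{\pi}_{v})_{0}}$; the discrepancy $(-1)^{\Vert\overline{\pi}_{v}\Vert_{0}}$, together with $s_{2}(\chi(\mathcal{O}_{\mathcal{C}_{\mathcal{F}}}))$ and the edge parities $s_{2}(\mathbf{E}(e))$, depends only on the data $\{\overline{\pi}_{v},\lambda_{e}\}$. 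By the final assertion of Proposition \ref{K-decomposition} this data is precisely a $T$-fixed point of $\mathrm{Hilb}^{\alpha}(\mathcal{X})$, so these leftover signs and the edge monomials are governed by the very same computation as in the DT partition function; matching them against $\mathbf{S}^{e}_{\lambda_{e}}$, $\mathbf{E}(e)$ and the edge $q$-data of $E^{e}_{\lambda_{e}}$ reduces to the bookkeeping already carried out in [\cite{BCY}, Section 6]. Once these signs are shown to coincide, collecting the edge contributions into $E^{e}_{\lambda_{e}}$ and the vertex contributions into $(-1)^{\varXi_{\pi_{v}}}W^{n}_{\lambda_{1,v}\lambda_{2,v}\lambda_{3,v}}((-1)^{\widetilde{s}(\lambda_{3,v})}\mathbf{q}_{v})$ yields exactly $\underline{PT}(\mathcal{X})|_{q_{e,0}\to -q_{e,0},\,q\to -q}$, which is the claimed identity.
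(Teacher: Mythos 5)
Your proposal is correct and follows essentially the same route as the paper: both assemble Lemma \ref{PT-fix}, Proposition \ref{K-decomposition}, the localization formula \eqref{PT-formula}, and Theorem \ref{sign-formula2} into an edge/vertex factorization, reconcile the residual color-$0$ signs (your $(-1)^{\Vert\overline{\pi}_{v}\Vert_{0}}$ discrepancy is exactly the vertex part of the paper's explicit computation of $\chi(\mathcal{O}_{\mathcal{C}_{\mathcal{F}}})$) via the substitution $q_{e,0}\to -q_{e,0}$, and delegate the edge bookkeeping to the argument of [\cite{BCY}, Section 6.3, Lemma 15 and Proposition 5]. Your write-up in fact makes explicit the sign-matching steps that the paper compresses into its citation of the DT-side argument.
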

\begin{proof}
Since 
\ben
\mathcal{O}_{\mathcal{C}_{\mathcal{F}}}=\sum_{v\in V_{\mathcal{X}}}\sum_{\mathcal{D}\in\overline{\pi}_{v}}\xi_{\overline{\pi}_{v}}(\mathcal{D})\mathcal{O}_{p_{v}}(-\mathcal{D})+\sum_{e\in E_{\mathcal{X}}}\sum_{\mathcal{D}\in\lambda_{e}}\mathcal{O}_{\mathcal{C}_{e}}(-\mathcal{D})
\een
then
\ben
\chi(\mathcal{O}_{\mathcal{C}_{\mathcal{F}}})&=&\sum_{v\in V_{\mathcal{X}}}\Vert\overline{\pi}_{v}\Vert_{0}+\sum_{e\in E_{\mathcal{X}}}C_{m_{e},m_{e}^\prime}^{\lambda_{e}}[0,n_{e}]+\sum_{e\in E_{\mathcal{X}}}\bigg(\delta_{0,e}\cdot A_{\lambda_{e}}(0,n_{f_{v_{0}}})\\
&&+\delta_{0,e}^\prime\cdot A_{\lambda_{e}^\prime}(0,n_{f^\prime_{v_{0}}})+\delta_{\infty,e}\cdot A_{\lambda_{e}}(0,n_{g_{v_{\infty}}})+\delta^\prime_{\infty,e}\cdot A_{\lambda^\prime_{e}}(0,n_{g^\prime_{v_{\infty}}})\bigg).
\een
The proof is completed by using Definitions \ref{orbifoldPT-vertex}, \ref{PT-def}, the formula \eqref{PT-formula},  Proposition \ref{K-decomposition} for $\beta=\mathcal{F}$, Theorem \ref{sign-formula2}, and [\cite{BCY}, Lemma 15 and Proposition 5] via the similar argument in [\cite{BCY}, Section 6.3].	
\end{proof}
\begin{remark}
The terms involving $f_{v_{0}}$ and $g^\prime_{v_{\infty}}$ in the expression of $E_{\lambda_{e}}^e$ are added by the overline   since the choice of orientations of $f_{v_{0}}$ and $g^\prime_{v_{\infty}}$ opposite to those of $f$ and $g^\prime$ in [\cite{BCY}]  exchanges generators $[\mathcal{O}_{p_{f_{v_{0}}}}\otimes\rho_{k}]\leftrightarrow [\mathcal{O}_{p_{f_{v_{0}}}}\otimes\rho_{-k}]$, $[\mathcal{O}_{p_{g^\prime_{v_{\infty}}}}\otimes\rho_{k}]\leftrightarrow [\mathcal{O}_{p_{g^\prime_{v_{\infty}}}}\otimes\rho_{-k}]$ in [\cite{BCY}, Lemma 17 and Proposition 5].
\end{remark}

\section{Orbifold DT theory and the graphical  condensation recurrence}
In this section, we show the dimer model for DT theory in [\cite{JWY}, Section 3.2] can be extended to our orbifold case in Section 4.1 and generalize the graphical condensation recurrence for DT theory in [\cite{JWY}, Section 3.3] in Section 4.2, where we also obtain two more recurrences for orbifold DT theory and exhibit the equivalence between them by the symmetry of DT $\mathbb{Z}_{n}$-vertex. We compute the weights appearing in these three recurrences in Section 4.3, which generalize the ones in [\cite{JWY}, Section 5.2].

\subsection{The dimer model for orbifold DT theory}
In   order to describe the orbifold DT topological vertex as some dimer model, we recall some relevant notation  and definitions in [\cite{JWY}, Section 2 and 3] as follows. Notice that any ordinary 2D partition $\eta$ in this section has its parts indexed starting from 1 other than 0 in Section 3. 

\begin{definition}([\cite{JWY}, Definition 2.0.1 and 2.0.2])
For any partition $\eta=(\eta_{1}, \eta_{2},\cdots,\eta_{l})$, define the Maya diagram of $\eta$ by the set
\ben
S(\eta)=\left\{\eta_{i}-i+\frac{1}{2}\; \bigg|\; i\geq1\right\}\subseteq\mathbb{Z}+\frac{1}{2}.
\een
For a subset $S\subseteq\mathbb{Z}+\frac{1}{2}$, define 
\ben
S^+=\left\{t\in S \;\bigg| \; t>0\right\};\;\;\;\;\; S^-=\left\{t\in\mathbb{Z}+\frac{1}{2}\setminus S \;\bigg|\;t<0 \right\}.
\een
If $|S^+|, |S^-|<+\infty$, define the charge of $S$ as 
\ben
c(S)=|S^+|-|S^-|.
\een
Then the set $\{t-c(S)\;|\;t\in S\}$ is the Maya diagram of some partition $\widehat{\eta}$, i.e., 
\ben
\{t-c(S)\;|\;t\in S\}=S(\widehat{\eta})
\een
and we call $S$ the charge $c(S)$ Maya diagram of $\widehat{\eta}$.
\end{definition}

\begin{definition}([\cite{JWY}, Definition 2.0.3])
For any partition $\eta\neq\emptyset$ with the Maya diagram $S(\eta)$,  the partitions  $\eta^r$, $\eta^c$ and $\eta^{rc}$ are defined by the following equations respectively
\ben
&&S(\eta^r)=\{t+1\;|\;t\in S(\eta)\setminus\{\min S(\eta)^+\}\},\\
&&S(\eta^c)=\{t-1\;|\;t\in S(\eta)\cup\{\max S(\eta)^-\}\},\\
&&S(\eta^{rc})=\{t \;|\; t\in(S(\eta)\setminus\{\min S(\eta)^+\})\cup\{\max S(\eta)^-\}\}.
\een
That is,  the set $S(\eta)\setminus\{\min S(\eta)^+\}$ is the charge $-1$ Maya diagram of $\eta^r$, the set $S(\eta)\cup\{\max S(\eta)^-\}$ is the charge $1$ Maya diagram of $\eta^c$, and the set $(S(\eta)\setminus\{\min S(\eta)^+\})\cup\{\max S(\eta)^-\}$ is the (charge $0$) Maya diagram of $\eta^{rc}$.
\end{definition}

As in [\cite{JWY}], let $\mathbf{H}(N)$ be the $N\times N\times N$ honeycomb graph divided into three sectors with some choice of  labellings of vertices on the outer face, which is presented in different ways for the DT and PT case respectively, see  Figure \ref{figure2} for the graph $\mathbf{H}(4)$. With the convention in [\cite{JWY}],  the labelled vertice $v$ in sector $i$ is said to be in sector $i^+$ or $i^-$ if the labelled number is positive or negative.

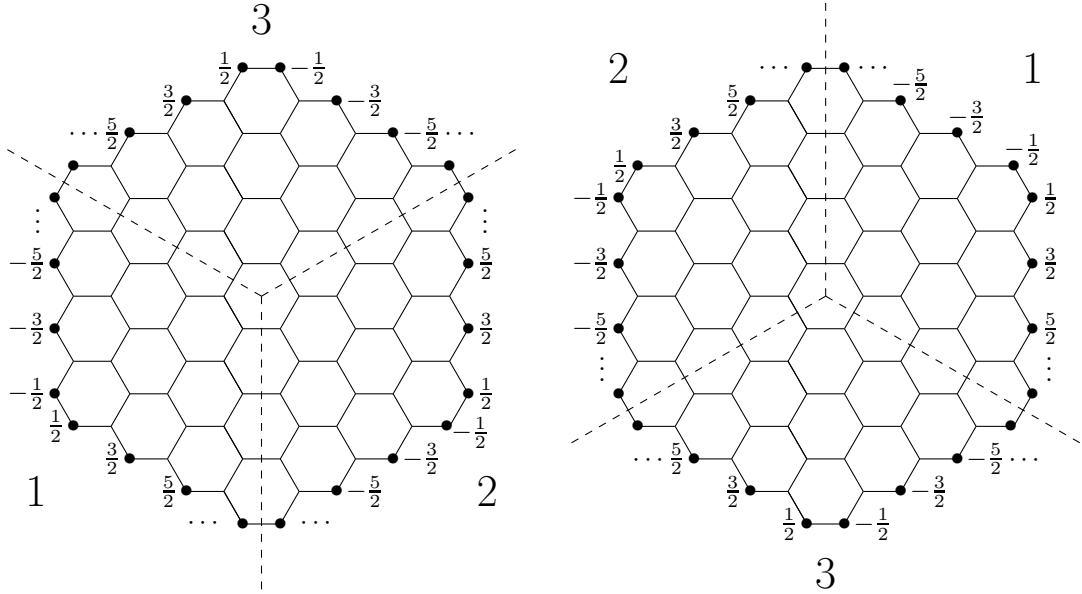
\begin{figure}[htp]
	\centering
	\begin{tikzpicture}[>=latex]
	\begin{scope}
	\foreach \x in
	{0,...,3,4}
	{
		\draw (2,2*0.433*\x-4*0.433)--(2.5,2*0.433*\x-4*0.433);
	}
	\foreach \x in
	{0,...,4,5}
	{
		\draw (1.25,2*0.433*\x-5*0.433)--(1.75,2*0.433*\x-5*0.433);	
	}
	\foreach \x in
	{0,...,5,6}
	{	
		\draw (0.5,2*0.433*\x-6*0.433)--(1,2*0.433*\x-6*0.433);	
	}
	\foreach \x in
	{0,1,...,7}
	{	
		\draw (-0.25,2*0.433*\x-7*0.433)--(0.25,2*0.433*\x-7*0.433);	
	}
	
	\foreach \x in
	{1,...,6,7}
	{	
		\draw (-1,2*0.433*\x-8*0.433)--(-0.5,2*0.433*\x-8*0.433);	
	}
	
	\foreach \x in
	{2,...,6,7}
	{	
		\draw (-1.75,2*0.433*\x-9*0.433)--(-1.25,2*0.433*\x-9*0.433);	
	}
	
	\foreach \x in
	{3,...,6,7}
	{	
		\draw (-2.5,2*0.433*\x-10*0.433)--(-2,2*0.433*\x-10*0.433);	
	}

	\foreach \x in
	{1,...,4}
	{	
		\draw (-2.75,2*0.433*\x-5*0.433)--(-2.5,2*0.433*\x-4*0.433);	
	}
	
	\foreach \x in
	{1,...,5}
	{	
		\draw (-2,2*0.433*\x-6*0.433)--(-1.75,2*0.433*\x-5*0.433);	
	}
	
	\foreach \x in
	{1,...,6}
	{	
		\draw (-1.25,2*0.433*\x-7*0.433)--(-1,2*0.433*\x-6*0.433);	
	}
	
	\foreach \x in
	{1,...,6,7}
	{	
		\draw (-0.5,2*0.433*\x-8*0.433)--(-0.25,2*0.433*\x-7*0.433);	
	}
	
	\foreach \x in
	{1,...,6,7}
	{	
		\draw (0.25,2*0.433*\x-9*0.433)--(0.5,2*0.433*\x-8*0.433);
	}

	\foreach \x in
	{1,...,6}
	{
		\draw (1,2*0.433*\x-8*0.433)--(1.25,2*0.433*\x-7*0.433);	
	}
	
	\foreach \x in
	{1,...,5}
	{	
		\draw (1.75,2*0.433*\x-7*0.433)--(2,2*0.433*\x-6*0.433);	
	}
	
	\foreach \x in
	{1,...,4}
	{	
		\draw (2.5,2*0.433*\x-6*0.433)--(2.75,2*0.433*\x-5*0.433);	
	}
	
	\foreach \x in
	{1,...,6,7}
	{
		\draw (-0.5,2*0.433*\x-8*0.433)--(-0.25,2*0.433*\x-9*0.433);	
	}
	
	\foreach \x in
	{1,...,4}
	{	
		\draw (-2.75,2*0.433*\x-5*0.433)--(-2.5,2*0.433*\x-6*0.433);	
	}
	
	\foreach \x in
	{1,...,5}
	{
		\draw (-2,2*0.433*\x-6*0.433)--(-1.75,2*0.433*\x-7*0.433);	
	}
	
	\foreach \x in
	{1,...,6}
	{	
		\draw (-1.25,2*0.433*\x-7*0.433)--(-1,2*0.433*\x-8*0.433);	
	}

	\foreach \x in
	{1,...,6,7}
	{	
		\draw (-0.5,2*0.433*\x-8*0.433)--(-0.25,2*0.433*\x-9*0.433);
	}
	
	\foreach \x in
	{1,...,6,7}
	{	
		\draw (0.25,2*0.433*\x-7*0.433)--(0.5,2*0.433*\x-8*0.433);
	}
	
	\foreach \x in
	{1,...,6}
	{	
		\draw (1,2*0.433*\x-6*0.433)--(1.25,2*0.433*\x-7*0.433);	
	}
	
	\foreach \x in
	{1,...,5}
	{
		\draw (1.75,2*0.433*\x-5*0.433)--(2,2*0.433*\x-6*0.433);	
	}
	
	\foreach \x in
	{1,...,4}
	{	
		\draw (2.5,2*0.433*\x-4*0.433)--(2.75,2*0.433*\x-5*0.433);	
	}
	\draw[dashed] (0,0)--(1.732*2,2); 
	\draw[dashed] (0,0)--(-1.732*2,2); 
	\draw[dashed] (0,0)--(0,-4); 
	\node[left] at (-0.25,0.433*7){$\frac{1}{2}$};
	\node at (-0.25,0.433*7){$\bullet$};
	\node[left] at (-1,0.433*6){$\frac{3}{2}$};
		\node at (-1,0.433*6){$\bullet$};
	\node[left] at (-1.75,0.433*5){$\cdots\frac{5}{2}$};
	\node at (-1.75,0.433*5){$\bullet$};
	\node at (-2.5,0.433*4){$\bullet$};
	\node[right] at (0.25,0.433*7){$-\frac{1}{2}$};
	\node at (0.25,0.433*7){$\bullet$};
	\node[right] at (1,0.433*6){$-\frac{3}{2}$};
	\node at (1,0.433*6){$\bullet$};
	\node[right] at (1.75,0.433*5){$-\frac{5}{2}\cdots$};
	\node at (1.75,0.433*5){$\bullet$};
	\node at (2.5,0.433*4){$\bullet$};
	\node at (0,0.433*8.5){$\huge \mbox{3}$};
	
	\node[left] at (-2.75,-0.433*3){$-\frac{1}{2}$};
	\node at (-2.75,-0.433*3){$\bullet$};
	\node[left] at (-2.75,-0.433){$-\frac{3}{2}$};
	\node at (-2.75,-0.433){$\bullet$};
	\node[left] at (-2.75,0.433){$-\frac{5}{2}$};
	\node at (-2.75,0.433){$\bullet$};
	\node at (-2.75,0.433*3){$\bullet$};
	\node[left] at (-2.8,1.1){$\vdots$};
	\node[left] at (-2.5,-0.433*4){$\frac{1}{2}$};
	\node at (-2.5,-0.433*4){$\bullet$};
	\node[left] at (-1.75,-0.433*5){$\frac{3}{2}$};
	\node at (-1.75,-0.433*5){$\bullet$};
	\node[left] at (-1,-0.433*6){$\frac{5}{2}$};
	\node at (-1,-0.433*6){$\bullet$};
	\node at (-0.25,-0.433*7){$\bullet$};
	\node[below] at (-0.75,-0.433*6.5){$\cdots$};
	\node at (-3,-0.433*6){$\huge \mbox{1}$};
	
	\node[right] at (2.75,-0.433*3){$\frac{1}{2}$};
	\node at (2.75,-0.433*3){$\bullet$};
	\node[right] at (2.75,-0.433){$\frac{3}{2}$};
	\node at (2.75,-0.433){$\bullet$};
	\node[right] at (2.75,0.433){$\frac{5}{2}$};
	\node at (2.75,0.433){$\bullet$};
	\node at (2.75,0.433*3){$\bullet$};
	\node[right] at (2.8,1.1){$\vdots$};
	\node[right] at (2.4,-0.433*4.2){$-\frac{1}{2}$};
	\node[right] at (2.25,-0.433*4){$\bullet$};
	\node[right] at (1.75,-0.433*5){$-\frac{3}{2}$};
	\node at (1.75,-0.433*5){$\bullet$};
	\node[right] at (1,-0.433*6){$-\frac{5}{2}$};
	\node at (1,-0.433*6){$\bullet$};
	\node at (0.25,-0.433*7){$\bullet$};
	\node[below] at (0.75,-0.433*6.5){$\cdots$};
	\node at (3,-0.433*6){$\huge \mbox{2}$};
	
	\end{scope}

	\begin{scope}[xshift=7.5cm]
		\foreach \x in
		{0,...,3,4}
		{
			\draw (2,2*0.433*\x-4*0.433)--(2.5,2*0.433*\x-4*0.433);
		}
		\foreach \x in
		{0,...,4,5}
		{
			\draw (1.25,2*0.433*\x-5*0.433)--(1.75,2*0.433*\x-5*0.433);	
		}
		\foreach \x in
		{0,...,5,6}
		{	
			\draw (0.5,2*0.433*\x-6*0.433)--(1,2*0.433*\x-6*0.433);	
		}
		\foreach \x in
		{0,1,...,7}
		{	
			\draw (-0.25,2*0.433*\x-7*0.433)--(0.25,2*0.433*\x-7*0.433);	
		}
		
		\foreach \x in
		{1,...,6,7}
		{	
			\draw (-1,2*0.433*\x-8*0.433)--(-0.5,2*0.433*\x-8*0.433);	
		}
		
		\foreach \x in
		{2,...,6,7}
		{	
			\draw (-1.75,2*0.433*\x-9*0.433)--(-1.25,2*0.433*\x-9*0.433);	
		}
		
		\foreach \x in
		{3,...,6,7}
		{	
			\draw (-2.5,2*0.433*\x-10*0.433)--(-2,2*0.433*\x-10*0.433);	
		}

		\foreach \x in
		{1,...,4}
		{	
			\draw (-2.75,2*0.433*\x-5*0.433)--(-2.5,2*0.433*\x-4*0.433);	
		}
		
		\foreach \x in
		{1,...,5}
		{	
			\draw (-2,2*0.433*\x-6*0.433)--(-1.75,2*0.433*\x-5*0.433);	
		}
		
		\foreach \x in
		{1,...,6}
		{	
			\draw (-1.25,2*0.433*\x-7*0.433)--(-1,2*0.433*\x-6*0.433);	
		}
		
		\foreach \x in
		{1,...,6,7}
		{	
			\draw (-0.5,2*0.433*\x-8*0.433)--(-0.25,2*0.433*\x-7*0.433);	
		}
		
		\foreach \x in
		{1,...,6,7}
		{	
			\draw (0.25,2*0.433*\x-9*0.433)--(0.5,2*0.433*\x-8*0.433);
		}

		\foreach \x in
		{1,...,6}
		{
			\draw (1,2*0.433*\x-8*0.433)--(1.25,2*0.433*\x-7*0.433);	
		}
		
		\foreach \x in
		{1,...,5}
		{	
			\draw (1.75,2*0.433*\x-7*0.433)--(2,2*0.433*\x-6*0.433);	
		}
		
		\foreach \x in
		{1,...,4}
		{	
			\draw (2.5,2*0.433*\x-6*0.433)--(2.75,2*0.433*\x-5*0.433);	
		}
		
		\foreach \x in
		{1,...,6,7}
		{
			\draw (-0.5,2*0.433*\x-8*0.433)--(-0.25,2*0.433*\x-9*0.433);	
		}
		
		\foreach \x in
		{1,...,4}
		{	
			\draw (-2.75,2*0.433*\x-5*0.433)--(-2.5,2*0.433*\x-6*0.433);	
		}
		
		\foreach \x in
		{1,...,5}
		{
			\draw (-2,2*0.433*\x-6*0.433)--(-1.75,2*0.433*\x-7*0.433);	
		}
		
		\foreach \x in
		{1,...,6}
		{	
			\draw (-1.25,2*0.433*\x-7*0.433)--(-1,2*0.433*\x-8*0.433);	
		}

		\foreach \x in
		{1,...,6,7}
		{	
			\draw (-0.5,2*0.433*\x-8*0.433)--(-0.25,2*0.433*\x-9*0.433);
		}
		
		\foreach \x in
		{1,...,6,7}
		{	
			\draw (0.25,2*0.433*\x-7*0.433)--(0.5,2*0.433*\x-8*0.433);
		}
		
		\foreach \x in
		{1,...,6}
		{	
			\draw (1,2*0.433*\x-6*0.433)--(1.25,2*0.433*\x-7*0.433);	
		}
		
		\foreach \x in
		{1,...,5}
		{
			\draw (1.75,2*0.433*\x-5*0.433)--(2,2*0.433*\x-6*0.433);	
		}
		
		\foreach \x in
		{1,...,4}
		{	
			\draw (2.5,2*0.433*\x-4*0.433)--(2.75,2*0.433*\x-5*0.433);	
		}
		
		\draw[dashed] (0,0)--(1.732*2,-2); 
		\draw[dashed] (0,0)--(-1.732*2,-2); 
		\draw[dashed] (0,0)--(0,4);

		\node at (-0.25,0.433*7){$\bullet$};
		
		\node at (-1,0.433*6){$\bullet$};
		\node[left] at (-1,0.433*6){$\frac{5}{2}$};
		\node[left] at (-.3,0.433*7){$\cdots$};
		\node at (-1.75,0.433*5){$\bullet$};
		\node[left] at (-1.75,0.433*5){$\frac{3}{2}$};
		\node at (-2.5,0.433*4){$\bullet$};
		\node[left] at (-2.5,0.433*4){$\frac{1}{2}$};
		\node at (0.25,0.433*7){$\bullet$};
		\node[right] at (.3,0.433*7){$\cdots$};
		\node at (1,0.433*6){$\bullet$};
		\node[right] at (0.75,0.433*6.5){$-\frac{5}{2}$};
		\node at (1.75,0.433*5){$\bullet$};
		\node[right] at (1.5,0.433*5.5){$-\frac{3}{2}$};
		\node at (2.5,0.433*4){$\bullet$};
		\node[right] at (2.25,0.433*4.5){$-\frac{1}{2}$};
		
		\node at (0,-0.433*8.5){$\huge \mbox{3}$};

		\node at (-2.75,-0.433*3){$\bullet$};
		\node[left] at (-2.8,-0.433*2){$\vdots$};
		\node at (-2.75,-0.433){$\bullet$};
		\node[left] at (-2.75,-0.433){$-\frac{5}{2}$};
		\node at (-2.75,0.433){$\bullet$};
		\node[left] at (-2.75,0.433){$-\frac{3}{2}$};
		\node at (-2.75,0.433*3){$\bullet$};
		\node[left] at (-2.75,0.433*3){$-\frac{1}{2}$};
		
		\node at (-2.5,-0.433*4){$\bullet$};
	
	    \node at (-2.75,0.433*7){$\huge \mbox{2}$};
		
		\node at (2.75,0.433*7){$\huge \mbox{1}$};
		
		\node at (2.75,-0.433*3){$\bullet$};
		
		\node at (2.75,-0.433){$\bullet$};
		\node[right] at (2.75,-0.433){$\frac{5}{2}$};
		\node[right] at (2.8,-0.433*2){$\vdots$};
		\node at (2.75,0.433){$\bullet$};
		\node[right] at (2.75,0.433){$\frac{3}{2}$};
		\node at (2.75,0.433*3){$\bullet$};
		\node[right] at (2.75,0.433*3){$\frac{1}{2}$};
		
		\node[right] at (2.25,-0.433*4){$\bullet$};
	
		\node at (1.75,-0.433*5){$\bullet$};
		
		\node at (1,-0.433*6){$\bullet$};
		\node at (0.25,-0.433*7){$\bullet$};
		\node[right] at (0.25,-0.433*7){$-\frac{1}{2}$};
		\node[left] at (-0.25,-0.433*7){$\frac{1}{2}$};
		\node[left] at (-1,-0.433*6){$\frac{3}{2}$};
		\node[right] at (1,-0.433*6){$-\frac{3}{2}$};
		\node at (-1,-0.433*6){$\bullet$};
		\node at (-0.25,-0.433*7){$\bullet$};
		\node at (-1.75,-0.433*5){$\bullet$};
		\node[left] at (-1.75,-0.433*5){$\cdots\frac{5}{2}$};
		\node[right] at (1.75,-0.433*5){$-\frac{5}{2}\cdots$};
		
		\end{scope}
	
	\end{tikzpicture} 
	\caption{\label{figure2}The left graph $\mathbf{H}(4)$ is divided into 3 sectors with the labellings of outer vertices for DT case. The right graph is again $\mathbf{H}(4)$ but divided into  another 3 sectors with the different labellings of outer vertices for PT case.}
\end{figure}

As is shown in [\cite{Kuo,JWY}], a 3D partition (or  a plane partition) visualized as some collection of boxes can be viewed from a suitable  direction as certain rhombus tiling within a hexagonal region consisting of triangles, denoted by faces of a finite planar graph. And its dual graph is exactly the beautiful honeycomb graph, which is the dimer configuration (or the so-called perfect matching) of this dual graph, see [\cite{Kuo}, Fig. 17] or [\cite{JWY}, Figure 3]  for examples. Labelled box configurations are related to AB configurations which also have this visualized description in Section 5.2. In order to describe the orbifold DT or PT topological vertex formula via  some weighted configurations on the modified honeycomb graph as in [\cite{JWY}], we  assign the weights of edges of the graph $\mathbf{H}(N)$  as follows.

\begin{definition}\label{weight-rule}
Given $n\in\mathbb{Z}_{>0}$, the honeycomb graph $\mathbf{H}(N)$ is weighted by the following rule:\\
$(i)$ all the non-horizontal edges are weighted by 1;	\\
$(ii)$ the $2N$ horizontal edges in the central column are weighted from bottom to top by
\ben
 q_{0}^0, q_{0}^1, q_{0}^2, \cdots, q_{0}^{2N-1};
\een 
$(iii)$ for $1\leq i\leq N-1$, the $(2N-i)$ horizontal edges in the $i$-th column on the right hand side of the  central column (corresponding to $0$-th column) are weighted from bottom to top by
\ben
q_{-i}^{0}, q_{-i}^1, q_{-i}^2, \cdots, q_{-i}^{2N-i-1};
\een
$(iv)$ for $1\leq i\leq N-1$, the $(2N-i)$ horizontal edges in the $i$-th column on the left hand side of  the  central column (corresponding to $0$-th column) are weighted from bottom to top by
\ben
q_{i}^{i}, q_{i}^{i+1}, q_{i}^{i+2}, \cdots, q_{i}^{2N-1},
\een
where for any $k\in\mathbb{Z}$, the equality $q_{k}=q_{k^\prime}$ is used implicitly  through this paper if $k^\prime\in\{0,1,\cdots,n-1\}$ and $k^\prime\equiv k \mbox{ mod } n$.
\end{definition}
The weighted rule in Definition \ref{weight-rule} is shown in Figure \ref{figure3} for  $\mathbf{H}(4)$ as an example.

\begin{figure}[htp]
	\centering
\begin{tikzpicture}[>=latex]
\foreach \x in
{0,...,3,4}
{
	\draw (2,2*0.433*\x-4*0.433)--node[below]{$q^{\x}_{-3}$}(2.5,2*0.433*\x-4*0.433);
}
\foreach \x in
{0,...,4,5}
{
	\draw (1.25,2*0.433*\x-5*0.433)--node[below]{$q^{\x}_{-2}$}(1.75,2*0.433*\x-5*0.433);	
}
\foreach \x in
{0,...,5,6}
{	
	\draw (0.5,2*0.433*\x-6*0.433)--node[below]{$q^{\x}_{-1}$}(1,2*0.433*\x-6*0.433);	
}
\foreach \x in
{0,1,...,7}
{	
	\draw (-0.25,2*0.433*\x-7*0.433)--node[below]{$q^{\x}_{0}$}(0.25,2*0.433*\x-7*0.433);	
}

\foreach \x in
{1,...,6,7}
{	
	\draw (-1,2*0.433*\x-8*0.433)--node[below]{$q^{\x}_{1}$}(-0.5,2*0.433*\x-8*0.433);	
}

\foreach \x in
{2,...,6,7}
{	
	\draw (-1.75,2*0.433*\x-9*0.433)--node[below]{$q^{\x}_{2}$}(-1.25,2*0.433*\x-9*0.433);	
}

\foreach \x in
{3,...,6,7}
{	
	\draw (-2.5,2*0.433*\x-10*0.433)--node[below]{$q^{\x}_{3}$}(-2,2*0.433*\x-10*0.433);	
}

\foreach \x in
{1,...,4}
{	
	\draw (-2.75,2*0.433*\x-5*0.433)--(-2.5,2*0.433*\x-4*0.433);	
}

\foreach \x in
{1,...,5}
{	
	\draw (-2,2*0.433*\x-6*0.433)--(-1.75,2*0.433*\x-5*0.433);	
}

\foreach \x in
{1,...,6}
{	
	\draw (-1.25,2*0.433*\x-7*0.433)--(-1,2*0.433*\x-6*0.433);	
}

\foreach \x in
{1,...,6,7}
{	
	\draw (-0.5,2*0.433*\x-8*0.433)--(-0.25,2*0.433*\x-7*0.433);	
}

\foreach \x in
{1,...,6,7}
{	
	\draw (0.25,2*0.433*\x-9*0.433)--(0.5,2*0.433*\x-8*0.433);
}

\foreach \x in
{1,...,6}
{
	\draw (1,2*0.433*\x-8*0.433)--(1.25,2*0.433*\x-7*0.433);	
}

\foreach \x in
{1,...,5}
{	
	\draw (1.75,2*0.433*\x-7*0.433)--(2,2*0.433*\x-6*0.433);	
}

\foreach \x in
{1,...,4}
{	
	\draw (2.5,2*0.433*\x-6*0.433)--(2.75,2*0.433*\x-5*0.433);	
}

\foreach \x in
{1,...,6,7}
{
	\draw (-0.5,2*0.433*\x-8*0.433)--(-0.25,2*0.433*\x-9*0.433);	
}

\foreach \x in
{1,...,4}
{	
	\draw (-2.75,2*0.433*\x-5*0.433)--(-2.5,2*0.433*\x-6*0.433);	
}

\foreach \x in
{1,...,5}
{
	\draw (-2,2*0.433*\x-6*0.433)--(-1.75,2*0.433*\x-7*0.433);	
}

\foreach \x in
{1,...,6}
{	
	\draw (-1.25,2*0.433*\x-7*0.433)--(-1,2*0.433*\x-8*0.433);	
}

\foreach \x in
{1,...,6,7}
{	
	\draw (-0.5,2*0.433*\x-8*0.433)--(-0.25,2*0.433*\x-9*0.433);
}

\foreach \x in
{1,...,6,7}
{	
	\draw (0.25,2*0.433*\x-7*0.433)--(0.5,2*0.433*\x-8*0.433);
}

\foreach \x in
{1,...,6}
{	
	\draw (1,2*0.433*\x-6*0.433)--(1.25,2*0.433*\x-7*0.433);	
}

\foreach \x in
{1,...,5}
{
	\draw (1.75,2*0.433*\x-5*0.433)--(2,2*0.433*\x-6*0.433);	
}

\foreach \x in
{1,...,4}
{	
	\draw (2.5,2*0.433*\x-4*0.433)--(2.75,2*0.433*\x-5*0.433);	
}

\end{tikzpicture} 
\caption{\label{figure3}The graph $\mathbf{H}(4)$ with weights as assigned in Definition \ref{weight-rule}.}
\end{figure}
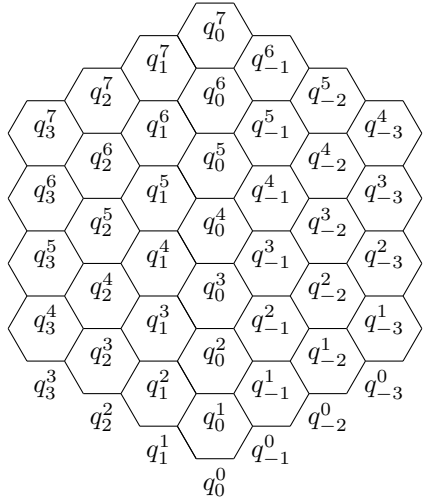

\begin{remark}
The weighted rule is defined such that if one add  a box $(i,j,k)$ to  a 3D partition  asymptotic to $(\lambda,\mu,\nu)$, the weight of the corresponding dimer configuration should be multiplied by a factor of $q_{i-j}$, and removing a box $(i,j,k)$  will multiply the weight by a factor $q_{i-j}^{-1}$. If  all $q_{l}$ are  equal to $q$, the weighted rule here generalizes the one in [\cite{Kuo}, Section 6] (see also [\cite{JWY}, Definition 2.0.4]).
\end{remark}
Let $(\lambda,\mu,\nu)$ be a triple of partitions with their Maya diagrams 
\ben
S_{1}:=S(\lambda), \;\;\;\; S_{2}:=S(\mu), \;\;\;\; S_{3}:=S(\nu).
\een
The modified honeycomb graph $\mathbf{H}(N;\lambda,\mu,\nu)$ is defined by removing vertices labelled by elements in $S_{i}^+\cup S_{i}^-$ from the $i$-th sector of $\mathbf{H}(N)$.  
If we view any 3D partition asymptotic to $(\lambda,\mu,\nu)$ as a collection of boxes, it is  shown in [\cite{JWY}, Section 3.2] that there is a bijective correspondence between all these box collections and all the dimer configurations on (or perfect matchings of)  $\mathbf{H}(N;\lambda,\mu,\nu)$. One may refer to [\cite{JWY}, Figure 3] as an example for this correspondence.

Let $\mathbf{Z}^{D}(\mathbf{G})$ be the sum of the weights of all dimer configurations on the graph $\mathbf{G}$. There is a unique minimal dimer configuration on $\mathbf{H}(N;\lambda,\mu,\nu)$ corresponding to the 3D partition $\pi_{\min}(\lambda,\mu,\nu)$ defined in Section 3. By the weighted rule,  the dimer configuration being  minimal here means that if we set all $q_{l}$ to be equal to $q$, the weight of this dimer configuration with respect to $q$ is minimal as in [\cite{JWY}], or that this dimer configuration has minimal weight with respect to each $q_{l}$. Denote this minimal dimer configuration by $\mathbf{D}_{\min}(\lambda,\mu,\nu;N)$ and its weight by $q^{\omega_{\min}(\lambda,\mu,\nu;N)}$, which is a polynomial of variables $q_{1-N}, q_{2-N},\cdots,q_{0},\cdots,q_{N-2},q_{N-1}$.  For any subset $\mathbf{S}\subseteq\mathbb{Z}^3$, we
define 
\ben
|\mathbf{S}|_{l}=\{(i,j,k)\in \mathbf{S}\;|\; i-j\equiv l \mbox{ mod } n\}.
\een
Then we have 
\ben
\lim_{N\to\infty}\left(q^{\omega_{\min}(\lambda,\mu,\nu;N)}\right)^{-1}\cdot\mathbf{Z}^{D}(\mathbf{H}(N;\lambda,\mu,\nu))=V^n_{\lambda\mu\nu}\cdot\prod_{l=0}^{n-1}q_{l}^{|\mathrm{II}(\lambda,\mu,\nu)|_{l}}\cdot q_{l}^{2|\mathrm{III}(\lambda,\mu,\nu)|_{l}}.
\een
This implies the following orbifold version of [\cite{JWY}, Theorem 3.2.1].
\begin{theorem}
We have the following limit in the sense of formal Laurent series
\ben
\lim_{N\to\infty}\widetilde{\mathbf{Z}}^{D}(\mathbf{H}(N;\lambda,\mu,\nu))=V^n_{\lambda\mu\nu}(q_{0}, q_{1}, \cdots, q_{n-1})
\een	
where 
\ben
\widetilde{\mathbf{Z}}^{D}(\mathbf{H}(N;\lambda,\mu,\nu)):=\left(q^{\widetilde{\omega}_{\min}(\lambda,\mu,\nu;N)}\right)^{-1}\cdot\mathbf{Z}^{D}(\mathbf{H}(N;\lambda,\mu,\nu))
\een
and
\ben
q^{\widetilde{\omega}_{\min}(\lambda,\mu,\nu;N)}:=q^{\omega_{\min}(\lambda,\mu,\nu;N)}\cdot\prod\limits_{l=0}^{n-1}q_{l}^{|\mathrm{II}(\lambda,\mu,\nu)|_{l}}\cdot q_{l}^{2|\mathrm{III}(\lambda,\mu,\nu)|_{l}}.
\een
\end{theorem}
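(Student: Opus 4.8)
The plan is to reduce the statement to the normalization identity displayed immediately above it, which in turn rests on tracking the \emph{colored} weight of each dimer configuration under the bijection with $3$D partitions. First I would invoke the bijective correspondence of [\cite{JWY}, Section 3.2] between $3$D partitions $\pi$ asymptotic to $(\lambda,\mu,\nu)$, viewed as box collections, and dimer configurations (perfect matchings) $D_\pi$ on the modified honeycomb graph $\mathbf{H}(N;\lambda,\mu,\nu)$. This correspondence is purely combinatorial and depends only on the graph, so it transfers verbatim to the orbifold setting, where only the edge weights have been refined. Since $\pi_{\min}(\lambda,\mu,\nu)\subseteq\pi$ for every such $\pi$ (the asymptotic conditions force each leg, and downward-closedness then fills in the cylinders), within the truncated $N$-region every $\pi$ is obtained from $\pi_{\min}$ by adding finitely many boxes.

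The central step is to compute the weight of $D_\pi$. By the box-adding property following Definition \ref{weight-rule}, inserting a box at position $(i,j,k)$ multiplies the weight of a dimer configuration by $q_{i-j}$, the subscript read mod $n$; since the color $i-j \bmod n$ is an intrinsic function of the position, the resulting factor is independent of the order of insertion, and induction starting from $\mathbf{D}_{\min}(\lambda,\mu,\nu;N)$ gives $\mathrm{wt}(D_\pi)=q^{\omega_{\min}(\lambda,\mu,\nu;N)}\cdot\prod_{l=0}^{n-1}q_l^{c_l(\pi)}$, where $c_l(\pi)=|(\pi\setminus\pi_{\min})|_l$ counts the boxes of color $l$ added to $\pi_{\min}$. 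Next I would identify $c_l(\pi)$ with the colored renormalized volume: every box of $\pi\setminus\pi_{\min}$ lies in none of the positive cylinders $\mathrm{Cyl}_{l}^{+}$, so $\xi_\pi\equiv1$ there, whence $\|\pi\|_l=\|\pi_{\min}\|_l+c_l(\pi)$ and therefore $c_l(\pi)=\|\pi\|_l-\|\pi_{\min}\|_l$. Summing $\mathrm{wt}(D_\pi)$ over all $\pi$ and recognizing $\sum_\pi\prod_l q_l^{\|\pi\|_l}$ as $V^n_{\lambda\mu\nu}$ (Definition \ref{orbifoldDT-vertex}) then gives, in the limit $N\to\infty$,
\[
\lim_{N\to\infty}\bigl(q^{\omega_{\min}}\bigr)^{-1}\mathbf{Z}^D(\mathbf{H}(N;\lambda,\mu,\nu))=V^n_{\lambda\mu\nu}\cdot\prod_{l=0}^{n-1}q_l^{-\|\pi_{\min}\|_l}.
\]

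It then remains to evaluate the colored renormalization constant. Using $\pi_{\min}=\mathrm{Cyl}_1^+\cup\mathrm{Cyl}_2^+\cup\mathrm{Cyl}_3^+$ together with the values $\xi=0,-1,-2$ for boxes lying in exactly one, two, or three of the positive cylinders, and noting that $\mathrm{II}$ and $\mathrm{III}$ are both supported in $(\mathbb{Z}_{\geq0})^3$ (so that restricting to the positive octant loses no boxes), I would show $\|\pi_{\min}\|_l=-|\mathrm{II}(\lambda,\mu,\nu)|_l-2|\mathrm{III}(\lambda,\mu,\nu)|_l$. Substituting this yields exactly the displayed normalization identity, and multiplying both sides by $\prod_l q_l^{-|\mathrm{II}|_l-2|\mathrm{III}|_l}=\prod_l q_l^{\|\pi_{\min}\|_l}$ --- which is precisely the passage from $q^{\omega_{\min}}$ to $q^{\widetilde{\omega}_{\min}}$ in the normalizing denominator, turning $\bigl(q^{\omega_{\min}}\bigr)^{-1}\mathbf{Z}^D$ into $\widetilde{\mathbf{Z}}^D$ --- collapses the right-hand factor and proves $\lim_{N\to\infty}\widetilde{\mathbf{Z}}^D(\mathbf{H}(N;\lambda,\mu,\nu))=V^n_{\lambda\mu\nu}$.

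The main obstacle is not any single computation but the simultaneous handling of the two limiting and normalization layers: one must check that the $N\to\infty$ limit is genuinely coefficient-wise in the ring of formal Laurent series (for each fixed monomial in the $q_l$ only finitely many $\pi$ contribute, so the truncation stabilizes) and that the minimal-weight normalization $q^{\omega_{\min}}$ respects the colored grading rather than merely the total $q$-degree used in [\cite{JWY}]. Everything else is a color-refinement of the $n=1$ argument: each box carries the intrinsic color $i-j\bmod n$, the bijection and the local moves are unchanged, and it is only the generating function that is regraded from the single variable $q$ to the family $(q_0,\dots,q_{n-1})$.
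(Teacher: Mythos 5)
Your proposal is correct and follows essentially the same route as the paper: the JWY bijection between 3D partitions asymptotic to $(\lambda,\mu,\nu)$ and dimer configurations on $\mathbf{H}(N;\lambda,\mu,\nu)$, the colored box-addition weight rule of Definition \ref{weight-rule}, and the identification $\Vert\pi_{\min}\Vert_{l}=-|\mathrm{II}(\lambda,\mu,\nu)|_{l}-2|\mathrm{III}(\lambda,\mu,\nu)|_{l}$ that converts the normalization by $q^{\omega_{\min}}$ into the normalization by $q^{\widetilde{\omega}_{\min}}$. The paper leaves these steps implicit (it asserts the limit identity directly from the correspondence and the weight rule and then states the theorem as an immediate consequence), so your write-up simply supplies, correctly, the details the paper omits, including the coefficient-wise stabilization needed for the formal Laurent series limit.
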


\subsection{The graphical  condensation recurrence for orbifold DT theory}

The graphical condensation is discovered by Kuo in [\cite{Kuo}], and is successfully applied  in [\cite{JWY}] to derive the recurrence for DT topological vertex. In this section, we will obtain  graphical condensation recurrences for  DT $\mathbb{Z}_{n}$-vertex with the weighted rule in Definition \ref{weight-rule} by the following technique. 
\begin{theorem}([\cite{Kuo}, Theorem 5.1])\label{graphical-condensation1}
Let $\mathbf{G}=(\mathbf{V}_{1},\mathbf{V}_{2},\mathbf{E})$ be a given edge-weighted plane bipartite graph with $|\mathbf{V}_{1}|=|\mathbf{V}_{2}|$. Let vertices $a,b,c$ and $d$ appear on a face of $\mathbf{G}$ in a cyclic order. If $a,c\in \mathbf{V}_{1}$ and $b,d\in\mathbf{V}_{2}$, then 
\ben
\mathbf{Z}^{D}(\mathbf{G})\mathbf{Z}^{D}(\mathbf{G}-\{a,b,c,d\})=\mathbf{Z}^D(\mathbf{G}-\{a,b\})\mathbf{Z}^D(\mathbf{G}-\{c,d\})+\mathbf{Z}^D(\mathbf{G}-\{a,d\})\mathbf{Z}^D(\mathbf{G}-\{b,c\}).
\een
\end{theorem}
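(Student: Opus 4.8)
The plan is to prove the identity by the method of \emph{superposition of matchings}, turning each product of partition functions into a weighted count of pairs of perfect matchings and then matching the two sides combinatorially; this keeps everything manifestly positive and explains the plus sign on the right. Writing $\mathcal{M}(H)$ for the set of perfect matchings of an edge-weighted graph $H$ and $w(M)=\prod_{e\in M}w(e)$, we have $\mathbf{Z}^{D}(H)=\sum_{M\in\mathcal{M}(H)}w(M)$, so the left-hand side $\mathbf{Z}^{D}(\mathbf{G})\,\mathbf{Z}^{D}(\mathbf{G}-\{a,b,c,d\})$ is the weighted sum over pairs $(M_{1},M_{2})$ with $M_{1}\in\mathcal{M}(\mathbf{G})$ and $M_{2}\in\mathcal{M}(\mathbf{G}-\{a,b,c,d\})$, the weight of a pair being $w(M_{1})w(M_{2})$. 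First I would attach to each pair its superposition $S=M_{1}\sqcup M_{2}$, a multiset of edges whose weight $w(S)=\prod_{e}w(e)^{m_{e}}$, with $m_{e}\in\{0,1,2\}$ the multiplicity, equals $w(M_{1})w(M_{2})$ and depends only on $S$.

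In $S$ every vertex other than $a,b,c,d$ has degree $2$, while $a,b,c,d$ each have degree $1$ (each is covered only by $M_{1}$), so $S$ is a disjoint union of doubled edges, alternating cycles, and exactly two alternating paths whose four endpoints are $a,b,c,d$. Because $\mathbf{G}$ is bipartite with $a,c\in\mathbf{V}_{1}$ and $b,d\in\mathbf{V}_{2}$, an $M_{1}$/$M_{2}$-alternating path can only join vertices of opposite colour: tracking that the forced edge at each of $a,b,c,d$ lies in $M_{1}$ shows that a path from $a$ to $c$, or from $b$ to $d$, would have to both start and end with an $M_{1}$-edge along an even-length path, which is impossible. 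Hence the only two ways the two paths can pair the endpoints are $\{P_{ab},P_{cd}\}$ and $\{P_{ad},P_{bc}\}$, and I would split the left-hand sum accordingly into a Type I and a Type II contribution.

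The core step is to show that the Type I sum equals $\mathbf{Z}^{D}(\mathbf{G}-\{a,b\})\,\mathbf{Z}^{D}(\mathbf{G}-\{c,d\})$ and the Type II sum equals $\mathbf{Z}^{D}(\mathbf{G}-\{a,d\})\,\mathbf{Z}^{D}(\mathbf{G}-\{b,c\})$; adding them gives the theorem. For a fixed superposition shape $S$ of Type I, I would count how many pairs realise it: the doubled edges are forced, each of the $c(S)$ cycles may be two-coloured in exactly two ways, and the two paths are forced once one fixes that the edge at $a$ lies in the matching of the graph still containing $a$. This gives $2^{c(S)}$ pairs $(M_{1},M_{2})$ from the left, and an identical bookkeeping gives $2^{c(S)}$ pairs $(N_{1},N_{2})$ with $N_{1}\in\mathcal{M}(\mathbf{G}-\{a,b\})$ and $N_{2}\in\mathcal{M}(\mathbf{G}-\{c,d\})$ realising the same $S$ with the same weight $w(S)$; summing $2^{c(S)}w(S)$ over Type I shapes then proves the claim. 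The main obstacle I anticipate is precisely this bookkeeping: verifying that the two families of pairs realise exactly the same set of shapes and that the forced alternation along each path is consistent with the bipartite colouring at both endpoints, so that the cycle factors $2^{c(S)}$ cancel cleanly. As a cross-check one can instead encode $\mathbf{Z}^{D}$ of a planar bipartite graph as $\pm$ the determinant of a Kasteleyn-signed biadjacency matrix and invoke the Desnanot--Jacobi (Dodgson condensation) identity with rows $a,c$ and columns $b,d$; there the difficulty migrates entirely to the sign analysis, since the cyclic order of $a,b,c,d$ on a common face is what converts the minus sign of Desnanot--Jacobi into the plus sign required here.
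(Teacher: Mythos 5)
Your superposition framework and your analysis of the left-hand side are sound, and they follow the same skeleton as Kuo's original argument (the paper itself quotes this theorem from Kuo without reproving it, so the relevant comparison is with Kuo's proof). The genuine gap is in the step you call ``identical bookkeeping.'' On the left, all four pendant edges at $a,b,c,d$ lie in $M_{1}$, so each path has both end-edges in $M_{1}$, hence an odd number of edges, hence joins vertices of opposite colour classes; parity alone excludes the pairing of $a$ with $c$ and $b$ with $d$. On the right this symmetry is broken: for a pair $(N_{1},N_{2})\in\mathcal{M}(\mathbf{G}-\{a,b\})\times\mathcal{M}(\mathbf{G}-\{c,d\})$ the pendant edges at $a,b$ lie in $N_{2}$ while those at $c,d$ lie in $N_{1}$, so a path from $a$ to $c$ starts with an $N_{2}$-edge and ends with an $N_{1}$-edge; it has an \emph{even} number of edges and joins two vertices of $\mathbf{V}_{1}$, which is perfectly consistent with bipartiteness. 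Parity therefore does not show that every right-hand pair realises a Type I shape, and the two families of pairs need not realise the same set of shapes. The only way to exclude the crossing pairing on the right is the hypothesis your main argument never uses: planarity together with the cyclic order of $a,b,c,d$ on a common face, via a Jordan-curve argument (two vertex-disjoint paths joining $a$ to $c$ and $b$ to $d$ would have to cross). No proof avoiding that hypothesis can succeed, because the identity is false without it.

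Here is a concrete failure. Let $\mathbf{G}$ be the hexagon with vertices $w_{1},\dots,w_{6}$ in cyclic order, all edge weights $1$, $\mathbf{V}_{1}=\{w_{1},w_{3},w_{5}\}$, $\mathbf{V}_{2}=\{w_{2},w_{4},w_{6}\}$, and set $a=w_{1}$, $c=w_{3}$, $b=w_{4}$, $d=w_{6}$: then $a,c\in\mathbf{V}_{1}$, $b,d\in\mathbf{V}_{2}$ and all four lie on a common face, but in cyclic order $a,c,b,d$ rather than $a,b,c,d$. Now $\mathbf{Z}^{D}(\mathbf{G})=2$ while $\mathbf{Z}^{D}(\mathbf{G}-\{a,b,c,d\})=0$ (the surviving vertices $w_{2},w_{5}$ are isolated), so the left side vanishes; but each of $\mathbf{G}-\{a,b\}$, $\mathbf{G}-\{c,d\}$, $\mathbf{G}-\{a,d\}$, $\mathbf{G}-\{b,c\}$ has exactly one perfect matching, so the right side equals $2$. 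The offending pair $N_{1}=\{w_{2}w_{3},w_{5}w_{6}\}\in\mathcal{M}(\mathbf{G}-\{a,b\})$, $N_{2}=\{w_{1}w_{2},w_{4}w_{5}\}\in\mathcal{M}(\mathbf{G}-\{c,d\})$ superposes to the path $w_{1}w_{2}w_{3}$ joining $a$ to $c$ and the path $w_{4}w_{5}w_{6}$ joining $b$ to $d$ --- precisely the shape your bookkeeping assumed away. The repair is local but essential: with $a,b,c,d$ in cyclic order on a face, disjointness of the two paths in the plane forbids the pairing of $a$ with $c$, after which your count (forced alternation along paths, factor $2^{c(S)}$ per shape on both sides) does go through and is equivalent to Kuo's path-swapping bijection; the determinantal route via Desnanot--Jacobi is also viable, but, as you yourself note, the face condition is again what controls the sign there.
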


As in [\cite{JWY}],  we choose $\mathbf{G}$ to be $\mathbf{H}(N;\lambda^{rc},\mu^{rc},\nu)$ and let 
\ben
a:=\max S_{1}^-,\;\;\;b:=\min S_{1}^+,\;\;\;c=\max S_{2}^-,\;\;\;d=\min S_{2}^+.
\een
Then we have 
\ben
&&\mathbf{G}-\{a,b,c,d\}=\mathbf{H}(N;\lambda,\mu,\nu),\\
&&\mathbf{G}-\{a,b\}=\mathbf{H}(N;\lambda,\mu^{rc},\nu),\\
&&\mathbf{G}-\{c,d\}=\mathbf{H}(N;\lambda^{rc},\mu,\nu).
\een
It is an important observation [\cite{JWY}] that the graph $\mathbf{G}-\{a,d\}$ is derived  from the charge $-1$ Maya diagram of $\lambda^r$, the charge 1 Maya diagram of $\mu^c$, and (the charge 0) Maya diagram of $\nu$, and that 3D partitions  asymptotic to $(\lambda^r,\mu^c,\nu)$ are corresponding to dimer configurations on $\mathbf{G}-\{a,d\}$ but with the orgin in $\mathbb{Z}^3$  shifted to the face directly above the central face of $\mathbf{H}(N)$. While the graph  $\mathbf{G}-\{b,c\}$ is obtained from the charge 1 Maya diagram of $\lambda^c$, the charge $-1$ Maya diagram of $\mu^r$, and  Maya diagram of $\nu$, together with 3D partitions  asymptotic to $(\lambda^c,\mu^r,\nu)$  corresponding to dimer configurations on $\mathbf{G}-\{b,c\}$ but with the orgin in $\mathbb{Z}^3$  shifted to the face directly below the central face of $\mathbf{H}(N)$. One can refer to [\cite{JWY}, Figure 4 and 5] for example. Let $\mathbf{D}_{\min}^{\uparrow}(\lambda^r,\mu^c,\nu;N)$ and  $\mathbf{D}_{\min}^{\downarrow}(\lambda^c,\mu^r,\nu;N)$ be the minimal dimer configurations on $\mathbf{G}-\{a,d\}$ and $\mathbf{G}-\{b,c\}$ with their weights denoted by  $q^{\omega_{\min}^{U}(\lambda^r,\mu^c,\nu;N)}$ and $q^{\omega_{\min}^{D}(\lambda^c,\mu^r,\nu;N)}$ respectively.
Let 
\ben
&&q^{\widetilde{\omega}_{\min}^{U}(\lambda^r,\mu^c,\nu;N)}=q^{\omega_{\min}^{U}(\lambda^r,\mu^c,\nu;N)}\cdot\prod\limits_{l=0}^{n-1}q_{l}^{|\mathrm{II}(\lambda^r,\mu^c,\nu)|_{l}}\cdot q_{l}^{2|\mathrm{III}(\lambda^r,\mu^c,\nu)|_{l}},\\
&&q^{\widetilde{\omega}_{\min}^{D}}(\lambda^c,\mu^r,\nu;N)=q^{\omega_{\min}^{D}(\lambda^c,\mu^r,\nu;N)}\cdot\prod\limits_{l=0}^{n-1}q_{l}^{|\mathrm{II}(\lambda^c,\mu^r,\nu)|_{l}}\cdot q_{l}^{2|\mathrm{III}(\lambda^c,\mu^r,\nu)|_{l}}.
\een
Set 
\ben
&&\widetilde{\mathbf{Z}}^{D}(\mathbf{H}(N;\lambda^{rc},\mu^{rc},\nu)-\{a,d\}):=\left(q^{\widetilde{\omega}_{\min}^{U}(\lambda^{r},\mu^{c},\nu;N)}\right)^{-1}\cdot\mathbf{Z}^{D}(\mathbf{H}(N;\lambda^{rc},\mu^{rc},\nu)-\{a,d\}),\\
&&\widetilde{\mathbf{Z}}^{D}(\mathbf{H}(N;\lambda^{rc},\mu^{rc},\nu)-\{b,c\}):=\left(q^{\widetilde{\omega}_{\min}^{D}(\lambda^{c},\mu^{r},\nu;N)}\right)^{-1}\cdot\mathbf{Z}^{D}(\mathbf{H}(N;\lambda^{rc},\mu^{rc},\nu)-\{b,c\}).
\een
As in Section 4.1, we have 
\ben
&&\lim_{N\to\infty}\widetilde{\mathbf{Z}}^{D}(\mathbf{H}(N;\lambda^{rc},\mu^{rc},\nu)-\{a,d\})=V^n_{\lambda^r\mu^c\nu},\\
&&\lim_{N\to\infty}\widetilde{\mathbf{Z}}^{D}(\mathbf{H}(N;\lambda^{rc},\mu^{rc},\nu)-\{b,c\})=V^n_{\lambda^c\mu^r\nu}.
\een 
Now it follows from Theorem \ref{graphical-condensation1}
that
\ben
&&q^{\widetilde{\omega}_{\min}(\lambda,\mu,\nu;N)}\cdot q^{\widetilde{\omega}_{\min}(\lambda^{rc},\mu^{rc},\nu;N)}\widetilde{\mathbf{Z}}^{D}(\mathbf{H}(N;\lambda,\mu,\nu))\widetilde{\mathbf{Z}}^{D}(\mathbf{H}(N;\lambda^{rc},\mu^{rc},\nu))\\
&=&q^{\widetilde{\omega}_{\min}(\lambda^{rc},\mu,\nu;N)}\cdot q^{\widetilde{\omega}_{\min}(\lambda,\mu^{rc},\nu;N)}\widetilde{\mathbf{Z}}^{D}(\mathbf{H}(N;\lambda^{rc},\mu,\nu))\widetilde{\mathbf{Z}}^{D}(\mathbf{H}(N;\lambda,\mu^{rc},\nu))\\
&+&q^{\widetilde{\omega}_{\min}^{U}(\lambda^r,\mu^c,\nu;N)}\cdot q^{\widetilde{\omega}_{\min}^{D}(\lambda^c,\mu^r,\nu;N)}\widetilde{\mathbf{Z}}^{D}(\mathbf{H}(N;\lambda^{rc},\mu^{rc},\nu)-\{a,d\})\widetilde{\mathbf{Z}}^{D}(\mathbf{H}(N;\lambda^{rc},\mu^{rc},\nu)-\{b,c\}).
\een
Then
\ben
&&\widetilde{\mathbf{Z}}^{D}(\mathbf{H}(N;\lambda,\mu,\nu))\widetilde{\mathbf{Z}}^{D}(\mathbf{H}(N;\lambda^{rc},\mu^{rc},\nu))\\
&=&\frac{q^{\widetilde{\omega}_{\min}(\lambda^{rc},\mu,\nu;N)}\cdot q^{\widetilde{\omega}_{\min}(\lambda,\mu^{rc},\nu;N)}}{q^{\widetilde{\omega}_{\min}(\lambda,\mu,\nu;N)}\cdot q^{\widetilde{\omega}_{\min}(\lambda^{rc},\mu^{rc},\nu;N)}}\widetilde{\mathbf{Z}}^{D}(\mathbf{H}(N;\lambda^{rc},\mu,\nu))\widetilde{\mathbf{Z}}^{D}(\mathbf{H}(N;\lambda,\mu^{rc},\nu))\\
&+&\frac{q^{\widetilde{\omega}_{\min}^{U}(\lambda^r,\mu^c,\nu;N)}\cdot q^{\widetilde{\omega}_{\min}^{D}(\lambda^c,\mu^r,\nu;N)}}{q^{\widetilde{\omega}_{\min}(\lambda,\mu,\nu;N)}\cdot q^{\widetilde{\omega}_{\min}(\lambda^{rc},\mu^{rc},\nu;N)}}\widetilde{\mathbf{Z}}^{D}(\mathbf{H}(N;\lambda^{rc},\mu^{rc},\nu)-\{a,d\})\widetilde{\mathbf{Z}}^{D}(\mathbf{H}(N;\lambda^{rc},\mu^{rc},\nu)-\{b,c\}).
\een
It is proved in Section 4.3.1 that 
\ben
\frac{q^{\widetilde{\omega}_{\min}(\lambda^{rc},\mu,\nu;N)}\cdot q^{\widetilde{\omega}_{\min}(\lambda,\mu^{rc},\nu;N)}}{q^{\widetilde{\omega}_{\min}(\lambda,\mu,\nu;N)}\cdot q^{\widetilde{\omega}_{\min}(\lambda^{rc},\mu^{rc},\nu;N)}}=1\;\;\;\;\;\;\mbox{and}\;\;\;\;\;\; \frac{q^{\widetilde{\omega}_{\min}^{U}(\lambda^r,\mu^c,\nu;N)}\cdot q^{\widetilde{\omega}_{\min}^{D}(\lambda^c,\mu^r,\nu;N)}}{q^{\widetilde{\omega}_{\min}(\lambda,\mu,\nu;N)}\cdot q^{\widetilde{\omega}_{\min}(\lambda^{rc},\mu^{rc},\nu;N)}}=q^{K_{1}(\lambda,\mu,\nu)}
\een
where $q^{K_{1}(\lambda,\mu,\nu)}$
is defined in Lemma \ref{recurrence1-weight9} and independent of $N$. Now take $N\to\infty$, we have the following graphical condensation recurrence
\ben
V^n_{\lambda\mu\nu}\cdot V^n_{\lambda^{rc}\mu^{rc}\nu}=V^n_{\lambda^{rc}\mu\nu}\cdot V^n_{\lambda\mu^{rc}\nu}+q^{K_{1}(\lambda,\mu,\nu)}\cdot V^n_{\lambda^r\mu^c\nu}\cdot V^n_{\lambda^c\mu^r\nu}.
\een
Then we have 
\bea\label{DT-recurrence1}
\frac{V^n_{\lambda\mu\nu}}{V^n_{\emptyset\emptyset\emptyset}} =\frac{V^n_{\lambda^{rc}\mu\nu}}{V^n_{\emptyset\emptyset\emptyset}}\cdot \frac{V^n_{\lambda\mu^{rc}\nu}}{V^n_{\emptyset\emptyset\emptyset}}\cdot\left(\frac{V^n_{\lambda^{rc}\mu^{rc}\nu}}{V^n_{\emptyset\emptyset\emptyset}}\right)^{-1}+q^{K_{1}(\lambda,\mu,\nu)}\cdot\frac{V^n_{\lambda^r\mu^c\nu}}{V^n_{\emptyset\emptyset\emptyset}}\cdot \frac{V^n_{\lambda^c\mu^r\nu}}{V^n_{\emptyset\emptyset\emptyset}}\cdot\left(\frac{V^n_{\lambda^{rc}\mu^{rc}\nu}}{V^n_{\emptyset\emptyset\emptyset}}\right)^{-1}
\eea
which generalizes the one in [\cite{JWY}, Section 3.3] by Remark \ref{weight-generalization1}.

However, this recurrence is still not enough to determine orbifold DT/PT vertex correspondence for the full 3-leg case unless one has  2-leg correspondence as the base case via Remark \ref{size-comparision}. As explained in Introduction, due to the fewer symmetries of $V^n_{\lambda\mu\nu}$ (resp. $W^n_{\lambda\mu\nu}$) than $V_{\lambda\mu\nu}$ (resp. $W_{\lambda\mu\nu}$), one need to explore more graphical condensation recurrences for our orbifold case. 

To derive the next  graphical condensation recurrence, we take $\mathbf{G}$ to be $\mathbf{H}(N;\lambda^{rc},\mu,\nu^{rc})$ and 
\ben
a:=\max S_{1}^-,\;\;\;b:=\min S_{1}^+,\;\;\;c:=\max S_{3}^-,\;\;\;d:=\min S_{3}^+.
\een
Then as above we have
\ben
&&\mathbf{G}-\{a,b,c,d\}=\mathbf{H}(N;\lambda,\mu,\nu),\\
&&\mathbf{G}-\{a,b\}=\mathbf{H}(N;\lambda,\mu,\nu^{rc}),\\
&&\mathbf{G}-\{c,d\}=\mathbf{H}(N;\lambda^{rc},\mu,\nu).
\een

Similarly, the graph $\mathbf{G}-\{a,d\}$ is obtained  from the charge $-1$ Maya diagram of $\lambda^r$, the Maya diagram of $\mu$, and the charge 1 Maya diagram of $\nu^c$, while the graph $\mathbf{G}-\{b,c\}$ is obtained from the charge 1 Maya diagram of $\lambda^c$, the Maya diagram of $\mu$, and the charge $-1$ Maya diagram of $\nu^r$. However, slightly different from the first case,  3D partitions  asymptotic to $(\lambda^r,\mu,\nu^c)$  correspond to dimer configurations on $\mathbf{G}-\{a,d\}$ but with the orgin in $\mathbb{Z}^3$  shifted directly to the face on the left-up side of the central face of $\mathbf{H}(N)$, and 3D partitions  asymptotic to $(\lambda^c,\mu,\nu^r)$  correspond to dimer configurations on $\mathbf{G}-\{b,c\}$ but with the orgin in $\mathbb{Z}^3$  shifted directly to the face on the right-down side of  the central face of $\mathbf{H}(N)$. We omit the figures and  one may take an exmple to draw (or modify) the pictures as in [\cite{JWY}, Figure 5] to witness the required shift.

Let $\mathbf{D}_{\min}^{\nwarrow}(\lambda^r,\mu,\nu^c;N)$ and  $\mathbf{D}_{\min}^{\searrow}(\lambda^c,\mu,\nu^r;N)$ be the minimal dimer configurations on $\mathbf{G}-\{a,d\}$ and $\mathbf{G}-\{b,c\}$ with their weights denoted by  $q^{\omega_{\min}^{LU}(\lambda^r,\mu,\nu^c;N)}$ and $q^{\omega_{\min}^{RD}(\lambda^c,\mu,\nu^r;N)}$ respectively. 
Let
\ben
&&\widetilde{\mathbf{Z}}^{D}(\mathbf{H}(N;\lambda^{rc},\mu,\nu^{rc})-\{a,d\}):=\left(q^{\widetilde{\omega}_{\min}^{LU}(\lambda^{r},\mu,\nu^{c};N)}\right)^{-1}\cdot\mathbf{Z}^{D}(\mathbf{H}(N;\lambda^{rc},\mu,\nu^{rc})-\{a,d\}),\\
&&\widetilde{\mathbf{Z}}^{D}(\mathbf{H}(N;\lambda^{rc},\mu,\nu^{rc})-\{b,c\}):=\left(q^{\widetilde{\omega}_{\min}^{RD}(\lambda^{c},\mu,\nu^{r};N)}\right)^{-1}\cdot\mathbf{Z}^{D}(\mathbf{H}(N;\lambda^{rc},\mu,\nu^{rc})-\{b,c\}),
\een
where
\ben
&&q^{\widetilde{\omega}_{\min}^{LU}(\lambda^r,\mu,\nu^c;N)}:=q^{\omega_{\min}^{LU}(\lambda^r,\mu,\nu^c;N)}\cdot\prod\limits_{l=0}^{n-1}q_{l}^{|\mathrm{II}(\lambda^r,\mu,\nu^c)|_{l}}\cdot q_{l}^{2|\mathrm{III}(\lambda^r,\mu,\nu^c)|_{l}},\\
&&q^{\widetilde{\omega}_{\min}^{RD}}(\lambda^c,\mu,\nu^r;N):=q^{\omega_{\min}^{RD}(\lambda^c,\mu,\nu^r;N)}\cdot\prod\limits_{l=0}^{n-1}q_{l}^{|\mathrm{II}(\lambda^c,\mu,\nu^r)|_{l}}\cdot q_{l}^{2|\mathrm{III}(\lambda^c,\mu,\nu^r)|_{l}}.
\een
As in the first case, we have 
\ben
&&\lim_{N\to\infty}\widetilde{\mathbf{Z}}^{D}(\mathbf{H}(N;\lambda^{rc},\mu,\nu^{rc})-\{a,d\})=V^n_{\lambda^r\mu\nu^c},\\
&&\lim_{N\to\infty}\widetilde{\mathbf{Z}}^{D}(\mathbf{H}(N;\lambda^{rc},\mu,\nu^{rc})-\{b,c\})=V^n_{\lambda^c\mu\nu^r}.
\een 
and 
\ben
&&\widetilde{\mathbf{Z}}^{D}(\mathbf{H}(N;\lambda,\mu,\nu))\widetilde{\mathbf{Z}}^{D}(\mathbf{H}(N;\lambda^{rc},\mu,\nu^{rc}))\\
&=&\frac{q^{\widetilde{\omega}_{\min}(\lambda^{rc},\mu,\nu;N)}\cdot q^{\widetilde{\omega}_{\min}(\lambda,\mu,\nu^{rc};N)}}{q^{\widetilde{\omega}_{\min}(\lambda,\mu,\nu;N)}\cdot q^{\widetilde{\omega}_{\min}(\lambda^{rc},\mu,\nu^{rc};N)}}\widetilde{\mathbf{Z}}^{D}(\mathbf{H}(N;\lambda^{rc},\mu,\nu))\widetilde{\mathbf{Z}}^{D}(\mathbf{H}(N;\lambda,\mu,\nu^{rc}))\\
&+&\frac{q^{\widetilde{\omega}_{\min}^{LU}(\lambda^r,\mu,\nu^c;N)}\cdot q^{\widetilde{\omega}_{\min}^{RD}(\lambda^c,\mu,\nu^r;N)}}{q^{\widetilde{\omega}_{\min}(\lambda,\mu,\nu;N)}\cdot q^{\widetilde{\omega}_{\min}(\lambda^{rc},\mu,\nu^{rc};N)}}\widetilde{\mathbf{Z}}^{D}(\mathbf{H}(N;\lambda^{rc},\mu,\nu^{rc})-\{a,d\})\widetilde{\mathbf{Z}}^{D}(\mathbf{H}(N;\lambda^{rc},\mu,\nu^{rc})-\{b,c\})
\een
It is shown in Section 4.3.2 that 
\ben
\frac{q^{\widetilde{\omega}_{\min}(\lambda^{rc},\mu,\nu;N)}\cdot q^{\widetilde{\omega}_{\min}(\lambda,\mu,\nu^{rc};N)}}{q^{\widetilde{\omega}_{\min}(\lambda,\mu,\nu;N)}\cdot q^{\widetilde{\omega}_{\min}(\lambda^{rc},\mu,\nu^{rc};N)}}=1\;\;\;\;\;\;\mbox{and}\;\;\;\;\;\;\frac{q^{\widetilde{\omega}_{\min}^{LU}(\lambda^r,\mu,\nu^c;N)}\cdot q^{\widetilde{\omega}_{\min}^{RD}(\lambda^c,\mu,\nu^r;N)}}{q^{\widetilde{\omega}_{\min}(\lambda,\mu,\nu;N)}\cdot q^{\widetilde{\omega}_{\min}(\lambda^{rc},\mu,\nu^{rc};N)}}=q^{K_{2}(\lambda,\mu,\nu)}
\een
where 
$q^{K_{2}(\lambda,\mu,\nu)}$ defined in Lemma \ref{recurrence2-weight7} 
is independent of $N$. Now let $N\to\infty$, we have 
\bea\label{DT-recurrence2}
\frac{V^n_{\lambda\mu\nu}}{V^n_{\emptyset\emptyset\emptyset}}=\frac{V^n_{\lambda^{rc}\mu\nu}}{V^n_{\emptyset\emptyset\emptyset}}\cdot \frac{V^n_{\lambda\mu\nu^{rc}}}{V^n_{\emptyset\emptyset\emptyset}}\cdot \left(\frac{V^n_{\lambda^{rc}\mu\nu^{rc}}}{V^n_{\emptyset\emptyset\emptyset}}\right)^{-1}+q^{K_{2}(\lambda,\mu,\nu)}\cdot \frac{V^n_{\lambda^r\mu\nu^c}}{V^n_{\emptyset\emptyset\emptyset}}\cdot \frac{V^n_{\lambda^c\mu\nu^r}}{V^n_{\emptyset\emptyset\emptyset}}\cdot \left(\frac{V^n_{\lambda^{rc}\mu\nu^{rc}}}{V^n_{\emptyset\emptyset\emptyset}}\right)^{-1}.
\eea

As explained in Introduction, it is enough to determine the orbifold DT/PT vertex correspondence from the above two equations and [\cite{Zhang}, Theorem 5.22] together with  the symmetries of $V^n_{\lambda\mu\nu}$ and $W^n_{\lambda\mu\nu}$,  see Section 5.3 for the orbifold PT case. 
Since $d(\nu)=d(\nu^\prime)$ and $\widetilde{d}(\nu^\prime)=\nu_{d(\nu)}$ shown in Section 4.3, one can verify that (refer to Lemma \ref{recurrence2-weight7} and Lemma \ref{recurrence3-weight5})
\ben
q^{K_{2}(\lambda,\mu,\nu)}=\overline{q^{K_{3}(\mu^\prime,\lambda^\prime,\nu^\prime)}}.
\een
In addition, by Remark \ref{symmetry1} and Lemma \ref{transpose},  we have 
\ben
\frac{\overline{V}^n_{\mu^\prime\lambda^\prime\nu^{\prime}}}{\overline{V}^n_{\emptyset\emptyset\emptyset}}\cdot \frac{\overline{V}^n_{\mu^\prime(\lambda^\prime)^{rc}(\nu^\prime)^{rc}}}{\overline{V}^n_{\emptyset\emptyset\emptyset}}=\frac{\overline{V}^n_{\mu^\prime(\lambda^\prime)^{rc}\nu^\prime}}{\overline{V}^n_{\emptyset\emptyset\emptyset}}\cdot \frac{\overline{V}^n_{\mu^\prime\lambda^\prime(\nu^{\prime})^{rc}}}{\overline{V}^n_{\emptyset\emptyset\emptyset}}+\overline{q^{K_{3}(\mu^\prime,\lambda^\prime,\nu^\prime)}}\cdot \frac{\overline{V}^n_{\mu^\prime(\lambda^\prime)^c(\nu^\prime)^r}}{\overline{V}^n_{\emptyset\emptyset\emptyset}}\cdot \frac{\overline{V}^n_{\mu^\prime(\lambda^\prime)^r(\nu^\prime)^c}}{\overline{V}^n_{\emptyset\emptyset\emptyset}}
\een
which is equivalent to Equation \eqref{DT-recurrence3}. Therefore, Equation \eqref{DT-recurrence3} follows from Equation  \eqref{DT-recurrence2} and the symmetry of  $V^n_{\lambda\mu\nu}$. Conversely,  Equation \eqref{DT-recurrence2} can again follow from Equation  \eqref{DT-recurrence3} and the symmetry of  $V^n_{\lambda\mu\nu}$. However, Equation \eqref{DT-recurrence1} doesn't follow from Equation \eqref{DT-recurrence2} or \eqref{DT-recurrence3} due to the fewer symmetries of $V^n_{\lambda\mu\nu}$ than $V_{\lambda\mu\nu}$  in Remark \ref{symmetry1} unless $n=1$.
For the completeness, we  independently obtain the third graphical condensation recurrence \eqref{DT-recurrence3}  as follows.
Let 
\ben
&&\mathbf{G}=\mathbf{H}(N;\lambda,\mu^{rc},\nu^{rc}),\\
&&\mathbf{G}-\{a,b,c,d\}=\mathbf{H}(N;\lambda,\mu,\nu),\\
&&\mathbf{G}-\{a,b\}=\mathbf{H}(N;\lambda,\mu,\nu^{rc}),\\
&&\mathbf{G}-\{c,d\}=\mathbf{H}(N;\lambda,\mu^{rc},\nu)
\een
where
\ben
a:=\max S_{2}^-,\;\;\;b:=\min S_{2}^+,\;\;\;c:=\max S_{3}^-,\;\;\;d:=\min S_{3}^+.
\een

Now the graph $\mathbf{G}-\{a,d\}$ (resp. $\mathbf{G}-\{b,c\}$) is obtained  from the Maya diagram of $\lambda$, the charge $-1$ (resp. $1$) Maya diagram of $\mu^r$ (resp. $\mu^c$),  and the charge 1 (resp. $-1$) Maya diagram of $\nu^c$ (resp. $\nu^r$). And 3D partitions  asymptotic to $(\lambda,\mu^r,\nu^c)$ (resp. $(\lambda,\mu^c,\nu^r)$) correspond to dimer configurations on $\mathbf{G}-\{a,d\}$ (resp. $\mathbf{G}-\{b,c\}$) but with the orgin in $\mathbb{Z}^3$  shifted directly to the face on the left-down (resp. right-up) side of the central face of $\mathbf{H}(N)$. Let $\mathbf{D}_{\min}^{\swarrow}(\lambda^r,\mu,\nu^c;N)$ and  $\mathbf{D}_{\min}^{\nearrow}(\lambda^c,\mu,\nu^r;N)$ be the minimal dimer configurations on $\mathbf{G}-\{a,d\}$ and $\mathbf{G}-\{b,c\}$ with their weights denoted by  $q^{\omega_{\min}^{LD}(\lambda,\mu^r,\nu^c;N)}$ and $q^{\omega_{\min}^{RU}(\lambda,\mu^c,\nu^r;N)}$ respectively. 
Let
\ben
&&\widetilde{\mathbf{Z}}^{D}(\mathbf{H}(N;\lambda,\mu^{rc},\nu^{rc})-\{a,d\}):=\left(q^{\widetilde{\omega}_{\min}^{LD}(\lambda,\mu^{r},\nu^{c};N)}\right)^{-1}\cdot\mathbf{Z}^{D}(\mathbf{H}(N;\lambda,\mu^{rc},\nu^{rc})-\{a,d\}),\\
&&\widetilde{\mathbf{Z}}^{D}(\mathbf{H}(N;\lambda,\mu^{rc},\nu^{rc})-\{b,c\}):=\left(q^{\widetilde{\omega}_{\min}^{RU}(\lambda,\mu^{c},\nu^{r};N)}\right)^{-1}\cdot\mathbf{Z}^{D}(\mathbf{H}(N;\lambda,\mu^{rc},\nu^{rc})-\{b,c\}),
\een
where
\ben
&&q^{\widetilde{\omega}_{\min}^{LD}(\lambda,\mu^r,\nu^c;N)}:=q^{\omega_{\min}^{LD}(\lambda,\mu^r,\nu^c;N)}\cdot\prod\limits_{l=0}^{n-1}q_{l}^{|\mathrm{II}(\lambda,\mu^r,\nu^c)|_{l}}\cdot q_{l}^{2|\mathrm{III}(\lambda,\mu^r,\nu^c)|_{l}},\\
&&q^{\widetilde{\omega}_{\min}^{RU}}(\lambda,\mu^c,\nu^r;N):=q^{\omega_{\min}^{RU}(\lambda,\mu^c,\nu^r;N)}\cdot\prod\limits_{l=0}^{n-1}q_{l}^{|\mathrm{II}(\lambda,\mu^c,\nu^r)|_{l}}\cdot q_{l}^{2|\mathrm{III}(\lambda,\mu^c,\nu^r)|_{l}}.
\een
Then we have 
\ben
&&\widetilde{\mathbf{Z}}^{D}(\mathbf{H}(N;\lambda,\mu,\nu))\widetilde{\mathbf{Z}}^{D}(\mathbf{H}(N;\lambda,\mu^{rc},\nu^{rc}))\\
&=&\frac{q^{\widetilde{\omega}_{\min}(\lambda,\mu^{rc},\nu;N)}\cdot q^{\widetilde{\omega}_{\min}(\lambda,\mu,\nu^{rc};N)}}{q^{\widetilde{\omega}_{\min}(\lambda,\mu,\nu;N)}\cdot q^{\widetilde{\omega}_{\min}(\lambda,\mu^{rc},\nu^{rc};N)}}\widetilde{\mathbf{Z}}^{D}(\mathbf{H}(N;\lambda,\mu^{rc},\nu))\widetilde{\mathbf{Z}}^{D}(\mathbf{H}(N;\lambda,\mu,\nu^{rc}))\\
&+&\frac{q^{\widetilde{\omega}_{\min}^{LD}(\lambda,\mu^r,\nu^c;N)}\cdot q^{\widetilde{\omega}_{\min}^{RU}(\lambda,\mu^c,\nu^r;N)}}{q^{\widetilde{\omega}_{\min}(\lambda,\mu,\nu;N)}\cdot q^{\widetilde{\omega}_{\min}(\lambda,\mu^{rc},\nu^{rc};N)}}\widetilde{\mathbf{Z}}^{D}(\mathbf{H}(N;\lambda,\mu^{rc},\nu^{rc})-\{a,d\})\widetilde{\mathbf{Z}}^{D}(\mathbf{H}(N;\lambda,\mu^{rc},\nu^{rc})-\{b,c\}).
\een
It is computed in Section 4.3.3 that
\ben
\frac{q^{\widetilde{\omega}_{\min}(\lambda,\mu^{rc},\nu;N)}\cdot q^{\widetilde{\omega}_{\min}(\lambda,\mu,\nu^{rc};N)}}{q^{\widetilde{\omega}_{\min}(\lambda,\mu,\nu;N)}\cdot q^{\widetilde{\omega}_{\min}(\lambda,\mu^{rc},\nu^{rc};N)}}=1\;\;\;\;\;\;\mbox{and}\;\;\;\;\;\;\frac{q^{\widetilde{\omega}_{\min}^{LD}(\lambda,\mu^r,\nu^c;N)}\cdot q^{\widetilde{\omega}_{\min}^{RU}(\lambda,\mu^c,\nu^r;N)}}{q^{\widetilde{\omega}_{\min}(\lambda,\mu,\nu;N)}\cdot q^{\widetilde{\omega}_{\min}(\lambda,\mu^{rc},\nu^{rc};N)}}=q^{K_{3}(\lambda,\mu,\nu)}
\een
where 
$
q^{K_{3}(\lambda,\mu,\nu)}$ defined in Lemma \ref{recurrence3-weight5}
is independent of $N$.
Since 
\ben
&&\lim_{N\to\infty}\widetilde{\mathbf{Z}}^{D}(\mathbf{H}(N;\lambda,\mu^{rc},\nu^{rc})-\{a,d\})=V^n_{\lambda\mu^r\nu^c},\\
&&\lim_{N\to\infty}\widetilde{\mathbf{Z}}^{D}(\mathbf{H}(N;\lambda,\mu^{rc},\nu^{rc})-\{b,c\})=V^n_{\lambda\mu^c\nu^r},
\een 
then we have
\bea\label{DT-recurrence3}
\frac{V^n_{\lambda\mu\nu}}{V^n_{\emptyset\emptyset\emptyset}}=\frac{V^n_{\lambda\mu^{rc}\nu}}{V^n_{\emptyset\emptyset\emptyset}}\cdot \frac{V^n_{\lambda\mu\nu^{rc}}}{V^n_{\emptyset\emptyset\emptyset}}\cdot \left(\frac{V^n_{\lambda\mu^{rc}\nu^{rc}}}{V^n_{\emptyset\emptyset\emptyset}}\right)^{-1}+q^{K_{3}(\lambda,\mu,\nu)}\cdot \frac{V^n_{\lambda\mu^r\nu^c}}{V^n_{\emptyset\emptyset\emptyset}}\cdot \frac{V^n_{\lambda\mu^c\nu^r}}{V^n_{\emptyset\emptyset\emptyset}}\cdot \left(\frac{V^n_{\lambda\mu^{rc}\nu^{rc}}}{V^n_{\emptyset\emptyset\emptyset}}\right)^{-1}.
\eea

\subsection{Weights for orbifold DT theory}
In order to formulate and compute weights for both orbifold DT and PT theories, we will first introduce some  properties of partitions and their modified partitions in [\cite{JWY}, Section 5.1] as follows. 
For any partition $\eta\neq\emptyset$, let 
\ben
d(\eta)=\max\{i:\eta_{i}\geq i\},\;\;\;\;
\widetilde{d}(\eta)=\max\{i:\eta_{i}\geq d(\eta)\}.
\een
Then we have $\ell(\eta)\geq\widetilde{d}(\eta)\geq d(\eta)\geq1$. It is easy to verify that $d(\eta)=d(\eta^\prime)$ and $\widetilde{d}(\eta^\prime)=\eta_{d(\eta)}$.
\begin{lemma}([\cite{JWY}, Lemma 5.1.9, 5.1.17 and Remark 5.1.31])
\label{modified-partition}
Let $\eta\neq\emptyset$ be a partition. Then
\ben
&&\eta_{i}^r=\left\{
\begin{aligned}
	& \eta_{i}+1 ,\; \;\;\mbox{if  $i<d(\eta)$}, \\
	& \eta_{i+1}, \;\;\;\;\;\;\mbox{if  $i\geq d(\eta)$},
\end{aligned}
\right.\\
&&\eta_{i}^c=\left\{
\begin{aligned}
	& \eta_{i}-1,\; \;\;\; \;\;\; \mbox{if $i\leq \widetilde{d}(\eta)$}, \\
	& d(\eta)-1, \; \;\;\mbox{if $i=\widetilde{d}(\eta)+1$}, \\
	& \eta_{i-1}, \;\;\;\;\;\;\; \;\;\mbox{if $i> \widetilde{d}(\eta)+1$},
\end{aligned}
\right.\\
&&\eta_{i}^{rc}=\left\{
\begin{aligned}
	& \eta_{i},\; \;\;\; \;\;\; \;\;\;\;\;\;\mbox{if $i< d(\eta)$}, \\
	& d(\eta)-1, \; \;\;\mbox{if $d(\eta)\leq i\leq\widetilde{d}(\eta)$}, \\
	& \eta_{i}, \;\;\;\;\;\;\; \;\;\;\;\;\; \mbox{if $i> \widetilde{d}(\eta)$}.
\end{aligned}
\right.
\een

\end{lemma}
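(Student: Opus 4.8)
The plan is to work entirely on the level of Maya diagrams, translating each of the three operations into an explicit rearrangement of the strictly decreasing sequence $a_i := \eta_i - i + \frac12$, $i\geq 1$, and then reading off the parts of the modified partition from $\eta^?_j = (\text{$j$-th largest element}) + j - \frac12$. The first step is to set up the dictionary between the Durfee statistics and the Maya diagram. Since $a_i>0 \iff \eta_i\geq i \iff i\leq d(\eta)$, the positive part $S(\eta)^+$ consists exactly of $a_1>\cdots>a_{d(\eta)}$, so its minimum is attained at $i=d(\eta)$, namely $\min S(\eta)^+ = a_{d(\eta)} = \eta_{d(\eta)} - d(\eta) + \frac12$. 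I would also record the elementary flatness fact that $\eta_i = d(\eta)$ for $d(\eta)<i\leq\widetilde{d}(\eta)$: this follows at once from the definitions, as $\eta_i\geq d(\eta)$ for $i\leq\widetilde{d}(\eta)$ while $\eta_i\leq d(\eta)$ for $i>d(\eta)$ (because $\eta_{d(\eta)+1}<d(\eta)+1$). In particular $d(\eta)\leq\widetilde{d}(\eta)$.

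Next I would locate $\max S(\eta)^-$, which I claim equals $g := d(\eta)-\widetilde{d}(\eta)-\frac12$. Writing $d=d(\eta)$, $\widetilde{d}=\widetilde{d}(\eta)$ for brevity, the flatness just recorded gives $a_i = d-i+\frac12$ for $d<i\leq\widetilde{d}$, so $a_{d+1},\ldots,a_{\widetilde{d}}$ are precisely the half-integers $-\frac12,-\frac32,\ldots,d-\widetilde{d}+\frac12$, that is, \emph{all} negative half-integers strictly above $g$; hence $g$ is itself a gap and no negative gap exceeds it. Moreover, since $a_{\widetilde{d}} = d-\widetilde{d}+\frac12 > g > a_{\widetilde{d}+1}$ (the last inequality from $\eta_{\widetilde{d}+1}\leq d-1$), the value $g$ occupies the slot between the $\widetilde{d}$-th and $(\widetilde{d}+1)$-th largest elements of $S(\eta)$. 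Alternatively, this can be extracted from the conjugation symmetry $S(\eta') = -\big((\mathbb{Z}+\tfrac12)\setminus S(\eta)\big)$ together with the stated identities $d(\eta)=d(\eta')$ and $\widetilde{d}(\eta')=\eta_{d(\eta)}$.

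With these two anchor points in hand, the three formulas follow by bookkeeping index shifts. For $\eta^r$ one deletes $a_{d}$ from the decreasing list and adds $1$ to every surviving element; re-indexing (the elements for $j<d$ come from $i=j$, those for $j\geq d$ from $i=j+1$) yields $\eta^r_j=\eta_j+1$ for $j<d$ and $\eta^r_j=\eta_{j+1}$ for $j\geq d$. For $\eta^c$ one inserts $g$, which lands in slot $\widetilde{d}+1$, and subtracts $1$ from every element; re-indexing, together with the value $\eta^c_{\widetilde{d}+1}=d-1$ produced by $g$ itself, gives the three-case formula. For $\eta^{rc}$ one performs both the deletion at slot $d$ and the insertion into slot $\widetilde{d}+1$ with no global shift; since $d\leq\widetilde{d}$ the deletion precedes the insertion, so the net displacement beyond slot $\widetilde{d}$ is zero and the parts with $j<d$ and $j>\widetilde{d}$ remain equal to $\eta_j$, while the middle slots $d\leq j\leq\widetilde{d}$ collapse to the constant $d-1$ precisely because $\eta_i=d$ on the range $d<i\leq\widetilde{d}$ recorded in the first step.

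The hard part will be the second step, pinning down $\max S(\eta)^-$, together with the attendant index bookkeeping in the third step: one must check that the single deletion and single insertion do not interfere (guaranteed by $d\leq\widetilde{d}$), that the removed and inserted slots are counted correctly so that the asymptotically negative tail of each Maya diagram is left unchanged, and that the middle range genuinely flattens to $d-1$. Once the flatness of $\eta$ on $(d,\widetilde{d}]$ is in place, everything else is routine re-indexing of a strictly monotone sequence.
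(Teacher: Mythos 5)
Your proposal is correct. Note that the paper does not actually prove this lemma: it is quoted verbatim from [JWY, Lemma 5.1.9, 5.1.17 and Remark 5.1.31], so there is no in-paper argument to compare against; your Maya-diagram computation is the natural (and essentially forced) route, since $\eta^r,\eta^c,\eta^{rc}$ are \emph{defined} through $S(\eta)$. The two pillars you identify are exactly right and correctly established: $\min S(\eta)^+=\eta_{d(\eta)}-d(\eta)+\tfrac12$ sits in slot $d(\eta)$, and the flatness $\eta_i=d(\eta)$ on $d(\eta)<i\leq\widetilde d(\eta)$ forces $\max S(\eta)^-=d(\eta)-\widetilde d(\eta)-\tfrac12$ to sit strictly between the $\widetilde d(\eta)$-th and $(\widetilde d(\eta)+1)$-th largest elements; after that the three formulas are the index shifts you describe. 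One small point of care in the $\eta^{rc}$ case: after deleting the element in slot $d(\eta)$, the inserted gap lands in slot $\widetilde d(\eta)$ (not $\widetilde d(\eta)+1$), contributing the value $d(\eta)-1$ there, while slots $d(\eta)\leq j\leq\widetilde d(\eta)-1$ get $\eta_{j+1}-1=d(\eta)-1$ by flatness; your "net displacement beyond slot $\widetilde d(\eta)$ is zero" phrasing already accounts for this, so the argument goes through as written.
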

\begin{remark}\label{size-comparision}
It follows easily from [\cite{JWY}, Remark 5.1.11, 5.1.27 and 5.1.28]  that  for any partition $\eta\neq\emptyset$, we have $|\eta^r|, |\eta^c|, |\eta^{rc}|<|\eta|$.
\end{remark}
\begin{lemma}([\cite{JWY}, Remark 5.1.12, 5.1.19 and Lemma 5.1.34])\label{length-relation}
Let $\eta\neq\emptyset$ be a partition. Then\\
$(i)$ if $d(\eta)=1$ and $\eta_{1}=1$, then $\ell(\eta^r)=\ell(\eta)-1$,  $\ell(\eta^c)=\ell(\eta^{rc})=0$;\\
$(ii)$  if $d(\eta)=1$ and $\eta_{1}>1$,  then $\ell(\eta^r)=\ell(\eta)-1$, $\ell(\eta^c)=1$ and $\ell(\eta^{rc})=0$;\\
$(iii)$ if $d(\eta)>1$, then $\ell(\eta^r)=\ell(\eta)-1$, $\ell(\eta^c)=\ell(\eta)+1$ and  $\ell(\eta^{rc})=\ell(\eta)$.
\end{lemma}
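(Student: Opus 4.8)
The plan is to read off all nine length identities directly from the explicit part-formulas for $\eta^r$, $\eta^c$, $\eta^{rc}$ recorded in Lemma \ref{modified-partition}, since $\ell$ of a partition is simply the largest index carrying a positive part. Throughout I will use the inequalities $\ell(\eta)\geq\widetilde{d}(\eta)\geq d(\eta)\geq1$ noted before the statement, together with the defining fact that $\eta_i\geq d(\eta)$ precisely when $i\leq\widetilde{d}(\eta)$.

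First I would dispose of $\eta^r$, which behaves uniformly across all three cases. By Lemma \ref{modified-partition} the head entries $\eta_i^r=\eta_i+1$ (for $i<d(\eta)$) are automatically positive, while the tail is the shift $\eta_i^r=\eta_{i+1}$ (for $i\geq d(\eta)$), which is positive exactly when $i+1\leq\ell(\eta)$. Since $d(\eta)-1\leq\ell(\eta)-1$, the last positive index is $\ell(\eta)-1$, giving $\ell(\eta^r)=\ell(\eta)-1$ in every case.

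Next I would treat $\eta^c$ and $\eta^{rc}$ in case (iii), where $d(\eta)>1$. Here $d(\eta)-1\geq1$, and for $i\leq\widetilde{d}(\eta)$ one has $\eta_i\geq d(\eta)\geq2$, so the decremented entries $\eta_i^c=\eta_i-1$ remain positive; the entry at $i=\widetilde{d}(\eta)+1$ equals $d(\eta)-1\geq1$, and the subsequent shift $\eta_i^c=\eta_{i-1}$ is positive up to $i=\ell(\eta)+1$, yielding $\ell(\eta^c)=\ell(\eta)+1$. For $\eta^{rc}$ the head ($i<d(\eta)$) and tail ($i>\widetilde{d}(\eta)$) entries coincide with those of $\eta$, while the middle block equals $d(\eta)-1\geq1$, so the last positive index is unchanged at $\ell(\eta)$, giving $\ell(\eta^{rc})=\ell(\eta)$.

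Finally, in cases (i) and (ii) I would use $d(\eta)=1$, which forces $\widetilde{d}(\eta)=\ell(\eta)$ and makes the inserted/overwritten value $d(\eta)-1=0$. For $\eta^{rc}$ the entire block $1\leq i\leq\ell(\eta)$ is overwritten by $0$ and no entries lie beyond it, so $\ell(\eta^{rc})=0$ in both cases. For $\eta^c$, the decrement $\eta_i^c=\eta_i-1$ annihilates every part equal to $1$: when $\eta_1=1$ the partition is $(1^{\ell(\eta)})$ and all parts vanish, giving $\ell(\eta^c)=0$, whereas when $\eta_1>1$ only $\eta_1^c=\eta_1-1\geq1$ survives, giving $\ell(\eta^c)=1$. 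The only point requiring care is the positivity bookkeeping in case (iii)—verifying that both the decremented entries and the inserted value $d(\eta)-1$ stay strictly positive—but this is immediate from $d(\eta)>1$ and the characterization of $\widetilde{d}(\eta)$, so there is no genuine obstacle beyond organizing the case analysis cleanly.
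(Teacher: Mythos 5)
Your proof is correct. Note that the paper itself offers no proof of Lemma \ref{length-relation}: it is imported wholesale from [JWY, Remark 5.1.12, 5.1.19 and Lemma 5.1.34], so your argument supplies what the paper leaves implicit, and it does so by the natural route, namely reading the lengths off the explicit part-formulas of Lemma \ref{modified-partition} (itself also quoted from [JWY]). Your case analysis is complete: the uniform treatment of $\eta^r$ correctly handles the boundary situation $d(\eta)=\ell(\eta)$, where the shifted tail contributes no positive parts but the head already reaches index $d(\eta)-1=\ell(\eta)-1$; the positivity of the decremented and inserted entries in case (iii) follows, as you say, from $\eta_i\geq d(\eta)\geq 2$ for $i\leq\widetilde{d}(\eta)$ and $d(\eta)-1\geq 1$; and in cases (i)--(ii) the key observations that $d(\eta)=1$ forces $\widetilde{d}(\eta)=\ell(\eta)$ and $\eta_i=1$ for $2\leq i\leq\ell(\eta)$ are exactly the facts the paper records separately in Remark \ref{special-value}(iii)--(iv), so your derivation is also consistent with how the surrounding text uses these lemmas.
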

\begin{lemma}([\cite{JWY}, Lemma 5.1.13 and Remark 5.1.22, 5.1.32])\label{diag-length}
Let $\eta\neq\emptyset$ be a partition. Then \\
$(i)$ $d(\eta^r)=d(\eta)\Longleftrightarrow\eta_{d(\eta)+1}=d(\eta)$,\;\;\;\;$d(\eta^r)=d(\eta)-1\Longleftrightarrow\eta_{d(\eta)+1}<d(\eta)$,\\
$(ii)$ $d(\eta^c)=d(\eta)\Longleftrightarrow\eta_{d(\eta)}>d(\eta)$,\;\;\;\;$d(\eta^c)=d(\eta)-1\Longleftrightarrow\eta_{d(\eta)}=d(\eta)$,\\
$(iii)$ $d(\eta^{rc})=d(\eta)-1$.
\end{lemma}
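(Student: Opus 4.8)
The plan is to derive all three statements directly from the explicit piecewise descriptions of $\eta^r$, $\eta^c$ and $\eta^{rc}$ recorded in Lemma \ref{modified-partition}, reducing each claim to testing the Durfee condition $\xi_i \geq i$ for the relevant modified partition $\xi$ at the three indices $i = d(\eta)-1,\, d(\eta),\, d(\eta)+1$. Throughout I will write $d := d(\eta)$ and $\widetilde d := \widetilde d(\eta)$, and I will lean on three standing facts: by definition of $d$ one has $\eta_d \geq d$ and $\eta_{d+1} \leq d$; since $\eta_d \geq d$ the index $d$ belongs to $\{i : \eta_i \geq d\}$, so $\widetilde d \geq d$; and $\eta$ is weakly decreasing. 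With these in hand every case becomes a one-line comparison.

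I would treat part (iii) first, as it is cleanest. By Lemma \ref{modified-partition}, $\eta_i^{rc} = \eta_i$ for $i < d$ and for $i > \widetilde d$, while $\eta_i^{rc} = d-1$ for $d \leq i \leq \widetilde d$. At $i = d$ this gives $\eta_d^{rc} = d-1 < d$, so $d(\eta^{rc}) < d$; at $i = d-1$ monotonicity gives $\eta_{d-1}^{rc} = \eta_{d-1} \geq \eta_d \geq d > d-1$, so $d(\eta^{rc}) \geq d-1$ (and the indices $i > \widetilde d$ satisfy $\eta_i < d \leq i$, so they never revive the condition). Combining the bounds yields $d(\eta^{rc}) = d-1$.

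For part (i) I will analyze $\eta^r$ at $i = d$, where $\eta_d^r = \eta_{d+1}$. If $\eta_{d+1} = d$ then $\eta_d^r = d \geq d$ while $\eta_{d+1}^r = \eta_{d+2} \leq \eta_{d+1} = d < d+1$, forcing $d(\eta^r) = d$; if $\eta_{d+1} < d$ then $\eta_d^r < d$ kills index $d$ whereas $\eta_{d-1}^r = \eta_{d-1}+1 \geq d+1 > d-1$ keeps index $d-1$, forcing $d(\eta^r) = d-1$. As $\eta_{d+1} \leq d$ always, the two cases $\eta_{d+1} = d$ and $\eta_{d+1} < d$ exhaust all possibilities and give distinct values, so the two equivalences follow. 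Part (ii) goes identically using $\eta_d^c = \eta_d - 1$ (valid since $d \leq \widetilde d$): the condition $\eta_d^c \geq d$ is equivalent to $\eta_d > d$, in which case $\eta_{d+1}^c \leq d-1 < d+1$ (here I split off the degenerate subcase $\widetilde d = d$, where $\eta_{d+1}^c = d-1$ comes from the middle branch of the formula) shows index $d+1$ is excluded, giving $d(\eta^c) = d$; otherwise $\eta_d = d$ gives $\eta_d^c = d-1 < d$ and $\eta_{d-1}^c \geq \eta_d - 1 = d-1$, giving $d(\eta^c) = d-1$.

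The main obstacle is not any single inequality but the bookkeeping of which branch of the piecewise formulas applies near $i = d$ versus $i = \widetilde d$; in particular I will need to handle the collision case $\widetilde d = d$, where the ``$i \leq \widetilde d$'' and ``$i = \widetilde d + 1$'' branches of $\eta^c$ fall on indices $d$ and $d+1$. The one remaining subtlety is the boundary $d(\eta) = 1$, where the modified partition may be empty; there I would adopt the convention $d(\emptyset) = 0$ and check consistency against the length computations of Lemma \ref{length-relation} (for instance $\ell(\eta^c) = \ell(\eta^{rc}) = 0$ when $d(\eta)=1$ and $\eta_1 = 1$), so that the stated values $d-1$ remain correct in these degenerate cases.
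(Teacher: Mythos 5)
Your proposal is correct. Note that the paper itself does not prove this lemma at all: it is imported verbatim from [JWY, Lemma 5.1.13, Remarks 5.1.22 and 5.1.32], so there is no in-paper argument to compare against. Your derivation from the piecewise formulas of Lemma \ref{modified-partition} is the natural self-contained route (and is essentially the route available in JWY, since those piecewise descriptions are exactly the JWY lemmas quoted here as Lemma \ref{modified-partition}). The argument is sound as written: the implicit organizing fact you rely on — that for any partition $\xi$ the set $\{i : \xi_i \geq i\}$ is an initial segment, so it suffices to test $\xi_i \geq i$ at $i = d(\eta)-1$, $d(\eta)$, $d(\eta)+1$ — is valid because $\xi_j \geq \xi_i \geq i > j$ whenever $j < i$ and $\xi_i \geq i$; your case splits are exhaustive since $\eta_{d(\eta)} \geq d(\eta)$ and $\eta_{d(\eta)+1} \leq d(\eta)$ always hold; and you correctly isolate the two places where care is needed, namely the collision $\widetilde{d}(\eta) = d(\eta)$ in the $\eta^c$ computation (where index $d(\eta)+1$ falls in the middle branch) and the boundary $d(\eta) = 1$ where $\eta^c$ or $\eta^{rc}$ may be empty, handled by the convention $d(\emptyset)=0$ consistently with Lemma \ref{length-relation}.
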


\begin{remark}\label{special-value}
By Lemma \ref{length-relation} and Lemma \ref{diag-length}, we have \\
$(i)$  if $d(\eta)>1$ and $d(\eta^r)=d(\eta)-1$, then $\widetilde{d}(\eta)= d(\eta)$;
\\
$(ii)$ if $d(\eta)>1$ and $d(\eta^r)=d(\eta)$,  then $\widetilde{d}(\eta)\geq d(\eta)+1$  and $\eta_{i}=d(\eta)$ for $d(\eta)+1\leq i\leq\widetilde{d}(\eta)$.\\
$(iii)$ if $d(\eta)=1$ and $\eta_{1}>1$ or $\eta_{1}=1$, then $\widetilde{d}(\eta)=\ell(\eta)$ and $\eta_{i}=1$ for $2\leq i\leq \ell(\eta)$.\\
$(iv)$ $\eta^c=\emptyset$ if and only if $d(\eta)=1$ and $\eta_{1}=1$, i.e., $\eta_{i}=1$ for $1\leq i\leq\ell(\eta)$.
\end{remark}

\begin{lemma}([\cite{JWY}, Lemma 5.1.24 and Remark 5.1.30])\label{transpose}
Let $\eta\neq\emptyset$ be a partition. Then
\ben
(\eta^c)^\prime=(\eta^\prime)^r;\;\;\;\;\;\;(\eta^r)^\prime=(\eta^\prime)^c;\;\;\;\;\;\;(\eta^{rc})^\prime=(\eta^\prime)^{rc}.
\een
\end{lemma}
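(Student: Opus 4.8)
The plan is to prove all three identities simultaneously by passing to Maya diagrams, where conjugation and the operations $r,c,rc$ all become elementary bead moves. The one classical input I will use is that conjugation of partitions is realized by the negate-and-complement involution $\sigma$ on subsets of $\mathbb{Z}+\frac12$, defined by $\sigma(S)=\{-t\mid t\in(\mathbb{Z}+\frac12)\setminus S\}$: concretely, $S(\eta')=\sigma(S(\eta))$, and more generally if $T$ is the charge $c$ Maya diagram of a partition $\rho$ then $\sigma(T)$ is the charge $-c$ Maya diagram of $\rho'$. The latter follows from $S(\eta')=\sigma(S(\eta))$ together with the shift identity $\sigma(T-c)=\sigma(T)+c$; the structural features I will exploit are that $\sigma$ is an involution and that it reverses charge, both visible from $\sigma(S)^+=-S^-$ and $\sigma(S)^-=-S^+$ (so that $c(\sigma(S))=-c(S)$).

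Next I will encode $r$ and $c$ as the two elementary moves $R(S)=S\setminus\{\min S^+\}$ (delete the lowest positive bead) and $C(S)=S\cup\{\max S^-\}$ (fill the highest negative hole), which lower and raise the charge by one respectively. Unwinding the definitions of $\eta^r,\eta^c,\eta^{rc}$, the set $R(S(\eta))$ is precisely the charge $-1$ Maya diagram of $\eta^r$, $C(S(\eta))$ is the charge $+1$ Maya diagram of $\eta^c$, and $C(R(S(\eta)))$ is the charge $0$ Maya diagram of $\eta^{rc}$. All these moves are legitimate for $\eta\neq\emptyset$, since then $S(\eta)^+$ and, by charge $0$, $S(\eta)^-$ are both nonempty, and $R$ preserves $S^-$ while $C$ preserves $S^+$, so each move in a composite remains applicable.

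The heart of the argument is the pair of intertwining relations $\sigma R=C\sigma$ and $\sigma C=R\sigma$, together with $RC=CR$. Each is a one-line set computation: for the first, $\sigma(R(S))=\sigma(S)\cup\{-\min S^+\}$, while $C(\sigma(S))=\sigma(S)\cup\{\max\sigma(S)^-\}=\sigma(S)\cup\{-\min S^+\}$ using $\sigma(S)^-=-S^+$; the second is symmetric, and $RC=CR$ holds because adding $\max S^-<0$ and deleting $\min S^+>0$ act on disjoint elements. Granting these, the three identities drop out by matching Maya diagrams at equal charge. For instance $(\eta^r)'$ has charge $+1$ Maya diagram $\sigma(R(S(\eta)))$ while $(\eta')^c$ has charge $+1$ Maya diagram $C(\sigma(S(\eta)))$, and these coincide by $\sigma R=C\sigma$; since a partition is determined by its Maya diagram at any fixed charge, $(\eta^r)'=(\eta')^c$. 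The identity $(\eta^c)'=(\eta')^r$ follows identically from $\sigma C=R\sigma$ at charge $-1$, and $(\eta^{rc})'=(\eta')^{rc}$ from the chain $\sigma CR=R\sigma R=RC\sigma=CR\sigma$ at charge $0$.

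The only genuine subtlety, and the step I would be most careful with, is the charge bookkeeping: because $r$ and $c$ shift the charge and $\sigma$ reverses it, one must compare the two sides of each identity as Maya diagrams of the \emph{same} charge before concluding equality of the underlying partitions. Once the charges are lined up the combinatorics is forced. Alternatively, one could verify the three relations directly from the closed formulas of Lemma \ref{modified-partition} using $\eta'_j=|\{i:\eta_i\ge j\}|$ together with $d(\eta)=d(\eta')$ and $\widetilde d(\eta')=\eta_{d(\eta)}$, but this is longer and splits into the cases of Lemma \ref{diag-length}, so I prefer the Maya-diagram route.
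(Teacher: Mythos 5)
Your proposal is correct, and every step checks out: the identities $\sigma(S)^+=-S^-$, $\sigma(S)^-=-S^+$, the shift identity $\sigma(T-c)=\sigma(T)+c$, the intertwining relations $\sigma R=C\sigma$, $\sigma C=R\sigma$, $RC=CR$, and the charge bookkeeping (charge $-1$ for $\eta^r$, $+1$ for $\eta^c$, $0$ for $\eta^{rc}$, with $\sigma$ negating charge) all agree with the conventions of Definitions 4.2 and 4.3 in the paper, and the applicability of each move for $\eta\neq\emptyset$ is correctly justified by $S(\eta)^+\neq\emptyset$, charge $0$ forcing $S(\eta)^-\neq\emptyset$, and the facts that $R$ preserves $S^-$ and $C$ preserves $S^+$. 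Note, however, that the paper gives no proof of this lemma at all: it is quoted verbatim from [JWY, Lemma 5.1.24 and Remark 5.1.30], so there is no internal argument to compare against. What your route buys, relative to the alternative you mention (verifying the three identities from the closed part formulas of Lemma \ref{modified-partition} together with $d(\eta)=d(\eta')$ and $\widetilde d(\eta')=\eta_{d(\eta)}$), is that it is uniform: the entire lemma reduces to three one-line set computations and charge matching, with no case split along $d(\eta)>1$ versus $d(\eta)=1$, $\eta_1>1$ versus $\eta_1=1$ as in Lemma \ref{diag-length} and Remark \ref{special-value}. The only point deserving the care you already flag is that equality of partitions must be concluded from equality of Maya diagrams \emph{at the same charge}; since the charge $c$ Maya diagram of $\rho$ is $S(\rho)+c$ and $\eta\mapsto S(\eta)$ is injective, that step is sound as written.
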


\begin{lemma}([\cite{JWY}, Lemma 5.1.16 and Lemma 5.1.23])\label{value-set}
For any partition $\eta\neq\emptyset$, we have \\
$(i)$ $\{i\in\mathbb{Z}_{>0}: \eta^r_{i}>i+1\}=\{i\in\mathbb{Z}_{>0}: i\leq d(\eta)-1\}$,\\
$(ii)$ $\{i\in\mathbb{Z}_{>0}: \eta^c_{i}\geq i-1\}=\{i\in\mathbb{Z}_{>0}: i\leq d(\eta)\}$.
\end{lemma}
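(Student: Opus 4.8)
The plan is to reduce both statements to the explicit piecewise formulas for $\eta^r$ and $\eta^c$ recorded in Lemma \ref{modified-partition}, and then to read off each membership condition by a direct case analysis keyed to the position of $i$ relative to $d(\eta)$ and $\widetilde{d}(\eta)$. The single fact driving everything is the defining property of the Durfee length: $\eta_i\geq i$ holds if and only if $i\leq d(\eta)$, and moreover $\eta_i\geq d(\eta)\geq i$ whenever $i\leq d(\eta)$ while $\eta_i<i$ whenever $i>d(\eta)$. The first equivalence is immediate from $d(\eta)=\max\{i:\eta_i\geq i\}$ together with the monotonicity $\eta_i\geq\eta_{d(\eta)}\geq d(\eta)$ for $i\leq d(\eta)$.

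For part $(i)$ I would substitute the formula for $\eta_i^r$ from Lemma \ref{modified-partition}. When $i<d(\eta)$ one has $\eta_i^r=\eta_i+1$, and since $\eta_i\geq d(\eta)>i$ this gives $\eta_i^r\geq i+2>i+1$, so every such $i$ lies in the left-hand set. When $i\geq d(\eta)$ one has $\eta_i^r=\eta_{i+1}$; here $i+1>d(\eta)$ forces $\eta_{i+1}<i+1$, so $\eta_i^r<i+1$ and $i$ is excluded. Hence the set is exactly $\{i:i\leq d(\eta)-1\}$, as claimed.

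For part $(ii)$ I would again substitute the three-line formula for $\eta_i^c$. On the range $i\leq\widetilde{d}(\eta)$ we have $\eta_i^c=\eta_i-1$, so the condition $\eta_i^c\geq i-1$ is equivalent to $\eta_i\geq i$; this holds precisely for $i\leq d(\eta)$, whereas for $d(\eta)<i\leq\widetilde{d}(\eta)$ the squeezing $d(\eta)\leq\eta_i\leq\eta_{d(\eta)+1}\leq d(\eta)$ gives $\eta_i=d(\eta)<i$ (cf. Remark \ref{special-value}$(ii)$), excluding these $i$. For $i=\widetilde{d}(\eta)+1$ we have $\eta_i^c=d(\eta)-1$, and $d(\eta)-1<\widetilde{d}(\eta)=i-1$ since $\widetilde{d}(\eta)\geq d(\eta)$, so this $i$ is excluded; for $i>\widetilde{d}(\eta)+1$ we have $\eta_i^c=\eta_{i-1}$ with $i-1>d(\eta)$, whence $\eta_{i-1}<i-1$ and again exclusion. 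Collecting the cases leaves exactly $\{i:i\leq d(\eta)\}$.

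The computation is entirely elementary once the formulas of Lemma \ref{modified-partition} are in hand; the only place demanding a little care is the middle range $d(\eta)<i\leq\widetilde{d}(\eta)$ in part $(ii)$, where one must observe that $\eta_i$ is pinned to the constant value $d(\eta)$, and the boundary index $i=\widetilde{d}(\eta)+1$, where the exceptional value $d(\eta)-1$ of $\eta_i^c$ must be compared against $i-1=\widetilde{d}(\eta)$. Neither presents a genuine obstacle, so I expect the proof to be a short verification rather than to require any new idea.
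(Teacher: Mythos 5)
Your proof is correct. Note, however, that the paper itself gives no argument for Lemma \ref{value-set}: it is imported verbatim from [\cite{JWY}, Lemma 5.1.16 and Lemma 5.1.23], just as Lemma \ref{modified-partition} is. So your contribution is a self-contained verification, deducing the two set identities from the piecewise formulas of Lemma \ref{modified-partition} together with the two defining facts $\eta_i\geq i\Leftrightarrow i\leq d(\eta)$ and $\eta_i\geq d(\eta)$ for $i\leq\widetilde{d}(\eta)$; each case of your analysis (the ranges $i<d(\eta)$, $i\geq d(\eta)$ in part $(i)$, and $i\leq\widetilde{d}(\eta)$, $i=\widetilde{d}(\eta)+1$, $i>\widetilde{d}(\eta)+1$ in part $(ii)$) checks out, including the degenerate situations $d(\eta)=1$, $\eta_1=1$ where $\eta^c=\emptyset$. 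One small remark: your citation of Remark \ref{special-value}$(ii)$ for the pinning $\eta_i=d(\eta)$ on the middle range is not quite apt, since that remark is stated under the extra hypothesis $d(\eta^r)=d(\eta)$; but this is harmless, because your inline squeeze $d(\eta)\leq\eta_i\leq\eta_{d(\eta)+1}\leq d(\eta)$ proves the same fact directly (and indeed that whole middle-range discussion is redundant, as the equivalence $\eta_i\geq i\Leftrightarrow i\leq d(\eta)$ already disposes of those indices).
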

Next, we employ the  following notations for  simplifications later.  
\ben
&&q^{\varpi_{1}(m,l,k)}:=\prod_{i=0}^{m}\prod_{j=0}^{m-l}q_{j-i+k}^{m-i},\\
&&q^{\varpi_{2}(\eta,m,k)}:=\prod_{i=1}^{\ell(\eta)}\prod_{j=0}^{m}q_{j-i+k}^{\eta_{i}},\\
&&q^{\varpi_{3}(\eta,m,k)}:=\prod_{i=1}^{\ell(\eta)}\prod_{j=0}^{\eta_{i}-1}q_{j-i+k}^{-m+i},\\
&&q^{\varpi_{4}(\eta,m,k)}:=\prod_{i:1\leq i\leq\eta_{i}}\prod_{j=1}^{i-k}q^{m+\eta_{i}-i+k-1}_{m-i+j+k-1}\cdot\prod_{i:\eta_{i}<i\leq\ell(\eta)}\prod_{j=1}^{\eta_{i}}q_{m-i+j+k-1}^{m+\eta_{i}-i+k-1},\\
&&q^{\varpi_{5}(\eta,m,k)}:=\prod_{i:1\leq i\leq \eta_{i}}\prod_{j=1}^{i-k}q_{i-j-k-m+1}^{\eta_{i}-j}\cdot\prod_{i:\eta_{i}<i\leq\ell(\eta)}\prod_{j=1}^{\eta_{i}}q_{i-j-k-m+1}^{\eta_{i}-j},\\
&&q^{\varpi_{6}(\eta,m)}:=\left(q_{m}^{m+\eta_{1}}\right)^{-1}\cdot\prod_{i:2\leq i\leq \eta_{i}+1}\prod_{j=1}^{i-2}q_{m+j+1-i}^{m+1+\eta_{i}-i}\cdot\prod_{i:\eta_{i}+1<i\leq\ell(\eta)}\prod_{j=1}^{\eta_{i}}q_{m+j+1-i}^{m+1+\eta_{i}-i},\\
&&q^{\varpi_{7}(\eta,m)}:=\left(q^{\eta_{1}}_{-m}\right)^{-1}\cdot \prod_{i:2\leq i\leq\eta_{i}+1}\prod_{j=1}^{i-2}q_{i-j-m-1}^{\eta_{i}-j}\cdot\prod_{i:\eta_{i}+1<i\leq\ell(\eta)}\prod_{j=1}^{\eta_{i}}q_{i-j-m-1}^{\eta_{i}-j},
\een
where $m, l, k$ are all integers and $\eta$ is a partition. And  for  any function $F(\{q_{k}\})$ of variables $\{q_{k}\}$, the notation $\overline{F(\{q_{k}\})}$ denotes $F(\{q_{k}\})\big|_{q_{k}\leftrightarrow q_{-k}}$  throughout this paper.

\subsubsection{Weights for the graphical condensation recurrence with $\mathbf{G}=\mathbf{H}(N;\lambda^{rc},\mu^{rc},\nu)$}
In this subsection, we will  compute the weights of several dimer configurations as  in [\cite{JWY}, Section 5.2] with the given weighted rule in Definition \ref{weight-rule}. The minimal dimer configuration of $\mathbf{H}(N)$ corresponding to the empty 3D partition has the weight from the $N^2$ horizontal dimers (see the tilings on the floor of the rightmost picture in [\cite{JWY}, Figure 15] for $\mathbf{H}(5)$ as an example), which is 
\ben
\prod_{i=0}^{N-1}\prod_{j=0}^{N-1}q_{j-i}^{N-1-i}.
\een
The dimer configuration corresponding to the 3D partition $\pi_{\min}(\lambda,\mu,\nu)$ with $N(|\lambda|+|\mu|+|\nu|)-|\mathrm{II}(\lambda,\mu,\nu)|-2|\mathrm{III}(\lambda,\mu,\nu)|$ boxes has the weight computed as follows
\ben
\dfrac{\prod\limits_{i=0}^{N-1}\prod\limits_{j=0}^{N-1}q_{j-i}^{N-1-i}\cdot\prod\limits_{i=1}^{\ell(\lambda^\prime)}\prod\limits_{j=0}^{N-1}q_{j-i+1}^{\lambda^\prime_{i}}\cdot\prod\limits_{i=1}^{\ell(\mu)}\prod\limits_{j=0}^{N-1}q_{i-j-1}^{\mu_{i}}\cdot\prod\limits_{i=1}^{\ell(\nu)}\prod\limits_{j=0}^{\nu_{i}-1}q_{j-i+1}^N}{\prod\limits_{(i,j,k)\in\mathrm{II}(\lambda,\mu,\nu)}q_{i-j}\cdot\prod\limits_{(i,j,k)\in\mathrm{III}(\lambda,\mu,\nu)}q_{i-j}^2}.
\een
The minimal dimer configuration of $\mathbf{H}(N;\lambda,\mu,\nu)$ has more horizontal dimers in sector 1 and 2 and fewer horizontal dimers in sector 3 than the above dimer configuration. In fact, in sector 1, if $(\lambda^\prime)_{i}\geq i$, the $i$-the part of $\lambda^\prime$ 
contributes $(i-1)$ more horizontal dimers with weights: $q_{N+1-i}^{N-i+\lambda^\prime_{i}}, \cdots, q_{N-1}^{N-i+\lambda^\prime_{i}}$; if $\lambda^\prime_{i}<i$, the $i$-th part of $\lambda^\prime$ contributes $\lambda^\prime_{i}$ more horizontal dimers with weights: $q_{N+1-i}^{N-i+\lambda^\prime_{i}}, \cdots, q_{N-i+\lambda^\prime_{i}}^{N-i+\lambda^\prime_{i}}$.
In sector 2, if $\mu_{i}\geq i$, the $i$-th part of $\mu$ contributes $(i-1)$ more horizontal dimers with weights: $q_{i-1-N}^{\mu_{i}-1}, q_{i-2-N}^{\mu_{i}-2}, \cdots, q_{1-N}^{\mu_{i}-i+1}$; if $\mu_{i}<i$, the  $i$-th part of $\mu$ contributes $\mu_{i}$ more horizontal dimers with weights: $q_{i-1-N}^{\mu_{i}-1}, q_{i-2-N}^{\mu_{i}-2},\cdots,q_{i-\mu_{i}-N}^0$. In sector 3, the  horizontal dimers which are not in the minimal dimer configuration of $\mathbf{H}(N;\lambda,\mu,\nu)$  have the weight $\prod\limits
_{i=1}^{\ell(\nu)}\prod\limits_{j=0}^{\nu_{i}-1}q_{j-i+1}^{2N-i}$.
Then we have

\begin{lemma}\label{recurrence1-weight1} 
The minimal dimer configuration of $\mathbf{H}(N;\lambda,\mu,\nu)$ has the weight\\ $q^{\omega_{\min}(\lambda,\mu,\nu;N)}=q^{\widetilde{\omega}_{\min}(\lambda,\mu,\nu;N)}\cdot\prod\limits_{l=0}^{n-1}q_{l}^{-|\mathrm{II}(\lambda,\mu,\nu)|_{l}}\cdot q_{l}^{-2|\mathrm{III}(\lambda,\mu,\nu)|_{l}}$,	where
\ben
&&q^{\widetilde{\omega}_{\min}(\lambda,\mu,\nu;N)}\\
&=&\prod_{i=0}^{N-1}\prod_{j=0}^{N-1}q_{j-i}^{N-1-i}\cdot\prod_{i=1}^{\ell(\lambda^\prime)}\prod_{j=0}^{N-1}q_{j-i+1}^{\lambda^\prime_{i}}\cdot\prod_{i=1}^{\ell(\mu)}\prod_{j=0}^{N-1}q_{i-j-1}^{\mu_{i}}\cdot\prod_{i=1}^{\ell(\nu)}\prod_{j=0}^{\nu_{i}-1}q_{j-i+1}^N\cdot\prod_{i=1}^{\ell(\nu)}\prod_{j=0}^{\nu_{i}-1}q_{j-i+1}^{-2N+i}\\
&&\times\prod_{i:1\leq i\leq \lambda^\prime_{i}}\prod_{j=1}^{i-1}q_{N-i+j}^{N-i+\lambda^\prime_{i}}\cdot\prod_{i:\lambda^\prime_{i}< i\leq\ell(\lambda^\prime)}\prod_{j=1}^{\lambda^\prime_{i}}q_{N-i+j}^{N-i+\lambda^\prime_{i}}\cdot\prod_{i:1\leq i\leq\mu_{i}}\prod_{j=1}^{i-1}q_{i-j-N}^{\mu_{i}-j}\cdot\prod_{i:\mu_{i}< i\leq\ell(\mu)}\prod_{j=1}^{\mu_{i}}q_{i-j-N}^{\mu_{i}-j}\\
&=&q^{\varpi_{1}(N-1,0,0)}\cdot q^{\varpi_{2}(\lambda^\prime,N-1,1)}\cdot\overline{q^{\varpi_{2}(\mu,N-1,1)}}\cdot q^{\varpi_{3}(\nu,N,1)}\cdot q^{\varpi_{4}(\lambda^\prime,N,1)}\cdot q^{\varpi_{5}(\mu,N,1)}.
\een
\end{lemma}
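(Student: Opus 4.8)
The plan is to follow the bookkeeping strategy of [\cite{JWY}, Section 5.2], but to refine every dimer count by the color $i-j \bmod n$ of the horizontal dimer involved, so that the subscript of each $q$-variable prescribed by Definition \ref{weight-rule} is tracked correctly. The computation splits naturally into three stages: establish the weight of the empty (baseline) configuration; multiply in the box contributions to reach the configuration attached to $\pi_{\min}(\lambda,\mu,\nu)$; and finally reconcile this with the genuine minimal configuration on the modified graph $\mathbf{H}(N;\lambda,\mu,\nu)$, after which the product is regrouped into the $\varpi_{i}$-notation.

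First I would record that the minimal configuration of the unmodified graph $\mathbf{H}(N)$, corresponding to the empty 3D partition, is supported on its $N^2$ horizontal dimers; by Definition \ref{weight-rule} this produces $\prod_{i=0}^{N-1}\prod_{j=0}^{N-1}q_{j-i}^{N-1-i}=q^{\varpi_{1}(N-1,0,0)}$, the first factor. Next, using the correspondence between box collections and dimer configurations from Section 4.1 together with the box-addition rule following Definition \ref{weight-rule} (adding a box $(i,j,k)$ multiplies the weight by $q_{i-j}$), I would compute the weight of the configuration associated with $\pi_{\min}(\lambda,\mu,\nu)$. This yields the stated fraction, whose numerator assembles the three leg contributions $q^{\varpi_{2}(\lambda^\prime,N-1,1)}$, $\overline{q^{\varpi_{2}(\mu,N-1,1)}}$ and the $\nu$-factor $\prod_{i}\prod_{j=0}^{\nu_{i}-1}q_{j-i+1}^{N}$, and whose denominator $\prod_{(i,j,k)\in\mathrm{II}}q_{i-j}\cdot\prod_{(i,j,k)\in\mathrm{III}}q_{i-j}^{2}$ records the boxes double- and triple-counted along the $\mathrm{II}$- and $\mathrm{III}$-overlaps. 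Passing to the normalized weight $q^{\widetilde{\omega}_{\min}}$ then cancels this denominator exactly against the correction factor $\prod_{l}q_{l}^{|\mathrm{II}|_{l}}q_{l}^{2|\mathrm{III}|_{l}}$ built into its definition in Section 4.1, which accounts for the relation $q^{\omega_{\min}}=q^{\widetilde{\omega}_{\min}}\cdot\prod_{l}q_{l}^{-|\mathrm{II}|_{l}}q_{l}^{-2|\mathrm{III}|_{l}}$ asserted in the statement.

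The core of the argument, and the step I expect to be the main obstacle, is the reconciliation: the true minimal configuration of $\mathbf{H}(N;\lambda,\mu,\nu)$, obtained by deleting the boundary vertices indexed by $S_{i}^{\pm}$, differs from the configuration just weighed by extra horizontal dimers in sectors $1$ and $2$ and fewer horizontal dimers in sector $3$. I would treat the sectors one at a time. In sectors $1$ and $2$ each part must be split according to the dichotomy $\eta_{i}\geq i$ versus $\eta_{i}<i$, which governs both how many additional horizontal dimers a row contributes and their colors; these are precisely the two products in the definitions of $\varpi_{4}$ and $\varpi_{5}$ (with $k=1$, so that $q_{m-i+j+k-1}=q_{N-i+j}$ and $q_{i-j-k-m+1}=q_{i-j-N}$). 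In sector $3$ the modification instead deletes horizontal dimers of total weight $\prod_{i}\prod_{j=0}^{\nu_{i}-1}q_{j-i+1}^{2N-i}$, which enters with negative exponent and combines with the $\nu$-leg factor to give $\prod_{i}\prod_{j=0}^{\nu_{i}-1}q_{j-i+1}^{-N+i}=q^{\varpi_{3}(\nu,N,1)}$.

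The delicate point throughout is maintaining the correct color subscript $j-i+k$ (reduced $\bmod\, n$) on every horizontal dimer: in the manifold case of [\cite{JWY}] all $q_{l}$ collapse to a single $q$ and these indices are invisible, whereas here each dimer must carry the variable dictated by its lattice position, so the case splits and shifts in the sector-by-sector count have to be compatible with the $\varpi_{i}$ index patterns exactly rather than merely up to total exponent. Once all four sector contributions are written out and regrouped, comparison with the definitions of $\varpi_{1},\dots,\varpi_{5}$, together with the overline convention $q_{k}\leftrightarrow q_{-k}$ applied to the $\mu$-leg, yields the claimed identity for $q^{\widetilde{\omega}_{\min}(\lambda,\mu,\nu;N)}$, and the formula for $q^{\omega_{\min}(\lambda,\mu,\nu;N)}$ follows immediately.
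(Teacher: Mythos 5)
Your proposal is correct and follows essentially the same route as the paper: the paper's own derivation (given in the discussion preceding the lemma) likewise starts from the $N^2$ horizontal dimers of the empty configuration, multiplies in the box contributions to reach the $\pi_{\min}(\lambda,\mu,\nu)$ configuration with the $\mathrm{II}$/$\mathrm{III}$ overlap denominator, and then performs exactly your sector-by-sector reconciliation — the $\lambda'_i\geq i$ versus $\lambda'_i<i$ (resp.\ $\mu_i\geq i$ versus $\mu_i<i$) dichotomy producing the extra dimers of $\varpi_{4}(\lambda',N,1)$ and $\varpi_{5}(\mu,N,1)$, and the deleted sector-3 dimers of weight $\prod_i\prod_{j=0}^{\nu_i-1}q_{j-i+1}^{2N-i}$ combining with the $\nu$-leg factor into $\varpi_{3}(\nu,N,1)$. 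Your identifications of all the factors with the $\varpi_i$ notation and of the $\mathrm{II}$/$\mathrm{III}$ cancellation against the normalization $q^{\widetilde{\omega}_{\min}}$ are all accurate.
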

Similarly, we have the following 
\begin{lemma}\label{recurrence1-weight2} 
The minimal dimer configuration of $\mathbf{H}(N;\lambda^{rc},\mu^{rc},\nu)-\{a,d\}$ has the weight \\
$q^{\omega_{\min}^{U}(\lambda^r,\mu^c,\nu;N)}=q^{\widetilde{\omega}_{\min}^{U}(\lambda^r,\mu^c,\nu;N)}\cdot\prod\limits_{l=0}^{n-1}q_{l}^{-|\mathrm{II}(\lambda^r,\mu^c,\nu)|_{l}}\cdot q_{l}^{-2|\mathrm{III}(\lambda^r,\mu^c,\nu)|_{l}}$, where
\ben
&&q^{\widetilde{\omega}^{U}_{\min}(\lambda^r,\mu^c,\nu;N)}\\
&=&\prod_{i=0}^{N}\prod_{j=0}^{N}q_{j-i}^{N-i}\cdot\prod_{i=1}^{\ell((\lambda^r)^\prime)}\prod_{j=0}^{N}q_{j-i+1}^{(\lambda^r)^\prime_{i}}\cdot\prod_{i=1}^{\ell(\mu^c)}\prod_{j=0}^{N}q_{i-j-1}^{\mu^c_{i}}\cdot\prod_{i=1}^{\ell(\nu)}\prod_{j=0}^{\nu_{i}-1}q_{j-i+1}^{N-1}\cdot\prod_{i=1}^{\ell(\nu)}\prod_{j=0}^{\nu_{i}-1}q_{j-i+1}^{-2N+i}\\
&&\times\left(q_{N}^{N+(\lambda^r)^\prime_{1}}\right)^{-1}\cdot\prod_{i:2\leq i\leq (\lambda^r)^\prime_{i}+1}\prod_{j=1}^{i-2}q_{N+j+1-i}^{N+1+(\lambda^r)^\prime_{i}-i}\cdot\prod_{i:(\lambda^r)^\prime_{i}+1<i\leq\ell((\lambda^r)^\prime)}\prod_{j=1}^{(\lambda^r)^\prime_{i}}q_{N+j+1-i}^{N+1+(\lambda^r)^\prime_{i}-i}\\
&&\times\left(q^{\mu^{c}_{1}}_{-N}\right)^{-1}\prod_{i:2\leq i\leq\mu^c_{i}+1}\prod_{j=1}^{i-2}q_{i-j-N-1}^{\mu^c_{i}-j}\cdot\prod_{i:\mu^c_{i}+1<i\leq\ell(\mu^{c})}\prod_{j=1}^{\mu^c_{i}}q_{i-j-N-1}^{\mu^c_{i}-j}\\
&=&q^{\varpi_{1}(N,0,0)}\cdot q^{\varpi_{2}((\lambda^\prime)^c,N,1)}\cdot\overline{q^{\varpi_{2}(\mu^c,N,1)}}\cdot q^{\varpi_{3}(\nu,N+1,1)}\cdot q^{\varpi_{6}((\lambda^\prime)^c,N)}\cdot q^{\varpi_{7}(\mu^c,N)}.
\een
\end{lemma}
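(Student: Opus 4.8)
The plan is to follow the same strategy as in the proof of Lemma \ref{recurrence1-weight1}, exploiting the bijection described in Section 4.2 between dimer configurations on $\mathbf{G}-\{a,d\}=\mathbf{H}(N;\lambda^{rc},\mu^{rc},\nu)-\{a,d\}$ and 3D partitions asymptotic to $(\lambda^r,\mu^c,\nu)$, with the crucial difference that the origin of $\mathbb{Z}^3$ is shifted to the face directly above the central face of $\mathbf{H}(N)$. First I would identify the minimal dimer configuration of $\mathbf{G}-\{a,d\}$ with the (shifted) minimal 3D partition $\pi_{\min}(\lambda^r,\mu^c,\nu)$, and observe that its weight splits as a product of a base contribution and three sector contributions, exactly as in Lemma \ref{recurrence1-weight1}.

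The base contribution is the weight of the configuration corresponding to the empty 3D partition on the up-shifted graph. Because the central face has moved up by one, the horizontal dimers now run over the enlarged index range $0\le i,j\le N$, giving $\prod_{i=0}^{N}\prod_{j=0}^{N}q_{j-i}^{N-i}=q^{\varpi_{1}(N,0,0)}$ in place of the range-$(N-1)$ product of Lemma \ref{recurrence1-weight1}. Next I would compute the extra horizontal dimers contributed in each sector. Sectors $1$ and $2$ add horizontal dimers governed by the parts of $(\lambda^r)^\prime$ and $\mu^c$ respectively, while sector $3$ removes dimers governed by $\nu$; the sector-$3$ exponent becomes $N-1$ rather than $N$ because of the shift, which accounts for the factor $q^{\varpi_{3}(\nu,N+1,1)}$ together with $\prod_{i=1}^{\ell(\nu)}\prod_{j=0}^{\nu_{i}-1}q_{j-i+1}^{-2N+i}$.

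The subtlety lies in sectors $1$ and $2$, where the partitions $\lambda^r$ and $\mu^c$ are \emph{modified} rather than ordinary. Here I would invoke Lemma \ref{modified-partition} to write out the parts of $\lambda^r$ and $\mu^c$ explicitly in terms of those of $\lambda$ and $\mu$, and Remark \ref{special-value} together with Lemma \ref{diag-length} to control their diagonal lengths. The leading parts behave specially under the shift: the top row of sector $1$ forces the inverse factor $\left(q_{N}^{N+(\lambda^r)^\prime_{1}}\right)^{-1}$ and the top row of sector $2$ forces $\left(q^{\mu^{c}_{1}}_{-N}\right)^{-1}$, so that the sector-$1$ and sector-$2$ products assemble into $q^{\varpi_{6}}$ and $q^{\varpi_{7}}$ respectively, rather than the $q^{\varpi_{4}},q^{\varpi_{5}}$ appearing in Lemma \ref{recurrence1-weight1}. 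Finally, using the transpose identity $(\lambda^r)^\prime=(\lambda^\prime)^c$ from Lemma \ref{transpose}, I would rewrite $q^{\varpi_{2}((\lambda^r)^\prime,N,1)}$ as $q^{\varpi_{2}((\lambda^\prime)^c,N,1)}$ and collect all factors into the stated closed form.

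The main obstacle I expect is the precise bookkeeping of the up-shift: one must verify exactly which horizontal edges are matched in the minimal configuration relative to the shifted base, and confirm that every exponent shift is tracked consistently across the boundary between the occupied and empty columns in each sector. The appearance of the two inverse factors and the passage from $(\varpi_{4},\varpi_{5})$ to $(\varpi_{6},\varpi_{7})$ is precisely where this shift manifests, so checking it rigorously—rather than merely by analogy with Lemma \ref{recurrence1-weight1}—is the delicate step; once the shifted minimal matching is pinned down, the remaining simplification into $\varpi$-notation is a routine reindexing.
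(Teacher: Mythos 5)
Your proposal is correct and takes essentially the same route as the paper: the paper establishes this lemma with only the remark that it follows ``similarly'' to Lemma \ref{recurrence1-weight1}, i.e., by the same direct weight computation applied to the minimal dimer configuration on $\mathbf{H}(N;\lambda^{rc},\mu^{rc},\nu)-\{a,d\}$, viewed via the correspondence with 3D partitions asymptotic to $(\lambda^r,\mu^c,\nu)$ whose origin is shifted one unit up, which is exactly your plan. Your accounting of the enlarged base product $q^{\varpi_{1}(N,0,0)}$, the sector contributions for the modified partitions via Lemma \ref{modified-partition}, the inverse factors absorbed into $q^{\varpi_{6}}$ and $q^{\varpi_{7}}$, and the rewriting $(\lambda^r)^\prime=(\lambda^\prime)^c$ by Lemma \ref{transpose} all match the stated formula.
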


\begin{lemma}\label{recurrence1-weight3} 
The minimal dimer configuration of $\mathbf{H}(N;\lambda^{rc},\mu^{rc},\nu)-\{b,c\}$ has the weight \\
$q^{\omega_{\min}^{D}(\lambda^c,\mu^r,\nu;N)}=q^{\widetilde{\omega}_{\min}^{D}}(\lambda^c,\mu^r,\nu;N)\cdot\prod\limits_{l=0}^{n-1}q_{l}^{-|\mathrm{II}(\lambda^c,\mu^r,\nu)|_{l}}\cdot q_{l}^{-2|\mathrm{III}(\lambda^c,\mu^r,\nu)|_{l}}$, where
\ben
&&q^{\widetilde{\omega}^{D}_{\min}(\lambda^c,\mu^r,\nu;N)}\\
&=&\prod_{i=0}^{N-2}\prod_{j=0}^{N-2}q_{j-i}^{N-i-2}\cdot\prod_{i=1}^{\ell((\lambda^c)^\prime)}\prod_{j=0}^{N-2}q_{j-i+1}^{(\lambda^c)^\prime_{i}}\cdot\prod_{i=1}^{\ell(\mu^r)}\prod_{j=0}^{N-2}q_{i-j-1}^{\mu^r_{i}}\cdot\prod_{i=1}^{\ell(\nu)}\prod_{j=0}^{\nu_{i}-1}q_{j-i+1}^{N+1}\cdot\prod_{i=1}^{\ell(\nu)}\prod_{j=0}^{\nu_{i}-1}q_{j-i+1}^{-2N+i}\\
&&\times\prod_{i:1\leq i\leq(\lambda^c)_{i}^\prime}\prod_{j=1}^iq^{N+(\lambda^c)_{i}^\prime-i-1}_{N-i+j-1}\cdot\prod_{i:(\lambda^c)_{i}^\prime<i\leq\ell((\lambda^c)^\prime)}\prod_{j=1}^{(\lambda^c)_{i}^\prime}q_{N-i+j-1}^{N+(\lambda^c)_{i}^\prime-i-1}\\
&&\times\prod_{i:1\leq i\leq \mu^r_{i}}\prod_{j=1}^iq_{i-j-N+1}^{\mu^r_{i}-j}\cdot\prod_{i:\mu^r_{i}<i\leq\ell(\mu^r)}\prod_{j=1}^{\mu^r_{i}}q_{i-j-N+1}^{\mu^r_{i}-j}\\
&=&q^{\varpi_{1}(N-2,0,0)}\cdot q^{\varpi_{2}((\lambda^\prime)^r,N-2,1)}\cdot\overline{q^{\varpi_{2}(\mu^r,N-2,1)}}\cdot q^{\varpi_{3}(\nu,N-1,1)}\cdot q^{\varpi_{4}((\lambda^\prime)^r,N,0)}\cdot q^{\varpi_{5}(\mu^r,N,0)}.
\een

\end{lemma}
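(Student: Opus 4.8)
The plan is to compute $q^{\widetilde{\omega}^{D}_{\min}(\lambda^c,\mu^r,\nu;N)}$ directly from the weighted rule of Definition \ref{weight-rule}, following the same bookkeeping used for Lemmas \ref{recurrence1-weight1} and \ref{recurrence1-weight2}; the relation to $q^{\omega^{D}_{\min}}$ is then immediate from the definition of the tilde-normalization. The key input, recorded in Section 4.2, is that $3$D partitions asymptotic to $(\lambda^c,\mu^r,\nu)$ correspond bijectively to dimer configurations on $\mathbf{G}-\{b,c\}=\mathbf{H}(N;\lambda^{rc},\mu^{rc},\nu)-\{b,c\}$, but with the origin of $\mathbb{Z}^3$ shifted to the face directly below the central face of $\mathbf{H}(N)$. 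First I would reduce to the minimal $3$D partition $\pi_{\min}(\lambda^c,\mu^r,\nu)$ and isolate its horizontal dimers, since every non-horizontal edge has weight $1$.

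Next I would account for the downward shift. Because the origin drops by one unit, the underlying ``empty'' array of horizontal dimers shrinks from $N\times N$ to $(N-1)\times(N-1)$, producing the base factor $\prod_{i=0}^{N-2}\prod_{j=0}^{N-2}q_{j-i}^{N-i-2}=q^{\varpi_{1}(N-2,0,0)}$ and shifting all the exponent bounds in the three sectors accordingly. I would then add the sector contributions one part at a time exactly as in Lemma \ref{recurrence1-weight1}: in sector $1$ the parts of $(\lambda^c)^\prime=(\lambda^\prime)^r$ (using Lemma \ref{transpose}) contribute the horizontal dimers assembling into $q^{\varpi_{2}((\lambda^\prime)^r,N-2,1)}\cdot q^{\varpi_{4}((\lambda^\prime)^r,N,0)}$; in sector $2$ the parts of $\mu^r$ contribute $\overline{q^{\varpi_{2}(\mu^r,N-2,1)}}\cdot q^{\varpi_{5}(\mu^r,N,0)}$; and in sector $3$ the dimers absent from the minimal configuration, together with the standard $\nu$-leg correction $\prod_{i=1}^{\ell(\nu)}\prod_{j=0}^{\nu_{i}-1}q_{j-i+1}^{-2N+i}$, assemble into $q^{\varpi_{3}(\nu,N-1,1)}$, where the argument $N-1$ (rather than $N$ as in Lemma \ref{recurrence1-weight1}) encodes precisely the downward displacement.

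The main obstacle is fixing the shift-dependent exponents exactly, especially in sectors $1$ and $2$: under the downward shift each diagonal part acquires one extra horizontal dimer, so the inner product range in the $\varpi_{4}$ and $\varpi_{5}$ factors runs $j=1,\dots,i$ with parameter $k=0$, as opposed to $j=1,\dots,i-1$ with $k=1$ in the unshifted Lemma \ref{recurrence1-weight1}. I would verify this by tracking a single part through the rhombus tiling and recording the net change in its horizontal-dimer weights caused by the shift, then extend additively over all parts and all three sectors. Collecting the base factor and the three sector factors and matching each piece against the definitions of $\varpi_{1},\dots,\varpi_{5}$ yields the stated closed form for $q^{\widetilde{\omega}^{D}_{\min}(\lambda^c,\mu^r,\nu;N)}$, completing the proof.
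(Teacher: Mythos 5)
Your proposal is correct and follows essentially the same route as the paper: the paper derives Lemma \ref{recurrence1-weight1} by direct sector-by-sector bookkeeping of horizontal dimers under the weight rule of Definition \ref{weight-rule}, and then obtains Lemma \ref{recurrence1-weight3} "similarly," which is exactly the computation you outline — using the downward origin shift from Section 4.2, the identification $(\lambda^c)^\prime=(\lambda^\prime)^r$ from Lemma \ref{transpose}, and the resulting change of the inner ranges to $j=1,\dots,i$ with $k=0$ in the $\varpi_{4},\varpi_{5}$ factors and of the sector-3 parameter to $N-1$. The relation between $q^{\omega^{D}_{\min}}$ and $q^{\widetilde{\omega}^{D}_{\min}}$ is indeed just the tilde-normalization of Section 4.2, as you say.
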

Now it follows from Lemma \ref{recurrence1-weight1} that
\ben
\frac{q^{\widetilde{\omega}_{\min}(\lambda^{rc},\mu,\nu;N)}\cdot q^{\widetilde{\omega}_{\min}(\lambda,\mu^{rc},\nu;N)}}{q^{\widetilde{\omega}_{\min}(\lambda,\mu,\nu;N)}\cdot q^{\widetilde{\omega}_{\min}(\lambda^{rc},\mu^{rc},\nu;N)}}=1.
\een
We still need the following lemmas to obtain the graphical condensation recurrence \eqref{DT-recurrence1}.
 
\begin{lemma}\label{recurrence1-weight9}
	 For any partitions $\lambda,\mu\neq\emptyset$ and $\nu$, we have 
	\ben
	\frac{q^{\widetilde{\omega}_{\min}^{U}(\lambda^r,\mu^c,\nu;N)}\cdot q^{\widetilde{\omega}_{\min}^{D}(\lambda^{c},\mu^{r},\nu;N)}}{q^{\widetilde{\omega}_{\min}(\lambda,\mu,\nu;N)}\cdot q^{\widetilde{\omega}_{\min}(\lambda^{rc},\mu^{rc},\nu;N)}}
	=q^{K_{1}(\lambda,\mu,\nu)}
	\een
where 
\ben
q^{K_{1}(\lambda,\mu,\nu)}:=q_{-\widetilde{d}(\lambda^\prime)}^{d(\lambda^\prime)}\cdot q_{\widetilde{d}(\mu)}^{d(\mu)}\cdot \prod\limits_{i=-\widetilde{d}(\lambda^\prime)}^{\widetilde{d}(\mu)}q_{i}^{-1}\cdot\prod\limits_{i=\widetilde{d}(\lambda^\prime)+1}^{\ell(\lambda^\prime)}\left(\frac{q_{-i}}{q_{-i+1}}\right)^{\lambda^\prime_{i}}\cdot\prod\limits_{i=\widetilde{d}(\mu)+1}^{\ell(\mu)}\left(\frac{q_{i}}{q_{i-1}}\right)^{\mu_{i}}
\een 
is independent of $N$.
\end{lemma}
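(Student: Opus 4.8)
The plan is to verify the identity by direct substitution of the three closed forms already computed. The numerator is the product of the weight of Lemma \ref{recurrence1-weight2} (the ``$U$''-shift, carrying $(\lambda^r,\mu^c,\nu)$) and that of Lemma \ref{recurrence1-weight3} (the ``$D$''-shift, carrying $(\lambda^c,\mu^r,\nu)$), while the denominator is $q^{\widetilde{\omega}_{\min}(\lambda,\mu,\nu;N)}\cdot q^{\widetilde{\omega}_{\min}(\lambda^{rc},\mu^{rc},\nu;N)}$, the second factor obtained from Lemma \ref{recurrence1-weight1} under $\lambda\mapsto\lambda^{rc}$, $\mu\mapsto\mu^{rc}$ together with the transpose identities $(\lambda^{rc})'=(\lambda')^{rc}$ of Lemma \ref{transpose}. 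After substitution every term is one of the building blocks $q^{\varpi_1},\dots,q^{\varpi_7}$ evaluated on $\lambda',(\lambda')^r,(\lambda')^c,(\lambda')^{rc}$, on $\mu,\mu^r,\mu^c,\mu^{rc}$, or on $\nu$, so the log-ratio splits into a rectangular background piece ($\varpi_1$), a $\nu$-piece ($\varpi_3$), a $\lambda'$-piece ($\varpi_2,\varpi_4,\varpi_6$) and a $\mu$-piece ($\overline{\varpi_2},\varpi_5,\varpi_7$).

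Two cancellations remove all the genuinely $N$-dependent data. First, $\nu$ enters only through $\varpi_3$, and since the exponent $-m+i$ in $q^{\varpi_3(\nu,m,1)}$ is affine in $m$, the combination $q^{\varpi_3(\nu,N+1,1)}q^{\varpi_3(\nu,N-1,1)}\big(q^{\varpi_3(\nu,N,1)}\big)^{-2}$ telescopes to $1$; thus no $\nu$ survives, consistent with the shape of $K_1$. Second, the background exponent in $q^{\varpi_1(m,0,0)}$ is quadratic in $m$, so its second difference $q^{\varpi_1(N,0,0)}q^{\varpi_1(N-2,0,0)}\big(q^{\varpi_1(N-1,0,0)}\big)^{-2}$ is only an $O(N)$ bulk, which must cancel against the boundary columns $j\approx N$ of the $\varpi_2$-factors, whose rectangle sizes $N$, $N-1$, $N-2$ differ between numerator and denominator. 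To organize this I would regroup $q^{\varpi_1}$ with the ratio $\frac{q^{\varpi_2((\lambda')^c,N,1)}q^{\varpi_2((\lambda')^r,N-2,1)}}{q^{\varpi_2(\lambda',N-1,1)}q^{\varpi_2((\lambda')^{rc},N-1,1)}}$ and its $\mu$-analogue, and use the part formulas of Lemma \ref{modified-partition} with the length relations of Lemma \ref{length-relation}: away from the diagonal the parts of $(\lambda')^c,(\lambda')^r$ balance those of $\lambda',(\lambda')^{rc}$, so the bulk columns telescope against the background and only finitely many residual factors, concentrated near $i=d(\lambda')$, remain.

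The surviving finite terms are then matched with $q^{K_1(\lambda,\mu,\nu)}$. The two legs are structurally parallel but live on opposite index ranges: after cancellation the residual $\lambda'$-terms sit at negative subscripts and the residual $\mu$-terms at positive subscripts, exactly as in $K_1$, and the reflection symmetry of $\mathbf{H}(N)$ underlying Remark \ref{symmetry1}, visible in $K_1$ as invariance under $q_k\leftrightarrow q_{-k}$, $\lambda'\leftrightarrow\mu$, lets me run one leg in detail and transcribe the other, with the central connector $\prod_{i=-\widetilde{d}(\lambda')}^{\widetilde{d}(\mu)}q_i^{-1}$ arising where the two legs meet at the origin. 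Evaluating the $\lambda'$-leg uses $d(\eta^{rc})=d(\eta)-1$ and the characterizations of $d(\eta^r),d(\eta^c)$ in Lemma \ref{diag-length}, the refined diagonal values in Remark \ref{special-value}, and the value-set descriptions of Lemma \ref{value-set}; these control precisely the boundary factor $q_{-\widetilde{d}(\lambda')}^{d(\lambda')}$ and the telescoped tail $\prod_{i=\widetilde{d}(\lambda')+1}^{\ell(\lambda')}(q_{-i}/q_{-i+1})^{\lambda'_i}$, and dually for $\mu$.

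The main obstacle is the boundary bookkeeping rather than the bulk telescoping. The terms near the indices $i=d(\eta)$ and $i=\widetilde{d}(\eta)$ do not cancel cleanly and are sensitive to the trichotomy $d(\eta)=1$ with $\eta_1=1$, $d(\eta)=1$ with $\eta_1>1$, and $d(\eta)>1$ (and, within the last, $d(\eta^r)=d(\eta)$ versus $d(\eta)-1$) recorded in Lemma \ref{length-relation}, Lemma \ref{diag-length} and Remark \ref{special-value}. Confirming that every case collapses to the same monomial $q^{K_1(\lambda,\mu,\nu)}$, and in particular that the residual diagonal contributions assemble into exactly $q_{-\widetilde{d}(\lambda')}^{d(\lambda')}\,q_{\widetilde{d}(\mu)}^{d(\mu)}$ times the central product, is where the real effort lies; everything else reduces to routine manipulation of the $\varpi$-exponents, and the $N$-independence of the result is automatic once the $\varpi_1$-$\varpi_3$ cancellations above are in place.
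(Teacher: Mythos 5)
Your proposal is correct and follows essentially the same route as the paper: the paper's proof of Lemma \ref{recurrence1-weight9} is precisely the assembly of the closed forms in Lemmas \ref{recurrence1-weight1}, \ref{recurrence1-weight2}, \ref{recurrence1-weight3} into the grouped ratios you describe --- the $\varpi_{1}$ background ratio (Lemma \ref{recurrence1-weight4}), the $\varpi_{3}$ ratio for $\nu$ which equals $1$ (Lemma \ref{recurrence1-weight6}), the $\varpi_{2}$ ratios for the two legs (Lemma \ref{recurrence1-weight5}), and the $\varpi_{4},\varpi_{6}$ and $\varpi_{5},\varpi_{7}$ ratios (Lemmas \ref{recurrence1-weight7} and \ref{recurrence1-weight8}) --- with the same case analysis via Lemmas \ref{modified-partition}, \ref{length-relation}, \ref{diag-length}, \ref{value-set} and Remark \ref{special-value}. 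The only cosmetic difference is that you propose to transcribe the $\mu$-leg from the $\lambda^{\prime}$-leg via the $q_{k}\leftrightarrow q_{-k}$ symmetry, whereas the paper treats it as a separate (but, as it notes, ``similar'') argument, since $\varpi_{5},\varpi_{7}$ are not literally the bars of $\varpi_{4},\varpi_{6}$ under the weighting rule of Definition \ref{weight-rule}.
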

\begin{proof}
It follows from Lemmas \ref{recurrence1-weight1},  \ref{recurrence1-weight2},  \ref{recurrence1-weight3}, 	 \ref{recurrence1-weight4},  \ref{recurrence1-weight5}, 	 \ref{recurrence1-weight6}, 	 \ref{recurrence1-weight7},  \ref{recurrence1-weight8}. 					
\end{proof}
\begin{remark}\label{weight-generalization1}
When $n=1$, i.e., all $q_{l}$	are equal to $q$, then we have
\ben
q^{K_{1}(\lambda,\mu,\nu)}:=q^{d(\lambda^\prime)+d(\mu)-\widetilde{d}(\mu)-\widetilde{d}(\lambda^\prime)-1}=q^{d(\lambda)+d(\mu)-\mu^\prime_{d(\mu)}-\lambda_{d(\lambda)}-1}
\een
which is equal to $q^{-K}$ in [\cite{JWY}, Section 5.2].
\end{remark}	
\begin{lemma}\label{recurrence1-weight4} 
\ben
\frac{q^{\varpi_{1}(N,0,0)}\cdot q^{\varpi_{1}(N-2,0,0)}}{\left(q^{\varpi_{1}(N-1,0,0)}\right)^2}=q_{N}^N\cdot\prod_{i=1-N}^{N-1}q_{i}.
\een
\end{lemma}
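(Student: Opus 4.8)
The plan is to prove the identity by computing, for each integer index $l$, the exponent of the variable $q_l$ on the left-hand side and matching it against the right-hand side. It suffices to work at the level of formal variables $\{q_l\}_{l\in\mathbb{Z}}$ indexed by the actual integers $j-i$: the congruence identification $q_k=q_{k'}$ for $k\equiv k'\bmod n$ is applied identically to both sides of the asserted equality, so an equality of the integer-indexed products descends to the claimed one. First I would read off from the definition $q^{\varpi_{1}(m,0,0)}=\prod_{i=0}^{m}\prod_{j=0}^{m}q_{j-i}^{m-i}$ that the exponent of $q_l$, call it $E_m(l)$, is the sum of $m-i$ over all pairs $(i,j)$ with $0\le i,j\le m$ and $j-i=l$. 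Solving the range constraints $j=i+l$ gives, for $0\le l\le m$, the index range $0\le i\le m-l$, and for $l=-s<0$ the range $s\le i\le m$.

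Summing these arithmetic progressions yields the closed forms $E_m(l)=\tfrac12(m+l)(m-l+1)$ for $0\le l\le m$ and $E_m(-s)=\tfrac12(m-s)(m-s+1)$ for $0\le s\le m$, together with $E_m(l)=0$ once $|l|>m$. I would then form the second difference $D(l):=E_N(l)+E_{N-2}(l)-2E_{N-1}(l)$, which is precisely the exponent of $q_l$ on the left-hand side. In each closed form the dependence on $m$ is $\tfrac12 m^2$ plus a term linear in $m$, whose discrete second difference in $m$ equals $1$; hence $D(l)=1$ whenever the polynomial expression is valid at all three arguments $N,N-1,N-2$, that is for $2-N\le l\le N-2$.

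The remaining work, and the only place where the answer is not uniformly $1$, is the handful of boundary indices at which one of $E_N,E_{N-1},E_{N-2}$ has already dropped to $0$, so that the second-difference shortcut does not apply. Here I would substitute exact values directly. At $l=N$ one has $E_N(N)=N$ while $E_{N-1}(N)=E_{N-2}(N)=0$, giving $D(N)=N$; at $l=N-1$, the values $E_N(N-1)=2N-1$, $E_{N-1}(N-1)=N-1$, $E_{N-2}(N-1)=0$ give $D(N-1)=1$; at $l=1-N$ (i.e. $s=N-1$), only $E_N(1-N)=1$ survives, so $D(1-N)=1$; and at $l=-N$ all three vanish, giving $D(-N)=0$, as does $D(l)=0$ for every $|l|>N$. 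Collecting the cases, the exponent of $q_l$ on the left-hand side is $N$ for $l=N$, is $1$ for $1-N\le l\le N-1$, and is $0$ otherwise, which is exactly the exponent pattern of $q_{N}^{N}\cdot\prod_{i=1-N}^{N-1}q_{i}$.

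I expect the main obstacle to be purely the boundary bookkeeping rather than any conceptual difficulty: the interior equality $D(l)=1$ is immediate from the quadratic-in-$m$ structure, but one must be careful that the single index $l=N$ produces exponent $N$ (not $1$), precisely because $E_{N-1}(N)=E_{N-2}(N)=0$ forces the would-be cancellation to fail there, and that the symmetric lower endpoint $l=-N$ contributes nothing. Keeping track of exactly which of the three terms survive at $l\in\{N,N-1,1-N,-N\}$ is the only delicate point, and once these are tabulated the identity follows.
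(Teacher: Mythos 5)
Your proof is correct. Note that the paper states this lemma without any proof at all, treating it as a routine computation from the definition $q^{\varpi_{1}(m,0,0)}=\prod_{i=0}^{m}\prod_{j=0}^{m}q_{j-i}^{m-i}$, so there is no "paper approach" to compare against; your exponent-by-exponent bookkeeping (closed forms for $E_m(l)$, second difference equal to $1$ in the interior, direct evaluation at $l\in\{N,N-1,1-N,-N\}$) is a complete and valid verification, including the two subtle points that $l=N$ contributes exponent $N$ while $l=-N$ contributes nothing. For what it is worth, a slightly shorter route avoids case analysis entirely: since the $i=m$ row contributes trivially, one has
\[
\frac{q^{\varpi_{1}(m,0,0)}}{q^{\varpi_{1}(m-1,0,0)}}=\prod_{i=0}^{m-1}\prod_{j=0}^{m-1}q_{j-i}\cdot\prod_{t=1}^{m}q_{t}^{t},
\]
and taking the ratio of this identity at $m=N$ and $m=N-1$ leaves $q_{N}^{N}$ times the ratio of the two square products, which visibly equals the extra row $\prod_{l=1-N}^{0}q_{l}$ times the extra column $\prod_{l=1}^{N-1}q_{l}$; this telescoping argument gives the same answer with the boundary behavior built in automatically. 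Both arguments are equally rigorous, so this is a matter of taste.
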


\begin{lemma}\label{recurrence1-weight5} 
For any partition $\eta\neq\emptyset$, we have
\ben
\frac{q^{\varpi_{2}(\eta^c,N,1)}\cdot q^{\varpi_{2}(\eta^r,N-2,1)}}{q^{\varpi_{2}(\eta,N-1,1)}\cdot q^{\varpi_{2}(\eta^{rc},N-1,1)}}=q_{-\widetilde{d}(\eta)}^{d(\eta)-1}\cdot\prod\limits_{i=\widetilde{d}(\eta)+1}^{\ell(\eta)}\left(\frac{q_{-i}}{q_{-i+1}}\right)^{\eta_{i}}\cdot\frac{\prod\limits_{i=1}^{\widetilde{d}(\eta)}q_{N-i+1}^{\eta_{i}-1}\cdot\prod\limits_{i=0}^{N-1}q_{i-\widetilde{d}(\eta)+1}^{-1}}{\prod\limits_{i=1}^{d(\eta)-1}q_{N-i}^{\eta_{i}+1}\cdot\prod\limits_{i=N-\widetilde{d}(\eta)+1}^{N-d(\eta)}q_{i}^{d(\eta)}}.
\een

\end{lemma}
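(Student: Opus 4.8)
The plan is to treat every $q^{\varpi_2(\cdot,\cdot,1)}$ as a formal Laurent monomial in the variables $\{q_t\}_{t\in\mathbb{Z}}$ and compute the exponent of each $q_t$. Writing $\Phi(\eta,m):=q^{\varpi_2(\eta,m,1)}=\prod_{i=1}^{\ell(\eta)}\prod_{j=0}^{m}q_{j-i+1}^{\eta_i}$, the first step is the elementary slice identity $\Phi(\eta,m)/\Phi(\eta,m-1)=\prod_{i=1}^{\ell(\eta)}q_{m-i+1}^{\eta_i}$, which lets me refer the two \emph{off-index} terms $\Phi(\eta^c,N)$ and $\Phi(\eta^r,N-2)$ back to the common index $N-1$. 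This reduces the left-hand ratio to
\[
\frac{\Phi(\eta^c,N-1)}{\Phi(\eta^{rc},N-1)}\cdot\frac{\Phi(\eta^r,N-1)}{\Phi(\eta,N-1)}\cdot\frac{\prod_{i}q_{N-i+1}^{\eta^c_i}}{\prod_{i}q_{N-i}^{\eta^r_i}},
\]
where the last factor collects the two slices produced by the recursion.

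Next I would evaluate each of the three factors using the explicit piecewise descriptions of $\eta^r,\eta^c,\eta^{rc}$ in Lemma \ref{modified-partition}. For the first two factors I compare partitions of almost the same shape: since $\ell(\eta^{rc})=\ell(\eta)$ while $\ell(\eta^c)=\ell(\eta)+1$ and $\ell(\eta^r)=\ell(\eta)-1$ (Lemma \ref{length-relation}), I split the index $i$ into the ranges $i<d(\eta)$, $d(\eta)\le i\le\widetilde d(\eta)$, $i=\widetilde d(\eta)+1$, and $i>\widetilde d(\eta)+1$, on which the row differences $\eta^c_i-\eta^{rc}_i$ and $\eta^r_i-\eta_i$ are either constant or telescope; here I would invoke Remark \ref{special-value} to use $\eta_i=d(\eta)$ for $d(\eta)<i\le\widetilde d(\eta)$, which collapses most of the middle range. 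Summing the resulting exponents of $q_{j-i+1}$ over $0\le j\le N-1$ separates each factor into an $N$-independent bulk part (from the small-$j$ end, producing the negative-index monomials $q_{-\widetilde d(\eta)}^{d(\eta)-1}$ and $\prod_{i>\widetilde d(\eta)}(q_{-i}/q_{-i+1})^{\eta_i}$) and an $N$-dependent boundary part near $j=N-1$.

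Finally I would combine the boundary contributions with the explicit slice factor $\prod_i q_{N-i+1}^{\eta^c_i}/\prod_i q_{N-i}^{\eta^r_i}$ and check that they assemble exactly into the $N$-dependent fraction on the right-hand side, namely $\bigl(\prod_{i=1}^{\widetilde d(\eta)}q_{N-i+1}^{\eta_i-1}\prod_{i=0}^{N-1}q_{i-\widetilde d(\eta)+1}^{-1}\bigr)\big/\bigl(\prod_{i=1}^{d(\eta)-1}q_{N-i}^{\eta_i+1}\prod_{i=N-\widetilde d(\eta)+1}^{N-d(\eta)}q_i^{d(\eta)}\bigr)$. I expect the main obstacle to be precisely this boundary bookkeeping: the row $i=\widetilde d(\eta)+1$ where $\eta^c$ drops to $d(\eta)-1$, and the extra or missing top rows caused by the length mismatches, must be tracked exactly, and the degenerate regime $d(\eta)=1$ (where $\eta^{rc}=\emptyset$ and $\widetilde d(\eta)=\ell(\eta)$ by Remark \ref{special-value}) has to be verified to specialize to the same uniform formula. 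The bulk cancellations are routine telescopes once the ranges are fixed, so the real work is confirming that no stray $N$-dependent factor survives outside the claimed fraction.
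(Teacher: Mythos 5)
Your plan is correct and is essentially the paper's own proof: both expand the $\varpi_{2}$-monomials row by row via the piecewise descriptions in Lemma \ref{modified-partition}, collapse the middle range $d(\eta)<i\le\widetilde{d}(\eta)$ using Remark \ref{special-value}, telescope the remaining tails, and verify the degenerate cases $d(\eta)=1$ with $\eta_{1}>1$ or $\eta_{1}=1$ separately (where the answer indeed specializes to the same uniform formula). The only difference is cosmetic bookkeeping: you first normalize all four factors to the common column range $N-1$ via the slice identity and pair $\eta^{c}$ with $\eta^{rc}$ and $\eta^{r}$ with $\eta$, whereas the paper keeps the mismatched column ranges and pairs $\eta^{c}$ with $\eta$ and $\eta^{r}$ with $\eta^{rc}$.
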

\begin{proof}
If $d(\eta)>1$, by Lemma \ref{modified-partition},  Lemma \ref{length-relation} and Remark \ref{special-value}, we have
\ben
\frac{q^{\varpi_{2}(\eta^c,N,1)}}{q^{\varpi_{2}(\eta,N-1,1)}}=\frac{\prod\limits_{i=1}^{\widetilde{d}(\eta)}\prod\limits_{j=0}^{N}q_{j-i+1}^{\eta_{i}-1}\cdot\prod\limits_{j=0}^{N}q_{j-\widetilde{d}(\eta)}^{d(\eta)-1}\cdot\prod\limits_{i=\widetilde{d}(\eta)+2}^{\ell(\eta)+1}\prod\limits_{j=0}^{N}q_{j-i+1}^{\eta_{i-1}}}{\prod\limits_{i=1}^{\widetilde{d}(\eta)}\prod\limits_{j=0}^{N-1}q_{j-i+1}^{\eta_{i}}\cdot\prod\limits_{i=\widetilde{d}(\eta)+1}^{\ell(\eta)}\prod\limits_{j=0}^{N-1}q_{j-i+1}^{\eta_{i}}}
%&=&\frac{\prod\limits_{i=1}^{\widetilde{d}(\eta)}\prod\limits_{j=0}^{N}q_{j-i+1}^{\eta_{i}-1}\prod\limits_{j=0}^{N}q_{j-\widetilde{d}(\eta)}^{d(\eta)-1}\prod\limits_{i=\widetilde{d}(\eta)+1}^{\ell(\eta)}q_{-i}^{\eta_{i}}\prod\limits_{i=\widetilde{d}(\eta)+1}^{\ell(\eta)}\prod\limits_{j=0}^{N-1}q_{j-i+1}^{\eta_{i}}}{\prod\limits_{i=1}^{\widetilde{d}(\eta)}\prod\limits_{j=0}^{N-1}q_{j-i+1}^{\eta_{i}}\prod\limits_{i=\widetilde{d}(\eta)+1}^{\ell(\eta)}\prod\limits_{j=0}^{N-1}q_{j-i+1}^{\eta_{i}}}\\
=\frac{\prod\limits_{i=1}^{\widetilde{d}(\eta)}q_{N-i+1}^{\eta_{i}-1}\cdot\prod\limits_{j=0}^{N}q_{j-\widetilde{d}(\eta)}^{d(\eta)-1}\cdot\prod\limits_{i=\widetilde{d}(\eta)+1}^{\ell(\eta)}q_{-i}^{\eta_{i}}}{\prod\limits_{i=1}^{\widetilde{d}(\eta)}\prod\limits_{j=0}^{N-1}q_{j-i+1}}
\een
and
\ben
\frac{ q^{\varpi_{2}(\eta^r,N-2,1)}}{ q^{\varpi_{2}(\eta^{rc},N-1,1)}}
&=&\frac{\prod\limits_{i=1}^{d(\eta)-1}\prod\limits_{j=0}^{N-2}q_{j-i+1}^{\eta_{i}+1}\cdot\prod\limits_{i=d(\eta)}^{\ell(\eta)-1}\prod\limits_{j=0}^{N-2}q_{j-i+1}^{\eta_{i+1}}}{\prod\limits_{i=1}^{d(\eta)-1}\prod\limits_{j=0}^{N-1}q_{j-i+1}^{\eta_{i}}\cdot\prod\limits_{i=d(\eta)}^{\widetilde{d}(\eta)}\prod\limits_{j=0}^{N-1}q_{j-i+1}^{d(\eta)-1}\cdot\prod\limits_{i=\widetilde{d}(\eta)+1}^{\ell(\eta)}\prod\limits_{j=0}^{N-1}q_{j-i+1}^{\eta_{i}}}\\
%&=&\frac{\prod\limits_{i=1}^{d(\eta)-1}\prod\limits_{j=0}^{N-2}q_{j-i+1}\prod\limits_{i=d(\eta)+1}^{\ell(\eta)}\prod\limits_{j=1}^{N-1}q_{j-i+1}^{\eta_{i}}}{\prod\limits_{i=1}^{d(\eta)-1}q_{N-i}^{\eta_{i}}\prod\limits_{i=d(\eta)}^{\widetilde{d}(\eta)}\prod\limits_{j=0}^{N-1}q_{j-i+1}^{d(\eta)-1}\prod\limits_{i=\widetilde{d}(\eta)+1}^{\ell(\eta)}\prod\limits_{j=0}^{N-1}q_{j-i+1}^{\eta_{i}}}\\
%&=&\frac{\prod\limits_{i=1}^{d(\eta)-1}\prod\limits_{j=0}^{N-2}q_{j-i+1}\prod\limits_{i=d(\eta)+1}^{\widetilde{d}(\eta)}\prod\limits_{j=1}^{N-1}q_{j-i+1}^{d(\eta)}\prod\limits_{i=\widetilde{d}(\eta)+1}^{\ell(\eta)}\prod\limits_{j=1}^{N-1}q_{j-i+1}^{\eta_{i}}}{\prod\limits_{i=1}^{d(\eta)-1}q_{N-i}^{\eta_{i}}\prod\limits_{j=0}^{N-1}q_{j-d(\eta)+1}^{d(\eta)-1}\prod\limits_{i=d(\eta)+1}^{\widetilde{d}(\eta)}\prod\limits_{j=0}^{N-1}q_{j-i+1}^{d(\eta)-1}\prod\limits_{i=\widetilde{d}(\eta)+1}^{\ell(\eta)}\prod\limits_{j=0}^{N-1}q_{j-i+1}^{\eta_{i}}}\\
&=&\frac{\prod\limits_{i=1}^{d(\eta)-1}\prod\limits_{j=0}^{N-2}q_{j-i+1}\cdot\prod\limits_{i=d(\eta)+1}^{\widetilde{d}(\eta)}\prod\limits_{j=1}^{N-1}q_{j-i+1}}{\prod\limits_{i=1}^{d(\eta)-1}q_{N-i}^{\eta_{i}}\cdot\prod\limits_{j=0}^{N-1}q_{j-d(\eta)+1}^{d(\eta)-1}\cdot\prod\limits_{i=d(\eta)+1}^{\widetilde{d}(\eta)}q_{-i+1}^{d(\eta)-1}\cdot\prod\limits_{i=\widetilde{d}(\eta)+1}^{\ell(\eta)}q_{-i+1}^{\eta_{i}}}.
\een
Then 
\ben
&&\frac{q^{\varpi_{2}(\eta^c,N,1)}\cdot q^{\varpi_{2}(\eta^r,N-2,1)}}{q^{\varpi_{2}(\eta,N-1,1)}\cdot q^{\varpi_{2}(\eta^{rc},N-1,1)}}\\
%&=&\frac{\prod\limits_{i=1}^{\widetilde{d}(\eta)}q_{N-i+1}^{\eta_{i}-1}\prod\limits_{j=0}^{N}q_{j-\widetilde{d}(\eta)}^{d(\eta)-1}\prod\limits_{i=\widetilde{d}(\eta)+1}^{\ell(\eta)}q_{-i}^{\eta_{i}}\prod\limits_{i=1}^{d(\eta)-1}\prod\limits_{j=0}^{N-2}q_{j-i+1}\prod\limits_{i=d(\eta)+1}^{\widetilde{d}(\eta)}\prod\limits_{j=1}^{N-1}q_{j-i+1}}{\prod\limits_{i=1}^{d(\eta)-1}q_{N-i}^{\eta_{i}}\prod\limits_{j=0}^{N-1}q_{j-d(\eta)+1}^{d(\eta)-1}\prod\limits_{i=d(\eta)+1}^{\widetilde{d}(\eta)}q_{-i+1}^{d(\eta)-1}\prod\limits_{i=\widetilde{d}(\eta)+1}^{\ell(\eta)}q_{-i+1}^{\eta_{i}}\prod\limits_{i=1}^{\widetilde{d}(\eta)}\prod\limits_{j=0}^{N-1}q_{j-i+1}}\\
&=&\frac{\prod\limits_{i=1}^{\widetilde{d}(\eta)}q_{N-i+1}^{\eta_{i}-1}\cdot\prod\limits_{j=0}^{N}q_{j-\widetilde{d}(\eta)}^{d(\eta)-1}\cdot\prod\limits_{i=\widetilde{d}(\eta)+1}^{\ell(\eta)}q_{-i}^{\eta_{i}}}{\prod\limits_{i=1}^{d(\eta)-1}q_{N-i}^{\eta_{i}+1}\cdot\prod\limits_{j=1-\widetilde{d}(\eta)}^{N-d(\eta)}q_{j}^{d(\eta)-1}\cdot\prod\limits_{i=\widetilde{d}(\eta)+1}^{\ell(\eta)}q_{-i+1}^{\eta_{i}}\cdot\prod\limits_{i=d(\eta)+1}^{\widetilde{d}(\eta)}q_{-i+1}\cdot\prod\limits_{j=0}^{N-1}q_{j-d(\eta)+1}}\\
&=&q_{-\widetilde{d}(\eta)}^{d(\eta)-1}\cdot\prod\limits_{i=\widetilde{d}(\eta)+1}^{\ell(\eta)}\left(\frac{q_{-i}}{q_{-i+1}}\right)^{\eta_{i}}\cdot\frac{\prod\limits_{i=1}^{\widetilde{d}(\eta)}q_{N-i+1}^{\eta_{i}-1}\cdot\prod\limits_{j=0}^{N-1}q_{j-\widetilde{d}(\eta)+1}^{-1}}{\prod\limits_{i=1}^{d(\eta)-1}q_{N-i}^{\eta_{i}+1}\cdot\prod\limits_{j=N-\widetilde{d}(\eta)+1}^{N-d(\eta)}q_{j}^{d(\eta)}}.
\een

If $d(\eta)=1$ and $\eta_{1}>1$,  then by Remark  \ref{special-value}
\ben
&&\frac{q^{\varpi_{2}(\eta^c,N,1)}\cdot q^{\varpi_{2}(\eta^r,N-2,1)}}{q^{\varpi_{2}(\eta,N-1,1)}\cdot q^{\varpi_{2}(\eta^{rc},N-1,1)}}
%=\frac{\prod\limits_{i=1}^{\ell(\eta^c)}\prod\limits_{j=0}^{N}q_{j-i+1}^{\eta^c_{i}}\prod\limits_{i=1}^{\ell(\eta^r)}\prod\limits_{j=0}^{N-2}q_{j-i+1}^{\eta^r_{i}}}{\prod\limits_{i=1}^{\ell(\eta)}\prod\limits_{j=0}^{N-1}q_{j-i+1}^{\eta_{i}}\prod\limits_{i=1}^{\ell(\eta^{rc})}\prod\limits_{j=0}^{N-1}q_{j-i+1}^{\eta^{rc}_{i}}}
=\frac{\prod\limits_{j=0}^{N}q_{j}^{\eta_{1}-1}\prod\limits_{i=1}^{\ell(\eta)-1}\prod\limits_{j=0}^{N-2}q_{j-i+1}^{\eta_{i+1}}}{\prod\limits_{i=1}^{\ell(\eta)}\prod\limits_{j=0}^{N-1}q_{j-i+1}^{\eta_{i}}}=\frac{q_{N}^{\eta_{1}}}{\prod\limits_{j=1-\ell(\eta)}^{N}q_{j}}.
\een

Similarly, if $d(\eta)=1$ and $\eta_{1}=1$,  we have
\ben
&&\frac{q^{\varpi_{2}(\eta^c,N,1)}\cdot q^{\varpi_{2}(\eta^r,N-2,1)}}{q^{\varpi_{2}(\eta,N-1,1)}\cdot q^{\varpi_{2}(\eta^{rc},N-1,1)}}=\frac{\prod\limits_{i=1}^{\ell(\eta)-1}\prod\limits_{j=0}^{N-2}q_{j-i+1}^{\eta_{i+1}}}{\prod\limits_{i=1}^{\ell(\eta)}\prod\limits_{j=0}^{N-1}q_{j-i+1}^{\eta_{i}}}=\frac{1}{\prod\limits_{j=1-\ell(\eta)}^{N-1}q_{j}}.
\een	
Now the proof is completed.
\end{proof}	

\begin{lemma}\label{recurrence1-weight6}  For any partition $\eta$, we have
	\ben
	\frac{q^{\varpi_{3}(\eta,N+1,1)}q^{\varpi_{3}(\eta,N-1,1)}}{\left(q^{\varpi_{3}(\eta,N,1)}\right)^2}=1.
	%=\frac{\prod\limits_{i=1}^{\ell(\eta)}\prod\limits_{j=0}^{\eta_{i}-1}q_{j-i+1}^{-N-1+i}\prod\limits_{i=1}^{\ell(\eta)}\prod\limits_{j=0}^{\eta_{i}-1}q_{j-i+1}^{-N+1+i}}{\left(\prod\limits_{i=1}^{\ell(\eta)}\prod\limits_{j=0}^{\eta_{i}-1}q_{j-i+1}^{-N+i}\right)^2}
	\een
\end{lemma}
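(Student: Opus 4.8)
The plan is to exploit the fact that, in the definition
\[
q^{\varpi_{3}(\eta,m,1)}=\prod_{i=1}^{\ell(\eta)}\prod_{j=0}^{\eta_{i}-1}q_{j-i+1}^{-m+i},
\]
the integer $m$ enters \emph{only} through the exponent $-m+i$, which is affine-linear in $m$. First I would separate the $m$-dependence by writing $q^{\varpi_{3}(\eta,m,1)}=A\cdot B^{-m}$, where
\[
A:=\prod_{i=1}^{\ell(\eta)}\prod_{j=0}^{\eta_{i}-1}q_{j-i+1}^{\,i},\qquad B:=\prod_{i=1}^{\ell(\eta)}\prod_{j=0}^{\eta_{i}-1}q_{j-i+1}
\]
are monomials in the variables $\{q_{k}\}$ that do not depend on $m$. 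The index set of the double product is identical for every value of $m$, so this factorization is legitimate term by term.

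Next I would substitute $m=N+1$, $m=N-1$, and $m=N$ into this factorization and compute directly:
\[
\frac{q^{\varpi_{3}(\eta,N+1,1)}\,q^{\varpi_{3}(\eta,N-1,1)}}{\left(q^{\varpi_{3}(\eta,N,1)}\right)^{2}}=\frac{\left(A\,B^{-(N+1)}\right)\left(A\,B^{-(N-1)}\right)}{\left(A\,B^{-N}\right)^{2}}=\frac{A^{2}B^{-2N}}{A^{2}B^{-2N}}=1.
\]
Equivalently, one may argue pair by pair: for each $(i,j)$ with $1\le i\le\ell(\eta)$ and $0\le j\le\eta_{i}-1$, the corresponding variable $q_{j-i+1}$ receives total exponent
\[
\bigl(-(N+1)+i\bigr)+\bigl(-(N-1)+i\bigr)-2\bigl(-N+i\bigr)=0,
\]
so every variable cancels and the quotient reduces to $1$.

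There is essentially no obstacle here: the identity is just the statement that the second difference of the affine-linear function $m\mapsto -m+i$ vanishes, and the case $\eta=\emptyset$ is trivial since all the products are empty and equal to $1$. The only point requiring a word of care is that the variables are indexed modulo $n$; but since both the factorization $A\cdot B^{-m}$ and the cancellation are carried out over the fixed index set determined by $\eta$, any coincidence $q_{k}=q_{k'}$ with $k\equiv k'\bmod n$ merely groups together contributions that already cancel, and therefore does not affect the conclusion.
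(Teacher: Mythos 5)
Your proof is correct, and it is essentially the paper's (implicit) argument: the paper states this lemma without proof precisely because, as you show, the exponent $-m+i$ in $q^{\varpi_{3}(\eta,m,1)}$ is affine-linear in $m$ over an index set independent of $m$, so the second difference $\bigl(-(N+1)+i\bigr)+\bigl(-(N-1)+i\bigr)-2\bigl(-N+i\bigr)=0$ kills every exponent term by term. Your factorization $A\cdot B^{-m}$ and the remark on the harmlessness of the identification $q_{k}=q_{k'}$ for $k\equiv k'\bmod n$ are exactly the intended computation.
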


\begin{lemma}\label{recurrence1-weight7}
For any partition $\eta\neq\emptyset$, we have
\ben
\frac{q^{\varpi_{6}(\eta^c,N)}\cdot q^{\varpi_{4}(\eta^r,N,0)}}{q^{\varpi_{4}(\eta,N,1)}\cdot q^{\varpi_{4}(\eta^{rc},N,1)}}=\frac{\prod\limits_{i=1}^{d(\eta)-1}q^{\eta_{i}-i}_{N-i}}{q_{N}^{N}\cdot\prod\limits_{i=1}^{d(\eta)}q_{N-i+1}^{\eta_{i}-i}}.
\een
\end{lemma}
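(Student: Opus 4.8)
The plan is to prove the identity by direct computation, substituting the explicit row descriptions of the modified partitions $\eta^r$, $\eta^c$, $\eta^{rc}$ from Lemma \ref{modified-partition} into the definitions of $q^{\varpi_4}$ and $q^{\varpi_6}$ and simplifying the resulting finite products of $q$-variables. The first step is to rewrite each factor in a form adapted to the diagonal of $\eta$. Since $\eta_i-i$ is strictly decreasing, the condition $1\leq i\leq\eta_i$ in the definition of $q^{\varpi_4(\eta,m,k)}$ is equivalent to $1\leq i\leq d(\eta)$, so that
\[
q^{\varpi_4(\eta,m,k)}=\prod_{i=1}^{d(\eta)}\prod_{j=1}^{i-k}q_{m-i+j+k-1}^{m+\eta_i-i+k-1}\cdot\prod_{i=d(\eta)+1}^{\ell(\eta)}\prod_{j=1}^{\eta_i}q_{m-i+j+k-1}^{m+\eta_i-i+k-1},
\]
and similarly the split in $q^{\varpi_6(\eta,m)}$ (between $2\leq i\leq\eta_i+1$ and $\eta_i+1<i\leq\ell(\eta)$) is governed by whether $i\leq d(\eta)$ via Lemma \ref{diag-length}. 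This turns each of the four factors into a product over the rows of the relevant Young diagram with explicitly computable exponents and index shifts.

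Next I would compute the four factors $q^{\varpi_6(\eta^c,N)}$, $q^{\varpi_4(\eta^r,N,0)}$, $q^{\varpi_4(\eta,N,1)}$ and $q^{\varpi_4(\eta^{rc},N,1)}$ one at a time, using the row formulas of Lemma \ref{modified-partition} together with Lemma \ref{diag-length} and Remark \ref{special-value} to track how the diagonal length changes: recall that $d(\eta^{rc})=d(\eta)-1$ always, whereas $d(\eta^r)$ and $d(\eta^c)$ each equal either $d(\eta)$ or $d(\eta)-1$ according to the values of $\eta_{d(\eta)+1}$ and $\eta_{d(\eta)}$. Forming the ratio, the bulk of the products telescopes: the below-diagonal parts of $\eta^r$ and $\eta^c$ are, after the reindexing $i\mapsto i\pm1$ dictated by $\eta^r_i=\eta_{i+1}$ and $\eta^c_i=\eta_{i-1}$ for large $i$, reindexed copies of the corresponding parts of $\eta$ and $\eta^{rc}$, and these cancel. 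What survives is the contribution of the near-diagonal rows $d(\eta)$ through $\widetilde d(\eta)$, together with the boundary correction $(q_N^{N+\eta_1})^{-1}$ coming from the prefactor in $q^{\varpi_6}$ and the corrections from the altered inner summation limits $j\leq i-k$ in $\varpi_4$. I would organize the case analysis ($d(\eta)>1$; $d(\eta)=1$ with $\eta_1>1$; $d(\eta)=1$ with $\eta_1=1$) in exact parallel with the proof of Lemma \ref{recurrence1-weight5}, noting that in the latter two cases $\eta^{rc}=\emptyset$ by Lemma \ref{length-relation}, so that $q^{\varpi_4(\eta^{rc},N,1)}=1$ and the factor simply drops out.

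The main obstacle will be the bookkeeping at the diagonal boundary. Unlike the $\varpi_2$-factors treated in Lemma \ref{recurrence1-weight5}, the factors $\varpi_4$ and $\varpi_6$ have their inner range ($j$ up to $i-k$, or up to $\eta_i$) tied to the diagonal split, and that split sits at a \emph{different} row for each of $\eta$, $\eta^r$, $\eta^c$, $\eta^{rc}$ precisely because their values of $d$ differ by Lemma \ref{diag-length}. Keeping the index alignment consistent through the reindexing $i\mapsto i\pm1$, so that the telescoping genuinely collapses to the stated right-hand side $\prod_{i=1}^{d(\eta)-1}q_{N-i}^{\eta_i-i}\big/\big(q_N^N\prod_{i=1}^{d(\eta)}q_{N-i+1}^{\eta_i-i}\big)$ rather than leaving spurious near-diagonal terms, is where the care is needed; the rest is routine manipulation of finite products. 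As a final consistency check I would verify that the total exponent of each variable $q_l$ agrees on both sides after collecting terms.
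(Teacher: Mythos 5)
Your proposal is correct and follows essentially the same route as the paper's proof: both substitute the row formulas of Lemma \ref{modified-partition} into $\varpi_{4}$ and $\varpi_{6}$, use Lemmas \ref{length-relation}, \ref{diag-length}, \ref{value-set} and Remark \ref{special-value} to track how the diagonal split moves for $\eta^{r}$, $\eta^{c}$, $\eta^{rc}$, cancel the off-diagonal parts of the products pairwise, and finish by a case analysis on $d(\eta)$ (the paper works out the case $d(\eta)>1$, $d(\eta^{r})=d(\eta)$ explicitly, grouping the ratio as $q^{\varpi_{6}(\eta^c,N)}/q^{\varpi_{4}(\eta,N,1)}$ times $q^{\varpi_{4}(\eta^r,N,0)}/q^{\varpi_{4}(\eta^{rc},N,1)}$, and treats the remaining cases as similar). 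The only organizational difference is your grouping of the four factors and your coarser case list, which you already flag as needing the sub-case distinction $d(\eta^{r})=d(\eta)$ versus $d(\eta)-1$; this matches the paper's bookkeeping.
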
	
\begin{proof}
If $d(\eta)>1$ and $d(\eta^r)=d(\eta)$, then by Lemmas \ref{modified-partition}, \ref{length-relation}, \ref{diag-length}, \ref{value-set} and Remark \ref{special-value}
\ben
\frac{q^{\varpi_{6}(\eta^c,N)}}{q^{\varpi_{4}(\eta,N,1)}}
%&=&\frac{\left(q_{N}^{N+\eta^c_{1}}\right)^{-1}\cdot\prod\limits_{i:2\leq i\leq \eta^c_{i}+1}\prod\limits_{j=1}^{i-2}q_{N+j+1-i}^{N+1+\eta^c_{i}-i}\cdot\prod\limits_{i:\eta^c_{i}+1<i\leq\ell(\eta^c)}\prod\limits_{j=1}^{\eta^c_{i}}q_{N+j+1-i}^{N+1+\eta^c_{i}-i}}{\prod\limits_{i:1\leq i\leq \eta_{i}}\prod\limits_{j=1}^{i-1}q_{N-i+j}^{N-i+\eta_{i}}\cdot\prod\limits_{i:\eta_{i}< i\leq\ell(\eta)}\prod\limits_{j=1}^{\eta_{i}}q_{N-i+j}^{N-i+\eta_{i}}}\\
%&=&\frac{\prod\limits_{i=2}^{d(\eta)}\prod\limits_{j=1}^{i-2}q_{N+j+1-i}^{N+\eta_{i}-i}\cdot\prod\limits_{i=d(\eta)+1}^{\widetilde{d}(\eta)}\prod\limits_{j=1}^{\eta_{i}-1}q_{N+j+1-i}^{N+\eta_{i}-i}\prod\limits_{j=1}^{d(\eta)-1}q_{N+j-\widetilde{d}(\eta)}^{N+d(\eta)-\widetilde{d}(\eta)-1}\prod\limits_{i=\widetilde{d}(\eta)+2}^{\ell(\eta)+1}\prod\limits_{j=1}^{\eta_{i-1}}q_{N+j+1-i}^{N+1+\eta_{i-1}-i}}{q_{N}^{N+\eta_{1}-1}\cdot\prod\limits_{i=1}^{d(\eta)}\prod\limits_{j=1}^{i-1}q_{N-i+j}^{N-i+\eta_{i}}\cdot\prod\limits_{i=d(\eta)+1}^{\widetilde{d}(\eta)}\prod\limits_{j=1}^{\eta_{i}}q_{N-i+j}^{N-i+\eta_{i}}\cdot\prod\limits_{i=\widetilde{d}(\eta)+1}^{\ell(\eta)}\prod\limits_{j=1}^{\eta_{i}}q_{N-i+j}^{N-i+\eta_{i}}}\\
&=&\frac{\prod\limits_{i=2}^{d(\eta)}\prod\limits_{j=2}^{i-1}q_{N+j-i}^{N+\eta_{i}-i}\cdot\prod\limits_{i=d(\eta)+1}^{\widetilde{d}(\eta)}\prod\limits_{j=2}^{\eta_{i}}q_{N+j-i}^{N+\eta_{i}-i}\cdot\prod\limits_{j=1}^{d(\eta)-1}q_{N+j-\widetilde{d}(\eta)}^{N+d(\eta)-\widetilde{d}(\eta)-1}\cdot\prod\limits_{i=\widetilde{d}(\eta)+1}^{\ell(\eta)}\prod\limits_{j=1}^{\eta_{i}}q_{N+j-i}^{N+\eta_{i}-i}}{q_{N}^{N+\eta_{1}-1}\cdot\prod\limits_{i=2}^{d(\eta)}\prod\limits_{j=1}^{i-1}q_{N-i+j}^{N+\eta_{i}-i}\cdot\prod\limits_{i=d(\eta)+1}^{\widetilde{d}(\eta)}\prod\limits_{j=1}^{\eta_{i}}q_{N-i+j}^{N+\eta_{i}-i}\cdot\prod\limits_{i=\widetilde{d}(\eta)+1}^{\ell(\eta)}\prod\limits_{j=1}^{\eta_{i}}q_{N-i+j}^{N+\eta_{i}-i}}\\
&=&\frac{\prod\limits_{j=1}^{d(\eta)-1}q_{N+j-\widetilde{d}(\eta)}^{N+d(\eta)-\widetilde{d}(\eta)-1}}{\prod\limits_{i=1}^{d(\eta)}q_{N-i+1}^{N+\eta_{i}-i}\cdot\prod\limits_{i=d(\eta)+1}^{\widetilde{d}(\eta)}q_{N-i+1}^{N+d(\eta)-i}}
\een
and 

\ben
\frac{q^{\varpi_{4}(\eta^r,N,0)}}{q^{\varpi_{4}(\eta^{rc},N,1)}}
%&=&\frac{\prod\limits_{i:1\leq i\leq\eta^r_{i}}\prod\limits_{j=1}^iq^{N+\eta^r_{i}-i-1}_{N-i+j-1}\cdot\prod\limits_{i:\eta^r_{i}<i\leq\ell(\eta^r)}\prod\limits_{j=1}^{\eta^r_{i}}q_{N-i+j-1}^{N+\eta^r_{i}-i-1}}{\prod\limits_{i:1\leq i\leq \eta^{rc}_{i}}\prod\limits_{j=1}^{i-1}q_{N-i+j}^{N-i+\eta^{rc}_{i}}\cdot\prod\limits_{i:\eta^{rc}_{i}< i\leq\ell(\eta^{rc})}\prod\limits_{j=1}^{\eta^{rc}_{i}}q_{N-i+j}^{N-i+\eta^{rc}_{i}}}\\
%&=&\frac{\prod\limits_{i=1}^{d(\eta)-1}\prod\limits_{j=1}^iq^{N+\eta_{i}-i}_{N-i+j-1}\cdot\prod\limits_{j=1}^{d(\eta)}q^{N-1}_{N-d(\eta)+j-1}\cdot\prod\limits_{i=d(\eta)+1}^{\ell(\eta)-1}\prod\limits_{j=1}^{\eta_{i+1}}q_{N-i+j-1}^{N+\eta_{i+1}-i-1}}{\prod\limits_{i=1}^{d(\eta)-1}\prod\limits_{j=1}^{i-1}q_{N-i+j}^{N-i+\eta_{i}}\cdot\prod\limits_{i=d(\eta)}^{\widetilde{d}(\eta)}\prod\limits_{j=1}^{d(\eta)-1}q_{N-i+j}^{N-i+d(\eta)-1}\cdot\prod\limits_{i=\widetilde{d}(\eta)+1}^{\ell(\eta)}\prod\limits_{j=1}^{\eta_{i}}q_{N-i+j}^{N-i+\eta_{i}}}\\
&=&\frac{\prod\limits_{i=1}^{d(\eta)-1}\prod\limits_{j=0}^{i-1}q^{N+\eta_{i}-i}_{N-i+j}\cdot\prod\limits_{j=1}^{d(\eta)}q^{N-1}_{N-d(\eta)+j-1}\cdot\prod\limits_{i=d(\eta)+2}^{\ell(\eta)}\prod\limits_{j=1}^{\eta_{i}}q_{N-i+j}^{N+\eta_{i}-i}}{\prod\limits_{i=1}^{d(\eta)-1}\prod\limits_{j=1}^{i-1}q_{N-i+j}^{N-i+\eta_{i}}\cdot\prod\limits_{i=d(\eta)}^{\widetilde{d}(\eta)}\prod\limits_{j=1}^{d(\eta)-1}q_{N-i+j}^{N-i+d(\eta)-1}\cdot\prod\limits_{i=\widetilde{d}(\eta)+1}^{\ell(\eta)}\prod\limits_{j=1}^{\eta_{i}}q_{N-i+j}^{N-i+\eta_{i}}}\\
%&=&\frac{\prod\limits_{i=1}^{d(\eta)-1}q^{N+\eta_{i}-i}_{N-i}\cdot\prod\limits_{i=d(\eta)+1}^{\widetilde{d}(\eta)}\prod\limits_{j=1}^{d(\eta)}q_{N-i+j}^{N+d(\eta)-i}}{\prod\limits_{j=1}^{d(\eta)-1}q_{N-d(\eta)+j}^{N-1}\cdot\prod\limits_{i=d(\eta)+1}^{\widetilde{d}(\eta)}\prod\limits_{j=1}^{d(\eta)-1}q_{N-i+j}^{N-i+d(\eta)-1}}\\
&=&\frac{\prod\limits_{i=1}^{d(\eta)-1}q^{N+\eta_{i}-i}_{N-i}\cdot\prod\limits_{i=d(\eta)+1}^{\widetilde{d}(\eta)}q_{N+d(\eta)-i}^{N+d(\eta)-i}\cdot\prod\limits_{i=d(\eta)+1}^{\widetilde{d}(\eta)}\prod\limits_{j=1}^{d(\eta)-1}q_{N-i+j}}{\prod\limits_{j=1}^{d(\eta)-1}q_{N-d(\eta)+j}^{N-1}}.
\een
Then 
\ben
&&\frac{q^{\varpi_{6}(\eta^c,N)}\cdot q^{\varpi_{4}(\eta^r,N,0)}}{q^{\varpi_{4}(\eta,N,1)}\cdot q^{\varpi_{4}(\eta^{rc},N,1)}}\\
&=&\frac{\prod\limits_{j=1}^{d(\eta)-1}q_{N+j-\widetilde{d}(\eta)}^{N+d(\eta)-\widetilde{d}(\eta)-1}\cdot\prod\limits_{i=d(\eta)+1}^{\widetilde{d}(\eta)}q_{N+d(\eta)-i}^{N+d(\eta)-i}}{\prod\limits_{i=d(\eta)+1}^{\widetilde{d}(\eta)}q_{N-i+1}^{N+d(\eta)-i}\cdot\prod\limits_{j=1}^{d(\eta)-1}q_{N-d(\eta)+j}^{N-1}}\cdot\frac{\prod\limits_{i=1}^{d(\eta)-1}q^{N+\eta_{i}-i}_{N-i}\cdot\prod\limits_{i=d(\eta)+1}^{\widetilde{d}(\eta)}\prod\limits_{j=1}^{d(\eta)-1}q_{N-i+j}}{\prod\limits_{i=1}^{d(\eta)}q_{N-i+1}^{N+\eta_{i}-i}}\\
%&=&\frac{\prod\limits_{j=1}^{d(\eta)-1}q_{N+j-\widetilde{d}(\eta)}^{d(\eta)-\widetilde{d}(\eta)-1}\cdot\prod\limits_{i=d(\eta)+1}^{\widetilde{d}(\eta)}q_{N+d(\eta)-i}^{d(\eta)-i}}{\prod\limits_{i=d(\eta)+1}^{\widetilde{d}(\eta)}q_{N-i+1}^{d(\eta)-i}\cdot\prod\limits_{j=1}^{d(\eta)-1}q_{N-d(\eta)+j}^{-1}}\cdot\frac{\prod\limits_{i=1}^{d(\eta)-1}q^{\eta_{i}-i}_{N-i}\prod\limits_{i=d(\eta)+1}^{\widetilde{d}(\eta)}\prod\limits_{j=1}^{d(\eta)-1}q_{N-i+j}}{q_{N}^{N+\eta_{1}-1}\prod\limits_{i=2}^{d(\eta)}q_{N-i+1}^{\eta_{i}-i}}\\
&=&\frac{\prod\limits_{j=1}^{d(\eta)-1}q_{N+j-\widetilde{d}(\eta)}^{d(\eta)-\widetilde{d}(\eta)-1}\cdot\prod\limits_{i=d(\eta)}^{\widetilde{d}(\eta)}\prod\limits_{j=1}^{d(\eta)-1}q_{N-i+j}\cdot\prod\limits_{i=d(\eta)+1}^{\widetilde{d}(\eta)}q_{N+d(\eta)-i}^{d(\eta)-i}}{\prod\limits_{i=d(\eta)+1}^{\widetilde{d}(\eta)}q_{N-i+1}^{d(\eta)-i}}\cdot\frac{\prod\limits_{i=1}^{d(\eta)-1}q^{\eta_{i}-i}_{N-i}}{q_{N}^{N}\cdot\prod\limits_{i=1}^{d(\eta)}q_{N-i+1}^{\eta_{i}-i}}\\
&=&\frac{\prod\limits_{i=1}^{d(\eta)-1}q^{\eta_{i}-i}_{N-i}}{q_{N}^{N}\cdot\prod\limits_{i=1}^{d(\eta)}q_{N-i+1}^{\eta_{i}-i}}.
\een
The proof is completed by using the above similar argument  for the other cases.
\end{proof}

\begin{lemma}\label{recurrence1-weight8}
For any partition $\eta\neq\emptyset$, we have
\ben
\frac{q^{\varpi_{7}(\eta^c,N)}\cdot q^{\varpi_{5}(\eta^r,N,0)}}{q^{\varpi_{5}(\eta,N,1)}\cdot q^{\varpi_{5}(\eta^{rc},N,1)}}=\frac{\prod\limits_{i=1}^{d(\eta)-1}q_{i-N}^{\eta_{i}}}{\prod\limits_{i=1}^{d(\eta)}q_{i-N-1}^{\eta_{i}-1}}.
\een
\end{lemma}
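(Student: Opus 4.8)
The plan is to prove Lemma \ref{recurrence1-weight8} by a direct computation that mirrors, step for step, the proof of its sector-$1$ counterpart Lemma \ref{recurrence1-weight7}. First I would factor the left-hand side into the two pieces $\dfrac{q^{\varpi_{7}(\eta^c,N)}}{q^{\varpi_{5}(\eta,N,1)}}$ and $\dfrac{q^{\varpi_{5}(\eta^r,N,0)}}{q^{\varpi_{5}(\eta^{rc},N,1)}}$, exactly as the cited proof splits the $\varpi_6/\varpi_4$ and $\varpi_4/\varpi_4$ ratios. Each numerator involves a modified partition, so I would substitute the explicit part-formulas for $\eta^r,\eta^c,\eta^{rc}$ from Lemma \ref{modified-partition} and rewrite the double products $\prod_i\prod_j$ according to the two index regimes $1\le i\le\eta_i$ and $\eta_i<i\le\ell(\eta)$ appearing in the definitions of $\varpi_5$ and $\varpi_7$.

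The computation naturally breaks into the same cases as in Remark \ref{special-value} and Lemma \ref{diag-length}: the case $d(\eta)>1$ with the two sub-cases $d(\eta^r)=d(\eta)$ and $d(\eta^r)=d(\eta)-1$, and the case $d(\eta)=1$ with $\eta_1>1$ or $\eta_1=1$. In each case the key simplifications are that $\eta_i$ is forced to equal $d(\eta)$ for $d(\eta)<i\le\widetilde d(\eta)$ by Remark \ref{special-value}, that the value-set descriptions of $\{i:\eta^r_i>i+1\}$ and $\{i:\eta^c_i\ge i-1\}$ in Lemma \ref{value-set} control where the diagonal split point moves under the $r$, $c$, $rc$ operations, and that the length shifts are supplied by Lemma \ref{length-relation}. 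After these substitutions I would collect terms and let the inner $j$-products telescope, so that only the boundary contributions at $i=1,\dots,d(\eta)$ survive, yielding the claimed right-hand side $\dfrac{\prod_{i=1}^{d(\eta)-1}q_{i-N}^{\eta_i}}{\prod_{i=1}^{d(\eta)}q_{i-N-1}^{\eta_i-1}}$.

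The main obstacle, and the genuine point of departure from Lemma \ref{recurrence1-weight7}, is that the exponents in $\varpi_5$ and $\varpi_7$ depend on the inner index $j$ (they are $\eta_i-j$), whereas those in $\varpi_4$ and $\varpi_6$ are constant in $j$. Consequently the inner $j$-products are not simple powers but genuinely telescoping products whose surviving terms depend on whether $i$ lies above or below the diagonal and on how the split condition $i\le\eta_i$ is transformed by $r$, $c$, $rc$. A one-row shift of the diagonal therefore does not merely insert or delete a fixed power, as in the $\varpi_4$ case, but redistributes the $\eta_i-j$ exponents among neighbouring subscripts; keeping this bookkeeping consistent across all four partitions $\eta,\eta^r,\eta^c,\eta^{rc}$ simultaneously is where the care is required, and each case must be re-indexed carefully before the cancellation can be read off.

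As an independent consistency check I would compare against the web symmetry: Lemma \ref{recurrence1-weight8} is the sector-$2$ analogue of Lemma \ref{recurrence1-weight7}, and the symmetries of $V^n_{\lambda\mu\nu}$ in Remark \ref{symmetry1} together with the transpose identities of Lemma \ref{transpose} predict how the two corner contributions relate under $q_k\leftrightarrow q_{-k}$. Nevertheless, since $\varpi_5,\varpi_7$ are genuinely different functions from $\varpi_4,\varpi_6$, this lemma is not a formal corollary of Lemma \ref{recurrence1-weight7} and must be proved by the direct case analysis above; evaluating small examples such as $\eta=(1)$ and $\eta=(2)$ provides a quick sanity check on the index conventions before the general cancellation is carried out.
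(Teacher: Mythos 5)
Your proposal is correct and takes essentially the same route as the paper: the paper's own proof of Lemma \ref{recurrence1-weight8} consists precisely of repeating the argument of Lemma \ref{recurrence1-weight7} with $\varpi_{5},\varpi_{7}$ in place of $\varpi_{4},\varpi_{6}$, i.e.\ the same splitting into the ratios $q^{\varpi_{7}(\eta^c,N)}/q^{\varpi_{5}(\eta,N,1)}$ and $q^{\varpi_{5}(\eta^r,N,0)}/q^{\varpi_{5}(\eta^{rc},N,1)}$ and the same case analysis via Lemmas \ref{modified-partition}, \ref{length-relation}, \ref{diag-length}, \ref{value-set} and Remark \ref{special-value}. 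Your remark that the exponents $\eta_{i}-j$ depend on the inner index $j$ (unlike the $\varpi_{4},\varpi_{6}$ case) correctly identifies the only bookkeeping difference, and it does not change the structure of the argument.
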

\begin{proof}
It follows from the similar argument as in the proof of Lemma \ref{recurrence1-weight7}.
\end{proof}

\subsubsection{Weights for the graphical condensation recurrence with $\mathbf{G}=\mathbf{H}(N;\lambda^{rc},\mu,\nu^{rc})$}
The similar argument as in Section 4.3.1 shows that 

\begin{lemma}\label{recurrence2-weight1}
The minimal dimer configuration of 	$\mathbf{H}(N;\lambda^{rc},\mu,\nu^{rc})-\{a,d\}$ has the weight \\
$q^{\omega_{\min}^{LU}(\lambda^r,\mu,\nu^c;N)}:=q^{\widetilde{\omega}_{\min}^{LU}(\lambda^r,\mu,\nu^c;N)}\cdot\prod\limits_{l=0}^{n-1}q_{l}^{-|\mathrm{II}(\lambda^r,\mu,\nu^c)|_{l}}\cdot q_{l}^{-2|\mathrm{III}(\lambda^r,\mu,\nu^c)|_{l}}$, where
\ben
&&q^{\widetilde{\omega}^{LU}_{\min}(\lambda^r,\mu,\nu^c;N)}\\
&=&\prod_{i=0}^{N}\prod_{j=0}^{N-1}q_{j-i+1}^{N-i}\cdot\prod_{i=1}^{\ell((\lambda^r)^\prime)}\prod_{j=0}^{N-1}q_{j-i+2}^{(\lambda^r)^\prime_{i}}\cdot\prod_{i=1}^{\ell(\mu)}\prod_{j=0}^{N}q_{i-j}^{\mu_{i}}\cdot\prod_{i=1}^{\ell(\nu^c)}\prod_{j=0}^{\nu^c_{i}-1}q_{j-i+2}^{N-1}\cdot\prod_{i=1}^{\ell(\nu^c)}\prod_{j=0}^{\nu^c_{i}-1}q_{j-i+2}^{-2N+i}\\
&&\times\left(q_{N}^{N+(\lambda^r)^\prime_{1}}\right)^{-1}\cdot\prod_{i:2\leq i\leq (\lambda^r)^\prime_{i}+1}\prod_{j=1}^{i-2}q_{N+j+1-i}^{N+1+(\lambda^r)^\prime_{i}-i}\cdot\prod_{i:(\lambda^r)^\prime_{i}+1<i\leq\ell((\lambda^r)^\prime)}\prod_{j=1}^{(\lambda^r)^\prime_{i}}q_{N+j+1-i}^{N+1+(\lambda^r)^\prime_{i}-i}\\
&&\times\prod_{i:1\leq i\leq\mu_{i}}\prod_{j=1}^{i-1}q_{i-j-N}^{\mu_{i}-j}\cdot\prod_{i:\mu_{i}<i\leq\ell(\mu)}\prod_{j=1}^{\mu_{i}}q_{i-j-N}^{\mu_{i}-j}\\
&=&q^{\varpi_{1}(N,1,1)}\cdot q^{\varpi_{2}((\lambda^\prime)^c,N-1,2)}\cdot\overline{q^{\varpi_{2}(\mu,N,0)}}\cdot q^{\varpi_{3}(\nu^c,N+1,2)}\cdot q^{\varpi_{6}((\lambda^\prime)^c,N)}\cdot q^{\varpi_{5}(\mu,N,1)}.
\een
\end{lemma}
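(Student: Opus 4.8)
The plan is to compute the weight of the minimal dimer configuration sector by sector, exactly as in the arguments preceding Lemmas \ref{recurrence1-weight1}, \ref{recurrence1-weight2} and \ref{recurrence1-weight3}, but now tracking the new origin shift. First I would recall from Section 4.2 that removing the vertices $a=\max S_1^-$ and $d=\min S_3^+$ from $\mathbf{H}(N;\lambda^{rc},\mu,\nu^{rc})$ produces the graph whose dimer configurations correspond to $3$D partitions asymptotic to $(\lambda^r,\mu,\nu^c)$, with the origin of $\mathbb{Z}^3$ shifted to the face on the left-up side of the central face of $\mathbf{H}(N)$. The crucial point, compared with the ``directly above'' shift of Lemma \ref{recurrence1-weight2}, is that this left-up shift moves the central column by one step in both the horizontal and the color directions; under the weighted rule of Definition \ref{weight-rule} this is precisely what advances the base color offset and relabels the sector contributions.

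With the shift fixed, I would write the total weight as a product of four blocks. The base block is the weight of the empty configuration on the shifted region, giving $\prod_{i=0}^{N}\prod_{j=0}^{N-1}q_{j-i+1}^{N-i}=q^{\varpi_{1}(N,1,1)}$; the extra $j$-column dropped and the color index advanced by one are the only changes from the $q^{\varpi_{1}(N,0,0)}$ of Lemma \ref{recurrence1-weight2}. Next, in sector $1$ the part $(\lambda^r)^\prime=(\lambda^\prime)^c$ (via Lemma \ref{transpose}) contributes the bulk horizontal dimers packaged as $q^{\varpi_{2}((\lambda^\prime)^c,N-1,2)}$ together with the boundary correction $q^{\varpi_{6}((\lambda^\prime)^c,N)}$, which is the same combination already computed in the up-shift case but with the color offset advanced by one. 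Sector $2$ now carries the unmodified partition $\mu$, so its contribution is the plain $\overline{q^{\varpi_{2}(\mu,N,0)}}\cdot q^{\varpi_{5}(\mu,N,1)}$ rather than the $\mu^c$-combination $\overline{q^{\varpi_{2}(\mu^c,N,1)}}\cdot q^{\varpi_{7}(\mu^c,N)}$ of Lemma \ref{recurrence1-weight2}; here the boundary term $q^{\varpi_{5}(\mu,N,1)}$ is literally the sector-$2$ correction of Lemma \ref{recurrence1-weight1}, while the bulk term shifts from $(N-1,1)$ to $(N,0)$ under the displacement. Finally sector $3$ contributes $q^{\varpi_{3}(\nu^c,N+1,2)}$, reflecting both the change from $\nu$ to $\nu^c$ forced by the removal of $d=\min S_3^+$ and the color offset $k=2$ coming from the shift.

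The remaining work is bookkeeping: I would assemble the four blocks, verify that the power of each $q_\ell$ matches the explicit double-product form displayed in the statement, and then read off the compact $\varpi$-expression. The first equality (the explicit products) comes directly from the triangle-counting argument of Section 4.3.1, and the second equality (the $\varpi$-form) is obtained by matching the definitions of $\varpi_{1},\dots,\varpi_{7}$ against those products; both equalities therefore fall out of the same computation. The factor $\prod_{l}q_l^{-|\mathrm{II}(\lambda^r,\mu,\nu^c)|_{l}}\cdot q_l^{-2|\mathrm{III}(\lambda^r,\mu,\nu^c)|_{l}}$ relating $q^{\omega_{\min}^{LU}}$ to $q^{\widetilde{\omega}_{\min}^{LU}}$ is inserted by definition, exactly as in Lemma \ref{recurrence1-weight2}.

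I expect the main obstacle to be getting the origin shift right: the left-up displacement simultaneously alters the horizontal position and the residue class $i-j \bmod n$ of every box relative to the new origin, so the color subscripts in all four blocks pick up offsets that differ from the simpler straight-up shift of Lemma \ref{recurrence1-weight2}. Keeping these offsets consistent across the base region, the two nonempty legs $\lambda^r$ and $\nu^c$, and the boundary corrections $\varpi_{6},\varpi_{5}$ is where an error would most naturally creep in; once the shift is pinned down, the rest is identical in spirit to the computation of Lemmas \ref{recurrence1-weight1}--\ref{recurrence1-weight3}.
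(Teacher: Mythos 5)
Your proposal is correct and follows essentially the same route as the paper: the paper's own justification is simply "the similar argument as in Section 4.3.1," i.e., the sector-by-sector counting of horizontal dimers for the minimal configuration, now with the origin shifted to the left-up face, and your block decomposition (base $q^{\varpi_{1}(N,1,1)}$, sector 1 as $q^{\varpi_{2}((\lambda^\prime)^c,N-1,2)}\cdot q^{\varpi_{6}((\lambda^\prime)^c,N)}$ using $(\lambda^r)^\prime=(\lambda^\prime)^c$, sector 2 as $\overline{q^{\varpi_{2}(\mu,N,0)}}\cdot q^{\varpi_{5}(\mu,N,1)}$, sector 3 as $q^{\varpi_{3}(\nu^c,N+1,2)}$) matches the lemma's stated factorization exactly, with the $\mathrm{II}/\mathrm{III}$ prefactor inserted by definition.
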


\begin{lemma}\label{recurrence2-weight2}
The minimal dimer configuration of 	$\mathbf{H}(N;\lambda^{rc},\mu,\nu^{rc})-\{b,c\}$ has the weight \\
$q^{\omega_{\min}^{RD}(\lambda^c,\mu,\nu^r;N)}:=q^{\widetilde{\omega}_{\min}^{RD}}(\lambda^c,\mu,\nu^r;N)\cdot\prod\limits_{l=0}^{n-1}q_{l}^{|\mathrm{II}(\lambda^c,\mu,\nu^r)|_{l}}\cdot q_{l}^{2|\mathrm{III}(\lambda^c,\mu,\nu^r)|_{l}}$, where
\ben
&&q^{\widetilde{\omega}^{RD}_{\min}(\lambda^c,\mu,\nu^r;n)}\\
&=&\prod_{i=0}^{N-2}\prod_{j=0}^{N-1}q_{j-i-1}^{N-i-2}\cdot\prod_{i=1}^{\ell((\lambda^c)^\prime)}\prod_{j=0}^{N-1}q_{j-i}^{(\lambda^c)^\prime_{i}}\cdot\prod_{i=1}^{\ell(\mu)}\prod_{j=0}^{N-2}q_{i-j-2}^{\mu_{i}}\cdot\prod_{i=1}^{\ell(\nu^r)}\prod_{j=0}^{\nu^r_{i}-1}q_{j-i}^{N}\cdot\prod_{i=1}^{\ell(\nu^r)}\prod_{j=0}^{\nu^r_{i}-1}q_{j-i}^{-2N+i+1}\\
&&\times\prod_{i:1\leq i\leq(\lambda^c)_{i}^\prime}\prod_{j=1}^iq^{N+(\lambda^c)_{i}^\prime-i-1}_{N-i+j-1}\cdot\prod_{i:(\lambda^c)_{i}^\prime<i\leq\ell((\lambda^c)^\prime)}\prod_{j=1}^{(\lambda^c)_{i}^\prime}q_{N-i+j-1}^{N+(\lambda^c)_{i}^\prime-i-1}\\
&&\times\prod_{i:1\leq i\leq \mu_{i}}\prod_{j=1}^{i-1}q_{i-j-N}^{\mu_{i}-j}\cdot\prod_{i:\mu_{i}<i\leq\ell(\mu)}\prod_{j=1}^{\mu_{i}}q_{i-j-N}^{\mu_{i}-j}\\
&=&q^{\varpi_{1}(N-2,-1,-1)}\cdot q^{\varpi_{2}((\lambda^\prime)^r,N-1,0)}\cdot\overline{q^{\varpi_{2}(\mu,N-2,2)}}\cdot q^{\varpi_{3}(\nu^r,N-1,0)}\cdot q^{\varpi_{4}((\lambda^\prime)^r,N,0)}\cdot q^{\varpi_{5}(\mu,N,1)}.
\een

\end{lemma}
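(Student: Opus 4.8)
The plan is to compute the weight of the minimal dimer configuration of $\mathbf{H}(N;\lambda^{rc},\mu,\nu^{rc})-\{b,c\}$ directly, repeating the counting carried out for Lemmas \ref{recurrence1-weight1} and \ref{recurrence1-weight3} in Section 4.3.1, and then to repackage the resulting monomial into the $\varpi$-notation introduced before Section 4.3.1. Recall from Section 4.2 that $\mathbf{H}(N;\lambda^{rc},\mu,\nu^{rc})-\{b,c\}$ is assembled from the charge $1$ Maya diagram of $\lambda^c$, the Maya diagram of $\mu$, and the charge $-1$ Maya diagram of $\nu^r$, and that its dimer configurations are in bijection with the $3$D partitions asymptotic to $(\lambda^c,\mu,\nu^r)$, the bijection being the one attached to the origin shifted to the face on the right-down side of the central face of $\mathbf{H}(N)$. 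The minimal such configuration corresponds to $\pi_{\min}(\lambda^c,\mu,\nu^r)$, so it suffices to weigh that configuration factor by factor.

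First I would fix the base contribution. As in Section 4.3.1, the empty configuration on the full hexagon contributes the product of its $N^2$ horizontal dimers; the deletion of $\{b,c\}$ together with the right-down relocation of the origin reindexes this product, and this reindexed base term is exactly $q^{\varpi_{1}(N-2,-1,-1)}$. Here the vertical parameter $N-2$ records the downward displacement, while the pair $(-1,-1)$ records the rightward displacement, in direct contrast to the sister Lemma \ref{recurrence2-weight1} on the same graph family, whose left-up deletion $\{a,d\}$ produces $q^{\varpi_{1}(N,1,1)}$.

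Next I would add the three sector contributions, using Lemma \ref{modified-partition} for the explicit parts of $\lambda^c$ and $\nu^r$ and Lemma \ref{transpose} for the identity $(\lambda^c)'=(\lambda')^r$. In sector $1$ the part $(\lambda^c)'_i=(\lambda')^r_i$ contributes extra horizontal dimers according to whether $(\lambda')^r_i\geq i$ or $(\lambda')^r_i<i$, exactly as in the $\lambda'$ count of Lemma \ref{recurrence1-weight1}; these assemble into $q^{\varpi_{2}((\lambda')^r,N-1,0)}\cdot q^{\varpi_{4}((\lambda')^r,N,0)}$. Sector $2$ is governed by $\mu$ itself and yields $\overline{q^{\varpi_{2}(\mu,N-2,2)}}\cdot q^{\varpi_{5}(\mu,N,1)}$, the overline reflecting the $q_k\leftrightarrow q_{-k}$ asymmetry of sector $2$ in the weight rule of Definition \ref{weight-rule}. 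Sector $3$ accounts for the horizontal dimers absent from the minimal configuration, governed by $\nu^r$, and gives $q^{\varpi_{3}(\nu^r,N-1,0)}$. The shifted third arguments $k=0,\,2,\,0$ in these three terms are dictated by the right-down relocation, in the same way that the downward shift of Lemma \ref{recurrence1-weight3} forces $k=1$ throughout, and the opposite left-up shift of Lemma \ref{recurrence2-weight1} forces $k=2,0,2$.

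The main obstacle will be the bookkeeping of the origin shift. Each displacement of the origin rotates the color $i-j\bmod n$ and translates all weight exponents, so the delicate point is to verify that the right-down relocation produces precisely the argument pattern $(N-2,-1,-1),(N-1,0),(N-2,2),(N-1,0),(N,0),(N,1)$ and not a neighboring one; once a single reference horizontal dimer is located correctly, the remaining exponents follow by the same telescoping sums already established in Section 4.3.1. Multiplying the six factors and comparing with the displayed expression then completes the proof.
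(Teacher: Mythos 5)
Your proposal is correct and takes essentially the same route as the paper: the paper proves this lemma precisely by the argument you outline, namely repeating the sector-by-sector horizontal-dimer count of Section 4.3.1 for the configuration with origin shifted to the right-down face (the paper compresses this to ``the similar argument as in Section 4.3.1 shows that'') and then repackaging the result in the $\varpi$-notation. Your use of the identity $(\lambda^c)^\prime=(\lambda^\prime)^r$ from Lemma \ref{transpose}, your attribution of the factors $q^{\varpi_{1}(N-2,-1,-1)}$, $q^{\varpi_{2}((\lambda^\prime)^r,N-1,0)}\cdot q^{\varpi_{4}((\lambda^\prime)^r,N,0)}$, $\overline{q^{\varpi_{2}(\mu,N-2,2)}}\cdot q^{\varpi_{5}(\mu,N,1)}$, and $q^{\varpi_{3}(\nu^r,N-1,0)}$ to the base and the three sectors, and your bookkeeping of the shift-dependent arguments all agree with the paper's computation.
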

In this case, we have 
\ben
\frac{q^{\widetilde{\omega}_{\min}(\lambda^{rc},\mu,\nu;N)}\cdot q^{\widetilde{\omega}_{\min}(\lambda,\mu,\nu^{rc};N)}}{q^{\widetilde{\omega}_{\min}(\lambda,\mu,\nu;N)}\cdot q^{\widetilde{\omega}_{\min}(\lambda^{rc},\mu,\nu^{rc};N)}}=1
\een
and still require the following lemmas to derive the  graphical condensation recurrence \eqref{DT-recurrence2}.
\begin{lemma}\label{recurrence2-weight7}
For any partitions $\lambda,\nu\neq\emptyset$ and $\mu$, we have 
	\ben
	\frac{q^{\widetilde{\omega}_{\min}^{LU}(\lambda^{r},\mu,\nu^c;N)}\cdot q^{\widetilde{\omega}_{\min}^{RD}(\lambda^{c},\mu,\nu^{r};N)}}{q^{\widetilde{\omega}_{\min}(\lambda,\mu,\nu;N)}\cdot q^{\widetilde{\omega}_{\min}(\lambda^{rc},\mu,\nu^{rc};N)}}
	=q^{K_{2}(\lambda,\mu,\nu)}
	\een
where 
\ben
q^{K_{2}(\lambda,\mu,\nu)}:=q_{1-d(\lambda^\prime)}^{-\lambda^\prime_{d(\lambda^\prime)}}\cdot\prod\limits_{i=1}^{d(\lambda^\prime)-1}q_{-i}\cdot\prod\limits_{i=1}^{d(\lambda^\prime)-1}\left(\frac{q_{-i}}{q_{-i+1}}\right)^{\lambda^\prime_{i}}\cdot\prod\limits_{i=1}^{\ell(\mu)}\left(\frac{q_{i}}{q_{i-1}}\right)^{\mu_{i}}\cdot\prod\limits_{i=1}^{\nu_{d(\nu)}-d(\nu)}q_{i}^{-1}
\een
is independent of $N$.	
\end{lemma}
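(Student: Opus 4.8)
The plan is to follow the strategy used for Lemma~\ref{recurrence1-weight9}: substitute the six-factor factorizations of the four weights into the ratio and then regroup the product according to the six $\varpi$-families, so that each family contributes an independently computable ratio. Concretely, I would insert the expressions for $q^{\widetilde{\omega}_{\min}^{LU}(\lambda^r,\mu,\nu^c;N)}$ and $q^{\widetilde{\omega}_{\min}^{RD}(\lambda^c,\mu,\nu^r;N)}$ from Lemmas~\ref{recurrence2-weight1} and~\ref{recurrence2-weight2} into the numerator, and the two copies of Lemma~\ref{recurrence1-weight1} — evaluated at $(\lambda,\mu,\nu)$ and at $(\lambda^{rc},\mu,\nu^{rc})$, using $(\lambda^{rc})'=(\lambda')^{rc}$ from Lemma~\ref{transpose} — into the denominator. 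The ratio then splits as a product of six blocks, one for each of $\varpi_1$, the $\varpi_2$-block in $\lambda'$, the $\overline{\varpi_2}$-block in $\mu$, the $\varpi_3$-block in $\nu$, the $\varpi_4/\varpi_6$-block in $\lambda'$, and the $\varpi_5$-block in $\mu$.

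Two of these blocks require essentially no new work. The $\varpi_5$-block is
\[
\frac{q^{\varpi_5(\mu,N,1)}\cdot q^{\varpi_5(\mu,N,1)}}{q^{\varpi_5(\mu,N,1)}\cdot q^{\varpi_5(\mu,N,1)}}=1,
\]
since both the $LU$ and $RD$ weights carry $q^{\varpi_5(\mu,N,1)}$ and each denominator factor does as well — here $\mu$ is untouched, so no modified partitions enter. The $\varpi_4/\varpi_6$-block is exactly the quantity
\[
\frac{q^{\varpi_6((\lambda')^c,N)}\cdot q^{\varpi_4((\lambda')^r,N,0)}}{q^{\varpi_4(\lambda',N,1)}\cdot q^{\varpi_4((\lambda')^{rc},N,1)}},
\]
which is evaluated by Lemma~\ref{recurrence1-weight7} with $\eta=\lambda'$, producing a factor whose $N$-dependent part is designed to cancel against the $\varpi_1$-block.

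The remaining blocks — $\varpi_1$, $\varpi_2(\lambda')$, $\overline{\varpi_2}(\mu)$ and $\varpi_3(\nu)$ — are handled by elementary sub-lemmas analogous to Lemmas~\ref{recurrence1-weight4}, \ref{recurrence1-weight5} and~\ref{recurrence1-weight6}, but with the shifted arguments $(N,1,1)$ and $(N-2,-1,-1)$ appearing in $\varpi_1$ and the shifts $k=2,0,1,1$ together with $m=N+1,N-1,N,N$ appearing in $\varpi_3$. Each such sub-lemma is proved by inserting the explicit part-wise descriptions of $\eta^r$, $\eta^c$ and $\eta^{rc}$ from Lemma~\ref{modified-partition}, together with the diagonal-length data of Lemmas~\ref{length-relation}, \ref{diag-length} and Remark~\ref{special-value}, and then collapsing the iterated products; the degenerate cases $d(\eta)=1$ with $\eta_1=1$ or $\eta_1>1$ are treated separately exactly as in the proof of Lemma~\ref{recurrence1-weight5}. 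Summing the six blocks, all $N$-dependent contributions must telescope away, leaving the $N$-independent expression $q^{K_2(\lambda,\mu,\nu)}$.

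I expect the $\varpi_3(\nu)$-block to be the main obstacle. In the first recurrence the third leg $\nu$ was left unmodified, so its $\varpi_3$-contribution was the trivial telescoping of Lemma~\ref{recurrence1-weight6} (equal to $1$); here both $\lambda$ and $\nu$ are modified and the origin is shifted to the left-up and right-down faces, so the $\varpi_3$-block genuinely contributes the factor $\prod_{i=1}^{\nu_{d(\nu)}-d(\nu)}q_i^{-1}$ appearing in $K_2$. Verifying this requires carefully tracking how the simultaneous partition modification ($\nu^c$ paired with $(m,k)=(N+1,2)$ and $\nu^r$ with $(N-1,0)$) interacts with the $k$- and $m$-shifts, using $d(\nu)=d(\nu')$ and $\nu_{d(\nu)}=\widetilde{d}(\nu')$, and confirming that its $N$-dependent part cancels against the residual $N$-dependence coming from the $\varpi_1$-, $\varpi_2$- and $\varpi_4/\varpi_6$-blocks.
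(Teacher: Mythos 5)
Your proposal is correct and follows essentially the same route as the paper: the paper's proof is precisely the substitution of Lemmas \ref{recurrence1-weight1}, \ref{recurrence2-weight1}, \ref{recurrence2-weight2} followed by the block-by-block evaluation you describe, where your "elementary sub-lemmas with shifted arguments" for the $\varpi_{1}$-, $\varpi_{2}(\lambda')$-, $\overline{\varpi_{2}}(\mu)$- and $\varpi_{3}(\nu)$-blocks are exactly the paper's Lemmas \ref{recurrence2-weight3}, \ref{recurrence2-weight4}, \ref{recurrence2-weight5} and \ref{recurrence2-weight6}, the $\varpi_{4}/\varpi_{6}$-block is Lemma \ref{recurrence1-weight7} with $\eta=\lambda'$, and the $\varpi_{5}(\mu)$-block cancels trivially. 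Your identifications of the shifted arguments, of the source of the factor $\prod_{i=1}^{\nu_{d(\nu)}-d(\nu)}q_{i}^{-1}$ in the $\varpi_{3}(\nu)$-block, and of the cancellation of all $N$-dependent factors all match the paper's computation.
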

\begin{proof}
It follows from Lemmas \ref{recurrence1-weight1}, \ref{recurrence2-weight1},	\ref{recurrence2-weight2},		\ref{recurrence2-weight3},		\ref{recurrence2-weight4},		\ref{recurrence2-weight5},		\ref{recurrence2-weight6},	\ref{recurrence1-weight7}.
\end{proof}

The following lemmas follow from the similar argument in Section 4.3.1.

\begin{lemma}\label{recurrence2-weight3}
	\ben
	\frac{q^{\varpi_{1}(N,1,1)}\cdot q^{\varpi_{1}(N-2,-1,-1)}}{\left(q^{\varpi_{1}(N-1,0,0)}\right)^2}=q_{0}^{1-N}\cdot q_{N}^N\cdot\prod\limits_{i=1}^{N-1}q_{i}.
	\een
\end{lemma}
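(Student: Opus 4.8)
The plan is to recognize the left-hand side as a second-order multiplicative difference of a single one-parameter family, which then collapses by telescoping. The key observation is that in the three factors $q^{\varpi_{1}(N,1,1)}$, $q^{\varpi_{1}(N-1,0,0)}$ and $q^{\varpi_{1}(N-2,-1,-1)}$ the second and third arguments always coincide and equal $m-N+1$, where $m$ denotes the first argument. Hence, setting $G(m):=q^{\varpi_{1}(m,\,m-N+1,\,m-N+1)}$, the quantity to be evaluated is exactly $G(N)\,G(N-2)/G(N-1)^{2}$.

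First I would put $G(m)$ into a shift-free product form. From the definition $q^{\varpi_{1}(m,l,k)}=\prod_{i=0}^{m}\prod_{j=0}^{m-l}q_{j-i+k}^{m-i}$, substituting $l=k=m-N+1$ gives $G(m)=\prod_{i=0}^{m}\prod_{j=0}^{N-1}q_{j-i+m-N+1}^{m-i}$. For each fixed $i$ the inner subscripts $j-i+m-N+1$ with $0\le j\le N-1$ run through the $N$ consecutive integers from $m-i-N+1$ to $m-i$, each occurring once and all carrying the same exponent $m-i$. Reindexing by $p:=m-i$ therefore yields $G(m)=\prod_{p=0}^{m}R(p)^{p}$, where $R(p):=\prod_{t=p-N+1}^{p}q_{t}$ is the product of the $N$ consecutive variables ending at $p$. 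The telescoping is then immediate: $G(m)/G(m-1)=R(m)^{m}$, so that $G(N)\,G(N-2)/G(N-1)^{2}=R(N)^{N}/R(N-1)^{N-1}$. Finally I would evaluate $R(N)=q_{1}\cdots q_{N}$ and $R(N-1)=q_{0}\cdots q_{N-1}$ and compare exponents one variable at a time: $q_{N}$ survives only through $R(N)^{N}$ (exponent $N$), $q_{0}$ only through $R(N-1)^{N-1}$ in the denominator (exponent $-(N-1)=1-N$), and each $q_{i}$ with $1\le i\le N-1$ contributes $N-(N-1)=1$. This reproduces $q_{0}^{1-N}\cdot q_{N}^{N}\cdot\prod_{i=1}^{N-1}q_{i}$ exactly.

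The computation carries no real difficulty once the family $G(m)$ is identified; the only point requiring care is the standing convention $q_{k}=q_{k'}$ for $k'\equiv k\bmod n$. I would carry out all of the above with subscripts treated as honest integers, working in the Laurent polynomial ring on formal variables indexed by $\mathbb{Z}$, and then observe that the desired identity descends along the ring homomorphism collapsing subscripts modulo $n$, since both sides are images of the integer-indexed expressions under this map. As a sanity check one can instead mirror the exponent-bookkeeping used for Lemma \ref{recurrence1-weight4}, directly summing $(m-i)$ over the admissible ranges of $i$ contributing to each $q_{s}$; the telescoping route above is preferable mainly because it sidesteps the endpoint case analysis that such a direct count would require.
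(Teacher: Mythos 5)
Your proof is correct. I verified the key identification: with $G(m):=q^{\varpi_{1}(m,\,m-N+1,\,m-N+1)}$ one indeed has $G(N)=q^{\varpi_{1}(N,1,1)}$, $G(N-1)=q^{\varpi_{1}(N-1,0,0)}$, $G(N-2)=q^{\varpi_{1}(N-2,-1,-1)}$; since $m-l=N-1$ in all three cases the inner index $j$ always runs over exactly $N$ values, so the reindexing $p=m-i$ gives $G(m)=\prod_{p=0}^{m}R(p)^{p}$ with $R(p)=\prod_{t=p-N+1}^{p}q_{t}$, and the telescoping $G(m)/G(m-1)=R(m)^{m}$ together with $R(N)=q_{1}\cdots q_{N}$, $R(N-1)=q_{0}\cdots q_{N-1}$ reproduces $q_{0}^{1-N}q_{N}^{N}\prod_{i=1}^{N-1}q_{i}$ exactly. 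For comparison: the paper never writes out a proof of this lemma; it is asserted to follow ``from the similar argument in Section 4.3.1,'' meaning a direct expansion and exponent count on the defining products of the kind carried out for Lemmas \ref{recurrence1-weight5} and \ref{recurrence1-weight7}. Your argument performs that same computation but organizes it through the one-parameter family $G(m)$ and the telescoping relation, which is what collapses the quotient to $R(N)^{N}/R(N-1)^{N-1}$ in one step and avoids the endpoint bookkeeping a raw expansion of the three double products would require. Note that this shortcut is specific to the present lemma: it exploits the fact that $l=k=m-N+1$ makes the window width constant, which fails for the companion Lemma \ref{recurrence1-weight4} where $l=k=0$ throughout. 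Your treatment of the convention $q_{k}=q_{k'}$ for $k'\equiv k \bmod n$ (proving the identity over integer-indexed variables and pushing forward along the specialization) is also the right way to make the manipulation unambiguous.
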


\begin{lemma}\label{recurrence2-weight4} 
For any partition $\eta\neq\emptyset$, we have
	\ben
	\frac{q^{\varpi_{2}(\eta^c,N-1,2)}q^{\varpi_{2}(\eta^r,N-1,0)}}{q^{\varpi_{2}(\eta,N-1,1)}q^{\varpi_{2}(\eta^{rc},N-1,1)}}
	=q_{1-d(\eta)}^{-\eta_{d(\eta)}}\cdot\prod\limits_{i=1}^{d(\eta)-1}\left(\frac{q_{-i}}{q_{-i+1}}\right)^{\eta_{i}}\cdot\frac{\prod\limits_{i=1}^{d(\eta)}q_{N-i+1}^{\eta_{i}-1}\cdot\prod\limits_{i=1}^{d(\eta)-1}q_{-i}}{\prod\limits_{i=1}^{d(\eta)-1}q_{N-i}^{\eta_{i}}\cdot\prod\limits_{i=1}^{N-1}q_{i}}.
	\een
\end{lemma}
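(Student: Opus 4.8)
The plan is to prove this identity in close parallel with Lemma~\ref{recurrence1-weight5}, the difference being that in Lemma~\ref{recurrence1-weight5} the second argument $m$ of $q^{\varpi_{2}}$ varies while the shift $k$ stays fixed at $1$, whereas here $m=N-1$ is constant and it is the shift $k$ that runs through $2,0,1,1$. Accordingly I would begin by unfolding the definition $q^{\varpi_{2}(\eta,m,k)}=\prod_{i=1}^{\ell(\eta)}\prod_{j=0}^{m}q_{j-i+k}^{\eta_{i}}$ and substituting the explicit part-descriptions of $\eta^{r}$, $\eta^{c}$ and $\eta^{rc}$ from Lemma~\ref{modified-partition}. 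I would then group the four factors into the two ratios $q^{\varpi_{2}(\eta^{c},N-1,2)}/q^{\varpi_{2}(\eta,N-1,1)}$ and $q^{\varpi_{2}(\eta^{r},N-1,0)}/q^{\varpi_{2}(\eta^{rc},N-1,1)}$, treat each separately, and multiply the results at the end.

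For the first ratio, the parts of $\eta^{c}$ with index $i>\widetilde{d}(\eta)+1$ equal $\eta_{i-1}$, so the reindexing $i\mapsto i-1$ converts the shift $k=2$ into $k=1$ and, because both factors share the same $j$-range $0\le j\le N-1$, makes this tail match and cancel the tail $i>\widetilde{d}(\eta)$ of $q^{\varpi_{2}(\eta,N-1,1)}$ exactly; what remains is a finite product over $1\le i\le\widetilde{d}(\eta)$ in which each $\eta_{i}$-factor meets an $(\eta_{i}-1)$-factor shifted by one index, together with the boundary contribution $\prod_{j=0}^{N-1}q_{j-\widetilde{d}(\eta)+1}^{d(\eta)-1}$ coming from the extra part $\eta^{c}_{\widetilde{d}(\eta)+1}=d(\eta)-1$. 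For the second ratio I would use $\eta^{r}_{i}=\eta_{i}+1$ for $i<d(\eta)$ and $\eta^{r}_{i}=\eta_{i+1}$ for $i\ge d(\eta)$, together with the flat block $\eta^{rc}_{i}=d(\eta)-1$ for $d(\eta)\le i\le\widetilde{d}(\eta)$; again the reindexing $i\mapsto i-1$ in the tail of $\eta^{r}$ absorbs the shift $k=0$ back to $k=1$ and cancels the matching tail of $\eta^{rc}$, leaving a finite product over $1\le i\le d(\eta)$ once the flat block has been accounted for via Remark~\ref{special-value}. Multiplying the two reduced expressions and collecting the powers of $q_{N-i}$, $q_{N-i+1}$, $q_{-i}$ and $q_{-i+1}$ should produce the stated right-hand side.

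As in Lemma~\ref{recurrence1-weight5} I would organize the computation into the three cases $d(\eta)>1$, $d(\eta)=1$ with $\eta_{1}>1$, and $d(\eta)=1$ with $\eta_{1}=1$, reading off the lengths $\ell(\eta^{r}),\ell(\eta^{c}),\ell(\eta^{rc})$ from Lemma~\ref{length-relation} and the shapes of the modified partitions from Remark~\ref{special-value}, and checking that the uniform right-hand side specializes correctly in the degenerate situations, in particular when $\eta^{c}=\emptyset$. Within the case $d(\eta)>1$ a further split according to whether $d(\eta^{r})=d(\eta)$ or $d(\eta^{r})=d(\eta)-1$ (Lemma~\ref{diag-length}) governs the size of the flat block and hence the exponents surviving there. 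The main obstacle I anticipate is purely bookkeeping: because the cancellation now comes from the variation of $k$ rather than of $m$, one must carefully track how the single-index shift interacts with the reindexing $i\mapsto i-1$ and with the boundary indices near $i=d(\eta)$ and $i=\widetilde{d}(\eta)$, which is precisely where exponent errors tend to arise; as a safeguard I would verify the final exponents by setting all $q_{l}=q$ and comparing against the corresponding $n=1$ reduction.
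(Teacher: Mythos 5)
Your proposal is correct and is essentially the paper's own argument: the paper proves this lemma by the same computation as Lemma~\ref{recurrence1-weight5} in Section~4.3.1, i.e., splitting into the ratios $q^{\varpi_{2}(\eta^{c},N-1,2)}/q^{\varpi_{2}(\eta,N-1,1)}$ and $q^{\varpi_{2}(\eta^{r},N-1,0)}/q^{\varpi_{2}(\eta^{rc},N-1,1)}$, substituting Lemma~\ref{modified-partition}, cancelling the reindexed tails (here exactly, as you note, since the $j$-ranges coincide and the shift in $k$ compensates the index shift), and running the three cases $d(\eta)>1$, $d(\eta)=1$ with $\eta_{1}>1$, and $d(\eta)=1$ with $\eta_{1}=1$. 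Two harmless inaccuracies: in the tail of $\eta^{r}$, where $\eta^{r}_{i}=\eta_{i+1}$, the reindexing that restores the shift $k=1$ is $i\mapsto i+1$ (not $i\mapsto i-1$), and the further split on $d(\eta^{r})$ is unnecessary because the flat-block factors of the second ratio cancel uniformly against the $\widetilde{d}(\eta)$-dependent factors of the first, which is precisely why the right-hand side involves only $d(\eta)$.
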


\begin{lemma} \label{recurrence2-weight5}
For any partition $\eta$, we have
	\ben
	\frac{q^{\varpi_{2}(\eta,N,0)}q^{\varpi_{2}(\eta,N-2,2)}}{\left(q^{\varpi_{2}(\eta,N-1,1)}\right)^2}=\prod\limits_{i=1}^{\ell(\eta)}\left(\frac{q_{-i}}{q_{-i+1}}\right)^{\eta_{i}}.
	\een
\end{lemma}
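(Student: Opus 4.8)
The plan is to verify the identity by a direct exponent-counting (telescoping) argument, in the same spirit as Lemma \ref{recurrence1-weight6}. First I would unwind the definition of $\varpi_{2}$: for each part $i$ of $\eta$ the factor $q^{\varpi_{2}(\eta,m,k)}$ contributes $\prod_{j=0}^{m}q_{j-i+k}^{\eta_{i}}$, that is, it places the exponent $\eta_{i}$ on each of the $m+1$ consecutive indices $-i+k,\,-i+k+1,\dots,\,m-i+k$. The key observation is that the three pairs $(m,k)$ occurring in the statement, namely $(N,0)$, $(N-2,2)$ and $(N-1,1)$, all share the common top index $m-i+k=N-i$ for every $i$, and differ only in their bottom indices $-i$, $-i+2$ and $-i+1$ respectively.

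Next, for a fixed part $i$ I would compute the net exponent of an arbitrary variable $q_{s}$ in the left-hand ratio. Because the three index intervals $[-i,\,N-i]$, $[-i+2,\,N-i]$ and $[-i+1,\,N-i]$ agree on their common top, all contributions cancel on the overlap $[-i+2,\,N-i]$, where the net exponent is $\eta_{i}(1+1-2)=0$. The only surviving indices lie at the bottom: at $s=-i$ the net exponent is $\eta_{i}(1+0-0)=\eta_{i}$, and at $s=-i+1$ it is $\eta_{i}(1+0-2)=-\eta_{i}$, while everything vanishes for $s<-i$ or $s>N-i$. Hence the part $i$ contributes exactly $q_{-i}^{\eta_{i}}\,q_{-i+1}^{-\eta_{i}}$ to the ratio.

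Finally, taking the product over $i=1,\dots,\ell(\eta)$ gives $\prod_{i=1}^{\ell(\eta)}\bigl(q_{-i}/q_{-i+1}\bigr)^{\eta_{i}}$, which is the asserted formula; the case $\eta=\emptyset$ is the trivial identity $1=1$. I expect no genuine obstacle here, since the statement uses neither the diagonal lengths $d(\eta),\widetilde{d}(\eta)$ nor any case split on $n$, unlike the neighbouring lemmas, so that the only point requiring care is the boundary bookkeeping at the two indices $s=-i$ and $s=-i+1$. I would also note that the whole computation is carried out before imposing the identification $q_{k}=q_{k'}$ for $k'\equiv k \bmod n$, so that reducing indices modulo $n$ on both sides at the very end preserves the equality.
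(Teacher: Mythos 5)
Your proof is correct: the exponent-counting argument --- observing that the three subscript intervals $[-i,N-i]$, $[-i+2,N-i]$, $[-i+1,N-i]$ share the common top index $N-i$ and differ only at the bottom, so that each part $i$ contributes exactly $q_{-i}^{\eta_i}q_{-i+1}^{-\eta_i}$ after cancellation --- is precisely the direct product manipulation the paper intends when it states this lemma without proof as following ``from the similar argument in Section 4.3.1.'' There is no difference in approach; the paper simply omits the computation, and your boundary bookkeeping (including the trivial case $\eta=\emptyset$ and the compatibility with the identification $q_{k}=q_{k'}$ for $k'\equiv k \bmod n$) fills it in correctly.
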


\begin{lemma} \label{recurrence2-weight6}
	For any partition $\eta\neq\emptyset$, we have
	\ben
	\frac{q^{\varpi_{3}(\eta^c,N+1,2)}q^{\varpi_{3}(\eta^r,N-1,0)}}{q^{\varpi_{3}(\eta,N,1)}q^{\varpi_{3}(\eta^{rc},N,1)}}=q_{0}^{N-1}\cdot\prod\limits_{i=1}^{\eta_{d(\eta)}-d(\eta)}q_{i}^{-1}.
	\een
\end{lemma}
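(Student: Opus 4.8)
The plan is to prove the identity by direct computation, following the template of the proof of Lemma \ref{recurrence1-weight5}, since the modification of the $\nu$-leg occurring in the recurrence with $\mathbf{G}=\mathbf{H}(N;\lambda^{rc},\mu,\nu^{rc})$ plays exactly the role that the modification of the $\lambda$- and $\mu$-legs played in Section 4.3.1. The first and most useful observation is that in $q^{\varpi_{3}(\eta,m,k)}=\prod_{i=1}^{\ell(\eta)}\prod_{j=0}^{\eta_{i}-1}q_{j-i+k}^{-m+i}$ the exponent $-m+i$ depends only on the row index $i$, so each factor splits over rows as $q^{\varpi_{3}(\eta,m,k)}=\prod_{i}P_{i}(\eta_{i},k)^{\,i-m}$, where $P_{i}(s,k):=q_{k-i}q_{k-i+1}\cdots q_{k-i+s-1}$ is the product of the colors in row $i$. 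This reduces the statement to understanding how the row products of $\eta^{c}$, $\eta^{r}$ and $\eta^{rc}$ compare with those of $\eta$ under the simultaneous shifts of base exponent ($N+1,N-1$ versus $N,N$) and of color index ($k=2,0$ versus $k=1,1$).

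Next I would feed in the explicit part formulas of Lemma \ref{modified-partition}. On the ranges where these are simplest, namely $i<d(\eta)$ for $\eta^{r}$ and $\eta^{rc}$ and $i\le\widetilde{d}(\eta)$ for $\eta^{c}$, the change of color index compensates the change of part size and produces the clean identities $P_{i}(\eta^{c}_{i},2)=P_{i}(\eta_{i},1)\,q_{1-i}^{-1}$, $P_{i}(\eta^{r}_{i},0)=P_{i}(\eta_{i},1)\,q_{-i}$ and $P_{i}(\eta^{rc}_{i},1)=P_{i}(\eta_{i},1)$. Substituting these and collecting the four factors row by row, the base-exponent arithmetic $(i-N-1)+(i-N+1)-(i-N)-(i-N)=0$ makes the whole $P_{i}(\eta_{i},1)$ piece drop out of every row of the simple range, leaving only the correction factors $q_{1-i}^{\,N+1-i}$ and $q_{-i}^{\,i-N+1}$; these then partially telescope across adjacent rows, so that only boundary contributions at $i=1$ and near the diagonal survive.

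The delicate part, and the main obstacle, is the diagonal region $d(\eta)\le i\le\widetilde{d}(\eta)+1$, where Lemma \ref{modified-partition} assigns $\eta^{c}_{i}$ and $\eta^{rc}_{i}$ the flat value $d(\eta)-1$ and shifts the index of $\eta^{r}$, so the clean row-product identities above fail and the rows must be compared by hand. Here I would use Remark \ref{special-value} and Lemma \ref{diag-length} to pin down $\widetilde{d}(\eta)$ and the fact that $\eta_{i}=d(\eta)$ throughout this range, and then carefully gather the uncancelled exponents; this is precisely where the $N$-dependent factor $q_{0}^{N-1}$ and the finite product $\prod_{i=1}^{\eta_{d(\eta)}-d(\eta)}q_{i}^{-1}$ (coming from the arm of the diagonal box in row $d(\eta)$) are produced. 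Finally I would dispatch the degenerate cases of Remark \ref{special-value} separately, namely $d(\eta)=1$ with $\eta_{1}>1$ and $d(\eta)=1$ with $\eta_{1}=1$; in these cases some of the modified partitions become empty by Lemma \ref{length-relation}, reducing the corresponding factors to $1$, and a short direct reindexing of the remaining ratio again yields $q_{0}^{N-1}$ with empty diagonal product, consistently with the claimed formula. Assembling the three ranges together with the special cases completes the proof.
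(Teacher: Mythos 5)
Your overall strategy is the same as the paper's: the paper disposes of this lemma by the direct-computation template of Section 4.3.1 (split into the cases $d(\eta)>1$, $d(\eta)=1$ with $\eta_{1}>1$, $d(\eta)=1$ with $\eta_{1}=1$, substitute the part formulas of Lemma \ref{modified-partition}, and simplify), and your row-product bookkeeping $P_{i}(s,k)$ together with the three clean identities and the exponent arithmetic on the range $i<d(\eta)$ is a correct repackaging of exactly that computation. However, as written your plan has two concrete defects. First, your decomposition of the rows into the simple range $i<d(\eta)$ and the diagonal region $d(\eta)\le i\le\widetilde{d}(\eta)+1$ omits the tail $i>\widetilde{d}(\eta)+1$, where $\eta^{c}_{i}=\eta_{i-1}$, $\eta^{r}_{i}=\eta_{i+1}$, $\eta^{rc}_{i}=\eta_{i}$; since $\ell(\eta^{c})=\ell(\eta)+1$ when $d(\eta)>1$, these rows genuinely contribute whenever $\widetilde{d}(\eta)<\ell(\eta)$. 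For example, for $\eta=(3,3,1)$ (so $d(\eta)=\widetilde{d}(\eta)=2$) the row $i=4$ contributes $P_{4}(\eta^{c}_{4},2)^{4-N-1}=q_{-2}^{3-N}$, and without it the product over rows $1\le i\le 3$ equals $q_{0}^{N-1}q_{1}^{-1}q_{-2}^{N-3}$ rather than the correct $q_{0}^{N-1}q_{1}^{-1}$: the tail rows (handled via the reindexing $P_{i}(s,2)=P_{i-1}(s,1)$, $P_{i}(s,0)=P_{i+1}(s,1)$ and a second telescoping) are precisely what cancels the residual $q_{-i}$ powers. So the assembly you describe would not close up.

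Second, your treatment of the degenerate case $d(\eta)=1$ with $\eta_{1}>1$ is incorrect: you assert it ``again yields $q_{0}^{N-1}$ with empty diagonal product,'' but in that case the lemma's right-hand side is $q_{0}^{N-1}\prod_{i=1}^{\eta_{1}-1}q_{i}^{-1}$, which is non-empty. Concretely, for $\eta=(2)$ one has $\eta^{c}=(1)$ and $\eta^{r}=\eta^{rc}=\emptyset$ by Lemmas \ref{modified-partition} and \ref{length-relation}, so the left-hand side is $q^{\varpi_{3}((1),N+1,2)}/q^{\varpi_{3}((2),N,1)}=q_{1}^{-N}/(q_{0}q_{1})^{1-N}=q_{0}^{N-1}q_{1}^{-1}$; the arm factor survives, contradicting the outcome you claim. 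Both defects are repairable inside your framework (include the tail range, and redo the $d(\eta)=1$, $\eta_{1}>1$ case keeping the arm product), but as stated the proposal does not prove the lemma.
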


\subsubsection{Weights for the graphical condensation recurrence with $\mathbf{G}=\mathbf{H}(N;\lambda,\mu^{rc},\nu^{rc})$}
Again, by the similar argument as in Section 4.3.1 we have
\begin{lemma}\label{recurrence3-weight1}
The minimal dimer configuration of $\mathbf{H}(N;\lambda,\mu^{rc},\nu^{rc})-\{a,d\}$ has the weight\\
$q^{\omega_{\min}^{LD}(\lambda,\mu^r,\nu^c;N)}:=q^{\widetilde{\omega}_{\min}^{LD}(\lambda,\mu^r,\nu^c;N)}\cdot\prod\limits_{l=0}^{n-1}q_{l}^{-|\mathrm{II}(\lambda,\mu^r,\nu^c)|_{l}}\cdot q_{l}^{-2|\mathrm{III}(\lambda,\mu^r,\nu^c)|_{l}}$, where
\ben
&&q^{\widetilde{\omega}^{LD}_{\min}(\lambda,\mu^r,\nu^c;N)}\\
&=&\prod_{i=0}^{N-1}\prod_{j=0}^{N-2}q_{j-i+1}^{N-i-1}\cdot\prod_{i=1}^{\ell(\lambda^\prime)}\prod_{j=0}^{N-2}q_{j-i+2}^{\lambda^\prime_{i}}\cdot\prod_{i=1}^{\ell(\mu^r)}\prod_{j=0}^{N-1}q_{i-j}^{\mu^r_{i}}\cdot\prod_{i=1}^{\ell(\nu^c)}\prod_{j=0}^{\nu^c_{i}-1}q_{j-i+2}^{N}\cdot\prod_{i=1}^{\ell(\nu^c)}\prod_{j=0}^{\nu^c_{i}-1}q_{j-i+2}^{-2N+i}\\
&&\times\prod_{i:1\leq i\leq \lambda^\prime_{i}}\prod_{j=1}^{i-1}q_{N+j-i}^{N+\lambda^\prime_{i}-i}\cdot\prod_{i:\lambda^\prime_{i}<i\leq\ell(\lambda^\prime)}\prod_{j=1}^{\lambda^\prime_{i}}q_{N+j-i}^{N+\lambda^\prime_{i}-i}\\
&&\times\prod_{i:1\leq i\leq\mu^r_{i}}\prod_{j=1}^{i}q_{i-j-N+1}^{\mu^r_{i}-j}\cdot\prod_{i:\mu^r_{i}<i\leq\ell(\mu^r)}\prod_{j=1}^{\mu^r_{i}}q_{i-j-N+1}^{\mu^r_{i}-j}\\
&=&q^{\varpi_{1}(N-1,1,1)}\cdot q^{\varpi_{2}(\lambda^\prime,N-2,2)}\cdot\overline{q^{\varpi_{2}(\mu^r,N-1,0)}}\cdot q^{\varpi_{3}(\nu^c,N,2)}\cdot  q^{\varpi_{4}(\lambda^\prime,N,1)}\cdot q^{\varpi_{5}(\mu^r,N,0)}.
\een

\end{lemma}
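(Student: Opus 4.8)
The plan is to mirror the sector-by-sector dimer count of Section 4.3.1 (Lemmas \ref{recurrence1-weight1}, \ref{recurrence1-weight2}, \ref{recurrence1-weight3}) and of Lemma \ref{recurrence2-weight1}, now applied to the graph $\mathbf{H}(N;\lambda,\mu^{rc},\nu^{rc})-\{a,d\}$, whose perfect matchings are in bijection with the 3D partitions asymptotic to $(\lambda,\mu^r,\nu^c)$ read off with the origin translated to the left-down neighbour of the central face of $\mathbf{H}(N)$. Under the weighted rule of Definition \ref{weight-rule}, the weight of the minimal such matching factors into a base contribution from the translated empty floor together with three independent sector contributions, and the task is to identify each factor with the corresponding $\varpi_i$ monomial.

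First I would isolate the base weight produced by the horizontal dimers of the translated empty configuration. Because the origin has been moved to the left-down neighbour of the central hexagon, the surviving horizontal edges of the bottom floor are reindexed so that their product becomes $q^{\varpi_{1}(N-1,1,1)}=\prod_{i=0}^{N-1}\prod_{j=0}^{N-2}q_{j-i+1}^{N-1-i}$; compared with the unshifted base $q^{\varpi_{1}(N-1,0,0)}$ of Lemma \ref{recurrence1-weight1} this records both the subscript offset $q_{j-i}\mapsto q_{j-i+1}$ and the truncation of the $j$-range by one unit that the left-down translation induces.

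Next I would process the three sectors as in Section 4.3.1. In sector 1 the unmodified leg $\lambda$ contributes, through $\lambda^\prime$, the extra horizontal dimers assembled into $q^{\varpi_{2}(\lambda^\prime,N-2,2)}\cdot q^{\varpi_{4}(\lambda^\prime,N,1)}$, using the same dichotomy $\lambda^\prime_{i}\ge i$ versus $\lambda^\prime_{i}<i$ that separates the $\varpi_{2}$ and $\varpi_{4}$ pieces there. In sector 2 the leg $\mu^r$ contributes $\overline{q^{\varpi_{2}(\mu^r,N-1,0)}}\cdot q^{\varpi_{5}(\mu^r,N,0)}$, the overline encoding the exchange $q_{k}\leftrightarrow q_{-k}$ dictated by the reflected indexing of sector 2. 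In sector 3 the horizontal dimers that must be deleted to create the leg $\nu^c$ carry the reciprocal weight $q^{\varpi_{3}(\nu^c,N,2)}$. Multiplying the base by these factors and collecting terms yields the stated product for $q^{\widetilde{\omega}^{LD}_{\min}(\lambda,\mu^r,\nu^c;N)}$, after which the passage to $q^{\omega^{LD}_{\min}(\lambda,\mu^r,\nu^c;N)}$ is the purely formal insertion of $\prod_{l=0}^{n-1}q_{l}^{-|\mathrm{II}(\lambda,\mu^r,\nu^c)|_{l}}\cdot q_{l}^{-2|\mathrm{III}(\lambda,\mu^r,\nu^c)|_{l}}$, exactly as in the corresponding clause of Lemma \ref{recurrence2-weight1}.

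I expect the main obstacle to be the careful bookkeeping of the left-down shift. Unlike the strictly vertical translations of Section 4.3.1, which change only the size parameter $m$ of $\varpi_{1}$, the diagonal left-down step simultaneously shifts the subscript offset (to $k=1$) and the column range (to $l=1$) while leaving the size at $m=N-1$; one must verify that these three adjustments are mutually consistent and, crucially, that the resulting asymptotics are the triple $(\lambda,\mu^r,\nu^c)$ rather than the graph-defining triple $(\lambda,\mu^{rc},\nu^{rc})$, the discrepancy being absorbed precisely by the charge-shifted Maya diagrams of the modified partitions $\mu^r$ and $\nu^c$ described in Lemma \ref{modified-partition}.
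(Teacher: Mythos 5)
Your proposal is correct and follows essentially the same route as the paper: the paper proves this lemma simply by invoking "the similar argument as in Section 4.3.1," i.e. the sector-by-sector count of horizontal dimers under the weighted rule of Definition \ref{weight-rule}, adjusted for the left-down origin shift, which is precisely what you carry out, and your identifications of the base factor $q^{\varpi_{1}(N-1,1,1)}$ and of the sector contributions $q^{\varpi_{2}(\lambda^\prime,N-2,2)}\cdot q^{\varpi_{4}(\lambda^\prime,N,1)}$, $\overline{q^{\varpi_{2}(\mu^r,N-1,0)}}\cdot q^{\varpi_{5}(\mu^r,N,0)}$, $q^{\varpi_{3}(\nu^c,N,2)}$ all match the stated formula. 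The only cosmetic slip is your phrase that the dichotomy $\lambda^\prime_{i}\ge i$ versus $\lambda^\prime_{i}<i$ "separates the $\varpi_{2}$ and $\varpi_{4}$ pieces" — in fact $\varpi_{2}$ is the cylinder weight with no dichotomy, and the case split lives entirely inside $\varpi_{4}$ (and $\varpi_{5}$), determining whether the $i$-th part contributes $i-1$ or $\lambda^\prime_{i}$ extra horizontal dimers.
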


\begin{lemma}\label{recurrence3-weight2}
The minimal dimer configuration of $\mathbf{H}(N;\lambda,\mu^{rc},\nu^{rc})-\{b,c\}$ has the weight\\
$q^{\omega_{\min}^{RU}(\lambda,\mu^c,\nu^r;N)}:=q^{\widetilde{\omega}_{\min}^{RU}}(\lambda,\mu^c,\nu^r;N)\cdot\prod\limits_{l=0}^{n-1}q_{l}^{-|\mathrm{II}(\lambda,\mu^c,\nu^r)|_{l}}\cdot q_{l}^{-2|\mathrm{III}(\lambda,\mu^c,\nu^r)|_{l}}$, where
\ben
&&q^{\widetilde{\omega}^{RU}_{\min}(\lambda,\mu^c,\nu^r;N)}\\
&=&\prod_{i=0}^{N-1}\prod_{j=0}^{N}q_{j-i-1}^{N-i-1}\cdot\prod_{i=1}^{\ell(\lambda^\prime)}\prod_{j=0}^{N}q_{j-i}^{\lambda^\prime_{i}}\cdot\prod_{i=1}^{\ell(\mu^c)}\prod_{j=0}^{N-1}q_{i-j-2}^{\mu^c_{i}}\cdot\prod_{i=1}^{\ell(\nu^r)}\prod_{j=0}^{\nu^r_{i}-1}q_{j-i}^{N-1}\cdot\prod_{i=1}^{\ell(\nu^r)}\prod_{j=0}^{\nu^r_{i}-1}q_{j-i}^{-2N+i+1}\\
&&\times\prod_{i:1\leq i\leq\lambda_{i}^\prime}\prod_{j=1}^{i-1}q^{N+\lambda_{i}^\prime-i}_{N-i+j}\cdot\prod_{i:\lambda_{i}^\prime<i\leq\ell(\lambda^\prime)}\prod_{j=1}^{\lambda_{i}^\prime}q_{N-i+j}^{N+\lambda_{i}^\prime-i}\\
&&\times\left(q_{-N}^{\mu^c_{1}}\right)^{-1}\cdot\prod_{i:2\leq i\leq \mu^c_{i}+1}\prod_{j=1}^{i-2}q_{i-j-N-1}^{\mu^c_{i}-j}\cdot\prod_{i:\mu^c_{i}+1<i\leq\ell(\mu^c)}\prod_{j=1}^{\mu^c_{i}}q_{i-j-N-1}^{\mu^c_{i}-j}\\
&=&q^{\varpi_{1}(N-1,-1,-1)}\cdot q^{\varpi_{2}(\lambda^\prime,N,0)}\cdot\overline{q^{\varpi_{2}(\mu^c,N-1,2)}}\cdot q^{\varpi_{3}(\nu^r,N,0)}\cdot q^{\varpi_{4}(\lambda^\prime,N,1)}\cdot q^{\varpi_{7}(\mu^c,N)}.
\een
\end{lemma}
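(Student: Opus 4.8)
The plan is to reproduce, for the right-up--shifted graph $\mathbf{H}(N;\lambda,\mu^{rc},\nu^{rc})-\{b,c\}$, the same explicit dimer count that produced Lemma \ref{recurrence1-weight1} and its shifted variants Lemmas \ref{recurrence1-weight2} and \ref{recurrence1-weight3}. First I would invoke the combinatorial description already recorded in Section 4.2: with $b=\min S_2^+$ and $c=\max S_3^-$, the graph $\mathbf{G}-\{b,c\}$ is built from the Maya diagram of $\lambda$, the charge $1$ Maya diagram of $\mu^c$, and the charge $-1$ Maya diagram of $\nu^r$, and its dimer configurations are in bijection with 3D partitions asymptotic to $(\lambda,\mu^c,\nu^r)$ once the origin of $\mathbb{Z}^3$ is moved to the face on the right-up side of the central face of $\mathbf{H}(N)$. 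The minimal such configuration corresponds to $\pi_{\min}(\lambda,\mu^c,\nu^r)$, so its weight is exactly $q^{\omega_{\min}^{RU}(\lambda,\mu^c,\nu^r;N)}$, the quantity to be evaluated.

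Next I would assemble this weight from a base term and the three sector contributions, exactly as in the paragraph preceding Lemma \ref{recurrence1-weight1}. The right-up displacement of the origin turns the horizontal-dimer product of the empty configuration into $q^{\varpi_1(N-1,-1,-1)}$, the shift parameters $(-1,-1)$ recording the new column and row indices. Since sector $1$ still carries the unmodified $\lambda$, its leg contributes $q^{\varpi_2(\lambda',N,0)}$ and the extra horizontal dimers forced by the parts of $\lambda'$ contribute $q^{\varpi_4(\lambda',N,1)}$, the bookkeeping being identical to sector $1$ of Lemma \ref{recurrence1-weight1} up to the shifted column label. In sector $2$ the partition is the charge-modified $\mu^c$, so its leg supplies the overlined factor $\overline{q^{\varpi_2(\mu^c,N-1,2)}}$ (the overline recording the reversed orientation of sector $2$, i.e.\ the exchange $q_k\leftrightarrow q_{-k}$) while its excess dimers, whose leading part acquires the prefactor $(q_{-N}^{\mu^c_1})^{-1}$ from the deleted charge-$1$ vertex, assemble into $q^{\varpi_7(\mu^c,N)}$. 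Finally sector $3$, now carrying $\nu^r$, contributes the floor term $q^{\varpi_3(\nu^r,N,0)}$ accounting for the horizontal dimers removed relative to the empty configuration; here the two sector-$3$ products combine since $(N-1)+(-2N+i+1)=-(N-i)$ is exactly the exponent in $\varpi_3(\nu^r,N,0)$. Multiplying the six factors yields the explicit product in the statement, and the compact $\varpi$-form is then read off directly from the definitions of $\varpi_1,\dots,\varpi_7$ in Section 4.3; the passage from $q^{\omega_{\min}^{RU}}$ to $q^{\widetilde{\omega}_{\min}^{RU}}$ is merely the removal of the normalization $\prod_{l}q_l^{-|\mathrm{II}|_l}q_l^{-2|\mathrm{III}|_l}$, precisely as in the preceding lemmas.

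The hard part will be the index arithmetic introduced by the right-up shift rather than any conceptual difficulty. One must verify that after relabelling through the shifted coordinates and the diagonal identification $k\equiv i-j\ (\mathrm{mod}\ n)$ of Definition \ref{weight-rule}, the base parameters are exactly $(N-1,-1,-1)$, the two $\lambda'$ factors land on $(N,0)$ and $(N,1)$, the two $\mu^c$ factors on $(N-1,2)$ and $N$ with the prefactor $(q_{-N}^{\mu^c_1})^{-1}$ correctly placed, and the $\nu^r$ factor on $(N,0)$. Since this is the same sort of computation already carried out in Lemmas \ref{recurrence1-weight2} and \ref{recurrence1-weight3}, with the transposition identities of Lemma \ref{transpose} available whenever a conjugate such as $(\mu^c)'=(\mu')^r$ must be matched, the lemma follows by the same argument with no new input.
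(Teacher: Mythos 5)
Your proposal is correct and takes essentially the same approach as the paper: the paper's proof of this lemma is literally ``the similar argument as in Section 4.3.1,'' i.e.\ the sector-by-sector dimer count (base term plus leg and excess-dimer contributions, adjusted for the right-up shift of the origin) that you carry out. Your factor identifications --- $q^{\varpi_{1}(N-1,-1,-1)}$ for the base, $q^{\varpi_{2}(\lambda^\prime,N,0)}$ and $q^{\varpi_{4}(\lambda^\prime,N,1)}$ for sector 1, $\overline{q^{\varpi_{2}(\mu^c,N-1,2)}}$ and $q^{\varpi_{7}(\mu^c,N)}$ for sector 2, and $q^{\varpi_{3}(\nu^r,N,0)}$ for sector 3, together with the exponent check $(N-1)+(-2N+i+1)=-N+i$ --- all agree with the stated formula.
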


Now we have
\ben
\frac{q^{\widetilde{\omega}_{\min}(\lambda,\mu^{rc},\nu;N)}\cdot q^{\widetilde{\omega}_{\min}(\lambda,\mu,\nu^{rc};N)}}{q^{\widetilde{\omega}_{\min}(\lambda,\mu,\nu;N)}\cdot q^{\widetilde{\omega}_{\min}(\lambda,\mu^{rc},\nu^{rc};N)}}=1
\een
and still demand the following lemmas to achieve the recurrence \eqref{DT-recurrence3}.
\begin{lemma}\label{recurrence3-weight5}
	For any partitions $\mu, \nu\neq\emptyset$ and $\lambda$, we have 
	\ben
	\frac{q^{\widetilde{\omega}_{\min}^{LD}(\lambda,\mu^{r},\nu^c;n)}\cdot q^{\widetilde{\omega}_{\min}^{RU}(\lambda,\mu^{c},\nu^{r};n)}}{q^{\widetilde{\omega}_{\min}(\lambda,\mu,\nu;n)}\cdot q^{\widetilde{\omega}_{\min}(\lambda,\mu^{rc},\nu^{rc};n)}}
	=q^{K_{3}(\lambda,\mu,\nu)}
	\een
where 
\ben
q^{K_{3}(\lambda,\mu,\nu)}:=q_{d(\mu)-1}^{-\mu_{d(\mu)}}\cdot \prod\limits_{i=1}^{d(\mu)-1}q_{i}\cdot\prod\limits_{i=1}^{d(\mu)-1}\left(\frac{q_{i}}{q_{i-1}}\right)^{\mu_{i}}\cdot\prod\limits_{i=1}^{\ell(\lambda^\prime)}\left(\frac{q_{-i}}{q_{-i+1}}\right)^{\lambda^\prime_{i}}\cdot \prod_{i=1}^{\widetilde{d}(\nu)-d(\nu)}q_{-i}^{-1}
\een
is independent of $N$.	
\end{lemma}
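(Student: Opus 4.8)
The plan is to prove the identity by substituting the explicit factorizations of the four minimal weights into $\varpi$-monomials and then cancelling the factors type by type. By Lemma \ref{recurrence3-weight1} and Lemma \ref{recurrence3-weight2} the two numerators $q^{\widetilde{\omega}_{\min}^{LD}(\lambda,\mu^{r},\nu^{c};N)}$ and $q^{\widetilde{\omega}_{\min}^{RU}(\lambda,\mu^{c},\nu^{r};N)}$ each decompose into six blocks of the shape $q^{\varpi_1}, q^{\varpi_2(\lambda^\prime,\cdots)}, \overline{q^{\varpi_2(\mu^{\bullet},\cdots)}}, q^{\varpi_3(\nu^{\bullet},\cdots)}, q^{\varpi_4(\lambda^\prime,\cdots)}$ and $q^{\varpi_{5}}$ or $q^{\varpi_{7}}$; applying Lemma \ref{recurrence1-weight1} to $(\lambda,\mu,\nu)$ and to $(\lambda,\mu^{rc},\nu^{rc})$ gives the same block structure for the two denominators. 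I would therefore write the left-hand ratio as a product of six independent ratios, one for each $\varpi$-type, since the data of $\lambda^\prime$, $\mu$ and $\nu$ never mix inside a single $\varpi$-block.

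Then I would evaluate the six ratios separately. The block $q^{\varpi_4(\lambda^\prime,N,1)}$ occurs with exponent $+1$ in both numerators and both denominators, so its ratio is identically $1$. The $\mu$-blocks are already available: the overlined ratio $\frac{q^{\varpi_2(\mu^c,N-1,2)}\cdot q^{\varpi_2(\mu^r,N-1,0)}}{q^{\varpi_2(\mu,N-1,1)}\cdot q^{\varpi_2(\mu^{rc},N-1,1)}}$ is exactly Lemma \ref{recurrence2-weight4} (read under the overline), and the mixed ratio $\frac{q^{\varpi_7(\mu^c,N)}\cdot q^{\varpi_5(\mu^r,N,0)}}{q^{\varpi_5(\mu,N,1)}\cdot q^{\varpi_5(\mu^{rc},N,1)}}$ is Lemma \ref{recurrence1-weight8}, both with $\eta=\mu$. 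The $\lambda^\prime$-block $\frac{q^{\varpi_2(\lambda^\prime,N-2,2)}\cdot q^{\varpi_2(\lambda^\prime,N,0)}}{(q^{\varpi_2(\lambda^\prime,N-1,1)})^2}$ is Lemma \ref{recurrence2-weight5} with $\eta=\lambda^\prime$. What remains are two genuinely new but routine computations: the $q^{\varpi_1}$-ratio $\frac{q^{\varpi_1(N-1,1,1)}\cdot q^{\varpi_1(N-1,-1,-1)}}{(q^{\varpi_1(N-1,0,0)})^2}$, which I would handle exactly as in Lemmas \ref{recurrence1-weight4} and \ref{recurrence2-weight3} by directly counting the extra horizontal dimers, and the $\nu$-block $\frac{q^{\varpi_3(\nu^c,N,2)}\cdot q^{\varpi_3(\nu^r,N,0)}}{q^{\varpi_3(\nu,N,1)}\cdot q^{\varpi_3(\nu^{rc},N,1)}}$, which I would evaluate exactly as in Lemma \ref{recurrence2-weight6} via the part-by-part descriptions of $\nu^r,\nu^c,\nu^{rc}$ in Lemma \ref{modified-partition} together with the diagonal data of Lemma \ref{diag-length} and Remark \ref{special-value}, splitting into the cases $d(\nu)>1$, $d(\nu)=1$ with $\nu_1>1$, and $\nu_1=1$. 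This last block is precisely where the factor $\prod_{i=1}^{\widetilde{d}(\nu)-d(\nu)}q_{-i}^{-1}$ of $q^{K_3}$ is produced.

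Finally I would multiply the six evaluated blocks. The subtle point — and the part I expect to be the main obstacle — is to verify that all the $N$-dependent residuals cancel: each of Lemmas \ref{recurrence2-weight4}, \ref{recurrence1-weight8}, \ref{recurrence2-weight5} and the two new ratios carries surviving monomials in $q_{N-i+1}$, $q_{N-i}$, $q_N^N$ and $\prod_{i}q_{\pm i}$, and these must telescope against one another so that the product is independent of $N$. Carrying this bookkeeping through and collecting the $N$-free remainder should reproduce exactly $q_{d(\mu)-1}^{-\mu_{d(\mu)}}\cdot\prod_{i=1}^{d(\mu)-1}q_i\cdot\prod_{i=1}^{d(\mu)-1}(q_i/q_{i-1})^{\mu_i}\cdot\prod_{i=1}^{\ell(\lambda^\prime)}(q_{-i}/q_{-i+1})^{\lambda^\prime_i}\cdot\prod_{i=1}^{\widetilde{d}(\nu)-d(\nu)}q_{-i}^{-1}$, which is $q^{K_3(\lambda,\mu,\nu)}$. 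As an independent consistency check I would confirm the outcome against the symmetry relation $q^{K_2(\lambda,\mu,\nu)}=\overline{q^{K_3(\mu^\prime,\lambda^\prime,\nu^\prime)}}$ recorded in Section 4.2, using $d(\nu)=d(\nu^\prime)$, $\widetilde{d}(\nu^\prime)=\nu_{d(\nu)}$ and the transpose identities of Lemma \ref{transpose}.
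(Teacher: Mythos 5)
Your proposal is correct and follows essentially the same route as the paper: the paper's proof is precisely the citation of Lemmas \ref{recurrence1-weight1}, \ref{recurrence3-weight1}, \ref{recurrence3-weight2} for the block decompositions, Lemmas \ref{recurrence2-weight5}, \ref{recurrence2-weight4}, \ref{recurrence1-weight8} for the $\lambda^\prime$- and $\mu$-blocks, and Lemmas \ref{recurrence3-weight3}, \ref{recurrence3-weight4} for the two ratios you flagged as ``genuinely new'' computations (which the paper indeed establishes by exactly the direct-counting and case-splitting arguments you describe). Your block-by-block cancellation, including the cancellation of the $\varpi_{4}(\lambda^\prime,N,1)$ factors and the telescoping of the $N$-dependent residuals, reproduces the paper's argument faithfully.
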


\begin{proof}
It follows from Lemmas \ref{recurrence1-weight1}, \ref{recurrence3-weight1},	\ref{recurrence3-weight2}, \ref{recurrence3-weight3},  \ref{recurrence2-weight5}, \ref{recurrence2-weight4}, \ref{recurrence3-weight4},  \ref{recurrence1-weight8}.	
\end{proof}

As in Section 4.3.2, we have
\begin{lemma}\label{recurrence3-weight3}
	\ben
	&&\frac{q^{\varpi_{1}(N-1,1,1)}\cdot q^{\varpi_{1}(N-1,-1,-1)}}{\left(q^{\varpi_{1}(N-1,0,0)}\right)^2}=q_{0}^{1-N}\cdot\prod_{i=1}^{N-1}q_{-i}.
	\een
\end{lemma}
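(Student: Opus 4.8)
The plan is to compute the exponent of each variable $q_s$ on both sides directly, exactly as in the proof of Lemma \ref{recurrence2-weight3}. First I would rewrite each factor $q^{\varpi_{1}(N-1,l,k)}$ so that all of its subscripts have the uniform shape $j-i$: substituting $j\mapsto j+k$ in the defining product $\prod_{i=0}^{m}\prod_{j=0}^{m-l}q_{j-i+k}^{m-i}$ gives $q^{\varpi_{1}(N-1,l,k)}=\prod_{i=0}^{N-1}\prod_{j=k}^{N-1-l+k}q_{j-i}^{N-1-i}$. The three factors in the statement, corresponding to $(l,k)=(1,1)$, $(l,k)=(-1,-1)$ and $(l,k)=(0,0)$, then all share the upper limit $j=N-1$ and the same exponent weight $N-1-i$, differing only in their lower limits, which are $1$, $-1$ and $0$ respectively.

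Next I would fix $i\in\{0,\dots,N-1\}$ and isolate the inner product over $j$. The common exponent $N-1-i$ factors out of every term, so it suffices to understand the ratio of the three inner products of the plain $q_{j-i}$. Relative to the reference product over $0\le j\le N-1$, the lower limit $1$ deletes the single factor $q_{-i}$ while the lower limit $-1$ adjoins the single factor $q_{-1-i}$; hence after cancelling against the squared denominator the inner ratio collapses to $q_{-1-i}/q_{-i}$. Therefore the contribution of the index $i$ to the left-hand side is $\left(q_{-1-i}/q_{-i}\right)^{N-1-i}$, and the entire left-hand side equals $\prod_{i=0}^{N-1}\left(q_{-1-i}/q_{-i}\right)^{N-1-i}$.

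Finally I would collect, for each $s$, the total exponent of $q_s$ in this product, reading all subscripts mod $n$ according to the convention of Definition \ref{weight-rule}. The factor $q_{-1-i}$ contributes exponent $N+s$ to $q_s$ for $-N\le s\le -1$, while $q_{-i}$ contributes $-(N-1+s)$ for $-(N-1)\le s\le 0$; summing these shows the two contributions cancel down to exponent $1$ for each $s$ with $-(N-1)\le s\le -1$, to exponent $1-N$ at $s=0$, and to exponent $0$ at $s=-N$. This yields precisely $q_{0}^{1-N}\cdot\prod_{i=1}^{N-1}q_{-i}$, as claimed.

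The argument is entirely routine, being a telescoping/bookkeeping computation with no conceptual content. The only point that demands care is the asymmetric boundary behaviour at $s=0$ and $s=-N$, where just one of the two telescoping contributions survives, so the single hazard is an off-by-one slip in the exponent sums rather than any genuine obstacle.
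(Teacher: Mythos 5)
Your proof is correct and is exactly the routine computation the paper intends: the paper states Lemma \ref{recurrence3-weight3} without an explicit proof (``As in Section 4.3.2, we have \ldots''), leaving precisely this kind of direct verification to the reader. Your index shift to uniform subscripts $j-i$, the per-$i$ telescoping of the inner products to $\left(q_{-1-i}/q_{-i}\right)^{N-1-i}$, and the final exponent bookkeeping (including the asymmetric boundary cases $s=0$, which receives $1-N$, and $s=-N$, which cancels to $0$) all check out and reproduce $q_{0}^{1-N}\cdot\prod_{i=1}^{N-1}q_{-i}$.
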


\begin{lemma} \label{recurrence3-weight4}
	For any partition $\eta\neq\emptyset$, we have
	\ben
	\frac{q^{\varpi_{3}(\eta^c,N,2)}q^{\varpi_{3}(\eta^r,N,0)}}{q^{\varpi_{3}(\eta,N,1)}q^{\varpi_{3}(\eta^{rc},N,1)}}=q_{0}^N\cdot\prod_{i=0}^{\widetilde{d}(\eta)-d(\eta)}q_{-i}^{-1}.
	\een
\end{lemma}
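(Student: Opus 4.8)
The plan is to prove the identity by direct substitution, following the template of the proof of Lemma \ref{recurrence1-weight5}. Writing $q^{\varpi_3(\zeta,N,k)}=\prod_{i=1}^{\ell(\zeta)}\prod_{j=0}^{\zeta_i-1}q_{j-i+k}^{-N+i}$, each row $i$ of a partition $\zeta$ contributes the consecutive variables $q_{k-i},q_{k-i+1},\dots,q_{k-i+\zeta_i-1}$, each raised to the power $-N+i$. I would split the left-hand side into the two ratios $q^{\varpi_3(\eta^c,N,2)}/q^{\varpi_3(\eta,N,1)}$ and $q^{\varpi_3(\eta^r,N,0)}/q^{\varpi_3(\eta^{rc},N,1)}$, evaluate each separately, and multiply. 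The shift in the column parameter $k$ (from $1$ to $2$ for $\eta^c$, and from $1$ to $0$ for $\eta^r$) is precisely what realigns the variable ranges so that most rows cancel telescopically.

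First I would substitute the explicit row values from Lemma \ref{modified-partition} together with the length relations of Lemma \ref{length-relation}. In the first ratio, for $1\le i\le\widetilde{d}(\eta)$ the rows $i$ of $\eta^c$ and of $\eta$ share the same top subscript $\eta_i-i$ and differ by a single variable at the bottom, so their quotient is one power of $q_{1-i}$; above the diagonal the relation $\eta^c_i=\eta_{i-1}$ shifts the row index by one, so the variable ranges coincide exactly and only the exponent difference survives, giving another clean telescoping product. The second ratio is analogous, using $\eta^r_i=\eta_i+1$ for $i<d(\eta)$ and the index shift $\eta^r_i=\eta_{i+1}$ for $i\ge d(\eta)$ against $\eta^{rc}_i=\eta_i$ off the diagonal; here the matched ranges carry an exponent difference producing factors $q_{-i}^{-1}$, consistent with the shape of the claimed answer.

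The delicate part, and where I expect the real work to lie, is the corner region $d(\eta)\le i\le\widetilde{d}(\eta)$, where $\eta^c$ (in its row $\widetilde{d}(\eta)+1$) and $\eta^{rc}$ (in its rows $d(\eta)$ through $\widetilde{d}(\eta)$) take the constant value $d(\eta)-1$; one uses $\eta_i=d(\eta)$ for $d(\eta)<i\le\widetilde{d}(\eta)$ from Remark \ref{special-value}. Matching these constant blocks against the shifted rows of $\eta$ and of $\eta^r$ demands careful tracking of the ragged variable ranges and of the row-shifted exponents, and it is exactly here that the surviving product $\prod_{i=0}^{\widetilde{d}(\eta)-d(\eta)}q_{-i}^{-1}$ and the $N$-dependent factor $q_0^N$ are generated, all other $N$-dependent contributions cancelling. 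I would also treat separately the degenerate cases $d(\eta)=1$ with $\eta_1>1$ and $d(\eta)=1$ with $\eta_1=1$, in which Lemma \ref{length-relation} forces $\eta^c$ or $\eta^{rc}$ to be empty, exactly as in the closing cases of the proof of Lemma \ref{recurrence1-weight5}.

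As an independent sanity check I would compare the target with Lemma \ref{recurrence2-weight6} under the conjugation $\eta\mapsto\eta'$ composed with the overline $q_l\leftrightarrow q_{-l}$. Using $d(\eta)=d(\eta')$, the identity $\eta'_{d(\eta')}=\widetilde{d}(\eta)$, and Lemma \ref{transpose}, the right-hand side $q_0^N\prod_{i=0}^{\widetilde{d}(\eta)-d(\eta)}q_{-i}^{-1}$ equals $\overline{q_0^{N-1}\prod_{i=1}^{\widetilde{d}(\eta)-d(\eta)}q_i^{-1}}$, namely the overline of the Lemma \ref{recurrence2-weight6} value evaluated at $\eta'$. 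This confirms the formula but does \emph{not} shortcut the proof: the exponent $-N+i$ in $\varpi_3$ depends on the row index, so $\varpi_3$ is not invariant under conjugation together with the overline, and the correction terms would themselves have to be computed. Hence the direct telescoping computation outlined above is the route I would carry out in full.
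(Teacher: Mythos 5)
Your proposal is correct and takes essentially the same route as the paper: for this lemma the paper gives no separate argument, deferring via ``As in Section 4.3.2'' to the explicit template of Lemma \ref{recurrence1-weight5}, which is precisely the direct case-by-case telescoping substitution (using Lemma \ref{modified-partition}, Lemma \ref{length-relation} and Remark \ref{special-value}, with the corner rows $d(\eta)\leq i\leq\widetilde{d}(\eta)$ and the degenerate cases $d(\eta)=1$ treated separately) that you outline. Your consistency check against Lemma \ref{recurrence2-weight6} under conjugation and the overline, together with your observation that it cannot replace the computation since $\varpi_{3}$ is not equivariant under that symmetry, is also sound.
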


\section{Orbifold PT theory and the graphical  condensation recurrence}
We recall the essential idea  in [\cite{JWY}, Section 4] that the PT vertex can be alternatively expressed as the sum over certain subset of AB configurations, which is available for deriving the graphical condensation recurrence in double-dimer model [\cite{Jenne}] for PT theory.  In this section, we extend this idea  to the PT $\mathbb{Z}_{n}$-vertex with the similar double-dimer model description, and obtain three   recurrences as in the orbifold DT case one of which generalizes the recurrence in [\cite{JWY}, Section 4.5]. 

\subsection{Labelled box configurations and AB configurations for orbifold PT theory}
We will briefly review the relation between labelled box configurations with AB configurations in [\cite{JWY}, Section 4.2], and then show the  PT $\mathbb{Z}_{n}$-vertex as a sum indexed by some AB configurations.  In the following we will employ notations $\mathrm{I}^-, \mathrm{II}, \mathrm{III}$   defined with respect to  $(\lambda,\mu,\nu)$ in Section 3.
\begin{definition}([\cite{JWY}, Definition 4.2.1 and Conditions 4.2.2, 4.2.3])
\\$(1)$ An AB configuration is a  pair $(A,B)$ of finite sets where $A\subseteq\mathrm{I}^-\cup\mathrm{III}$ and $B\subseteq\mathrm{II}\cup\mathrm{III}$ has elements called boxes, satisfying the following conditions \\
$(i)$ if $w\in\mathrm{I}^-\cup\mathrm{III}$ and any of 
\ben
(w_{1}-1,w_{2},w_{3}),\;\;(w_{1},w_{2}-1,w_{3}),\;\;(w_{1},w_{2},w_{3}-1)
\een
is a box in $A$, then $w\in A$.\\
$(ii)$ if $w\in\mathrm{II}\cup\mathrm{III}$ and any of 
\ben
(w_{1}-1,w_{2},w_{3}),\;\;(w_{1},w_{2}-1,w_{3}),\;\;(w_{1},w_{2},w_{3}-1)
\een
is a box in $B$, then $w\in B$.\\
We denote by $\mathcal{AB}_{all}$ the set of all AB configurations.\\
$(2)$ Given a labelled box configuration $\widetilde{\pi}$ with outgoing partitions $(\lambda,\mu,\nu)$. If  an AB  configuration $(A,B)$ satisfies 
$A\cup B=\widetilde{\pi}$  and $A\cap B=\mathrm{III}_{ulb}(\widetilde{\pi})$ as the same sets of boxes in $\widetilde{\pi}$, then we call $(A,B)$ an AB configuration on $\widetilde{\pi}$. 
We denote by $\mathcal{AB}(\widetilde{\pi})$ the set of all AB configurations on $\widetilde{\pi}$.
\end{definition}

Let $\widehat{\mathcal{P}}(\lambda,\mu,\nu)$ be the set of all labelled box configurations with outgoing partitions $(\lambda,\mu,\nu)$. Given  $\widetilde{\pi}\in\widehat{\mathcal{P}}(\lambda,\mu,\nu)$, one can construct the base AB configuration $AB_{base}(\widetilde{\pi})=(A,B)$ in [\cite{JWY}, Definition 4.2.4] where $A:=\mathrm{I}^-(\widetilde{\pi})\cup\mathrm{III}(\widetilde{\pi})$  and $B:=\mathrm{II}(\widetilde{\pi})\cup\mathrm{III}_{ulb}(\widetilde{\pi})$. It is shown in [\cite{JWY}, Section 4.2.1] that $AB_{base}(\widetilde{\pi})\in\mathcal{AB}(\widetilde{\pi})$. Conversely, $(A,B)\in\mathcal{AB}_{all}$ is called a labelled AB configuration if the algorithm [\cite{JWY}, Algorithm 4.2.13] succeeds in labelling connected components of $\mathcal{L}(A,B):=(\mathrm{I}^-\cap A)\cup(\mathrm{II}\setminus B)\cup(\mathrm{III}\cap(A\Delta B))$. For a labelled AB configuration $(A,B)$, the labelled boxes $\widetilde{\pi}(A,B)$ is defined in [\cite{JWY}, Definition 4.2.22].
Due to [\cite{JWY}, Lemma 4.2.24], we define 
\ben
\mathcal{AB}:=\bigcup_{\widetilde{\pi}\in\widehat{\mathcal{P}}(\lambda,\mu,\nu)}\mathcal{AB}(\widetilde{\pi}).
\een
Then $(A,B)\in\mathcal{AB}$ if and only if $(A,B)$ is a labelled AB configuration by [\cite{JWY}, Theorem 4.2.26] and   $\widetilde{\pi}(A,B)$ is a labelled box configuration if $(A,B)\in\mathcal{AB}$ by [\cite{JWY}, Corollary 4.2.27].
Recall that $\widetilde{\mathcal{P}}(\lambda,\mu,\nu)$ is the set of components of the moduli space of  labelled box configurations with outgoing partitions $(\lambda,\mu,\nu)$ in Section 3. 
By [\cite{JWY}, Corollary 4.2.31], if $[\widetilde{\pi}_{1}], [\widetilde{\pi}_{2}]\in \widetilde{\mathcal{P}}(\lambda,\mu,\nu)$ and $[\widetilde{\pi}_{1}]\neq[\widetilde{\pi}_{2}]$, then $\mathcal{AB}(\widetilde{\pi}_{1})\cap\mathcal{AB}(\widetilde{\pi}_{2})=\emptyset$. For any element  $\widetilde{\pi}\in\widehat{\mathcal{P}}(\lambda,\mu,\nu)$ in one given component in $\widetilde{\mathcal{P}}(\lambda,\mu,\nu)$, the sets $\mathcal{AB}(\widetilde{\pi})$ are the same by the definition.  Then 
\ben
\mathcal{AB}:=\bigcup_{[\widetilde{\pi}]\in\widetilde{\mathcal{P}}(\lambda,\mu,\nu)}\mathcal{AB}(\widetilde{\pi}).
\een
And it is shown in [\cite{JWY}, Corollary 4.2.34] that $|\mathcal{AB}(\widetilde{\pi})|=\chi_{\mathrm{top}}([\widetilde{\pi}])$
where $[\widetilde{\pi}]\in\widetilde{\mathcal{P}}(\lambda,\mu,\nu)$ is the component of the moduli space of labellings of $\underline{\widetilde{\pi}}$ containing $\widetilde{\pi}$.  The orbifold version of [\cite{JWY}, Definition 4.2.35] is presented as follows.
\begin{definition}
\ben
Z_{\mathcal{AB}}(q_{0},q_{1},\cdots,q_{n-1})=\sum_{(A,B)\in\mathcal{AB}}\prod_{l=0}^{n-1}q_{l}^{|A|_{l}+|B|_{l}-|\mathrm{II}|_{l}-2|\mathrm{III}|_{l}}.
\een
\end{definition}
This definition  is an alternative expression  of the orbifold PT topological vertex $W^n_{\lambda\mu\nu}$, i.e., we have the following orbifold version of [\cite{JWY}, Theorem 4.2.36].
\begin{theorem}\label{unlabel-formula}
\ben
W^n_{\lambda\mu\nu}(q_{0},q_{1},\cdots,q_{n-1})=Z_{\mathcal{AB}}(q_{0},q_{1},\cdots,q_{n-1}).
\een
\end{theorem}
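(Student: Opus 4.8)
The plan is to reduce the identity to a color-by-color comparison of monomials, following the structure of the proof of [\cite{JWY}, Theorem 4.2.36] but carrying along the color $l$ of a box $(i,j,k)$, namely $i-j \bmod n$. First I would reorganize the sum defining $Z_{\mathcal{AB}}$ according to the components $[\widetilde{\pi}]\in\widetilde{\mathcal{P}}(\lambda,\mu,\nu)$. Since $\mathcal{AB}=\bigcup_{[\widetilde{\pi}]}\mathcal{AB}(\widetilde{\pi})$ and, by [\cite{JWY}, Corollary 4.2.31], the sets $\mathcal{AB}(\widetilde{\pi}_{1})$ and $\mathcal{AB}(\widetilde{\pi}_{2})$ are disjoint whenever $[\widetilde{\pi}_{1}]\neq[\widetilde{\pi}_{2}]$, this is a disjoint union indexed by components, so $Z_{\mathcal{AB}}=\sum_{[\widetilde{\pi}]}\sum_{(A,B)\in\mathcal{AB}(\widetilde{\pi})}\prod_{l}q_{l}^{|A|_{l}+|B|_{l}-|\mathrm{II}|_{l}-2|\mathrm{III}|_{l}}$.

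The heart of the argument is the claim that, for every $(A,B)\in\mathcal{AB}(\widetilde{\pi})$ and every $l$, one has the exponent identity $|A|_{l}+|B|_{l}-|\mathrm{II}|_{l}-2|\mathrm{III}|_{l}=\ell(\widetilde{\pi})_{l}+\Vert\pi_{\mathrm{min}}(\lambda,\mu,\nu)\Vert_{l}$, so that the monomial depends only on the component $[\widetilde{\pi}]$. I would prove this in two pieces. For the first, since $A\cup B=\widetilde{\pi}$ and $A\cap B=\mathrm{III}_{ulb}(\widetilde{\pi})$ as sets of boxes, inclusion–exclusion applied to the color-$l$ boxes gives $|A|_{l}+|B|_{l}=|\widetilde{\pi}|_{l}+|\mathrm{III}_{ulb}(\widetilde{\pi})|_{l}$, and by the definition of $\eta_{\widetilde{\pi}}$ (which is $1$ on $\mathrm{I}^{-}(\widetilde{\pi})\cup\mathrm{II}(\widetilde{\pi})\cup\mathrm{III}_{lb}(\widetilde{\pi})$ and $2$ on $\mathrm{III}_{ulb}(\widetilde{\pi})$) the right-hand side equals $\ell(\widetilde{\pi})_{l}$. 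For the second, I would compute $\xi_{\pi_{\mathrm{min}}}$ directly: every box of $\pi_{\mathrm{min}}(\lambda,\mu,\nu)$ lies in exactly one, two, or three of the $\mathrm{Cyl}_{l}^{+}$, contributing $0$, $-1$, or $-2$ respectively; because $\mathrm{II}(\lambda,\mu,\nu)$ and $\mathrm{III}(\lambda,\mu,\nu)$ are contained in $(\mathbb{Z}_{\geq0})^{3}$, the two- and three-fold overlaps are exactly $\mathrm{II}$ and $\mathrm{III}$, whence $\Vert\pi_{\mathrm{min}}(\lambda,\mu,\nu)\Vert_{l}=-|\mathrm{II}|_{l}-2|\mathrm{III}|_{l}$. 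This is precisely the color-refined form of the identity recorded in the remark following Definition \ref{PT-vertex}. Combining the two pieces yields the exponent identity.

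Granting the claim, the proof concludes quickly: the inner sum over $\mathcal{AB}(\widetilde{\pi})$ becomes $|\mathcal{AB}(\widetilde{\pi})|\cdot\prod_{l}q_{l}^{\ell(\widetilde{\pi})_{l}+\Vert\pi_{\mathrm{min}}(\lambda,\mu,\nu)\Vert_{l}}$, and by [\cite{JWY}, Corollary 4.2.34] we have $|\mathcal{AB}(\widetilde{\pi})|=\chi_{\mathrm{top}}([\widetilde{\pi}])$, so summing over $[\widetilde{\pi}]\in\widetilde{\mathcal{P}}(\lambda,\mu,\nu)$ reproduces exactly the defining expression for $W^{n}_{\lambda\mu\nu}$ in Definition \ref{orbifoldPT-vertex}. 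I expect the main obstacle to be purely bookkeeping: verifying that the color-refined exponent is genuinely independent of the choice of $(A,B)$ within $\mathcal{AB}(\widetilde{\pi})$ and matches $\ell(\widetilde{\pi})_{l}+\Vert\pi_{\mathrm{min}}\Vert_{l}$ term by term. The orbifold subtlety is mild, since the color is a function of position alone and hence refines every count; consequently the set-theoretic inputs of [\cite{JWY}] (Corollaries 4.2.27, 4.2.31, and 4.2.34) transfer verbatim, and the only genuinely new computation is the colored evaluation of $\Vert\pi_{\mathrm{min}}\Vert_{l}$.
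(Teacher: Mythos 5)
Your proposal is correct and follows essentially the same route as the paper's proof: decompose $\mathcal{AB}$ into the disjoint sets $\mathcal{AB}(\widetilde{\pi})$ over components $[\widetilde{\pi}]\in\widetilde{\mathcal{P}}(\lambda,\mu,\nu)$, identify the color-$l$ exponent with $\ell(\widetilde{\pi})_{l}+\Vert\pi_{\mathrm{min}}(\lambda,\mu,\nu)\Vert_{l}$ via $A\cup B=\widetilde{\pi}$, $A\cap B=\mathrm{III}_{ulb}(\widetilde{\pi})$, and then apply $|\mathcal{AB}(\widetilde{\pi})|=\chi_{\mathrm{top}}([\widetilde{\pi}])$. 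The paper compresses the exponent identity into a single chain of equalities ($|A|_{l}+|B|_{l}=|A\cup B|_{l}+|A\cap B|_{l}=\ell(\widetilde{\pi})_{l}$ and $-|\mathrm{II}|_{l}-2|\mathrm{III}|_{l}=\Vert\pi_{\mathrm{min}}\Vert_{l}$), which you spell out explicitly, but the content is identical.
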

\begin{proof}
With the discussion above, we have  	
\ben
Z_{\mathcal{AB}}(q_{0},q_{1},\cdots,q_{n-1})&=&\sum_{(A,B)\in\mathcal{AB}}\prod_{l=0}^{n-1}q_{l}^{|A|_{l}+|B|_{l}-|\mathrm{II}|_{l}-2|\mathrm{III}|_{l}}\\
&=&\sum_{[\widetilde{\pi}]\in\widetilde{\mathcal{P}}(\lambda,\mu,\nu)}\sum_{(A,B)\in\mathcal{AB}(\widetilde{\pi})}\prod_{l=0}^{n-1}q_{l}^{|A\cup B|_{l}+|A\cap B|_{l}-|\mathrm{II}|_{l}-2|\mathrm{III}|_{l}}\\
&=&\sum_{[\widetilde{\pi}]\in\widetilde{\mathcal{P}}(\lambda,\mu,\nu)}\sum_{(A,B)\in\mathcal{AB}(\widetilde{\pi})}\prod_{l=0}^{n-1}q_{l}^{\ell(\widetilde{\pi})_{l}+\Vert\pi_{\mathrm{min}}(\lambda,\mu,\nu)\Vert_{l}}\\
&=&\sum_{[\widetilde{\pi}]\in\widetilde{\mathcal{P}}(\lambda,\mu,\nu)}|\mathcal{AB}(\widetilde{\pi})|\prod_{l=0}^{n-1}q_{l}^{\ell(\widetilde{\pi})_{l}+\Vert\pi_{\mathrm{min}}(\lambda,\mu,\nu)\Vert_{l}}\\
&=&\sum_{[\widetilde{\pi}]\in\widetilde{\mathcal{P}}(\lambda,\mu,\nu)}\chi_{\mathrm{top}}([\widetilde{\pi}])\prod_{l=0}^{n-1}q_{l}^{\ell(\widetilde{\pi})_{l}+\Vert\pi_{\mathrm{min}}(\lambda,\mu,\nu)\Vert_{l}}\\
&=&W^{n}_{\lambda\mu\nu}(q_{0},q_{1},\cdots,q_{n-1}).
\een
\end{proof}

\subsection{The  double-dimer model for orbifold PT theory}
We will briefly review the (labelled) double-dimer model introduced in [\cite{JWY}, Section 4.3 and 4.4], which is also valid for describing the  PT $\mathbb{Z}_{n}$-vertex. 
Given an AB configuration $(A,B)$, define
\ben
\mathfrak{A}=R_{2}\cup(\mathrm{I}^-\cup\mathrm{III})\setminus A\;\;\;\;\mbox{and}\;\;\;\;  \mathfrak{B}=R_{1}\cup(\mathrm{II}\cup\mathrm{III})\setminus B
\een
where $R_{1}$ (resp. $R_{2}$) is the subset of $\mathbb{Z}^3$ whose elements have at least one negative coordinate (resp. two negative coordinates). One can view $\mathfrak{A}$ and $\mathfrak{B}$ as  rhombus tilings of the plane due to [\cite{JWY}, Lemma 4.3.3], which are corresponding to dimer configurations $\mathbf{D}_{A}$ and $\mathbf{D}_{B}$ of the infinite honeycomb graph $\mathbf{H}$. A double-dimer configuration is obtained from the union of two dimer configurations. Let $\mathbf{D}_{(A,B)}$ be the double-dimer configuration on $\mathbf{H}$ by superimposing $\mathbf{D}_{A}$ and $\mathbf{D}_{B}$ with their origins in $\mathbb{Z}^3$ corresponding to the same face of $\mathbf{H}$. Then $\mathbf{D}_{(A,B)}$ is composed of loops, double edges  and infinite paths.  One can refer to [\cite{JWY}, Figure 9 and 10] for examples. Given an AB configuration $(A,B)$, there is a labelling algorithm  [\cite{JWY}, Algorithm 4.3.11] for $\mathbf{D}_{(A,B)}$, which is shown to be equivalent to [\cite{JWY}, Algorithm 4.2.13] in the sense that $(A,B)\in\mathcal{AB}$ if and only if Algorithm 4.3.11 in  [\cite{JWY}] succeeds (see [\cite{JWY}, Theorem 4.4.10]), and for a given $(A,B)\in\mathcal{AB}$, the labelling in [\cite{JWY}, Algorithm 4.2.13] assigned for some cell is the same as the one in [\cite{JWY}, Algorithm 4.3.11] for the corresponding face (see [\cite{JWY}, Theorem 4.4.11]). And it is verified in [\cite{JWY}, Theorem 4.4.12] that $(A,B)\in\mathcal{AB}$ if and only if both ends of each path in $\mathbf{D}_{(A,B)}$ are contained in sector $i$ for some $i\in\{1,2,3\}$. To describe the orbifold PT vertex and  compare with the orbifold DT case, one need to truncate $\mathbf{D}_{(A,B)}$ on $\mathbf{H}$ to be the one on $\mathbf{H}(N)$. The following definition is useful for describing the truncation.

\begin{definition}([\cite{JWY}, Definition 4.4.13 and 4.4.15])
Let $\mathbf{G}=(\mathbf{V}_{1}, \mathbf{V}_{2}, \mathbf{E})$ be a finite edge-weighted bipartite planar graph with $|\mathbf{V}_{1}|=|\mathbf{V}_{2}|$. Let  $\mathbf{N}$ be a set of special vertices called nodes on the outer face of $\mathbf{G}$.
A double-dimer configuration on $(\mathbf{G},\mathbf{N})$ is a multiset of the edges of $\mathbf{G}$ such that each internal vertex is the endpoint of exactly two edges and each vertex in $\mathbf{N}$ is the endpoint of exactly one edge. A planar pairing $\sigma$ is tripartite if the nodes can be divided into three circularly contiguous sets $R$, $G$ and $B$ (the nodes in $R$, $G$ and $B$ are often colored by red, green and blue respectively) such that no node is paired with a node in the same set.
\end{definition}

For $N$ large enough, it is shown  that $\mathbf{D}_{B}$ should be truncated to be a perfect matching $\mathbf{D}_{B}(N)$ of $\mathbf{H}(N)$ by [\cite{JWY}, Lemma 4.4.14] and $\mathbf{D}_{A}$ could be truncated to be a partial matching $\mathbf{D}_{A}(N)$ of $\mathbf{H}(N)$, which implies that $\mathbf{D}_{(A,B)}$ can be truncated to be a  double-dimer configuration $\mathbf{D}_{(A,B)}(N)$ on $\mathbf{H}(N)$ (see [\cite{JWY}, Figure 11] for example) with nodes. By [\cite{JWY}, Theorem 4.4.17],  if $(A,B)\in\mathcal{AB}$ and $N$ is large enough, the double-dimer configuration $\mathbf{D}_{(A,B)}(N)$ is tripartite with nodes explicitly described  as follows.  Let $S_{1}=S(\lambda)$, $S_{2}=S(\mu)$ and $S_{3}=S(\nu)$ be the Maya diagrams of $\lambda, \mu, \nu$ respectively.  Define $\mathbf{N}_{i}^+(N)$ (resp. $\mathbf{N}_{i}^-(N)$) to be the set of vertices on the outer face of $\mathbf{H}(N)$ in sector $i^+$ (resp. in sector $i^-$) which are  not labelled by any of elements of $S_{i}^+$ (resp. $S_{i}^-$). Set $\mathbf{N}_{\lambda,\mu,\nu}(N)=\bigcup\limits_{i=1}^3\mathbf{N}_{i}^+(N)\cup\mathbf{N}_{i}^-(N)$. It is proved in [\cite{JWY}, Corollary 4.4.22] that the set of nodes of $\mathbf{D}_{(A,B)}(N)$ in sector $i$ is $\mathbf{N}_{i}=\mathbf{N}_{i}^+(N)\cup\mathbf{N}_{i}^-(N)$. We have the following color scheme  [\cite{JWY}] for the nodes of $\mathbf{D}_{(A,B)}(N)$. In sector 1, the nodes in $N_{1}^+$ are colored by blue and the nodes in $N_{1}^-$ are colored by red; in sector 2, the nodes in $N_{2}^+$ are colored by red and the nodes in  $N_{2}^-$ are colored by green; in sector 3, the nodes in  $N_{3}^+$ are colored by green and the nodes in  $N_{3}^-$ are colored by blue. Since $|\mathbf{N}_{i}^+(N)|=|\mathbf{N}_{i}^-(N)|$, one can equip nodes $\mathbf{N}_{\lambda,\mu,\nu}(N)$ with the following tripartite rainbow pairing.

\begin{definition}([\cite{JWY}, Section 4.4])
Suppose there are $2r$ nodes in sector $i$  numbered consecutively in clockwise order by $1, 2, \cdots, 2r$, which are vertices on the outer face of $\mathbf{H}(N)$. If $r>0$, we call the pairing $((1,2r),(2,2r-1),\cdots,(r,r+1))$ the rainbow pairing of the nodes in sector $i$. If $r=0$, we call the empty pairing as the rainbow pairing of the nodes in sector $i$. Denote the nodes by $\mathbf{N}$. If the nodes in sector $i$ are paired according to the rainbow pairing for every $1\leq i\leq3$, we call the resulting pairing of $\mathbf{N}$ the rainbow pairing of $\mathbf{N}$.
\end{definition}

Denote by $\mathbf{D}_{\sigma}(\mathbf{G},\mathbf{N})$ the set of all double-dimer configurations on $\mathbf{G}$ with nodes $\mathbf{N}$ subject to the rainbow pairing $\sigma$. Let $\mathbf{Z}^{DD}_{\sigma}(\mathbf{G},\mathbf{N})$ be the sum of all the weights of double-dimer configurations in $\mathbf{D}_{\sigma}(\mathbf{G},\mathbf{N})$, where the weight of a $k$-loop double-dimer configuration is $2^k$ times its edge-weight defined by the product of weights of all its edges.

It is shown before [\cite{JWY}, Definition 4.4.23] that  a given AB configuration $(A,B)$ can be consecutively changed into an AB configuration $(\mathrm{III},\mathrm{II}\cup\mathrm{III})\in\mathcal{AB}(\widetilde{\pi})$ by a sequence of  removing or adding a single box, where $\widetilde{\pi}$ is some labelled box configuration which is composed of type $\mathrm{II}$ boxes  and  type $\mathrm{III}$ boxes which are unlabelled. If an AB configuration $(A^\prime, B^\prime)$ is derived from an AB configuration $(A,B)$ by adding (resp.  removing) a box to (resp. from) $A$ or $B$ and all the boxes in $A^\prime\cup B^\prime$ and $A\cup B$ are corresponding to faces of $\mathbf{H}(N)$, then $\mathbf{D}_{A^\prime}(N)$ or $\mathbf{D}_{B^\prime}(N)$ is obtained from $\mathbf{D}_{A}(N)$ or $\mathbf{D}_{B}(N)$ by a local move shown in [\cite{JWY}, Figure 13] (resp. [\cite{JWY}, Figure 12]). According to the weighted rule in Definition \ref{weight-rule}, removing  a box $(i,j,k)$  will increase the edge-weight  by a factor of $q_{i-j}$ and adding  a box $(i,j,k)$ will decrease the edge-weight by a factor of $q_{i-j}$. Let $q^{\omega_{(A,B)}(N)}$ be the edge-weight of $\mathbf{D}_{(A,B)}(N)$. It is shown in [\cite{JWY}] that for $N$ large enough, we have $\mathbf{D}_{(\mathrm{III},\mathrm{II}\cup\mathrm{III})}(N)\in\mathbf{D}_{\sigma}(\mathbf{H}(N),\mathbf{N}_{\lambda,\mu,\nu}(N))$, which is called the base double-dimer configuration. Denote by $q^{\omega_{base}(\lambda,\mu,\nu;N)}$ the  edge-weight of $\mathbf{D}_{(\mathrm{III},\mathrm{II}\cup\mathrm{III})}(N)$.
Then we have
\ben
q^{w_{(A,B)}(N)}\cdot\prod_{l=0}^{n-1}q_{l}^{|A|_{l}+|B|_{l}}=q^{w_{base}(\lambda,\mu,\nu;N)}\cdot\prod_{l=0}^{n-1}q_{l}^{|\mathrm{II}|_{l}+2|\mathrm{III}|_{l}}
\een
if all of the boxes in $A\cup B$ correspond to faces of $\mathbf{H}(N)$.
For $N$ large enough, one can show as in [\cite{JWY}] that if $|A|+|B|\leq\frac{N}{2}$ and $(A,B)\in\mathcal{AB}$, we have $\mathbf{D}_{(A,B)}(N)\in\mathbf{D}_{\sigma}(\mathbf{H}(N),\mathbf{N}_{\lambda,\mu,\nu}(N))$. Then 
\ben
&&Z_{\mathcal{AB}}(q_{0}^{-1},q_{1}^{-1},\cdots,q_{n-1}^{-1})\\
&=&\sum_{(A,B)\in\mathcal{AB}}\prod_{l=0}^{n-1}q_{l}^{-|A|_{l}-|B|_{l}+|\mathrm{II}|_{l}+2|\mathrm{III}|_{l}}\\
&=&\sum_{\substack{(A,B)\in\mathcal{AB}\\|A|+|B|\leq\frac{N}{2}}}\prod_{l=0}^{n-1}q_{l}^{-|A|_{l}-|B|_{l}+|\mathrm{II}|_{l}+2|\mathrm{III}|_{l}}+\sum_{\substack{(A,B)\in\mathcal{AB}\\|A|+|B|>\frac{N}{2}}}\prod_{l=0}^{n-1}q_{l}^{-|A|_{l}-|B|_{l}+|\mathrm{II}|_{l}+2|\mathrm{III}|_{l}}\\
&=&\left(q^{\omega_{base}(\lambda,\mu,\nu;N)}\right)^{-1}\cdot\sum_{\substack{(A,B)\in\mathcal{AB}\\|A|+|B|\leq\frac{N}{2}}}q^{\omega_{(A,B)}(N)}+\sum_{\substack{(A,B)\in\mathcal{AB}\\|A|+|B|>\frac{N}{2}}}\prod_{l=0}^{n-1}q_{l}^{-|A|_{l}-|B|_{l}+|\mathrm{II}|_{l}+2|\mathrm{III}|_{l}}.
\een
The similar argument as in [\cite{JWY}] shows that
\ben
&&\left(q^{\omega_{base}(\lambda,\mu,\nu;N)}\right)^{-1}\cdot\mathbf{Z}^{DD}_{\sigma}(\mathbf{H}(N),\mathbf{N}_{\lambda,\mu,\nu}(N))\\
&=&\left(q^{\omega_{base}(\lambda,\mu,\nu;N)}\right)^{-1}\cdot\sum_{\substack{(A,B)\in\mathcal{AB}\\|A|+|B|\leq\frac{N}{2}}}q^{\omega_{(A,B)}(N)}+\sum_{\substack{(A,B)\in\tau^{-1}(\mathbf{D}_{\sigma}(\mathbf{H}(N),\mathbf{N}_{\lambda,\mu,\nu}(N)))\\|A|+|B|>\frac{N}{2}}}\prod_{l=0}^{n-1}q_{l}^{-|A|_{l}-|B|_{l}+|\mathrm{II}|_{l}+2|\mathrm{III}|_{l}}
\een
where $\tau:\mathcal{AB}_{all}\to\mathbf{D}(\mathbf{H}(N))$ maps any AB configuration $(A,B)$ to $\mathbf{D}_{(A,B)}(N)$. Here, $\mathbf{D}(\mathbf{H}(N))$ is the set of all double-dimer configurations with nodes on $\mathbf{H}(N)$ and $\tau^{-1}(\mathbf{D}_{\sigma}(\mathbf{H}(N),\mathbf{N}_{\lambda,\mu,\nu}(N)))\subseteq\mathcal{AB}$ as shown in [\cite{JWY}].
Now,  by Theorem \ref{unlabel-formula}, we have the following orbifold version of [\cite{JWY}, Theorem 4.4.24].
\begin{theorem}\label{orbifold-PT-vertex}
\ben
\lim_{N\to\infty}\widetilde{\mathbf{Z}}^{DD}_{\sigma}(\mathbf{H}(N),\mathbf{N}_{\lambda,\mu,\nu}(N))=W^{n}_{\lambda\mu\nu}(q_{0}^{-1},q_{1}^{-1},\cdots,q_{n-1}^{-1})
\een
where 
\ben
\widetilde{\mathbf{Z}}^{DD}_{\sigma}(\mathbf{H}(N),\mathbf{N}_{\lambda,\mu,\nu}(N)):=\left(q^{\omega_{base}(\lambda,\mu,\nu;N)}\right)^{-1}\cdot\mathbf{Z}^{DD}_{\sigma}(\mathbf{H}(N),\mathbf{N}_{\lambda,\mu,\nu}(N)).
\een
\end{theorem}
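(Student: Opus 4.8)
The plan is to deduce the statement from Theorem \ref{unlabel-formula} together with the two expansions recorded immediately above, by controlling the high-degree tails of each series in the topology of formal Laurent series. First I would rewrite the right-hand side: by Theorem \ref{unlabel-formula} the orbifold vertex equals the AB partition function, so $W^n_{\lambda\mu\nu}(q_0^{-1},\ldots,q_{n-1}^{-1})=Z_{\mathcal{AB}}(q_0^{-1},\ldots,q_{n-1}^{-1})$. It therefore suffices to prove that the normalized double-dimer series $\widetilde{\mathbf{Z}}^{DD}_{\sigma}(\mathbf{H}(N),\mathbf{N}_{\lambda,\mu,\nu}(N))=\left(q^{\omega_{base}(\lambda,\mu,\nu;N)}\right)^{-1}\mathbf{Z}^{DD}_{\sigma}(\mathbf{H}(N),\mathbf{N}_{\lambda,\mu,\nu}(N))$ and $Z_{\mathcal{AB}}(q_0^{-1},\ldots,q_{n-1}^{-1})$ have the same formal limit as $N\to\infty$.

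Next I would compare the two expansions already displayed before the statement. Both $Z_{\mathcal{AB}}(q_0^{-1},\ldots,q_{n-1}^{-1})$ and $\left(q^{\omega_{base}}\right)^{-1}\mathbf{Z}^{DD}_{\sigma}$ contain the identical bounded partial sum $\left(q^{\omega_{base}(\lambda,\mu,\nu;N)}\right)^{-1}\sum_{(A,B)\in\mathcal{AB},\,|A|+|B|\le N/2}q^{\omega_{(A,B)}(N)}$, whose validity rests on the weight identity $q^{\omega_{(A,B)}(N)}\prod_{l=0}^{n-1}q_l^{|A|_l+|B|_l}=q^{\omega_{base}(\lambda,\mu,\nu;N)}\prod_{l=0}^{n-1}q_l^{|\mathrm{II}|_l+2|\mathrm{III}|_l}$, which holds once all boxes of $A\cup B$ correspond to faces of $\mathbf{H}(N)$, and in particular when $|A|+|B|\le N/2$ for $N$ large. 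Subtracting the two expansions, the difference $\widetilde{\mathbf{Z}}^{DD}_{\sigma}-Z_{\mathcal{AB}}(q_0^{-1},\ldots,q_{n-1}^{-1})$ collapses to the difference of two tail sums over $(A,B)$ with $|A|+|B|>N/2$: one indexed by $\tau^{-1}(\mathbf{D}_{\sigma}(\mathbf{H}(N),\mathbf{N}_{\lambda,\mu,\nu}(N)))$ coming from the truncated double-dimer model, and one indexed by all of $\mathcal{AB}$ coming from $Z_{\mathcal{AB}}$.

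Then I would show each tail vanishes coefficientwise. Each contributing $(A,B)$ yields a single monomial $\prod_{l=0}^{n-1}q_l^{e_l}$ with $e_l=-|A|_l-|B|_l+|\mathrm{II}|_l+2|\mathrm{III}|_l$, so its total degree $\sum_{l}e_l=-(|A|+|B|)+|\mathrm{II}|+2|\mathrm{III}|$ is strictly less than $-N/2+|\mathrm{II}|+2|\mathrm{III}|$, a quantity tending to $-\infty$ since $|\mathrm{II}|$ and $|\mathrm{III}|$ depend only on $(\lambda,\mu,\nu)$. Hence a fixed monomial of total degree $d$ can occur in either tail only when $N<2\left(|\mathrm{II}|+2|\mathrm{III}|-d\right)$; for all larger $N$ its coefficient in both tails is $0$. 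Thus both tail sums converge to $0$ in the formal Laurent series topology, and the two normalized partition functions share the common limit, which is $W^n_{\lambda\mu\nu}(q_0^{-1},\ldots,q_{n-1}^{-1})$, as claimed.

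The hard part will be the bookkeeping guaranteeing that the two expansions genuinely line up on the bounded range and that the tails are indexed as asserted. Concretely, I must invoke the cited structural facts — that $\mathbf{D}_B$ truncates to a perfect matching and $\mathbf{D}_A$ to a partial matching of $\mathbf{H}(N)$, that $(A,B)\in\mathcal{AB}$ with $|A|+|B|\le N/2$ produces $\mathbf{D}_{(A,B)}(N)\in\mathbf{D}_{\sigma}(\mathbf{H}(N),\mathbf{N}_{\lambda,\mu,\nu}(N))$, and that $\tau^{-1}(\mathbf{D}_{\sigma}(\mathbf{H}(N),\mathbf{N}_{\lambda,\mu,\nu}(N)))\subseteq\mathcal{AB}$ — and I must verify that the local moves of [\cite{JWY}, Figures 12 and 13] change the edge-weight by exactly $q_{i-j}^{\pm1}$ per box $(i,j,k)$ under the orbifold rule of Definition \ref{weight-rule}, so that the color-graded exponents $|A|_l+|B|_l$ are tracked correctly by the $\mathbb{Z}_n$-coloring $i-j\equiv l \bmod n$. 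Once this compatibility between the orbifold weighting and the box coloring is established, the argument is a direct adaptation of [\cite{JWY}, Theorem 4.4.24] with the scalar weight replaced by the $\mathbb{Z}_n$-graded weight.
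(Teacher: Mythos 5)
Your proposal is correct and is essentially the paper's own argument: the paper likewise invokes Theorem \ref{unlabel-formula} to replace $W^{n}_{\lambda\mu\nu}(q_{0}^{-1},\ldots,q_{n-1}^{-1})$ by $Z_{\mathcal{AB}}(q_{0}^{-1},\ldots,q_{n-1}^{-1})$, splits both that series and $\widetilde{\mathbf{Z}}^{DD}_{\sigma}(\mathbf{H}(N),\mathbf{N}_{\lambda,\mu,\nu}(N))$ into the common bounded partial sum over $|A|+|B|\le N/2$ plus tails indexed by $\mathcal{AB}$ and $\tau^{-1}(\mathbf{D}_{\sigma}(\mathbf{H}(N),\mathbf{N}_{\lambda,\mu,\nu}(N)))$ respectively, and lets the tails vanish in the formal Laurent series topology. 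Your explicit coefficientwise degree bound on the tail monomials is exactly the "similar argument as in [\cite{JWY}]" that the paper leaves implicit, and your compatibility check of the local moves with the $\mathbb{Z}_{n}$-graded weighting is the same bookkeeping the paper relies on via Definition \ref{weight-rule}.
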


\subsection{The graphical condensation recurrence for orbifold PT theory}
In [\cite{Jenne}], the author has developed an analogue version of [\cite{Kuo}, Theorem 5.1] for the double-dimer case as follows.

\begin{theorem}(See [\cite{Jenne}, Theorem 1.0.2] or [\cite{JWY}, Theorem 4.5.1]) \label{graphical-condensation2} Let $\mathbf{G}=(\mathbf{V}_{1},\mathbf{V}_{2},\mathbf{E})$ be a finite edge-weighted planar bipartitie graph with a set of nodes $\mathbf{N}$. Divide the nodes into three circularly contiguous sets $R$, $G$, $B$ such that $|R|$, $|G|$ and $|B|$ satisfy the triangle inequality and let $\sigma$ be the corresponding tripartite pairing. Let $a, b, c, d$ be nodes appearing in a cyclic order such that the set $\{a,b,c,d\}$ contains at least one node of each RGB color. If $a,c\in \mathbf{V}_{1}$ and $b,d\in\mathbf{V}_{2}$. Then 
\ben
\mathbf{Z}_{\sigma}^{DD}(\mathbf{G},\mathbf{N})\mathbf{Z}_{\sigma_{abcd}}^{DD}(\mathbf{G},\mathbf{N}-\{a,b,c,d\})&=&\mathbf{Z}_{\sigma_{ab}}^{DD}(\mathbf{G},\mathbf{N}-\{a,b\})\mathbf{Z}^{DD}_{\sigma_{cd}}(\mathbf{G},\mathbf{N}-\{c,d\})\\
&&+\mathbf{Z}_{\sigma_{ad}}^{DD}(\mathbf{G},\mathbf{N}-\{a,d\})\mathbf{Z}^{DD}_{\sigma_{bc}}(\mathbf{G},\mathbf{N}-\{b,c\})
\een
where $\sigma_{abcd}$ is the unique planar pairing on $\mathbf{N}-\{a,b,c,d\}$ in which like RGB colors are not paired together, and for $i,j\in\{a,b,c,d\}$, $\sigma_{ij}$ is the unique planar pairing on $\mathbf{N}-\{i,j\}$ in which like RGB colors are not paired together.
\end{theorem}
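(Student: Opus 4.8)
The plan is to establish Theorem~\ref{graphical-condensation2} by the double-dimer analogue of the superposition argument that underlies Kuo's Theorem~\ref{graphical-condensation1}, with the additional bookkeeping forced by loops and by the tripartite boundary pairing. First I would encode the product $\mathbf{Z}^{DD}_\sigma(\mathbf{G},\mathbf{N})\,\mathbf{Z}^{DD}_{\sigma_{abcd}}(\mathbf{G},\mathbf{N}-\{a,b,c,d\})$ as a sum over pairs $(D_1,D_2)$ with $D_1\in\mathbf{D}_\sigma(\mathbf{G},\mathbf{N})$ and $D_2\in\mathbf{D}_{\sigma_{abcd}}(\mathbf{G},\mathbf{N}-\{a,b,c,d\})$, and form the superposition $D_1\sqcup D_2$ as a multiset of edges of $\mathbf{G}$. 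Counting incidences shows that the only vertices of odd degree in $D_1\sqcup D_2$ are $a,b,c,d$ (each of degree $3$), while every node of $\mathbf{N}-\{a,b,c,d\}$ has degree $2$ and every remaining internal vertex has degree $4$; hence the superposition decomposes into doubled edges, closed loops, and exactly two open paths whose four endpoints are $a,b,c,d$.

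Next I would classify the superposition by the pairing it induces on $\{a,b,c,d\}$. Since these nodes lie on the outer face in the cyclic order $a,b,c,d$ and the two paths are embedded disjointly in the plane, planarity forces the induced pairing to be non-crossing, leaving only $\{(a,b),(c,d)\}$ and $\{(a,d),(b,c)\}$; the crossing pairing $\{(a,c),(b,d)\}$ cannot occur. For a superposition of the first type I would cut the two paths and reconnect them along $\{a,b\}$ and $\{c,d\}$, which redistributes the same multiset of edges into a pair $(D_1',D_2')$ with $D_1'\in\mathbf{D}_{\sigma_{ab}}(\mathbf{G},\mathbf{N}-\{a,b\})$ and $D_2'\in\mathbf{D}_{\sigma_{cd}}(\mathbf{G},\mathbf{N}-\{c,d\})$; running the same move backwards recovers $(D_1,D_2)$, so this is a bijection onto the superpositions that produce the $\mathbf{Z}^{DD}_{\sigma_{ab}}\mathbf{Z}^{DD}_{\sigma_{cd}}$ term, and the second type maps bijectively to the $\mathbf{Z}^{DD}_{\sigma_{ad}}\mathbf{Z}^{DD}_{\sigma_{bc}}$ term. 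The bipartite hypothesis $a,c\in\mathbf{V}_1$, $b,d\in\mathbf{V}_2$ is what guarantees that each reconnected configuration again satisfies the degree constraints defining a double-dimer configuration.

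It then remains to check that these bijections are weight-preserving. The edge-weight is automatically preserved because no edge is added or removed, only regrouped, so the issue is the loop factor $2^{k}$ and the identification of the boundary pairings. To handle the loops I would expand $2^{k}$ as a sum over $2$-colorings of the loops of each configuration, carry out the cut-and-rejoin at the level of these colored configurations, and verify that the operation neither opens a loop into a path nor closes a path into a loop, so that the total loop count, and hence the power of $2$, agrees on both sides. For the pairings I would invoke the two standing hypotheses: because $\{a,b,c,d\}$ contains a node of each of the colors $R,G,B$ and $|R|,|G|,|B|$ satisfy the triangle inequality, the node sets $\mathbf{N}-\{a,b,c,d\}$, $\mathbf{N}-\{a,b\}$, $\mathbf{N}-\{c,d\}$, and so on still satisfy the triangle inequality, so the non-crossing tripartite pairing on each is unique and coincides with $\sigma_{abcd},\sigma_{ab},\sigma_{cd},\sigma_{ad},\sigma_{bc}$ as required.

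The main obstacle will be precisely the loop accounting in the last step: one must rule out the degenerate local situations in which the cut-and-rejoin merges the two open paths with an incident loop, or splits off a new loop, since any such event changes $k$ and breaks the factor $2^{k}$. Showing that the triangle-inequality and color hypotheses force the reconnection to act only on the two genuine paths, leaving the loop structure intact, is the crux of the argument. Once this is secured, summing the weight-preserving bijections over the two types of superposition yields the two products on the right-hand side and proves the identity.
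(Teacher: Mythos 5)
First, a point of orientation: the paper does not prove this statement at all --- it is imported verbatim from Jenne ([\cite{Jenne}, Theorem 1.0.2], restated as [\cite{JWY}, Theorem 4.5.1]) and used as a black box. So your proposal must be judged on its own merits, and it has a genuine gap at its very first structural step. You assert that the superposition $D_1\sqcup D_2$ of a double-dimer configuration $D_1\in\mathbf{D}_\sigma(\mathbf{G},\mathbf{N})$ and $D_2\in\mathbf{D}_{\sigma_{abcd}}(\mathbf{G},\mathbf{N}-\{a,b,c,d\})$ ``decomposes into doubled edges, closed loops, and exactly two open paths.'' This is false: every internal vertex has degree $2+2=4$ in the superposition, and a $4$-regular multigraph admits no canonical decomposition into loops and paths --- at each degree-$4$ vertex one must choose how to pair the incident edges into transitions, and different choices yield different loop/path structures, different loop counts $k$, and different induced pairings of $\{a,b,c,d\}$. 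This is exactly the feature that makes Kuo's argument work for single dimers (superposition of two matchings has maximum degree $2$, hence a unique decomposition) and fail for double dimers. Consequently the ``cut-and-rejoin'' bijection you build on top of this decomposition is not defined, and the loop-factor bookkeeping you flag as ``the crux'' cannot even be started, because $k$ itself is not well defined on the superposition. It is precisely this obstruction that kept double-dimer condensation open until Jenne's work; her proof takes an entirely different route, expressing tripartite-pairing double-dimer partition functions through Kenyon--Wilson determinants in the quantities $Z^{D}(\mathbf{G}-\{i,j\})$ and deriving the recurrence from algebraic (Dodgson/Pl\"ucker-type) identities among those determinants, which is where the circular-contiguity and triangle-inequality hypotheses do real work.

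Two smaller inaccuracies point to the same problem. In Kuo's single-dimer argument the crossing pairing $\{(a,c),(b,d)\}$ is excluded by \emph{bipartiteness} (a path in the superposition alternates between the two matchings, forcing its endpoints into opposite vertex classes), not by planarity as you claim; for double dimers neither argument applies directly since there are no well-defined paths to begin with. And in your proposal the hypotheses that $|R|,|G|,|B|$ satisfy the triangle inequality and that $\{a,b,c,d\}$ meets all three colors are used only to assert uniqueness of the residual pairings $\sigma_{ab},\sigma_{cd},\sigma_{ad},\sigma_{bc},\sigma_{abcd}$ --- a cosmetic role. In the actual proof these hypotheses are what guarantee the determinantal formula for $\mathbf{Z}^{DD}_\sigma$ holds for every partition function appearing in the identity; any proposed proof in which they could be dropped without visible damage should be treated as a warning sign that the core mechanism is missing.
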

As in [\cite{JWY}], we choose $\mathbf{G}=\mathbf{H}(N)$, $\mathbf{N}=\mathbf{N}_{\lambda^{rc},\mu^{rc},\nu}(N)$ and let 
\ben
a:=\max S_{1}^-,\;\;\;b:=\min S_{1}^+,\;\;\;c=\max S_{2}^-,\;\;\;d=\min S_{2}^+.
\een
Then
\ben
&&(\mathbf{H}(N),\mathbf{N}_{\lambda,\mu,\nu}(N))=(\mathbf{H}(N),\mathbf{N}_{\lambda^{rc},\mu^{rc},\nu}(N)-\{a,b,c,d\});\\
&&(\mathbf{H}(N),\mathbf{N}_{\lambda,\mu^{rc},\nu}(N))=(\mathbf{H}(N),\mathbf{N}_{\lambda^{rc},\mu^{rc},\nu}(N)-\{a,b\});\\
&&(\mathbf{H}(N),\mathbf{N}_{\lambda^{rc},\mu,\nu}(N))=(\mathbf{H}(N),\mathbf{N}_{\lambda^{rc},\mu^{rc},\nu}(N)-\{c,d\}).
\een

Now we recall the important description of two double-dimer configurations on $\mathbf{H}(N)$ with nodes $\mathbf{N}-\{a,d\}$ and $\mathbf{N}-\{b,c\}$ in [\cite{JWY}, Section 4.5] as follows. Since $\mathbf{N}-\{a,d\}=\mathbf{N}_{\lambda,\mu,\nu}(N)\cup\{b,c\}$ and $\mathbf{N}-\{b,c\}=\mathbf{N}_{\lambda,\mu,\nu}(N)\cup\{a,d\}$, the pairing $\sigma_{ad}$ on $\mathbf{N}-\{a,d\}$ has one more blue-green path connecting a blue node $b$ in sector 1 and a green node $c$ in sector 2 while the pairing $\sigma_{bc}$ has one fewer blue-green path but one more red-green path and one more red-blue path  than $\sigma_{abcd}$. The double-dimer configuration on $\mathbf{H}(N)$ with nodes $\mathbf{N}-\{a,d\}$ is obtained from an AB configuration $(A,B)$ with respect to $(\lambda^r,\mu^c,\nu)$, 
but is not equal to $\mathbf{D}_{(A,B)}(N)$. This dimer configuration and the corresponding tilings are shifted directly up by one unit prior to the  truncation. We call it  the base$_{up}$ double-dimer configuration and denote its edge-weight by $q^{\omega^{U}_{base}(\lambda^r,\mu^c,\nu;N)}$. Similarly, the double-dimer configuration on $\mathbf{H}(N)$ with nodes $\mathbf{N}-\{b,c\}$ is derived from an AB configuration  with respect to $(\lambda^c,\mu^r,\nu)$, and this  configuration and its corresponding tilings are shifed directly down by one unit prior to the truncation.  And we call it  the base$_{down}$ double-dimer configuration and denote its edge-weight by $q^{\omega^{D}_{base}(\lambda^c,\mu^r,\nu;N)}$. See  [\cite{JWY}, Example 4.5.2] for  illustrations. 
Let 
\ben
&&\widetilde{\mathbf{Z}}_{\sigma}^{DD}(\mathbf{H}(N),\mathbf{N}_{\lambda^{rc},\mu^{rc},\nu}(N)):=\left(q^{\omega_{base}(\lambda^{rc},\mu^{rc},\nu;N)}\right)^{-1}\mathbf{Z}_{\sigma}^{DD}(\mathbf{H}(N),\mathbf{N}_{\lambda^{rc},\mu^{rc},\nu}(N)),\\
&&\widetilde{\mathbf{Z}}_{\sigma_{abcd}}^{DD}(\mathbf{H}(N),\mathbf{N}_{\lambda,\mu,\nu}(N)):=\left(q^{\omega_{base}(\lambda,\mu,\nu;N)}\right)^{-1}\mathbf{Z}_{\sigma_{abcd}}^{DD}(\mathbf{H}(N),\mathbf{N}_{\lambda,\mu,\nu}(N)),\\
&&\widetilde{\mathbf{Z}}_{\sigma_{ab}}^{DD}(\mathbf{H}(N),\mathbf{N}_{\lambda,\mu^{rc},\nu}(N)):=\left(q^{\omega_{base}(\lambda,\mu^{rc},\nu;N)}\right)^{-1}\mathbf{Z}_{\sigma_{ab}}^{DD}(\mathbf{H}(N),\mathbf{N}_{\lambda,\mu^{rc},\nu}(N)),\\
&&\widetilde{\mathbf{Z}}_{\sigma_{cd}}^{DD}(\mathbf{H}(N),\mathbf{N}_{\lambda^{rc},\mu,\nu}(N)):=\left(q^{\omega_{base}(\lambda^{rc},\mu,\nu;N)}\right)^{-1}\mathbf{Z}_{\sigma_{cd}}^{DD}(\mathbf{H}(N),\mathbf{N}_{\lambda^{rc},\mu,\nu}(N)),\\
&&\widetilde{\mathbf{Z}}_{\sigma_{ad}}^{DD}(\mathbf{H}(N),\mathbf{N}_{\lambda^{rc},\mu^{rc},\nu}(N)-\{a,d\}):=\left(q^{\omega^{U}_{base}(\lambda^{r},\mu^{c},\nu;N)}\right)^{-1}\mathbf{Z}_{\sigma_{ad}}^{DD}(\mathbf{H}(N),\mathbf{N}_{\lambda^{rc},\mu^{rc},\nu}(N)-\{a,d\}),\\
&&\widetilde{\mathbf{Z}}_{\sigma_{bc}}^{DD}(\mathbf{H}(N),\mathbf{N}_{\lambda^{rc},\mu^{rc},\nu}(N)-\{b,c\}):=\left(q^{\omega^{D}_{base}(\lambda^{c},\mu^{r},\nu;N)}\right)^{-1}\mathbf{Z}_{\sigma_{bc}}^{DD}(\mathbf{H}(N),\mathbf{N}_{\lambda^{rc},\mu^{rc},\nu}(N)-\{b,c\}).
\een
Now, by Theorem \ref{graphical-condensation2}, we have 
\ben
&&\widetilde{\mathbf{Z}}_{\sigma}^{DD}(\mathbf{H}(N),\mathbf{N}_{\lambda^{rc},\mu^{rc},\nu}(N))\widetilde{\mathbf{Z}}_{\sigma_{abcd}}^{DD}(\mathbf{H}(N),\mathbf{N}_{\lambda,\mu,\nu}(N))\\
&=&\frac{q^{\omega_{base}(\lambda,\mu^{rc},\nu;N)}\cdot q^{\omega_{base}(\lambda^{rc},\mu,\nu;N)} }{q^{\omega_{base}(\lambda^{rc},\mu^{rc},\nu;N)}\cdot q^{\omega_{base}(\lambda,\mu,\nu;N)} }\cdot\widetilde{\mathbf{Z}}_{\sigma_{ab}}^{DD}(\mathbf{H}(N),\mathbf{N}_{\lambda,\mu^{rc},\nu}(N))\widetilde{\mathbf{Z}}_{\sigma_{cd}}^{DD}(\mathbf{H}(N),\mathbf{N}_{\lambda^{rc},\mu,\nu}(N))\\
&+&\frac{q^{\omega^{U}_{base}(\lambda^{r},\mu^{c},\nu;N)}\cdot q^{\omega^{D}_{base}(\lambda^{c},\mu^{r},\nu;N)} }{q^{\omega_{base}(\lambda^{rc},\mu^{rc},\nu;N)}\cdot q^{\omega_{base}(\lambda,\mu,\nu;N)}}\cdot\widetilde{\mathbf{Z}}_{\sigma_{ad}}^{DD}(\mathbf{H}(N),\mathbf{N}_{\lambda^{rc},\mu^{rc},\nu}(N)-\{a,d\})\widetilde{\mathbf{Z}}_{\sigma_{bc}}^{DD}(\mathbf{H}(N),\mathbf{N}_{\lambda^{rc},\mu^{rc},\nu}(N)-\{b,c\}).
\een
It is calculated in Section 5.4.1 that 
\ben
\frac{q^{\omega_{base}(\lambda,\mu^{rc},\nu;N)}\cdot q^{\omega_{base}(\lambda^{rc},\mu,\nu;N)} }{q^{\omega_{base}(\lambda^{rc},\mu^{rc},\nu;N)}\cdot q^{\omega_{base}(\lambda,\mu,\nu;N)}}=1\;\;\;\;\;\;\mbox{and}\;\;\;\;\;\;\frac{q^{\omega^{U}_{base}(\lambda^{r},\mu^{c},\nu;N)}\cdot q^{\omega^{D}_{base}(\lambda^{c},\mu^{r},\nu;N)} }{q^{\omega_{base}(\lambda^{rc},\mu^{rc},\nu;N)}\cdot q^{\omega_{base}(\lambda,\mu,\nu;N)}}=q^{-K_{1}(\lambda,\mu,\nu)}
\een
where $q^{-K_{1}(\lambda,\mu,\nu)}$ defined in Lemma \ref{recurrence4-weight8} is independent of $N$.
Since
\ben
&&\lim_{N\to\infty}\widetilde{\mathbf{Z}}_{\sigma_{ad}}^{DD}(\mathbf{H}(N),\mathbf{N}_{\lambda^{rc},\mu^{rc},\nu}(N)-\{a,d\})=W^{n}_{\lambda^r\mu^c\nu}(q_{0}^{-1},q_{1}^{-1},\cdots,q_{n-1}^{-1}),\\
&&\lim_{N\to\infty}\widetilde{\mathbf{Z}}_{\sigma_{bc}}^{DD}(\mathbf{H}(N),\mathbf{N}_{\lambda^{rc},\mu^{rc},\nu}(N)-\{b,c\})=W^{n}_{\lambda^c\mu^r\nu}(q_{0}^{-1},q_{1}^{-1},\cdots,q_{n-1}^{-1})
\een
by Theorem \ref{orbifold-PT-vertex}, let $N\to\infty$, we have
\ben
&&W^{n}_{\lambda\mu\nu}(q_{0}^{-1},q_{1}^{-1},\cdots,q_{n-1}^{-1})\cdot W^{n}_{\lambda^{rc}\mu^{rc}\nu}(q_{0}^{-1},q_{1}^{-1},\cdots,q_{n-1}^{-1})\\
&=&W^{n}_{\lambda\mu^{rc}\nu}(q_{0}^{-1},q_{1}^{-1},\cdots,q_{n-1}^{-1})\cdot W^{n}_{\lambda^{rc}\mu\nu}(q_{0}^{-1},q_{1}^{-1},\cdots,q_{n-1}^{-1})\\
&&+q^{-K_{1}(\lambda,\mu,\nu)}\cdot W^{n}_{\lambda^r\mu^c\nu}(q_{0}^{-1},q_{1}^{-1},\cdots,q_{n-1}^{-1})\cdot W^{n}_{\lambda^c\mu^r\nu}(q_{0}^{-1},q_{1}^{-1},\cdots,q_{n-1}^{-1}).
\een
It is equivalent to 
\bea\label{PT-recurrence1}
W^{n}_{\lambda\mu\nu}
=W^{n}_{\lambda^{rc}\mu\nu}\cdot W^{n}_{\lambda\mu^{rc}\nu}\cdot \left(W^{n}_{\lambda^{rc}\mu^{rc}\nu}\right)^{-1}
+q^{K_{1}(\lambda,\mu,\nu)}\cdot W^{n}_{\lambda^r\mu^c\nu}\cdot W^{n}_{\lambda^c\mu^r\nu}\cdot\left(W^{n}_{\lambda^{rc}\mu^{rc}\nu}\right)^{-1}.
\eea
which generalizes the one in [\cite{JWY}, Section 4.5] by Remark \ref{weight-generalization2}.

But we still need more recurrences as in the orbifold DT case. First, we take $\mathbf{G}=\mathbf{H}(N)$, $\mathbf{N}=\mathbf{N}_{\lambda^{rc},\mu,\nu^{rc}}(N)$ and let 
\ben
a:=\max S_{1}^-,\;\;\;b:=\min S_{1}^+,\;\;\;c=\max S_{3}^-,\;\;\;d=\min S_{3}^+.
\een
Then
\ben
&&(\mathbf{H}(N),\mathbf{N}_{\lambda,\mu,\nu}(N))=(\mathbf{H}(N),\mathbf{N}_{\lambda^{rc},\mu,\nu^{rc}}(N)-\{a,b,c,d\}),\\
&&(\mathbf{H}(N),\mathbf{N}_{\lambda,\mu,\nu^{rc}}(N))=(\mathbf{H}(N),\mathbf{N}_{\lambda^{rc},\mu,\nu^{rc}}(N)-\{a,b\}),\\
&&(\mathbf{H}(N),\mathbf{N}_{\lambda^{rc},\mu,\nu}(N))=(\mathbf{H}(N),\mathbf{N}_{\lambda^{rc},\mu,\nu^{rc}}(N)-\{c,d\}).
\een

 Since $\mathbf{N}-\{b,c\}=\mathbf{N}_{\lambda,\mu,\nu}(N)\cup\{a,d\}$ and $\mathbf{N}-\{a,d\}=\mathbf{N}_{\lambda,\mu,\nu}(N)\cup\{b,c\}$, the pairing $\sigma_{bc}$ on $\mathbf{N}-\{b,c\}$ has one more red-green path connecting a red node $a$ in sector 1 and a green node $d$ in sector 3 while the pairing $\sigma_{ad}$ has one fewer red-green path but one more blue-red path and one more blue-green path  than $\sigma_{abcd}$. The double-dimer configuration on $\mathbf{H}(N)$ with nodes $\mathbf{N}-\{a,d\}$ is obtained from an AB configuration  with respect to $(\lambda^r,\mu,\nu^c)$, 
 but is not equal to $\mathbf{D}_{(A,B)}(N)$. This dimer configuration and the corresponding tilings are shifted in the left-up  direction  by one unit prior to the truncation. We call it  the base$_{left-up}$ double-dimer configuration and denote its edge-weight by $q^{\omega^{LU}_{base}(\lambda^r,\mu,\nu^c;N)}$. Similarly, the double-dimer configuration on $\mathbf{H}(N)$ with nodes $\mathbf{N}-\{b,c\}$ is derived from an AB configuration  with respect to $(\lambda^c,\mu,\nu^r)$, and this  configuration and its corresponding tilings are shifed   by one unit in the right-down direction prior to the truncation.  We call it  the base$_{right-down}$ double-dimer configuration and denote its edge-weight by $q^{\omega^{RD}_{base}(\lambda^c,\mu,\nu^r;N)}$. The figures are omitted and the reader may take an example to draw the pictures (or modify the pictures in [\cite{JWY}, Example 4.5.2]) for the mentioned shift. Let 
 \ben
 &&\widetilde{\mathbf{Z}}_{\sigma}^{DD}(\mathbf{H}(N),\mathbf{N}_{\lambda^{rc},\mu,\nu^{rc}}(N)):=\left(q^{\omega_{base}(\lambda^{rc},\mu,\nu^{rc};N)}\right)^{-1}\mathbf{Z}_{\sigma}^{DD}(\mathbf{H}(N),\mathbf{N}_{\lambda^{rc},\mu,\nu^{rc}}(N)),\\
 %&&\widetilde{\mathbf{Z}}_{\sigma_{abcd}}^{DD}(\mathbf{H}(N),\mathbf{N}_{\lambda,\mu,\nu}(N))=q^{-\omega_{base}(\lambda,\mu,\nu;N)}\mathbf{Z}_{\sigma_{abcd}}^{DD}(\mathbf{H}(N),\mathbf{N}_{\lambda,\mu,\nu}(N)),\\
 &&\widetilde{\mathbf{Z}}_{\sigma_{ab}}^{DD}(\mathbf{H}(N),\mathbf{N}_{\lambda,\mu,\nu^{rc}}(N)):=\left(q^{\omega_{base}(\lambda,\mu,\nu^{rc};N)}\right)^{-1}\mathbf{Z}_{\sigma_{ab}}^{DD}(\mathbf{H}(N),\mathbf{N}_{\lambda,\mu,\nu^{rc}}(N)),\\
 %&&\widetilde{\mathbf{Z}}_{\sigma_{cd}}^{DD}(\mathbf{H}(N),\mathbf{N}_{\lambda^{rc},\mu,\nu}(N)):=\left(q^{\omega_{base}(\lambda^{rc},\mu,\nu;N)}\right)^{-1}\mathbf{Z}_{\sigma_{cd}}^{DD}(\mathbf{H}(N),\mathbf{N}_{\lambda^{rc},\mu,\nu}(N)),\\
 &&\widetilde{\mathbf{Z}}_{\sigma_{ad}}^{DD}(\mathbf{H}(N),\mathbf{N}_{\lambda^{rc},\mu,\nu^{rc}}(N)-\{a,d\}):=\left(q^{\omega^{LU}_{base}(\lambda^{r},\mu,\nu^{c};N)}\right)^{-1}\mathbf{Z}_{\sigma_{ad}}^{DD}(\mathbf{H}(N),\mathbf{N}_{\lambda^{rc},\mu,\nu^{rc}}(N)-\{a,d\}),\\
 &&\widetilde{\mathbf{Z}}_{\sigma_{bc}}^{DD}(\mathbf{H}(N),\mathbf{N}_{\lambda^{rc},\mu,\nu^{rc}}(N)-\{b,c\}):=\left(q^{\omega^{RD}_{base}(\lambda^{c},\mu,\nu^{r};N)}\right)^{-1}\mathbf{Z}_{\sigma_{bc}}^{DD}(\mathbf{H}(N),\mathbf{N}_{\lambda^{rc},\mu,\nu^{rc}}(N)-\{b,c\}).
 \een
By Theorem \ref{graphical-condensation2}, we have 
\ben
&&\widetilde{\mathbf{Z}}_{\sigma}^{DD}(\mathbf{H}(N),\mathbf{N}_{\lambda^{rc},\mu,\nu^{rc}}(N))\widetilde{\mathbf{Z}}_{\sigma_{abcd}}^{DD}(\mathbf{H}(N),\mathbf{N}_{\lambda,\mu,\nu}(N))\\
&=&\frac{q^{\omega_{base}(\lambda,\mu,\nu^{rc};N)}\cdot q^{\omega_{base}(\lambda^{rc},\mu,\nu;N)} }{q^{\omega_{base}(\lambda^{rc},\mu,\nu^{rc};N)}\cdot q^{\omega_{base}(\lambda,\mu,\nu;N)} }\cdot\widetilde{\mathbf{Z}}_{\sigma_{ab}}^{DD}(\mathbf{H}(N),\mathbf{N}_{\lambda,\mu,\nu^{rc}}(N))\widetilde{\mathbf{Z}}_{\sigma_{cd}}^{DD}(\mathbf{H}(N),\mathbf{N}_{\lambda^{rc},\mu,\nu}(N))\\
&+&\frac{q^{\omega^{LU}_{base}(\lambda^{r},\mu,\nu^{c};N)}\cdot q^{\omega^{RD}_{base}(\lambda^{c},\mu,\nu^{r};N)} }{q^{\omega_{base}(\lambda^{rc},\mu,\nu^{rc};N)}\cdot q^{\omega_{base}(\lambda,\mu,\nu;N)}}\cdot\widetilde{\mathbf{Z}}_{\sigma_{ad}}^{DD}(\mathbf{H}(N),\mathbf{N}_{\lambda^{rc},\mu,\nu^{rc}}(N)-\{a,d\})\widetilde{\mathbf{Z}}_{\sigma_{bc}}^{DD}(\mathbf{H}(N),\mathbf{N}_{\lambda^{rc},\mu,\nu^{rc}}(N)-\{b,c\}).
\een
It is proved in Section 5.4.2 that 
\ben
&&\frac{q^{\omega_{base}(\lambda,\mu,\nu^{rc};N)}\cdot q^{\omega_{base}(\lambda^{rc},\mu,\nu;N)} }{q^{\omega_{base}(\lambda^{rc},\mu,\nu^{rc};N)}\cdot q^{\omega_{base}(\lambda,\mu,\nu;N)}}=1\;\;\;\;\;\;\mbox{and}\;\;\;\;\;\;\frac{q^{\omega^{LU}_{base}(\lambda^{r},\mu,\nu^{c};N)}\cdot q^{\omega^{RD}_{base}(\lambda^{c},\mu,\nu^{r};N)} }{q^{\omega_{base}(\lambda^{rc},\mu,\nu^{rc};N)}\cdot q^{\omega_{base}(\lambda,\mu,\nu;N)}}=q^{-K_{2}(\lambda,\mu,\nu)}
\een
where $q^{-K_{2}(\lambda,\mu,\nu)}$ defined in Lemma \ref{recurrence5-weight7} is independent of $N$.
Since
\ben
&&\lim_{N\to\infty}\widetilde{\mathbf{Z}}_{\sigma_{ad}}^{DD}(\mathbf{H}(N),\mathbf{N}_{\lambda^{rc},\mu,\nu^{rc}}(N)-\{a,d\})=W^{n}_{\lambda^r\mu\nu^c}(q_{0}^{-1},q_{1}^{-1},\cdots,q_{n-1}^{-1}),\\
&&\lim_{N\to\infty}\widetilde{\mathbf{Z}}_{\sigma_{bc}}^{DD}(\mathbf{H}(N),\mathbf{N}_{\lambda^{rc},\mu,\nu^{rc}}(N)-\{b,c\})=W^{n}_{\lambda^c\mu\nu^r}(q_{0}^{-1},q_{1}^{-1},\cdots,q_{n-1}^{-1})
\een
by Theorem \ref{orbifold-PT-vertex}, let $N\to\infty$, we have
\ben
&&W^{n}_{\lambda\mu\nu}(q_{0}^{-1},q_{1}^{-1},\cdots,q_{n-1}^{-1})\cdot W^{n}_{\lambda^{rc}\mu\nu^{rc}}(q_{0}^{-1},q_{1}^{-1},\cdots,q_{n-1}^{-1})\\
&=&W^{n}_{\lambda\mu\nu^{rc}}(q_{0}^{-1},q_{1}^{-1},\cdots,q_{n-1}^{-1})\cdot W^{n}_{\lambda^{rc}\mu\nu}(q_{0}^{-1},q_{1}^{-1},\cdots,q_{n-1}^{-1})\\
&&+q^{-K_{2}(\lambda,\mu,\nu)}\cdot W^{n}_{\lambda^r\mu\nu^c}(q_{0}^{-1},q_{1}^{-1},\cdots,q_{n-1}^{-1})\cdot W^{n}_{\lambda^c\mu\nu^r}(q_{0}^{-1},q_{1}^{-1},\cdots,q_{n-1}^{-1})
\een
which is equivalent to 
\bea\label{PT-recurrence2}
W^{n}_{\lambda\mu\nu}
=W^{n}_{\lambda^{rc}\mu\nu}\cdot W^{n}_{\lambda\mu\nu^{rc}}\cdot\left(W^{n}_{\lambda^{rc}\mu\nu^{rc}}\right)^{-1}
+q^{K_{2}(\lambda,\mu,\nu)}\cdot W^{n}_{\lambda^r\mu\nu^c}\cdot W^{n}_{\lambda^c\mu\nu^r}\left(W^{n}_{\lambda^{rc}\mu\nu^{rc}}\right)^{-1}.
\eea

As in DT case, one can derive another graphical condensation recurrence from the symmetry of $W^{n}_{\lambda\mu\nu}$ and Equation \eqref{PT-recurrence2} as follows
\ben
\overline{W}^{n}_{\mu^\prime\lambda^\prime\nu^\prime}\cdot\overline{W}^{n}_{\mu^\prime(\lambda^\prime)^{rc}(\nu^\prime)^{rc}}
=\overline{W}^{n}_{\mu^\prime\lambda^\prime(\nu^\prime)^{rc}}\cdot\overline{W}^{n}_{\mu^\prime(\lambda^\prime)^{rc}\nu^\prime}
+\overline{q^{K_{3}(\mu^\prime,\lambda^\prime,\nu^\prime)}}\cdot \overline{W}^{n}_{\mu^\prime(\lambda^\prime)^c(\nu^\prime)^r}\cdot\overline{W}^{n}_{\mu^\prime(\lambda^\prime)^r(\nu^\prime)^c}
\een
which is equivalent to Equation \eqref{PT-recurrence3}. Similarly, the relations among Equations \eqref{PT-recurrence1}, \eqref{PT-recurrence2}, \eqref{PT-recurrence3} are  similar as the orbifold DT case.  Independently, the  graphical condensation recurrence \eqref{PT-recurrence3} can be obtained by setting
$\mathbf{G}=\mathbf{H}(N)$, $\mathbf{N}=\mathbf{N}_{\lambda,\mu^{rc},\nu^{rc}}(N)$ and
\ben
a:=\max S_{2}^-,\;\;\;b:=\min S_{2}^+,\;\;\;c=\max S_{3}^-,\;\;\;d=\min S_{3}^+.
\een
Then
\ben
&&(\mathbf{H}(N),\mathbf{N}_{\lambda,\mu,\nu}(N))=(\mathbf{H}(N),\mathbf{N}_{\lambda,\mu^{rc},\nu^{rc}}(N)-\{a,b,c,d\});\\
&&(\mathbf{H}(N),\mathbf{N}_{\lambda,\mu,\nu^{rc}}(N))=(\mathbf{H}(N),\mathbf{N}_{\lambda,\mu^{rc},\nu^{rc}}(N)-\{a,b\});\\
&&(\mathbf{H}(N),\mathbf{N}_{\lambda,\mu^{rc},\nu}(N))=(\mathbf{H}(N),\mathbf{N}_{\lambda,\mu^{rc},\nu^{rc}}(N)-\{c,d\}).
\een

Since $\mathbf{N}-\{a,d\}=\mathbf{N}_{\lambda,\mu,\nu}(N)\cup\{b,c\}$ and $\mathbf{N}-\{b,c\}=\mathbf{N}_{\lambda,\mu,\nu}(N)\cup\{a,d\}$, the pairing $\sigma_{ad}$ on $\mathbf{N}-\{a,d\}$ has one more red-blue path connecting a red node $b$ in sector 2 and a blue node $c$ in sector 3 while the pairing $\sigma_{bc}$ has one fewer red-blue path but one more green-red path and one more green-blue path  than $\sigma_{abcd}$. The double-dimer configuration on $\mathbf{H}(N)$ with nodes $\mathbf{N}-\{a,d\}$ is obtained from an AB configuration $(A,B)$  with respect to $(\lambda,\mu^r,\nu^c)$, 
but is not equal to $\mathbf{D}_{(A,B)}(N)$. This dimer configuration and the corresponding tilings are shifted in the left-down direction  by one unit prior to truncation. We call it  the base$_{left-down}$ double-dimer configuration and denote its edge-weight by $q^{\omega^{LD}_{base}(\lambda,\mu^r,\nu^c;N)}$. Similarly, the double-dimer configuration on $\mathbf{H}(N)$ with nodes $\mathbf{N}-\{b,c\}$ is derived from an AB configuration  with respect to $(\lambda,\mu^c,\nu^r)$, and this  configuration and its corresponding tilings are shifed   by one unit in the right-up direction prior to truncation.  We call it  the base$_{right-up}$ double-dimer configuration and denote its edge-weight by $q^{\omega^{RU}_{base}(\lambda,\mu^c,\nu^r;N)}$. Let 
\ben
&&\widetilde{\mathbf{Z}}_{\sigma}^{DD}(\mathbf{H}(N),\mathbf{N}_{\lambda,\mu^{rc},\nu^{rc}}(N)):=\left(q^{\omega_{base}(\lambda,\mu^{rc},\nu^{rc};N)}\right)^{-1}\mathbf{Z}_{\sigma}^{DD}(\mathbf{H}(N),\mathbf{N}_{\lambda,\mu^{rc},\nu^{rc}}(N)),\\
%&&\widetilde{\mathbf{Z}}_{\sigma_{abcd}}^{DD}(\mathbf{H}(N),\mathbf{N}_{\lambda,\mu,\nu}(N))=q^{-\omega_{base}(\lambda,\mu,\nu;N)}\mathbf{Z}_{\sigma_{abcd}}^{DD}(\mathbf{H}(N),\mathbf{N}_{\lambda,\mu,\nu}(N)),\\
%&&\widetilde{\mathbf{Z}}_{\sigma_{ab}}^{DD}(\mathbf{H}(N),\mathbf{N}_{\lambda,\mu,\nu^{rc}}(N))=q^{-\omega_{base}(\lambda,\mu,\nu^{rc};N)}\mathbf{Z}_{\sigma_{ab}}^{DD}(\mathbf{H}(N),\mathbf{N}_{\lambda,\mu,\nu^{rc}}(N)),\\
&&\widetilde{\mathbf{Z}}_{\sigma_{cd}}^{DD}(\mathbf{H}(N),\mathbf{N}_{\lambda,\mu^{rc},\nu}(N)):=\left(q^{\omega_{base}(\lambda,\mu^{rc},\nu;N)}\right)^{-1}\mathbf{Z}_{\sigma_{cd}}^{DD}(\mathbf{H}(N),\mathbf{N}_{\lambda,\mu^{rc},\nu}(N)),\\
&&\widetilde{\mathbf{Z}}_{\sigma_{ad}}^{DD}(\mathbf{H}(N),\mathbf{N}_{\lambda,\mu^{rc},\nu^{rc}}(N)-\{a,d\}):=\left(q^{\omega^{LD}_{base}(\lambda,\mu^{r},\nu^{c};N)}\right)^{-1}\mathbf{Z}_{\sigma_{ad}}^{DD}(\mathbf{H}(N),\mathbf{N}_{\lambda,\mu^{rc},\nu^{rc}}(N)-\{a,d\}),\\
&&\widetilde{\mathbf{Z}}_{\sigma_{bc}}^{DD}(\mathbf{H}(N),\mathbf{N}_{\lambda,\mu^{rc},\nu^{rc}}(N)-\{b,c\}):=\left(q^{\omega^{RU}_{base}(\lambda,\mu^{c},\nu^{r};N)}\right)^{-1}\mathbf{Z}_{\sigma_{bc}}^{DD}(\mathbf{H}(N),\mathbf{N}_{\lambda,\mu^{rc},\nu^{rc}}(N)-\{b,c\}).
\een
By Theorem \ref{graphical-condensation2}, we have 
\ben
&&\widetilde{\mathbf{Z}}_{\sigma}^{DD}(\mathbf{H}(N),\mathbf{N}_{\lambda,\mu^{rc},\nu^{rc}}(N))\widetilde{\mathbf{Z}}_{\sigma_{abcd}}^{DD}(\mathbf{H}(N),\mathbf{N}_{\lambda,\mu,\nu}(N))\\
&=&\frac{q^{\omega_{base}(\lambda,\mu,\nu^{rc};N)}\cdot q^{\omega_{base}(\lambda,\mu^{rc},\nu;N)} }{q^{\omega_{base}(\lambda,\mu^{rc},\nu^{rc};N)}\cdot q^{\omega_{base}(\lambda,\mu,\nu;N)} }\cdot\widetilde{\mathbf{Z}}_{\sigma_{ab}}^{DD}(\mathbf{H}(N),\mathbf{N}_{\lambda,\mu,\nu^{rc}}(N))\widetilde{\mathbf{Z}}_{\sigma_{cd}}^{DD}(\mathbf{H}(N),\mathbf{N}_{\lambda,\mu^{rc},\nu}(N))\\
&+&\frac{q^{\omega^{LD}_{base}(\lambda,\mu^{r},\nu^{c};N)}\cdot q^{\omega^{RU}_{base}(\lambda,\mu^{c},\nu^{r};N)} }{q^{\omega_{base}(\lambda,\mu^{rc},\nu^{rc};N)}\cdot q^{\omega_{base}(\lambda,\mu,\nu;N)}}\cdot\widetilde{\mathbf{Z}}_{\sigma_{ad}}^{DD}(\mathbf{H}(N),\mathbf{N}_{\lambda,\mu^{rc},\nu^{rc}}(N)-\{a,d\})\widetilde{\mathbf{Z}}_{\sigma_{bc}}^{DD}(\mathbf{H}(N),\mathbf{N}_{\lambda,\mu^{rc},\nu^{rc}}(N)-\{b,c\})
\een
It is proved in Section 5.4.3 that 
\ben
&&\frac{q^{\omega_{base}(\lambda,\mu,\nu^{rc};N)}\cdot q^{\omega_{base}(\lambda,\mu^{rc},\nu;N)} }{q^{\omega_{base}(\lambda,\mu^{rc},\nu^{rc};N)}\cdot q^{\omega_{base}(\lambda,\mu,\nu;N)}}=1\;\;\;\;\;\;\mbox{and}\;\;\;\;\;\;\frac{q^{\omega^{LD}_{base}(\lambda,\mu^{r},\nu^{c};N)}\cdot q^{\omega^{RU}_{base}(\lambda,\mu^{c},\nu^{r};N)} }{q^{\omega_{base}(\lambda,\mu^{rc},\nu^{rc};N)}\cdot q^{\omega_{base}(\lambda,\mu,\nu;N)}}=q^{-K_{3}(\lambda,\mu,\nu)}
\een
where $q^{-K_{3}(\lambda,\mu,\nu)}$ defined in Lemma \ref{recurrence6-weight7} is independent of $N$.
Since
\ben
&&\lim_{N\to\infty}\widetilde{\mathbf{Z}}_{\sigma_{ad}}^{DD}(\mathbf{H}(N),\mathbf{N}_{\lambda,\mu^{rc},\nu^{rc}}(N)-\{a,d\})=W^{n}_{\lambda\mu^r\nu^c}(q_{0}^{-1},q_{1}^{-1},\cdots,q_{n-1}^{-1}),\\
&&\lim_{N\to\infty}\widetilde{\mathbf{Z}}_{\sigma_{bc}}^{DD}(\mathbf{H}(N),\mathbf{N}_{\lambda,\mu^{rc},\nu^{rc}}(N)-\{b,c\})=W^{n}_{\lambda\mu^c\nu^r}(q_{0}^{-1},q_{1}^{-1},\cdots,q_{n-1}^{-1})
\een
by Theorem \ref{orbifold-PT-vertex}, let $N\to\infty$, we have
\ben
&&W^{n}_{\lambda\mu\nu}(q_{0}^{-1},q_{1}^{-1},\cdots,q_{n-1}^{-1})\cdot W^{n}_{\lambda\mu^{rc}\nu^{rc}}(q_{0}^{-1},q_{1}^{-1},\cdots,q_{n-1}^{-1})\\
&=&W^{n}_{\lambda\mu\nu^{rc}}(q_{0}^{-1},q_{1}^{-1},\cdots,q_{n-1}^{-1})\cdot W^{n}_{\lambda\mu^{rc}\nu}(q_{0}^{-1},q_{1}^{-1},\cdots,q_{n-1}^{-1})\\
&&+q^{-K_{3}(\lambda,\mu,\nu)}\cdot W^{n}_{\lambda\mu^r\nu^c}(q_{0}^{-1},q_{1}^{-1},\cdots,q_{n-1}^{-1})\cdot W^{n}_{\lambda\mu^c\nu^r}(q_{0}^{-1},q_{1}^{-1},\cdots,q_{n-1}^{-1}).
\een
Now, we have
\bea\label{PT-recurrence3}
W^{n}_{\lambda\mu\nu}
=W^{n}_{\lambda\mu^{rc}\nu}\cdot W^{n}_{\lambda\mu\nu^{rc}}\cdot\left(W^{n}_{\lambda\mu^{rc}\nu^{rc}}\right)^{-1}
+q^{K_{3}(\lambda,\mu,\nu)}\cdot W^{n}_{\lambda\mu^r\nu^c}\cdot W^{n}_{\lambda\mu^c\nu^r}\cdot\left(W^{n}_{\lambda\mu^{rc}\nu^{rc}}\right)^{-1}.
\eea

\subsection{Weights for  orbifold PT theory}
For simplifications, we introduce  the following notations
\ben
&&q^{\vartheta_{1}(\eta,m,k,l)}:=\prod_{i=1}^{m-\ell(\eta)-l}\prod_{j=1}^{i}q_{-m+i-j+1}^{i+k},\\
&&q^{\vartheta_{2}(\eta,m,k)}:=\prod_{i:1\leq i\leq\eta_{i}}\prod_{j=1}^{m-\eta_{i}}q_{-i-j+k+1}^{m+\eta_{i}-i-k}\cdot\prod_{i:\eta_{i}< i\leq\ell(\eta)}\prod_{j=1}^{m-i}q_{-i-j+k+1}^{m+\eta_{i}-i-k},\\
&&q^{\vartheta_{3}(\eta,m,k,l)}:=\prod_{i=1}^{m-\ell(\eta)-1}\prod_{j=\ell(\eta)+1}^{m-i}q_{i+j-l}^{m+i-k},\\
&&q^{\vartheta_{4}(\eta,m,k)}:=\prod_{i:1\leq i\leq\eta_{i}}\prod_{j=1}^{m-\eta_{i}-k}q_{i+j+k-1}^{m+\eta_{i}+j+k-1}\cdot\prod_{i:\eta_{i}<i\leq\ell(\eta)}\prod_{j=1}^{m-i-k}q_{i+j+k-1}^{m+\eta_{i}+j+k-1},\\
&&q^{\vartheta_{5}(\eta,m,k)}:=\left\{
\begin{aligned}
	& 1 ,\; \;\;\mbox{if $\eta=\emptyset$}, \\
	& \prod_{i:1\leq i\leq\eta_{i}+1}\prod_{j=1}^{m-\eta_{i}-1}q_{-i-j+k+1}^{m+\eta_{i}-i-k+1}\cdot\prod_{i:\eta_{i}+1< i\leq\ell(\eta)}\prod_{j=1}^{m-i}q_{-i-j+k+1}^{m+\eta_{i}-i-k+1}, \;\;\;\mbox{if $\eta\neq\emptyset$},
\end{aligned}
\right.\\
&&q^{\vartheta_{6}(\eta,m)}:=\left\{
\begin{aligned}
	& 1 ,\; \;\;\mbox{if $\eta=\emptyset$}, \\
	& \prod_{i:1\leq i\leq\eta_{i}+1}\prod_{j=1}^{m-\eta_{i}-1}q_{i+j-1}^{m+\eta_{i}+j}\cdot\prod_{i:\eta_{i}+1<i\leq\ell(\eta)}\prod_{j=1}^{m-i}q_{i+j-1}^{m+\eta_{i}+j}, \;\;\;\mbox{if $\eta\neq\emptyset$},
\end{aligned}
\right.\\
&&q^{\vartheta_{7}(\eta,m,k)}:=\prod_{i:1\leq i<\eta_{i}-1}\prod_{j=1}^{m-\eta_{i}+k}q_{-i-j+k}^{m+\eta_{i}-i-1}\cdot\prod_{i:\eta_{i}-1\leq i\leq\ell(\eta)}\prod_{j=1}^{m-i+k-1}q_{-i-j+k}^{m+\eta_{i}-i-1},\\
&&q^{\vartheta_{8}(\eta,m,k,l)}:=\prod_{i:1\leq i<\eta_{i}-k}\prod_{j=1}^{m-\eta_{i}}q_{i+j+k-l-1}^{m+\eta_{i}+j-2l-1}\cdot\prod_{i:\eta_{i}-k\leq i\leq\ell(\eta)}\prod_{j=1}^{m-i-1}q_{i+j+k-l-1}^{m+\eta_{i}+j-2l-1},
\een
where $m, l, k$ are all integers and $\eta$ is a partition.

\subsubsection{Weights for the graphical condensation recurrence with $(\mathbf{H}(N),\mathbf{N}_{\lambda^{rc},\mu^{rc},\nu}(N))$}
We will  compute the edge-weights of several double-dimer configurations as  in [\cite{JWY}, Section 5.3] with the given weighted rule in Definition \ref{weight-rule}.
Since the base double-dimer configuration $\mathbf{D}_{(\mathrm{III},\mathrm{II}\cup\mathrm{III})}(N)$ has the edge-weight $q^{\omega_{base}(\lambda,\mu,\nu;N)}$, which is obtained from the  weights of all horizontal dimers of $\mathbf{D}_{A}(N)$ and the  weights of all horizontal dimers of  $\mathbf{D}_{B}(N)$ where $(A,B)=(\mathrm{III},\mathrm{II}\cup\mathrm{III})$. For the horizontal dimers of $\mathbf{D}_{A}(N)$ in sector 1, one can divide them into $\ell(\lambda^\prime)+1$ groups  with the  group $i$ along the $i$-th part of $\lambda^\prime$ if $1\leq i\leq\ell(\lambda^\prime)$ and the others as a group if $i>\ell(\lambda^\prime)$. For $i>\ell(\lambda^\prime)$, the product of weights of all the edges in this group is $\prod\limits_{i=1}^{N-\ell(\lambda^\prime)-1}\prod\limits_{j=1}^iq_{-N+i-j+1}^i$. If $\lambda^\prime_{i}\geq i$, there are still $(N-\lambda^\prime_{i})$ horizontal dimers in $\mathbf{H}(N)$ with weights: $q_{-i}^{N+\lambda^\prime_{i}-i}, \cdots, q_{-i-N+\lambda^\prime_{i}+1}^{N+\lambda^\prime_{i}-i}$. If $\lambda^\prime_{i}<i$, there are  still $(N-i)$ horizontal dimers in $\mathbf{H}(N)$ with weights: $q_{-i}^{N+\lambda^\prime_{i}-i}, \cdots, q_{1-N}^{N+\lambda^\prime_{i}-i}$. Similarly, one can divide all horizontal dimers of $\mathbf{D}_{A}(N)$ in sector 2 into $\ell(\mu)+1$ groups with the group $i$ along the $i$-th part of $\mu$ if $1\leq i\leq\ell(\mu)$ and the others as a group if $i>\ell(\mu)$. For $i>\ell(\mu)$, the product of weights of all the edges in this group
is $\prod\limits_{i=1}^{N-\ell(\mu)-1}\prod\limits_{j=\ell(\mu)+1}^{N-i}q_{i+j-1}^{N+i-1}$. If $\mu_{i}\geq i$, there are still $(N-\mu_{i})$ horizontal dimers in $\mathbf{H}(N)$ with weights: $q_{i}^{N+\mu_{i}}, q_{i+1}^{N+\mu_{i}+1}, \cdots, q_{i+N-\mu_{i}-1}^{2N-1}$. If $\mu_{i}<i$, there are still $(N-i)$ horizontal dimers in $\mathbf{H}(N)$ with weights: $q_{i}^{N+\mu_{i}}, q_{i+1}^{N+\mu_{i}+1}, \cdots, q_{N-1}^{2N+\mu_{i}-i-1}$. The total weight from the horizontal dimers of $\mathbf{D}_{A}(N)$ in sector 3 is $\prod\limits_{i=1}^{\ell(\nu)}\prod\limits_{j=0}^{\nu_{i}-1}q_{j-i+1}^{N-i}$. Since the horizontal dimers of  $\mathbf{D}_{B}(N)$ appear only in sector 3, the total weight from $\mathbf{D}_{B}(N)$ is $\prod\limits_{i=0}^{N-1}\prod\limits_{j=0}^{N-1}q_{j-i}^{N-i-1}$.
Therefore, we have

\begin{lemma}\label{recurrence4-weight1}
The edge-weight of the base double-dimer configuration is 
\ben
&&q^{\omega_{base}(\lambda,\mu,\nu;N)}\\
&=&\prod_{i=1}^{N-\ell(\lambda^\prime)-1}\prod_{j=1}^iq_{-N+i-j+1}^i\cdot\prod_{i:1\leq i\leq\lambda^\prime_{i}}\prod_{j=1}^{N-\lambda^\prime_{i}}q_{-i-j+1}^{N+\lambda^\prime_{i}-i}\cdot\prod_{i:\lambda^\prime_{i}< i\leq\ell(\lambda^\prime)}\prod_{j=1}^{N-i}q_{-i-j+1}^{N+\lambda^\prime_{i}-i}\\
&&\times\prod_{i=1}^{N-\ell(\mu)-1}\prod_{j=\ell(\mu)+1}^{N-i}q_{i+j-1}^{N+i-1}\cdot\prod_{i:1\leq i\leq\mu_{i}}\prod_{j=1}^{N-\mu_{i}}q_{i+j-1}^{N+\mu_{i}+j-1}\cdot\prod_{i:\mu_{i}<i\leq\ell(\mu)}\prod_{j=1}^{N-i}q_{i+j-1}^{N+\mu_{i}+j-1}\\
&&\times\prod_{i=1}^{\ell(\nu)}\prod_{j=0}^{\nu_{i}-1}q_{j-i+1}^{N-i}\cdot\prod_{i=0}^{N-1}\prod_{j=0}^{N-1}q_{j-i}^{N-i-1}\\
&=&q^{\vartheta_{1}(\lambda^\prime,N,0,1)}\cdot q^{\vartheta_{2}(\lambda^\prime,N,0)}\cdot q^{\vartheta_{3}(\mu,N,1,1)}\cdot q^{\vartheta_{4}(\mu,N,0)}\cdot \left(q^{\varpi_{3}(\nu,N,1)}\right)^{-1}\cdot q^{\varpi_{1}(N-1,0,0)}.
\een
\end{lemma}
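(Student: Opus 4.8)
The plan is to read the edge-weight off directly from the combinatorics of the base configuration. By Definition \ref{weight-rule}(i) every non-horizontal edge of $\mathbf{H}(N)$ carries weight $1$, so $q^{\omega_{base}(\lambda,\mu,\nu;N)}$ is precisely the product of the weights of the horizontal dimers of the two superimposed matchings $\mathbf{D}_A(N)$ and $\mathbf{D}_B(N)$, where $(A,B)=(\mathrm{III},\mathrm{II}\cup\mathrm{III})$ is the base AB configuration. First I would dispose of $\mathbf{D}_B(N)$: its horizontal dimers lie in sector $3$ and their total weight is $\prod_{i=0}^{N-1}\prod_{j=0}^{N-1}q_{j-i}^{N-i-1}$, which is exactly $q^{\varpi_1(N-1,0,0)}$, the weight of the minimal matching of $\mathbf{H}(N)$ attached to the empty $3$D partition as recorded in Section 4.3.1. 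This handles the last two factors in the claimed product.

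Next I would analyze $\mathbf{D}_A(N)$ sector by sector, the bulk of the argument. In sector $1$ I group the horizontal dimers into $\ell(\lambda')+1$ families: one running along the $i$-th part of $\lambda'$ for each $1\le i\le \ell(\lambda')$, together with a remaining family indexed by $i>\ell(\lambda')$. Reading the column labels $q_{i-j}$ and the bottom-to-top height exponents prescribed by Definition \ref{weight-rule}(iii)--(iv), the remaining family contributes $\prod_{i=1}^{N-\ell(\lambda')-1}\prod_{j=1}^{i}q_{-N+i-j+1}^{\,i}=q^{\vartheta_1(\lambda',N,0,1)}$. For a fixed part $i$ the number of dimers is $N-\lambda'_i$ when $\lambda'_i\ge i$ and $N-i$ when $\lambda'_i<i$, while in both cases the common height exponent is $N+\lambda'_i-i$; assembling the two cases gives $q^{\vartheta_2(\lambda',N,0)}$. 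The identical bookkeeping in sector $2$, now along the parts of $\mu$ and using the column convention on the opposite side of the central column, yields $q^{\vartheta_3(\mu,N,1,1)}$ for the $i>\ell(\mu)$ family and $q^{\vartheta_4(\mu,N,0)}$ for the part-families (here the height exponent is $N+\mu_i+j-1$, with dimer count $N-\mu_i$ or $N-i$ according to $\mu_i\ge i$ or $\mu_i<i$). Finally the sector-$3$ horizontal dimers of $\mathbf{D}_A(N)$ are read off directly as $\prod_{i=1}^{\ell(\nu)}\prod_{j=0}^{\nu_i-1}q_{j-i+1}^{N-i}=\left(q^{\varpi_3(\nu,N,1)}\right)^{-1}$.

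Multiplying the six factors produces the asserted identity, so the proof is complete once each sector contribution is matched against its defining product. I expect the only real difficulty to be the index bookkeeping: one must correctly convert the ``column and height'' description of the weighted rule into the exponents $N+\lambda'_i-i$ and $N+\mu_i+j-1$, verify that the two dimer-count cases $\lambda'_i\ge i$ versus $\lambda'_i<i$ (and the $\mu$-analogue) reassemble exactly into the piecewise products defining $\vartheta_2$ and $\vartheta_4$, and keep the subscript conventions of $\mathbf{H}(N)$ consistent so that each $q_l$ is indexed by $i-j \bmod n$ under the identification $q_k=q_{k'}$ of Definition \ref{weight-rule}. Once these sector weights are pinned down, the comparison with the compact $\vartheta$- and $\varpi$-notation is purely a matter of reading off defining products.
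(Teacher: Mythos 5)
Your proposal is correct and follows essentially the same route as the paper: the paper's argument (given in the prose immediately preceding the lemma) likewise reduces the edge-weight to the horizontal dimers of $\mathbf{D}_{A}(N)$ and $\mathbf{D}_{B}(N)$ for $(A,B)=(\mathrm{III},\mathrm{II}\cup\mathrm{III})$, groups the sector-1 and sector-2 dimers into $\ell(\lambda^\prime)+1$ and $\ell(\mu)+1$ families along the parts plus a residual family, uses the same dimer counts ($N-\lambda^\prime_{i}$ vs.\ $N-i$, resp.\ $N-\mu_{i}$ vs.\ $N-i$) with exponents $N+\lambda^\prime_{i}-i$ and $N+\mu_{i}+j-1$, and reads off the sector-3 and $\mathbf{D}_{B}$ contributions before matching everything to the $\vartheta$- and $\varpi$-notation. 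All of your identifications with $\vartheta_{1},\vartheta_{2},\vartheta_{3},\vartheta_{4},\varpi_{3},\varpi_{1}$ check against their definitions, so the proposal is sound.
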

Similarly, we have
\begin{lemma}\label{recurrence4-weight2}
The edge-weight of the base$_{up}$ double-dimer configuration is 	
\ben
&&q^{\omega_{base}^{U}(\lambda^r,\mu^c,\nu;n)}\\
&=&\prod_{i=1}^{N-\ell((\lambda^r)^\prime)-1}\prod_{j=1}^iq_{-N+i-j+1}^{i+1}\cdot\prod_{i:1\leq i\leq(\lambda^r)^\prime_{i}+1}\prod_{j=1}^{N-(\lambda^r)^\prime_{i}-1}q_{-i-j+1}^{N+(\lambda^r)^\prime_{i}-i+1}\cdot\prod_{i:(\lambda^r)^\prime_{i}+1< i\leq\ell((\lambda^r)^\prime)}\prod_{j=1}^{N-i}q_{-i-j+1}^{N+(\lambda^r)^\prime_{i}-i+1}\\
&&\times\prod_{i=1}^{N-\ell(\mu^c)-1}\prod_{j=\ell(\mu^c)+1}^{N-i}q_{i+j-1}^{N+i}\cdot\prod_{i:1\leq i\leq\mu^c_{i}+1}\prod_{j=1}^{N-\mu^c_{i}-1}q_{i+j-1}^{N+\mu^c_{i}+j}\cdot\prod_{i:\mu^c_{i}+1<i\leq\ell(\mu^c)}\prod_{j=1}^{N-i}q_{i+j-1}^{N+\mu^c_{i}+j}\\
&&\times q_{N}^{-N}\cdot\prod_{i=1}^{\ell(\nu)}\prod_{j=0}^{\nu_{i}-1}q_{j-i+1}^{N-i+1}\cdot \prod_{i=0}^{N}\prod_{j=0}^{N}q_{j-i}^{N-i}\\
&=&q^{\vartheta_{1}((\lambda^\prime)^c,N,1,1)}\cdot q^{\vartheta_{5}((\lambda^\prime)^c,N,0)}\cdot q^{\vartheta_{3}(\mu^c,N,0,1)}\cdot q^{\vartheta_{6}(\mu^c,N)}\cdot
\left(q^{\varpi_{3}(\nu,N+1,1)}\right)^{-1}\cdot q^{\varpi_{1}(N,0,0)}\cdot q_{N}^{-N}.
\een
\end{lemma}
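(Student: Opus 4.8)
The plan is to mirror the sector-by-sector computation carried out in the proof of Lemma \ref{recurrence4-weight1}, now applied to the triple $(\lambda^r,\mu^c,\nu)$, while carefully tracking the effect of the single upward shift that distinguishes the base$_{up}$ configuration from an ordinary base configuration. First I would recall that the base$_{up}$ double-dimer configuration is obtained from the base AB configuration $(\mathrm{III},\mathrm{II}\cup\mathrm{III})$ with respect to $(\lambda^r,\mu^c,\nu)$ by shifting the dimer configuration (equivalently, its rhombus tilings) directly up by one unit prior to truncation to $\mathbf{H}(N)$. Thus its edge-weight is the product of the horizontal-dimer weights of the shifted $\mathbf{D}_{A}(N)$ with $A=\mathrm{III}$, taken over sectors $1,2,3$, together with the horizontal-dimer weights of the shifted $\mathbf{D}_{B}(N)$ with $B=\mathrm{II}\cup\mathrm{III}$, which again occur only in sector $3$.

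Next, I would organize the horizontal dimers of $\mathbf{D}_A(N)$ in sector $1$ into $\ell((\lambda^r)^\prime)+1$ groups along the parts of $(\lambda^r)^\prime$, and those in sector $2$ into $\ell(\mu^c)+1$ groups along the parts of $\mu^c$, exactly as in Lemma \ref{recurrence4-weight1}, splitting each sector into the contributions with $(\lambda^r)^\prime_i\geq i$ versus $(\lambda^r)^\prime_i<i$ (and analogously for $\mu^c$). The upward shift raises the relevant exponents by one and effectively enlarges the graph in the shifted direction: it sends the excess-group contribution $\prod_{i=1}^{N-\ell(\lambda^\prime)-1}\prod_{j=1}^{i}q_{-N+i-j+1}^{i}$ of the unshifted case to $\prod_{i=1}^{N-\ell((\lambda^r)^\prime)-1}\prod_{j=1}^{i}q_{-N+i-j+1}^{i+1}$, produces the isolated boundary correction $q_{N}^{-N}$, and replaces $\varpi_{1}(N-1,0,0)$ by $\varpi_{1}(N,0,0)$ together with $\varpi_{3}(\nu,N,1)$ by $\varpi_{3}(\nu,N+1,1)$ in the $\mathbf{D}_B(N)$ and sector-$3$ contributions. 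For sector $3$ the analysis is otherwise identical to the base case up to the shifted range of the $\nu$-leg.

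Finally, I would assemble these sector contributions and repackage them into the compact $\vartheta$- and $\varpi$-notation of Section 5.4, using Lemma \ref{transpose} to rewrite $(\lambda^r)^\prime=(\lambda^\prime)^c$ and Lemma \ref{modified-partition} to control the part-sizes of $\lambda^r$ and $\mu^c$ that govern the group boundaries, thereby matching the claimed factors $q^{\vartheta_{1}((\lambda^\prime)^c,N,1,1)}$, $q^{\vartheta_{5}((\lambda^\prime)^c,N,0)}$, $q^{\vartheta_{3}(\mu^c,N,0,1)}$, and $q^{\vartheta_{6}(\mu^c,N)}$ termwise. The main obstacle will be the careful bookkeeping of the boundary terms under the shift: after the passage to $\lambda^r$ and $\mu^c$ the diagonal indices $d$ and $\widetilde{d}$ are altered, so the dividing lines between the two product ranges in each sector move, and one must verify that the standalone factor $q_{N}^{-N}$ together with the increments $N\to N+1$ in $\varpi_{1}$ and $\varpi_{3}$ exactly account for the one-unit vertical translation. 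Once these boundary adjustments are matched, the equality of the two displayed expressions reduces to a routine rearrangement of finite products.
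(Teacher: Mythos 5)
Your proposal is correct and follows essentially the same route as the paper: the paper's own justification is precisely the sector-by-sector horizontal-dimer computation given for Lemma \ref{recurrence4-weight1}, repeated ``similarly'' for the base AB configuration of $(\lambda^r,\mu^c,\nu)$ with the one-unit upward shift tracked through the exponents, the boundary factor $q_{N}^{-N}$, and the replacements $\varpi_{1}(N-1,0,0)\to\varpi_{1}(N,0,0)$, $\varpi_{3}(\nu,N,1)\to\varpi_{3}(\nu,N+1,1)$. Your use of Lemma \ref{transpose} to write $(\lambda^r)^\prime=(\lambda^\prime)^c$ and repackage the result in the $\vartheta$-notation is exactly what the stated formula encodes.
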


\begin{lemma}\label{recurrence4-weight3}
The edge-weight of the base$_{down}$ double-dimer configuration is 	
\ben
&&q^{\omega_{base}^{D}(\lambda^c,\mu^r,\nu;n)}\\
&=&\prod_{i=1}^{N-\ell((\lambda^c)^\prime)-1}\prod_{j=1}^iq_{-N+i-j+1}^{i-1}\cdot\prod_{i:1\leq i<(\lambda^c)^\prime_{i}-1}\prod_{j=1}^{N-(\lambda^c)^\prime_{i}+1}q_{-i-j+1}^{N+(\lambda^c)^\prime_{i}-i-1}\cdot\prod_{i:(\lambda^c)^\prime_{i}-1\leq i\leq\ell((\lambda^c)^\prime)}\prod_{j=1}^{N-i}q_{-i-j+1}^{N+(\lambda^c)^\prime_{i}-i-1}\\
&&\times\prod_{i=1}^{N-\ell(\mu^r)-1}\prod_{j=\ell(\mu^r)+1}^{N-i}q_{i+j-1}^{N+i-2}\cdot\prod_{i:1\leq i<\mu^r_{i}-1}\prod_{j=1}^{N-\mu^r_{i}+1}q_{i+j-1}^{N+\mu^r_{i}+j-2}\cdot\prod_{i:\mu^r_{i}-1\leq i\leq\ell(\mu^r)}\prod_{j=1}^{N-i}q_{i+j-1}^{N+\mu^r_{i}+j-2}\\
&&\times\prod_{i=1}^{\ell(\nu)}\prod_{j=0}^{\nu_{i}-1}q_{j-i+1}^{N-i-1}\cdot\prod_{i=0}^{N-2}\prod_{j=0}^{N-2}q_{j-i}^{N-i-2}\\
&=&q^{\vartheta_{1}((\lambda^\prime)^r,N,-1,1)}\cdot q^{\vartheta_{7}((\lambda^\prime)^r,N,1)}\cdot q^{\vartheta_{3}(\mu^r,N,2,1)}\cdot q^{\vartheta_{8}(\mu^r,N+1,1,1)}\cdot \left(q^{\varpi_{3}(\nu,N-1,1)}\right)^{-1}\cdot q^{\varpi_{1}(N-2,0,0)}.
\een
\end{lemma}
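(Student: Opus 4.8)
The plan is to carry out the same explicit bookkeeping of horizontal-dimer weights that produced Lemmas \ref{recurrence4-weight1} and \ref{recurrence4-weight2}, adapted to the base$_{down}$ configuration. Recall that this double-dimer configuration arises from an AB configuration with respect to $(\lambda^c,\mu^r,\nu)$, but with its tilings shifted directly down by one unit before truncation to $\mathbf{H}(N)$. Concretely, I would first record how the one-unit downward shift acts: it decreases the effective ``size'' of the floor of the hexagonal region by one (so the background dimers of $\mathbf{D}_B(N)$ range over an $(N-1)\times(N-1)$ block rather than $N\times N$), and it uniformly drops the weight-exponent contributed by each horizontal dimer by one relative to the unshifted base case. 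This is the mirror image of the upward shift analyzed for Lemma \ref{recurrence4-weight2}, and explains at the outset why the answer is centered at $N-2$ (via $q^{\varpi_{1}(N-2,0,0)}$) rather than at $N-1$ or $N$.

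Next I would treat the three sectors in turn, exactly as in the paragraph preceding Lemma \ref{recurrence4-weight1}. In sector $1$, the horizontal dimers of $\mathbf{D}_A(N)$ are grouped along the parts of $(\lambda^c)^\prime$, and I would split into the two cases $1\le i<(\lambda^c)^\prime_i-1$ and $(\lambda^c)^\prime_i-1\le i\le\ell((\lambda^c)^\prime)$ dictated by the shifted diagonal; the remaining ``background'' group over $i>\ell((\lambda^c)^\prime)$ assembles into $q^{\vartheta_{1}}$, while the two diagonal cases assemble into $q^{\vartheta_{7}}$, with the overall exponent shift by $-1$ encoded in the fourth argument of $\vartheta_1$ and in $\vartheta_7$'s exponent $N+(\lambda^c)^\prime_i-i-1$. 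In sector $2$ the dimers are grouped along the parts of $\mu^r$, yielding $q^{\vartheta_{3}(\mu^r,N,2,1)}$ from the background group and $q^{\vartheta_{8}(\mu^r,N+1,1,1)}$ from the two diagonal cases. Sector $3$ contributes the total weight $\prod_{i=1}^{\ell(\nu)}\prod_{j=0}^{\nu_i-1}q_{j-i+1}^{N-i-1}$, which is $(q^{\varpi_{3}(\nu,N-1,1)})^{-1}$, and the horizontal dimers of $\mathbf{D}_B(N)$ (living only in sector $3$) contribute $q^{\varpi_{1}(N-2,0,0)}$. Collecting these six factors gives the claimed product, and the first equality of the lemma is just the unabbreviated tabulation of the same data.

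Throughout, I would keep the identification $(\lambda^c)^\prime=(\lambda^\prime)^r$ (Lemma \ref{transpose}) in reserve, applying it at the very end to rewrite the sector-$1$ factors $q^{\vartheta_{1}((\lambda^c)^\prime,N,-1,1)}\cdot q^{\vartheta_{7}((\lambda^c)^\prime,N,1)}$ in the form $q^{\vartheta_{1}((\lambda^\prime)^r,N,-1,1)}\cdot q^{\vartheta_{7}((\lambda^\prime)^r,N,1)}$ appearing in the statement; the explicit shapes of $\lambda^c$, $\lambda^r$, $\mu^r$ from Lemma \ref{modified-partition} are what let one match the grouping bounds to the index conditions in the definitions of $\vartheta_7$ and $\vartheta_8$.

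I expect the main obstacle to be the careful alignment of the grouping thresholds and exponents introduced jointly by the column/row operations and the downward shift. In particular, the passage from the geometric condition ``$(\lambda^c)^\prime_i\ge i+1$ versus $(\lambda^c)^\prime_i\le i$'' to the precise cutoffs ``$i<(\lambda^c)^\prime_i-1$ versus $(\lambda^c)^\prime_i-1\le i$'' in $\vartheta_7$, together with the uniform $-1$ exponent drop and the corresponding adjustment of the inner product ranges (e.g.\ the upper limit $m-i+k-1$ rather than $m-i$), is where an off-by-one error is easiest to make; verifying that these match $\vartheta_7$ and $\vartheta_8$ literally, rather than up to a stray power of some $q_k$, is the crux. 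Once that alignment is confirmed in each sector, the remainder is the routine regrouping already illustrated for the base and base$_{up}$ cases, so I would not grind through it in full but rather indicate that it proceeds ``by the same argument.''
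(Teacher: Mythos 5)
Your proposal is correct and is essentially the paper's own argument: the paper proves this lemma only by the phrase ``Similarly, we have,'' referring to the sector-by-sector bookkeeping of horizontal dimer weights carried out for the base configuration (Lemma \ref{recurrence4-weight1}), with the uniform exponent shift by $-1$ and the shifted diagonal cutoffs coming from the one-unit downward shift, and with $(\lambda^c)^\prime=(\lambda^\prime)^r$ from Lemma \ref{transpose} --- exactly as you describe. One trivial slip: the exponent shift in the background factor is encoded in the \emph{third} argument of $\vartheta_{1}$ (the $k=-1$ in $q^{\vartheta_{1}((\lambda^\prime)^r,N,-1,1)}$), not the fourth, but this does not affect your argument.
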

Now it follows from Lemma \ref{recurrence4-weight1} that
\ben
\frac{q^{\omega_{base}(\lambda,\mu^{rc},\nu;N)}\cdot q^{\omega_{base}(\lambda^{rc},\mu,\nu;N)} }{q^{\omega_{base}(\lambda^{rc},\mu^{rc},\nu;N)}\cdot q^{\omega_{base}(\lambda,\mu,\nu;N)}}=1.
\een
And we still need the following lemmas to derive the graphical condensation recurrence \eqref{PT-recurrence1}.
\begin{lemma}\label{recurrence4-weight8}
	For any partition $\lambda, \mu\neq\emptyset$ and $\nu$,
	\ben
	\dfrac{q^{\omega_{base}^{U}(\lambda^r,\mu^c,\nu;N)}\cdot q^{\omega_{base}^{D}(\lambda^{c},\mu^{r},\nu;N)}}{q^{\omega_{base}(\lambda,\mu,\nu;N)}\cdot q^{\omega_{base}(\lambda^{rc},\mu^{rc},\nu;N)}}=q^{-K_{1}(\lambda,\mu,\nu)}
	\een
where 	$q^{-K_{1}(\lambda,\mu,\nu)}=\left(q^{K_{1}(\lambda,\mu,\nu)}\right)^{-1}$ is independent of $N$.
\end{lemma}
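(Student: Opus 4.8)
The plan is to reduce this PT identity to the already-established DT computation of Lemma \ref{recurrence1-weight9} by exploiting the close parallel between the two sides. Concretely, I would substitute the explicit product formulas of Lemmas \ref{recurrence4-weight1}, \ref{recurrence4-weight2} and \ref{recurrence4-weight3} (the last two for the up- and down-shifted base configurations, and Lemma \ref{recurrence4-weight1} applied both to $(\lambda,\mu,\nu)$ and to $(\lambda^{rc},\mu^{rc},\nu)$) into the fourfold quotient
\[
\frac{q^{\omega_{base}^{U}(\lambda^r,\mu^c,\nu;N)}\cdot q^{\omega_{base}^{D}(\lambda^{c},\mu^{r},\nu;N)}}{q^{\omega_{base}(\lambda,\mu,\nu;N)}\cdot q^{\omega_{base}(\lambda^{rc},\mu^{rc},\nu;N)}},
\]
and then group the resulting factors by sector: the $\lambda'$-sector blocks $\vartheta_1,\vartheta_2,\vartheta_5,\vartheta_7$, the $\mu$-sector blocks $\vartheta_3,\vartheta_4,\vartheta_6,\vartheta_8$, the $\nu$-factor $(q^{\varpi_3(\nu,\cdot,1)})^{-1}$, and the bulk factor $q^{\varpi_1(\cdot,0,0)}$. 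The target is to show each sector contributes the reciprocal of its DT counterpart, so that the whole quotient is $(q^{K_1(\lambda,\mu,\nu)})^{-1}=q^{-K_1(\lambda,\mu,\nu)}$ and in particular $N$-independent.

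First I would dispose of the $\nu$- and bulk sectors, which require no case analysis. Because every base weight carries its $\nu$-dependence through a reciprocal exponent $-\varpi_3(\nu,\cdot,1)$, the four $\nu$-contributions assemble into $-[\varpi_3(\nu,N+1,1)+\varpi_3(\nu,N-1,1)-2\varpi_3(\nu,N,1)]$, which vanishes by Lemma \ref{recurrence1-weight6}; this is exactly what allows $\nu$ to be arbitrary (and possibly empty). The bulk $\varpi_1$-factors reorganize into the combination computed in the PT analogue of Lemma \ref{recurrence1-weight4}, whose leftover then merges with the finitely many boundary terms from the remaining sectors. After these reductions the entire content lives in the $\lambda'$- and $\mu$-sectors.

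For the $\lambda'$- and $\mu$-sectors I would establish the intermediate block-ratio lemmas (the PT counterparts of Lemmas \ref{recurrence1-weight5}, \ref{recurrence1-weight7}, \ref{recurrence1-weight8}) by the same case split on $d(\eta)$, $\widetilde d(\eta)$ and $\ell(\eta)$ used on the DT side, invoking the modified-partition formulas of Lemma \ref{modified-partition}, the length relations of Lemma \ref{length-relation}, the diagonal identities of Lemma \ref{diag-length}, the index descriptions of Lemma \ref{value-set}, and the special values of Remark \ref{special-value}. The organizing heuristic—and the running sanity check—is that the PT block-ratio in each sector should come out as the exact reciprocal of the DT block-ratio: the roles of the operations $r$ and $c$ are interchanged between the up/left and down/right shifts, and the $\nu$-exponent enters with opposite sign, reflecting the geometric complementarity between the horizontal edges used by the minimal DT dimer cover and those used by the base PT double-dimer cover. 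Granting the sectorwise inversion, the product of all contributions is $(q^{K_1})^{-1}=q^{-K_1}$, and the $N$-dependence cancels once the telescoping products in each sector are carried out as in the DT case.

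The main obstacle will be the bookkeeping in the $\lambda'$- and $\mu$-sectors. Each $\vartheta$-block is a double product whose inner range splits at the main diagonal of the partition (the distinction between $1\le i\le\eta_i$ and $\eta_i<i$), and the passage $\eta\mapsto\eta^r,\eta^c,\eta^{rc}$ shifts both this split and the cutoffs at $d(\eta)$ and $\widetilde d(\eta)$; tracking how all $N$-dependent factors cancel while precisely the boundary terms near the diagonal survive, and confirming that the surviving finite product is the reciprocal of the DT boundary product defining $q^{K_1(\lambda,\mu,\nu)}$, is the delicate part. I expect the degenerate branches ($d(\eta)=1$ with $\eta_1=1$, and $d(\eta)=1$ with $\eta_1>1$, where $\eta^c$ or $\eta^r$ may become empty) to be the most error-prone, exactly as they are in Lemma \ref{recurrence1-weight5}, so I would verify those cases separately before assembling the final identity.
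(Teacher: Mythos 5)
Your plan coincides with the paper's proof of this lemma: the paper substitutes the explicit weight formulas of Lemmas \ref{recurrence4-weight1}, \ref{recurrence4-weight2}, \ref{recurrence4-weight3} into the fourfold quotient, reuses Lemma \ref{recurrence1-weight6} for the $\nu$-part, reuses Lemma \ref{recurrence1-weight4} for the bulk (note this is literally the same lemma, not a ``PT analogue'' --- the combination $q^{\varpi_{1}(N,0,0)}\cdot q^{\varpi_{1}(N-2,0,0)}/\left(q^{\varpi_{1}(N-1,0,0)}\right)^2$ occurs identically, and the extra factor $q_{N}^{-N}$ coming from the base$_{up}$ configuration absorbs the $q_{N}^{N}$ it produces), and establishes the four PT block-ratio lemmas (Lemmas \ref{recurrence4-weight4}, \ref{recurrence4-weight5}, \ref{recurrence4-weight6}, \ref{recurrence4-weight7}) by precisely the case analysis on $d(\eta)$ and $\eta_{1}$ that you describe, invoking Lemmas \ref{modified-partition}, \ref{length-relation}, \ref{diag-length}, \ref{value-set} and Remark \ref{special-value}.

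One warning about your ``running sanity check'': sectorwise inversion is false as stated, and if you insist on it you will wrongly conclude you have made computational errors. The individual block ratios are neither reciprocals of their DT counterparts nor $N$-independent. Compare Lemma \ref{recurrence1-weight5} with Lemma \ref{recurrence4-weight5} in the case $d(\eta)>1$: the DT side produces $q_{-\widetilde{d}(\eta)}^{d(\eta)-1}$ together with $N$-dependent factors such as $\prod_{i=1}^{\widetilde{d}(\eta)}q_{N-i+1}^{\eta_{i}-1}$, while the PT side produces $q_{-\widetilde{d}(\eta)}^{-d(\eta)}\cdot q_{-\ell(\eta)}^{-N+\ell(\eta)}\cdot\prod_{i=\widetilde{d}(\eta)+1}^{\ell(\eta)-1}q_{-i}^{-1}$ times the inverted $\left(q_{-i}/q_{-i+1}\right)^{\eta_{i}}$-product; only that last product inverts, the exponents of $q_{-\widetilde{d}(\eta)}$ do not match up reciprocally, and both sides retain residual $N$-dependence. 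The same happens for the bulk: the DT combination equals $q_{N}^{N}\prod_{i=1-N}^{N-1}q_{i}$, whereas the PT combination equals $\prod_{i=1-N}^{N-1}q_{i}$. All the $N$-dependent and boundary factors cancel only in the global product over every block (the $\vartheta_{1}$- and $\vartheta_{3}$-ratios of Lemmas \ref{recurrence4-weight4} and \ref{recurrence4-weight6} included), exactly as on the DT side where $N$-independence emerges only in Lemma \ref{recurrence1-weight9}. Since your plan is in any case to compute every block explicitly, this does not derail the argument; it only means that $q^{-K_{1}(\lambda,\mu,\nu)}$ emerges at the final assembly, not sector by sector.
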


\begin{proof}
It follows from Lemmas	\ref{recurrence4-weight1}, \ref{recurrence4-weight2}, \ref{recurrence4-weight3}, \ref{recurrence4-weight4}, \ref{recurrence4-weight5}, 
\ref{recurrence4-weight6}, \ref{recurrence4-weight7}, \ref{recurrence1-weight6}, \ref{recurrence1-weight4}.
\end{proof}

\begin{remark}\label{weight-generalization2}
	When $n=1$,
	$q^{-K_{1}(\lambda,\mu,\nu)}$ is equal to $q^{K}$ in [\cite{JWY}, Section 4.5] as in Remark \ref{weight-generalization1}.
\end{remark}

\begin{lemma}\label{recurrence4-weight4}
For any partition $\eta\neq\emptyset$, we have	
\ben
\frac{q^{\vartheta_{1}(\eta^c,N,1,1)}\cdot q^{\vartheta_{1}(\eta^r,N,-1,1)}}{q^{\vartheta_{1}(\eta,N,0,1)}\cdot q^{\vartheta_{1}(\eta^{rc},N,0,1)}}=\left\{
\begin{aligned}
	&q_{-\ell(\eta)}^{N-\ell(\eta)-1}\cdot\prod\limits_{i=1-N}^{-\ell(\eta)-1}q_{i}^{-1} ,\; \;\;\;\; \;\;\; \;\;\;\;\;\;\;\;\;\;\;\;\;\;\;\;\;\;\;\mbox{if $d(\eta)>1$}; \\
	&\prod\limits_{i=1-N}^{-\ell(\eta)}q_{i}^{-1}\cdot\prod\limits_{i=1-\ell(\eta)}^{-1}q_{i}^{-N-i},   \;\;\;\;\;\;\;\;\;\;\;\;\;\;\;\;\;\;\;\mbox{if $d(\eta)=1$ and $\eta_{1}>1$};\\
	&\prod\limits_{i=1-N}^{-\ell(\eta)}q_{i}^{N-1}\cdot\prod\limits_{i=1-\ell(\eta)}^{-1}q_{i}^{-i},     \;\;\;\;\;\;\;\; \;\;\;\;\;\;\;\;\;\;\;\;\;\mbox{if $d(\eta)=1$ and $\eta_{1}=1$}.
\end{aligned}
\right.
\een
\end{lemma}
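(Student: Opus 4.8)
The plan is to exploit the fact that the quantity $q^{\vartheta_{1}(\eta,m,k,l)}=\prod_{i=1}^{m-\ell(\eta)-l}\prod_{j=1}^{i}q_{-m+i-j+1}^{i+k}$ depends on the partition $\eta$ only through its length $\ell(\eta)$. Fixing $m=N$ and $l=1$ throughout, I would first relabel the inner index by $s=-N+i-j+1$, so that for each fixed $i$ the inner factor is $P_i^{\,i+k}$ where $P_i:=\prod_{s=1-N}^{\,i-N}q_{s}$ (the exponent $i+k$ does not depend on $j$). This yields the compact form
\[
q^{\vartheta_{1}(\eta,N,k,1)}=\prod_{i=1}^{N-\ell(\eta)-1}P_i^{\,i+k},
\]
reducing each of the four factors in the asserted ratio to a product of the same building blocks $P_i$, with only the upper index $N-\ell(\cdot)-1$ and the shift $k$ varying.

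Next I would substitute the lengths of the modified partitions supplied by Lemma \ref{length-relation}. In the generic case $d(\eta)>1$ one has $\ell(\eta^c)=\ell(\eta)+1$, $\ell(\eta^r)=\ell(\eta)-1$ and $\ell(\eta^{rc})=\ell(\eta)$ by part $(iii)$, so with $M:=N-\ell(\eta)-1$ the ratio becomes $\big(\prod_{i=1}^{M-1}P_i^{\,i+1}\big)\big(\prod_{i=1}^{M+1}P_i^{\,i-1}\big)\big(\prod_{i=1}^{M}P_i^{\,2i}\big)^{-1}$. Collecting the net exponent of each $P_i$, the numerator contributes $2i$ for $1\le i\le M-1$, which cancels against the squared denominator, leaving only the boundary terms $P_M^{\,M-1}P_{M+1}^{\,M}P_M^{-2M}=q_{-\ell(\eta)}^{\,M}\,P_M^{-1}$, where I use $P_{M+1}=P_M\,q_{-\ell(\eta)}$. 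Unwinding $P_M=\prod_{s=1-N}^{-\ell(\eta)-1}q_s$ gives exactly $q_{-\ell(\eta)}^{\,N-\ell(\eta)-1}\prod_{i=1-N}^{-\ell(\eta)-1}q_i^{-1}$, the first case.

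Finally, the two degenerate cases $d(\eta)=1$ require separate bookkeeping, and this is where I expect the main difficulty. Here parts $(i)$ and $(ii)$ of Lemma \ref{length-relation} give $\ell(\eta^{rc})=0$ and $\ell(\eta^c)$ equal to $1$ (when $\eta_1>1$) or $0$ (when $\eta_1=1$), so the four upper limits $N-\ell(\eta^c)-1$, $N-\ell(\eta^r)-1$, $N-\ell(\eta)-1$, $N-\ell(\eta^{rc})-1$ no longer differ by a uniform $\pm1$ and the clean pairwise cancellation of the generic case breaks down. Instead I would compute the net exponent of each $P_i$ over the disjoint ranges of $i$ on which the four products are simultaneously active; the bulk range $1\le i\le N-\ell(\eta)-1$ still contributes $0$, and the surviving contributions come from the indices near $i=N-\ell(\eta)$ and $i=N-1$ where one or more products have already terminated. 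Re-expanding these leftover $P_i$ in terms of the variables $q_s$ and regrouping by subscript produces the tail $\prod_{i=1-N}^{-\ell(\eta)}q_i^{-1}$ (resp.\ $\prod_{i=1-N}^{-\ell(\eta)}q_i^{N-1}$) together with the $i$-dependent factor $\prod_{i=1-\ell(\eta)}^{-1}q_i^{-N-i}$ (resp.\ $\prod_{i=1-\ell(\eta)}^{-1}q_i^{-i}$), matching the two stated formulas. The only real care needed is the correct treatment of the boundary index where a product terminates, and a check that the small values $\ell(\eta)=1$ (where some of these residual products are empty) remain consistent with the asserted expressions.
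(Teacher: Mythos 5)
Your proposal is correct and follows essentially the same route as the paper: both proofs substitute the lengths $\ell(\eta^r),\ell(\eta^c),\ell(\eta^{rc})$ from Lemma \ref{length-relation} into $\vartheta_{1}$ (which depends on $\eta$ only through $\ell(\eta)$), cancel the bulk of the four products, and track the leftover boundary factors separately in the three cases $d(\eta)>1$, $d(\eta)=1$ with $\eta_1>1$, and $d(\eta)=1$ with $\eta_1=1$. Your prefix-product notation $P_i=\prod_{s=1-N}^{i-N}q_s$ is merely a cleaner bookkeeping device for the identical computation, and your sketched treatment of the degenerate cases (zero net exponent on $1\le i\le N-\ell(\eta)-1$, surviving terms at $i=N-\ell(\eta)$ and beyond, including the empty-product check at $\ell(\eta)=1$) does reproduce the stated formulas.
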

\begin{proof}
If $d(\eta)>1$, by Lemma \ref{length-relation},  we have 	
\ben
\frac{q^{\vartheta_{1}(\eta^c,N,1,1)}\cdot q^{\vartheta_{1}(\eta^r,N,-1,1)}}{q^{\vartheta_{1}(\eta,N,0,1)}\cdot q^{\vartheta_{1}(\eta^{rc},N,0,1)}}
%&=&\frac{\prod\limits_{i=1}^{N-\ell(\eta^c)-1}\prod\limits_{j=1}^{i}q_{-N+i-j+1}^{i+1}\prod\limits_{i=1}^{N-\ell(\eta^r)-1}\prod\limits_{j=1}^{i}q_{-N+i-j+1}^{i-1}}{\prod\limits_{i=1}^{N-\ell(\eta)-1}\prod\limits_{j=1}^{i}q_{-N+i-j+1}^{i}\prod\limits_{i=1}^{N-\ell(\eta^{rc})-1}\prod\limits_{j=1}^{i}q_{-N+i-j+1}^{i}}\\
=\frac{\prod\limits_{i=1}^{N-\ell(\eta)-2}\prod\limits_{j=1}^{i}q_{-N+i-j+1}^{i+1}\cdot\prod\limits_{i=1}^{N-\ell(\eta)}\prod\limits_{j=1}^{i}q_{-N+i-j+1}^{i-1}}{\prod\limits_{i=1}^{N-\ell(\eta)-1}\prod\limits_{j=1}^{i}q_{-N+i-j+1}^{2i}}
%&=&\frac{\prod\limits_{j=1}^{N-\ell(\eta)}q_{-\ell(\eta)-j+1}^{N-\ell(\eta)-1}}{\prod\limits_{j=1}^{N-\ell(\eta)-1}q_{-\ell(\eta)-j}^{N-\ell(\eta)}}
=\frac{q_{-\ell(\eta)}^{N-\ell(\eta)-1}}{\prod\limits_{j=1}^{N-\ell(\eta)-1}q_{-\ell(\eta)-j}}.
\een	
If $d(\eta)=1$ and $\eta_{1}>1$,  then by Lemma \ref{length-relation} and  Remark  \ref{special-value}	
\ben
\frac{q^{\vartheta_{1}(\eta^c,N,1,1)}\cdot q^{\vartheta_{1}(\eta^r,N,-1,1)}}{q^{\vartheta_{1}(\eta,N,0,1)}\cdot q^{\vartheta_{1}(\eta^{rc},N,0,1)}}
%&=&\frac{\prod\limits_{i=1}^{N-\ell(\eta^c)-1}\prod\limits_{j=1}^{i}q_{-N+i-j+1}^{i+1}\prod\limits_{i=1}^{N-\ell(\eta^r)-1}\prod\limits_{j=1}^{i}q_{-N+i-j+1}^{i-1}}{\prod\limits_{i=1}^{N-\ell(\eta)-1}\prod\limits_{j=1}^{i}q_{-N+i-j+1}^{i}\prod\limits_{i=1}^{N-\ell(\eta^{rc})-1}\prod\limits_{j=1}^{i}q_{-N+i-j+1}^{i}}\\
%&=&\frac{\prod\limits_{i=1}^{N-2}\prod\limits_{j=1}^{i}q_{-N+i-j+1}^{i+1}\prod\limits_{i=1}^{N-\ell(\eta)}\prod\limits_{j=1}^{i}q_{-N+i-j+1}^{i-1}}{\prod\limits_{i=1}^{N-\ell(\eta)-1}\prod\limits_{j=1}^{i}q_{-N+i-j+1}^{i}\prod\limits_{i=1}^{N-1}\prod\limits_{j=1}^{i}q_{-N+i-j+1}^{i}}\\
&=&\frac{\prod\limits_{i=N-\ell(\eta)}^{N-1}\prod\limits_{j=1}^{i}q_{-N+i-j+1}^{i}\cdot\prod\limits_{i=1}^{N-1}\prod\limits_{j=1}^{i}q_{-N+i-j+1}^{i+1}\cdot\prod\limits_{i=1}^{N-1}\prod\limits_{j=1}^{i}q_{-N+i-j+1}^{i-1}}{\prod\limits_{j=1}^{N-1}q_{-j}^{N}\cdot\prod\limits_{i=N-\ell(\eta)+1}^{N-1}\prod\limits_{j=1}^{i}q_{-N+i-j+1}^{i-1}\cdot\prod\limits_{i=1}^{N-1}\prod\limits_{j=1}^{i}q_{-N+i-j+1}^{i}\cdot\prod\limits_{i=1}^{N-1}\prod\limits_{j=1}^{i}q_{-N+i-j+1}^{i}}\\
%&=&\frac{\prod\limits_{j=1}^{N-\ell(\eta)}q_{-\ell(\eta)-j+1}^{N-\ell(\eta)-1}\prod\limits_{i=N-\ell(\eta)}^{N-1}\prod\limits_{j=1}^{i}q_{-N+i-j+1}}{\prod\limits_{j=1}^{N-1}q_{-j}^{N}}\\
&=&\frac{1}{\prod\limits_{j=1-N}^{-\ell(\eta)}q_{j}\cdot\prod\limits_{j=1-\ell(\eta)}^{-1}q_{j}^{N+j}}.
\een
If $d(\eta)=1$ and $\eta_{1}=1$,  then by Lemma \ref{length-relation} and  Remark  \ref{special-value}	
\ben
\frac{q^{\vartheta_{1}(\eta^c,N,1,1)}\cdot q^{\vartheta_{1}(\eta^r,N,-1,1)}}{q^{\vartheta_{1}(\eta,N,0,1)}\cdot q^{\vartheta_{1}(\eta^{rc},N,0,1)}}
%&=&\frac{\prod\limits_{i=1}^{N-\ell(\eta^c)-1}\prod\limits_{j=1}^{i}q_{-N+i-j+1}^{i+1}\prod\limits_{i=1}^{N-\ell(\eta^r)-1}\prod\limits_{j=1}^{i}q_{-N+i-j+1}^{i-1}}{\prod\limits_{i=1}^{N-\ell(\eta)-1}\prod\limits_{j=1}^{i}q_{-N+i-j+1}^{i}\prod\limits_{i=1}^{N-\ell(\eta^{rc})-1}\prod\limits_{j=1}^{i}q_{-N+i-j+1}^{i}}\\
&=&\frac{\prod\limits_{i=N-\ell(\eta)}^{N-1}\prod\limits_{j=1}^{i}q_{-N+i-j+1}^{i}\cdot\prod\limits_{i=1}^{N-1}\prod\limits_{j=1}^{i}q_{-N+i-j+1}^{i+1}\cdot\prod\limits_{i=1}^{N-1}\prod\limits_{j=1}^{i}q_{-N+i-j+1}^{i-1}}{\prod\limits_{i=N-\ell(\eta)+1}^{N-1}\prod\limits_{j=1}^{i}q_{-N+i-j+1}^{i-1}\cdot\prod\limits_{i=1}^{N-1}\prod\limits_{j=1}^{i}q_{-N+i-j+1}^{i}\cdot\prod\limits_{i=1}^{N-1}\prod\limits_{j=1}^{i}q_{-N+i-j+1}^{i}}\\
%&=&\prod\limits_{i=N-\ell(\eta)}^{N-1}\prod\limits_{j=1}^{i}q_{-N+i-j+1}\prod\limits_{j=1}^{N-\ell(\eta)}q_{-\ell(\eta)-j+1}^{N-\ell(\eta)-1}\\
&=&\prod\limits_{j=1-N}^{-\ell(\eta)}q_{j}^{N-1}\cdot\prod\limits_{j=1-\ell(\eta)}^{-1}q_{j}^{-j}.
\een
\end{proof}

\begin{lemma}\label{recurrence4-weight5}
For any partition $\eta\neq\emptyset$, we have		
\ben
\frac{q^{\vartheta_{5}(\eta^c,N,0)}\cdot q^{\vartheta_{7}(\eta^r,N,1)}}{q^{\vartheta_{2}(\eta,N,0)}\cdot q^{\vartheta_{2}(\eta^{rc},N,0)}}=\left\{
\begin{aligned}
	&q_{-\widetilde{d}(\eta)}^{-d(\eta)}\cdot q_{-\ell(\eta)}^{-N+\ell(\eta)}\cdot\prod\limits_{i=\widetilde{d}(\eta)+1}^{\ell(\eta)-1}q_{-i}^{-1}\cdot\prod\limits_{i=\widetilde{d}(\eta)+1}^{\ell(\eta)}\left(\frac{q_{-i+1}}{q_{-i}}\right)^{\eta_{i}} ,\;   \;\;\;\;\mbox{if $d(\eta)>1$}; \\
	&\prod\limits_{i=1}^{\ell(\eta)-1}q_{-i}^{N-i},  \;\;\;\;\;\;\;\;\;\; \;\;\;\;\;\;\;\;\;\;\;\;\;\;\;\;\;\;\;\;\;\;\;\;\;\;\;\;\;\;\;\;\;\;\;\;\;\;\;\mbox{if $d(\eta)=1$ and $\eta_{1}>1$};\\
	&\prod\limits_{i=1}^{\ell(\eta)-1}q_{-i}^{-i}\cdot\prod\limits_{i=\ell(\eta)}^{N-1}q_{-i}^{-N},      \;\; \;\;\;\;\;\;\;\;\;  \;\;\;\;\;\;\;\;\;\;\;\;\;\;\;\;\;\;\;\;\; \;\;\;\mbox{if $d(\eta)=1$ and $\eta_{1}=1$}.
\end{aligned}
\right.
\een
\end{lemma}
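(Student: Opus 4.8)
The plan is to prove this identity by the same case analysis and direct-substitution method already used for Lemma \ref{recurrence1-weight5} and Lemma \ref{recurrence1-weight7}. The three cases in the statement are exactly the three regimes governed by Lemma \ref{length-relation} and Remark \ref{special-value}: namely $d(\eta)>1$; $d(\eta)=1$ with $\eta_{1}>1$; and $d(\eta)=1$ with $\eta_{1}=1$ (the last being the case $\eta^c=\emptyset$ by Remark \ref{special-value}(iv)). In each case I would substitute the explicit row descriptions of $\eta^r$, $\eta^c$, $\eta^{rc}$ from Lemma \ref{modified-partition} directly into the definitions of $q^{\vartheta_{2}}$, $q^{\vartheta_{5}}$ and $q^{\vartheta_{7}}$, and then simplify the resulting products.

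Concretely, I would first compute the two numerator factors $q^{\vartheta_{5}(\eta^c,N,0)}$ and $q^{\vartheta_{7}(\eta^r,N,1)}$ and the two denominator factors $q^{\vartheta_{2}(\eta,N,0)}$ and $q^{\vartheta_{2}(\eta^{rc},N,0)}$ one at a time, rather than the whole ratio at once. For each factor the outer product over $i$ splits along the three ranges $i<d(\eta)$, $d(\eta)\leq i\leq\widetilde{d}(\eta)$ and $i>\widetilde{d}(\eta)$, because on the middle range $\eta^{rc}_{i}=d(\eta)-1$ is constant while $\eta^c_{i}$ and $\eta^r_{i}$ are pinned by Lemma \ref{modified-partition}, and on the outer two ranges the modified parts either agree with $\eta_{i}$ or are the shifted copies $\eta_{i\pm1}$, so a reindexing $i\mapsto i\pm1$ aligns them with the corresponding denominator factor. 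Matching $q^{\vartheta_{7}(\eta^r,N,1)}$ against $q^{\vartheta_{2}(\eta^{rc},N,0)}$ and $q^{\vartheta_{5}(\eta^c,N,0)}$ against $q^{\vartheta_{2}(\eta,N,0)}$ in this way, most inner products over $j$ telescope, and what survives is the displayed product of a few boundary terms. For the two $d(\eta)=1$ sub-cases I would invoke Remark \ref{special-value}(iii), which forces $\eta_{i}=1$ for $2\leq i\leq\ell(\eta)$, so the staircase degenerates and the products collapse to the simple one-index expressions on the right.

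The main obstacle is the bookkeeping at the diagonal: keeping correct track of which rows satisfy the ``above-diagonal'' condition $i\leq\eta_{i}$ versus the ``below-diagonal'' condition $\eta_{i}<i$ in each of $\vartheta_{2}$, $\vartheta_{5}$, $\vartheta_{7}$, since the shifted thresholds $i\leq\eta_{i}+1$ (in $\vartheta_{5}$) and $i<\eta_{i}-1$ (in $\vartheta_{7}$) move this boundary by one when passing to $\eta^c$ and $\eta^r$. As in Lemma \ref{recurrence1-weight7}, this further bifurcates the case $d(\eta)>1$ according to whether $d(\eta^r)=d(\eta)$ or $d(\eta^r)=d(\eta)-1$, which by Lemma \ref{diag-length}(i) is controlled by whether $\eta_{d(\eta)+1}=d(\eta)$; Lemma \ref{value-set} together with Remark \ref{special-value}(i)--(ii) then pins down the exponents on the boundary rows. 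The remaining work is the routine verification that these boundary exponents combine into $q_{-\widetilde{d}(\eta)}^{-d(\eta)}\cdot q_{-\ell(\eta)}^{-N+\ell(\eta)}\cdot\prod_{i=\widetilde{d}(\eta)+1}^{\ell(\eta)-1}q_{-i}^{-1}\cdot\prod_{i=\widetilde{d}(\eta)+1}^{\ell(\eta)}\left(\frac{q_{-i+1}}{q_{-i}}\right)^{\eta_{i}}$ in the first case, and into the two monomial products in the remaining cases, which I expect to go through exactly as in the model computations already carried out in Section 4.3.1.
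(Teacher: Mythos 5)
Your proposal is correct and follows essentially the same route as the paper's proof: the same three-case split via Remark \ref{special-value}, the same pairing of $q^{\vartheta_{5}(\eta^c,N,0)}$ against $q^{\vartheta_{2}(\eta,N,0)}$ and $q^{\vartheta_{7}(\eta^r,N,1)}$ against $q^{\vartheta_{2}(\eta^{rc},N,0)}$, and the same cancellation across the ranges $i<d(\eta)$, $d(\eta)\leq i\leq\widetilde{d}(\eta)$, $i>\widetilde{d}(\eta)$ using Lemmas \ref{modified-partition} and \ref{value-set}. The only cosmetic difference is that you anticipate a sub-bifurcation on $d(\eta^r)$ as in Lemma \ref{recurrence1-weight7}, whereas the paper's computation for this lemma goes through uniformly because the shifted thresholds in $\vartheta_{5}$ and $\vartheta_{7}$ translate directly into the diagonal conditions for $\eta$ itself.
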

\begin{proof}
If $d(\eta)>1$, then $\eta^{c}\neq\emptyset$ by Remark \ref{special-value}. Then by Lemmas \ref{modified-partition}, \ref{size-comparision}, \ref{value-set}, we have 
\ben
\frac{q^{\vartheta_{5}(\eta^c,N,0)}}{q^{\vartheta_{2}(\eta,N,0)}}
%&=&\frac{\prod\limits_{i:1\leq i\leq\eta^c_{i}+1}\prod\limits_{j=1}^{N-\eta^c_{i}-1}q_{-i-j+1}^{N+\eta^c_{i}-i+1}\prod\limits_{i:\eta^c_{i}+1< i\leq\ell(\eta^c)}\prod\limits_{j=1}^{N-i}q_{-i-j+1}^{N+\eta^c_{i}-i+1}}{\prod\limits_{i:1\leq i\leq\eta_{i}}\prod\limits_{j=1}^{N-\eta_{i}}q_{-i-j+1}^{N+\eta_{i}-i}\prod\limits_{i:\eta_{i}< i\leq\ell(\eta)}\prod\limits_{j=1}^{N-i}q_{-i-j+1}^{N+\eta_{i}-i}}\\
&=&\frac{\prod\limits_{i=1}^{d(\eta)}\prod\limits_{j=1}^{N-\eta_{i}}q_{-i-j+1}^{N+\eta_{i}-i}\cdot\prod\limits_{i=d(\eta)+1}^{\widetilde{d}(\eta)}\prod\limits_{j=1}^{N-i}q_{-i-j+1}^{N+d(\eta)-i}\cdot\prod\limits_{j=1}^{N-\widetilde{d}(\eta)-1}q_{-\widetilde{d}(\eta)-j}^{N+d(\eta)-\widetilde{d}(\eta)-1}\cdot\prod\limits_{i=\widetilde{d}(\eta)+2}^{\ell(\eta)+1}\prod\limits_{j=1}^{N-i}q_{-i-j+1}^{N+\eta_{i-1}-i+1}}{\prod\limits_{i=1}^{d(\eta)}\prod\limits_{j=1}^{N-\eta_{i}}q_{-i-j+1}^{N+\eta_{i}-i}\cdot\prod\limits_{i=d(\eta)+1}^{\widetilde{d}(\eta)}\prod\limits_{j=1}^{N-i}q_{-i-j+1}^{N+d(\eta)-i}\cdot\prod\limits_{i=\widetilde{d}(\eta)+1}^{\ell(\eta)}\prod\limits_{j=1}^{N-i}q_{-i-j+1}^{N+\eta_{i}-i}}\\
&=&\frac{\prod\limits_{j=1}^{N-\widetilde{d}(\eta)-1}q_{-\widetilde{d}(\eta)-j}^{N+d(\eta)-\widetilde{d}(\eta)-1}}{\prod\limits_{i=\widetilde{d}(\eta)+1}^{\ell(\eta)}q_{-i}^{N+\eta_{i}-i}}
\een
and
\ben
\frac{q^{\vartheta_{7}(\eta^r,N,1)}}{q^{\vartheta_{2}(\eta^{rc},N,0)}}
%&=&\frac{\prod\limits_{i:1\leq i<\eta^r_{i}-1}\prod\limits_{j=1}^{N-\eta^r_{i}+1}q_{-i-j+1}^{N+\eta^r_{i}-i-1}\prod\limits_{i:\eta^r_{i}-1\leq i\leq\ell(\eta^r)}\prod\limits_{j=1}^{N-i}q_{-i-j+1}^{N+\eta^r_{i}-i-1}}{\prod\limits_{i:1\leq i\leq\eta^{rc}_{i}}\prod\limits_{j=1}^{N-\eta^{rc}_{i}}q_{-i-j+1}^{N+\eta^{rc}_{i}-i}\prod\limits_{i:\eta^{rc}_{i}< i\leq\ell(\eta^{rc})}\prod\limits_{j=1}^{N-i}q_{-i-j+1}^{N+\eta^{rc}_{i}-i}}\\
&=&\frac{\prod\limits_{i=1}^{d(\eta)-1}\prod\limits_{j=1}^{N-\eta_{i}}q_{-i-j+1}^{N+\eta_{i}-i}\cdot\prod\limits_{i=d(\eta)}^{\ell(\eta)-1}\prod\limits_{j=1}^{N-i}q_{-i-j+1}^{N+\eta_{i+1}-i-1}}{\prod\limits_{i=1}^{d(\eta)-1}\prod\limits_{j=1}^{N-\eta_{i}}q_{-i-j+1}^{N+\eta_{i}-i}\cdot\prod\limits_{i=d(\eta)}^{\widetilde{d}(\eta)}\prod\limits_{j=1}^{N-i}q_{-i-j+1}^{N+d(\eta)-i-1}\cdot\prod\limits_{i=\widetilde{d}(\eta)+1}^{\ell(\eta)}\prod\limits_{j=1}^{N-i}q_{-i-j+1}^{N+\eta_{i}-i}}\\
%&=&\frac{\prod\limits_{i=d(\eta)+1}^{\widetilde{d}(\eta)}\prod\limits_{j=0}^{N-i}q_{-i-j+1}^{N+d(\eta)-i}\prod\limits_{i=\widetilde{d}(\eta)+1}^{\ell(\eta)}\prod\limits_{j=0}^{N-i}q_{-i-j+1}^{N+\eta_{i}-i}}{\prod\limits_{j=1}^{N-d(\eta)}q_{-d(\eta)-j+1}^{N-1}\prod\limits_{i=d(\eta)+1}^{\widetilde{d}(\eta)}\prod\limits_{j=1}^{N-i}q_{-i-j+1}^{N+d(\eta)-i-1}\prod\limits_{i=\widetilde{d}(\eta)+1}^{\ell(\eta)}\prod\limits_{j=1}^{N-i}q_{-i-j+1}^{N+\eta_{i}-i}}\\
&=&\frac{\prod\limits_{i=d(\eta)+1}^{\widetilde{d}(\eta)}q_{-i+1}^{N+d(\eta)-i}\cdot\prod\limits_{i=d(\eta)+1}^{\widetilde{d}(\eta)}\prod\limits_{j=1}^{N-i}q_{-i-j+1}\cdot\prod\limits_{i=\widetilde{d}(\eta)+1}^{\ell(\eta)}q_{-i+1}^{N+\eta_{i}-i}}{\prod\limits_{j=1}^{N-d(\eta)}q_{-d(\eta)-j+1}^{N-1}}\\
&=&\prod\limits_{j=0}^{N-\widetilde{d}(\eta)-1}q_{-\widetilde{d}(\eta)-j}^{-N-d(\eta)+\widetilde{d}(\eta)+1}\cdot\prod\limits_{i=\widetilde{d}(\eta)+1}^{\ell(\eta)}q_{-i+1}^{N+\eta_{i}-i}.
\een
Then 
\ben
\frac{q^{\vartheta_{5}(\eta^c,N,0)}\cdot q^{\vartheta_{7}(\eta^r,N,1)}}{q^{\vartheta_{2}(\eta,N,0)}\cdot q^{\vartheta_{2}(\eta^{rc},N,0)}}
%&=&\frac{\prod\limits_{j=1}^{N-\widetilde{d}(\eta)-1}q_{-\widetilde{d}(\eta)-j}^{N+d(\eta)-\widetilde{d}(\eta)-1}}{\prod\limits_{i=\widetilde{d}(\eta)+1}^{\ell(\eta)}q_{-i}^{N+\eta_{i}-i}}\prod\limits_{j=0}^{N-\widetilde{d}(\eta)-1}q_{-\widetilde{d}(\eta)-j}^{-N-d(\eta)+\widetilde{d}(\eta)+1}\cdot\prod\limits_{i=\widetilde{d}(\eta)+1}^{\ell(\eta)}q_{-i+1}^{N+\eta_{i}-i}\\
=\frac{q_{-\widetilde{d}(\eta)}^{-d(\eta)}\cdot q_{-\ell(\eta)}^{-N+\ell(\eta)}}{\prod\limits_{i=\widetilde{d}(\eta)+1}^{\ell(\eta)-1}q_{-i}}\prod\limits_{i=\widetilde{d}(\eta)+1}^{\ell(\eta)}\left(\frac{q_{-i+1}}{q_{-i}}\right)^{\eta_{i}}.
\een
If $d(\eta)=1$ and $\eta_{1}>1$, then $\eta^c\neq\emptyset$ by Remark \ref{special-value}. Now we have
\ben
\frac{q^{\vartheta_{5}(\eta^c,N,0)}\cdot q^{\vartheta_{7}(\eta^r,N,1)}}{q^{\vartheta_{2}(\eta,N,0)}\cdot q^{\vartheta_{2}(\eta^{rc},N,0)}}&=&\frac{\prod\limits_{j=1}^{N-\eta_{1}}q_{-j}^{N+\eta_{1}-1}\cdot\prod\limits_{i=2}^{\ell(\eta)}\prod\limits_{j=0}^{N-i}q_{-i-j+1}^{N+1-i}}{\prod\limits_{j=1}^{N-\eta_{1}}q_{-j}^{N+\eta_{1}-1}\cdot\prod\limits_{i=2}^{\ell(\eta)}\prod\limits_{j=1}^{N-i}q_{-i-j+1}^{N+1-i}}=\prod\limits_{i=1}^{\ell(\eta)-1}q_{-i}^{N-i}.
\een
If $d(\eta)=1$ and $\eta_{1}=1$, then $\eta^c=\emptyset$ by Remark \ref{special-value}. Then  we have
\ben
\frac{q^{\vartheta_{5}(\eta^c,N,0)}\cdot q^{\vartheta_{7}(\eta^r,N,1)}}{q^{\vartheta_{2}(\eta,N,0)}\cdot q^{\vartheta_{2}(\eta^{rc},N,0)}}=\frac{\prod\limits_{i=2}^{\ell(\eta)}\prod\limits_{j=0}^{N-i}q_{-i-j+1}^{N+1-i}}{\prod\limits_{j=1}^{N-1}q_{-j}^{N}\cdot\prod\limits_{i=2}^{\ell(\eta)}\prod\limits_{j=1}^{N-i}q_{-i-j+1}^{N+1-i}}=\frac{\prod\limits_{i=1}^{\ell(\eta)-1}q_{-i}^{-i}}{\prod\limits_{j=\ell(\eta)}^{N-1}q_{-j}^{N}}.
\een
\end{proof}

\begin{lemma}\label{recurrence4-weight6}
For any partition $\eta\neq\emptyset$, we have		
\ben
\frac{q^{\vartheta_{3}(\eta^c,N,0,1)}\cdot q^{\vartheta_{3}(\eta^r,N,2,1)}}{q^{\vartheta_{3}(\eta,N,1,1)}\cdot q^{\vartheta_{3}(\eta^{rc},N,1,1)}}
%&=&\frac{\prod\limits_{i=1}^{N-\ell(\eta^c)-1}\prod\limits_{j=\ell(\eta^c)+1}^{N-i}q_{i+j-1}^{N+i}\prod\limits_{i=1}^{N-\ell(\eta^r)-1}\prod\limits_{j=\ell(\eta^r)+1}^{N-i}q_{i+j-1}^{N+i-2}}{\prod\limits_{i=1}^{N-\ell(\eta)-1}\prod\limits_{j=\ell(\eta)+1}^{N-i}q_{i+j-1}^{N+i-1}\prod\limits_{i=1}^{N-\ell(\eta^{rc})-1}\prod\limits_{j=\ell(\eta^{rc})+1}^{N-i}q_{i+j-1}^{N+i-1}}\\
=\left\{
\begin{aligned}
	&q_{\ell(\eta)}^{N-1}\cdot\prod\limits_{i=\ell(\eta)+1}^{N-1}q_{i}^{-1} ,\;  \;\;\;\;\;\;\;\;\;   \;\;\;\;\;\;\;\;\;\;\;\;\;\;\; \;\;\;\;\mbox{if $d(\eta)>1$}; \\
	&\prod\limits_{i=\ell(\eta)}^{N-1}q_{i}^{-1}\cdot \prod\limits_{i=1}^{\ell(\eta)-1}q_{i}^{-N},   \;\;\;\;\;\;\;\;\;\;\;\;\;\;\;\;\;\;\;\;\;\;\;\mbox{if $d(\eta)=1$ and $\eta_{1}>1$};\\
	&\prod\limits_{i=1}^{\ell(\mu)-1}q_{i}^i\cdot\prod\limits_{i=\ell(\mu)}^{N-1}q_{i}^{N+i-1},      \;\;\;\;\;\;\;\;\;\;\;\;\;\;\;\;\;\;\;\;\; \mbox{if $d(\eta)=1$ and $\eta_{1}=1$}.
\end{aligned}
\right.
\een
\end{lemma}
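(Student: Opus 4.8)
The plan is to exploit the fact that $q^{\vartheta_3(\eta,m,k,l)}$ depends on $\eta$ only through its length $\ell(\eta)$, as is immediate from the definition of $q^{\vartheta_3}$. Writing $L:=\ell(\eta)$ and taking $l=1$ throughout (the only value occurring in the statement), I set $F(L,m,k):=q^{\vartheta_3(\eta,m,k,1)}=\prod_{i=1}^{m-L-1}\prod_{j=L+1}^{m-i}q_{i+j-1}^{m+i-k}$. The target ratio then becomes $\mathcal{R}=\dfrac{F(\ell(\eta^c),N,0)\,F(\ell(\eta^r),N,2)}{F(L,N,1)\,F(\ell(\eta^{rc}),N,1)}$, so the entire lemma reduces to understanding how $F$ responds to changes of its length argument and of the parameter $k$, with no reference to the individual parts of $\eta$.

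First I would establish the key closed form for the exponent of each variable $q_t$ in $F(L,m,k)$. Collecting the double product along the antidiagonals $i+j-1=t$: for fixed $t$ the index $i$ runs over $1\le i\le t-L$ (the constraint $j\le m-i$ forces $t\le m-1$, automatic since $L$ is fixed and $N$ is large), so the exponent of $q_t$ equals $\sum_{i=1}^{t-L}(m+i-k)=(t-L)(m-k)+\tfrac{(t-L)(t-L+1)}{2}$ for $L+1\le t\le m-1$, and $0$ otherwise. This single identity converts the whole lemma into elementary arithmetic in the shifted variable $r:=t-L$.

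Next I would apply Lemma \ref{length-relation} to read off $(\ell(\eta^c),\ell(\eta^r),\ell(\eta^{rc}))$, which is $(L+1,L-1,L)$ when $d(\eta)>1$, $(1,L-1,0)$ when $d(\eta)=1$ and $\eta_1>1$, and $(0,L-1,0)$ when $d(\eta)=1$ and $\eta_1=1$; these three length patterns are precisely the origin of the three cases in the statement. Substituting the closed form, the exponent of $q_t$ in $\mathcal{R}$ is a sum of four terms $\pm E$. In the interior range the coefficient of $N$ cancels identically, leaving the constant $-1$ for the case $d(\eta)>1$; at the boundary $t=\ell(\eta)$ two of the four contributions fall outside their range of validity and vanish, leaving $N-1$. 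Together these reproduce $q_{\ell(\eta)}^{N-1}\prod_{i=\ell(\eta)+1}^{N-1}q_i^{-1}$, and the remaining two cases are handled the same way after substituting the corresponding lengths.

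The main obstacle is the boundary bookkeeping rather than the interior cancellation, which is uniform: the three stated products differ only in their endpoint exponents, and these arise exactly at the $t$-values where $r=t-L$ drops to $0$ or below, so that one or more of the four $E$-contributions leaves its interval $L+1\le t\le m-1$. Extracting these endpoint terms carefully, together with the degenerate subcases $\ell(\eta^c)=0$ or $\ell(\eta^{rc})=0$ (identified via Remark \ref{special-value}), forces the separate formulas for the three cases. This is entirely parallel to the endpoint analysis already carried out in Lemmas \ref{recurrence4-weight4} and \ref{recurrence4-weight5}, so I expect no new ideas beyond disciplined accounting at the boundaries.
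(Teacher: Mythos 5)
Your proposal is correct and is essentially the paper's own argument: the paper disposes of this lemma with ``the similar argument as in the proof of Lemma \ref{recurrence4-weight4}'', i.e., the same three-case split coming from Lemma \ref{length-relation} and Remark \ref{special-value}, combined with the observation that $q^{\vartheta_{3}}$ depends on $\eta$ only through $\ell(\eta)$ and followed by elementary cancellation of the resulting length-indexed products --- your closed-form exponent of each $q_t$ is the same cancellation in different bookkeeping. One cosmetic slip: at $t=\ell(\eta)$ in the case $d(\eta)>1$, three (not two) of the four contributions vanish, since both denominator factors have length $\ell(\eta)$; the surviving contribution from $q^{\vartheta_{3}(\eta^r,N,2,1)}$ is still $N-1$, so your conclusion is unaffected.
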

\begin{proof}
It follows from the similar argument as in the proof of Lemma \ref{recurrence4-weight4}.
\end{proof}

\begin{lemma}\label{recurrence4-weight7}
For any partition $\eta\neq\emptyset$, we have	
\ben
\frac{q^{\vartheta_{6}(\eta^c,N)}\cdot q^{\vartheta_{8}(\eta^r,N+1,1,1)}}{q^{\vartheta_{4}(\eta,N,0)}\cdot q^{\vartheta_{4}(\eta^{rc},N,0)}}=\left\{
\begin{aligned}
	&q_{\widetilde{d}(\eta)}^{N}\cdot q_{\ell(\eta)}^{-N}\cdot\prod\limits_{i=\widetilde{d}(\eta)+1}^{\ell(\eta)}\left(\frac{q_{i-1}}{q_{i}}\right)^{\eta_{i}}\cdot\frac{\prod\limits_{i=d(\eta)}^{\widetilde{d}(\eta)-1}q_{i}}{q_{\widetilde{d}(\eta)}^{N+d(\eta)-1}\cdot\prod\limits_{i=\widetilde{d}(\eta)}^{\ell(\eta)-1}q_{i}} ,\;  \;\;\;\;\;\; \mbox{if $d(\eta)>1$}; \\
	&\prod\limits_{i=1}^{\ell(\eta)-1}q_{i}^{N},   \;\;\;\;\;\;\;\;\;\;\;\;\;\;\;\;\;\;\;\;\;\;\;\;\;\;\;\;\;\;\;\;\;\;\;\;\;\;\;\;\;\;\;\;\;\;\;\;\;\;\;\;\;\;\;\;\mbox{if $d(\eta)=1$ and $\eta_{1}>1$};\\
	&\prod\limits_{i=1}^{\ell(\eta)-1}q_{i}^{-i}\cdot\prod\limits_{i=\ell(\eta)}^{N-1}q_{i}^{-N-i},   \;\;\; \;  \;\;\;\;\;\;\;\;\;\;\;\;\;\;\;\;\;\;\;\;\;\;\;\;\;\; \;\;\;\mbox{if $d(\eta)=1$ and $\eta_{1}=1$}.
\end{aligned}
\right.
\een
\end{lemma}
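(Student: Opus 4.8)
The plan is to mimic the structure used in the proofs of Lemmas \ref{recurrence1-weight7} and \ref{recurrence4-weight5}. Since every quantity in sight is a Laurent monomial in the $q_i$, the target ratio factors trivially as
\[
\frac{q^{\vartheta_{6}(\eta^c,N)}\cdot q^{\vartheta_{8}(\eta^r,N+1,1,1)}}{q^{\vartheta_{4}(\eta,N,0)}\cdot q^{\vartheta_{4}(\eta^{rc},N,0)}}=\frac{q^{\vartheta_{6}(\eta^c,N)}}{q^{\vartheta_{4}(\eta,N,0)}}\cdot\frac{q^{\vartheta_{8}(\eta^r,N+1,1,1)}}{q^{\vartheta_{4}(\eta^{rc},N,0)}},
\]
and I would evaluate the two sub-ratios separately. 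First I would substitute the explicit descriptions of $\eta^c$, $\eta^r$ and $\eta^{rc}$ from Lemma \ref{modified-partition} into the definitions of $\vartheta_{6}$, $\vartheta_{8}$ and $\vartheta_{4}$, organizing each double product according to the two index regimes $\{i:1\le i\le \eta_i\}$ and $\{i:\eta_i<i\le\ell(\eta)\}$ that govern those ranges. The preliminary facts I would invoke are Remark \ref{special-value}, which fixes $\widetilde{d}(\eta)$, pins the flat stretch $\eta_i=d(\eta)$ for $d(\eta)+1\le i\le\widetilde{d}(\eta)$, and gives the criterion $\eta^c=\emptyset\Leftrightarrow d(\eta)=1,\eta_1=1$, together with Lemmas \ref{length-relation}, \ref{diag-length} and \ref{value-set} to control how $\ell$, $d$ and $\widetilde{d}$ change under the operations $r$, $c$, $rc$.

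In the main case $d(\eta)>1$ (so $\eta^c\neq\emptyset$, $\eta^{rc}\neq\emptyset$, and both $\vartheta_{6}$, $\vartheta_{8}$ use their nontrivial branch), the reindexing $\eta^c_i=\eta_i-1$ for $i\le\widetilde{d}(\eta)$, $\eta^c_{\widetilde{d}(\eta)+1}=d(\eta)-1$, $\eta^c_i=\eta_{i-1}$ for $i>\widetilde{d}(\eta)+1$ makes the interior of each column in $q^{\vartheta_{6}(\eta^c,N)}/q^{\vartheta_{4}(\eta,N,0)}$ cancel term-by-term, leaving only boundary contributions from the ends $i=1$, $i=\widetilde{d}(\eta)$, $i=\widetilde{d}(\eta)+1$. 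Applying the analogous substitutions $\eta^r_i=\eta_i+1$ ($i<d(\eta)$), $\eta^r_i=\eta_{i+1}$ ($i\ge d(\eta)$) and the formula for $\eta^{rc}$ should reduce $q^{\vartheta_{8}(\eta^r,N+1,1,1)}/q^{\vartheta_{4}(\eta^{rc},N,0)}$ to a similar collection of boundary monomials; here I must track the combined effect of the parameter change $(m,k,l)=(N+1,1,1)$ in $\vartheta_8$ against $(m,k)=(N,0)$ in $\vartheta_4$, i.e. the extra shift by one in $m$ and in the subscript, which is exactly the bookkeeping that distinguishes this lemma from Lemma \ref{recurrence4-weight5}. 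Multiplying the two simplified sub-ratios and cancelling the overlapping factors over $d(\eta)\le i\le\ell(\eta)$ should collapse everything to the claimed expression, with the head/tail terms $q_{\widetilde{d}(\eta)}^{N}q_{\ell(\eta)}^{-N}$ and the product $\prod_{i=\widetilde{d}(\eta)+1}^{\ell(\eta)}(q_{i-1}/q_i)^{\eta_i}$ arising from the uncancelled column ends.

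For the degenerate cases I would specialize at the outset using Remark \ref{special-value} and Lemma \ref{length-relation}. When $d(\eta)=1$ and $\eta_1>1$ we have $\widetilde{d}(\eta)=\ell(\eta)$ with $\eta_i=1$ for $2\le i\le\ell(\eta)$, and $\eta^{rc}=\emptyset$ so $q^{\vartheta_{4}(\eta^{rc},N,0)}=1$; the flat-stretch and tail products collapse and the computation yields $\prod_{i=1}^{\ell(\eta)-1}q_i^{N}$. When additionally $\eta_1=1$, both $\eta^c$ and $\eta^{rc}$ are empty, so $q^{\vartheta_{6}(\eta^c,N)}=q^{\vartheta_{4}(\eta^{rc},N,0)}=1$ and the ratio reduces to $q^{\vartheta_{8}(\eta^r,N+1,1,1)}/q^{\vartheta_{4}(\eta,N,0)}$, which simplifies to the last branch. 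The main obstacle is purely combinatorial, as in the companion lemmas: correctly aligning the conditionally-defined split at $i=\eta_i$ (which moves when $\eta$ is replaced by $\eta^r,\eta^c,\eta^{rc}$) and keeping the subscript and exponent shifts induced by $(m,k,l)$ mutually consistent so each interior double product telescopes cleanly. I expect the $\vartheta_8$ factor to be the most delicate, because of its nonstandard range boundary $\eta_i-k$ and the doubled exponent shift $-2l-1$; checking that these conspire to match $\vartheta_{4}(\eta^{rc},N,0)$ on the interior is where the real work lies, after which assembling the surviving boundary monomials is routine.
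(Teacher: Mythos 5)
Your proposal is correct and is essentially the paper's own argument: the paper proves this lemma by saying it "follows from the similar argument as in the proof of Lemma \ref{recurrence4-weight5}," and that argument is exactly what you describe — the same pairing of sub-ratios $q^{\vartheta_{6}(\eta^c,N)}/q^{\vartheta_{4}(\eta,N,0)}$ and $q^{\vartheta_{8}(\eta^r,N+1,1,1)}/q^{\vartheta_{4}(\eta^{rc},N,0)}$, substitution of $\eta^c$, $\eta^r$, $\eta^{rc}$ via Lemma \ref{modified-partition} with Remark \ref{special-value} and Lemmas \ref{length-relation}, \ref{value-set}, and the same three-case split with $\eta^{rc}=\emptyset$ (and possibly $\eta^c=\emptyset$) handled separately when $d(\eta)=1$. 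Your identification of where the telescoping happens and where the boundary monomials survive matches the companion computations in Sections 4.3.1 and 5.4.1, so the plan goes through as written.
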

\begin{proof}
It follows from the similar argument as in the proof  of Lemma \ref{recurrence4-weight5}.	
\end{proof}

\subsubsection{Weights for the graphical condensation recurrence with $(\mathbf{G}=\mathbf{H}(N),\mathbf{N}_{\lambda^{rc},\mu,\nu^{rc}}(N))$}

By the similar argument in Section 5.4.1, we have

\begin{lemma}\label{recurrence5-weight1}
The edge-weight of the base$_{left-up}$ double-dimer configuration is
\ben
&&q^{\omega_{base}^{LU}(\lambda^r,\mu,\nu^c;n)}\\
&=&\prod_{i=1}^{N-\ell((\lambda^r)^\prime)}\prod_{j=1}^iq_{-N+i-j+1}^{i}\cdot\prod_{i:1\leq i\leq(\lambda^r)^\prime_{i}+1}\prod_{j=1}^{N-(\lambda^r)^\prime_{i}}q_{-i-j+2}^{N+(\lambda^r)^\prime_{i}-i+1}\cdot\prod_{i:(\lambda^r)^\prime_{i}+1< i\leq\ell((\lambda^r)^\prime)}\prod_{j=1}^{N+1-i}q_{-i-j+2}^{N+(\lambda^r)^\prime_{i}-i+1}\\
&&\times\prod_{i=1}^{N-\ell(\mu)-2}\prod_{j=\ell(\mu)+1}^{N-i-1}q_{i+j}^{N+i}\cdot\prod_{i:1\leq i\leq\mu_{i}}\prod_{j=1}^{N-\mu_{i}-1}q_{i+j}^{N+\mu_{i}+j}\cdot\prod_{i:\mu_{i}<i\leq\ell(\mu)}\prod_{j=1}^{N-i-1}q_{i+j}^{N+\mu_{i}+j}\\
&&\times q_{N}^{-N}\cdot\prod_{i=1}^{\ell(\nu^c)}\prod_{j=0}^{\nu^c_{i}-1}q_{j-i+2}^{N-i+1}\cdot \prod_{i=0}^{N}\prod_{j=0}^{N-1}q_{j-i+1}^{N-i}\\
&=&q^{\vartheta_{1}((\lambda^\prime)^c,N,0,0)}\cdot q^{\vartheta_{5}((\lambda^\prime)^c,N+1,1)}\cdot q^{\vartheta_{3}(\mu,N-1,-1,0)}\cdot q^{\vartheta_{4}(\mu,N,1)}\cdot
\left(q^{\varpi_{3}(\nu^c,N+1,2)}\right)^{-1}\cdot q^{\varpi_{1}(N,1,1)}\cdot q_{N}^{-N}.
\een

\end{lemma}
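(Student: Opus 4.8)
The statement to prove is Lemma \ref{recurrence5-weight1}, which gives an explicit formula for the edge-weight $q^{\omega_{base}^{LU}(\lambda^r,\mu,\nu^c;N)}$ of the base$_{left-up}$ double-dimer configuration, both as a quadruple-product expression and in the compact $\vartheta$/$\varpi$ notation. The plan is to follow the same anatomical decomposition carried out for the ordinary base double-dimer configuration in Lemma \ref{recurrence4-weight1} and its shifted analogues in Lemmas \ref{recurrence4-weight2} and \ref{recurrence4-weight3}, but now tracking the effect of the left-up shift (by one unit, as described before Theorem \ref{graphical-condensation2}) together with the modifications $\lambda\mapsto\lambda^r$, $\nu\mapsto\nu^c$ on the three sectors.

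First I would recall that the base$_{left-up}$ configuration arises from an AB configuration $(A,B)$ with respect to $(\lambda^r,\mu,\nu^c)$, with the associated rhombus tilings shifted directly to the left-up face prior to truncation on $\mathbf{H}(N)$. Accordingly, I would compute the contribution to the edge-weight sector by sector, exactly as in the paragraph preceding Lemma \ref{recurrence4-weight1}. In sector 1 the horizontal dimers of $\mathbf{D}_A(N)$ are grouped along the parts of $(\lambda^r)^\prime$, giving the first two products; the shift changes the index offset in the subscript $-i-j+2$ (rather than $-i-j+1$) and raises the exponent to $N+(\lambda^r)^\prime_i-i+1$. In sector 2, the horizontal dimers grouped along the parts of $\mu$ produce the two products with subscript $q_{i+j}$ and the ``overflow'' group $\prod_{i=1}^{N-\ell(\mu)-2}\prod_{j=\ell(\mu)+1}^{N-i-1}q_{i+j}^{N+i}$. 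In sector 3 the shift and the replacement $\nu\mapsto\nu^c$ yield the factor $\prod_{i=1}^{\ell(\nu^c)}\prod_{j=0}^{\nu^c_i-1}q_{j-i+2}^{N-i+1}$, and the horizontal dimers of $\mathbf{D}_B(N)$ contribute $\prod_{i=0}^{N}\prod_{j=0}^{N-1}q_{j-i+1}^{N-i}$. The single correction factor $q_N^{-N}$ records the one extra horizontal dimer lost at the apex under the upward component of the shift, precisely as it appears in Lemma \ref{recurrence4-weight2}.

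Having assembled the four-product formula, the remaining step is purely bookkeeping: match each product against the definitions of $\vartheta_1,\vartheta_3,\vartheta_4,\vartheta_5$ and $\varpi_1,\varpi_3$ from Section 5.4 and Section 4.3 by comparing the ranges of $i,j$, the subscripts of the $q$'s, and the exponents. For instance, the sector-1 overflow product with exponent $i$ and offset $-N$ matches $q^{\vartheta_1((\lambda^\prime)^c,N,0,0)}$ after using $(\lambda^r)^\prime=(\lambda^\prime)^c$ from Lemma \ref{transpose}, while the grouped product matches $q^{\vartheta_5((\lambda^\prime)^c,N+1,1)}$; the sector-2 factors identify with $q^{\vartheta_3(\mu,N-1,-1,0)}$ and $q^{\vartheta_4(\mu,N,1)}$; and the sector-3 factors combine into $\left(q^{\varpi_3(\nu^c,N+1,2)}\right)^{-1}$ and $q^{\varpi_1(N,1,1)}$. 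I would verify each identification by direct substitution into the defining formulas.

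I expect the main obstacle to be getting the index shifts exactly right: the left-up shift affects sectors 1, 2, 3 asymmetrically (it is one unit up and one unit to the left in the hexagonal coordinates), so the offsets $+2$ versus $+1$ in the subscripts and the $\pm 1$ adjustments to the loop bounds $N\mapsto N\pm1$ and the exponents must be tracked with care; a single off-by-one error anywhere corrupts the final $\vartheta$-matching. The cleanest way to control this is to fix the origin convention explicitly and re-derive the sector contributions of the unshifted base case in parallel, so that the shift enters as a uniform translation of the box coordinates $(i,j,k)$ whose effect on the weight is governed entirely by the rule in Definition \ref{weight-rule}, namely each removed box multiplies by $q_{i-j}$ and each added box divides by it. With that principle the $q_N^{-N}$ factor and all offset changes follow mechanically, and the identification with the $\vartheta$-notation becomes routine, completing the proof.
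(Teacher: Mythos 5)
Your proposal is correct and follows essentially the same route as the paper: the paper derives this lemma "by the similar argument in Section 5.4.1," i.e., the same sector-by-sector accounting of horizontal dimers of $\mathbf{D}_A(N)$ and $\mathbf{D}_B(N)$ used for Lemma \ref{recurrence4-weight1}, adapted to the partitions $(\lambda^r,\mu,\nu^c)$ and the left-up shift, followed by matching against the $\vartheta$/$\varpi$ definitions via $(\lambda^r)^\prime=(\lambda^\prime)^c$. Your identifications of the six factors (including the $q_N^{-N}$ correction and the parameter choices in $\vartheta_1,\vartheta_5,\vartheta_3,\vartheta_4,\varpi_3,\varpi_1$) check out against the definitions in Sections 4.3 and 5.4.
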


\begin{lemma}\label{recurrence5-weight2}
The edge-weight of the base$_{right-down}$ double-dimer configuration is
\ben
&&q^{\omega_{base}^{RD}(\lambda^c,\mu,\nu^r;n)}\\
&=&\prod_{i=1}^{N-\ell((\lambda^c)^\prime)-2}\prod_{j=1}^iq_{-N+i-j+1}^{i}\cdot\prod_{i:1\leq i<(\lambda^c)^\prime_{i}-1}\prod_{j=1}^{N-(\lambda^c)^\prime_{i}}q_{-i-j}^{N+(\lambda^c)^\prime_{i}-i-1}\cdot\prod_{i:(\lambda^c)^\prime_{i}-1\leq i\leq\ell((\lambda^c)^\prime)}\prod_{j=1}^{N-i-1}q_{-i-j}^{N+(\lambda^c)^\prime_{i}-i-1}\\
&&\times\prod_{i=1}^{N-\ell(\mu)}\prod_{j=\ell(\mu)+1}^{N-i+1}q_{i+j-2}^{N+i-2}\cdot\prod_{i:1\leq i\leq\mu_{i}}\prod_{j=1}^{N-\mu_{i}+1}q_{i+j-2}^{N+\mu_{i}+j-2}\cdot\prod_{i:\mu_{i}< i\leq\ell(\mu)}\prod_{j=1}^{N-i+1}q_{i+j-2}^{N+\mu_{i}+j-2}\\
&&\times\prod_{i=1}^{\ell(\nu^r)}\prod_{j=0}^{\nu^r_{i}-1}q_{j-i}^{N-i-1}\cdot\prod_{i=0}^{N-2}\prod_{j=0}^{N-1}q_{j-i-1}^{N-i-2}\\
&=&q^{\vartheta_{1}((\lambda^\prime)^r,N,0,2)}\cdot q^{\vartheta_{7}((\lambda^\prime)^r,N,0)}\cdot q^{\vartheta_{3}(\mu,N+1,3,2)}\cdot q^{\vartheta_{4}(\mu,N,-1)}\cdot \left(q^{\varpi_{3}(\nu^r,N-1,0)}\right)^{-1}\cdot q^{\varpi_{1}(N-2,-1,-1)}.
\een

\end{lemma}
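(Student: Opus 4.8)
The base$_{right-down}$ double-dimer configuration is the image of the base AB configuration $(A,B)=(\mathrm{III},\mathrm{II}\cup\mathrm{III})$ attached to the outgoing partitions $(\lambda^c,\mu,\nu^r)$, translated one unit in the right-down direction before being truncated to $\mathbf{H}(N)$. My plan is to compute $q^{\omega_{base}^{RD}(\lambda^c,\mu,\nu^r;N)}$ in exactly the way $q^{\omega_{base}^{D}(\lambda^c,\mu^r,\nu;N)}$ is handled in Lemma \ref{recurrence4-weight3}: the edge-weight is the product of the weights of all horizontal dimers of $\mathbf{D}_A(N)$, which occupy all three sectors, together with the horizontal dimers of $\mathbf{D}_B(N)$, which occupy only sector $3$ and the central block. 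Following the decomposition introduced in the paragraph preceding Lemma \ref{recurrence4-weight1}, I would organize the sector-$1$ and sector-$2$ dimers into groups indexed by the parts of $(\lambda^c)^\prime$ and of $\mu$ (one group per part, plus one residual group), and read off the weight of each group directly from Definition \ref{weight-rule}; the parts of $(\lambda^c)^\prime=(\lambda^\prime)^r$ are described explicitly by Lemma \ref{modified-partition} applied to $\lambda^\prime$, which also dictates the boundary case $i<(\lambda^c)^\prime_{i}-1$ versus $i\geq(\lambda^c)^\prime_{i}-1$ appearing in the statement.

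The only genuinely new ingredient compared with Lemma \ref{recurrence4-weight3} is the effect of the right-down translation on the indexing. Under Definition \ref{weight-rule} a unit shift in this direction relabels the columns and moves the truncation window, so that relative to the straight-down case the $q$-subscripts in sectors $1$ and $2$ are uniformly decremented and the outer product ranges are adjusted by one (producing, e.g., the subscripts $q_{-i-j}$ and $q_{i+j-2}$ and the ranges $\prod_{i=1}^{N-\ell((\lambda^c)^\prime)-2}$ and $\prod_{j=\ell(\mu)+1}^{N-i+1}$). I would fix these offsets once and for all by tracking a single reference horizontal dimer through the translation and then propagate them across every group. The sector-$3$ contribution, arising from the horizontal dimers of $\mathbf{D}_A(N)$ along $\nu^r$ together with the full central matching of $\mathbf{D}_B(N)$, then assembles into $\left(q^{\varpi_{3}(\nu^r,N-1,0)}\right)^{-1}\cdot q^{\varpi_{1}(N-2,-1,-1)}$.

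Once the three explicit sector products of the middle displayed line are in hand, the last step is pure recognition against the notation of Sections 5.4 and 4.3: I would match the sector-$1$ product to $q^{\vartheta_{1}((\lambda^c)^\prime,N,0,2)}\cdot q^{\vartheta_{7}((\lambda^c)^\prime,N,0)}$, the sector-$2$ product to $q^{\vartheta_{3}(\mu,N+1,3,2)}\cdot q^{\vartheta_{4}(\mu,N,-1)}$, and the sector-$3$/central product to $\left(q^{\varpi_{3}(\nu^r,N-1,0)}\right)^{-1}\cdot q^{\varpi_{1}(N-2,-1,-1)}$, each read off from the defining double products of $\vartheta_{1},\vartheta_{3},\vartheta_{4},\vartheta_{7},\varpi_{1},\varpi_{3}$. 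Finally I would invoke $(\lambda^c)^\prime=(\lambda^\prime)^r$ from Lemma \ref{transpose} to rewrite $(\lambda^c)^\prime$ as $(\lambda^\prime)^r$, yielding the stated factorization. The analogous computation of $q^{\omega^{LU}_{base}(\lambda^r,\mu,\nu^c;N)}$ in Lemma \ref{recurrence5-weight1} serves as a template for all the transcription conventions.

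The main obstacle will be the bookkeeping of the off-by-one shifts in both the product ranges and the $q$-subscripts induced by the right-down translation, compounded by the case distinctions forced on $(\lambda^c)^\prime$ and $\nu^r$ through Lemma \ref{modified-partition} (in particular the location of the boundary part where $(\lambda^c)^\prime_{i}-1$ crosses $i$). These are precisely the spots where an index or an exponent is easiest to lose. I do not anticipate any conceptual difficulty beyond what already occurs in Lemma \ref{recurrence4-weight3}, since the combinatorial correspondence between the translated base configuration and its rhombus tilings is identical in form; the work is simply to apply the weighted rule faithfully under the shift.
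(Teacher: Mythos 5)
Your proposal is correct and follows essentially the same route as the paper: the paper proves this lemma precisely "by the similar argument in Section 5.4.1," i.e., the sector-by-sector computation of horizontal dimer weights of $\mathbf{D}_{A}(N)$ and $\mathbf{D}_{B}(N)$ under Definition \ref{weight-rule}, adjusted for the one-unit right-down shift prior to truncation, grouped by the parts of $(\lambda^c)^\prime$ and $\mu$ with the boundary case at $(\lambda^c)^\prime_{i}-1$ from Lemma \ref{modified-partition}, and then recognized as the stated $\vartheta_{1},\vartheta_{7},\vartheta_{3},\vartheta_{4},\varpi_{3},\varpi_{1}$ factors with $(\lambda^c)^\prime=(\lambda^\prime)^r$ via Lemma \ref{transpose}. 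Your outline reproduces exactly this argument, including its one real difficulty (the off-by-one bookkeeping induced by the shift), so there is nothing to add.
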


As in Section 5.4.1, we have 
\ben
\dfrac{q^{\omega_{base}(\lambda^{rc},\mu,\nu;N)}\cdot q^{\omega_{base}(\lambda,\mu,\nu^{rc};N)}}{q^{\omega_{base}(\lambda,\mu,\nu;N)}\cdot q^{\omega_{base}(\lambda^{rc},\mu,\nu^{rc};N)}}=1
\een
and still demand the following lemmas to obtain the recurrence \eqref{PT-recurrence2}.
\begin{lemma}\label{recurrence5-weight7}
	For any partitions $\lambda, \nu\neq\emptyset$ and $\mu$, we have 
	\ben
	\dfrac{q^{\omega_{base}^{LU}(\lambda^r,\mu,\nu^c;n)}\cdot q^{\omega_{base}^{RD}(\lambda^{c},\mu,\nu^{r};n)}}{q^{\omega_{base}(\lambda,\mu,\nu;n)}\cdot q^{\omega_{base}(\lambda^{rc},\mu,\nu^{rc};n)}}=q^{-K_{2}(\lambda,\mu,\nu)}
	\een
	where $q^{-K_{2}(\lambda,\mu,\nu)}=\left(q^{K_{2}(\lambda,\mu,\nu)}\right)^{-1}$ is independent of $N$.
\end{lemma}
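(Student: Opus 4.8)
The plan is to mirror the factor-by-factor argument behind the first PT recurrence (Lemma~\ref{recurrence4-weight8}) and its DT twin (Lemma~\ref{recurrence2-weight7}). I would begin by inserting the closed forms already available: the base weight $q^{\omega_{base}(\lambda,\mu,\nu;N)}$ from Lemma~\ref{recurrence4-weight1}, used once as stated and once with $\lambda^\prime\mapsto(\lambda^\prime)^{rc}$ and $\nu\mapsto\nu^{rc}$ (legitimate since $(\lambda^{rc})^\prime=(\lambda^\prime)^{rc}$ by Lemma~\ref{transpose}), together with the shifted weights $q^{\omega_{base}^{LU}(\lambda^r,\mu,\nu^c;N)}$ and $q^{\omega_{base}^{RD}(\lambda^c,\mu,\nu^r;N)}$ from Lemmas~\ref{recurrence5-weight1} and \ref{recurrence5-weight2}. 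Each of the four weights is a product of the elementary blocks $q^{\vartheta_1},\dots,q^{\vartheta_8},q^{\varpi_1},q^{\varpi_3}$, and since $\mu$ is untouched in this recurrence while only $\lambda$ and $\nu$ are modified, the whole quotient factors into independent sub-ratios grouped by the partition and the block type they involve.

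Concretely, I would split the quotient into four pieces. The pure-$N$ piece $q^{\varpi_1(N,1,1)}\,q^{\varpi_1(N-2,-1,-1)}/\bigl(q^{\varpi_1(N-1,0,0)}\bigr)^2$ is precisely Lemma~\ref{recurrence2-weight3}, carried along with the stray factor $q_N^{-N}$ coming from $q^{\omega_{base}^{LU}}$. The $\nu$-piece, assembled from the four $q^{\varpi_3}$ blocks at arguments $(N,1)$, $(N+1,2)$, $(N-1,0)$, is the reciprocal of Lemma~\ref{recurrence2-weight6} (its hypotheses match, which is where $\nu\neq\emptyset$ is used). The remaining $\lambda^\prime$- and $\mu$-pieces split into a $q^{\vartheta_1}$-ratio and a $q^{\vartheta_2}/q^{\vartheta_5}/q^{\vartheta_7}$-ratio, and into a $q^{\vartheta_3}$-ratio and a $q^{\vartheta_4}$-ratio, respectively. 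These are the analogues, for the present $(N,k,l)$ shifts, of Lemmas~\ref{recurrence4-weight4}--\ref{recurrence4-weight7}; since the index shifts differ from those of Section~5.4.1, I would establish them as separate auxiliary lemmas by the same case analysis on $d(\eta)$, $\widetilde{d}(\eta)$ and $\eta_1$, invoking Lemmas~\ref{modified-partition}, \ref{length-relation}, \ref{diag-length}, \ref{value-set} and Remark~\ref{special-value}. Because the $\mu$-pieces compare one fixed partition $\mu$ at shifted arguments, they collapse to the simpler ``diagonal'' identities in the spirit of Lemma~\ref{recurrence2-weight5}.

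The final step is to multiply the pieces and confirm that all $N$-dependence cancels: the $q_N^N$ from the $\varpi_1$-ratio must absorb the stray $q_N^{-N}$, and the boundary tails $q_0^{\pm N}$ and $\prod_i q_i^{\pm1}$ produced by the $\varpi$- and $\vartheta$-ratios must telescope against one another, leaving the $N$-free product $\bigl(q^{K_2(\lambda,\mu,\nu)}\bigr)^{-1}$ with $K_2$ as in Lemma~\ref{recurrence2-weight7}. I expect this cancellation bookkeeping---tracking the shifted $(N,k,l)$ indices through every block and checking the telescoping of the boundary products---to be the main obstacle, whereas once the auxiliary ratio lemmas are in hand the concluding multiplication is purely mechanical. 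A useful a priori check is that the double-dimer weighting is reciprocal to the dimer weighting (adding a box \emph{decreases} the edge-weight by $q_{i-j}$, as noted before Theorem~\ref{orbifold-PT-vertex}), which predicts that the PT ratio must be the inverse $q^{-K_2}$ of the DT ratio $q^{K_2}$ obtained from the identical partition data in Lemma~\ref{recurrence2-weight7}; this duality both confirms the target value and could, with care, shorten part of the computation.
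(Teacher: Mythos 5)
Your proposal follows essentially the same route as the paper: the paper's proof likewise expands the four weights via Lemmas \ref{recurrence4-weight1}, \ref{recurrence5-weight1}, \ref{recurrence5-weight2}, factors the quotient block-by-block, reuses the DT Lemmas \ref{recurrence2-weight3} (the $\varpi_{1}$-piece, absorbing the stray $q_{N}^{-N}$) and \ref{recurrence2-weight6} (the reciprocal $\varpi_{3}$-piece for $\nu$), and handles the $\lambda^\prime$- and $\mu$-pieces through exactly the auxiliary ratio lemmas you propose to establish, which appear in the paper as Lemmas \ref{recurrence5-weight3}, \ref{recurrence5-weight4}, \ref{recurrence5-weight5}, \ref{recurrence5-weight6} (the latter two being the ``diagonal'' identities in $\mu$ you anticipated). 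Your plan is correct and matches the paper's argument in both decomposition and ingredients.
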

\begin{proof}
	It follows from Lemmas \ref{recurrence4-weight1}, \ref{recurrence5-weight1}, \ref{recurrence5-weight2}, \ref{recurrence5-weight3}, \ref{recurrence5-weight4}, \ref{recurrence5-weight5}, \ref{recurrence5-weight6}, 
	 \ref{recurrence2-weight6}, \ref{recurrence2-weight3}.
\end{proof}

The following lemmas follow from the similar argument in Section 5.4.1.
\begin{lemma}\label{recurrence5-weight3}
	For any partition $\eta\neq\emptyset$, we have	
	\ben
	\frac{q^{\vartheta_{1}(\eta^c,N,0,0)}\cdot q^{\vartheta_{1}(\eta^r,N,0,2)}}{q^{\vartheta_{1}(\eta,N,0,1)}\cdot q^{\vartheta_{1}(\eta^{rc},N,0,1)}}
    =\left\{
	\begin{aligned}
		&1 ,\;  \;\;\;\;\; \;\;\;\;\; \;\;\; \;\;\;\;\;\;\;\; \;\;\;\;\;\;\; \mbox{if $d(\eta)>1$ or $d(\eta)=1$ and $\eta_{1}>1$}; \\
		&\prod\limits_{i=0}^{N-1}q_{-i}^{N},      \;\;\;\;\;\;\;\;\;\;\;\;\;\;\;\;\;\;\mbox{if $d(\eta)=1$ and $\eta_{1}=1$}.
	\end{aligned}
	\right.
	\een
\end{lemma}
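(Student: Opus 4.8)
The plan is to exploit that $q^{\vartheta_{1}(\eta,m,k,l)}$ depends on the partition $\eta$ only through its length $\ell(\eta)$, and that all four factors in the ratio share the parameters $m=N$ and $k=0$. Writing $P_{i}:=\prod_{j=1}^{i}q_{-N+i-j+1}^{i}$, each of the four factors is then a truncated product $\prod_{i=1}^{M}P_{i}$ in which the partition enters only through the upper limit $M$. First I would record the four limits: $N-\ell(\eta^{c})$ for the numerator factor $q^{\vartheta_{1}(\eta^{c},N,0,0)}$, $N-\ell(\eta^{r})-2$ for $q^{\vartheta_{1}(\eta^{r},N,0,2)}$, and $N-\ell(\eta)-1$, $N-\ell(\eta^{rc})-1$ for the two denominator factors. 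This reduces the lemma to comparing the multiset of upper limits appearing on top with the one appearing on the bottom.

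Next I would substitute the length formulas of Lemma \ref{length-relation}. When $d(\eta)>1$ one has $\ell(\eta^{c})=\ell(\eta)+1$, $\ell(\eta^{r})=\ell(\eta)-1$, $\ell(\eta^{rc})=\ell(\eta)$, so all four limits collapse to $N-\ell(\eta)-1$ and the quotient is $1$ factor by factor. When $d(\eta)=1$ and $\eta_{1}>1$ one instead has $\ell(\eta^{c})=1$, $\ell(\eta^{r})=\ell(\eta)-1$, $\ell(\eta^{rc})=0$; the numerator limits $N-1$ and $N-\ell(\eta)-1$ then agree as a multiset with the denominator limits $N-\ell(\eta)-1$ and $N-1$, and again the ratio is $1$. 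Both of these subcases are pure bookkeeping once the limits are written down.

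The only nonzero contribution arises when $d(\eta)=1$ and $\eta_{1}=1$, where $\ell(\eta^{c})=\ell(\eta^{rc})=0$ and $\ell(\eta^{r})=\ell(\eta)-1$. Here the numerator limits are $N$ and $N-\ell(\eta)-1$ while the denominator limits are $N-\ell(\eta)-1$ and $N-1$; cancelling the common block $\prod_{i=1}^{N-\ell(\eta)-1}P_{i}$ leaves
\[
\frac{\prod_{i=1}^{N}P_{i}}{\prod_{i=1}^{N-1}P_{i}}=P_{N}=\prod_{j=1}^{N}q_{1-j}^{N}=\prod_{i=0}^{N-1}q_{-i}^{N},
\]
which is exactly the claimed value after the reindexing $i=j-1$. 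There is no real obstacle: the entire lemma is driven by the observation that $\vartheta_{1}$ sees only $\ell(\eta)$, so it amounts to a comparison of four integers supplied by Lemma \ref{length-relation}. The only points demanding care are keeping the $-2$ and $-1$ shifts in the upper limits straight across the four factors, and confirming the reindexing of $P_{N}$; both I would verify explicitly in the three cases above.
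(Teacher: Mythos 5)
Your proof is correct and takes essentially the same route as the paper's, which establishes this lemma by the ``similar argument'' of Lemma \ref{recurrence4-weight4}, namely a case analysis driven by Lemma \ref{length-relation} (and Remark \ref{special-value}). Your organizing observation that all four factors share $m=N$ and $k=0$, so that $\vartheta_{1}$ sees the partition only through $\ell(\eta)$ and the ratio reduces to comparing the multisets of upper limits $\{N-\ell(\eta^{c}),\,N-\ell(\eta^{r})-2\}$ versus $\{N-\ell(\eta)-1,\,N-\ell(\eta^{rc})-1\}$, is a clean streamlining of that computation, and your three-case bookkeeping, including the final evaluation $P_{N}=\prod_{j=1}^{N}q_{1-j}^{N}=\prod_{i=0}^{N-1}q_{-i}^{N}$, checks out exactly.
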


\begin{lemma}\label{recurrence5-weight4}
For any partition $\eta\neq\emptyset$, we have		
	\ben
	\frac{q^{\vartheta_{5}(\eta^c,N+1,1)}\cdot q^{\vartheta_{7}(\eta^r,N,0)}}{q^{\vartheta_{2}(\eta,N,0)}\cdot q^{\vartheta_{2}(\eta^{rc},N,0)}}=\left\{
	\begin{aligned}
		&\frac{q_{0}^{N-1}\cdot q_{1-d(\eta)}^{\eta_{d(\eta)}}}{\prod\limits_{i=1}^{d(\eta)-1}q_{-i}}\cdot\prod\limits_{i=1}^{d(\eta)-1}\left(\frac{q_{-i+1}}{q_{-i}}\right)^{\eta_{i}} ,\;  \;\;\;\;\;\;\;\;\; \mbox{if $d(\eta)>1$}; \\
		&q_{0}^{N+\eta_{1}-1},   \;\;\;\;\;\;\;\;\;\;\;\;\;\;\;\;\;\;\;\;\;\;\;\;\;\;\;\;\;\;\;\;\;\;\;\;\;\;\;\;\;\;\;\;\;\;\;\;\;\mbox{if $d(\eta)=1$ and $\eta_{1}>1$};\\
		&\prod\limits_{i=1}^{N-1}q_{-i}^{-N},      \;\;\;\;\;\;\;\;\;\;\;\;\;\;\;\;\;\;\;\;\; \;\;\;\;\;\;\;\; \;\;\;\;\;\;\;\; \;\;\;\;\;\;\;\; \;\;\; \mbox{if $d(\eta)=1$ and $\eta_{1}=1$}.
	\end{aligned}
	\right.
	\een
\end{lemma}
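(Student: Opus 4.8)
The statement is the exact analogue of Lemma \ref{recurrence4-weight5} with the parameters of the two ``modified'' weight factors shifted: here $\vartheta_5$ is evaluated at $(N+1,1)$ rather than $(N,0)$, and $\vartheta_7$ at $(N,0)$ rather than $(N,1)$, while the two denominator factors $\vartheta_2(\eta,N,0)$ and $\vartheta_2(\eta^{rc},N,0)$ are unchanged. The plan is therefore to copy the organization of the proof of Lemma \ref{recurrence4-weight5} verbatim in spirit: split into the three cases $d(\eta)>1$, $d(\eta)=1$ with $\eta_1>1$, and $d(\eta)=1$ with $\eta_1=1$; in each case compute the two ratios $R_1:=q^{\vartheta_5(\eta^c,N+1,1)}/q^{\vartheta_2(\eta,N,0)}$ and $R_2:=q^{\vartheta_7(\eta^r,N,0)}/q^{\vartheta_2(\eta^{rc},N,0)}$ separately, and multiply. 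The pairing $\eta^c\leftrightarrow\eta$ and $\eta^r\leftrightarrow\eta^{rc}$ is the correct one because, by Lemma \ref{modified-partition}, $\eta^c$ and $\eta$ differ only in their parts up to a shift near the diagonal, and likewise $\eta^r$ differs from $\eta^{rc}$ only near the diagonal, so both $R_1$ and $R_2$ telescope to short boundary products.

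\textbf{Main case $d(\eta)>1$.} First I would use Lemma \ref{modified-partition} together with Remark \ref{special-value} to record the three blocks of each modified partition: the head $1\le i<d(\eta)$, the flat diagonal block $d(\eta)\le i\le\widetilde d(\eta)$ (where $\eta^{rc}_i=d(\eta)-1$ and $\eta_i=d(\eta)$), and the tail $i>\widetilde d(\eta)$ (where all four partitions agree). Substituting these into the defining products of $\vartheta_2,\vartheta_5,\vartheta_7$ and aligning the inner index $j$, most factors cancel block by block; Lemma \ref{value-set} is what guarantees the inner product ranges line up so that the cancellation is exact. Because $m$ has been increased by one in $\vartheta_5$ (to $N+1$) and the base shift $k$ changed, the inner ranges $\prod_{j=1}^{N-\eta^c_i}$ and the exponents $N+\eta^c_i-i+1$ differ from those in Lemma \ref{recurrence4-weight5}, so the \emph{surviving} boundary factors are shifted accordingly; I expect $R_1$ to leave a $q_0$-power and a factor tied to $\eta_{d(\eta)}$, and $R_2$ to leave the complementary boundary terms, their product collapsing to the claimed $\tfrac{q_0^{N-1}\,q_{1-d(\eta)}^{\eta_{d(\eta)}}}{\prod_{i=1}^{d(\eta)-1}q_{-i}}\cdot\prod_{i=1}^{d(\eta)-1}(q_{-i+1}/q_{-i})^{\eta_i}$. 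Lemma \ref{length-relation} and Lemma \ref{diag-length} are used to control the lengths $\ell(\eta^r),\ell(\eta^c),\ell(\eta^{rc})$ entering the outer product bounds, and Lemma \ref{size-comparision} to ensure the ranges stay nonnegative.

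\textbf{Degenerate cases.} For $d(\eta)=1$ with $\eta_1>1$ and with $\eta_1=1$, Remark \ref{special-value} tells me that $\eta$ is a hook-type partition with $\eta_i=1$ for $2\le i\le\ell(\eta)$, and in particular that $\eta^c=\emptyset$ exactly when $\eta_1=1$. In the latter case $q^{\vartheta_5(\eta^c,N+1,1)}=1$ by definition, so $R_1$ reduces to $1/q^{\vartheta_2(\eta,N,0)}$ and only $R_2$ contributes nontrivially; this is the source of the distinct answer $\prod_{i=1}^{N-1}q_{-i}^{-N}$ in the third branch. These two sub-cases are short direct computations once the explicit one-parameter form of $\eta,\eta^r,\eta^c,\eta^{rc}$ is plugged in.

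\textbf{Expected obstacle.} The genuine work is not conceptual but the index bookkeeping: because the parameters $(m,k)$ of $\vartheta_5$ and $\vartheta_7$ here differ from those in Lemma \ref{recurrence4-weight5}, the partial cancellations between numerator and denominator terminate at different values of $i$ and $j$, so the boundary factors that survive the telescoping are not the same and must be recomputed from scratch rather than quoted. The most error-prone point will be the flat block $d(\eta)\le i\le\widetilde d(\eta)$, where $\eta^{rc}$ and the diagonal value $d(\eta)-1$ interact with the shifted base of $\vartheta_5$; keeping the exponents $N+\eta_i-i$ versus $N+\eta_i-i+1$ straight across this block is where the three-way case split must be handled with care. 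I would therefore verify the final collapse separately for $R_1$ and $R_2$ before multiplying, and cross-check the $n=1$ specialization against the manifold computation to catch sign and shift errors.
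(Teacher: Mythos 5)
Your proposal matches the paper's own treatment: the paper proves Lemma \ref{recurrence5-weight4} by exactly the ``similar argument'' to Lemma \ref{recurrence4-weight5} in Section 5.4.1, which is precisely your plan — the same pairing of $\vartheta_{5}(\eta^c)$ against $\vartheta_{2}(\eta)$ and $\vartheta_{7}(\eta^r)$ against $\vartheta_{2}(\eta^{rc})$, the same three-case split controlled by Lemma \ref{modified-partition}, Remark \ref{special-value} and Lemma \ref{value-set}, and boundary factors recomputed from scratch because of the shifted parameters $(N+1,1)$ and $(N,0)$. Your sketch is also consistent with the stated formula (for instance, when $d(\eta)=1$ and $\eta_{1}=1$ one has $\eta^c=\eta^{rc}=\emptyset$, so $q^{\vartheta_{5}(\eta^c,N+1,1)}=q^{\vartheta_{2}(\eta^{rc},N,0)}=1$ and the telescoping of $q^{\vartheta_{7}(\eta^r,N,0)}$ against $q^{\vartheta_{2}(\eta,N,0)}$ leaves exactly $\prod_{i=1}^{N-1}q_{-i}^{-N}$), so there is nothing further to add.
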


\begin{lemma}\label{recurrence5-weight5}
	For any partition $\eta$, we have		
	\ben
	\frac{q^{\vartheta_{3}(\eta,N-1,-1,0)}\cdot q^{\vartheta_{3}(\eta,N+1,3,2)}}{\left(q^{\vartheta_{3}(\eta,N,1,1)}\right)^2}=\frac{q_{\ell(\eta)}^{N-1}}{\prod\limits_{i=\ell(\eta)+1}^{N-1}q_{i}}.
	\een
\end{lemma}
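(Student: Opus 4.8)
The key structural observation is that $q^{\vartheta_{3}(\eta,m,k,l)}=\prod_{i=1}^{m-\ell(\eta)-1}\prod_{j=\ell(\eta)+1}^{m-i}q_{i+j-l}^{m+i-k}$ depends on $\eta$ only through its length $L:=\ell(\eta)$, and not on the individual parts of $\eta$, so the asserted identity is really a closed identity in the two integers $N$ and $L$ (valid in particular for $\eta=\emptyset$, where $L=0$, matching the statement's lack of a nonemptiness hypothesis). The plan is therefore to prove it by collecting, in each of the three factors, the total exponent of a fixed variable $q_{c}$. First I would reindex each double product by its subscript: writing $c$ for the index $i+j-l$ and eliminating $j=c+l-i$, each factor becomes a single sum over $i$ of the linear exponent $m+i-k$, subject to range constraints inherited from $L+1\le j\le m-i$ and $1\le i\le m-L-1$.

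Concretely, I would verify that for the three relevant parameter triples $(m,k,l)=(N-1,-1,0)$, $(N+1,3,2)$, $(N,1,1)$ the constraint $j\le m-i$ forces $c\le N-1$ in every case, while $j\ge L+1$ bounds $i$ from above by $c-L-1$, $c-L+1$, $c-L$ respectively. Denoting by $E_{A}(c),E_{B}(c),E_{C}(c)$ the resulting exponents of $q_{c}$ in the three factors $q^{\vartheta_{3}(\eta,N-1,-1,0)}$, $q^{\vartheta_{3}(\eta,N+1,3,2)}$, $q^{\vartheta_{3}(\eta,N,1,1)}$, each is an arithmetic-series sum $\sum_{i}(m+i-k)$ that evaluates to a quadratic polynomial in the truncated range length. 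The total exponent of $q_{c}$ on the left-hand side is then $E_{A}(c)+E_{B}(c)-2E_{C}(c)$, and setting $t=c-L$ I would check that both the $N$-linear terms and the quadratic-in-$t$ terms cancel identically, leaving the constant $-1$ for every $L+1\le c\le N-1$; this cancellation is exactly a discrete second difference along the diagonal $(m,k,l)=(N,1,1)\pm(1,2,1)$, which is why the interior collapses to a constant.

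The remaining work is the boundary bookkeeping, which is where the clean right-hand side $q_{L}^{N-1}\big/\prod_{i=L+1}^{N-1}q_{i}$ actually emerges. The three products begin at different smallest subscripts — $L+2$ for the first factor, $L$ for the second, and $L+1$ for the third — so I would treat $c=L$ and $c=L+1$ separately from the interior: at $c=L$ only the second factor contributes, giving exponent $N-1$ and hence the numerator $q_{L}^{N-1}$; at $c=L+1$ the first factor still contributes nothing while $E_{B}-2E_{C}=-1$, matching the interior value. I expect the main obstacle to be precisely this careful tracking of the $i$-ranges — in particular confirming that the two candidate upper bounds $m-L-1$ and $c-L+(\text{shift})$ always resolve to the smaller one whenever $c\le N-1$, so that no spurious boundary terms appear — rather than the arithmetic itself, which is routine once the reindexing is fixed. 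Notably, the case split $d(\eta)>1$ versus $d(\eta)=1$ appearing in the neighbouring Lemmas \ref{recurrence4-weight4} and \ref{recurrence4-weight5} does not arise here, since $\vartheta_{3}$ ignores the parts of $\eta$ entirely, so a single uniform computation suffices.
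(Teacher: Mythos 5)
Your proposal is correct: $\vartheta_{3}(\eta,m,k,l)$ indeed depends on $\eta$ only through $L=\ell(\eta)$, your reindexing by the subscript $c=i+j-l$ with upper bounds $c-L-1$, $c-L+1$, $c-L$ on $i$ is right, the interior exponent $E_{A}+E_{B}-2E_{C}$ does collapse to $-1$ for $L+1\le c\le N-1$, and the boundary values (only the second factor contributing $N-1$ at $c=L$) reproduce exactly the claimed right-hand side; this is essentially the same direct weight-cancellation computation the paper has in mind when it states this lemma without proof as following ``from the similar argument in Section 5.4.1.'' The only difference is organizational---you collect the total exponent of each fixed variable $q_{c}$ rather than cancelling the double products factor by factor, and you correctly note that no case split on $d(\eta)$ is needed here---so this is not a genuinely different route, just a clean way of carrying out the computation the paper omits.
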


\begin{lemma}\label{recurrence5-weight6}
For any partition $\eta$, we have	
	\ben
	\frac{q^{\vartheta_{4}(\eta,N,1)}\cdot q^{\vartheta_{4}(\eta,N,-1)}}{\left(q^{\vartheta_{4}(\eta,N,0)}\right)^2}=\left\{
	\begin{aligned}
		&1 ,\;  \;\;\;\;\; \;\;\;\;\; \;\;\; \;\;\;\;\;\;\;\; \;\;\;\;\;\;\;\;\;\;\;\;\;\;\; \;\;\;\;\;\;\;\;\;\;\;\;\;\;\; \;\;\;\;\;\;\; \mbox{if $\eta=\emptyset$}; \\
		&\frac{q_{0}^{N-1}\cdot q_{\ell(\eta)}^{-N}}{\prod\limits_{i=1}^{\ell(\eta)-1}q_{i}}\cdot\prod\limits_{i=1}^{\ell(\eta)}\left(\frac{q_{i-1}}{q_{i}}\right)^{\eta_{i}},      \;\;\;\;\;\;\;\;\;\;\;\;\;\;\;\;\;\;\mbox{if $\eta\neq\emptyset$}.
	\end{aligned}
	\right.
	\een
\end{lemma}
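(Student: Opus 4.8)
The plan is to prove the identity by a direct, row-by-row evaluation of the three products $q^{\vartheta_4(\eta,N,1)}$, $q^{\vartheta_4(\eta,N,-1)}$, and $q^{\vartheta_4(\eta,N,0)}$, in the same second-difference spirit as the balancing Lemmas \ref{recurrence2-weight5} and \ref{recurrence5-weight5}. The case $\eta=\emptyset$ is immediate: then $\ell(\eta)=0$, both products in the definition of $\vartheta_4$ are empty, so $q^{\vartheta_4(\emptyset,N,k)}=1$ for each $k$ and the quotient equals $1$. So I assume $\eta\neq\emptyset$ and take $N$ large, which is all that is needed since the lemma is applied only in the $N\to\infty$ limit.

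The key observation is a reparametrization of each inner product. Recall
\[
q^{\vartheta_4(\eta,N,k)}=\prod_{i:\,1\le i\le\eta_i}\prod_{j=1}^{N-\eta_i-k}q_{i+j+k-1}^{N+\eta_i+j+k-1}\cdot\prod_{i:\,\eta_i<i\le\ell(\eta)}\prod_{j=1}^{N-i-k}q_{i+j+k-1}^{N+\eta_i+j+k-1}.
\]
Fix a row $i$ and set $s=i+j+k-1$. Then the exponent $N+\eta_i+j+k-1$ becomes $N+\eta_i-i+s$, which is independent of $k$; and as $j$ runs over its range the index $s$ runs from $i+k$ up to a top value that also turns out to be $k$-independent, namely $U_i=i+N-\eta_i-1$ in the first regime and $U_i=N-1$ in the second. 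Thus the $i$-th factor of $q^{\vartheta_4(\eta,N,k)}$ can be written uniformly as $P_i(k):=\prod_{s=i+k}^{U_i}q_s^{\,N+\eta_i-i+s}$, with only the lower summation bound depending on $k$.

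With this form the second difference collapses. Writing $E(s)=N+\eta_i-i+s$, the exponent of $q_s$ in $P_i(1)P_i(-1)/P_i(0)^2$ is $E(s)\bigl(\mathbf{1}[s\ge i+1]+\mathbf{1}[s\ge i-1]-2\cdot\mathbf{1}[s\ge i]\bigr)$, which vanishes except at $s=i-1$ (coefficient $+1$) and $s=i$ (coefficient $-1$); here I use $N$ large so that $U_i\ge i$ in both regimes. Hence every row contributes the same factor $q_{i-1}^{\,E(i-1)}q_i^{-E(i)}=q_{i-1}^{\,N+\eta_i-1}q_i^{-(N+\eta_i)}$, independently of whether $i\le\eta_i$ or $\eta_i<i$. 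Taking the product over $1\le i\le\ell(\eta)$ and collecting the exponent of each $q_s$ — obtaining $N+\eta_1-1$ at $s=0$, $\eta_{s+1}-\eta_s-1$ for $1\le s\le\ell(\eta)-1$, and $-(N+\eta_{\ell(\eta)})$ at $s=\ell(\eta)$ — reproduces exactly $q_0^{N-1}q_{\ell(\eta)}^{-N}\bigl(\prod_{i=1}^{\ell(\eta)-1}q_i\bigr)^{-1}\prod_{i=1}^{\ell(\eta)}(q_{i-1}/q_i)^{\eta_i}$.

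The main obstacle is not conceptual but organizational: one must verify that both the top index $U_i$ and the $s$-exponent genuinely lose their $k$-dependence in the two regimes simultaneously, and that the surviving boundary terms at $s=i-1,i$ are produced uniformly, so that the two regimes merge into a single product over rows. I expect the only real care is checking range non-emptiness ($U_i\ge i$) for $N$ large and handling the small-$\ell(\eta)$ edge cases when collecting the final exponents at $s=0$ and $s=\ell(\eta)$; these are routine once the substitution $s=i+j+k-1$ is in place.
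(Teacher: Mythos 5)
Your proof is correct, and it fills a gap the paper leaves open: the paper offers no explicit argument for this lemma, dismissing it (together with Lemmas \ref{recurrence5-weight3}--\ref{recurrence5-weight5}) as following ``from the similar argument in Section 5.4.1'', where the model proofs (Lemmas \ref{recurrence4-weight4}, \ref{recurrence4-weight5}) are brute-force expansions of the double products with a three-way case split on $d(\eta)>1$, $d(\eta)=1$ with $\eta_{1}>1$, and $d(\eta)=1$ with $\eta_{1}=1$. Your route is genuinely cleaner: the substitution $s=i+j+k-1$ makes both the exponent $N+\eta_{i}-i+s$ and the upper limit of $s$ (namely $i+N-\eta_{i}-1$ when $\eta_{i}\ge i$, and $N-1$ when $\eta_{i}<i$) independent of $k$, so the ratio collapses row by row to the second difference $q_{i-1}^{N+\eta_{i}-1}q_{i}^{-(N+\eta_{i})}$ with no case analysis at all --- which is consistent with the fact that, unlike its neighbours, this lemma involves none of the modified partitions $\eta^{r},\eta^{c},\eta^{rc}$ and its answer only distinguishes $\eta=\emptyset$ from $\eta\neq\emptyset$. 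Your final bookkeeping (exponent $N+\eta_{1}-1$ at $s=0$, $\eta_{s+1}-\eta_{s}-1$ for $1\le s\le\ell(\eta)-1$, and $-(N+\eta_{\ell(\eta)})$ at $s=\ell(\eta)$) does match the stated right-hand side. Your explicit large-$N$ caveat, needed so that each row's range reaches $s=i$, is also the correct reading of the paper's implicit convention rather than a defect: the identity as literally stated can fail for small $N$ (for $\eta=(1)$ and $N=1$ the left side is $q_{0}$ while the right side is $q_{0}q_{1}^{-2}$), and these weight lemmas are only ever invoked for configurations on $\mathbf{H}(N)$ with $N$ large before taking $N\to\infty$.
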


\subsubsection{Weights for the graphical condensation recurrence with $(\mathbf{G}=\mathbf{H}(N),\mathbf{N}_{\lambda,\mu^{rc},\nu^{rc}}(N))$}
The similar argument in Section 5.4.1 shows that
\begin{lemma}\label{recurrence6-weight1}
The edge-weight of the base$_{left-down}$ double-dimer configuration is 
\ben
&&q^{\omega_{base}^{LD}(\lambda,\mu^r,\nu^c;n)}\\
&=&\prod_{i=1}^{N-\ell(\lambda^\prime)}\prod_{j=1}^iq_{-N+i-j+1}^{i-1}\cdot\prod_{i:1\leq i\leq\lambda^\prime_{i}}\prod_{j=1}^{N-\lambda^\prime_{i}+1}q_{-i-j+2}^{N+\lambda^\prime_{i}-i}\cdot\prod_{i:\lambda^\prime_{i}< i\leq\ell(\lambda^\prime)}\prod_{j=1}^{N+1-i}q_{-i-j+2}^{N+\lambda^\prime_{i}-i}\\
&&\times\prod_{i=1}^{N-\ell(\mu^r)-2}\prod_{j=\ell(\mu^r)+1}^{N-i-1}q_{i+j}^{N+i-1}\cdot\prod_{i:1\leq i<\mu^r_{i}-1}\prod_{j=1}^{N-\mu^r_{i}}q_{i+j}^{N+\mu^r_{i}+j-1}\cdot\prod_{i:\mu^r_{i}-1\leq i\leq\ell(\mu^r)}\prod_{j=1}^{N-i-1}q_{i+j}^{N+\mu^r_{i}+j-1}\\
&&\times \prod_{i=1}^{\ell(\nu^c)}\prod_{j=0}^{\nu^c_{i}-1}q_{j-i+2}^{N-i}\cdot \prod_{i=0}^{N-1}\prod_{j=0}^{N-2}q_{j-i+1}^{N-i-1}\\
&=&q^{\vartheta_{1}(\lambda^\prime,N,-1,0)}\cdot q^{\vartheta_{2}(\lambda^\prime,N+1,1)}\cdot q^{\vartheta_{3}(\mu^r,N-1,0,0)}\cdot q^{\vartheta_{8}(\mu^r,N,1,0)}\cdot
\left(q^{\varpi_{3}(\nu^c,N,2)}\right)^{-1}\cdot q^{\varpi_{1}(N-1,1,1)}.
\een

\end{lemma}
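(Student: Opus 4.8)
The plan is to compute $q^{\omega^{LD}_{base}(\lambda,\mu^r,\nu^c;N)}$ by the same bookkeeping carried out in detail in Section 5.4.1 for the base double-dimer configuration (the proof of Lemma \ref{recurrence4-weight1}), and followed verbatim for the base$_{up}$, base$_{down}$, base$_{left-up}$ and base$_{right-down}$ configurations in Lemmas \ref{recurrence4-weight2}, \ref{recurrence4-weight3}, \ref{recurrence5-weight1} and \ref{recurrence5-weight2}. By Definition \ref{weight-rule} every non-horizontal edge carries weight $1$, so $q^{\omega^{LD}_{base}(\lambda,\mu^r,\nu^c;N)}$ is exactly the product of the weights of all horizontal dimers appearing in $\mathbf{D}_A(N)$ and $\mathbf{D}_B(N)$, where $(A,B)=(\mathrm{III},\mathrm{II}\cup\mathrm{III})$ is taken with respect to the outgoing partitions $(\lambda,\mu^r,\nu^c)$. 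The only feature distinguishing this from the straight base configuration is that the tilings are translated one unit in the left-down direction before being truncated to $\mathbf{H}(N)$, and the whole computation reduces to tracking the effect of this translation on the index data of each surviving horizontal edge.

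First I would record how the left-down shift reindexes the horizontal dimers. Each such dimer contributes a factor $q_l^{\,p}$, where the subscript $l$ is read off from the column of $\mathbf{H}(N)$ it occupies and the exponent $p$ from its vertical position; translating the origin face one unit to the left-down shifts the column index and the height index by fixed amounts. Comparing with the unshifted enumeration, this is precisely what turns the neutral arguments of the $\vartheta$ and $\varpi$ symbols into the shifted ones $(N,-1,0)$, $(N+1,1)$, $(N-1,0,0)$, $(N,1,0)$, $(N,2)$ and $(N-1,1,1)$ appearing in the claimed identity. I would fix the shift once and apply it uniformly across the three sectors.

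Next comes the sector-by-sector accounting, mirroring the text preceding Lemma \ref{recurrence4-weight1}. In sector $1$ the horizontal dimers of $\mathbf{D}_A(N)$ split into $\ell(\lambda^\prime)+1$ groups, one along each part of $\lambda^\prime$ together with a leftover group, with the two cases $\lambda^\prime_i\geq i$ and $\lambda^\prime_i<i$; after the shift these produce $q^{\vartheta_{1}(\lambda^\prime,N,-1,0)}\cdot q^{\vartheta_{2}(\lambda^\prime,N+1,1)}$. In sector $2$ the analogous grouping along the parts of $\mu^r$, using the $r$-modified case split dictated by Lemma \ref{modified-partition} (whence the thresholds $\mu^r_i-1$), yields $q^{\vartheta_{3}(\mu^r,N-1,0,0)}\cdot q^{\vartheta_{8}(\mu^r,N,1,0)}$. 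The sector-$3$ horizontal dimers of $\mathbf{D}_A(N)$ give $\bigl(q^{\varpi_{3}(\nu^c,N,2)}\bigr)^{-1}$, while the horizontal dimers of $\mathbf{D}_B(N)$, which occur only in sector $3$, contribute $q^{\varpi_{1}(N-1,1,1)}$. Multiplying the explicit products and re-expressing them in the compact $\vartheta/\varpi$ notation then gives the stated equality.

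The main obstacle will be the index bookkeeping under the left-down shift: determining precisely which horizontal edges survive truncation to $\mathbf{H}(N)$, and assigning each the correct column subscript and exponent after translation, so that the endpoints of the grouped products (the leftover groups, the transition between $\lambda^\prime_i\geq i$ and $\lambda^\prime_i<i$, and the $\mu^r_i-1$ boundary cases) coincide exactly with the summation limits built into the $\vartheta$ and $\varpi$ symbols. This step is routine but error-prone; it is the direct analogue of the shifts already handled for the base$_{left-up}$ and base$_{right-down}$ configurations in Lemmas \ref{recurrence5-weight1} and \ref{recurrence5-weight2}, so I would verify it by the same argument as in Section 5.4.1.
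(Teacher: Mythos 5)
Your proposal matches the paper's own treatment: the paper proves this lemma simply by invoking "the similar argument in Section 5.4.1," i.e., the sector-by-sector enumeration of horizontal dimer weights for $(A,B)=(\mathrm{III},\mathrm{II}\cup\mathrm{III})$ with respect to $(\lambda,\mu^r,\nu^c)$, adjusted for the one-unit left-down shift prior to truncation, exactly as you outline. One small correction of attribution: the shifted thresholds $\mu^r_i-1$ in sector 2 come from the geometric translation of the origin (as in the base$_{down}$ and base$_{right-down}$ cases), not from Lemma \ref{modified-partition} itself, but this does not affect the validity of your argument.
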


\begin{lemma}\label{recurrence6-weight2}
The edge-weight of the base$_{right-up}$ double-dimer configuration is 
\ben
&&q^{\omega_{base}^{RU}(\lambda,\mu^c,\nu^r;n)}\\
&=&\prod_{i=1}^{N-\ell(\lambda^\prime)-2}\prod_{j=1}^iq_{-N+i-j+1}^{i+1}\cdot\prod_{i:1\leq i\leq\lambda^\prime_{i}}\prod_{j=1}^{N-\lambda^\prime_{i}-1}q_{-i-j}^{N+\lambda^\prime_{i}-i}\cdot\prod_{i:\lambda^\prime_{i}< i\leq\ell(\lambda^\prime)}\prod_{j=1}^{N-i-1}q_{-i-j}^{N+\lambda^\prime_{i}-i}\\
&&\times\prod_{i=1}^{N-\ell(\mu^c)}\prod_{j=\ell(\mu^c)+1}^{N-i+1}q_{i+j-2}^{N+i-1}\cdot\prod_{i:1\leq i<\mu^c_{i}+1}\prod_{j=1}^{N-\mu^c_{i}}q_{i+j-2}^{N+\mu^c_{i}+j-1}\cdot\prod_{i:\mu^c_{i}+1\leq i\leq\ell(\mu^c)}\prod_{j=1}^{N-i+1}q_{i+j-2}^{N+\mu^c_{i}+j-1}\\
&&\times\prod_{i=1}^{\ell(\nu^r)}\prod_{j=0}^{\nu^r_{i}-1}q_{j-i}^{N-i}\cdot\prod_{i=0}^{N-1}\prod_{j=0}^{N}q_{j-i-1}^{N-i-1}\\
&=&q^{\vartheta_{1}(\lambda^\prime,N,1,2)}\cdot q^{\vartheta_{2}(\lambda^\prime,N-1,-1)}\cdot q^{\vartheta_{3}(\mu^c,N+1,2,2)}\cdot q^{\vartheta_{8}(\mu^c,N,-1,0)}\cdot \left(q^{\varpi_{3}(\nu^r,N,0)}\right)^{-1}\cdot q^{\varpi_{1}(N-1,-1,-1)}.
\een
\end{lemma}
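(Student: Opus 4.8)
The plan is to compute the edge-weight $q^{\omega_{base}^{RU}(\lambda,\mu^c,\nu^r;n)}$ directly, reusing the bookkeeping carried out for the base double-dimer configuration in the paragraph preceding Lemma \ref{recurrence4-weight1} and mirroring the computation of the base$_{left-down}$ weight in Lemma \ref{recurrence6-weight1}. Recall from the discussion in Section 5.3 that the base$_{right-up}$ configuration arises from the AB configuration $(\mathrm{III},\mathrm{II}\cup\mathrm{III})$ taken with respect to the triple $(\lambda,\mu^c,\nu^r)$, whose two constituent dimer configurations $\mathbf{D}_A$ and $\mathbf{D}_B$ are superimposed after their origins and tilings are shifted one unit in the right-up direction and then truncated to $\mathbf{H}(N)$. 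Since $\mathbf{D}_B$ contributes horizontal dimers only in sector 3, the total edge-weight factors into three sector contributions coming from $\mathbf{D}_A(N)$ together with one global contribution from the horizontal dimers of $\mathbf{D}_B(N)$.

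First I would record the sector-3 pieces: the horizontal dimers of $\mathbf{D}_A(N)$ lying in the $\nu^r$-leg give $\left(q^{\varpi_3(\nu^r,N,0)}\right)^{-1}$, where the right-up shift is exactly what turns the base color index $1$ in $\varpi_3(\nu,N,1)$ of Lemma \ref{recurrence4-weight1} into $0$, while the horizontal dimers of $\mathbf{D}_B(N)$ assemble into the shifted $N\times N$ block contribution $q^{\varpi_1(N-1,-1,-1)}$, the displacement carrying the base parameters $(N-1,0,0)$ to $(N-1,-1,-1)$. Next I would handle sectors 1 and 2 by grouping the horizontal dimers of $\mathbf{D}_A(N)$ along the parts of $\lambda^\prime$ and $\mu^c$ respectively, exactly as in the base computation: in sector 1 the remainder group (indices $i>\ell(\lambda^\prime)$) assembles into $q^{\vartheta_1(\lambda^\prime,N,1,2)}$ and the part-by-part groups into $q^{\vartheta_2(\lambda^\prime,N-1,-1)}$, and in sector 2 the analogous split produces $q^{\vartheta_3(\mu^c,N+1,2,2)}$ together with $q^{\vartheta_8(\mu^c,N,-1,0)}$ — here the column-modified profile of $\mu^c$ is what replaces the $\vartheta_4$ form of the unshifted base by the $\vartheta_8$ form. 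Multiplying all six factors yields the asserted product.

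The only genuinely delicate point, and hence the main obstacle, is pinning down the precise integer parameters in each $\vartheta$ and $\varpi$ symbol, since these are entirely dictated by how the right-up shift displaces the origin in $\mathbb{Z}^3$ and thus by how the colors $i-j \bmod n$ of the horizontal dimers are relabelled relative to the unshifted base configuration. Concretely, I would fix the displacement vector of the right-up shift, track its effect on the subscripts and exponents prescribed by Definition \ref{weight-rule} for each of the three families of horizontal edges, and verify that the induced shifts in the color index $k$ and the range parameter $m$ reproduce the arguments $(N,1,2)$, $(N-1,-1)$, $(N+1,2,2)$, $(N,-1,0)$, $(N,0)$, and $(N-1,-1,-1)$. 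This is the same mechanism by which the base$_{left-down}$ weight in Lemma \ref{recurrence6-weight1} acquires its parameters $(N,-1,0)$, $(N+1,1)$, and so on, so once the shift is made explicit the verification is routine and requires no new idea beyond a careful comparison with the unshifted computation of Lemma \ref{recurrence4-weight1}.
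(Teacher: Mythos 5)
Your proposal is correct and takes essentially the same route as the paper: the paper's own proof of this lemma is nothing more than the remark that ``the similar argument in Section 5.4.1 shows that'' it holds, i.e.\ the same sector-by-sector grouping of the horizontal dimers of $\mathbf{D}_{A}(N)$ (remainder group plus part-by-part groups along $\lambda^\prime$, $\mu^c$, $\nu^r$) together with the contribution of $\mathbf{D}_{B}(N)$, with all parameters of the $\vartheta$- and $\varpi$-factors re-derived by tracking the one-unit right-up shift of the origin through the weight rule of Definition \ref{weight-rule}. Your identification of the six factors and of the shift as the only source of the parameter changes is exactly the computation the paper intends.
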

In this case, we have
\ben
\dfrac{q^{\omega_{base}(\lambda,\mu^{rc},\nu;N)}\cdot q^{\omega_{base}(\lambda,\mu,\nu^{rc};nN)}}{q^{\omega_{base}(\lambda,\mu,\nu;N)}\cdot q^{\omega_{base}(\lambda,\mu^{rc},\nu^{rc};N)}}=1
\een
and still require the following lemmas to achieve Equation \eqref{PT-recurrence3}.
\begin{lemma}\label{recurrence6-weight7}
	For any partitions $\mu, \nu\neq\emptyset$ and $\lambda$, we have 
	\ben
	\dfrac{q^{\omega_{base}^{LD}(\lambda,\mu^r,\nu^c;N)}\cdot q^{\omega_{base}^{RU}(\lambda,\mu^{c},\nu^{r};N)}}{q^{\omega_{base}(\lambda,\mu,\nu;N)}\cdot q^{\omega_{base}(\lambda,\mu^{rc},\nu^{rc};N)}}=q^{-K_{3}(\lambda,\mu,\nu)}
	\een
where $q^{-K_{3}(\lambda,\mu,\nu)}=\left(q^{K_{3}(\lambda,\mu,\nu)}\right)^{-1}$ is independent of $N$.	
\end{lemma}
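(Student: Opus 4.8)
The plan is to reduce the identity to the already-established factorizations of the four edge-weights appearing in the ratio and then to collect the resulting monomials group by group, exactly as in the proofs of the analogous Lemmas \ref{recurrence4-weight8} and \ref{recurrence5-weight7}. Concretely, I would substitute the explicit expressions for $q^{\omega_{base}^{LD}(\lambda,\mu^r,\nu^c;N)}$ and $q^{\omega_{base}^{RU}(\lambda,\mu^c,\nu^r;N)}$ from Lemmas \ref{recurrence6-weight1} and \ref{recurrence6-weight2}, together with the expressions for $q^{\omega_{base}(\lambda,\mu,\nu;N)}$ and $q^{\omega_{base}(\lambda,\mu^{rc},\nu^{rc};N)}$ obtained from Lemma \ref{recurrence4-weight1} (the latter by specializing $\mu\mapsto\mu^{rc}$ and $\nu\mapsto\nu^{rc}$). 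Each of these four weights is a product of auxiliary symbols $q^{\vartheta_{1}},q^{\vartheta_{2}}$ in the first ($\lambda'$-)leg, symbols $q^{\vartheta_{3}},q^{\vartheta_{4}}$ (or $q^{\vartheta_{8}}$) in the second ($\mu$-)leg, a factor $q^{\varpi_{3}}$ in the third ($\nu$-)leg, and an overall $q^{\varpi_{1}}$-factor.

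Since $\lambda$ is held fixed throughout this recurrence (only $\mu$ and $\nu$ are modified), the big ratio factors into four independent pieces. The $\lambda'$-dependent factors yield two ratios of $q^{\vartheta_{1}}$ and of $q^{\vartheta_{2}}$ of the \emph{same} partition $\lambda'$ at shifted parameters; these are the first-leg analogues of the same-partition identities Lemmas \ref{recurrence5-weight5} and \ref{recurrence5-weight6} that govern the fixed $\mu$-leg in recurrence \eqref{DT-recurrence3}'s counterpart, and I would establish them by the same telescoping of the row-products $\prod_{j=1}^{i}q_{-N+i-j+1}$. The $\mu$-dependent factors yield ratios of $q^{\vartheta_{3}}$ and of $q^{\vartheta_{4}}/q^{\vartheta_{8}}$ over the modified partitions $\mu^{r},\mu^{c},\mu,\mu^{rc}$; these are of the same type as Lemmas \ref{recurrence4-weight6} and \ref{recurrence4-weight7} but at the parameters dictated by Lemmas \ref{recurrence6-weight1} and \ref{recurrence6-weight2}, with the roles of $r$ and $c$ interchanged, and are proved by the same case analysis on $d(\mu),\widetilde{d}(\mu)$. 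The $\nu$-dependent factors assemble into the reciprocal of $\frac{q^{\varpi_{3}(\nu^c,N,2)}q^{\varpi_{3}(\nu^r,N,0)}}{q^{\varpi_{3}(\nu,N,1)}q^{\varpi_{3}(\nu^{rc},N,1)}}$, already evaluated in Lemma \ref{recurrence3-weight4}; and the overall $q^{\varpi_{1}}$-factors collapse to $\frac{q^{\varpi_{1}(N-1,1,1)}q^{\varpi_{1}(N-1,-1,-1)}}{\left(q^{\varpi_{1}(N-1,0,0)}\right)^{2}}$, computed in Lemma \ref{recurrence3-weight3}.

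Finally I would multiply the evaluated pieces and simplify. The crux is that every $N$-dependent monomial — the boundary terms surviving each telescoped row-product together with the $q_{0}^{\pm(N-1)}$ and $\prod_{i}q_{\pm i}$ tails coming out of Lemmas \ref{recurrence3-weight3} and \ref{recurrence3-weight4} — must cancel, leaving the $N$-independent value $q^{-K_{3}(\lambda,\mu,\nu)}=\left(q^{K_{3}(\lambda,\mu,\nu)}\right)^{-1}$ with $q^{K_{3}}$ as in Lemma \ref{recurrence3-weight5}. This cancellation is the main obstacle, as it is sensitive to the diagonal statistics $d$ and $\widetilde{d}$ and to the case distinctions $d(\eta)>1$ versus $d(\eta)=1$ with $\eta_{1}>1$ or $\eta_{1}=1$, precisely the bookkeeping carried out for the corresponding leg in Lemma \ref{recurrence1-weight8}. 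As an independent consistency check I would verify that the resulting value agrees with the DT weight under the symmetry $q^{K_{2}(\lambda,\mu,\nu)}=\overline{q^{K_{3}(\mu',\lambda',\nu')}}$ recorded just before Equation \eqref{DT-recurrence3}, combined with Remark \ref{symmetry2} and Lemma \ref{transpose}, which must reproduce the same $N$-independent answer.
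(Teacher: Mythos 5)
Your proposal follows essentially the same route as the paper's proof: you substitute the factorizations from Lemmas \ref{recurrence6-weight1}, \ref{recurrence6-weight2} and \ref{recurrence4-weight1}, split the ratio leg by leg, and evaluate the fixed-$\lambda'$ same-partition $\vartheta_{1},\vartheta_{2}$ ratios, the modified-$\mu$ ratios of $\vartheta_{3}$ and $\vartheta_{8}/\vartheta_{4}$, the $\nu$-leg via Lemma \ref{recurrence3-weight4}, and the $\varpi_{1}$ part via Lemma \ref{recurrence3-weight3} --- precisely the pieces the paper isolates as Lemmas \ref{recurrence6-weight3}--\ref{recurrence6-weight6} together with those two shared lemmas. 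The grouping, the identification of which pieces are same-partition versus $r/c$-modified, and the final cancellation of $N$-dependent factors all match the paper's argument.
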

\begin{proof}
It follows from Lemmas \ref{recurrence4-weight1},	\ref{recurrence6-weight1}, \ref{recurrence6-weight2}, 
\ref{recurrence6-weight3}, 	\ref{recurrence6-weight4}, 
\ref{recurrence6-weight5},  \ref{recurrence6-weight6},  \ref{recurrence3-weight4}, \ref{recurrence3-weight3}.
\end{proof}

As in Section 5.4.1, we have
\begin{lemma}\label{recurrence6-weight3}
For any partition $\eta$, we have	
	\ben
	\frac{q^{\vartheta_{1}(\eta,N,-1,0)}\cdot q^{\vartheta_{1}(\eta,N,1,2)}}{\left(q^{\vartheta_{1}(\eta,N,0,1)}\right)^2}=\frac{q_{-\ell(\eta)}^{N-\ell(\eta)-1}}{\prod\limits_{i=1-N}^{-\ell(\eta)-1}q_{i}}.
	\een
\end{lemma}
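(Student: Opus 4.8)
The plan is to prove this as a direct computation of exponents, exploiting the fact that $q^{\vartheta_{1}(\eta,m,k,l)}$ depends on the partition $\eta$ only through its length $\ell(\eta)$. Throughout I would treat each $q_{i}$ (indexed by $i\in\mathbb{Z}$) as an independent formal variable and verify the claimed monomial identity at the level of integer subscripts; since both sides are genuine products of integer-indexed $q_{i}$'s, the asserted equality then survives the reduction $q_{i}=q_{i'}$ for $i'\equiv i \bmod n$, so no congruence bookkeeping is needed. Writing $L:=\ell(\eta)$ and $P:=N-L$, the three factors are $q^{\vartheta_{1}(\eta,N,-1,0)}=\prod_{i=1}^{P}\prod_{j=1}^{i}q_{-N+i-j+1}^{\,i-1}$, $\ q^{\vartheta_{1}(\eta,N,1,2)}=\prod_{i=1}^{P-2}\prod_{j=1}^{i}q_{-N+i-j+1}^{\,i+1}$, and $q^{\vartheta_{1}(\eta,N,0,1)}=\prod_{i=1}^{P-1}\prod_{j=1}^{i}q_{-N+i-j+1}^{\,i}$, the differing top indices coming from the last argument $l$.

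The key reindexing step is to collect exponents by the anti-diagonal $t:=i-j\ge0$, since the subscript $-N+i-j+1=t+1-N$ depends only on $t$, whereas the exponent $i+k$ depends only on $i$. First I would record that in a single block $\prod_{i=1}^{M}\prod_{j=1}^{i}q_{-N+i-j+1}^{\,i+k}$ the total exponent of the variable $q_{s}$ with $s=t+1-N$ equals $\sum_{i=t+1}^{M}(i+k)=\tfrac{1}{2}\big(M(M+1)-t(t+1)\big)+(M-t)k$, valid precisely for $0\le t\le M-1$ and vanishing otherwise. Denote this quantity $f(M,k,t)$.

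Then the exponent of $q_{s}$ (with $s=t+1-N$) in the ratio is $\Delta(t):=f(P,-1,t)+f(P-2,1,t)-2f(P-1,0,t)$, which I would evaluate in ranges dictated by the top indices $P,P-2,P-1$. For $0\le t\le P-3$ all three terms are present; the quadratic-in-$t$ parts cancel identically, the $P$-dependent part contributes $+1$, and the $k$-linear part contributes $-2$, giving $\Delta(t)=-1$. For the boundary value $t=P-2$ the middle term drops out (its range requires $t\le P-3$) and a short computation gives $\Delta(P-2)=(2P-3)-2(P-1)=-1$ as well. For $t=P-1$ only the first term survives and one finds $\Delta(P-1)=f(P,-1,P-1)=P-1=N-L-1$. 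Translating $t\mapsto s=t+1-N$, the values $t=0,\dots,P-2$ fill the subscript range $1-N\le s\le -L-1$ each with exponent $-1$, while $t=P-1$ gives the single subscript $s=-L$ with exponent $N-L-1$; assembling these yields exactly $q_{-\ell(\eta)}^{N-\ell(\eta)-1}/\prod_{i=1-N}^{-\ell(\eta)-1}q_{i}$.

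I expect the only real care-point, rather than a genuine obstacle, to be the accurate tracking of the three distinct upper limits $P$, $P-2$, $P-1$ so that the boundary contributions at $t=P-2$ and $t=P-1$ are extracted correctly; these are precisely the terms that fail to cancel and so produce the non-trivial factor $q_{-\ell(\eta)}^{N-\ell(\eta)-1}$. I would also note that the identity holds for every partition $\eta$ (including $\eta=\emptyset$, where $L=0$) since nothing beyond $\ell(\eta)$ enters, so no case division on the shape of $\eta$ is required, in contrast with the neighbouring lemmas involving $\vartheta_{2},\dots,\vartheta_{8}$.
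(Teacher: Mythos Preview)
Your proof is correct. The paper does not spell out a proof for this lemma at all—it simply prefaces the statement with ``As in Section~5.4.1, we have'', leaving the computation to the reader. Your approach of collecting exponents along the anti-diagonal $t=i-j$ and then tracking the three boundary ranges $t\le P-3$, $t=P-2$, $t=P-1$ is a clean way to carry out that omitted computation, and your observation that $\vartheta_{1}$ depends on $\eta$ only through $\ell(\eta)$ (so no case split on $d(\eta)$ is needed, unlike the neighbouring Lemma~\ref{recurrence4-weight4}) is exactly the right simplification.
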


\begin{lemma}\label{recurrence6-weight4}
	For any partition $\eta$, we have		
	\ben
	\frac{q^{\vartheta_{2}(\eta,N+1,1)}\cdot q^{\vartheta_{2}(\eta,N-1,-1)}}{\left(q^{\vartheta_{2}(\eta,N,0)}\right)^2}=\left\{
	\begin{aligned}
		&1 ,\;  \;\;\;\;\;\;\; \;\;\;\;\;\;\;\;\;\; \;\;\; \;\;\;\;\;\;\;\; \;\;\;\;\;\;\;\;\;\;\;\;\;\;\; \;\;\;\;\;\;\;\;\;\;\;\;\;\;\; \;\;\;\;\;\;\; \mbox{if $\eta=\emptyset$}; \\
		&\frac{q_{0}^{N-1}\cdot q_{-\ell(\eta)}^{\ell(\eta)-N}}{\prod\limits_{i=1}^{\ell(\eta)-1}q_{-i}}\cdot\prod\limits_{i=1}^{\ell(\eta)}\left(\frac{q_{-i+1}}{q_{-i}}\right)^{\eta_{i}},      \;\;\;\;\;\;\;\;\;\;\;\;\;\;\;\;\;\;\mbox{if $\eta\neq\emptyset$}.
	\end{aligned}
	\right.
	\een
\end{lemma}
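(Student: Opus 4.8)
The plan is to prove the identity by a direct computation from the definition of $q^{\vartheta_{2}(\eta,m,k)}$, exploiting a parameter invariance that makes the three factors line up row by row. The crucial first observation is that for each row index $i$ the exponent appearing in $\vartheta_{2}$, namely $m+\eta_i-i-k$, takes the \emph{same} value $N+\eta_i-i$ for all three parameter triples $(m,k)=(N+1,1)$, $(N-1,-1)$ and $(N,0)$. Writing $M_i=\max(i,\eta_i)$, the inner $j$-product for row $i$ in $\vartheta_{2}(\eta,m,k)$ is $\prodl_{j=1}^{m-M_i}q_{-i-j+k+1}^{\,N+\eta_i-i}$, so, up to the common exponent $e_i:=N+\eta_i-i$, each row contributes only a product of variables $q_s$ over an interval of subscripts $s$.

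Second, I would record these subscript intervals for the three triples. A short calculation shows that the bottom subscript $-i-(m-M_i)+k+1$ equals $a_i:=-i-N+M_i+1$ in all three cases, while the top subscript (at $j=1$) is $-i+1$ for $(N+1,1)$, $-i-1$ for $(N-1,-1)$, and $-i$ for $(N,0)$. Hence, as a multiset of subscripts weighted by $e_i$, the numerator covers $[a_i,-i+1]\cup[a_i,-i-1]$ and the squared denominator covers $[a_i,-i]$ twice. Comparing term by term, every subscript in $[a_i,-i-1]$ cancels, the subscript $-i+1$ survives once in the numerator, and $-i$ survives once in the denominator. Thus row $i$ contributes exactly $\left(q_{-i+1}/q_{-i}\right)^{N+\eta_i-i}$, and the full ratio is $\prodl_{i=1}^{\ell(\eta)}\left(q_{-i+1}/q_{-i}\right)^{N+\eta_i-i}$.

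Finally I would split the exponent as $N+\eta_i-i=\eta_i+(N-i)$. The $\eta_i$-part immediately gives the factor $\prodl_{i=1}^{\ell(\eta)}\left(q_{-i+1}/q_{-i}\right)^{\eta_i}$ of the claimed answer, while the $(N-i)$-part, $P:=\prodl_{i=1}^{\ell(\eta)}\left(q_{-i+1}/q_{-i}\right)^{N-i}$, telescopes: reindexing the numerator by $s=1-i$ and the denominator by $t=-i$ shows that index $0$ appears only in the numerator with exponent $N-1$, index $-\ell(\eta)$ only in the denominator with exponent $\ell(\eta)-N$, and each intermediate index $-i$ with $1\le i\le\ell(\eta)-1$ accumulates net exponent $(N-1-i)+(-N+i)=-1$. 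This yields $P=q_0^{N-1}q_{-\ell(\eta)}^{\ell(\eta)-N}\big/\prodl_{i=1}^{\ell(\eta)-1}q_{-i}$, which is precisely the prefactor in the statement; the case $\eta=\emptyset$ reduces to the empty product $1$. I expect the only real obstacle to be careful bookkeeping at the interval endpoints and in the two telescoping reindexings, together with checking that for the relevant large $N$ all inner products are nonempty so that the endpoint analysis is valid; this is entirely parallel to the earlier computations such as Lemma~\ref{recurrence2-weight5} and Lemma~\ref{recurrence5-weight6}.
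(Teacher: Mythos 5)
Your proof is correct and follows essentially the same direct-computation route as the paper (which omits the details of this lemma, referring to the analogous calculations in Section 5.4.1): the exponents $m+\eta_i-i-k$ agree across the three parameter triples, the subscript intervals share the common lower endpoint $a_i=-i-N+M_i+1$ so that each row contributes $\left(q_{-i+1}/q_{-i}\right)^{N+\eta_i-i}$, and the $(N-i)$-part telescopes to the stated prefactor. The only implicit hypothesis, which you correctly flag, is that $N$ is large relative to $\eta$ (it suffices that $N>\max(\eta_1,\ell(\eta))$, so each inner product has the terms your endpoint bookkeeping requires), which is exactly the regime in which the lemma is applied.
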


\begin{lemma}\label{recurrence6-weight5}
	For any partition $\eta\neq\emptyset$, we have		
	\ben
	\frac{q^{\vartheta_{3}(\eta^c,N+1,2,2)}\cdot q^{\vartheta_{3}(\eta^r,N-1,0,0)}}{q^{\vartheta_{3}(\eta,N,1,1)}\cdot q^{\vartheta_{3}(\eta^{rc},N,1,1)}}
	=\left\{
	\begin{aligned}
		&1,    \;\;\; \;\;\;\;\;\;\;  \;\;\;\;\;\;\; \;\;\; \;\;\;\;\;\;\; \;\;\;  \;  \;\;\;\;\;\;\;\;\; \mbox{if $d(\eta)>1$ or $d(\eta)=1$ and $\eta_{1}>1$}; \\
		&q_{N-1}^{2N-1}\cdot\prod\limits_{i=0}^{N-2}q_{i}^{N+i},      \;\;\;\;\;\;\;\;\;\;\;\;\;\;\;\; \mbox{if $d(\eta)=1$ and $\eta_{1}=1$}.
	\end{aligned}
	\right.
	\een
\end{lemma}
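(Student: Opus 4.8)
The plan rests on a single structural observation: the weight $q^{\vartheta_3(\eta,m,k,l)}$ depends on the partition $\eta$ \emph{only} through its length $\ell(\eta)$, since $\eta$ enters the defining double product solely via the index bounds $\ell(\eta)$. I would therefore introduce the auxiliary quantity
\[
f(L,m,k,l):=\prod_{i=1}^{m-L-1}\prod_{j=L+1}^{m-i}q_{i+j-l}^{m+i-k},
\]
so that $q^{\vartheta_3(\eta,m,k,l)}=f(\ell(\eta),m,k,l)$. The first thing I would record is the shift invariance $f(L+a,m+a,k+a,l+a)=f(L,m,k,l)$ for every integer $a$, which is immediate by reindexing: under $(L,m,k,l)\mapsto(L+a,m+a,k+a,l+a)$ the outer range $1\le i\le m-L-1$, the range of the inner index $s:=i+j-l$ (namely $i+L+1-l\le s\le m-l$ for fixed $i$), and the exponent $m+i-k$ are all unchanged. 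This one identity does almost all of the work.

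Next I would feed in the length data for $\eta^r,\eta^c,\eta^{rc}$ from Lemma \ref{length-relation} and treat its three regimes separately. When $d(\eta)>1$ one has $\ell(\eta^c)=\ell(\eta)+1$, $\ell(\eta^r)=\ell(\eta)-1$, $\ell(\eta^{rc})=\ell(\eta)$; by the shift invariance (with $a=1$ and $a=-1$) the numerator factors become $f(\ell(\eta)+1,N+1,2,2)=f(\ell(\eta),N,1,1)$ and $f(\ell(\eta)-1,N-1,0,0)=f(\ell(\eta),N,1,1)$, while the denominator is $f(\ell(\eta),N,1,1)^2$, so the ratio is $1$. When $d(\eta)=1$ and $\eta_1>1$ one has $\ell(\eta^c)=1$, $\ell(\eta^r)=\ell(\eta)-1$, $\ell(\eta^{rc})=0$; the same shift gives $f(1,N+1,2,2)=f(0,N,1,1)$ and $f(\ell(\eta)-1,N-1,0,0)=f(\ell(\eta),N,1,1)$, and the four factors again cancel in pairs to yield $1$.

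The only genuine computation is the degenerate regime $d(\eta)=1$, $\eta_1=1$, where Lemma \ref{length-relation} gives $\ell(\eta^c)=\ell(\eta^{rc})=0$ and $\ell(\eta^r)=\ell(\eta)-1$. Here the numerator factor $f(\ell(\eta)-1,N-1,0,0)=f(\ell(\eta),N,1,1)$ cancels $q^{\vartheta_3(\eta,N,1,1)}$ in the denominator, and the ratio collapses to $f(0,N+1,2,2)/f(0,N,1,1)$. I would evaluate this directly from the product formula rather than via a shift (a shift would formally push $L$ to $-1$). Comparing the two products row-by-row in $i$: for $1\le i\le N-1$ the numerator row $\prod_{s=i-1}^{N-1}q_s^{N+i-1}$ exceeds the denominator row $\prod_{s=i}^{N-1}q_s^{N+i-1}$ by exactly the single factor $q_{i-1}^{N+i-1}$, and the numerator carries one extra row $i=N$ contributing $q_{N-1}^{2N-1}$. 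Reindexing $j=i-1$ then gives $q_{N-1}^{2N-1}\prod_{i=0}^{N-2}q_i^{N+i}$, the stated value.

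Since the bulk of the argument is pure index manipulation, the main obstacle is bookkeeping rather than conceptual: one must keep the four parameter slots $(L,m,k,l)$ straight across the three modified partitions, verify that the shift really identifies the off-diagonal arguments $(\cdot,N+1,2,2)$ and $(\cdot,N-1,0,0)$ with the diagonal $(\cdot,N,1,1)$, and handle the degenerate case without illegitimately applying the shift past the non-negative-length regime. Establishing the shift invariance carefully at the outset is precisely what makes the first two cases immediate and isolates the one-line residual computation of the last case.
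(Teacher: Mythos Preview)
Your proof is correct. The key observation that $q^{\vartheta_3(\eta,m,k,l)}$ depends on $\eta$ only through $\ell(\eta)$, together with the shift invariance $f(L+a,m+a,k+a,l+a)=f(L,m,k,l)$, is a clean way to collapse the first two cases to $1$ immediately and to reduce the degenerate case $d(\eta)=1$, $\eta_1=1$ to the explicit ratio $f(0,N+1,2,2)/f(0,N,1,1)$, which you evaluate correctly.

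The paper does not write out this proof; it simply refers back to the analogous computations in Section~5.4.1 (in particular the proofs of Lemmas~\ref{recurrence4-weight4} and~\ref{recurrence4-weight6}), where each case is handled by expanding the double products directly using the explicit descriptions of $\eta^r,\eta^c,\eta^{rc}$ from Lemma~\ref{modified-partition} and the length formulas from Lemma~\ref{length-relation}, and then cancelling term by term. Your route is genuinely more conceptual: by isolating the shift invariance up front you avoid all of that term-by-term bookkeeping in the first two regimes and localize the actual computation to one short product comparison. The trade-off is that the paper's brute-force template works uniformly across all of the $\vartheta_i$ lemmas without needing a new structural observation for each one, whereas your shift trick is specific to the shape of $\vartheta_3$ (and would need to be re-derived, if it exists at all, for the other $\vartheta_i$).
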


\begin{lemma}\label{recurrence6-weight6}
	For any partition $\eta\neq\emptyset$, we have	
	\ben
	\frac{q^{\vartheta_{8}(\eta^c,N,-1,0)}\cdot q^{\vartheta_{8}(\eta^r,N,1,0)}}{q^{\vartheta_{4}(\eta,N,0)}\cdot q^{\vartheta_{4}(\eta^{rc},N,0)}}=\left\{
	\begin{aligned}
		&\frac{q_{0}^{N-1}\cdot q_{d(\eta)-1}^{\eta_{d(\eta)}}}{\prod\limits_{i=1}^{d(\eta)-1}q_{i}}\cdot\prod\limits_{i=1}^{d(\eta)-1}\left(\frac{q_{i-1}}{q_{i}}\right)^{\eta_{i}},           \;\;\; \;\;\;\;\;\;\;\;\; \mbox{if $d(\eta)>1$}; \\
		&q_{0}^{N+\eta_{1}-1},   \;\;\;\;\;\;\;\;\;\;\;\;\;\;\;\;\;\;\;\;\;\;\;\;\;\;\;\;\;\;\;\;\;\;\;\;\;\;\;\;\;\;\;\;\;\;\;\;\mbox{if $d(\eta)=1$ and $\eta_{1}>1$};\\
		&\prod\limits_{i=1}^{N-1}q_{i}^{-N-i},      \;\;\;\;\;\;\;\;\;\;\;\;\;\;\;\;\;\;\;\;\; \;\;\;\;\;\;\;\;\; \;\;\;\;\;\;\;\;\; \;\;\;\; \mbox{if $d(\eta)=1$ and $\eta_{1}=1$}.
	\end{aligned}
	\right.
	\een
\end{lemma}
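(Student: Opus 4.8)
The plan is to prove Lemma~\ref{recurrence6-weight6} by the same direct computation used for its companions in Sections~4.3 and~5.4, most closely Lemmas~\ref{recurrence4-weight5} and~\ref{recurrence4-weight7}, whose denominator $q^{\vartheta_{4}(\eta,N,0)}\cdot q^{\vartheta_{4}(\eta^{rc},N,0)}$ is identical to the one here. First I would substitute the definitions of $q^{\vartheta_{8}(\cdot)}$ and $q^{\vartheta_{4}(\cdot)}$ and factor the left-hand ratio as
\[
\frac{q^{\vartheta_{8}(\eta^c,N,-1,0)}}{q^{\vartheta_{4}(\eta,N,0)}}\cdot\frac{q^{\vartheta_{8}(\eta^r,N,1,0)}}{q^{\vartheta_{4}(\eta^{rc},N,0)}},
\]
pairing the $c$-deformation with the neutral $\eta$ and the $r$-deformation with $\eta^{rc}$, exactly as $q^{\vartheta_{5}(\eta^c)}$ was paired with $q^{\vartheta_{2}(\eta)}$ and $q^{\vartheta_{7}(\eta^r)}$ with $q^{\vartheta_{2}(\eta^{rc})}$ in the proof of Lemma~\ref{recurrence4-weight5}. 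Each sub-ratio would be recorded first as a closed product, and the cancellation between the two performed only at the end.

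The key step is to rewrite the parts of each partition using Lemma~\ref{modified-partition} and to locate the index break points. For $q^{\vartheta_{4}(\eta,N,0)}$ the outer split at $i\le\eta_i$ versus $i>\eta_i$ is the split at $i\le d(\eta)$ versus $i>d(\eta)$, since $\{i:\eta_i\ge i\}=\{1,\dots,d(\eta)\}$. For the numerators the inner break points are governed by Lemma~\ref{value-set}: the condition $\eta^r_i>i+1$ in $q^{\vartheta_{8}(\eta^r,N,1,0)}$ cuts off at $i\le d(\eta)-1$, while the condition defining $q^{\vartheta_{8}(\eta^c,N,-1,0)}$ is controlled by $d(\eta^c)$. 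After inserting the explicit values $\eta^c_i,\eta^r_i,\eta^{rc}_i$, the bulk of the factors in each sub-ratio cancels telescopically, and what survives are boundary products at the small indices $i=1,\dots,d(\eta)$ (producing the $q_0,\dots,q_{d(\eta)-1}$ factors) together with far-column terms in $q_0$ and $q_{N-1}$ coming from the $m-i-1$ and $N-\eta_i$ ranges. I would use Remark~\ref{special-value} to collapse the middle block where $\eta_i=d(\eta)$ for $d(\eta)<i\le\widetilde d(\eta)$, and Lemma~\ref{length-relation} to track $\ell(\eta^r)=\ell(\eta)-1$, $\ell(\eta^c)=\ell(\eta)+1$, $\ell(\eta^{rc})=\ell(\eta)$.

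The computation then divides along the three cases of the statement, dictated by Remark~\ref{special-value}: $d(\eta)>1$; $d(\eta)=1$ with $\eta_1>1$; and $d(\eta)=1$ with $\eta_1=1$ (equivalently $\eta^c=\emptyset$). In the generic case $d(\eta)>1$ the surviving edge terms assemble into $q_{d(\eta)-1}^{\eta_{d(\eta)}}\cdot\big(\prod_{i=1}^{d(\eta)-1}q_i\big)^{-1}\cdot\prod_{i=1}^{d(\eta)-1}(q_{i-1}/q_i)^{\eta_i}$ times the $q_0^{N-1}$ factor. In the two degenerate cases one gets the single-index answers $q_0^{N+\eta_1-1}$ and $\prod_{i=1}^{N-1}q_i^{-N-i}$ respectively, the latter using that $\eta^c=\emptyset$ forces $q^{\vartheta_{8}(\eta^c,N,-1,0)}=1$, so the first sub-ratio degenerates to $\big(q^{\vartheta_{4}(\eta,N,0)}\big)^{-1}$.

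The main obstacle will be boundary bookkeeping rather than any conceptual point. Because the inner break point of $q^{\vartheta_{8}(\eta^c,N,-1,0)}$ depends on whether $d(\eta^c)=d(\eta)$ or $d(\eta)-1$ (Lemma~\ref{diag-length}(ii), governed by whether $\eta_{d(\eta)}=d(\eta)$), the first sub-ratio splits into two apparent sub-cases that must be reconciled into the single uniform formula; Remark~\ref{special-value}(i)--(ii) is precisely what makes them agree, forcing $\widetilde d(\eta)=d(\eta)$ in one branch and pinning $\eta_i=d(\eta)$ across the middle block in the other, so that the extra terms telescope away. Organizing the argument so that the surviving $q_0^{N-1}$, $q_{d(\eta)-1}^{\eta_{d(\eta)}}$ and $\prod(q_{i-1}/q_i)^{\eta_i}$ contributions stay transparent confirms that the residual $N$-dependence is exactly as stated; this $N$-dependence is expected here and is cancelled only later against the companion lemmas when assembled into the $N$-independent weight $q^{-K_{3}(\lambda,\mu,\nu)}$ of Lemma~\ref{recurrence6-weight7}.
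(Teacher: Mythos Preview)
Your proposal is correct and follows essentially the same approach as the paper, which does not give a standalone proof but simply refers back to the analogous computations in Section~5.4.1 (the closest model being Lemma~\ref{recurrence4-weight7}, whose proof in turn is declared to follow that of Lemma~\ref{recurrence4-weight5}). Your observation that the break point in $q^{\vartheta_{8}(\eta^c,N,-1,0)}$ is governed by $d(\eta^c)$ rather than directly by $d(\eta)$, and hence bifurcates according to Lemma~\ref{diag-length}(ii), is a genuine bookkeeping subtlety that the paper's terse ``as in Section~5.4.1'' glosses over; handling it via Remark~\ref{special-value} as you propose is exactly right.
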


\section{Orbifold DT/PT  correspondence}

\subsection{A proof of orbifold DT/PT vertex  correspondence}
In this subsection, we will give a proof of  Conjecture \ref{orbifold-DT/PT-correspondence} and show some applications of orbifold DT/PT vertex correspondence. In fact, the 1-leg correspondence has been proved by explicitly expressing the 1-leg   PT $\mathbb{Z}_{n}$-vertex formula in terms of Schur functions in [\cite{Zhang}, Theorem 5.22] via  establishing the connection between  1-leg colored labelled box configurations and  certain colored reverse plane partitions. Recall that a partition $\eta$ is called multi-regular if $|\eta|_{l}=\frac{|\eta|}{n}$ for all $l\in\{0, 1, \cdots, n-1\}$. 
\begin{theorem}([\cite{Zhang}, Theorem 5.22])\label{1-leg}
If $\nu$ is multi-regular, then
\ben
\dfrac{V^n_{\lambda\emptyset\emptyset}}{V_{\emptyset\emptyset\emptyset}^n}=W_{\lambda\emptyset\emptyset}^n;\;\;\;\;\;\;\;\;\dfrac{V^n_{\emptyset\mu\emptyset}}{V_{\emptyset\emptyset\emptyset}^n}=W_{\emptyset\mu\emptyset}^n;\;\;\;\;\;\;\;\;\dfrac{V^n_{\emptyset\emptyset\nu}}{V_{\emptyset\emptyset\emptyset}^n}=W_{\emptyset\emptyset\nu}^n.
\een
\end{theorem}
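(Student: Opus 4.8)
The plan is to treat the three single-leg identities in two stages: first reduce the count of genuinely distinct cases using the available symmetries, and then evaluate both sides as principally specialized loop Schur functions. I would begin by exploiting Remarks \ref{symmetry1} and \ref{symmetry2}: setting $\mu=\nu=\emptyset$ in $V^n_{\lambda\mu\nu}=\overline{V}^n_{\mu'\lambda'\nu'}$ and $W^n_{\lambda\mu\nu}=\overline{W}^n_{\mu'\lambda'\nu'}$ gives $V^n_{\lambda\emptyset\emptyset}=\overline{V}^n_{\emptyset\lambda'\emptyset}$ and $W^n_{\lambda\emptyset\emptyset}=\overline{W}^n_{\emptyset\lambda'\emptyset}$, so the leg-$1$ identity follows from the leg-$2$ identity applied to $\lambda'$ after the substitution $q_k\leftrightarrow q_{-k}$. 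Hence only the leg-$2$ identity $V^n_{\emptyset\mu\emptyset}/V^n_{\emptyset\emptyset\emptyset}=W^n_{\emptyset\mu\emptyset}$ and the leg-$3$ identity $V^n_{\emptyset\emptyset\nu}/V^n_{\emptyset\emptyset\emptyset}=W^n_{\emptyset\emptyset\nu}$ require independent proofs, the latter being the essentially new (gerbe-direction) case, since direction $3$ is not exchanged with the others by any orbifold symmetry.

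Next I would simplify the PT side combinatorially. For a single nonempty outgoing partition one has $\mathrm{II}=\mathrm{III}=\emptyset$, so by Theorem \ref{unlabel-formula} every AB configuration is of the form $(A,\emptyset)$ with $A$ a finite order ideal in the single cylinder $\mathrm{I}^-$; there are no unlabelled boxes, each component $[\widetilde{\pi}]$ is a single point, and
\ben
W^n_{\lambda\emptyset\emptyset}=\sum_{A}\prod_{l=0}^{n-1}q_l^{|A|_l},
\een
the sum ranging over such order ideals. Reading off column heights identifies these order ideals with reverse plane partitions of the relevant shape ($\lambda$, $\mu$, or $\nu$), each box carrying the $\mathbb{Z}_n$-coloring $i-j\bmod n$. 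I would then evaluate this weighted count by a transfer-matrix / Lindstr\"om--Gessel--Viennot argument, or equivalently by colored free-fermion vertex operators, expressing it as a loop Schur function in the principal specialization; this is the colored refinement of the classical reverse-plane-partition generating function and is the origin of the $\mathfrak{s}_\nu$-factor in Corollary \ref{explicit-formula}.

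For the DT side I would run the Okounkov--Reshetikhin--Vafa operator formalism graded by the $i-j\bmod n$ coloring. Sweeping the $3$D partitions asymptotic to $(\emptyset,\mu,\emptyset)$ by diagonal slices factors the transfer matrix into half-vertex operators $\Gamma_\pm$ whose normal-ordering produces $V^n_{\emptyset\emptyset\emptyset}$ (the orbifold MacMahon function) times a single loop Schur function evaluated at the same principal specialization as in the PT computation. Dividing by $V^n_{\emptyset\emptyset\emptyset}$ and comparing with the previous step yields the leg-$2$ identity, and hence the leg-$1$ identity via the symmetry reduction. The same scheme applies to the leg-$3$ identity, except that the color $i-j\bmod n$ is constant along the gerbe direction (independent of $k$); here I would invoke the multi-regularity of $\nu$ to ensure the per-color box counts in the asymptotic leg are balanced, so that the normalization by $V^n_{\emptyset\emptyset\emptyset}$ and the color grading remain compatible and the resulting formal series is well defined.

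I expect the main obstacle to be the $\mathbb{Z}_n$-coloring bookkeeping rather than the bijection itself: in the uncolored ($n=1$) limit both sides reduce to the classical $s_{\lambda'}(q^\rho)$ computation, but the orbifold refinement requires replacing ordinary Schur functions by loop Schur functions and checking that the principal specialization, the diagonal color grading, and the normalization by the orbifold MacMahon function all match on the two sides. The leg-$3$ case is the delicate one, precisely because it is inaccessible from the other two by symmetry and because the constant coloring along the leg forces the multi-regularity hypothesis; confirming that this hypothesis is exactly what renders $V^n_{\emptyset\emptyset\nu}/V^n_{\emptyset\emptyset\emptyset}$ a well-defined series equal to $W^n_{\emptyset\emptyset\nu}$ is where I anticipate the real work.
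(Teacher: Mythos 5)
Your strategy coincides with the proof this paper actually relies on: the paper does not prove Theorem \ref{1-leg} itself, but imports it from [\cite{Zhang}, Theorem 5.22], whose method---identifying 1-leg colored labelled box configurations with colored reverse plane partitions and evaluating the resulting generating function in (loop) Schur functions, to be compared against the known DT-side formula of [\cite{BCY}]---is exactly what your second and third paragraphs propose. Your additional symmetry reduction of the first identity to the second via Remarks \ref{symmetry1} and \ref{symmetry2} is valid, since $V^n_{\emptyset\emptyset\emptyset}$ is invariant under $q_k\leftrightarrow q_{-k}$, and your reduction of the PT side through Theorem \ref{unlabel-formula} to sums over order ideals (all components being isolated points in the 1-leg case) is correct.

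One correction on the role of the hypothesis: multi-regularity of $\nu$ is not needed to make anything well defined. Both $V^n_{\emptyset\emptyset\nu}/V^n_{\emptyset\emptyset\emptyset}$ and $W^n_{\emptyset\emptyset\nu}$ are perfectly well-defined formal series for arbitrary $\nu$; the point (see Remark \ref{DT-PT-correspondence1}) is that for general $\nu$ the DT ratio equals $O_\nu\cdot W^n_{\emptyset\emptyset\nu}$, where $O_\nu$ is an explicit discrepancy factor built from color-shifted copies of $V^n_{\emptyset\emptyset\emptyset}$ with exponents $-2|\nu|_k+|\nu|_{k+1}+|\nu|_{k-1}$, and these exponents all vanish precisely when $\nu$ is multi-regular. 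So in your leg-3 vertex-operator computation the hypothesis serves to cancel this factor against the normalization, not to ensure convergence or compatibility of the color grading; your phrase about balanced per-color box counts in the leg points at the right mechanism, but the stated justification (well-definedness) is not the actual one, and an executed proof must exhibit and cancel $O_\nu$ explicitly.
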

With the above theorem as the base case, we have
\begin{theorem}\label{3-leg correspondence}
If $\nu$ is multi-regular, then
\ben
V^n_{\lambda\mu\nu}=V_{\emptyset\emptyset\emptyset}^nW_{\lambda\mu\nu}^n.
\een
\end{theorem}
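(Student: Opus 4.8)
The plan is to deduce the full three-leg correspondence $V^n_{\lambda\mu\nu}=V^n_{\emptyset\emptyset\emptyset}W^n_{\lambda\mu\nu}$ from the one-leg base case (Theorem \ref{1-leg}) purely by induction on the sizes $|\lambda|+|\mu|$, exploiting that both sides obey the \emph{same} system of graphical condensation recurrences. Concretely, set
\ben
Y_{1}(\lambda,\mu,\nu):=\frac{V^n_{\lambda\mu\nu}}{V^n_{\emptyset\emptyset\emptyset}},\qquad Y_{2}(\lambda,\mu,\nu):=W^n_{\lambda\mu\nu},
\een
and recall that by the orbifold DT recurrences \eqref{DT-recurrence1}, \eqref{DT-recurrence2}, \eqref{DT-recurrence3} and the matching orbifold PT recurrences \eqref{PT-recurrence1}, \eqref{PT-recurrence2}, \eqref{PT-recurrence3}, both $Y_{1}$ and $Y_{2}$ satisfy \eqref{recurrence1}, \eqref{recurrence2}, \eqref{recurrence3} with the identical weights $q^{K_{1}}, q^{K_{2}}, q^{K_{3}}$. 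Since $\nu$ is held multi-regular throughout and none of the operations $r,c,rc$ touches the third leg, the multi-regularity hypothesis is preserved along the entire induction, so Theorem \ref{1-leg} stays applicable at every base instance.

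First I would establish the base cases. When $\lambda=\mu=\emptyset$, the desired identity $Y_{1}(\emptyset,\emptyset,\nu)=Y_{2}(\emptyset,\emptyset,\nu)$ is exactly the third equation of Theorem \ref{1-leg}. Next I would bootstrap to the two-leg case $Y_{1}(\lambda,\mu,\emptyset)=Y_{2}(\lambda,\mu,\emptyset)$ and, more importantly, the mixed two-leg cases $Y_{1}(\lambda,\emptyset,\nu)=Y_{2}(\lambda,\emptyset,\nu)$ and $Y_{1}(\emptyset,\mu,\nu)=Y_{2}(\emptyset,\mu,\nu)$, which are precisely the inputs needed to start the induction on the full three-leg quantity. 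As the introduction notes, $Y_{1}(\lambda,\mu,\emptyset)=Y_{2}(\lambda,\mu,\emptyset)$ follows from \eqref{recurrence1} together with Theorem \ref{1-leg} and the size bound of Remark \ref{size-comparision}; the two genuinely mixed two-leg correspondences should instead be obtained from \eqref{recurrence2} and \eqref{recurrence3}, since those are the recurrences that leave $\mu$ (resp. $\lambda$) untouched while reducing against the third leg.

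The inductive step is then the core of the argument. Fixing the multi-regular $\nu$, I would induct on $N:=|\lambda|+|\mu|$. For $(\lambda,\mu)$ with both nonempty, I would apply recurrence \eqref{recurrence1}: it expresses $Y(\lambda,\mu,\nu)$ in terms of the six values $Y(\lambda^{rc},\mu,\nu)$, $Y(\lambda,\mu^{rc},\nu)$, $Y(\lambda^{rc},\mu^{rc},\nu)$, $Y(\lambda^{r},\mu^{c},\nu)$, $Y(\lambda^{c},\mu^{r},\nu)$, all of which have strictly smaller combined size by Remark \ref{size-comparision} (each of $\eta^{r},\eta^{c},\eta^{rc}$ has size $<|\eta|$). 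By the induction hypothesis all six agree for $Y_{1}$ and $Y_{2}$, and since the recurrence is a fixed rational expression in these six quantities with $N$-independent weight $q^{K_{1}}$, it forces $Y_{1}(\lambda,\mu,\nu)=Y_{2}(\lambda,\mu,\nu)$. The cases where exactly one of $\lambda,\mu$ is empty are handled by the two-leg base results above, and the fully empty case by Theorem \ref{1-leg}; thus every $(\lambda,\mu,\nu)$ with multi-regular $\nu$ is covered.

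The main obstacle I anticipate is \emph{not} the inductive closure itself but verifying that the two-leg base cases genuinely reduce to Theorem \ref{1-leg}. The subtlety, flagged in the introduction, is that recurrence \eqref{recurrence1} alone cannot reach $Y_{1}(\lambda,\emptyset,\nu)$ or $Y_{1}(\emptyset,\mu,\nu)$ from the one-leg data, which is exactly why \eqref{recurrence2} and \eqref{recurrence3} had to be derived separately; one must check that when a leg is empty the relevant modified partitions ($\emptyset^{r},\emptyset^{c},\emptyset^{rc}$) and the weight factors $q^{K_{2}},q^{K_{3}}$ degenerate consistently so that the recurrence specializes to a valid reduction terminating at one-leg vertices. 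I would therefore treat the empty-leg degenerations of \eqref{recurrence2} and \eqref{recurrence3} with care, using Lemma \ref{transpose} and the symmetries in Remarks \ref{symmetry1} and \ref{symmetry2} to transport one mixed two-leg case to the other, and confirming via the explicit formulas in Lemmas \ref{recurrence2-weight7} and \ref{recurrence3-weight5} that the weights remain well defined. Once these base identities are secured, the induction is essentially formal.
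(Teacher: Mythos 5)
Your proposal reproduces the architecture of the paper's own proof of Theorem \ref{3-leg correspondence} exactly: recurrences \eqref{recurrence2} and \eqref{recurrence3} (specialized to $\mu=\emptyset$, resp.\ $\lambda=\emptyset$) for the mixed two-leg cases, recurrence \eqref{recurrence1} for the case $(\lambda,\mu,\emptyset)$ and for the final induction on $|\lambda|+|\mu|$, with Theorem \ref{1-leg} as the base case and Remark \ref{size-comparision} guaranteeing termination. So in terms of strategy you and the paper coincide.

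However, your justification contains a false step, and it sits at the most delicate point of the whole argument. You claim that ``none of the operations $r,c,rc$ touches the third leg,'' hence that ``the multi-regularity hypothesis is preserved along the entire induction'' and Theorem \ref{1-leg} ``stays applicable at every base instance.'' That is true for recurrence \eqref{recurrence1}, but false for recurrences \eqref{recurrence2} and \eqref{recurrence3} --- precisely the ones you need for the mixed two-leg cases: their right-hand sides contain $\nu^{rc}$, $\nu^{c}$, $\nu^{r}$, and these are in general \emph{not} multi-regular even when $\nu$ is. Concretely, for $n=2$ the partition $\nu=(2)$ is multi-regular, yet $\nu^{c}=(1)$ is not; so the induction tree for $Y(\lambda,\emptyset,(2))$ generated by \eqref{recurrence2} contains leaves of the form $Y(\emptyset,\emptyset,\nu')$ with $\nu'$ non-multi-regular. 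At such leaves Theorem \ref{1-leg} does not apply, and this is not merely a missing citation: by Remark \ref{DT-PT-correspondence1} the one-leg correspondence genuinely fails for non-multi-regular $\nu'$ (it holds only after inserting the nontrivial correction factor $O_{\nu'}$). In other words, your preservation claim asserts away the very difficulty that forces the multi-regularity hypothesis into the statement of the theorem, and you offer no mechanism for controlling the non-multi-regular third legs produced inside the \eqref{recurrence2}/\eqref{recurrence3} inductions; indeed you contradict the claim yourself two sentences later when you describe these recurrences as ``reducing against the third leg.'' Meanwhile the obstacle you do choose to worry about --- the degenerations $\emptyset^{r},\emptyset^{c},\emptyset^{rc}$ --- never arises, since each recurrence applies $r,c,rc$ only to the two legs it actively reduces, both of which stay nonempty while the recurrence is invoked, the empty leg being carried along untouched. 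Until you explain how the base instances of the \eqref{recurrence2} and \eqref{recurrence3} inductions are handled when the third leg leaves the multi-regular locus, your reduction to Theorem \ref{1-leg} is not established.
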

\begin{proof}
It follows from the recurrences \eqref{DT-recurrence1} and \eqref{PT-recurrence1} with $\nu=\emptyset$, Remark \ref{size-comparision},  and Theorem \ref{1-leg} that
\ben
\dfrac{V^n_{\lambda\mu\emptyset}}{V_{\emptyset\emptyset\emptyset}^n}=W_{\lambda\mu\emptyset}^n.
\een
It follows from the recurrences \eqref{DT-recurrence2} and \eqref{PT-recurrence2} with $\mu=\emptyset$, Remark \ref{size-comparision},  and Theorem \ref{1-leg} that
\ben
\dfrac{V^n_{\lambda\emptyset\nu}}{V_{\emptyset\emptyset\emptyset}^n}=W_{\lambda\emptyset\nu}^n.
\een
It follows from the recurrences \eqref{DT-recurrence3} and \eqref{PT-recurrence3} with $\lambda=\emptyset$, Remark \ref{size-comparision},  and Theorem \ref{1-leg} that
\ben
\dfrac{V^n_{\emptyset\mu\nu}}{V_{\emptyset\emptyset\emptyset}^n}=W_{\emptyset\mu\nu}^n.
\een
Therefore, the proof is completed by the recurrences \eqref{DT-recurrence1} and \eqref{PT-recurrence1}, Remark \ref{size-comparision}, and the 2-leg case just proved above.
\end{proof}
	
Now we recall some notation in [\cite{BCY}, Section 3.4] as follows. Let $\mathbf{q}_{0}=1$ and  define $\mathbf{q}_{t}$ for $t\in\mathbb{Z}$ by the recursive relations $\mathbf{q}_{t}=q_{t}\cdot \mathbf{q}_{t-1}$. Then we  have
\ben
\{\cdots, \mathbf{q}_{2}, \mathbf{q}_{1}, \mathbf{q}_{0}, \mathbf{q}_{-1}, \mathbf{q}_{-2}, \cdots\}=\{\cdots, q_{1}q_{2}, q_{1}, 1, q_{0}^{-1}, q_{0}^{-1}q_{-1}^{-1}, \cdots\}.
\een
For a partition $\eta=(\eta_{0}\geq\eta_{1}\geq\eta_{2}\geq\cdots)$, set
\ben
\mathbf{q}_{\bullet-\eta}=\{\mathbf{q}_{-\eta_{0}}, \mathbf{q}_{1-\eta_{1}}, \mathbf{q}_{2-\eta_{2}}, \mathbf{q}_{3-\eta_{3}}, \cdots\}.
\een
Let 
\ben
&&q^{-A_{\lambda}}:=\prod_{k=0}^{n-1}q_{k}^{-A_{\lambda}(k,n)},\\
&&H_{\nu}:=\prod_{(i,j)\in\nu}\frac{1}{1-\prod\limits_{s=0}^{n-1}q_{s}^{h_{\nu}^s(i,j)}},
\een
where
\ben
h_{\nu}^s(i,j):=\mbox{the number of boxes of color s in the $(i,j)$-hook of $\nu$}.
\een

It follows from Theorem \ref{3-leg correspondence} and [\cite{BCY}, Theorem 12] that

\begin{corollary}\label{application1}
The PT $\mathbb{Z}_{n}$-vertex $W_{\lambda\mu\nu}^n(q_{0},q_{1},\cdots,q_{n-1})$ has the following explicit formula:
\ben
W_{\lambda\mu\nu}^n=q^{-A_{\lambda}}\cdot \overline{q^{-A_{\mu^\prime}}}\cdot H_{\nu}\cdot \sum_{\eta}q_{0}^{-|\eta|}\cdot \overline{s_{\lambda^\prime/\eta}(\mathbf{q}_{\bullet-\nu^\prime})}\cdot s_{\mu/\eta}(\mathbf{q}_{\bullet-\nu})
\een
where $\nu$ is multi-regular, $s_{\xi/\eta}$ is the skew Schur function associated to partitions $\eta\subset\xi$ ($s_{\xi/\eta}=0$  if $\eta\not\subset\xi$), and the overline denotes the exchange of variables $q_{k}\leftrightarrow q_{-k}$ with subscripts in $\mathbb{Z}_{n}$.
\end{corollary}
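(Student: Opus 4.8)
The plan is to obtain the formula as an immediate consequence of the multi-regular orbifold DT/PT vertex correspondence together with the known closed form of the reduced DT $\mathbb{Z}_n$-vertex, so that no new geometric or combinatorial input is required. First I would invoke Theorem \ref{3-leg correspondence}: since $\nu$ is assumed multi-regular, that theorem gives $V^n_{\lambda\mu\nu}=V^n_{\emptyset\emptyset\emptyset}\,W^n_{\lambda\mu\nu}$, and hence
\[
W^n_{\lambda\mu\nu}=\frac{V^n_{\lambda\mu\nu}}{V^n_{\emptyset\emptyset\emptyset}}.
\]
This reduces the entire problem to reading off the reduced DT $\mathbb{Z}_n$-vertex $V^n_{\lambda\mu\nu}/V^n_{\emptyset\emptyset\emptyset}$ in explicit form, where the PT vertex no longer appears.

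Second, I would quote the explicit Schur-function evaluation of the reduced DT $\mathbb{Z}_n$-vertex from [\cite{BCY}, Theorem 12], which for multi-regular $\nu$ expresses $V^n_{\lambda\mu\nu}/V^n_{\emptyset\emptyset\emptyset}$ as the product of the framing/normalization factors $q^{-A_{\lambda}}$, $\overline{q^{-A_{\mu'}}}$, and $H_{\nu}$ with the colored skew-Schur sum $\sum_{\eta}q_{0}^{-|\eta|}\,\overline{s_{\lambda'/\eta}(\mathbf{q}_{\bullet-\nu'})}\,s_{\mu/\eta}(\mathbf{q}_{\bullet-\nu})$. Substituting the first identity into this evaluation yields exactly the asserted expression for $W^n_{\lambda\mu\nu}$. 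The equivalent form recorded in Corollary \ref{explicit-formula}, in which $H_{\nu}$ is replaced by $\left(\prod_{(i,j)\in\nu}q_{i-j}^{-j}\right)\mathfrak{s}_{\nu}$, then follows by applying the loop-Schur-function identity of Remark \ref{loop-Schur-function}.

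The only genuine care required — and what I expect to be the main, though essentially routine, obstacle — is the bookkeeping of conventions when importing [\cite{BCY}, Theorem 12]. One must check that the coloring scheme used there for the three legs matches the outgoing-partition coloring fixed in Section 3.4 (namely color $-j\bmod n$ on $\lambda_{1,v}$, $i\bmod n$ on $\lambda_{2,v}$, and $i-j\bmod n$ on $\lambda_{3,v}$), so that the barred operations $\overline{(\cdot)}$ and the specializations $\mathbf{q}_{\bullet-\nu}$ and $\mathbf{q}_{\bullet-\nu'}$ are attached to the correct legs, and that the framing prefactors of the DT formula persist unchanged after dividing out $V^n_{\emptyset\emptyset\emptyset}$. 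Since the substantive enumerative work has already been discharged in Theorem \ref{3-leg correspondence}, the corollary is purely a matter of composing the two cited results and reconciling their normalizations.
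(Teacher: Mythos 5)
Your proposal is correct and follows exactly the paper's own route: the paper derives Corollary \ref{application1} precisely by combining Theorem \ref{3-leg correspondence} (giving $W^n_{\lambda\mu\nu}=V^n_{\lambda\mu\nu}/V^n_{\emptyset\emptyset\emptyset}$ for multi-regular $\nu$) with the explicit Schur-function formula for the reduced DT $\mathbb{Z}_n$-vertex in [\cite{BCY}, Theorem 12], and the loop-Schur reformulation via Remark \ref{loop-Schur-function} is likewise how the paper passes to Corollary \ref{explicit-formula}. Your attention to matching the coloring conventions of Section 3.4 with those of [\cite{BCY}] is the right (and only) point of care, and nothing further is needed.
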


\begin{remark}\label{DT-PT-correspondence1}
By [\cite{BCY}, Theorem 12],  it is actually proved in [\cite{Zhang}, Theorem 5.22] that for any partition $\nu\neq\emptyset$,
\ben
\dfrac{V^n_{\emptyset\emptyset\nu}}{ O_{\nu}\cdot V_{\emptyset\emptyset\emptyset}^n}=W_{\emptyset\emptyset\nu}^n
\een
where
\ben
O_{\nu}=\prod_{k=0}^{n-1}V_{\emptyset\emptyset\emptyset}^n(q_{k},q_{k+1},\cdots,q_{n+k-1})^{-2|\nu|_{k}+|\nu|_{k+1}+|\nu|_{k-1}}.
\een
However, this equality can not be extended to the full 3-leg case in  general when  $\nu$ is not multi-regular and $n>1$. Otherwise both $\dfrac{V^n_{\lambda\mu\nu}}{V_{\emptyset\emptyset\emptyset}^n}$  and $\dfrac{V^n_{\lambda\mu\nu}}{O_{\nu}\cdot V_{\emptyset\emptyset\emptyset}^n}$  ($\nu\neq\emptyset$) will satisfy Equation \eqref{recurrence2}  by recurrences \eqref{DT-recurrence2} and \eqref{PT-recurrence2}, which implies the equality $O_{\nu}\cdot O_{\nu^{rc}}= O_{\nu^r}\cdot O_{\nu^c}$. But for any $n\geq 2$, this equality does not hold for the non multi-regular partition $\nu=(1)$.
\end{remark}

\begin{remark}\label{loop-Schur-function}
It is shown in [\cite{RZ2}, Lemma A.1] that $H_{\nu}$ is related to the  loop Schur function $\mathfrak{s}_{\nu}$ by
\ben
\mathfrak{s}_{\nu}(q_{0},q_{1},\cdots,q_{n-1})=\left(\prod_{(i,j)\in\nu}q_{i-j}^j\right)\cdot H_{\nu}
\een  
where 
\ben
\mathfrak{s}_{\nu}(q_{0},q_{1},\cdots,q_{n-1}):=\left(\sum_{T\in SSYT(\nu,n)}\prod_{l=0}^{n-1}\prod_{\substack{(i,j)\in\nu\\i-j= l \;mod\; n}}q_{l,\omega((i,j),T)}\right)_{q_{i,j}=q_{i}^j}.
\een
Here, the partition $\nu\subset\mathbb{Z}_{\geq0}^2$ is viewed as a Young diagram in French notation which is different from  English notation used for  Young diagrams in [\cite{RZ2}]. For each  $\nu$ and $n\in\mathbb{N}$, the set  $SSYT(\nu,n)$ contains semi-standard Young tableaus of the colored Young diagram $\nu$ (the box $(i,j)\in\nu$ is colored by $i-j\mbox{ mod } n$) whose boxes are numbered such that numbers are weakly increasing from bottom to top and strictly increasing from left to right. For any $T\in SSYT(\nu,n)$, the weight $\omega((i,j),T)$ is defined to be the number appearing in the box $(i,j)$. The subscript of the parenthesis above means the substitution $q_{i,j}=q_{i}^j$ where $i\in\mathbb{Z}_{n}$ and $j\in\mathbb{N}$.
One may also refer to [\cite{Ross}] for loop Schur functions generalizing the classical Schur functions.
\end{remark}	

Next, we provide a proof of the multi-regular orbifold DT/PT correspondence in [\cite{Zhang}, Conjecture 4.7] for  toric CY 3-orbifolds with transverse $A_{n-1}$ singularities as follows.
\begin{corollary}\label{application2}
Let $\mathcal{X}$ be a  toric CY 3-orbifold with transverse $A_{n-1}$ singularities. Then we have 
\ben
DT^\prime_{mr}(\mathcal{X})=PT_{mr}(\mathcal{X}).
\een	
\end{corollary}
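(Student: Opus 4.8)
The plan is to compare the explicit vertex expressions on both sides edge-assignment by edge-assignment, reducing everything to the three-leg correspondence of Theorem \ref{3-leg correspondence}. By [\cite{BCY}, Theorem 10] and Theorem \ref{PT-partition function}, the series $DT(\mathcal{X})$ and $PT(\mathcal{X})$ are both obtained by the same substitution $q_{e,0}\to-q_{e,0},\,q\to-q$ from underlined generating functions $\underline{DT}(\mathcal{X})$ and $\underline{PT}(\mathcal{X})$ that carry exactly the same edge weights $E^e_{\lambda_e}$, the same vertex signs $(-1)^{\varXi_{\pi_v}}$, and the same vertex arguments $(-1)^{\widetilde{s}(\lambda_{3,v})}\mathbf{q}_v$, differing only in that the vertex factor is $V^{n_{e_{3,v}}}_{\lambda_{1,v}\lambda_{2,v}\lambda_{3,v}}$ on the DT side and $W^{n_{e_{3,v}}}_{\lambda_{1,v}\lambda_{2,v}\lambda_{3,v}}$ on the PT side (the identification of edge data is exactly the content of Remark \ref{DT-PT-comparision}). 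Restricting both sums to multi-regular edge assignments $\{\lambda_e\}$ and recalling that $DT_0(\mathcal{X})$ is the empty-assignment contribution $\prod_{v\in V_{\mathcal{X}}}V^{n_{e_{3,v}}}_{\emptyset\emptyset\emptyset}(\mathbf{q}_v)$, I would first write
\[
\underline{DT}'_{mr}(\mathcal{X})=\frac{\underline{DT}_{mr}(\mathcal{X})}{\underline{DT}_0(\mathcal{X})}=\sum_{\{\lambda_e\}\,\mathrm{mr}}\prod_{e\in E_{\mathcal{X}}}E^e_{\lambda_e}\prod_{v\in V_{\mathcal{X}}}\frac{(-1)^{\varXi_{\pi_v}}V^{n_{e_{3,v}}}_{\lambda_{1,v}\lambda_{2,v}\lambda_{3,v}}((-1)^{\widetilde{s}(\lambda_{3,v})}\mathbf{q}_v)}{V^{n_{e_{3,v}}}_{\emptyset\emptyset\emptyset}(\mathbf{q}_v)},
\]
where the factorization of the denominator across vertices is legitimate because $DT_0$ is itself a product over vertices matching the product structure of each summand.

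The second step is to observe that for a multi-regular edge assignment all the orbifold sign data trivializes. Since each $\lambda_e$ is multi-regular and conjugation preserves multi-regularity, every third-leg partition $\lambda_{3,v}$ (equal to $\lambda_{e_{3,v}}$ or its conjugate) is multi-regular, so $|\lambda_{3,v}|_k=|\lambda_{3,v}|/n$ for every color $k$. Substituting into $\varXi_{\pi_v}=\sum_k|\lambda_{3,v}|_k(|\lambda_{1,v}|_k+|\lambda_{2,v}|_k+|\lambda_{1,v}|_{k+1}+|\lambda_{2,v}|_{k-1})$ gives $\varXi_{\pi_v}=\tfrac{|\lambda_{3,v}|}{n}\cdot 2(|\lambda_{1,v}|+|\lambda_{2,v}|)$, which is even, and likewise $\widetilde{s}_k(\lambda_{3,v})=|\lambda_{3,v}|_{k-1}+|\lambda_{3,v}|_{k+1}=2|\lambda_{3,v}|/n$ is even; hence $(-1)^{\varXi_{\pi_v}}=1$ and $(-1)^{\widetilde{s}(\lambda_{3,v})}=1$. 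The vertex argument is then simply $\mathbf{q}_v$, and because $\lambda_{3,v}$ is multi-regular Theorem \ref{3-leg correspondence} applies in the specialized variables $\mathbf{q}_v$ to give $V^{n_{e_{3,v}}}_{\lambda_{1,v}\lambda_{2,v}\lambda_{3,v}}(\mathbf{q}_v)/V^{n_{e_{3,v}}}_{\emptyset\emptyset\emptyset}(\mathbf{q}_v)=W^{n_{e_{3,v}}}_{\lambda_{1,v}\lambda_{2,v}\lambda_{3,v}}(\mathbf{q}_v)$ at each vertex. Running the identical sign computation on the PT side of Theorem \ref{PT-partition function} collapses $\underline{PT}_{mr}(\mathcal{X})$ to the very same sum $\sum_{\{\lambda_e\}\,\mathrm{mr}}\prod_e E^e_{\lambda_e}\prod_v W^{n_{e_{3,v}}}_{\lambda_{1,v}\lambda_{2,v}\lambda_{3,v}}(\mathbf{q}_v)$, whence $\underline{DT}'_{mr}(\mathcal{X})=\underline{PT}_{mr}(\mathcal{X})$. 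Applying the common substitution $q_{e,0}\to-q_{e,0},\,q\to-q$, which commutes with forming the ratio, then yields $DT'_{mr}(\mathcal{X})=PT_{mr}(\mathcal{X})$.

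I expect the main obstacle to be bookkeeping rather than conceptual: one must pin down precisely that $\underline{DT}$ and $\underline{PT}$ share the same edge factors and vertex signs, and that $DT_0$ genuinely factors as the empty-assignment product $\prod_v V^n_{\emptyset\emptyset\emptyset}$, so that the per-vertex cancellation producing $W^n$ out of $V^n/V^n_{\emptyset\emptyset\emptyset}$ is valid. A point worth emphasizing is that the entire argument is carried out summand-by-summand over multi-regular edge assignments, so the equality holds at the level of \emph{formal series}: restricted to any fixed multi-regular edge assignment the DT and PT contributions already agree, and no passage through equalities of rational functions (as in the alternative route of [\cite{BCR}, Theorem A]) is required.
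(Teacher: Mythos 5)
Your proposal is correct and follows essentially the same route as the paper's own proof: combine the explicit vertex formulas ([\cite{BCY}, Theorem 10] and Theorem \ref{PT-partition function}) with Theorem \ref{3-leg correspondence} applied vertex by vertex, using that $DT_{0}(\mathcal{X})$ factors as the empty-assignment product $\prod_{v}V^{n_{e_{3,v}}}_{\emptyset\emptyset\emptyset}(\mathbf{q}_{v})$ (after $q_{e,0}\to-q_{e,0}$). The only difference is that you spell out the bookkeeping the paper leaves implicit—in particular the correct and genuinely needed observation that multi-regularity of $\lambda_{3,v}$ forces $\widetilde{s}_{k}(\lambda_{3,v})$ (and $\varXi_{\pi_{v}}$) to be even, so the sign-twisted variables $(-1)^{\widetilde{s}(\lambda_{3,v})}\mathbf{q}_{v}$ reduce to $\mathbf{q}_{v}$ and the per-vertex division by $V^{n}_{\emptyset\emptyset\emptyset}(\mathbf{q}_{v})$ cleanly produces $W^{n}$.
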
	
\begin{proof}	
It follows from Theorem \ref{PT-partition function}, Theorem \ref{3-leg correspondence}, [\cite{BCY}, Theorem 10] and the degree 0 DT partition 
\ben
DT_{0}(\mathcal{X})=\prod_{v\in V_{\mathcal{X}}}V^{n_{e_{3,v}}}_{\emptyset\emptyset\emptyset}(\mathbf{q}_{v})\bigg|_{q_{e_{3,v},0}\to -q_{e_{3,v},0}}
\een
which is obtained by the similar argument in the proof of [\cite{BCY}, Theorem 10].
\end{proof}	

\begin{remark}
It is observed as in [\cite{RZ2,Ross3}] that if we choose for each edge $e$
\ben
&&\mathcal{O}_{\mathcal{C}_{e}}(\mathcal{D}_{e})=\mathcal{O}_{\mathcal{C}_{e}}(m^\prime_{e} p_{e}-\delta_{0,e}^\prime p_{0,e}-\delta_{\infty,e}^\prime p_{\infty,e}),\\
&&\mathcal{O}_{\mathcal{C}_{e}}(\mathcal{D}_{e}^\prime)=\mathcal{O}_{\mathcal{C}_{e}}(m_{e} p_{e}-\delta_{0,e}p_{0,e}-\delta_{\infty,e}p_{\infty,e})
\een
 which are different from the convention in Section 2.1 when $n_{e}=1$, then  we have 
\ben
\underline{PT}_{mr}(\mathcal{X})=\sum_{\{\lambda_{e}\}\in\Lambda_{\mathcal{X}}}\prod_{e\in E_{\mathcal{X}}}\widetilde{E}_{\lambda_{e}}^e\prod_{v\in V_{\mathcal{X}}}\widetilde{W}_{\lambda_{1,v}\lambda_{2,v}\lambda_{3,v}}^{n_{e_{3,v}}}(\mathbf{q}_{v})
\een
and 
\ben
PT_{mr}(\mathcal{X})=\underline{PT}_{mr}(\mathcal{X})\bigg|_{q_{e,0}\to -q_{e,0},\; q\to-q}
\een 
where 
\ben
&&\widetilde{E}_{\lambda_{e}}^e=(-1)^{(m_{e}+\delta_{0,e}+\delta_{\infty,e})|\lambda_{e}|}\cdot\left(\prod_{k=0}^{n_{e}-1}v_{e,k}^{\frac{|\lambda_{e}|}{n_{e}}}\right)\cdot q_{e}^{C_{m_{e}^\prime,m_{e}}^{\lambda_{e}}},\\
&&\widetilde{W}_{\lambda\mu\nu}^n= \left(\prod_{(i,j)\in\nu}q_{i-j}^{-j}\right)\cdot \mathfrak{s}_{\nu}\cdot \sum_{\eta}q_{0}^{-|\eta|}\cdot \overline{s_{\lambda^\prime/\eta}(\mathbf{q}_{\bullet-\nu^\prime})}\cdot s_{\mu/\eta}(\mathbf{q}_{\bullet-\nu}).
\een
\end{remark}

\subsection{Remarks on orbifold GW/DT correspondence and  DT crepant resolution conjecture}
In [\cite{RZ1,RZ2,Ross2}], the authors establish the orbifold GW/DT correspondence for toric CY 3-orbifolds with transverse $A_{n-1}$ singularities by proving orbifold GW/DT vertex correspondence which is compatible with their gluing formulas. As the reduced DT $\mathbb{Z}_{n}$-vertex used in [\cite{RZ1,RZ2,Ross2}] is the PT $\mathbb{Z}_{n}$-vertex by Theorem \ref{3-leg correspondence}, the orbifold GW/DT correspondence there is actually the orbifold GW/PT correspondence. Similarly, the proof of DT crepant resolution conjecture for toric CY 3-folds with transverse $A_{n-1}$ singularities in [\cite{Ross3}] can be viewed as the validity  of the corresponding PT crepant resolution conjecture. Since the orbifold PT theory can be interpreted as a geometric approach to the reduced orbifold DT theory as in the smooth case, we will investigate the orbifold GW/PT correpsondence further as in [\cite{PP3,PP5,PP1}].
% and  the Virasoro constraints for the orbifold PT theory.

%\appendix
%\section{}


\begin{thebibliography}{999}
\bibitem{AKMV} M. Aganagic, A. Klemm, M. Mari$\tilde{n}$o, C. Vafa, {\em The topological vertex},  Comm. Math.
Phys.  254 (2005) 425--478.

\bibitem{BCR} S. V. Beentjes, J. Calabres,  J. V. Rennemo, {\em A proof of the Donaldson-Thomas crepant resolution conjecture}, Invent. Math. 229 (2022),  451--562.

\bibitem{Beh} K. Behrend, {\em Donaldson–Thomas type invariants via microlocal geometry},  Ann. of Math. (2) 170 (3) (2009) 1307--1338.
	
	
\bibitem{BF} K. Behrend, B. Fantechi, {\em Symmetric obstruction theories and Hilbert schemes of points on threefolds},  Algebra Number Theory 2 (3) (2008) 313--345.	


\bibitem{BCS} L. Borisov, L. Chen, G. Smith, {\em The orbifold Chow ring of toric Deligne–Mumford stacks},  J. Amer. Math. Soc. 18 (1) (2005) 193--215.

\bibitem{Bri1}  T. Bridgeland, {\em Hall algebras and curve-counting invariants},  J. Amer. Math. Soc. 24 (2011), no. 4, 969--998.	
	
\bibitem{BCY} J. Bryan, C. Cadman and B. Young, {\em The orbifold topological vertex},  Adv. Math. 229 (2012), no. 1, 531--595.


\bibitem{BP} J. Bryan and R. Pandharipande, {\em The local Gromov-Witten theory of curves}, J. Amer. Math. Soc. 21 (2008), no. 1, 101--136.


	
\bibitem{BS} U. Bruzzo, F. Sala, {\em Framed sheaves on projective stacks},  (with an appendix by M. Pedrini), Adv. Math. 272 (2015) 20--95.


\bibitem{CG} N. Chriss and V. Ginzburg, {\em Representation theory and complex geometry},  Birkh$\ddot{a}$user Boston, Inc., Boston, MA, 1997.


\bibitem{FMN} B. Fantechi, E. Mann, F. Nironi, {\em Smooth toric Deligne–Mumford stacks},  J. Reine Angew. Math. 648
(2010) 201--244.

\bibitem{GP} T. Graber and R. Pandharipande, {\em Localization of virtual classes}, Invent. Math. 135, (1999), no. 2, 487--518.


%\bibitem{HL3} D. Huybrechts and M. Lehn, {\em The geometry of moduli spaces of sheaves}, 2nd ed., Cambridge Mathematical Library, Cambridge Univ. Press, Cambridge, 2010.

\bibitem{Jenne} H. Jenne, {\em Combinatorics of the double-dimer model},  Adv. Math. 392 (2021).

\bibitem{JWY} H. Jenne, G. Webb and B. Young, {\em Double-Dimer condensation and the PT-DT correspondence},  arXiv:2109.11773

\bibitem{Kre} A. Kresch, {\em On the geometry of Deligne-Mumford stacks}, in: Algebraic Geometry. Part 1, Seattle, 2005, in: Proc. Sympos. Pure Math., vol. 80, American Mathematical Society, Providence, RI, 2009, 259--271.

\bibitem{Kuo} E. H. Kuo, {\em Applications of graphical condensation for enumerating matchings and tilings}, Theor. Comput. Sci., 319(1-3) (2004) 29--57.


\bibitem{LLLZ} Jun Li, Chiu-Chu Melissa Liu, Kefeng Liu, and Jian Zhou,
{\em A mathematical theory of the topological vertex}, Geom. Topol. 13 (2009), no. 1, 527--621.


\bibitem{Lyj1} Yijie  Lin, {\em Moduli spaces of semistable pairs on projective Deligne–Mumford stacks}, Math. Res. Lett. 30 (2023), no. 4, 1131--1205.

\bibitem{Lyj2} Yijie  Lin, {\em Relative orbifold Pandharipande-Thomas theory and the degeneration formula}, Int. Math. Res. Not. (IMRN), (2023), no. 5, 4082--4148.



\bibitem{LLZ1} Chiu-Chu Melissa Liu, Kefeng Liu, Jian Zhou, {\em A proof of a conjecture of Mariño–Vafa on Hodge integrals}, 
J. Differential Geom. 65 (2003) 289--340.

\bibitem{LLZ2} Chiu-Chu Melissa Liu, Kefeng Liu, Jian Zhou, {\em A formula of two-partition Hodge integrals},  J. Amer. Math. Soc.
20 (1) (2007) 149--184.


%\bibitem{Lieb} M. Lieblich, {\em Moduli of twisted sheaves},   Duke Math. J. 138 (2007), no. 1, 23--118.

  
\bibitem{MNOP1} D. Maulik, N. Nekrasov, A. Okounkov and R. Pandharipande, {\em Gromov-Witten theory and Donaldson-Thomas theory, I},  Compos. Math. 142 (2006), no. 5, 1263--1285. 
  
\bibitem{MOOP} D. Maulik, A. Oblomkov, A. Okounkov, R. Pandharipande, {\em Gromov-Witten/Donaldson-Thomas correspondence for toric 3-folds},  Invent. Math. 186, (2011), no. 2, 435--479.


\bibitem{MPT} D. Maulik, R. Pandharipande, and R. Thomas, {\em Curves on K3 surfaces and modular forms}, (with an
appendix by A. Pixton), J. Topol. 3, (2010), no. 4, 937--996.

\bibitem{OOP1} A. Oblomkov, A. Okounkov and R. Pandharipande, {\em GW/PT descendent correspondence via vertex operators},  Communications in Mathematical Physics. 2020. no. 374. 1321--1359.

\bibitem{OP} A. Okounkov and R. Pandharipande, {\em Hodge integrals and invariants of the unknot},  Geom. Topol. 8 (2004), 675--699.

\bibitem{OP1} A. Okounkov and R. Pandharipande, {\em The local Donaldson-Thomas theory of curves},  Geom. Topol. 14 (2010), no. 3, 1503--1567.

\bibitem{ORV} A. Okounkov, N. Reshetikhin, C. Vafa, {\em Quantum Calabi–Yau and classical crystals}, 
from: “The unity of mathematics”, (P Etingof, V Retakh, IM Singer, editors), Progr. Math. 244, Birkhäuser, Boston, MA (2006) 597--618.


\bibitem{PP3} R. Pandharipande and A. Pixton, {\em Descendents on local curves: Rationality},  Comp. Math. 149 (2013), 81--124.

%\bibitem{PP4} R. Pandharipande and A. Pixton, {\em Descendents on local curves: Stationary theory},   in Geometry and arithmetic, 283--307, EMS Ser. Congr. Rep., Eur. Math. Soc., Z$\mathrm{\ddot{u}}$rich, 2012.

\bibitem{PP5} R. Pandharipande and A. Pixton, {\em Descendent theory for stable pairs on toric 3-folds},  Jour. Math. Soc. Japan. 65 (2013), 1337--1372.

\bibitem{PP1} R. Pandharipande and A. Pixton, {\em Gromov-Witten/Pairs descendent correspondence for toric 3-folds}, Geom. Topol. 18 (2014), 2747--2821.

\bibitem{PP2} R. Pandharipande and A. Pixton, {\em Gromov-Witten/Pairs correspondence for the quintic}, J. Amer. Math. Soc. 30 (2017), 389--449.

\bibitem{PT1} R. Pandharipande and R. P. Thomas, {\em Curve counting via stable pairs in the derived category}, Invent. Math. 178 (2009), no. 2, 407--447.


\bibitem{PT2} R. Pandharipande and R. P. Thomas, {\em The 3-fold vertex via stable pairs}, Geom. Topol. 13, 1835--1876 (2009).


\bibitem{Ross}  D. Ross, {\em The loop Murnaghan-Nakayama rule}, J. Algebraic Combin. 39 (2014),  3--15.




\bibitem{Ross1}  D. Ross, {\em Localization and gluing of orbifold amplitudes: the Gromov–Witten orbifold vertex}, Trans.
Amer. Math. Soc. 366 (2011),  no. 3, 1587--1620.



\bibitem{Ross2} D. Ross, {\em On the Gromov-Witten/Donaldson-Thomas correspondence and Ruan’s conjecture for Calabi-Yau 3-orbifolds},   Comm. Math. Phys. 340 (2015), no. 2, 851--864.

\bibitem{Ross3} D. Ross, {\em Donaldson-Thomas theory and resolutions of toric transverse $A$-singularities},  Sel. Math. New Ser. 23 (2017), 15--37.


\bibitem{RZ1} D. Ross and Z. Zong, {\em The gerby Gopakumar-Mari$\tilde{n}$o-Vafa formula},  Geom. Topol. 17 (2013), no. 5, 2935--2976.


\bibitem{RZ2} D. Ross and Z. Zong, {\em Cyclic Hodge integrals and loop Schur functions},   Adv. Math. 285 (2015), 1448--1486.


\bibitem{Serre} J. P. Serre, {\em Local Algebra},  Springer Monographs in Mathematics, pp. xiv+128. Springer, Berlin (2000).

\bibitem{ST} J. Stoppa and R. P. Thomas, {\em Hilbert schemes and stable pairs: GIT and derived category wall crossings}, Bull.
Soc. Math. Fr. 139(3), 297--339 (2011).

\bibitem{Toda} Y. Toda, {\em Curve counting theories via stable objects I. DT/PT correspondence},  J. Amer. Math. Soc. 23 (2010), no. 4, 1119--1157.


\bibitem{Vis} A. Vistoli, {\em Intersection theory on algebraic stacks and on their moduli spaces}, Invent. Math. 97 (1989), no. 3, 613--670.

\bibitem{Zhang} Tao Zhang, {\em Stable pair theory on toric orbifolds and colored reverse plane partitions},  PhD thesis, http://hdl.handle.net/1903/27192.


\bibitem{Zhou2} Z. Zhou, {\em Donaldson-Thomas theory of $[\mathbb{C}^2/\mathbb{Z}_{n+1}]\times\mathbb{P}^1$},  Sel. Math. New Ser. 24 (2018), 3663--3722.

\bibitem{Zong1} Z. Zong, {\em Generalized Mari$\tilde{n}$o-Vafa formula and local Gromov-Witten theory of orbi-curves},  J. Differential Geom. 100 (2015), no. 1, 161--190.

\end{thebibliography}
\end{document}